\author{Thomas Cottrell}
\title{Comparing algebraic and non-algebraic foundations of $n$-category theory}
\theoremstyle{definition}  \newtheorem{defn}{Definition}[section]
\theoremstyle{definition}  
\theoremstyle{plain}  \newtheorem{prop}[defn]{Proposition}
\theoremstyle{plain}  \newtheorem{thm}[defn]{Theorem}
\theoremstyle{plain}  \newtheorem{lemma}[defn]{Lemma}
\theoremstyle{plain}  \newtheorem{corol}[defn]{Corollary}
\newtheorem{conjecture}[defn]{Conjecture}
\newcommand{\Set}{\mathbf{Set}}
\newcommand{\Cat}{\mathbf{Cat}}
\newcommand{\CAT}{\mathbf{CAT}}
\newcommand{\SSet}{\mathbf{SSet}}
\newcommand{\nSSet}{n\text{-}\mathbf{SSet}}
\newcommand{\GSet}{\mathbf{GSet}}
\newcommand{\nGSet}{n\text{-}\mathbf{GSet}}
\newcommand{\kGSet}{k\text{-}\mathbf{GSet}}
\newcommand{\twoGSet}{2\text{-}\mathbf{GSet}}
\newcommand{\Alg}{\text{-}\mathbf{Alg}}
\newcommand{\nColl}{n\text{-}\mathbf{Coll}}
\newcommand{\kColl}{k\text{-}\mathbf{Coll}}
\newcommand{\TColl}{T\text{-}\mathbf{Coll}}
\newcommand{\Bicat}{\mathbf{Bicat}}
\newcommand{\Mag}{\mathbf{Mag}}
\newcommand{\nMag}{n\text{-}\mathbf{Mag}}
\newcommand{\nCat}{n\text{-}\mathbf{Cat}}
\newcommand{\Rn}{\mathcal{R}}
\newcommand{\Qn}{\mathcal{Q}}
\newcommand{\Rtwo}{\mathcal{R}}
\newcommand{\Qtwo}{\mathcal{Q}}
\newcommand{\OCS}{\mathbf{OCS}}
\newcommand{\OUC}{\mathbf{OUC}}
\newcommand{\Tr}{\mathrm{Tr}}
\newcommand{\nerve}{\mathcal{N}}
\newcommand{\NB}{\nerve \mathcal{B}}
\newcommand{\comp}{\circ}
\newcommand{\newcomp}{\odot}
\newcommand{\id}{\text{id}}  %or should I use 1 for identities?
\DeclareMathOperator*{\colim}{colim}
\newcommand{\op}{\text{op}}
\newcommand{\ladj}{\dashv}
\newcommand{\iso}{\cong}
\DeclareMathOperator*{\sumcomptwo}{\bigcirc}
\newcommand{\sumcomp}[2]{\sumcomptwo^{#1,#2}}
\newcommand{\gt}{\text{ }|\text{ }}
\newcommand{\ContrR}{\mathbf{Contr}}
\newcommand{\MagR}{\mathbf{Mag}}
\newcommand{\Contr}{\mathbf{Contr}}
\newcommand{\SoC}{\mathbf{SoC}}
\newcommand{\kOpd}{k\text{-}\mathbf{Opd}}
\newcommand{\vectj}{\mathbf{j}}
\newcommand{\vectk}{\mathbf{k}}
\newcommand{\vectp}{\mathbf{p}}
\newcommand{\vectone}{\mathbf{1}}
\newcommand{\vectzero}{\mathbf{0}}
\newcommand{\hatX}{\widehat{X}}
\newcommand{\dotblank}{\cdot}
\newcommand{\pushj}{\amalg j}
\newcommand{\pushk}{\amalg k}
\begin{document}

\begin{titlepage}
\vspace*{\stretch{1}}
\begin{center}\bf
{\LARGE Comparing algebraic and non-algebraic foundations of $n$-category theory}\\
\vspace{2cm}
{\Large by}\\
\vspace{15mm}
{\LARGE Thomas Peter Cottrell.}\\
\vspace{5cm} {\Large A thesis submitted for the degree of\\ Doctor of Philosophy.}\\
\vspace{4cm}
{\large School of Mathematics and Statistics\\ The University of Sheffield}\\
\bigskip
{\large February 2014.} \vspace{1cm}
\end{center}
\end{titlepage}

\newpage\hbox{}\thispagestyle{empty}\newpage

\pagenumbering{roman}
\chapter*{Abstract}

  Many definitions of weak $n$-category have been proposed. It has been widely observed that each of these definitions is of one of two types:  algebraic definitions, in which composites and coherence cells are explicitly specified, and non-algebraic definitions, in which a coherent choice of composites and constraint cells is merely required to exist.  Relatively few comparisons have been made between definitions, and most of those that have concern the relationship between definitions of just one type.  The aim of this thesis is to establish more comparisons, including a comparison between an algebraic definition and a non-algebraic definition.

  The thesis is divided into two parts.  Part~\ref{Part1} concerns the relationships between three algebraic definitions of weak $n$-category: those of Penon and Batanin, and Leinster's variant of Batanin's definition.  A correspondence between the structures used to define composition and coherence in the definitions of Batanin and Leinster has long been suspected, and we make this precise for the first time.  We use this correspondence to prove several coherence theorems that apply to all three definitions, and also to take the first steps towards describing the relationship between the weak $n$-categories of Batanin and Leinster.

  In Part~\ref{Part2} we take the first step towards a comparison between Penon's definition of weak $n$-category and a non-algebraic definition, Simpson's variant of Tamsamani's definition, in the form of a nerve construction.  As a prototype for this nerve construction, we recall a nerve construction for bicategories proposed by Leinster, and prove that the nerve of a bicategory given by this construction is a Tamsamani--Simpson weak $2$-category.  We then define our nerve functor for Penon weak $n$-categories.  We prove that the nerve of a Penon weak $2$-category is a Tamsamani--Simpson weak $2$-category, and conjecture that this result holds for higher $n$.

\tableofcontents

\mainmatter
\chapter*{Introduction}
\addcontentsline{toc}{chapter}{Introduction}

\pagestyle{intro}

  An $n$-category is a higher-dimensional categorical structure in which, as well as objects and morphisms between those objects, we have morphisms between morphisms (``$2$-morphisms''), morphisms between $2$-morphisms (``$3$-morphisms''), and so on up to $n$-morphisms for some fixed natural number $n$.  Such structures arise in areas as diverse as homotopy theory, computer science, and theoretical physics, as well as category theory itself.  The case of strict $n$-categories, in which composition of morphisms is strictly associative and unital, is well-understood, but for many applications it is not sufficiently general, since many naturally occurring ``composition-like'' operations satisfy associativity and unitality only up to some kind of higher-dimensional isomorphism or equivalence.  Thus, a notion of weak $n$-category is required.  The theory of weak $n$-categories has grown rapidly over the past two decades; many different definitions of weak $n$-category have been proposed, using a wide variety of approaches, but the relationships between these definitions are not yet well understood, with few comparisons having been made.  It has been widely observed (see \cite{Lei02}) that each of these definitions belongs to one of two groups, called ``algebraic'' and ``non-algebraic''.

  The distinction between algebraic and non-algebraic definitions lies in the way in which composites and coherence cells are treated, and is often described as follows: in an algebraic definition composites and coherence cells are explicitly specified; in a non-algebraic definition a suitable choice of composites and coherence cells is required to exist, but is not specified and is not necessarily unique.  However, the difference is more deeply ingrained in the approaches used than this description would suggest.  Algebraic definitions draw upon techniques from universal algebra, such as the theories of monads and operads, whereas non-algebraic definitions use topological techniques, such as homotopy theory and model category theory, and are closely related to the more algebraic notions of topological space, such as Kan complexes.  Thus when making comparisons between definitions that belong to just one group, there are pre-existing techniques that can be used, but making a comparison between an algebraic definition and a non-algebraic definition is more of a challenge.  The way in which the algebraic and non-algebraic approaches fit into the bigger picture of the relationship between algebra and topology is well-illustrated in the following diagram, by Leinster~\cite{Lei10}:
    \begin{center}
    \includegraphics[width=0.9\textwidth]{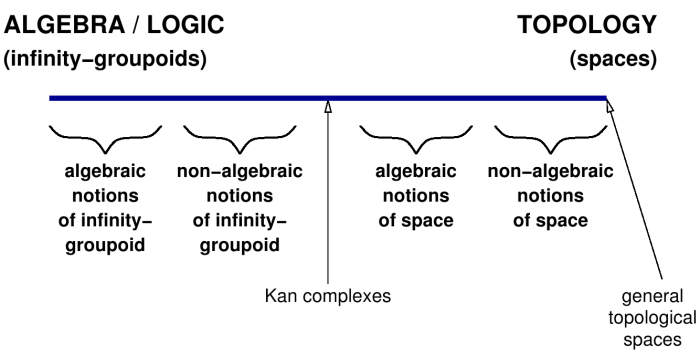}
    \end{center}
  Leinster used this diagram to illustrate the homotopy hypothesis of Gro\-then\-dieck, an important application of the theory of weak $n$-categories.  Very roughly, this hypothesis states that ``$\omega$-groupoids should be the same as spaces'' (and, in the $n$-dimensional case, ``$n$-groupoids should be the same as $n$-types'').  To state it formally we need to choose a notion of weak $n$-category and a notion of space, with the ``strongest'' statement of the hypothesis arising when we use an algebraic definition of weak $n$-category and a non-algebraic definition of space.  A statement of the hypothesis using a non-algebraic notion of weak $n$-category or an algebraic notion of space is less strong since it connects concepts that are more similar to one another, but it should be easier to prove for the same reason.  Understanding the relationship between algebraic and non-algebraic definitions of weak $n$-category would thus represent a significant step towards proving the strongest version of the homotopy hypothesis.  The case of weak $n$-groupoids, weak $n$-categories in which all morphisms (including higher morphisms) are invertible up to some higher cell, is of particular interest not only in the case of the homotopy hypothesis, but also in the study of Homotopy Type Theory \cite{UFP13}.

  We now discuss the various definitions of weak $n$-category that have been proposed.  The earliest algebraic definitions were made for specific values of $n$, and took a very direct approach; these include the classical definitions of bicategory \cite{Ben67} and tricategory (first defined in \cite{GPS95}, and made fully algebraic in \cite{Gur06}), and Trimble's definition of tetracategory~\cite{Tri95}.  The definition of tetracategory is long, and highlights the fact that it is not practical to use the classical approach for higher values of $n$.  Thus, in algebraic definitions of weak $n$-category for a general value of $n$, more abstract methods are used.  Algebraic definitions for a general value of $n$ include Penon's definition \cite{Pen99}, which uses a weakened version of the free strict $n$-category monad, Batanin's definition (\cite{Bat98}, with variants \cite{Lei00, Ber02, Lei02, Cis07, Gar10, vdBG11, Che11}), which uses globular operads, and the definitions of Trimble (\cite{Tri99}, first published in \cite{Lei02}, with variants \cite{CG07, Che11}) and May \cite{May01}, which use weakened forms of enrichment.

  Non-algebraic definitions take the form ``a weak $n$-category consists of some underlying data (usually a presheaf) satisfying a certain condition''.  This condition ensures that a coherent choice of composition structure can be made from the underlying data, but the exact composition structure is not explicitly specified.  Various types of underlying data are used in non-algebraic definitions, including simplicial sets \cite{Str87}, multisimplicial sets \cite{Tam99, Sim97}, cellular sets \cite{Joy97}, opetopic and multitopic sets \cite{BD98, HMP00, HMP01, HMP02, Lei98}, and $\omega$-hypergraphs \cite{HMT99, MT00}.  Many non-algebraic definitions deal with the case of $(\infty, n)$-categories, a type of higher category in which there is no maximum dimension of cell, but in which all cells of dimension greater than $n$ are equivalences. These definitions include quasi-categories (also known as weak Kan complexes) \cite{BV73, Vog73, Joy02, Lur09}, Segal $n$-categories \cite{HS98, Sim12}, and complete Segal spaces \cite{Rez01}.  % 
  Although there is a large number of different definitions, relatively few comparisons have been made between them, and most of the comparisons that have been made are either exclusively between algebraic definitions, or exclusively between non-algebraic definitions.  In the case of algebraic definitions, Batanin has made a comparison between his definition and that of Penon \cite{Bat02}, showing that his definition is weaker than Penon's and conjecturing some sort of weak equivalence between the two; Cheng has shown that a generalisation of Trimble's definition is an instance of a variant of Batanin's \cite{Che11}.  In the case of non-algebraic definitions, Cheng has proved an equivalence between the opetopic and multitopic definitions of Baez--Dolan, Hermida--Makkai--Power, and Leinster~\cite{Che04a, Che04b}; Bergner has proved equivalences between various definitions of $(\infty, 0)$- and $(\infty, 1)$-categories~\cite{Berg08}; Joyal and Tierney have proved equivalences between quasi-categories, complete Segal spaces, and Segal $n$-categories \cite{JT07}; Barwick and Schommer-Pries have shown the definitions of $(\infty, n)$-category of Joyal, Lurie, Rezk, and Simpson all satisfy a certain axiomatisation \cite{BSP12}.  Leinster has compared various definitions, both algebraic and non-algebraic, with the classical definitions of category and bicategory in the cases $n = 1$ and $n = 2$ \cite{Lei02}.  In the non-algebraic cases these are comparisons of an algebraic definition with a non-algebraic definition, although not for a general value of $n$.  Similarly, Gurski has also proved an equivalence between bicategories and a $2$-dimensional reformulation of Street's definition of weak $\omega$-category \cite{Gur09}, building on work of Duskin~\cite{Dus02}.

  It may appear from this list of comparisons that the most progress has been made with non-algebraic definitions;  this is because in the non-algebraic setting the lack of specified composites and coherence cells means we do not need to be as careful about keeping track of things as we do in the algebraic setting.  Very little progress has been made in comparing algebraic and non-algebraic definitions, with the only existing comparisons being restricted to the case $n = 2$.  Moving between the algebraic and non-algebraic settings is difficult; it is not simply a case of taking a non-algebraic definition and making choices of composites and coherence cells, or of taking an algebraic definition and just asking for existence in place of specified structure.  There are situations in which it is possible to make changes like this, but the resulting definitions are not far removed from the original ones.  For example, Batanin's definition~\cite{Bat98}, which is algebraic, uses a globular operad with a specified contraction; we can make this ``less algebraic'' by replacing this with a contractible operad \cite[Definition~B2]{Lei02}, but the resulting notion of weak $n$-category is still an algebraic one.  Similarly, the Tamsamani--Simpson definition, which is non-algebraic, asks for certain maps to be contractible; we can make this ``more algebraic'' by asking for specified contractions~\cite{Pel08}, but the resulting notion of weak $n$-category is still non-algebraic.

  One established method of moving between the algebraic and non-algebraic settings is the idea of a ``nerve construction''.  This idea arose from the well-known nerve construction for categories, which allows us to express a category as a simplicial set satisfying a ``nerve condition''.  Roughly speaking, a nerve construction takes an algebraic object, and produces from it a particular kind of presheaf, so a nerve construction can be seen as a way of passing from an algebraic setting to a non-algebraic setting.  Various authors have given nerve constructions for algebraic definitions of weak $n$-category \cite{Web07, Mel10, BMW12}, but these have focussed on extracting a canonical nerve from a given algebraic notion of $n$-category, rather than making connections with existing non-algebraic definitions.  This can be seen as creating a new non-algebraic definition corresponding to the given algebraic definition; the presheaves this approach gives are therefore specific to the chosen algebraic definition, and are unlikely to be presheaves on a category that arises naturally elsewhere.  One exception to this is the case of strict $\omega$-categories;  Berger has shown that, in this case, the canonical nerve is a presheaf on a category that arises naturally as a wreath product of the simplex category $\Delta$ \cite{Ber02, Ber07}.  These nerve constructions illustrate one reason that the subject has grown; various authors have tried to make connections between definitions, and ended up inventing new definitions (see, for example, \cite{BD98, Lei00}).  The proliferation of definitions of weak $n$-category has led to a disjointed, disparate subject, and this highlights the importance of making comparisons between existing definitions.  The aim of this thesis is to make the first comparison between an algebraic definition and a non-algebraic definition of weak $n$-category.

  The thesis is structured as follows: it is divided into two parts;  broadly speaking, the first part concerns the relationships between various algebraic definitions of weak $n$-category, and second part describes the first steps towards a comparison between an algebraic definition and a non-algebraic definition.

  The first part begins with Chapter~\ref{chap:Penon}, in which we recall the definition of Penon weak $n$-category \cite{Pen99}.  This is the central definition of the thesis, in the sense that it is the only definition of weak $n$-category to be used in both parts.  The idea of the definition of Penon weak $n$-category is to weaken the well-understood notion of strict $n$-category by means of a ``contraction''.  This is inspired by the topological notion of a contraction; and Penon uses it to ensure that any axiom that holds in a strict $n$-category holds ``up to homotopy'' in a Penon weak $n$-category.

  Penon weak $n$-categories are defined as algebras for a monad induced by a certain adjunction.  Penon described this adjunction in his original paper, but we give a new construction that we will use later, in Chapters~\ref{chap:Penonop} and \ref{chap:nerveconstr}.  The left adjoint in this adjunction freely adds two types of structure: a binary composition structure and a contraction structure.  In our construction we add these structures alternately, dimension by dimension, using an interleaving construction based on that of Cheng~\cite{Che10} (see also \cite{HDM06}).

  In Chapter~\ref{chap:opdefns} we discuss definitions of weak $n$-categories as algebras for globular operads.  Globular operads were introduced by Batanin~\cite{Bat98} as a tool for defining weak $n$-categories; they are a type of higher operad in which the operations have as their arities globular pasting diagrams.  We recall the definitions of globular operads and their algebras, then discuss Batanin's approach to identifying which globular operads give a ``sensible'' definition of weak $n$-category.  We recall the definitions of a system of compositions and a contraction on a globular operad, and the definition of Batanin weak $n$-categories as algebras for the initial globular operad with a contraction and system of compositions.

  Batanin's definition can be seen as a whole family of definitions, with many authors using variants \cite{Ber02, Lei02, Cis07, Gar10, vdBG11, Che11}.  One variant of particular note is that of Leinster~\cite{Lei00}; in place of a contraction and system of compositions, Leinster uses a notion called an ``unbiased contraction'' on an operad, which simultaneously ensures that we have unbiased composition operations and coherence operations.  We recall the definition of unbiased contraction, and Leinster's variant of Batanin's definition, in which he defines weak $n$-categories to be the algebras for the initial operad with an unbiased contraction; we refer to these algebras as ``Leinster weak $n$-categories''.

  In this chapter, we make the following results precise for the first time:
    \begin{itemize}
      \item The existence of the initial operad with a contraction and system of compositions; this has previously been (very reasonably) assumed by other authors \cite{Bat98, Lei02}.
      \item The correspondence between operads with contractions and systems of compositions, and operads with unbiased contractions; specifically, we prove a conjecture of Leinster~\cite[Section~10.1]{Lei04} stating that any operad with a contraction and system of compositions can be equipped with an unbiased contraction (the converse is already known \cite[Examples~10.1.2 and 10.1.4]{Lei04}).
      \item Three coherence theorems for algebras for globular operads; these results are not surprising, but have not previously been proved.  These theorems hold for the algebras for any globular operad equipped either with a contraction and system of compositions, or with an unbiased contraction.  By the previous result, for each theorem we can pick whichever notion is most convenient for the purposes of the proof.
    \end{itemize}

  Chapter~\ref{chap:Penonop} concerns comparisons between various operadic definitions of weak $n$-category.  It is a result of Batanin~\cite{Bat02} that Penon weak $n$-categories can be defined as algebras for a certain globular operad, and that this operad can be equipped with a canonical choice of contraction and system of compositions.  We give a new proof of this, using our construction of the monad for Penon weak $n$-categories from Chapter~\ref{chap:Penon}.  Our proof is more direct than Batanin's, and gives a different point of view, elucidating the structure of the operad.  This implies that the coherence theorems from the previous chapter also hold for Penon weak $n$-categories; Batanin has already observed~\cite{Bat02} that this result also gives a canonical comparison map from the operad for Batanin weak $n$-categories to the operad for Penon weak $n$-categories, and has used this to prove that Batanin weak $n$-categories are the weaker of the two for $n \geq 3$.

  We then take several steps towards a comparison between Batanin weak $n$-categories and Leinster weak $n$-categories.  It has been widely believed that these definitions are in some sense equivalent (see \cite[end of Section~4.5]{Lei00}), but no attempt to formalise this statement has been made.  We derive comparison functors between the categories of Batanin weak $n$-categories and Leinster weak $n$-categories, and discuss how close these functors are to being equivalences of categories.  We investigate what happens when we take a Leinster weak $n$-category and apply first the comparison functor to the category of Batanin weak $n$-categories, then the comparison functor back to the category of Leinster weak $n$-categories.  We believe that the Leinster weak $n$-category we obtain is in some sense equivalent to the one with which we started, and take a preliminary step towards formalising this statement.

  In the second part we describe a new nerve construction for Penon weak $n$-categories.  As mentioned earlier, nerves give a non-algebraic approach to the study of $n$-categories.  A nerve construction takes the form of a functor from a category of ``algebraic objects'' of some kind (e.g.~algebras for a certain monad) to a category of ``non-algebraic objects'' (e.g.~presheaves on a certain category); essentially, a nerve construction gives us a way of comparing algebraic things with non-algebraic things.  The nerve given by our nerve construction for Penon weak $n$-categories is an $n$-simplicial set, a presheaf of the same kind used in the definition of Tamsamani--Simpson weak $n$-category; thus this allows for a comparison to be made between the two definitions.  While various authors have given nerve constructions for algebraic definitions of weak $n$-category in the past \cite{Ber02, Web07, Mel10, BMW12}, our nerve construction is the first to provide a comparison between an algebraic definition of weak $n$-category and a pre-existing non-algebraic definition.

  We begin Chapter~\ref{chap:TamSim} by recalling the nerve construction for categories, and explaining how this leads to an alternative equivalent definition of a category as a simplicial set satisfying a condition called the ``nerve condition''.  We then recall Simpson's variant of Tamsamani's definition of weak $n$-category \cite{Tam99, Sim97}.  Instead of simplicial sets, i.e.~functors $\Delta^{\op} \rightarrow \Set$, this definition uses $n$-simplicial sets, functors $(\Delta^n)^{\op} \rightarrow \Set$; these must satisfy a generalised nerve condition, named the Segal condition in analogy with a similar condition arising in the study of Segal categories \cite{Seg74, DKS89}.  The Segal condition ensures that coherent composition exists in a Tamsamani--Simpson weak $n$-category;  note that there may be many different choices of coherent composition, and since this is a non-algebraic definition, no specific choice is made.

  In Chapter~\ref{chap:nerveconstr} we describe our nerve construction for Penon weak $n$-categories in the case $n = 2$.  We treat the case $n = 2$ separately because it is simpler, both notationally and conceptually, and also because we are able to prove that the nerve of a Penon weak $2$-category satisfies the Segal condition.  This represents strict progress towards a comparison of the two definitions for $n = 2$, since it tells us that the image of the nerve functor is contained in the category of Tamsamani--Simpson weak $2$-categories.

  In \cite{Lei02}, Leinster proposed a nerve construction for bicategories in which the nerve of a bicategory is a bisimplicial set; we use this as the prototype for our nerve construction.  Leinster defined the action of the nerve formally only on the objects of $\Delta^2$;  we complete this definition, and extend it to a definition of a nerve functor for bicategories.  We then prove for the first time that the nerve of a bicategory satisfies the Segal condition, and is therefore a Tamsamani--Simpson weak $2$-category.  Note that various other nerve constructions for bicategories have been proposed \cite{Dus02, LP08, Gur09}, but these are less suitable for generalisation to nerve constructions for an algebraic definition of weak $n$-category, since they do not make a distinction between the dimensions of cells in the same way.  Tamsamani--Simpson weak $n$-categories are well-suited to comparison with algebraic definitions since cells of different dimensions are kept separate in the underlying data, as they are in the algebraic definitions.  It also appears that a generalisation of one of these other nerve constructions for bicategories would require a definition of lax maps of Penon weak $n$-categories, and such a definition does not exist.

  We then define our nerve functor for Penon weak $2$-categories.  To do so, we use our construction of the monad for Penon weak $n$-categories from Chapter~\ref{chap:Penon}.  It is here that the construction shows its utility; it allows us to describe a certain type of Penon weak $n$-category which we can think of as being ``partially free'', and Penon weak $n$-categories of this type are used to describe the shapes of cells we require in our nerve.  The full necessity of the interleaving construction in developing this nerve is somewhat hidden, however; describing the shapes of cells in the nerve required great care, and our construction of Penon's monad gave us the precise control needed to do this correctly, allowing us to tweak the construction to get it just right.

  At the end of this chapter we prove that the nerve of a Penon weak $2$-category satisfies the Segal condition, and is therefore a Tamsamani--Simpson weak $2$-category.  The proof is unavoidably technical, and is also in some parts elementary, and we apologise for this;  both Penon weak $n$-categories and Tamsamani--Simpson weak $n$-categories are naturally arising in their own contexts, but these contexts are very different, and it is inevitable that any comparison will be technically complicated.  In this proof we use the notation for the cells of a Penon weak $n$-category given by our construction of Penon's monad from Chapter~\ref{chap:Penon}.

  In Chapter~\ref{chap:nerveconstrn} we generalise our nerve construction for Penon weak $2$-categories to the case of general $n$.  We conclude with a discussion of further results still to be proved that our nerve construction now makes it possible to state precisely, including a conjecture that the nerve of a Penon weak $n$-category is a Tamsamani--Simpson weak $n$-category; proofs of these results are beyond the scope of this thesis.  Comparison functors are rare in the study of weak $n$-categories, as discussed in the background section, and it is an achievement to have obtained a comparison functor allowing us to compare the definitions of Penon and Tamsamani--Simpson, even though proving that this functor satisfies the desired properties is still out of reach.  We hope that this work will lead to a full comparison between these two definitions, as well as paving the way for more comparisons between algebraic and non-algebraic definitions of weak $n$-category.

\section*{Notation and terminology}
\addcontentsline{toc}{section}{Notation and terminology}

  Throughout this thesis, the letter $n$ always denotes a fixed natural number, which is assumed to be the highest dimension of cell in the definition(s) of weak $n$-category being discussed.  Where a specific value of $n$ is used, this is noted.  We write $\mathbb{N}$ for the set of natural numbers; note that we take this to include $0$.

  All of our algebraic definitions of weak $n$-category use $n$-globular sets as their underlying data.  An $n$-globular set is a presheaf on the \emph{$n$-globe category} $\mathbb{G}$, which is defined as the category with
      \begin{itemize}
        \item objects: natural numbers $0$, $1$, $\dotsc$, $n - 1$, $n$;
        \item morphisms generated by, for each $1 \leq m \leq n$, morphisms
          \[
            \sigma_m, \tau_m \colon (m - 1) \rightarrow m
          \]
        such that $\sigma_{m + 1} \sigma_m = \tau_{m + 1} \sigma_m$ and $\sigma_{m + 1} \tau_m = \tau_{m + 1} \tau_m$ for $m \geq 2$ (called the ``globularity conditions'').
      \end{itemize}
  For an $n$-globular set $X \colon \mathbb{G}^{\op} \rightarrow \Set$, we write $s$ for $X(\sigma_m)$, and $t$ for $X(\tau_m)$, regardless of the value of $m$, and refer to them as the source and target maps respectively.  We denote the set $X(m)$ by $X_m$.  We say that two $m$-cells $x$, $y \in X_m$ are \emph{parallel} if $s(x) = s(y)$ and $t(x) = t(y)$; note that all $0$-cells are considered to be parallel.  We write $\nGSet$ for the category of $n$-globular sets $[\mathbb{G}^{\op}, \Set]$.

  We write $T$ for the free strict $n$-category monad on $\nGSet$.  This is the monad induced by the adjunction
        \[
          \xy
            % POINTS
            (0, 0)*+{\nGSet}="nG";
            (24, 0)*+{\nCat ,}="nC";
            % ARROWS
            {\ar@<1ex>_-*!/u1pt/{\labelstyle \bot} "nG" ; "nC"};
            {\ar@<1ex> "nC" ; "nG"};
          \endxy
       \]
  where $\nCat$ is the category of strict $n$-categories, and the right adjoint is the forgetful functor sending a strict $n$-category to its underlying $n$-globular set.  In certain circumstances (for results in which a greater degree of generality is possible) we write $T$ to denote an arbitrary monad, possibly satisfying certain conditions.  In such cases, $T$ can always be taken to be the free strict $n$-category monad, and this will be the particular example in which we are interested.  We write $\eta^T \colon 1 \Rightarrow T$ for the unit of the monad $T$, and $\mu^T \colon T^2 \Rightarrow T$ for its multiplication.  Similarly, for any monad $K$, we denote its unit by $\eta^K \colon 1 \Rightarrow K$ and its multiplication by $\mu^K \colon K^2 \Rightarrow K$.

  We write $1$ for the terminal $n$-globular set, which has precisely one $m$-cell for each $0 \leq m \leq n$.  The $n$-globular set $T1$ appears frequently throughout the thesis, specifically in Chapters~\ref{chap:opdefns} and \ref{chap:Penonop}.  Applying the monad $T$ freely generates all possible formal composites; since there is only one $m$-cell in $1$ for each $0 \leq m \leq n$, that $m$-cell can be composed with itself any number of times along all dimensions of boundary.  Thus an element of $T1_m$ is a pasting diagram made up entirely of globular cells (possibly including identity cells).  There is only one such pasting diagram at dimension $0$, since $0$-cells cannot be composed, so $T1_0$ has only one element.  At dimension $1$ such a diagram consists of a finite string of $1$-cells composed end to end, so for each natural number $k$ there is an element of $T1_1$ which should be visualised as
      \[
        \underbrace{
        \xy
          % POINTS
          (0, 0)*+{\bullet}="1";
          (10, 0)*+{\bullet}="2";
          (20, 0)*+{\bullet}="3";
          (30, 0)*+{\bullet}="4";
          (25, 0)*+{\dotsc};
          (40, 0)*+{\bullet}="5";
          (42, -1)*+{,};
          % ARROWS
          {\ar "1" ; "2"};
          {\ar "2" ; "3"};
          {\ar "4" ; "5"};
        \endxy
        }_{k \text{ } 1 \text{-cells}}
      \]
  where each arrow represents the unique element of $1_1$.  Note that this includes the degenerate case $k = 0$.  At higher dimensions the diagrams become more complicated, since cells can be composed in more ways; a typical element of $T1_2$ looks like
      \[
        \xy
          % POINTS
          (-16, 0)*+{\bullet}="0";
          (0, 0)*+{\bullet}="1";
          (16, 0)*+{\bullet}="2";
          (32, 0)*+{\bullet}="3";
          (34, -1)*+{,};
          (8, 3.5)*+{\Downarrow};
          (8, -3.5)*+{\Downarrow};
          (24, 0)*+{\Downarrow};
          % ARROWS
          {\ar "0" ; "1"};
          {\ar@/^1.75pc/ "1" ; "2"};
          {\ar "1" ; "2"};
          {\ar@/_1.75pc/ "1" ; "2"};
          {\ar@/^1.25pc/ "2" ; "3"};
          {\ar@/_1.25pc/ "2" ; "3"};
        \endxy
      \]
  where each double arrow represents the unique element of $1_2$, and the single arrow on the left-hand end is a degenerate $2$-cell (i.e. an identity $2$-cell on the unique element of $1_1$).  We call a cell in $T1$ a \emph{globular pasting diagram}; for a fixed $m$ we call a cell in $T1_m$ an \emph{$m$-globular pasting diagram}.

\newpage

\section*{Acknowledgements}

  First and foremost I would like to thank my supervisor Eugenia Cheng, whose guidance and support has been invaluable at every stage of this project.  I am also grateful to Nick Gurski for fulfilling the role of supervisor during Eugenia's occasional absences.  I would like to thank Roald Koudenburg, Jonathan Elliott, Tom Athorne, Alex Corner and Ben Fuller for many useful and enlightening conversations, and Freya Massey for her encouragement and emotional support.  Finally, I thank the University of Sheffield for their financial support of my Ph.D.

\part{Algebraic definitions of weak $n$-category} \label{Part1}

\pagestyle{bookstyle}

\chapter{Penon weak $n$-categories}  \label{chap:Penon}

  This chapter concerns the definition of Penon weak $n$-category.  This was originally given in \cite{Pen99}, but we use a variant given in \cite{Bat02, CM08}.  Penon defined weak $n$-categories as the algebras for a monad on the category of \emph{reflexive} globular sets (globular sets in which each cell has a putative identity cell at the dimension above).  In \cite{CM08} Cheng and Makkai observed that, in the finite dimensional case, Penon's definition did not encompass certain well-understood examples of weak $n$-categories, such as braided monoidal categories, but that this could be remedied by using globular sets instead of reflexive globular sets.  Note that Penon originally gave his definition in the case $n = \omega$, whereas we take $n$ to be finite (this modification of the definition for finite $n$ is standard, see \cite{Lei02, CM08}).  The use of a finite value of $n$ allows us to prove coherence theorems that hold for Penon weak $n$-categories in Chapter~\ref{chap:opdefns}, most of which do not hold in the $\omega$-dimensional case, and in Chapters~\ref{chap:nerveconstr} and \ref{chap:nerveconstrn} it allows us to make a comparison between Penon weak $n$-categories and Tamsamani--Simpson weak $n$-categories, the latter only being defined for finite $n$.

  The reason for choosing to use Penon weak $n$-categories over another algebraic definition is that we are able to give an explicit description of Penon's monad, and thus of a free Penon weak $n$-category.  This was very useful when devising the nerve constructions in Chapters~\ref{chap:nerveconstr} and \ref{chap:nerveconstrn}; these constructions involve algebras that are almost free, and the construction of Penon's monad in this chapter made it possible to modify the free algebra construction in a way that would not be possible with the definitions of Batanin and Leinster discussed in Chapter~\ref{chap:opdefns}.  In spite of its unusual construction, Penon's monad is known to arise from an $n$-globular operad with contraction and system of compositions (see \cite{Bat02}), so this definition belongs to a commonly studied family of definitions of weak $n$-category.  This is discussed in more detail in Section~\ref{sect:Penonop}.

  The monad for Penon weak $n$-categories is induced by a certain adjunction; in this chapter we recall the definition, then give a new construction of the left adjoint of the adjunction.

\section{Definition of Penon weak $n$-categories}  \label{sect:Penondefn}

  In this section we recall the non-reflexive variant of Penon's definition of weak $n$-category \cite{Pen99, Bat02, CM08}.  The idea of Penon's definition is to weaken the well-understood notion of strict $n$-category by means of a ``contraction''.  To do this Penon considers ``$n$-magmas'': $n$-globular sets equipped with binary composition operations that are not required to satisfy any axioms (apart from the usual source and target conditions). He then asks when an $n$-magma is ``coherent enough'' to be considered a weak $n$-category.  To answer this question he uses the fact that every strict $n$-category has an underlying $n$-magma to compare $n$-magmas with strict $n$-categories by considering maps
    \[
      \xy
        % POINTS
        (0, 0)*+{X}="0";
        (16, 0)*+{S,}="1";
        % ARROW
        {\ar^f "0" ; "1"};
      \endxy
    \]
  where $X$ is an $n$-magma, $S$ is the underlying $n$-magma of a strict $n$-category, and $f$ preserves the $n$-magma structure.  Penon defines a notion of a contraction on such a map, which lifts identities in $S$ to equivalences in $X$, ensuring that the axioms that hold in $S$ hold up to equivalence in $X$; by analogy with contractions in the topological sense, we can think of the axioms as holding ``up to homotopy'' in $X$.

  Penon then defines a category whose objects are maps $f \colon X \rightarrow S$ as above equipped with contractions; we denote this category by $\Qn$, following the notation of Leinster~\cite{Lei02}.  An object of $\Qn$ can be thought of as consisting of an $n$-magma $X$ that can be contracted down to a strict $n$-category $S$.  There is a forgetful functor $\Qn \rightarrow \nGSet$ sending an object of $\Qn$ to the underlying $n$-globular set of its magma part.  This functor has a left adjoint, which induces a monad on $\nGSet$, and a Penon weak $n$-category is defined to be an algebra for this monad.

  We begin by recalling the definition of an $n$-magma.

  \begin{defn}  \label{defn:magma}
    An \emph{$n$-magma} (or simply \emph{magma}, when $n$ is fixed) consists of an $n$-globular set $X$ equipped with, for each $m$, $p$, with $0 \leq p < m \leq n$, a binary composition function
      \[
        \comp^m_p \colon X_m \times_{X_p} X_m \rightarrow X_m,
      \]
    where $X_m \times_{X_p} X_m$ denotes the pullback
      \[
        \xy
          % POINTS
          (0, 0)*+{X_m \times_{X_p} X_m}="XX";
          (20, 0)*+{X_m}="Xr";
          (0, -16)*+{X_m}="Xb";
          (20, -16)*+{X_p}="Xp";
          % ARROWS
          {\ar "XX" ; "Xr"};
          {\ar "XX" ; "Xb"};
          {\ar_-{t} "Xb" ; "Xp"};
          {\ar^{s} "Xr" ; "Xp"};
          % PULLBACK STUFF
          (6,-2)*{}; (6,-6)*{} **\dir{-};
          (2,-6)*{}; (6,-6)*{} **\dir{-};
        \endxy
     \]
    in $\Set$; these composition functions must satisfy the following source and target conditions:
      \begin{itemize}
        \item if $p = m - 1$, given $(a, b) \in X_m \times_{X_p} X_m$,
          \[
            s(b \comp^m_p a) = s(a), \; t(b \comp^m_p a) = t(b);
          \]
        \item if $p < m - 1$, given $(a, b) \in X_m \times_{X_p} X_m$,
          \[
            s(b \comp^m_p a) = s(b) \comp^{m - 1}_p s(a), \; t(b \comp^m_p a) = t(b) \comp^{m - 1}_p t(a).
          \]
      \end{itemize}
    A map of $n$-magmas $f \colon X \rightarrow Y$ is a map of the underlying $n$-globular sets such that, for all $m$, $p$, with $0 \leq p < m \leq n$, and for all $(a, b) \in X_m \times_{X_p} X_m$,
      \[
        f(b \comp^m_p a) = f(b) \comp^m_p f(a).
      \]
    We write $\nMag$ for the category whose objects are $n$-magmas and whose morphisms are maps of $n$-magmas.
  \end{defn}

  Observe that every strict $n$-category has an underlying $n$-magma, and we have a forgetful functor
    \[
      \nCat \longrightarrow \nMag.
    \]

  We now recall the definition of a contraction on a map of $n$-globular sets $f \colon X \rightarrow S$, where $S$ is the underlying $n$-globular set of a strict $n$-category.  Note that this definition does not require a magma structure on $X$.  We must treat dimension $n$ slightly differently, since there is no dimension $n + 1$;  to do so, we define a notion of a ``tame'' map of $n$-globular sets (the terminology is due to Leinster \cite[Definition~9.3.1]{Lei04}), which ensures that we have equalities between $n$-cells where we would normally expect contraction $(n + 1)$-cells.

  It is common to express the definition of contraction in terms of lifting conditions \cite{Bat02, Ber02, Cis07}; however, we express the definition using pullbacks of sets since this approach allows for a straightforward construction of free contractions, which we describe in the next section.

  In the following definition, $X^c_{m + 1}$ is the set of all pairs of $m$-cells requiring a contraction $(m + 1)$-cell, i.e. the set of all pairs of parallel $m$-cells on $X_m$ which are mapped by $f$ to the same $m$-cell in $S_m$.  For any $(a, a) \in X^c_{m + 1}$, we write $\gamma_m(a, a) = 1_a$, since it is these contraction cells that give us the identities in a Penon weak $n$-category.

  \begin{defn} \label{defn:contr}
    Let $f \colon X \rightarrow S$ be a map of $n$-globular sets, where $S$ is the underlying $n$-globular set of a strict $n$-category.  The map $f$ is said to be \emph{tame} if, given $a$, $b \in X_n$, if $s(a) = s(b)$, $t(a) = t(b)$, and $f_n(a) = f_n(b)$, then $a = b$.

    For each $0 \leq m < n$, define a set $X^c_{m + 1}$ by the following pullback:
      \[
        \xy
          % POINTS
          (0, 0)*+{X^c_{m + 1}}="Xc";
          (32, 0)*+{X_m}="Xr";
          (0, -16)*+{X_m}="Xb";
          (32, -16)*+{X_{m - 1} \times X_{m - 1} \times S_m.}="XXS";
          % ARROWS
          {\ar "Xc" ; "Xr"};
          {\ar "Xc" ; "Xb"};
          {\ar_-{(s, t, f_m)} "Xb" ; "XXS"};
          {\ar^{(s, t, f_m)} "Xr" ; "XXS"};
          % PULLBACK STUFF
          (6,-1)*{}; (6,-5)*{} **\dir{-};
          (2,-5)*{}; (6,-5)*{} **\dir{-};
        \endxy
     \]
    Note that when $m = 0$, we take $X_{m - 1}$ to be the terminal set.

    A \emph{contraction} $\gamma$ on a tame map $f \colon X \rightarrow S$ consists of, for each $0 \leq m < n$, a map
      \[
        \gamma_{m + 1} \colon X^c_{m + 1} \rightarrow X_{m + 1}
      \]
    such that, for all $(a, b) \in X^c_{m + 1}$,
      \begin{itemize}
        \item $s(\gamma_{m + 1}(a, b)) = a$;
        \item $t(\gamma_{m + 1}(a, b)) = b$;
        \item $f_{m + 1}(\gamma_{m + 1}(a, b)) = 1_{f_m(a)} = 1_{f_m(b)}$.
      \end{itemize}
  \end{defn}

  Note that we only ever speak of a contraction on a tame map; thus, whenever we state that a map is equipped with a contraction, the map is automatically assumed to be tame.  One way to think about this is to say that we do require a contraction $(n + 1)$-cell for each pair of $n$-cells in $X^c_n$, and the only $(n + 1)$-cells in $X$ are equalities.

  Penon does not use the term ``contraction''; instead, he uses the word ``stretching'' (``\'etirement'').  This may appear somewhat counterintuitive, since the two words seem antonymous.  However, Penon's terminology comes from viewing the same situation from a different point of view; rather than seeing $S$ as a contracted version of $X$, Penon sees $X$ as a stretched-out version of $S$.  In the case in which $X$ has a magma structure, Penon refers to a such a map as a ``categorical stretching'' (``\'etirement cat\'egorique'').  Categorical stretchings form a category $\Qn$, which we now define.

  \begin{defn}
    The \emph{category of $n$-categorical stretchings} $\Qn$ is the category with
      \begin{itemize}
        \item objects: an object of $\Qn$ consists of an $n$-magma $X$, a strict $n$-category $S$, and a map of $n$-magmas
      \[
        \xy
          % POINTS
          (0, 0)*+{X}="X";
          (0, -16)*+{S}="S";
          % ARROWS
          {\ar_f "X" ; "S"};
        \endxy
     \]
          equipped with a contraction $\gamma$;
        \item morphisms: a morphism in $\Qn$ is a commuting square
     \[
        \xy
          % POINTS
          (0, 0)*+{X}="X";
          (16, 0)*+{Y}="Y";
          (0, -16)*+{S}="S";
          (16, -16)*+{R}="R";
          % ARROWS
          {\ar^u "X" ; "Y"};
          {\ar_f "X" ; "S"};
          {\ar^g "Y" ; "R"};
          {\ar_v "S" ; "R"};
        \endxy
     \]
     in $\nMag$ such that
              \begin{itemize}
                \item $v$ is a map of strict $n$-categories;
                \item writing $\gamma$ for the contraction on the map $f$ and $\delta$ for the contraction on the map $g$, for all $0 \leq m < n$, and $(a, b) \in X^c_{m + 1}$, we have
                  \[
                    u(\gamma_m(a, b)) = \delta_m(u(a), u(b)).
                  \]
              \end{itemize}
     We denote such a morphism by $(u, v)$.
      \end{itemize}
  \end{defn}

  For an object
      \[
        \xy
          % POINTS
          (0, 0)*+{X}="X";
          (0, -16)*+{S}="S";
          % ARROWS
          {\ar_f "X" ; "S"};
        \endxy
     \]
  of $\Qn$, we refer to $X$ as its \emph{magma part} and $S$ as its \emph{strict $n$-category part}.  There is a forgetful functor
      \[
        \xy
          % POINTS
            (0, 0)*+{U \colon \Qn}="Qn";
            (30, 0)*+{\nGSet}="nGSet";
            (4, -8)*+{X}="X";
            (30, -16)*+{X}="Xr";
            (4, -24)*+{S}="S";
          % ARROWS
            {\ar "Qn" ; "nGSet"};
            {\ar_x "X" ; "S"};
            {\ar@{|->} (10, -16) ; (24, -16)};
        \endxy
      \]
  and this functor has a left adjoint $F \colon \nGSet \rightarrow \Qn$.  Penon gives a construction of this left adjoint in the second part of \cite{Pen99}.

  \begin{defn}  \label{defn:Penonncat}
    Let $P$ be the monad on $\nGSet$ induced by the adjunction $F \ladj U$.  A Penon weak $n$-category is defined to be an algebra for the monad $P$, and $P\Alg$ is the category of Penon weak $n$-categories.
  \end{defn}

\section{Construction of Penon's left adjoint}  \label{sect:ladj}

  In~\cite{Pen99} Penon gave a construction of the left adjoint $F$, mentioned above, using computads (which he called ``polygraphs'', terminology due to Burroni \cite{Bur93}).  In this section we give a new, alternative construction of the functor $F$, using a monad interleaving construction similar to that used by Cheng to construct the operad for Leinster weak $\omega$-categories~\cite{Che10} (see also \cite{HDM06}, which describes a more general interleaving argument).  There are two reasons for giving this alternative construction:  in Section~\ref{sect:Penonop} we use it to prove that there is an $n$-globular operad whose algebras are Penon weak $n$-categories, and it also gives us notation for all the individual cells in a Penon weak $n$-category, a fact which we use in Section~\ref{sect:Segal}.

  The first step of our construction is the same as that of Penon.  There is a forgetful functor $U_T \colon \nCat \rightarrow \nGSet$ (the notation $U_T$ is used because $\nCat = T\Alg$), and we write $\Rn$ for the comma category
    \[
      \nGSet \downarrow U_T.
    \]
  Thus an object of $\Rn$ is a map of $n$-globular sets $f \colon X \rightarrow S$, where $S$ is underlying $n$-globular set of a strict $n$-category.  Recall that an object of $\Qn$ consists of an object $f \colon X \rightarrow S$ of $\Rn$ equipped with a magma structure on $X$ and a contraction on $f$, so we can factorise the forgetful functor $U \colon \Qn \rightarrow \nGSet$ as
      \[
        \xy
          % POINTS
          (0, 0)*+{\Qn}="Qn";
          (24, 0)*+{\nGSet}="nG";
          (12, -10)*+{\Rn}="Rn";
          % ARROWS
          {\ar^-U "Qn" ; "nG"};
          {\ar_-W "Qn" ; "Rn"};
          {\ar_-V "Rn" ; "nG"};
        \endxy
     \]
  where $W$ forgets the magma and contraction structures, and $V$ sends an object $f \colon X \rightarrow S$ of $\Rn$ to its $n$-globular set part $X$.  To construct a left adjoint to $U$ we construct left adjoints to $V$ and $W$ separately.  Constructing a left adjoint to $V$ is straightforward: it sends an $n$-globular set $X$ to $\eta^T_X \colon X \rightarrow TX$.

  We now explain the interleaving argument, which is used to construct the left adjoint to $W$; this is where our construction differs from that of Penon.  In an object of $\Qn$ the magma structure and contraction structure exist independently of one another, and there are no axioms governing their interaction.  Thus, we can define categories
    \begin{itemize}
      \item $\MagR_n$, whose objects are objects $f \colon X \rightarrow S$ of $\Rn$, together with a magma structure on $X$, which is respected by $f$;
      \item $\ContrR_n$, whose objects are objects $f \colon X \rightarrow S$ of $\Rn$, together with a contraction.
    \end{itemize}
  The maps in these categories are required to respect the magma and contraction structures respectively.  We can write the category $\Qn$ as the pullback
    \[
      \xy
        % POINTS
        (0, 0)*+{\Qn}="0,0";
        (16, 0)*+{\MagR_n}="1,0";
        (0, -16)*+{\ContrR_n}="0,1";
        (16, -16)*+{\Rn,}="1,1";
        % ARROWS
        {\ar "0,0" ; "1,0"};
        {\ar "0,0" ; "0,1"};
        {\ar^-N "1,0" ; "1,1"};
        {\ar_-D "0,1" ; "1,1"};
        % PULLBACK STUFF
        (6,-1)*{}; (6,-5)*{} **\dir{-};
        (2,-5)*{}; (6,-5)*{} **\dir{-};
      \endxy
    \]
  where $N$ and $D$ are the forgetful functors that forget the magma and contraction structures respectively.  The functor $N$ has a left adjoint $M$, which freely adds binary composites, and the functor $D$ has a left adjoint $C$, which freely adds contraction cells.  We wish to combine these left adjoints to obtain a left adjoint to $W \colon \Qn \rightarrow \Rn$, which adds both the magma and contraction structures freely.  However, we can't just add all of one structure, then all of the other, since with this approach we do not end up with enough cells.  If we add a contraction structure first, followed by a magma structure, we do not get any contraction cells whose sources or targets are composites, such as unitors and associators.  If we add a magma structure first, followed by a contraction structure, we do not get any composites involving contraction cells.

  We therefore ``interleave'' the structures, one dimension at a time.  To do so, we make the following observations:
    \begin{itemize}
      \item when we add contraction cells freely, the contraction $m$-cells depend only on cells at dimension $m - 1$;
      \item when we add composites freely, the composites of $m$-cells depend only on cells at dimensions $m$ and below.
    \end{itemize}
  This means that we can add the contraction cells and composites one dimension at a time; starting with dimension $1$, we first add contraction cells freely, then add composites freely; we then move up to the next dimension and repeat the process.

  To formalise this, we give separate dimension-by-dimension constructions of both the free contraction structure and the free magma structure, then interleave these constructions by lifting them to the case in which we have both a magma structure and a contraction structure.  Thus we obtain a left adjoint to the forgetful functor $W \colon \Qn \rightarrow \Rn$; by composing this with the left adjoint to the functor $V \colon \Rn \rightarrow \nGSet$ we obtain the left adjoint $F$ to $U \colon \Qn \rightarrow \nGSet$.

  Owing to the length of this construction, this section is divided into four subsections.  In Subsection~\ref{subsect:HladjV} we construct the left adjoint to $V$.  In Subsections~\ref{subsect:contr} and ~\ref{subsect:magma} we give dimension-by-dimension constructions of the left adjoints to $D$ and $N$ respectively; these describe the free contraction structure and free magma structure.  Finally, in Subsection~\ref{subsect:interleaving} we then interleave these constructions to give a left adjoint to $W$.

  \subsection{Left adjoint to $V$}  \label{subsect:HladjV}
  We begin by describing $\Rn$ explicitly, in order to establish some terminology, and to make clear its connection with $\Qn$.

  \begin{defn}
    Write $\Rn$ to denote the comma category $\nGSet \downarrow U_T$; explicitly, $\Rn$ is the category with
      \begin{itemize}
        \item objects: an object of $\Rn$ consists of an $n$-globular set $X$, a strict $n$-category $S$, and a map of $n$-globular sets
      \[
        \xy
          % POINTS
          (0, 0)*+{X}="X";
          (0, -16)*+{S}="S";
          % ARROWS
          {\ar_f "X" ; "S"};
        \endxy
     \]
        \item morphisms: a morphism in $\Rn$ is a commuting square
     \[
        \xy
          % POINTS
          (0, 0)*+{X}="X";
          (16, 0)*+{Y}="Y";
          (0, -16)*+{S}="S";
          (16, -16)*+{R}="R";
          % ARROWS
          {\ar^u "X" ; "Y"};
          {\ar_f "X" ; "S"};
          {\ar^g "Y" ; "R"};
          {\ar_v "S" ; "R"};
        \endxy
     \]
     in $\nGSet$ such that $v$ is a map of strict $n$-categories.  We denote such a morphism by $(u, v)$.
      \end{itemize}
  \end{defn}

  As in the case of $\Qn$, for an object
      \[
        \xy
          % POINTS
          (0, 0)*+{X}="X";
          (0, -16)*+{S}="S";
          % ARROWS
          {\ar_f "X" ; "S"};
        \endxy
     \]
  we refer to $X$ as its \emph{$n$-globular set part} and $S$ as its \emph{strict $n$-category part}.

  We have a forgetful functor $W \colon \Qn \rightarrow \Rn$, which forgets the contraction and $n$-magma structures but leaves the underlying map of $n$-globular sets unchanged, and a forgetful functor $V \colon \Rn \rightarrow \nGSet$, defined by
    \[
      V(\xymatrix{ X \ar[r]^{f} & S }) = X;
    \]
  these compose to give $V \comp W = U$.  We construct left adjoints to $V$ and $W$ separately, then compose these to obtain the left adjoint to $U$.  We begin with the construction of the left adjoint to $V$; we do this in more generality than we require, since this construction is valid for any monad $T$.

  \begin{defn}
    Let $T$ be a monad on a category $\mathcal{C}$, and write $U_T \colon T\Alg \rightarrow \mathcal{C}$ for the forgetful functor that sends a $T$-algebra to its underlying object in $\mathcal{C}$.  Define a functor $H \colon \mathcal{C} \rightarrow \mathcal{C}/U_T$ as follows:
      \begin{itemize}
        \item on objects: for $X \in \nGSet$,
          \[
            H(X) = (\xymatrix{ X \ar[r]^{\eta^T_X} & TX }),
          \]
          where $TX$ has the structure of the free $T$-algebra on $X$;
        \item on morphisms: for $f \colon X \rightarrow Y$ in $\mathcal{C}$, $Hf = (f, Tf)$.
      \end{itemize}
  \end{defn}

  \begin{prop} \label{prop:HladjV}
    Write $V \colon \mathcal{C}/U_T \rightarrow \mathcal{C}$ for the forgetful functor defined by, for an object $f \colon X \rightarrow S$ of $\mathcal{C}/U_T$, where $S$ has a $T$-algebra structure $\theta: TS \rightarrow S$,
    \[
      V(\xymatrix{ X \ar[r]^{f} & S }) = X.
    \]
    Then there is an adjunction $H \ladj V$.
  \end{prop}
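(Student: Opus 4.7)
The plan is to establish the adjunction by exhibiting an explicit natural bijection
\[
  \mathcal{C}/U_T\bigl(HX,\, (f\colon Y\to S)\bigr) \;\cong\; \mathcal{C}(X, Y) \;=\; \mathcal{C}(X, V(f\colon Y\to S)),
\]
using the free/forgetful adjunction $F_T \ladj U_T$ (where $F_TX = (TX, \mu^T_X)$) as the key input. A morphism in $\mathcal{C}/U_T$ from $HX = (\eta^T_X \colon X \to TX)$ to $(f \colon Y \to S)$, with $S$ carrying algebra structure $\theta \colon TS \to S$, is by definition a pair $(u, v)$ with $u \colon X \to Y$ in $\mathcal{C}$, $v \colon TX \to S$ a $T$-algebra morphism (since $HX$ carries the \emph{free} algebra structure on $TX$), and satisfying $v \circ \eta^T_X = f \circ u$.

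My first step is to define the bijection in both directions. From the left-hand side, send $(u, v)$ to its top edge $u$. In the other direction, given $u \colon X \to Y$, set $v := \theta \circ Tf \circ Tu$; this is a $T$-algebra morphism because it is the adjunct of $f \circ u \colon X \to U_T S$ under $F_T \ladj U_T$, i.e.\ the unique algebra morphism $TX \to S$ extending $f \circ u$. I then verify the square commutes:
\[
  v \circ \eta^T_X \;=\; \theta \circ Tf \circ Tu \circ \eta^T_X \;=\; \theta \circ \eta^T_S \circ f \circ u \;=\; f \circ u,
\]
using naturality of $\eta^T$ and the algebra unit law $\theta \circ \eta^T_S = \id_S$.

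Next I check the two assignments are mutually inverse. One composite is trivially the identity on $u$. For the other, starting from $(u, v)$, I must show $v = \theta \circ Tf \circ Tu$; but both $v$ and $\theta \circ Tf \circ Tu$ are algebra morphisms $TX \to S$ satisfying the same equation when precomposed with $\eta^T_X$ (namely $f \circ u$), so by the uniqueness clause in the universal property of the free algebra $(TX, \mu^T_X)$ they agree.

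Finally I verify naturality of the bijection in $X$ and in $(f \colon Y \to S)$. Both are straightforward: naturality in $X$ reduces to functoriality of $T$ and naturality of $\eta^T$; naturality in the codomain reduces to the definition of morphism in $\mathcal{C}/U_T$ together with uniqueness of algebra extensions. The only step requiring any care is the verification that the constructed $v$ is an algebra morphism and makes the square commute, but this is immediate from the construction of $v$ as an adjunct under $F_T \ladj U_T$; there is no substantive obstacle. Equivalently, one could package the proof via unit and counit, taking $\eta_X = \id_X$ and $\varepsilon_{(f\colon X \to S)} = (\id_X, \theta \circ Tf)$, and verify the triangle identities by the same uniqueness argument.
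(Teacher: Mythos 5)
Your proof is correct and rests on the same content as the paper's: the counit you name, $(\id_X,\theta\circ Tf)$, is exactly the one the paper constructs, and the commutativity of the square follows in both cases from naturality of $\eta^T$ and the algebra unit law. You package the argument as a hom-set bijection whose inverse is forced by the universal property of the free algebra $F_TX$, whereas the paper writes down the unit and counit and checks the triangle identities directly; these are equivalent formulations and your explicit appeal to $F_T\ladj U_T$ for uniqueness is, if anything, the cleaner way to organise the verification.
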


  \begin{proof}
    First, we define the unit $\alpha \colon 1 \Rightarrow VH$ and the counit $\beta \colon HV \Rightarrow 1$.  For $X \in \mathcal{C}$,
      \[
        \alpha_X = \id_X \colon X \longrightarrow VHX = X.
      \]
    For $f \colon X \rightarrow S$ in $\mathcal{C}/U_T$, where $S$ has a $T$-algebra denoted by $\theta \colon TS \rightarrow S$, observe that $\theta$ is a map of $T$-algebras since, by the algebra axioms, the diagram
      \[
        \xy
          % POINTS
          (0, 0)*+{T^2 S}="TTS";
          (16, 0)*+{TS}="TSr";
          (0, -16)*+{TS}="TSb";
          (16, -16)*+{S}="S";
          % ARROWS
          {\ar^{T\theta} "TTS" ; "TSr"};
          {\ar_{\mu^T_S} "TTS" ; "TSb"};
          {\ar^{\theta} "TSr" ; "S"};
          {\ar_{\theta} "TSb" ; "S"};
        \endxy
      \]
    commutes.  The component of $\beta$ at $f \colon X \rightarrow S$, denoted $\beta_f$, is given by the commuting diagram
      \[
        \xy
          % POINTS
          (0, 0)*+{X}="Xl";
          (28, 0)*+{X}="Xr";
          (14, -14)*+{S}="Sm";
          (0, -28)*+{TX}="TX";
          (14, -28)*+{TS}="TS";
          (28, -28)*+{S,}="S";
          % ARROWS
          {\ar^{\id_X} "Xl" ; "Xr"};
          {\ar_{\eta^T_X} "Xl" ; "TX"};
          {\ar^{f} "Xl" ; "Sm"};
          {\ar^f "Xr" ; "S"};
          {\ar^{\eta^T_S} "Sm" ; "TS"};
          {\ar^{\id_S} "Sm" ; "S"};
          {\ar_{Tf} "TX" ; "TS"};
          {\ar_{\theta} "TS" ; "S"};
        \endxy
     \]
    as a map in $\mathcal{C}/U_T$.  This diagram commutes since the left-hand square is a naturality square for $\eta$ and the bottom-right triangle is the unit axiom for the algebra $\theta$; the remaining square commutes trivially.

    We now show that $\alpha$ and $\beta$ satisfy the triangle identities.  First, consider
      \[
        \xymatrix{
          V \ar[rr]^{\alpha V} \ar[drr]_{1_V} && VHV \ar[d]^{V\beta}  \\
          && V.
                 }
      \]
    For $f \colon X \rightarrow S$ in $\Rn$,
      \[
        V(\xymatrix{ X \ar[r]^{f} & S }) = X = VHV(\xymatrix{ X \ar[r]^{f} & S }),
      \]
    $\alpha_{X} = 1_X$, and $U\beta_{f} = 1_X$, so this diagram commutes.

    Now consider
      \[
        \xymatrix{
          H \ar[rr]^{H\alpha} \ar[drr]_{1_H} && HVH \ar[d]^{\beta H}  \\
          && H.
                 }
      \]
    For $X \in \mathcal{C}$,
      \[
        H(X) = (\xymatrix{ X \ar[r]^{\eta^T_X} & TX }) = HVH(X),
      \]
    $H\alpha_X = H\id_X = (\id_X, \id_{TX})$, and $\beta_{HX} = (\id, \mu_X \comp T\eta_X) = (\id_X, \id_{TX})$, so this diagram commutes.
  \end{proof}

  This gives us the left adjoint to the functor $V \colon \Rn \rightarrow \nGSet$.

\subsection{Free contraction structure} \label{subsect:contr}

  We now construct the free contraction on an object of $\Rn$.  In order to be able to use the construction in the interleaving argument in Section~\ref{subsect:interleaving}, we give the construction one dimension at a time.  In order to do so, we define, for each $0 \leq k \leq n + 1$, a category $\ContrR_k$, an object of which consists of an object of $\Rn$ equipped with a contraction up to dimension $k$. (Observe that $\ContrR_0 = \Rn$, and note that a ``contraction at dimension $n + 1$'' refers to the tameness condition at dimension $n$.)  We then have, for each $0 < k \leq n + 1$, a forgetful functor
    \[
      D_k \colon \ContrR_k \rightarrow \ContrR_{k - 1}.
    \]
  We construct a left adjoint to each $D_k$, which freely adds a contraction structure at dimension $k$, leaving all other dimensions unchanged.

  \begin{defn}  \label{defn:kcontr}
    Let $f \colon X \rightarrow S$ be a map of $n$-globular sets, where $S$ is the underlying $n$-globular set of a strict $n$-category, and let $0 \leq k \leq n$.  Recall from Definition~\ref{defn:contr} that, for each $0 \leq m < n$, the set $X^c_{m + 1}$ is defined by the pullback
      \[
        \xy
          % POINTS
          (0, 0)*+{X^c_{m + 1}}="Xc";
          (32, 0)*+{X_m}="Xr";
          (0, -16)*+{X_m}="Xb";
          (32, -16)*+{X_{m - 1} \times X_{m - 1} \times S_m.}="XXS";
          % ARROWS
          {\ar "Xc" ; "Xr"};
          {\ar "Xc" ; "Xb"};
          {\ar_-{(s, t, f_m)} "Xb" ; "XXS"};
          {\ar^{(s, t, f_m)} "Xr" ; "XXS"};
          % PULLBACK STUFF
          (6,-1)*{}; (6,-5)*{} **\dir{-};
          (2,-5)*{}; (6,-5)*{} **\dir{-};
        \endxy
     \]
    where, when $m = 0$, we take $X_{m - 1}$ to be the terminal set.

    A $k$-contraction $\gamma$ on the map $f$ consists of, for each $0 \leq m < k$, a map
      \[
        \gamma_{m + 1} \colon X^c_{m + 1} \rightarrow X_{m + 1}
      \]
    such that, for $(a, b) \in X^c_m$,
      \[
        s(\gamma_{m + 1}(a, b)) = a,
      \]
      \[
        t(\gamma_{m + 1}(a, b)) = b,
      \]
      \[
        f_{m + 1}(\gamma_{m + 1}(a, b)) = \id_{f(a)}.
      \]
  \end{defn}

  Note that having an $n$-contraction on a map $f$ is not the same as having contraction on $f$;  for a contraction on $f$, we also require that $f_n$ satisfies the condition that, for all $a$, $b \in X_n$, if $s(a) = s(b)$, $t(a) = t(b)$, and $f_n(a) = f_n(b)$, then $a = b$.  This condition can be thought as having contraction cells at dimension $n + 1$, but the only $(n + 1)$-cells are equalities.

  \begin{defn}
    For each $0 \leq k \leq n$, define a category $\ContrR_k$, with
      \begin{itemize}
        \item objects: an object of $\ContrR_k$ consists of an $n$-globular set $X$, a strict $n$-category $S$, and a map of $n$-globular sets
     \[
        \xy
          % POINTS
          (0, 0)*+{X}="X";
          (0, -16)*+{S}="S";
          % ARROWS
          {\ar_f "X" ; "S"};
        \endxy
     \]
            equipped with a $k$-contraction $\gamma$;
        \item morphisms: a morphism in $\ContrR_k$ is a commuting square
     \[
        \xy
          % POINTS
          (0, 0)*+{X}="X";
          (16, 0)*+{Y}="Y";
          (0, -16)*+{S}="S";
          (16, -16)*+{R}="R";
          % ARROWS
          {\ar^u "X" ; "Y"};
          {\ar_f "X" ; "S"};
          {\ar^g "Y" ; "R"};
          {\ar_v "S" ; "R"};
        \endxy
     \]
     in $\nGSet$ such that
              \begin{itemize}
                \item $v$ is a map of strict $n$-categories;
                \item writing $\gamma$ for the contraction on the map $f$ and $\delta$ for the contraction on the map $g$, for all $0 < m \leq k$, and $(a, b) \in X^c_{m}$, we have
                  \[
                    u(\gamma_m(a, b)) = \delta_m(u(a), u(b)).
                  \]
              \end{itemize}
      \end{itemize}
    Define a category $\ContrR_{n + 1}$, with
      \begin{itemize}
        \item objects: an object of $\ContrR_{n + 1}$ consists of an $n$-magma $X$, a strict $n$-category $S$, and a map of $n$-magmas
     \[
        \xy
          % POINTS
          (0, 0)*+{X}="X";
          (0, -16)*+{S}="S";
          % ARROWS
          {\ar_f "X" ; "S"};
        \endxy
     \]
            equipped with a contraction $\gamma$;
        \item morphisms: a morphism in $\ContrR_{n + 1}$ is a commuting square
     \[
        \xy
          % POINTS
          (0, 0)*+{X}="X";
          (16, 0)*+{Y}="Y";
          (0, -16)*+{S}="S";
          (16, -16)*+{R}="R";
          % ARROWS
          {\ar^u "X" ; "Y"};
          {\ar_f "X" ; "S"};
          {\ar^g "Y" ; "R"};
          {\ar_v "S" ; "R"};
        \endxy
     \]
     in $\nGSet$ such that
              \begin{itemize}
                \item $v$ is a map of strict $n$-categories;
                \item writing $\gamma$ for the contraction on the map $f$ and $\delta$ for the contraction on the map $g$, for all $0 < m \leq n$, and $(a, b) \in X^c_{m}$, we have
                  \[
                    u(\gamma_m(a, b)) = \delta_m(u(a), u(b)).
                  \]
              \end{itemize}
      \end{itemize}
  \end{defn}

  For all $0 < k \leq n + 1$, we have a forgetful functor
    \[
      D_k \colon \ContrR_k \rightarrow \ContrR_{k - 1};
    \]
  for $0 < k \leq n$, this functor forgets the contraction at dimension $k$, and for \hbox{$k = n + 1$} it is the inclusion functor of the subcategory $\ContrR_{n + 1}$ into $\ContrR_n$.

  We now define a putative left adjoint $C_k$ to the functor $D_k$;  we will then prove that this functor is left adjoint to $D_k$ in Proposition~\ref{prop:CladjD}.

  \begin{defn}  \label{defn:freekcontr}
    For each $k$, $0 < k \leq n$, we define a functor
      \[
        C_k \colon \ContrR_{k - 1} \rightarrow \ContrR_k.
      \]
    We begin by giving the action of $C_k$ on objects.  Let
       \[
        \xy
          % POINTS
          (0, 0)*+{X}="X";
          (0, -16)*+{S}="S";
          % ARROWS
          {\ar_f "X" ; "S"};
        \endxy
       \]
    be an object of $\ContrR_{k - 1}$, and write $\gamma$ for its $(k - 1)$-contraction (assuming $k > 1$; if $k = 1$, we have $\ContrR_{k - 1} = \ContrR_{0} = \Rn$, so there is no contraction on $f$).  We define an object
       \[
        \xy
          % POINTS
          (0, 0)*+{\tilde{X}}="X";
          (0, -16)*+{S}="S";
          % ARROWS
          {\ar_{\tilde{f}} "X" ; "S"};
        \endxy
       \]
    of $\ContrR_k$, with $k$-contraction $\tilde{\gamma}$.  The $n$-globular set $\tilde{X}$ is defined by:
      \begin{itemize}
        \item $\tilde{X}_j = X_j$ for all $j \neq k$;
        \item $\tilde{X}_k = X_k \amalg X^c_{k}$,
        \item for $(x, y) \in X^c_{k} \subseteq \tilde{X}_k$, $s(x, y) = x$, $t(x, y) = y$,
        \item for all other cells, sources and targets are inherited from $X$.
      \end{itemize}
    The map $\tilde{f} \colon \tilde{X} \rightarrow S$ is defined by
      \begin{itemize}
        \item $\tilde{f}_j = f_j$ for all $j \neq k$;
        \item $\tilde{f}_k \colon \tilde{X}_k \rightarrow S_k$ is defined by
          \begin{itemize}
            \item $\tilde{f}_k(\alpha) = f_k(\alpha)$ for $\alpha \in X_k \subseteq \tilde{X}_k$;
            \item $\tilde{f}_k(x, y) = 1_{f_{k - 1}(x)}$ for $(x, y) \in X^c_{k} \subseteq \tilde{X}_k$.
          \end{itemize}
      \end{itemize}
    The $k$-contraction $\tilde{\gamma}$ on $\tilde{f}$ is defined by
      \begin{itemize}
        \item $\tilde{\gamma}_m = \gamma^{k - 1}_m$ for all $m < k - 1$;
        \item $\tilde{\gamma}_{k - 1} \colon X^c_{k} \rightarrow \tilde{X}_k$ is the inclusion into the coproduct $\tilde{X}_k = X_k \amalg X^c_{k}$.
      \end{itemize}
    This defines the action of $C_k$ on objects.

    We now give the action of $C_k$ on morphisms.  Let
      \[
        \xy
          % POINTS
          (0, 0)*+{X}="X";
          (16, 0)*+{Y}="Y";
          (0, -16)*+{S}="S";
          (16, -16)*+{R}="R";
          % ARROWS
          {\ar^u "X" ; "Y"};
          {\ar_f "X" ; "S"};
          {\ar^g "Y" ; "R"};
          {\ar_v "S" ; "R"};
        \endxy
      \]
    be a morphism in $\ContrR_{k - 1}$.  Define a morphism
      \[
        \xy
          % POINTS
          (0, 0)*+{\tilde{X}}="X";
          (16, 0)*+{\tilde{Y}}="Y";
          (0, -16)*+{S}="S";
          (16, -16)*+{R}="R";
          % ARROWS
          {\ar^{\tilde{u}} "X" ; "Y"};
          {\ar_{\tilde{f}} "X" ; "S"};
          {\ar^{\tilde{g}} "Y" ; "R"};
          {\ar_{v} "S" ; "R"};
        \endxy
      \]
    in $\ContrR_k$, where $\tilde{u}$ is defined by
      \begin{itemize}
        \item $\tilde{u}_j = u_j$ for all $j \neq k$;
        \item $\tilde{u}_k \colon \tilde{X}_k \rightarrow \tilde{Y}_k$ is given by
          \begin{itemize}
            \item $\tilde{u}_k(\alpha) = u_k(\alpha)$ for $\alpha \in X_k \subseteq \tilde{X}_k$;
            \item $\tilde{u}_k(x, y) = (u_{k - 1}(x), u_{k - 1}(y))$ for $(x, y) \in X^c_{k} \subseteq \tilde{X}_k$.
          \end{itemize}
      \end{itemize}
    This defines the action of $C_k$ on morphisms.
  \end{defn}

  \begin{prop}  \label{prop:CladjD}
    For all $0 < k \leq n$, there is an adjunction $C_k \ladj D_k$.
  \end{prop}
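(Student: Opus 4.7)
The plan is to exhibit a unit–counit adjunction explicitly, using the fact that the cells freshly added at dimension $k$ by $C_k$ are "free" in the sense that they are indexed precisely by the set $X^c_k$ of pairs of parallel $(k-1)$-cells sent by $f$ to the same $k$-cell in $S$, and that any map in $\ContrR_k$ must send contraction $k$-cells to contraction $k$-cells.

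First I would define the unit $\eta \colon 1_{\ContrR_{k-1}} \Rightarrow D_k C_k$ at an object $f \colon X \rightarrow S$. Since $\tilde{X}$ agrees with $X$ at all dimensions $j \neq k$ and $\tilde{X}_k = X_k \amalg X^c_k$, I take $\eta_f$ to be the pair $(\iota, \id_S)$, where $\iota_j = \id$ for $j \neq k$ and $\iota_k \colon X_k \hookrightarrow \tilde{X}_k$ is the coproduct inclusion of the first summand. One checks directly that $\iota$ respects sources, targets, $f$, and the $(k-1)$-contraction. Then I would define the counit $\varepsilon \colon C_k D_k \Rightarrow 1_{\ContrR_k}$ at an object $g \colon Y \rightarrow R$ with $k$-contraction $\delta$. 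At dimensions $j \neq k$, take the identity; at dimension $k$, define $\varepsilon_g$ on $Y_k \amalg Y^c_k$ to be $\id_{Y_k}$ on the first summand and $\delta_k \colon Y^c_k \to Y_k$ on the second. The source, target, and $\tilde{g} = g$ conditions hold by the definition of a $k$-contraction (namely, $s(\delta_k(x,y)) = x$, $t(\delta_k(x,y)) = y$, and $g_k(\delta_k(x,y)) = 1_{g_{k-1}(x)}$), and the $k$-contraction axiom $\varepsilon_g(\tilde{\gamma}_k(x,y)) = \delta_k(x,y)$ holds by construction since $\tilde{\gamma}_k$ is the coproduct inclusion.

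Next I would verify naturality of $\eta$ and $\varepsilon$, which is immediate: at dimensions $j \neq k$ the components are identities and hence commute with anything; at dimension $k$, naturality of $\eta$ reduces to the tautology that the inclusion of $Y_k$ into $Y_k \amalg Y^c_k$ commutes with the action of $\tilde{u}_k$ on the $Y_k$-summand, and naturality of $\varepsilon$ reduces, on the $X^c_k$-summand, to the hypothesis that any morphism in $\ContrR_k$ satisfies $u(\gamma_k(a,b)) = \delta_k(u(a), u(b))$. Finally I would check the two triangle identities $D_k \varepsilon \circ \eta D_k = 1_{D_k}$ and $\varepsilon C_k \circ C_k \eta = 1_{C_k}$. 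The first is immediate at all dimensions $\neq k$, and at dimension $k$ it says that the composite $Y_k \hookrightarrow Y_k \amalg Y^c_k \xrightarrow{\varepsilon_g} Y_k$ is $\id_{Y_k}$, which holds by construction. The second, at dimension $k$ applied to the object $C_k(f)$, requires the composite $\tilde{X}_k = X_k \amalg X^c_k \hookrightarrow \tilde{X}_k \amalg \tilde{X}^c_k \to \tilde{X}_k$ to be the identity, which follows because the counit acts as the identity on the first summand and applies the freshly defined $\tilde{\gamma}_k$ (which is precisely the inclusion into $\tilde{X}_k$) on the second.

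The case $k = n+1$ deserves a brief separate treatment, since then $D_{n+1}$ is the inclusion of the full subcategory of tame objects; here $C_{n+1}$ must enforce tameness by quotienting $X_n$ by the equivalence relation identifying parallel $n$-cells that agree under $f_n$. The adjunction in this case is constructed the same way, with the unit being the quotient map and the counit the identity; the triangle identities are equally straightforward. The main obstacle throughout is purely bookkeeping: keeping track of the different behaviour of the construction at dimensions below, equal to, and above $k$, and checking that each piece of structure (source, target, the map to $S$, and the lower-dimensional contraction) is preserved by the various inclusions and quotients. No genuinely difficult step arises, because the freeness of $C_k$ is manifest in the coproduct definition $\tilde{X}_k = X_k \amalg X^c_k$.
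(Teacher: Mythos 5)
Your proposal is correct and follows essentially the same route as the paper: the unit is the coproduct inclusion $X_k \hookrightarrow X_k \amalg X^c_k$, the counit is the identity on the first summand and the given contraction on the second, and the triangle identities are verified dimensionwise exactly as in the paper's proof (with the same care distinguishing the two kinds of freely added contraction cells in the second identity). The only difference is that you fold in remarks on the $k = n+1$ case, which the paper handles in a separate proposition and which is not needed for the stated range $0 < k \leq n$.
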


  \begin{proof}
    We first define the unit $\eta \colon 1 \Rightarrow D_kC_k$, and counit $\epsilon: C_kD_k \Rightarrow 1$.

    Let
       \[
        \xy
          % POINTS
          (0, 0)*+{X}="X";
          (0, -16)*+{S}="S";
          % ARROWS
          {\ar_f "X" ; "S"};
        \endxy
       \]
    be an object of $\ContrR_{k - 1}$, with $(k - 1)$-contraction $\gamma$ (assuming $k > 1$; if $k = 1$, we have $\ContrR_{k - 1} = \ContrR_{0} = \Rn$, so there is no contraction on $f$).  Applying $D_kC_k$ gives
       \[
        \xy
          % POINTS
          (0, 0)*+{\tilde{X}}="X";
          (0, -16)*+{S}="S";
          % ARROWS
          {\ar_{\tilde{f}} "X" ; "S"};
        \endxy
       \]
    in $\ContrR_{k - 1}$ with the same $(k - 1)$-contraction.  The corresponding component of the unit $\eta$ is given by the map
      \[
        \xy
          % POINTS
          (0, 0)*+{X}="X";
          (16, 0)*+{\tilde{X}}="Xr";
          (0, -16)*+{S}="S";
          (16, -16)*+{S}="Sr";
          % ARROWS
          {\ar^{\eta_X} "X" ; "Xr"};
          {\ar_{f} "X" ; "S"};
          {\ar^{\tilde{f}} "Xr" ; "Sr"};
          {\ar_{\id_S} "S" ; "Sr"};
        \endxy
      \]
    where $\eta_X$ is defined by
      \[
        (\eta_X)_j =
            \left\{
              \begin{array}{ll}
              1_{X_j} & \text{if } j \neq k, \\
              \text{the inclusion } X_j \hookrightarrow X_j \amalg X^c_{j} & \text{if } j = k.
              \end{array}
            \right.
      \]

    Now let
       \[
        \xy
          % POINTS
          (0, 0)*+{X}="X";
          (0, -16)*+{S}="S";
          % ARROWS
          {\ar_f "X" ; "S"};
        \endxy
       \]
    be an object of $\ContrR_{k}$, with $k$-contraction $\gamma$.  Applying $C_kD_k$ gives
       \[
        \xy
          % POINTS
          (0, 0)*+{\tilde{X}}="X";
          (0, -16)*+{S}="S";
          % ARROWS
          {\ar_{\tilde{f}} "X" ; "S"};
        \endxy
       \]
    in $\ContrR_k$ with $k$-contraction $\tilde{\gamma}$, which is equal to $\gamma$ at all dimensions except $k$.  The corresponding component of the counit $\epsilon$ is given by the map
      \[
        \xy
          % POINTS
          (0, 0)*+{\tilde{X}}="X";
          (16, 0)*+{X}="Xr";
          (0, -16)*+{S}="S";
          (16, -16)*+{S}="Sr";
          % ARROWS
          {\ar^{\epsilon_X} "X" ; "Xr"};
          {\ar_{\tilde{f}} "X" ; "S"};
          {\ar^{f} "Xr" ; "Sr"};
          {\ar_{\id_S} "S" ; "Sr"};
        \endxy
      \]
    where $\epsilon_X$ is defined by
      \begin{itemize}
        \item $(\epsilon_X)_j = 1_{X_j}$ for all $j \neq k$;
        \item $(\epsilon_X)_k \colon \tilde{X}_k \rightarrow X_k$ is given by
          \begin{itemize}
            \item $(\epsilon_X)_k(\alpha) = \alpha$ for $\alpha \in X_k \subseteq \tilde{X}_k$;
            \item $(\epsilon_X)_k(x, y) = \tilde{\gamma}_{k}(x, y)$ for $(x, y) \in X^c_{k} \subseteq \tilde{X}_k$.
          \end{itemize}
      \end{itemize}

      We now check that the triangle identities hold; consider the diagrams
        \[
          \xymatrix{
            D_k \ar[rr]^{\eta D_k} \ar[rrd]_1 && D_kC_kD_k \ar[d]^{D_k\epsilon} & C_k \ar[rr]^{C_k\eta} \ar[rrd]_1 && C_kD_kC_k \ar[d]^{\epsilon C_k}  \\
            && D_k, & && C_k.
                   }
        \]
      In all of the natural transformations in these diagrams, the components on the strict $n$-category parts are all identities, so to show that the diagrams commute we need only consider the components on the $n$-globular set parts.  Since the components of the maps of $n$-globular sets are identities at every dimension except dimension $k$, we need only check that the corresponding diagrams of maps of sets of $k$-cells commute.

      First, we must show that, given
       \[
        \xy
          % POINTS
          (0, 0)*+{X}="X";
          (0, -16)*+{S}="S";
          % ARROWS
          {\ar_f "X" ; "S"};
        \endxy
       \]
      in $\ContrR_k$, the diagram
        \[
          \xymatrix{
            X_k \ar[rr]^-{(\eta_X)_k} \ar[rrd]_{1_{X_k}} && \tilde{X}_k = X_k \amalg X^c_{k - 1} \ar[d]^{(\epsilon_X)_k}  \\
            && X_k
                   }
        \]
      commutes; this is true, since given $\alpha \in X_k$, we have
        \[
          (\epsilon_X)_k \comp (\eta_X)_k(\alpha) = (\epsilon_X)_k(\alpha) = \alpha.
        \]

      Secondly, we must show that, given
       \[
        \xy
          % POINTS
          (0, 0)*+{X}="X";
          (0, -16)*+{S}="S";
          % ARROWS
          {\ar_f "X" ; "S"};
        \endxy
       \]
      in $\ContrR_{k - 1}$ with $(k - 1)$-contraction $\gamma$, the diagram
        \[
          \xymatrix{
            \tilde{X}_k \ar[rr]^-{(\tilde{\eta}_X)_k} \ar[rrd]_{1_{\tilde{X}_k}} && \tilde{X}_k \amalg \tilde{X}_{k}^c \ar[d]^{(\epsilon_{\tilde{X}})_k}  \\
            && \tilde{X}_k
                   }
        \]
      commutes.  We have two kinds of freely added contraction cells in $\tilde{X}_k \amalg \tilde{X}_{k}^c$; we write $(x, y)$ for the contraction cells in $X^c_k$, and $[x, y]$ for those in $\tilde{X}^c_k$ (the latter being the specified contraction cells in this case).  Given $\alpha \in X_k \subseteq \tilde{X}_k$,
        \[
          (\epsilon_{\tilde{X}})_k\comp (\tilde{\eta}_{X})_k(\alpha) = (\epsilon_{\tilde{X}})_k(\alpha) = \alpha;
        \]
      given $(x, y) \in X^c_k \subseteq \tilde{X}_k$,
        \[
          (\epsilon_{\tilde{X}})_k\comp (\tilde{\eta}_{X})_k(x, y) = (\epsilon_{\tilde{X}})_k[x, y] = (x, y);
        \]
      hence the diagram commutes.

      Thus the triangle identities hold, and we have an adjunction $C_k \ladj D_k$, with unit $\eta$ and counit $\epsilon$.
  \end{proof}

  We must also define $C_{n + 1}$ separately, since ``adding contraction $(n + 1)$-cells'' consists of identifying certain $n$-cells rather than actually adding cells; we can think of this as adding equality $(n + 1)$-cells between pairs of $n$-cells that would usually require a contraction cell between them.

  \begin{defn}  \label{defn:freenplusonecontr}
    We define a functor
      \[
        C_{n + 1} \colon \ContrR_n \rightarrow \ContrR_{n + 1}.
      \]
    We begin by giving the effect of $C_{n + 1}$ on objects.  Let
       \[
        \xy
          % POINTS
          (0, 0)*+{X}="X";
          (0, -16)*+{S}="S";
          % ARROWS
          {\ar_f "X" ; "S"};
        \endxy
       \]
    be an object of $\ContrR_n$, with $n$-contraction $\gamma$.  Define a set $X^c_{n + 1}$ and maps $\pi_1$, $\pi_2 \colon X^c_{n + 1} \rightarrow X_n$ by the following pullback:
      \[
        \xy
          % POINTS
          (0, 0)*+{X^c_{n + 1}}="Xc";
          (32, 0)*+{X_n}="Xr";
          (0, -16)*+{X_n}="Xb";
          (32, -16)*+{X_{n - 1} \times X_{n - 1} \times S_n.}="XXS";
          % ARROWS
          {\ar^{\pi_1} "Xc" ; "Xr"};
          {\ar_{\pi_2} "Xc" ; "Xb"};
          {\ar_-{(s, t, f_n)} "Xb" ; "XXS"};
          {\ar^{(s, t, f_n)} "Xr" ; "XXS"};
          % PULLBACK STUFF
          (6,-1)*{}; (6,-5)*{} **\dir{-};
          (2,-5)*{}; (6,-5)*{} **\dir{-};
        \endxy
     \]
    The set $X^c_{n + 1}$ can be thought of as the set of pairs of $n$-cells to be identified, but note that there is some redundancy: for all $a \in X_n$, $(a, a) \in X^c_{n + 1}$, and if we have $(a, b) \in X^c_{n + 1}$ we also have $(b, a) \in X^c_{n + 1}$.

    We now define an object
       \[
        \xy
          % POINTS
          (0, 0)*+{\tilde{X}}="X";
          (0, -16)*+{S}="S";
          % ARROWS
          {\ar_{\tilde{f}} "X" ; "S"};
        \endxy
       \]
    of $\ContrR_{n + 1}$ with contraction $\tilde{\gamma}$.  For $0 \leq m < n$, define
      \[
        \tilde{X}_m = X_m,
      \]
    and define $\tilde{X}_n$ to be the coequaliser of the diagram
      \[
        \xymatrix{
          X^c_{n + 1} \ar@<1ex>[r]^-{\pi_1} \ar@<-1ex>[r]_-{\pi_2} & X_n.
                 }
      \]
    For $0 \leq m < n$, define
      \[
        \tilde{f}_m = f_m,
      \]
     and define $\tilde{f}_n \colon \tilde{X}_n \rightarrow S_n$ to be the unique map such that
      \[
        \xymatrix{
          X^c_{n + 1} \ar@<1ex>[r]^-{\pi_1} \ar@<-1ex>[r]_-{\pi_2} & X_n \ar[r]^q \ar[dr]_{f_n} & \tilde{X}_n \ar@{-->}[d]^{\tilde{f}_n}  \\
          && S_n
                 }
      \]
    commutes, where $q \colon X_n \rightarrow \tilde{X}_n$ is the coequaliser map.  Finally, define $\tilde{\gamma}$ to be the $n$-contraction defined by
          \[
            \tilde{\gamma}_m =
                \left\{
                  \begin{array}{ll}
                  \gamma_m & \text{if } m < n, \\
                  q \comp \gamma_n & \text{if } m = n.
                  \end{array}
                \right.
          \]
    This defines the action of $C_{n + 1}$ on objects.

    We now give the action of $C_{n + 1}$ on morphisms.  Let
      \[
        \xy
          % POINTS
          (0, 0)*+{X}="X";
          (16, 0)*+{Y}="Y";
          (0, -16)*+{S}="S";
          (16, -16)*+{R}="R";
          % ARROWS
          {\ar^u "X" ; "Y"};
          {\ar_f "X" ; "S"};
          {\ar^g "Y" ; "R"};
          {\ar_v "S" ; "R"};
        \endxy
      \]
    be a morphism in $\ContrR_{n}$.  Define a morphism
      \[
        \xy
          % POINTS
          (0, 0)*+{\tilde{X}}="X";
          (16, 0)*+{\tilde{Y}}="Y";
          (0, -16)*+{S}="S";
          (16, -16)*+{R}="R";
          % ARROWS
          {\ar^{\tilde{u}} "X" ; "Y"};
          {\ar_{\tilde{f}} "X" ; "S"};
          {\ar^{\tilde{g}} "Y" ; "R"};
          {\ar_{v} "S" ; "R"};
        \endxy
      \]
    in $\ContrR_{n + 1}$, where, for $0 \leq m < n$,
      \[
        \tilde{u}_m = u_m,
      \]
    and $\tilde{u}_n \colon \tilde{X}_n \rightarrow \tilde{Y}_n$ is defined to be the unique map such that the diagram
      \[
        \xymatrix{
          X^c_{n + 1} \ar@<1ex>[r]^-{\pi_1} \ar@<-1ex>[r]_-{\pi_2} & X_n \ar[r]^q \ar[d]_{u_n} & \tilde{X}_n \ar@{-->}[d]^{\tilde{u}_n}  \\
          & Y_n \ar[r]_-p & \tilde{Y}_n
                 }
      \]
    commutes, where $p$ is the coequaliser map for $\tilde{Y}_n$.  This defines the action of $C_{n + 1}$ on morphisms.
  \end{defn}

  \begin{prop}
    There is an adjunction $C_{n + 1} \ladj D_{n + 1}$.
  \end{prop}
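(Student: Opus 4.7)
The plan is to follow the same pattern as Proposition~\ref{prop:CladjD}, with two adjustments: the functor $D_{n + 1}$ is the inclusion of a full subcategory rather than a genuine forgetful functor, and the free construction $C_{n + 1}$ proceeds by a coequaliser rather than by a coproduct. The unit $\eta \colon 1 \Rightarrow D_{n + 1}C_{n + 1}$ at an object $(f \colon X \rightarrow S, \gamma)$ of $\ContrR_n$ will have components that are identities below dimension $n$, the coequaliser map $q \colon X_n \rightarrow \tilde{X}_n$ at dimension $n$, and $\id_S$ on the strict $n$-category part. Dually, because every object of $\ContrR_{n + 1}$ is tame, the counit $\epsilon \colon C_{n + 1}D_{n + 1} \Rightarrow 1$ will simply invert $q$ at dimension $n$.

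Before setting up the natural transformations I would first verify that $C_{n + 1}$ really does land in $\ContrR_{n + 1}$, i.e.\ that $\tilde{f}$ is tame. Given $[a], [b] \in \tilde{X}_n$ with $s([a]) = s([b])$, $t([a]) = t([b])$ and $\tilde{f}_n([a]) = \tilde{f}_n([b])$, one lifts to representatives $a, b \in X_n$ having the same source, target, and $f_n$-image, so $(a, b) \in X^c_{n + 1}$ and hence $[a] = [b]$. Conversely, if $f$ is already tame then $X^c_{n + 1}$ consists only of diagonal pairs, so $\pi_1 = \pi_2$ and the coequaliser map $q$ is an isomorphism; this lets me define $\epsilon$ by inverting $q$ at dimension $n$. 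Naturality of both $\eta$ and $\epsilon$ follows immediately from the universal property of the coequaliser together with the defining property of $C_{n + 1}$ on morphisms.

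The triangle identities reduce to equalities of maps of sets at dimension $n$, since every other component of every map involved is an identity, and every component on the strict $n$-category part is $\id_S$. For the identity $D_{n + 1}\epsilon \cdot \eta D_{n + 1} = 1_{D_{n + 1}}$, starting from a tame object the composite at dimension $n$ is $q^{-1} \comp q = \id_{X_n}$. For the identity $\epsilon C_{n + 1} \cdot C_{n + 1}\eta = 1_{C_{n + 1}}$, the component $C_{n + 1}\eta$ at dimension $n$ is, by the universal property used to define $C_{n + 1}$ on morphisms, the unique map $\tilde{X}_n \rightarrow \tilde{\tilde{X}}_n$ whose precomposition with $q$ equals the composite of $q$ with the second coequaliser map $\tilde{X}_n \rightarrow \tilde{\tilde{X}}_n$; since $q$ is epi this forces $C_{n + 1}\eta$ at dimension $n$ to equal that second coequaliser map, and postcomposing with its inverse (which is the dimension-$n$ component of $\epsilon C_{n + 1}$, available because $\tilde{f}$ is tame) yields the identity. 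The only point requiring real care is bookkeeping the two separate coequaliser constructions and invoking tameness of $\tilde{f}$ to collapse the second; once this is cleanly set up, the verification is formal, and I expect this bookkeeping, rather than any substantive mathematical difficulty, to be the main obstacle.
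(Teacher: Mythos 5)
Your proposal is correct and follows essentially the same route as the paper: unit given by the coequaliser map $q$ at dimension $n$, counit given by collapsing the (trivial) coequaliser on tame objects, and triangle identities reducing to identities of maps of sets at dimension $n$. The only difference is one of care: the paper simply asserts $C_{n+1}D_{n+1} = \id$ and $q = \id_{X_n}$ on tame objects so that everything in sight is an identity, whereas you work with $q$ as an isomorphism and verify explicitly that $\tilde{f}$ is tame — a well-definedness check the paper leaves implicit in Definition~\ref{defn:freenplusonecontr}.
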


  \begin{proof}
    We first define the unit $\eta \colon 1 \Rightarrow D_{n + 1}C_{n + 1}$, and counit $\epsilon: C_{n + 1}D_{n + 1} \Rightarrow 1$.  Let
       \[
        \xy
          % POINTS
          (0, 0)*+{X}="X";
          (0, -16)*+{S}="S";
          % ARROWS
          {\ar_f "X" ; "S"};
        \endxy
       \]
    be an object of $\ContrR_{n}$.  Applying $D_{n + 1}C_{n + 1}$ gives
       \[
        \xy
          % POINTS
          (0, 0)*+{\tilde{X}}="X";
          (0, -16)*+{S}="S";
          % ARROWS
          {\ar_{\tilde{f}} "X" ; "S"};
        \endxy
       \]
    in $\ContrR_{n + 1}$.  The corresponding component of the unit $\eta$ is given by the map
      \[
        \xy
          % POINTS
          (0, 0)*+{X}="X";
          (16, 0)*+{\tilde{X}}="Xr";
          (0, -16)*+{S}="S";
          (16, -16)*+{S}="Sr";
          % ARROWS
          {\ar^{\eta_X} "X" ; "Xr"};
          {\ar_{f} "X" ; "S"};
          {\ar^{\tilde{f}} "Xr" ; "Sr"};
          {\ar_{\id_S} "S" ; "Sr"};
        \endxy
      \]
    where $\eta_X$ is defined by
      \[
        (\eta_X)_j =
            \left\{
              \begin{array}{ll}
              1_{X_j} & \text{if } j < n, \\
              \text{the coequaliser map } q \colon X_n \rightarrow \tilde{X}_n & \text{if } j = n.
              \end{array}
            \right.
      \]
    Observe that $C_{n + 1}D_{n + 1} = \id$, and if
       \[
        \xy
          % POINTS
          (0, 0)*+{X}="X";
          (0, -16)*+{S}="S";
          % ARROWS
          {\ar_f "X" ; "S"};
        \endxy
       \]
    is in the image of $D_{n + 1}$, $\tilde{X} = X$ and $q = \id_{X}$, so $\eta = \id$.  Furthermore, the counit
      \[
        \epsilon: C_{n + 1}D_{n + 1} \Rightarrow 1
      \]
    is also the identity.  Thus all maps appearing in the diagrams for the triangle identities are identity maps, so both diagrams commute.  Hence there is an adjunction $C_{n + 1} \ladj D_{n + 1}$.
  \end{proof}

  Thus Definitions~\ref{defn:freekcontr} and \ref{defn:freenplusonecontr} give us a dimension-by-dimension construction of the free contraction on an object of $\Rn$.

\subsection{Free magma structure}  \label{subsect:magma}

  We now construct the free $n$-magma on the source of an object of $\Rn$.  As with the construction of the free contraction in the previous subsection, in order to be able to use the construction in the interleaving argument in Subsection~\ref{subsect:interleaving}, we give the construction one dimension at a time.  To do so we define, for each $0 \leq j \leq n$, a category $\MagR_j$, an object of which consists of an object of $\Rn$ in which the source is equipped with a magma structure up to dimension $j$. (Observe that $\MagR_0 = \Rn$.)  We then have, for each $0 < j \leq n$, a forgetful functor
    \[
      N_j \colon \MagR_j \rightarrow \MagR_{j - 1}.
    \]
  We construct a left adjoint to each $N_j$, which freely adds a magma structure at dimension $j$, leaving all other dimensions unchanged.

  In order to define what it means for an $n$-globular set to have a $j$-magma structure, we use the $j$-truncation functor
    \[
      \Tr_j \colon \nGSet \longrightarrow j\text{-}\mathbf{GSet},
    \]
  which forgets the sets of $m$-cells for all $m > j$, and, for $m \leq j$, leaves the sets of $m$-cells and their source and target maps unchanged; the action on maps is defined similarly.

  \begin{defn}
    Define a category $\MagR_j$, with
      \begin{itemize}
        \item objects: an object of $\MagR_j$ consists of an object
       \[
        \xy
          % POINTS
          (0, 0)*+{X}="X";
          (0, -16)*+{S}="S";
          % ARROWS
          {\ar_f "X" ; "S"};
        \endxy
       \]
            in $\Rn$ such that $\Tr_j X$ is a $j$-magma, and $\Tr_j f$ is a map of $j$-magmas.
        \item morphisms: a morphism in $\MagR_j$ is a morphism
     \[
        \xy
          % POINTS
          (0, 0)*+{X}="X";
          (16, 0)*+{Y}="Y";
          (0, -16)*+{S}="S";
          (16, -16)*+{R}="R";
          % ARROWS
          {\ar^u "X" ; "Y"};
          {\ar_f "X" ; "S"};
          {\ar^g "Y" ; "R"};
          {\ar_v "S" ; "R"};
        \endxy
     \]
    in $\Rn$ such that $\Tr_j u$ is a map of $j$-magmas.
      \end{itemize}
  \end{defn}

  We can express the category $\MagR_j$ as a pullback.  For any $m \in \mathbb{N}$ we have a commuting triangle of forgetful functors
    \[
      \xy
        % POINTS
        (0, 0)*+{m\text{-}\Cat}="mCat";
        (32, 0)*+{m\text{-}\mathbf{Mag}}="mMag";
        (16, -12)*+{m\text{-}\mathbf{GSet}}="mGSet";
        % ARROWS
        {\ar^G "mCat" ; "mMag"};
        {\ar_{U_{T_m}} "mCat" ; "mGSet"};
        {\ar^-E "mMag" ; "mGSet"};
      \endxy
    \]
  in $\CAT$.  We can then write $\Mag_j$ as the pullback
    \[
      \xy
        % POINTS
        (0, 0)*+{\MagR_j}="Magj";
        (28, 0)*+{\nGSet \downarrow U_{T}}="Rn";
        (0, -16)*+{j\text{-}\mathbf{Mag} \downarrow G}="jMag";
        (28, -16)*+{j\text{-}\mathbf{GSet} \downarrow U_{T_j}}="Rj";
        % ARROWS
        {\ar "Magj" ; "Rn"};
        {\ar "Magj" ; "jMag"};
        {\ar^{\Tr_j} "Rn" ; "Rj"};
        {\ar_-E "jMag" ; "Rj"};
        % PULLBACK STUFF
        (6,-1)*{}; (6,-5)*{} **\dir{-};
        (2,-5)*{}; (6,-5)*{} **\dir{-};
      \endxy
    \]

  For all $0 < j \leq n$, we have a forgetful functor
    \[
      N_j \colon \MagR_j \rightarrow \MagR_{j - 1},
    \]
  which forgets the composition maps for $j$-cells.  We will define, for each $0 < j \leq n$, a functor
    \[
      M_j \colon \MagR_{j - 1} \rightarrow \MagR_j
    \]
  which freely adds binary composites at dimension $j$, taking an $n$-globular set equipped with a $(j - 1)$-magma structure and adding a magma structure at dimension $j$ to give an $n$-globular set equipped with a $j$-magma structure.  We will then show that the functor $M_j$ is left adjoint to the forgetful functor $N_j$.

  Before defining $M_j$, we first fix some notation that will be used in the construction of the free binary composites.  Let $X$ be an $n$-globular set equipped with a $(j - 1)$-magma structure.  For each $0 \leq p < j$, we can form the set of pairs of $j$-cells that are composable along $p$-cells using the following pullback:
     \[
        \xy
          % POINTS
          (0, 0)*+{X_j \times_{X_p} X_j}="XjXj";
          (20, 0)*+{X_j}="Xr";
          (0, -16)*+{X_j}="Xb";
          (20, -16)*+{X_p.}="Xp";
          % ARROWS
          {\ar "XjXj" ; "Xr"};
          {\ar "XjXj" ; "Xb"};
          {\ar^s "Xr" ; "Xp"};
          {\ar_t "Xb" ; "Xp"};
          % PULLBACK STUFF
          (6,-2)*{}; (6,-6)*{} **\dir{-};
          (2,-6)*{}; (6,-6)*{} **\dir{-};
        \endxy
     \]
  We view $X_j \times_{X_p} X_j$ as the set of freely generated binary composites of $j$-cells along $p$-cells.  We can form the set of freely generated binary composites of $j$-cells along boundaries of all dimensions by taking the coproduct of these sets over $p$.  As the notation will become somewhat complicated in the definition of the left adjoint to $N_j$, we use the following shorthand:
    \[
      X^2_j := \coprod_{0 \leq p < j} X_j \times_{X_p} X_j.
    \]
  This set comes equipped with source and target maps into $X_{j - 1}$, which are defined in analogy with the sources and targets of composites in a magma structure from Definition~\ref{defn:magma}, as follows:
      \begin{itemize}
        \item if $p = m - 1$, given $(a, b) \in X_m \times_{X_p} X_m$,
          \[
            s(a, b) = s(a),
          \]
          \[
            t(a, b) = t(b)
          \]
        \item if $p < m - 1$, given $(a, b) \in X_m \times_{X_p} X_m$,
          \[
            s(a, b) = s(b) \comp^{m - 1}_p s(a),
          \]
          \[
            t(a, b) = t(b) \comp^{m - 1}_p t(a).
          \]
      \end{itemize}

  The set $X^2_j$ contains only binary composites of ``depth $1$''; that is, it contains binary composites of pairs of $j$-cells in $X$, but it does not contain binary composites of binary composites, binary composites of binary composites of binary composites, etc.  In order to obtain these composites of greater ``depth'', which we require in the free magma structure, we must iterate this process.  To do so we define, for each $k \geq 0$, a set $X^{(k)}_j$ of composites of depth at most $k$.  We have inclusion maps
    \[
      X^{(k)}_j \hookrightarrow X^{(k + 1)}_j,
    \]
  so this gives a sequence of sets; we take the colimit of this sequence to obtain the set of freely generated binary composites of all depths.  We now describe and illustrate this iterative process for low depths of composite ($k \leq 2$).

    When $k = 0$, we define
      \[
        X^{(0)}_j = X_j,
      \]
    with source and target maps $s$, $t \colon X^{(0)}_j \rightarrow X_{j - 1}$ given by those in $X$.

    When $k = 1$, we define
      \[
        X_j^{(1)} := X_j + \left(X_j^{(0)}\right)^2 = X_j + X_j^2,
      \]
    where the notation $X_j^2$ is shorthand, as described earlier.  The set $X_j^2$ inherits source and target maps from $X_j^{(0)}$, so we have source and target maps
      \[
        s, t \colon X^{(1)}_j \longrightarrow X_{j - 1}
      \]
    inherited from those for $X_j$ and $X_j^2$.  To see how this gives the set of composites of depth at most $1$, we consider the case $j = 2$.  By ``expanding out'' $X_2^2$, we see that $X_2^{(1)}$ contains the following shapes of composites:
      \[
        \xy
          % EQUATION
          (0, 0.5)*+{X_2^{(1)} =};
          (16, 0)*+{X_2};
          (28, 0)*+{+};
          (46, 0)*+{X_2 \times_{X_0} X_2};
          (64, 0)*+{+};
          (82, 0)*+{X_2 \times_{X_1} X_2};
          % DIAGRAM POINTS
          % GLOBULAR 2-CELL:
          (10,-10)*+{\bullet}="0";
          (16,-10)*+{\Downarrow};
          (22,-10)*+{\bullet}="1";
          % 0-COMP:
          (34,-10)*+{\bullet}="0,0";
          (40,-10)*+{\Downarrow};
          (46,-10)*+{\bullet}="0,1";
          (52,-10)*+{\Downarrow};
          (58,-10)*+{\bullet}="0,2";
          % 1-COMP:
          (76,-10)*+{\bullet}="1,0";
          (82,-13)*+{\Downarrow};
          (82, -7)*+{\Downarrow};
          (88,-10)*+{\bullet}="1,1";
          % ARROWS
          {\ar@/^1pc/ "0" ; "1"};
          {\ar@/_1pc/ "0" ; "1"};
          {\ar@/^1pc/ "0,0" ; "0,1"};
          {\ar@/_1pc/ "0,0" ; "0,1"};
          {\ar@/^1pc/ "0,1" ; "0,2"};
          {\ar@/_1pc/ "0,1" ; "0,2"};
          {\ar@/^1.5pc/ "1,0" ; "1,1"};
          {\ar "1,0" ; "1,1"};
          {\ar@/_1.5pc/ "1,0" ; "1,1"};
        \endxy
      \]

    When $k = 2$, we define
      \[
        X_j^{(2)} := X_j + \left(X_j^{(1)}\right)^2.
      \]
    As in the case $k = 1$, this comes equipped with source and target maps.  In the case $j = 2$, ``expanding out'' $\left(X_j^{(1)}\right)^2$ gives
      \begin{align*}
        X_2^{(2)} & = \;\; X_2 \;\; + \;\; X_2^2 \;\; + \;\; X_2 \times_{X_0} X^2_2 \;\; + \;\; X_2 \times_{X_1} X^2_2 \;\; + \;\;  X^2_2 \times_{X_0} X_2  \\
        & + \;\; X^2_2 \times_{X_1} X_2 \;\; + \;\; X^2_2 \times_{X_0} X^2_2 \;\; + \;\; X^2_2 \times_{X_1} X^2_2.
      \end{align*}
    Thus $X_2^{(2)}$ contains the same shapes of composites that appear in $X_2^{(1)}$, as well as those composites of depth $2$: in $X_2 \times_{X_0} X^2_2$ we have composites of the following shapes:
      \[
        \xy
          % 0-COMP 0-COMP
          (0, 0)*+{\bullet}="2,0";
          (6, 0)*+{\Downarrow};
          (12, 0)*+{\bullet}="2,1";
          (14, 0)*+{\phantom{\bullet}}="2,1p";
          (14, 0)*+{\Bigg(};
          (20, 0)*+{\Downarrow};
          (26, 0)*+{\bullet}="2,2";
          (32, 0)*+{\Downarrow};
          (38, 0)*+{\phantom{\bullet}}="2,3p";
          (38, 0)*+{\Bigg)};
          (40, 0)*+{\bullet}="2,3";
          (46, 0)*+{\text{and}};
          % 1-COMP 0-COMP
          (52, 0)*+{\bullet}="3,0";
          (58, 0)*+{\Downarrow};
          (64, 0)*+{\bullet}="3,1";
          (66, 0)*+{\phantom{\bullet}}="3,1p";
          (66, 0)*+{\Bigg(};
          (72, -3)*+{\Downarrow};
          (72, 3)*+{\Downarrow};
          (78, 0)*+{\phantom{\bullet}}="3,2p";
          (78, 0)*+{\Bigg)};
          (80, 0)*+{\bullet}="3,2";
          (81.5, 0)*+{;};
          % ARROWS
          {\ar@/^1pc/ "2,0" ; "2,1"};
          {\ar@/_1pc/ "2,0" ; "2,1"};
          {\ar@/^1pc/ "2,1p" ; "2,2"};
          {\ar@/_1pc/ "2,1p" ; "2,2"};
          {\ar@/^1pc/ "2,2" ; "2,3p"};
          {\ar@/_1pc/ "2,2" ; "2,3p"};
          {\ar@/^1pc/ "3,0" ; "3,1"};
          {\ar@/_1pc/ "3,0" ; "3,1"};
          {\ar@/^1.5pc/ "3,1p" ; "3,2p"};
          {\ar "3,1p" ; "3,2p"};
          {\ar@/_1.5pc/ "3,1p" ; "3,2p"};
        \endxy
      \]
    in $X_2 \times_{X_1} X^2_2$ we have composites of the following shape:
      \[
        \xy
          % POINTS
          (0, 0)*+{\bullet}="0,0";
          (6, 0)*+{\Downarrow};
          (12, 0)*+{\bullet}="1,0";
          (6, -7)*+{\comp};
          (0,-16)*+{\bullet}="0,1";
          (6,-19)*+{\Downarrow};
          (6, -13)*+{\Downarrow};
          (12,-16)*+{\bullet}="1,1";
          (13.5,-16)*+{;};
          % ARROWS
          {\ar@/^1pc/ "0,0" ; "1,0"};
          {\ar@/_1pc/ "0,0" ; "1,0"};
          {\ar@/^1.5pc/ "0,1" ; "1,1"};
          {\ar "0,1" ; "1,1"};
          {\ar@/_1.5pc/ "0,1" ; "1,1"};
        \endxy
      \]
    the shapes of composites in $X^2_2 \times_{X_0} X_2$ and $X^2_2 \times_{X_1} X_2$ are similar to those above; in $X^2_2 \times_{X_0} X^2_2$ we have composites of the following shapes:
      \[
        \xy
          % 0-COMP 0-COMP
          (0, 0)*+{\bullet}="2,-1";
          (2, 0)*+{\phantom{\bullet}}="2,-1p";
          (2, 0)*+{\Bigg(};
          (8, 0)*+{\Downarrow};
          (14, 0)*+{\bullet}="2,0";
          (20, 0)*+{\Downarrow};
          (26, 0)*+{\phantom{\bullet}}="2,1q";
          (26, 0)*+{\Bigg)};
          (28, 0)*+{\bullet}="2,1";
          (30, 0)*+{\phantom{\bullet}}="2,1p";
          (30, 0)*+{\Bigg(};
          (36, 0)*+{\Downarrow};
          (42, 0)*+{\bullet}="2,2";
          (48, 0)*+{\Downarrow};
          (54, 0)*+{\phantom{\bullet}}="2,3p";
          (54, 0)*+{\Bigg)};
          (56, 0)*+{\bullet}="2,3";
          % ARROWS
          {\ar@/^1pc/ "2,-1p" ; "2,0"};
          {\ar@/_1pc/ "2,-1p" ; "2,0"};
          {\ar@/^1pc/ "2,0" ; "2,1q"};
          {\ar@/_1pc/ "2,0" ; "2,1q"};
          {\ar@/^1pc/ "2,1p" ; "2,2"};
          {\ar@/_1pc/ "2,1p" ; "2,2"};
          {\ar@/^1pc/ "2,2" ; "2,3p"};
          {\ar@/_1pc/ "2,2" ; "2,3p"};
        \endxy
      \]
    and also
      \[
        \xy
          % POINTS
          (0, 0)*+{\bullet}="2,-1";
          (2, 0)*+{\phantom{\bullet}}="2,-1p";
          (2, 0)*+{\Bigg(};
          (8, 0)*+{\Downarrow};
          (14, 0)*+{\bullet}="2,0";
          (20, 0)*+{\Downarrow};
          (26, 0)*+{\phantom{\bullet}}="2,1q";
          (26, 0)*+{\Bigg)};
          (28, 0)*+{\bullet}="2,1";
          (34, 3)*+{\Downarrow};
          (34, -3)*+{\Downarrow};
          (40, 0)*+{\bullet}="2,2";
          (46, 0)*+{\text{and}};
          (52, 0)*+{\bullet}="3,0";
          (58, 3)*+{\Downarrow};
          (58, -3)*+{\Downarrow};
          (64, 0)*+{\bullet}="3,1";
          (66, 0)*+{\phantom{\bullet}}="3,1p";
          (66, 0)*+{\Bigg(};
          (72, 0)*+{\Downarrow};
          (78, 0)*+{\bullet}="3,2";
          (84, 0)*+{\Downarrow};
          (90, 0)*+{\phantom{\bullet}}="3,3q";
          (90, 0)*+{\Bigg)};
          (92, 0)*+{\bullet}="3,3";
          (93.5, 0)*+{;};
          % ARROWS
          {\ar@/^1pc/ "2,-1p" ; "2,0"};
          {\ar@/_1pc/ "2,-1p" ; "2,0"};
          {\ar@/^1pc/ "2,0" ; "2,1q"};
          {\ar@/_1pc/ "2,0" ; "2,1q"};
          {\ar@/^1.5pc/ "2,1" ; "2,2"};
          {\ar "2,1" ; "2,2"};
          {\ar@/_1.5pc/ "2,1" ; "2,2"};
          {\ar@/^1.5pc/ "3,0" ; "3,1"};
          {\ar "3,0" ; "3,1"};
          {\ar@/_1.5pc/ "3,0" ; "3,1"};
          {\ar@/^1pc/ "3,1p" ; "3,2"};
          {\ar@/_1pc/ "3,1p" ; "3,2"};
          {\ar@/^1pc/ "3,2" ; "3,3q"};
          {\ar@/_1pc/ "3,2" ; "3,3q"};
        \endxy
      \]
    and finally, in $X^2_2 \times_{X_1} X^2_2$ we have composites of the following shapes:
      \[
        \xy
          % POINTS
          (0, 0)*+{\bullet}="0,0";
          (6, 3)*+{\Downarrow};
          (6, -3)*+{\Downarrow};
          (12, 0)*+{\bullet}="1,0";
          (6, -8)*+{\comp};
          (0,-16)*+{\bullet}="0,1";
          (6,-19)*+{\Downarrow};
          (6, -13)*+{\Downarrow};
          (12,-16)*+{\bullet}="1,1";
          (18, -8)*+{\text{and}};
          (24, -4)*+{\bullet}="2,0";
          (24, -12)*+{\bullet}="2,1";
          (30, -1)*+{\Downarrow};
          (30, -15)*+{\Downarrow};
          (36, -4)*+{\bullet}="3,0";
          (36, -8)*+{\comp};
          (36, -12)*+{\bullet}="3,1";
          (42, -1)*+{\Downarrow};
          (42, -15)*+{\Downarrow};
          (48, -4)*+{\bullet}="4,0";
          (48, -12)*+{\bullet}="4,1";
          (49.5, -13)*+{.};
          % ARROWS
          {\ar@/^1.5pc/ "0,0" ; "1,0"};
          {\ar "0,0" ; "1,0"};
          {\ar@/_1.5pc/ "0,0" ; "1,0"};
          {\ar@/^1.5pc/ "0,1" ; "1,1"};
          {\ar "0,1" ; "1,1"};
          {\ar@/_1.5pc/ "0,1" ; "1,1"};
          {\ar@/^1.5pc/ "2,0" ; "3,0"};
          {\ar "2,0" ; "3,0"};
          {\ar@/_1.5pc/ "2,1" ; "3,1"};
          {\ar "2,1" ; "3,1"};
          {\ar@/^1.5pc/ "3,0" ; "4,0"};
          {\ar "3,0" ; "4,0"};
          {\ar@/_1.5pc/ "3,1" ; "4,1"};
          {\ar "3,1" ; "4,1"};
        \endxy
      \]
    Thus $X_2^{(2)}$ contains all binary composites of $2$-cells of depth at most $2$.

  Since the construction of the the free $j$-magma structure consists of taking pullbacks and filtered colimits of sets, in order to define the composition maps at dimension $j$ we require the following lemma due to Mac Lane \cite[Theorem~IX.2.1]{Mac98}, which states that finite limits commute with filtered colimits in $\Set$.  Note that this theorem still holds if $\Set$ is replaced by any locally finitely presentable category; see \cite[Proposition 1.59]{AR94}.

  \begin{lemma}[Mac Lane]  \label{lem:maclane}
    Let $\mathbb{I}$ be a finite category, and let $\mathbb{J}$ be a small, filtered category.  Then for any bifunctor
      \[
        F \colon \mathbb{I} \times \mathbb{J} \rightarrow \Set
      \]
    the canonical arrow
      \[
        \colim_{j \in \mathbb{J}} \lim_{i \in \mathbb{I}} F(i, j) \longrightarrow \lim_{i \in \mathbb{I}} \colim_{j \in \mathbb{J}} F(i, j)
      \]
    is an isomorphism.
  \end{lemma}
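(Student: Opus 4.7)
The plan is to use the standard explicit model of filtered colimits of sets as quotients of disjoint unions: an element of $\colim_{j} G(j)$ is represented by some $x \in G(j)$, and two elements $x \in G(j)$, $y \in G(j')$ represent the same class iff there exist morphisms $j \to k$, $j' \to k$ in $\mathbb{J}$ whose induced maps send $x$ and $y$ to a common element of $G(k)$. Applying this with $G(j) = \lim_i F(i,j)$ on one side and with $G_i(j) = F(i,j)$ pointwise on the other gives a workable description of both sides, and the canonical comparison map sends the class of $(x_i)_{i \in \mathbb{I}} \in \lim_i F(i,j)$ to the tuple $\bigl([x_i]\bigr)_{i \in \mathbb{I}} \in \lim_i \colim_j F(i,j)$.

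For surjectivity, I would start with a compatible family $(\xi_i)_{i \in \mathbb{I}}$ in $\lim_i \colim_j F(i,j)$ and choose representatives $x_i \in F(i, j_i)$. Since $\mathbb{I}$ has only finitely many objects and $\mathbb{J}$ is filtered, there is an object $j_0$ equipped with morphisms $j_i \to j_0$ for every $i$, and I replace each $x_i$ by its image in $F(i, j_0)$. The compatibility condition means that for each morphism $\alpha \colon i \to i'$ of $\mathbb{I}$, the elements $F(\alpha, j_0)(x_i)$ and $x_{i'}$ of $F(i', j_0)$ have the same class in $\colim_j F(i', j)$, so by the description of the colimit there exists some $j_0 \to k_\alpha$ equalising them in $F(i', k_\alpha)$. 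Because $\mathbb{I}$ has only finitely many morphisms, filteredness of $\mathbb{J}$ supplies a single $j_1$ dominating all the $k_\alpha$, and then $\bigl(F(-, j_1)(x_i)\bigr)_i$ is a genuine element of $\lim_i F(i, j_1)$ whose image realises $(\xi_i)_i$.

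For injectivity, I would take two classes in $\colim_j \lim_i F(i,j)$ with equal image on the right; passing to a common $j$ as above, they are represented by tuples $(x_i)_i$ and $(y_i)_i$ in $\lim_i F(i,j)$ with $[x_i] = [y_i]$ in $\colim_j F(i,j)$ for every $i$. Each equality provides a morphism $j \to k_i$ of $\mathbb{J}$ equalising $x_i$ and $y_i$ in $F(i, k_i)$; filteredness together with finiteness of the object set of $\mathbb{I}$ then produces a single $j' \geq k_i$ for all $i$, at which the tuples agree on the nose, so the two classes in $\colim_j \lim_i F(i,j)$ coincide.

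The main obstacle is not conceptual but combinatorial: one must be careful to invoke filteredness finitely many times both for the objects of $\mathbb{I}$ (to place representatives in a common $F(-,j)$) and for the morphisms of $\mathbb{I}$ (to enforce the naturality equations on the nose). It is precisely in this double bookkeeping that the finiteness of $\mathbb{I}$ is indispensable, and it is also what would break if one attempted to generalise beyond $\Set$ to a category that is not locally finitely presentable.
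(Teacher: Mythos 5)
Your proof is correct, but note that the paper does not actually prove this lemma: it is quoted verbatim as a classical result of Mac Lane, with a citation to \cite[Theorem~IX.2.1]{Mac98}, so there is no in-paper argument to compare against. What you have written is the standard element-chasing proof (and is essentially Mac Lane's own), using the explicit description of a filtered colimit of sets as a quotient of the disjoint union, and checking surjectivity and injectivity of the comparison map by finitely many applications of filteredness — first over the objects of $\mathbb{I}$ to land all representatives in a common $F(-,j_0)$, then over the morphisms of $\mathbb{I}$ to enforce the naturality equations.

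One step deserves slightly more care than you give it. In the surjectivity argument, after producing for each morphism $\alpha$ of $\mathbb{I}$ a map $u_\alpha \colon j_0 \to k_\alpha$ equalising $F(\alpha,j_0)(x_i)$ and $x_{i'}$, and then choosing $j_1$ with maps $v_\alpha \colon k_\alpha \to j_1$, the composites $v_\alpha u_\alpha \colon j_0 \to j_1$ are finitely many \emph{possibly distinct} parallel morphisms; the expression $F(-,j_1)(x_i)$ only makes sense once you have fixed a single transition map $j_0 \to j_1$, and the equation for $\alpha$ is only guaranteed along the route through $k_\alpha$. You need one further application of the coequalising axiom of filteredness to find $j_1 \to j_2$ merging these finitely many parallel morphisms into one, after which the pushed-forward tuple is an honest element of $\lim_i F(i,j_2)$. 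You do flag the "double bookkeeping" as the crux, so this is an elision rather than a gap, but it is exactly the point where the second filteredness axiom (coequalising pairs, not just cocones on objects) is used, and it is worth making explicit.
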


  We now define a putative left adjoint $M_j$ to the functor $N_j$; we will then prove that this functor is left adjoint to $N_j$ in Proposition~\ref{prop:MadjN}.

  \begin{defn}  \label{defn:Mj}
    For each $0 < j \leq n$, we define a functor
      \[
        M_j \colon \MagR_{j - 1} \rightarrow \MagR_{j}.
      \]
    We begin by giving the action of $M_j$ on objects.  Let
       \[
        \xy
          % POINTS
          (0, 0)*+{X}="X";
          (0, -16)*+{S}="S";
          % ARROWS
          {\ar_f "X" ; "S"};
        \endxy
       \]
    be an object of $\MagR_{j - 1}$.  We will define an object
       \[
        \xy
          % POINTS
          (0, 0)*+{\hatX}="X";
          (0, -16)*+{S}="S";
          % ARROWS
          {\ar_{\hat{f}} "X" ; "S"};
        \endxy
       \]
    of $\MagR_{j}$, where $\hatX$ differs from $X$ only at dimension $j$.  The set $\hatX_j$ of $j$-cells of $\hatX$ is the set of freely generated binary composites of $j$-cells of $X$.  We define this as the colimit of a sequence of sets $X^{(k)}_j$, where $X^{(k)}_j$ is the set of freely generated binary composites of $j$-cells of $X$ of depth at most $k$.  We define $X^{(k)}_j$ by induction over $k$, as follows: when $k = 0$, define
      \[
        X^{(0)}_j = X_j,
      \]
    with source and target maps $s$, $t \colon X^{(0)}_j \rightarrow X_{j - 1}$ given by those in $X$.  Now suppose that $k > 0$ and we have defined $X^{(k - 1)}_j$, equipped with source and target maps
      \[
        s, t \colon X^{(k - 1)}_j \longrightarrow X_{j - 1}.
      \]
    We define $X^{(k)}_j$ by
      \[
        X_j^{(k)} := X_j + \left(X_j^{(k - 1)}\right)^2.
      \]
    Recall that the notation used above is shorthand, defined by
      \[
        \left(X_j^{(k - 1)}\right)^2 := \coprod_{0 \leq p < j} X_j^{(k - 1)} \times_{X_{p}} X_j^{(k - 1)},
      \]
    and that this set inherits source and target maps from $X_j^{(k - 1)}$.  Thus we have source and target maps
      \[
        s, t \colon X^{(k)}_j \longrightarrow X_{j - 1}
      \]
    inherited from those for $X_j$ and $\left(X_j^{(k - 1)}\right)^2$.

    For each $k \geq 0$, we define a map
      \[
        i^{(k)} \colon X_j^{(k)} \rightarrow X_j^{(k + 1)},
      \]
    which includes the freely generated composites in $X_j^{(k)}$ (those of depth at most $k$) into the set $X_j^{(k + 1)}$ (which contains composites of depth at most $k + 1$), and leaves the generating cells unchanged.  The maps $i^{(k)}$ are defined by induction over $k$, as follows:

      \begin{itemize}
        \item for $k = 0$, $i^{(0)}$ is the coprojection map
          \[
            i^{(0)} \colon X_j \rightarrow X_j + X_j^2;
          \]
        \item for $k \geq 1$, suppose we have defined $i^{(k - 1)} \colon X_j^{(k - 1)} \rightarrow X_j^{(k)}$.  We define $i^{(k)}$ to be the map
              \[
                i^{(k)} := 1_{X_j} + \coprod_{0 \leq p < j} \left( i^{(k - 1)}, i^{(k - 1)} \right) \colon X_j + \left(X_j^{(k - 1)}\right)^2 \rightarrow X_j + \left(X_j^{(k)}\right)^2.
              \]
      \end{itemize}
    These sets and maps give us a diagram
      \[
        \xymatrix{
          X_j^{(0)} \ar[r]^{i^{(0)}} & X_j^{(1)} \ar[r]^{i^{(1)}} & X_j^{(2)} \ar[r]^{i^{(2)}} & X_j^{(3)} \ar[r]^{i^{(3)}} & \dotsc
                 }
      \]
    in $\Set$; we then define
      \[
        \hatX_j := \colim_{k \geq 0} X_j^{(k)}.
      \]
    For $m \neq j$, we define
      \[
        \hatX_m := X_m.
      \]
    For $m \neq j$, $j + 1$, the source and target maps
      \[
        s, t \colon \hatX_m \rightarrow \hatX_{m - 1}
      \]
    are those inherited from $X$.  Now write $c^{(k)}_j \colon X^{(k)}_j \rightarrow \hatX_j$ for the coprojection maps.  The source and target maps for $m = j + 1$ are given by the composites
      \[
        \xy
          % POINTS
          (0, 0)*+{\hatX_{j + 1} = X_{j + 1}}="XX";
          (26, 0)*+{X_{j}}="Xj";
          (42, 0)*+{\hatX_{j}}="hatXj";
          % ARROWS
          {\ar^-{s} "XX" ; "Xj"};
          {\ar^-{c^{(0)}_j} "Xj" ; "hatXj"};
        \endxy
      \]
    and
      \[
        \xy
          % POINTS
          (0, 0)*+{\hatX_{j + 1} = X_{j + 1}}="XX";
          (26, 0)*+{X_{j}}="Xj";
          (42, 0)*+{\hatX_{j}}="hatXj";
          % ARROWS
          {\ar^-{t} "XX" ; "Xj"};
          {\ar^-{c^{(0)}_j} "Xj" ; "hatXj"};
        \endxy
      \]
    respectively.  To define the source and target maps for $m = j$, recall that, for each $k$, we have source and target maps $s$, $t \colon X_j^{(k)} \rightarrow X_{j - 1}$; we define $s$, $t \colon \hatX_j \rightarrow X_{j - 1}$ to be the unique maps induced by the colimit defining $\hatX_j$ that make, for all $k \geq 1$, the diagrams
      \[
        \xy
          % POINTS
          (0, 0)*+{X^{(k)}_j}="Xkjl";
          (16, 0)*+{\hatX_j}="hatXjl";
          (16, -16)*+{X_{j - 1}}="Xl";
          (40, 0)*+{X^{(k)}_j}="Xkjr";
          (56, 0)*+{\hatX_j}="hatXjr";
          (56, -16)*+{X_{j - 1}}="Xr";
          % ARROWS
          {\ar^-{c^{(k)}_j} "Xkjl" ; "hatXjl"};
          {\ar_s "Xkjl" ; "Xl"};
          {\ar@{-->}^s "hatXjl" ; "Xl"};
          {\ar^-{c^{(k)}_j} "Xkjr" ; "hatXjr"};
          {\ar_t "Xkjr" ; "Xr"};
          {\ar@{-->}^t "hatXjr" ; "Xr"};
        \endxy
      \]
    commute respectively.

    We now define the $j$-magma structure on $\hatX$.  For all $m < j$, and for all $0 \leq p < m$, the composition map
      \[
        \comp^m_p \colon \hatX_m \times_{\hatX_p} \hatX_m = X_m \times_{X_p} X_m \rightarrow X_m
      \]
    is the corresponding composition map from the $(j - 1)$-magma structure on $X$.  To define the composition map $\comp^j_p$ for $0 \leq p < j$, we begin by observing that, by Lemma~\ref{lem:maclane}, we have an isomorphism
      \[
        \underset{k, l \geq 0}\colim\Big(X_j^{(k)} \times_{X_p} X_j^{(l)}\Big) \iso \Big(\underset{k \geq 0}\colim X_j^{(k)} \Big) \times_{X_p} \Big(\underset{l \geq 0}\colim X_j^{(l)} \Big) = \hatX_j \times_{X_p} \hatX_j.
      \]
    Thus, to define the composition maps at dimension $j$, we define, for each $k$, $l > 0$, $0 \leq p < j$, a map
      \[
        \comp^j_p \colon X_j^{(k)} \times_{X_p} X_j^{(l)} \rightarrow \hatX_j.
      \]
    To do so, observe that, in the case $k = l$, the source of the composition map above includes in $X^{(k + 1)}_j$, which in turn includes in $\hatX_j$; thus in this case we define the composition map to be the composite:
      \[
        \xymatrix{
           X_j^{(k)} \times_{X_p} X_j^{(k)} \ar@{^(->}[r] & X_j^{(k + 1)} \ar[r]^-{c_j^{(k + 1)}} & \hatX_j.
                 }
      \]
      % Replace with xy?
    Now suppose that $k < l$; in this case we first include the source of the composition map in
      \[
        X_j^{(l)} \times_{X_p} X_j^{(l)},
      \]
    and we can then follow the same method as for $k = l$.  Write
      \[
        i^{(k, l)} := i^{(l)} \comp i^{(l - 1)} \comp \dotsb \comp i^{(k)} \colon X_j^{(k)} \longrightarrow X_j^{(l)},
      \]
     and define $\comp^j_p$ to be the composite
      \[
        \xy
          % POINTS
          (0, 0)*+{X_j^{(k)} \times_{X_p} X_j^{(l)}}="XkXl";
          (38, 0)*+{X_j^{(l)} \times_{X_p} X_j^{(l)}}="XlXl";
          (64, 0)*+{X_j^{(l + 1)}}="Xl";
          (82, 0)*+{\hatX_j,}="Xj";
          % ARROWS
          {\ar^-{(i^{(k, l)}, \id)} "XkXl" ; "XlXl"};
          {\ar@{^(->} "XlXl" ; "Xl"};
          {\ar^-{c^{(l + 1)}_j} "Xl" ; "Xj"};
        \endxy
      \]
    where the second map is the coprojection into the coproduct defining $X_j^{(l + 1)}$.  Similarly, for $l > k$, we define $\comp^j_p$ to be the composite
      \[
        \xy
          % POINTS
          (0, 0)*+{X_j^{(k)} \times_{X_p} X_j^{(l)}}="XkXl";
          (38, 0)*+{X_j^{(k)} \times_{X_p} X_j^{(k)}}="XkXk";
          (64, 0)*+{X_j^{(k + 1)}}="Xk";
          (82, 0)*+{\hatX_j,}="Xj";
          % ARROWS
          {\ar^-{(\id, i^{(k, l)})} "XkXl" ; "XkXk"};
          {\ar@{^(->} "XkXk" ; "Xk"};
          {\ar^-{c^{(k + 1)}_j} "Xk" ; "Xj"};
        \endxy
      \]
    Then $\comp^j_p \colon \hatX_j \times_{X_p} \hatX_j \rightarrow \hatX_j$ is defined to be the unique map induced by universal property of
      \[
        \hatX_j \times_{X_p} \hatX_j
      \]
    as a colimit (using Lemma~\ref{lem:maclane}) such that, for all $k$, $l > 0$, the diagram
      \[
        \xy
          % POINTS
          (0, 0)*+{X_j^{(k)} \times_{X_p} X_j^{(l)}}="XkXl";
          (38, 0)*+{\hatX_j \times_{X_p} \hatX_j}="hatXX";
          (38, -16)*+{\hatX_j}="hatX";
          % ARROWS
          {\ar^-{\left(c_j^{(k)}, c_j^{(l)}\right)} "XkXl" ; "hatXX"};
          {\ar_-{\comp^j_p} "XkXl" ; "hatX"};
          {\ar@{-->}^-{\comp^j_p} "hatXX" ; "hatX"};
        \endxy
      \]
    commutes.  This defines a $j$-magma structure on $\hatX$.

    We now define the map $\hat{f} \colon \hatX \rightarrow S$.  At dimension $j$, $\hat{f}$ acts on a freely generated composite in $\hatX_j$ by first applying $f$ to each individual generating $j$-cell in the composite, then evaluating this composite via the magma structure on $S$; at all other dimensions it is the same as the map $f$.

    For $m \neq j$, define
      \[
        \hat{f}_m = f_m \colon \hatX_m = X_m \rightarrow S_m.
      \]
    To define $\hat{f}_m$ for $m = j$, we first define, for each $k \geq 0$, a map
      \[
        f_j^{(k)} \colon X_j^{(k)} \rightarrow S_j.
      \]
    When $k = 0$, define
      \[
        f_j^{(k)} = f_j \colon X_j^{(0)} \rightarrow S_j.
      \]
    Now let $k \geq 1$ and suppose we have defined the map
      \[
        f_j^{(k - 1)} \colon X_{j}^{(k - 1)} \rightarrow S_j;
      \]
    we define the map
      \[
        f_j^{(k)} \colon X_{j}^{(k)} \rightarrow S_j
      \]
    as follows:  for each $0 \leq p < j$ there is a map
      \[
        \left(f_j^{(k - 1)}, f_j^{(k - 1)}\right) \colon X_j^{(k - 1)} \times_{X_p} X_j^{(k - 1)} \rightarrow S_j \times_{S_p} S_j
      \]
    induced by the universal property of $S_j \times_{S_p} S_j$.  We compose each of these with the composition map $\comp^j_p$, and define $f_j^{(k)} \colon X_j^{(k)} \rightarrow S_j$ to be a coproduct of these composites, as follows:
      \begin{align*}
        f_j^{(k)} := & f_j^{(0)} + \coprod_{0 \leq p < j} \left((\comp^j_p) \comp \left(f_j^{(k - 1)}, f_j^{(k - 1)}\right)\right) \colon \\
        & X_j^{(k)} = X_j + \coprod_{0 \leq p < j} X_j^{(k - 1)} \times_{X_p} X_j^{(k - 1)} \rightarrow S_j.
      \end{align*}
    We then define $\hat{f}_j$ to be the unique map such that, for all $k \geq 1$, the diagram
      \[
        \xy
          % POINTS
          (0, 0)*+{X^{(k)}_j}="Xkjl";
          (16, 0)*+{\hatX_j}="hatXjl";
          (16, -16)*+{S_{j}}="Sl";
          % ARROWS
          {\ar^-{c^{(k)}_j} "Xkjl" ; "hatXjl"};
          {\ar_{f^{(k)}_j} "Xkjl" ; "Sl"};
          {\ar@{-->}^{\hat{f}_j} "hatXjl" ; "Sl"};
        \endxy
      \]
    commutes.

    Thus we have defined an object
       \[
        \xy
          % POINTS
          (0, 0)*+{\hatX}="X";
          (0, -16)*+{S}="S";
          % ARROWS
          {\ar_{\hat{f}} "X" ; "S"};
        \endxy
       \]
    of $\MagR_{j}$; this gives the action of $M_j$ on objects.

    We now give the action of $M_j$ on morphisms.  Let
      \[
        \xy
          % POINTS
          (0, 0)*+{X}="X";
          (16, 0)*+{Y}="Y";
          (0, -16)*+{S}="S";
          (16, -16)*+{R}="R";
          % ARROWS
          {\ar^u "X" ; "Y"};
          {\ar_f "X" ; "S"};
          {\ar^g "Y" ; "R"};
          {\ar_v "S" ; "R"};
        \endxy
      \]
    be a morphism in $\MagR_{j - 1}$.  We define a morphism
      \[
        \xy
          % POINTS
          (0, 0)*+{\hat{X}}="X";
          (16, 0)*+{\hat{Y}}="Y";
          (0, -16)*+{S}="S";
          (16, -16)*+{R}="R";
          % ARROWS
          {\ar^{\hat{u}} "X" ; "Y"};
          {\ar_{\hat{f}} "X" ; "S"};
          {\ar^{\hat{g}} "Y" ; "R"};
          {\ar_{v} "S" ; "R"};
        \endxy
      \]
    in $\MagR_{j}$.  At dimension $j$, the map $\hat{u}$ acts on a freely generated composite in $\hat{X}$ by applying $u$ to each individual generating $j$-cell in the composite, thus giving a freely generated composite of $j$-cells in $\hat{Y}_j$; at all other dimensions it is the same as the map $u$.

    For $m \neq j$, we define $\hat{u}_m = u_m$.  To define $\hat{u}_m$ for $m = j$, first we define, for each $k \geq 1$, a map
      \[
        u^{(k)}_j \colon X^{(k)}_j \rightarrow Y^{(k)}_j.
      \]
    When $k = 0$, define
      \[
        u^{(1)}_j := u_j \colon X^{(1)}_j \rightarrow Y^{(1)}_j.
      \]
    Now let $k \geq 1$ and suppose we have defined
      \[
        u^{(k - 1)}_j := u_j \colon X^{(k - 1)}_j \rightarrow Y^{(k - 1)}_j;
      \]
    we define $u^{(k)}_j$ as follows:
      \[
        u^{(k)}_j := u^{(k - 1)}_j + \coprod_{0 \leq p < j} \left(u^{(k - 1)}_j, u^{(k - 1)}_j\right) \colon X^{(k)}_j \rightarrow Y^{(k)}_j.
      \]
    We then define $\hat{u}_j$ to be the unique map such that, for all $k \geq 1$, the diagram
      \[
        \xy
          % POINTS
          (0, 0)*+{X^{(k)}_j}="Xkj";
          (16, 0)*+{\hatX_j}="Xj";
          (0, -16)*+{Y^{(k)}_j}="Ykj";
          (16, -16)*+{\hat{Y}_j}="Yj";
          % ARROWS
          {\ar^{c^{(k)}_j} "Xkj" ; "Xj"};
          {\ar_{u^{(k)}_j} "Xkj" ; "Ykj"};
          {\ar@{-->}^{\hat{u}_j} "Xj" ; "Yj"};
          {\ar_{c^{(k)}_j} "Ykj" ; "Yj"};
        \endxy
      \]
    commutes.  This gives the action of the functor $M_j$ on morphisms.
  \end{defn}

  \begin{prop}  \label{prop:MadjN}
    For all $0 < j \leq n$, there is an adjunction $M_j \ladj N_j$.
  \end{prop}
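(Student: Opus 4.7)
The plan is to follow the same pattern as Proposition~\ref{prop:CladjD}: explicitly construct the unit $\eta \colon 1 \Rightarrow N_j M_j$ and counit $\epsilon \colon M_j N_j \Rightarrow 1$, and then verify naturality and the triangle identities. Because $N_j$ only forgets data at dimension $j$ and $M_j$ leaves dimensions $\neq j$ unchanged, at every stage only the dimension-$j$ components need genuine checking; in all other dimensions the relevant maps are identities, and likewise the strict $n$-category parts remain unchanged. Thus the whole problem reduces to a statement about sets of $j$-cells.

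For the unit, given $f \colon X \to S$ in $\MagR_{j-1}$, I would define $(\eta_X)_m = \id_{X_m}$ for $m \neq j$ and $(\eta_X)_j = c_j^{(0)} \colon X_j \hookrightarrow \hat{X}_j$, the coprojection from the depth-$0$ stage of the defining colimit. The key fact that makes this work is that $c_j^{(0)}$ manifestly commutes with source, target and with $\hat{f}_j$ (since $\hat{f}_j \circ c_j^{(0)} = f_j^{(0)} = f_j$), and naturality in $u \colon X \to Y$ follows immediately from the construction of $\hat{u}_j$ via its defining diagram.

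For the counit, given $f \colon X \to S$ in $\MagR_j$ (so $X$ already carries a $j$-magma structure with composition maps $\comp^j_p$), I would build an evaluation map $(\epsilon_X)_j \colon \hat{X}_j \to X_j$ dimension-by-depth. Define $e_j^{(k)} \colon X_j^{(k)} \to X_j$ by induction: set $e_j^{(0)} = \id_{X_j}$, and
\[
e_j^{(k)} := \id_{X_j} + \coprod_{0 \leq p < j} \comp^j_p \circ \bigl(e_j^{(k-1)}, e_j^{(k-1)}\bigr).
\]
These are compatible with the inclusions $i^{(k)} \colon X_j^{(k)} \to X_j^{(k+1)}$ (a routine induction on $k$), so they induce a unique map $(\epsilon_X)_j \colon \hat{X}_j \to X_j$ out of the colimit. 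Set $(\epsilon_X)_m = \id$ for $m \neq j$. One then checks that $(\epsilon_X)_j$ preserves the $j$-magma structure (the composition in $M_j N_j X$ sends a pair of formal composites to the formal composite of one depth higher, which evaluates under $e_j^{(k)}$ to the genuine composite in $X$), and that $f_j \circ (\epsilon_X)_j = \hat{f}_j$ by induction on $k$, using that $f$ itself is a map of $j$-magmas.

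The triangle identities are then direct. For $\epsilon M_j \circ M_j \eta = \id_{M_j}$: at dimension $j$, the composite first includes $\hat{X}_j$ as depth-$0$ elements of the larger free-magma construction on $\hat{X}$, and then evaluates, but on depth-$0$ elements the evaluation $e^{(0)}$ is the identity. For $N_j \epsilon \circ \eta N_j = \id_{N_j}$: starting from $a \in X_j$, the unit sends it to $c_j^{(0)}(a) \in \hat{X}_j$, which then evaluates under $(\epsilon_X)_j$ to $e_j^{(0)}(a) = a$. The main obstacle is not these identities themselves but the bookkeeping needed to justify that $\epsilon_X$ is well-defined on the colimit and respects the $j$-magma structure; both points reduce to an induction on the depth $k$, using Lemma~\ref{lem:maclane} to interchange the filtered colimits with the finite pullbacks involved in the composition maps, exactly as in Definition~\ref{defn:Mj}.
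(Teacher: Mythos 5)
Your proposal matches the paper's proof essentially step for step: the unit is the depth-$0$ coprojection $c_j^{(0)}$, the counit is the inductively defined evaluation map (the paper's $\epsilon_X^{(l+1)} := \id_{X_j} + \coprod_{0 \leq p < j} (\comp^j_p) \comp (\epsilon_X^{(l)}, \epsilon_X^{(l)})$ is exactly your $e_j^{(k)}$), and both the reduction to dimension $j$ and the verification of the triangle identities via the universal property of the colimit are the same. No substantive differences.
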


  \begin{proof}
    We first define the unit $\eta \colon 1 \Rightarrow N_j M_j$ and counit $\epsilon \colon M_j N_j \Rightarrow 1$.

    Let
       \[
        \xy
          % POINTS
          (0, 0)*+{X}="X";
          (0, -16)*+{S}="S";
          % ARROWS
          {\ar_f "X" ; "S"};
        \endxy
       \]
    be an object in $\MagR_{j - 1}$.  Then the corresponding component of the unit map $\eta$ is
       \[
        \xy
          % POINTS
          (0, 0)*+{X}="X";
          (16, 0)*+{\hatX}="hatX";
          (0, -16)*+{S}="S";
          (16, -16)*+{S,}="Sr";
          % ARROWS
          {\ar_f "X" ; "S"};
          {\ar^{\eta_X} "X" ; "hatX"};
          {\ar^{\hat{f}} "hatX" ; "Sr"};
          {\ar_{\id_S} "S" ; "Sr"};
        \endxy
       \]
    where $\eta_X$ is defined by
      \[
        (\eta_X)_k =
            \left\{
              \begin{array}{ll}
              \id_{X_k} & \text{if } k \neq j, \\
              \text{the coprojection map } c^{(0)}_j \colon X_j \rightarrow \hatX_j & \text{if } k = j.
              \end{array}
            \right.
      \]
    Naturality of $\eta$ is immediate at dimensions $k \neq j$, and follows from the definition of the action of $M_j$ on maps when $k = j$.

    Now let
       \[
        \xy
          % POINTS
          (0, 0)*+{X}="X";
          (0, -16)*+{S}="S";
          % ARROWS
          {\ar_f "X" ; "S"};
        \endxy
       \]
    be an object in $\MagR_{j}$.  The corresponding component of the counit map $\epsilon$ should be a map of the form
       \[
        \xy
          % POINTS
          (0, 0)*+{\hatX}="X";
          (16, 0)*+{X}="hatX";
          (0, -16)*+{S}="S";
          (16, -16)*+{S.}="Sr";
          % ARROWS
          {\ar_{\hat{f}} "X" ; "S"};
          {\ar^{\eta_X} "X" ; "hatX"};
          {\ar^{f} "hatX" ; "Sr"};
          {\ar_{\id_S} "S" ; "Sr"};
        \endxy
       \]
    To define the map $\epsilon_X$, recall that
      \[
        \hatX_j := \colim_{k \geq 0} X^{(k)}_j;
      \]
    thus for each $k \geq 0$, we define a map
      \[
        \epsilon^{(k)}_X \colon X^{(k)}_j \rightarrow X_j,
      \]
    by induction over $k$.

    When $k = 0$, $X^{(k)}_j = X_j$, and we define
      \[
        \epsilon^{(0)}_X := \id_{X_j}.
      \]
    Now suppose we have defined $\epsilon^{(k)}_X$ for some $k = l$.  Recall that
      \[
        X^{(l + 1)}_j := X_j + \coprod_{0 \leq p < j} X^{(l)}_j \times_{X_p} X^{(l)}_j.
      \]
    We define $\epsilon^{(l + 1)}_X$ by
      \[
        \epsilon^{(l + 1)}_X := \id_{X_j} + \coprod_{0 \leq p < j} \left( (\comp^j_p) \comp \big( \epsilon^{(l)}_X, \epsilon^{(l)}_X \big) \right) \colon X^{(l + 1)}_j \longrightarrow X_j,
      \]
    where $\comp^j_p$ is the composition map from the $j$-magma structure on $X$.  We then define $(\epsilon_X)_j \colon \hatX_j \rightarrow X_j$ to be the unique map such that, for all $k \geq 0$, the diagram
      \[
        \xy
          % POINTS
          (0, 0)*+{X^{(k)}_j}="Xkjl";
          (16, 0)*+{\hatX_j}="hatXjl";
          (16, -16)*+{X_{j}}="Xl";
          % ARROWS
          {\ar^-{c^{(k)}_j} "Xkjl" ; "hatXjl"};
          {\ar_{\epsilon^{(k)}_X} "Xkjl" ; "Xl"};
          {\ar@{-->}^{(\epsilon_X)_j} "hatXjl" ; "Xl"};
        \endxy
      \]
    commutes.  This defines the counit $\epsilon \colon M_j N_j \Rightarrow 1$.  We now check naturality of $\epsilon$.  Let
      \[
        \xy
          % POINTS
          (0, 0)*+{X}="X";
          (16, 0)*+{Y}="Y";
          (0, -16)*+{S}="S";
          (16, -16)*+{R}="R";
          % ARROWS
          {\ar^u "X" ; "Y"};
          {\ar_f "X" ; "S"};
          {\ar^g "Y" ; "R"};
          {\ar_v "S" ; "R"};
        \endxy
      \]
    be a morphism in $\MagR_{j}$; since the components of $\epsilon$ are identities on strict $n$-category parts, and at all dimensions other than dimension $j$, to show that $\epsilon$ is natural we need only show that the diagram
      \[
        \xy
          % POINTS
          (0, 0)*+{\hatX_j}="hatX";
          (16, 0)*+{\hat{Y}_j}="hatY";
          (0, -16)*+{X_j}="X";
          (16, -16)*+{Y_j}="Y";
          % ARROWS
          {\ar^{\hat{u}_j} "hatX" ; "hatY"};
          {\ar_{(\epsilon_X)_j} "hatX" ; "X"};
          {\ar^{(\epsilon_Y)_j} "hatY" ; "Y"};
          {\ar_{u_j} "X" ; "Y"};
        \endxy
      \]
    commutes.  By definition of $\hatX$ as a colimit, this diagram commutes if, for each $k \geq 0$, the diagram
      \[
        \xy
          % POINTS
          (0, 0)*+{X^{(k)}_j}="hatX";
          (16, 0)*+{Y^{(k)}_j}="hatY";
          (0, -16)*+{X_j}="X";
          (16, -16)*+{Y_j}="Y";
          % ARROWS
          {\ar^{u^{(k)}_j} "hatX" ; "hatY"};
          {\ar_{\epsilon^{(k)}_X} "hatX" ; "X"};
          {\ar^{\epsilon^{(k)}_Y} "hatY" ; "Y"};
          {\ar_{u_j} "X" ; "Y"};
        \endxy
      \]
    commutes; we prove this by induction over $k$.  It is immediate when $k = 0$, since $\epsilon^{(0)}_X = \id_{X_j}$ and $\epsilon^{(0)}_Y = \id_{Y_j}$.  Now suppose we have shown that the diagram commutes for some $k = l$; then we have
      \begin{align*}
        u \comp \epsilon^{(l + 1)}_X
        & = u_j + \coprod_{0 \leq p < j} \left(u_j \comp (\comp^j_p) \comp \big( \epsilon^{(l)}_X, \epsilon^{(l)}_X \big) \right)  \\
        & = u_j + \coprod_{0 \leq p < j} \left((\comp^j_p) \comp \big( u_j \epsilon^{(l)}_X, u_j \epsilon^{(l)}_X \big) \right)  \\
        & = u_j + \coprod_{0 \leq p < j} \left((\comp^j_p) \comp \big( \epsilon^{(l)}_X u^{(l)}_j, \epsilon^{(l)}_X u^{(l)}_j \big) \right) = \epsilon^{(l + 1)}_j u^{(l + 1)}_j,
      \end{align*}
    so the diagram commutes for $k = l + 1$.  Thus, by induction, the diagram commutes for all $k \geq 0$.  Hence $\epsilon$ is natural.

    We now check that $\eta$ and $\epsilon$ satisfy the triangle identities, i.e. that the diagrams
        \[
          \xymatrix{
            N_k \ar[rr]^{\eta N_k} \ar[rrd]_1 && N_kM_kN_k \ar[d]^{N_k\epsilon} & M_k \ar[rr]^{M_k\eta} \ar[rrd]_1 && M_kN_kM_k \ar[d]^{\epsilon M_k}  \\
            && N_k, & && M_k
                   }
        \]
    commute.  In all of the natural transformations in these diagrams, the components on strict $n$-category parts are all identities, so to show that the diagrams commute we need only consider the components on underlying $n$-globular sets.  Since the components of the maps of $n$-globular sets are identities at every dimension except dimension $j$, we need only check that the corresponding diagrams of maps of sets of $j$-cells commute.

    For the first triangle identity, let
       \[
        \xy
          % POINTS
          (0, 0)*+{X}="X";
          (0, -16)*+{S}="S";
          % ARROWS
          {\ar_f "X" ; "S"};
        \endxy
       \]
    be an object of $\MagR_j$.  Then the diagram
      \[
        \xy
          % POINTS
          (0, 0)*+{X_j}="Xkjl";
          (24, 0)*+{\hatX_j}="hatXjl";
          (24, -16)*+{X_{j}}="Xl";
          % ARROWS
          {\ar^-{(\eta_X)_j = c^{(0)}_j} "Xkjl" ; "hatXjl"};
          {\ar_{\epsilon^{(0)}_j} "Xkjl" ; "Xl"};
          {\ar^{(\epsilon_X)_j} "hatXjl" ; "Xl"};
        \endxy
      \]
    commutes by the universal property of $(\epsilon_X)_j$, so this triangle identity is satisfied.

    Similarly, for the second triangle identity, let
       \[
        \xy
          % POINTS
          (0, 0)*+{X}="X";
          (0, -16)*+{S}="S";
          % ARROWS
          {\ar_f "X" ; "S"};
        \endxy
       \]
    be an object of $\MagR_{j - 1}$.  Then the diagram
      \[
        \xy
          % POINTS
          (0, 0)*+{\hatX_j}="Xkjl";
          (24, 0)*+{\skew{2.9}\widehat{\widehat{X}}_j}="hatXjl";
          (24, -16)*+{\hatX_{j}}="Xl";
          % ARROWS
          {\ar^-{(\eta_{\hatX})_j = c^{(0)}_j} "Xkjl" ; "hatXjl"};
          {\ar_{\epsilon^{(0)}_j} "Xkjl" ; "Xl"};
          {\ar^{(\epsilon_{\hatX})_j} "hatXjl" ; "Xl"};
        \endxy
      \]
    commutes by the universal property of $(\epsilon_{\hatX})_j$, so this triangle identity is satisfied.

    Thus we have an adjunction $M_j \ladj N_j$, as required.
  \end{proof}

\subsection{Interleaving the contraction and magma structures}  \label{subsect:interleaving}

  We now explain the interleaving argument and show that we can interleave the constructions of Subsections~\ref{subsect:contr} and \ref{subsect:magma} to give a construction of the left adjoint to the functor
    \[
      W \colon \Qn \rightarrow \Rn.
    \]
  To do so we add the contraction and magma structures one dimension at a time, starting with dimension $1$ and working upwards.  At dimension $m$ we first add free contraction cells using the functor $C_m$, then add free composites using the functor $M_m$, and then move up to the next dimension.  Finally, we add ``contraction $(n + 1)$-cells'' using the functor $C_{n + 1}$, which identifies the appropriate cells at dimension $n$.  Note that the method we use very closely follows the method used by Cheng in \cite{Che10}.

  This construction is possible because of the dimensional dependencies of the functors $C_k$ and $M_j$ defined in Subsections~\ref{subsect:contr} and \ref{subsect:contr};  the contraction $k$-cells added by $C_k$ only depend on the $(k - 1)$-cells, and the composites added by the $M_j$ only depend on the $j$-cells.

  In order to describe this interleaving process formally, we define, for each $0 \leq j, k \leq n$, a category whose objects are objects of $\Rn$ equipped with both a $j$-magma structure and a $k$-contraction.

  \begin{defn}
    For each $0 \leq j \leq n$, $0 \leq k \leq n + 1$, define a category $\mathcal{R}_{j, k}$ with
      \begin{itemize}
        \item objects: an object of $\mathcal{R}_{j, k}$ consists of an $n$-globular set $X$ equipped with a $j$-magma structure, a strict $n$-category $S$, and a map of $n$-globular sets
     \[
        \xy
          % POINTS
          (0, 0)*+{X}="X";
          (0, -16)*+{S}="S";
          % ARROWS
          {\ar_f "X" ; "S"};
        \endxy
     \]
            that preserves the $j$-magma structure of $X$, equipped with a $k$-contraction $\gamma$;
        \item morphisms: a morphism in $\mathcal{R}_{j, k}$ is a commuting square
     \[
        \xy
          % POINTS
          (0, 0)*+{X}="X";
          (16, 0)*+{Y}="Y";
          (0, -16)*+{S}="S";
          (16, -16)*+{R}="R";
          % ARROWS
          {\ar^u "X" ; "Y"};
          {\ar_f "X" ; "S"};
          {\ar^g "Y" ; "R"};
          {\ar_v "S" ; "R"};
        \endxy
     \]
     in $\nGSet$ such that
              \begin{itemize}
                \item $v$ is a map of strict $n$-categories;
                \item $u$ preserves the $j$-magma structure of $X$;
                \item writing $\gamma$ for the contraction on the map $f$ and $\delta$ for the contraction on the map $g$, for all $0 < m \leq n$, and $(a, b) \in X^c_{m}$, we have
                  \[
                    u(\gamma_m(a, b)) = \delta_m(u(a), u(b)).
                  \]
              \end{itemize}
      \end{itemize}
  \end{defn}

  For $0 < j \leq n$, $0 < k \leq n + 1$, we have forgetful functors
    \[
      D_{j, k} \colon \mathcal{R}_{j, k} \rightarrow \mathcal{R}_{j, k - 1},
    \]
  which forgets the contraction structure at dimension $k$, and
    \[
      N_{j, k} \colon \mathcal{R}_{j, k} \rightarrow \mathcal{R}_{j - 1, k},
    \]
  which forgets the magma structure at dimension $j$.  Thus we can write the forgetful functor
    \[
      W \colon \Qn \rightarrow \Rn
    \]
  as the composite
    \[
      \xymatrix{
        \Qn = \mathcal{R}_{n, n + 1} \ar[r]^-{D_{n, n + 1}} & \mathcal{R}_{n, n} \ar[r]^-{N_{n, n}} & \mathcal{R}_{n - 1, n} \ar[r]^-{D_{n - 1, n}} & \dotsb \ar[r]^-{N_{1, 1}} & \mathcal{R}_{0, 1} \ar[r]^-{D_{0, 1}} & \mathcal{R}_{0, 0} = \Rn.
               }
    \]
  In order to construct the left adjoint to $W$, we construct left adjoint to each of the factors in the composite above, by lifting the constructions of $C_k$ and $M_j$ from Subsections~\ref{subsect:contr} and \ref{subsect:magma} in a way that interacts properly with the forgetful functors
    \[
      \mathcal{R}_{j, k} \rightarrow \MagR_j,
    \]
  which forget the $k$-contraction structure entirely, and
    \[
      \mathcal{R}_{j, k} \rightarrow \ContrR_j,
    \]
  which forget the $j$-magma structure entirely.

  \begin{lemma}  \label{lem:magstab}
    For all $0 < k \leq n + 1$, the adjunction
      \[
        \xy
          % POINTS
          (0, 0)*+{\ContrR_{k - 1}}="Cl";
          (30, 0)*+{\ContrR_{k}}="Cr";
          % ARROWS
          {\ar@<1ex>^-{C_{k}}_-*!/u1pt/{\labelstyle \bot} "Cl" ; "Cr"};
          {\ar@<1ex>^-{D_{k}} "Cr" ; "Cl"};
        \endxy
      \]
    lifts to an adjunction
      \[
        \xy
          % POINTS
          (0, 0)*+{\mathcal{R}_{k - 1, k - 1}}="Rl";
          (30, 0)*+{\mathcal{R}_{k - 1, k}}="Rr";
          % ARROWS
          {\ar@<1ex>^-{C_{k - 1, k}}_-*!/u1pt/{\labelstyle \bot} "Rl" ; "Rr"};
          {\ar@<1ex>^-{D_{k - 1, k}} "Rr" ; "Rl"};
        \endxy
      \]
    making the diagram
      \[
        \xy
          % POINTS
          (0, 0)*+{\mathcal{R}_{k - 1, k - 1}}="Rl";
          (30, 0)*+{\mathcal{R}_{k - 1, k}}="Rr";
          (0, -16)*+{\ContrR_{k - 1}}="Cl";
          (30, -16)*+{\ContrR_{k}}="Cr";
          % ARROWS
          {\ar@<1ex>^-{C_{k - 1, k}}_-*!/u1pt/{\labelstyle \bot} "Rl" ; "Rr"};
          {\ar@<1ex>^-{D_{k - 1, k}} "Rr" ; "Rl"};
          {\ar@<1ex>^-{C_{k}}_-*!/u1pt/{\labelstyle \bot} "Cl" ; "Cr"};
          {\ar@<1ex>^-{D_{k}} "Cr" ; "Cl"};
          {\ar "Rl" ; "Cl"};
          {\ar "Rr" ; "Cr"};
        \endxy
      \]
    commute serially.
  \end{lemma}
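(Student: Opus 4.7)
The plan is to lift $C_k$ and $D_k$ levelwise, exploiting the fact that the contraction functors modify cells only at a single dimension, and that this dimension does not interact with the $(k-1)$-magma structure already present.

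First I would define $D_{k-1,k}$ in the obvious way: an object of $\mathcal{R}_{k-1,k}$ has a $(k-1)$-magma structure and a $k$-contraction; forget the contraction at dimension $k$ to obtain an object of $\mathcal{R}_{k-1,k-1}$. Morphisms are treated identically. Serial commutativity of the right-hand square with the forgetful functors to $\ContrR_{k-1}$ and $\ContrR_k$ is immediate since forgetting the magma structure commutes with forgetting a contraction at a given dimension.

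Next I would construct $C_{k-1,k}$. Given an object $(f\colon X\to S,\gamma)$ of $\mathcal{R}_{k-1,k-1}$, apply $C_k$ to the underlying object of $\ContrR_{k-1}$ to obtain $(\tilde f\colon \tilde X\to S,\tilde\gamma)$. For $1\leq k\leq n$, inspection of Definition~\ref{defn:freekcontr} shows $\tilde X_j=X_j$ for all $j\neq k$; in particular, all sets of $m$-cells for $m\leq k-1$ are unchanged, so the $(k-1)$-magma structure on $X$ transfers verbatim to $\tilde X$, and $\tilde f$ respects it because $\tilde f_m=f_m$ at those dimensions. For $k=n+1$, the construction of Definition~\ref{defn:freenplusonecontr} instead coequalises a pair of maps into $X_n$; here I would need to verify that the magma operations $\comp^n_p\colon X_n\times_{X_p}X_n\to X_n$ descend to the quotient $\tilde X_n$. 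This follows because the equivalence relation defining the coequaliser is generated by pairs $(a,b)$ with $s(a)=s(b)$, $t(a)=t(b)$ and $f_n(a)=f_n(b)$; since $f$ preserves composition in $S$ (and sources and targets match), composition is constant on equivalence classes, so it descends uniquely to $\tilde X_n$, and $\tilde f_n$ respects it by construction of the unique induced map. The action of $C_{k-1,k}$ on morphisms is defined the same way (using well-definedness on the coequaliser in the case $k=n+1$). Serial commutativity of the left-hand square is then immediate by construction.

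For the adjunction $C_{k-1,k}\dashv D_{k-1,k}$, I would take the unit and counit to be those of $C_k\dashv D_k$ from Proposition~\ref{prop:CladjD} (and its $k=n+1$ analogue), and verify they lift to morphisms in $\mathcal{R}_{k-1,k}$. This requires showing that each component respects the $(k-1)$-magma structure. For $k\leq n$ this is trivial, since the components $\eta_X$ and $\epsilon_X$ are identities at every dimension other than $k$ and the magma structure lives strictly below $k$. For $k=n+1$, the unit component at dimension $n$ is the coequaliser quotient map $q\colon X_n\to\tilde X_n$, which preserves composition precisely by the argument of the previous paragraph; the counit is the identity. The triangle identities are then inherited directly from the underlying adjunction $C_k\dashv D_k$, since both sides live in $\mathcal{R}_{k-1,k}$ and the forgetful functor to $\ContrR_k$ is faithful enough to detect them.

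The main obstacle is the case $k=n+1$: it is the only one where the contraction functor genuinely alters cells at a dimension carrying magma operations, so I would need to check carefully that the coequaliser is compatible with the composition maps $\comp^n_p$ and with morphisms. Once this compatibility is established, everything else in the lift is formal.
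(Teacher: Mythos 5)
Your proof is correct, and for $0<k\leq n$ it coincides with the paper's argument: $C_k$ only modifies the set of $k$-cells, so the $(k-1)$-magma structure, which lives strictly below dimension $k$, is untouched and the lift is formal. Where you diverge is in singling out $k=n+1$ as the genuinely nontrivial case, and here your treatment is actually \emph{more} complete than the paper's. The paper's entire proof is the one-sentence observation that ``$C_k$ adds only $k$-cells to $X$, so the underlying $(k-1)$-globular set of $X$ remains stable under $C_k$'' --- but this justification does not literally apply when $k=n+1$, since $C_{n+1}$ does not add $(n+1)$-cells; it coequalises $n$-cells, thereby changing the underlying $n$-globular set on which the $n$-magma structure lives. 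Your verification that the operations $\comp^n_p$ descend to the quotient $\tilde X_n$ is exactly the missing step: the relation being coequalised is the kernel pair of $(s,t,f_n)$, and since $f$ is a map of magmas and sources and targets of composites are determined by those of the factors, the relation is a congruence for each $\comp^n_p$, so composition, the quotient map $q$ (which is the unit component), and the induced maps on morphisms are all compatible with the magma structure. So your proof fills a small gap in the paper's rather than merely reproducing it; the rest (the definition of $D_{k-1,k}$, serial commutativity, and the inheritance of the unit, counit and triangle identities along the faithful forgetful functors) is formal in both accounts.
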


  \begin{proof}
    We need to show that, given $(X, f, S, \gamma) \in \ContrR_{k - 1}$, if $X$ is equipped with a $(k - 1)$-magma structure, then this $(k - 1)$-magma structure is ``stable'' under $C_k$;  this is immediate since, by construction, $C_k$ adds only $k$-cells to $X$, so the underlying $(k - 1)$-globular set of $X$ remains stable under $C_k$.
  \end{proof}

  \begin{lemma}  \label{lem:contrstab}
    For all $0 < j \leq n $, the adjunction
      \[
        \xy
          % POINTS
          (0, 0)*+{\MagR_{j - 1}}="Ml";
          (30, 0)*+{\MagR_{j}}="Mr";
          % ARROWS
          {\ar@<1ex>^-{M_{j}}_-*!/u1pt/{\labelstyle \bot} "Ml" ; "Mr"};
          {\ar@<1ex>^-{N_{j}} "Mr" ; "Ml"};
        \endxy
      \]
    lifts to an adjunction
      \[
        \xy
          % POINTS
          (0, 0)*+{\mathcal{R}_{j - 1, j}}="Rl";
          (30, 0)*+{\mathcal{R}_{j, j}}="Rr";
          % ARROWS
          {\ar@<1ex>^-{M_{j, j}}_-*!/u1pt/{\labelstyle \bot} "Rl" ; "Rr"};
          {\ar@<1ex>^-{N_{j, j}} "Rr" ; "Rl"};
        \endxy
      \]
    making the diagram
      \[
        \xy
          % POINTS
          (0, 0)*+{\mathcal{R}_{j - 1, j}}="Rl";
          (30, 0)*+{\mathcal{R}_{j, j}}="Rr";
          (0, -16)*+{\MagR_{j - 1}}="Ml";
          (30, -16)*+{\MagR_{j}}="Mr";
          % ARROWS
          {\ar@<1ex>^-{M_{j, j}}_-*!/u1pt/{\labelstyle \bot} "Rl" ; "Rr"};
          {\ar@<1ex>^-{N_{j, j}} "Rr" ; "Rl"};
          {\ar@<1ex>^-{M_{j}}_-*!/u1pt/{\labelstyle \bot} "Ml" ; "Mr"};
          {\ar@<1ex>^-{N_{j}} "Mr" ; "Ml"};
          {\ar "Rl" ; "Ml"};
          {\ar "Rr" ; "Mr"};
        \endxy
      \]
    commute serially.
  \end{lemma}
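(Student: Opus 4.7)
The plan is to show that the construction of $M_j$ on the magma part extends to handle the $j$-contraction data, with no new choices required, and that the adjunction data lifts directly from the underlying adjunction $M_j \dashv N_j$. The crucial observation, dual to the one used in Lemma~\ref{lem:magstab}, is one of dimensional dependency: a $j$-contraction $\gamma$ consists of maps $\gamma_{m+1}\colon X^c_{m+1} \to X_{m+1}$ for $0 \leq m < j$, and each set $X^c_{m+1}$ is defined as a pullback involving only $X_m$, $X_{m-1}$, and $S_m$. Since $M_j$ leaves $X_m$ unchanged for all $m \neq j$, the sets $X^c_m$ for $m \leq j$ are identical for $X$ and $\hatX$, and for $m < j$ each contraction cell $\gamma_m(a,b) \in X_m = \hatX_m$ already lives in $\hatX$.

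On objects, define $M_{j,j}(X,f,S,\gamma) := (\hatX, \hat f, S, \hat\gamma)$, where $(\hatX, \hat f, S)$ is given by $M_j$ on the underlying magma-part, and the lifted $j$-contraction $\hat\gamma$ is defined by $\hat\gamma_m = \gamma_m$ for $m < j$, and $\hat\gamma_j := c^{(0)}_j \circ \gamma_j \colon X^c_j = \hatX^c_j \to \hatX_j$, using the coprojection $c^{(0)}_j \colon X_j \to \hatX_j$. The three contraction axioms hold: the source and target conditions hold because $c^{(0)}_j$ is a map of $n$-globular sets and the sources/targets are $(j-1)$-cells unchanged by $M_j$; and the image condition holds because the construction of $\hat f$ in Definition~\ref{defn:Mj} ensures $\hat f_j \circ c^{(0)}_j = f^{(0)}_j = f_j$. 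On morphisms, define $M_{j,j}$ by applying $M_j$; compatibility with $\hat\gamma$ follows from naturality of $c^{(0)}_j$.

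The unit and counit of the lifted adjunction are defined to have the same components as those of $M_j \dashv N_j$, so I need only check that these components are morphisms in $\mathcal{R}_{\bullet,j}$, i.e.\ that they respect the $j$-contractions. For the unit at $(X,f,S,\gamma) \in \mathcal{R}_{j-1,j}$, the $j$-component is $c^{(0)}_j$, and on $(a,b) \in X^c_j$ we have $c^{(0)}_j(\gamma_j(a,b)) = \hat\gamma_j(a,b)$ by definition, so the unit preserves contractions. For the counit at $(X,f,S,\gamma) \in \mathcal{R}_{j,j}$, the $j$-component $(\epsilon_X)_j \colon \hatX_j \to X_j$ satisfies $(\epsilon_X)_j \circ c^{(0)}_j = \epsilon^{(0)}_X = \id_{X_j}$, so $(\epsilon_X)_j \circ \hat\gamma_j = (\epsilon_X)_j \circ c^{(0)}_j \circ \gamma_j = \gamma_j$, which is exactly the compatibility condition. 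The triangle identities are inherited immediately from those of $M_j \dashv N_j$, since they hold componentwise on the underlying magma parts. Finally, serial commutativity of the displayed square is immediate, since $M_{j,j}$ and $N_{j,j}$ are defined so that their actions on underlying magma parts are precisely $M_j$ and $N_j$.

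The main subtlety — and the only place where anything could go wrong — is the verification that $\hat\gamma_j$ satisfies the image axiom $\hat f_{j}(\hat\gamma_j(a,b)) = 1_{f_{j-1}(a)}$; I expect this to be the only point requiring care, but it reduces to the identity $\hat f_j \circ c^{(0)}_j = f_j$, which is transparent from the inductive construction of $\hat f$. Everything else is formal transport of the underlying adjunction along the extra contraction data.
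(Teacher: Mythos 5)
Your proof is correct and follows essentially the same approach as the paper: the key observation in both is that $M_j$ changes only dimension $j$, so the sets $X^c_m$ for $m \leq j$ are unchanged and the lifted contraction at dimension $j$ is $c^{(0)}_j \circ \gamma_j$. You are somewhat more thorough than the paper's own proof, which leaves the unit/counit compatibility and triangle identities implicit; your explicit checks of these are all valid.
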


  \begin{proof}
    We need to show that, given $(X, f, S) \in \MagR_{j - 1}$, if $(X, f, S)$ is equipped with a $j$-contraction $\gamma$, this $j$-contraction structure is ``stable'' under $M_j$.  By construction, $M_j$ adds only $j$-cells to $X$, so the underlying $(j - 1)$-globular set of $X$ remains stable under $M_j$.  The required contraction $j$-cells depend only on the $(j - 1)$-cells of $\hatX$, and we have $\hatX^c_{j - 1} = X^c_{j - 1}$, so the contraction $j$-cells in $\hatX$ are given by
      \[
        \xymatrix{
          X^c_{j - 1} \ar[r]^{\gamma_{j - 1}} & X_j \ar[r]^{c_j} & \hatX_j.
                 }
      \]
    For $m < j $, we have $\hatX^c_{m - 1} = X^c_{m - 1}$, $\hatX_m = X_m$, and the contraction $m$-cells are given by $\gamma_{m - 1} \colon X^c_{m - 1} \rightarrow X_m$.  Hence the $j$-contraction structure is stable under $M_j$.
  \end{proof}

  Combining Lemmas~\ref{lem:magstab} and \ref{lem:contrstab}, we obtain a chain of adjunctions
    \[
      \xymatrix{
        \Rn = \mathcal{R}_{0, 0} \ar@<1ex>[r]^-{C_{0, 1}}_-*!/u1pt/{\labelstyle \bot} & \mathcal{R}_{0, 1} \ar@<1ex>[l]^-{D_{0, 1}} \ar@<1ex>[r]^-{M_{1, 1}}_-*!/u1pt/{\labelstyle \bot} & \dotsc \ar@<1ex>[l]^-{N_{1, 1}} \ar@<1ex>[r]^-{C_{n - 1, n}}_-*!/u1pt/{\labelstyle \bot} & \mathcal{R}_{n - 1, n} \ar@<1ex>[l]^-{D_{n - 1, n}} \ar@<1ex>[r]^-{M_{n, n}}_-*!/u1pt/{\labelstyle \bot}  & \mathcal{R}_{n, n} \ar@<1ex>[l]^-{N_{n, n}} \ar@<1ex>[r]^-{C_{n, n + 1}}_-*!/u1pt/{\labelstyle \bot} & \mathcal{R}_{n, n + 1} = \Qn \ar@<1ex>[l]^-{D_{n, n + 1}}.
               }
    \]
  Composing these, we obtain an adjunction
    \[
      \xymatrix{
        \Rn \ar@<1ex>[r]^-{J}_-*!/u1pt/{\labelstyle \bot} & \Qn \ar@<1ex>[l]^-{W},
               }
    \]
  where $J = C_{n, n + 1} \comp M_{n, n} \comp C_{n - 1, n} \comp \dotsb \comp M_{1, 1} \comp C_{0, 1}$.  We then have
    \[
      \xymatrix{
        \nGSet \ar@<1ex>[r]^-{F}_-*!/u1pt/{\labelstyle \bot} & \Qn \ar@<1ex>[l]^-{U},
               }
    \]
  where $F = J \comp H$.  Thus $U$ has a left adjoint, so Penon weak $n$-categories are indeed well-defined, and moreover we have an explicit description of this left adjoint.

  \chapter{Operadic definitions of weak $n$-category}  \label{chap:opdefns}

  In this chapter we recall and compare two operadic definitions of weak $n$-category, those of Batanin~\cite{Bat98} and Leinster~\cite{Lei98}.  We give a new proof of the existence of the initial $n$-globular operad with a contraction and system of compositions, a fact that has previously only been assumed~\cite{Bat98, Lei02}.  A correspondence between the contractions and systems of compositions used in Batanin's definition, and the unbiased contractions used in Leinster's definition, has long been suspected, and we prove a conjecture of Leinster~\cite[Section~10.1]{Lei04} that shows that the two notions are in some sense equivalent.  We then prove several new coherence theorems which apply to algebras for any operad with an unbiased contraction or with a contraction and system of compositions; these coherence theorems thus apply to both Batanin weak $n$-categories and Leinster weak $n$-categories.

  All definitions in this chapter are of the $n$-dimensional case, but it is straightforward and well-established how to modify the definitions to the $\omega$-dimensional case \cite{Bat98, Lei98}.  The results in Section~\ref{sect:globopcoh} are mostly not applicable in the $\omega$-dimensional case, since most of the coherence theorems concern behaviour of cells at dimension $n$ (for example, stating that certain diagrams of $n$-cells commute).

  As noted in the introduction, throughout this chapter we write $1$ to denote the terminal object of the category in which we are working; with the exception of Section~\ref{sect:globopalgs}, we work exclusively in $\nGSet$.  In this case, $1$ is the $n$-globular set in which the set $1_m$ of $m$-cells is a one-element set for every $0 \leq m \leq n$. We also write $1$ for the unique element of each set $1_m$.  Applying $T$ to the terminal $n$-globular set freely generates composites, giving the $n$-globular set $T1$ of globular pasting diagrams.  The cells of $T1$ are the arities of the operations in an $n$-globular operad.

  \section{Globular operads}  \label{sect:globop}  \label{sect:globopalgs}
  
  In this section we recall the definitions of generalised operads and their algebras, with particular emphasis placed on the case of $n$-globular operads.  None of the material in this section is new; it originates in \cite{Lei04}, with the special case of $n$-globular operads originating in \cite{Bat98}.

  A classical operad has a set of operations, each equipped with an arity: a natural number which is to be thought of as the number of inputs that the operation has.  In the definition of generalised operad, we replace $\Set$ with any category $\mathcal{C}$ that has all pullbacks and a terminal object, denoted $1$.  The arities of the generalised operad are then generated by applying a suitably well-behaved monad $T$ to the terminal object, giving an ``object of arities'' $T1$ in $\mathcal{C}$; hence such a generalised operad is called a ``$T$-operad''.  To retrieve the definition of classical non-symmetric operad, we take $\mathcal{C} = \Set$, and take $T$ to be the free monoid monad on $\Set$.  The terminal object in $\Set$ is the one-element set $1$, and applying $T$ gives $T1 \cong \mathbb{N}$, so in this case a $T$-operad has natural numbers as arities.  We will return to this example throughout the section.

  We will be particularly interested in the case of $n$-globular operads, in which $\mathcal{C} = \nGSet$ and $T$ is the free strict $n$-category monad.  As explained in the introduction, in this case $T1$ is the $n$-globular set whose elements are globular pasting diagrams.  In an $n$-globular operad, each operation has such a diagram as its arity, and should be thought of as a way of composing a diagram of cells of that shape.

  Before giving the definition of $T$-operad, we must first state formally what it means for $T$ to be ``suitably well-behaved''.

  \begin{defn}  \label{defn:cartesian}
    A category is said to be \emph{cartesian} if it has all pullbacks.  A functor is said to be \emph{cartesian} if it preserves pullbacks.  A natural transformation is said to be \emph{cartesian} if all of its naturality squares are pullback squares.  A map of monads is said to be \emph{cartesian} if its underlying natural transformation is cartesian.  A monad is said to be \emph{cartesian} if its functor part is a cartesian functor and its unit and counit are cartesian natural transformations.
  \end{defn}

  We now explain what it means for a monad to be cartesian with reference to the example of the free monoid monad on $\Set$.  Let $f \colon X \rightarrow Y$ be a map of sets, and consider the corresponding naturality squares for the free monoid monad $T$, i.e.
      \[
        \xy
          % POINTS
          % UNIT
          (0, 0)*+{X}="X";
          (16, 0)*+{Y}="Y";
          (0, -16)*+{TX}="TX";
          (16, -16)*+{TY}="TY";
          % AND
          (32, -8)*+{\text{and}};
          % MULTIPLICATION
          (48, 0)*+{T^2X}="TTX";
          (64, 0)*+{T^2Y}="TTY";
          (48, -16)*+{TX}="TXr";
          (64, -16)*+{TY.}="TYr";
          % ARROWS
          {\ar^-{f} "X" ; "Y"};
          {\ar_{\eta^T_X} "X" ; "TX"};
          {\ar_-{Tf} "TX" ; "TY"};
          {\ar^{\eta^T_Y} "Y" ; "TY"};
          {\ar^-{T^2 f} "TTX" ; "TTY"};
          {\ar_{\mu^T_X} "TTX" ; "TXr"};
          {\ar_-{Tf} "TXr" ; "TYr"};
          {\ar^{\mu^T_Y} "TTY" ; "TYr"};
          % PULLBACK STUFF
          (6,-1)*{}; (6,-5)*{} **\dir{-};
          (2,-5)*{}; (6,-5)*{} **\dir{-};
          (54,-1)*{}; (54,-5)*{} **\dir{-};
          (50,-5)*{}; (54,-5)*{} **\dir{-};
        \endxy
     \]
  Recall that an element of $TX$ is a ``word'' in $X$; that is, a finite string of elements of $X$.  The fact that the naturality square for the unit is a pullback square tells us that every element of $X$ is uniquely determined by its image under $f$ and the corresponding word of length $1$ in $TX$.  This means that we have no equations of the form $x = t$ where $x$ is a word of length $1$ but $t$ is a word of some other length, and also tells us that any element of $TX$ whose image under $Tf$ is a word of length $1$ must itself be a word of length $1$.

  Similarly, the fact that the naturality square for the multiplication is a pullback square tells us that any element of $T^2 X$ (a word of words in $X$) is uniquely determined by an element of $TX$, which tells us which elements of $X$ appear in this word of words, and an element of $T^2 Y$, which tells us how these elements of $X$ are divided into words.  This means that if two words in $TX$ are equal, they must be of the same length, and consist of the same elements of $X$ in the same order.

  So the fact that $T$ is cartesian tells us that, for a set $X$, each element of $TX$ has a fixed length and order, and that this length and order will be preserved by any map in the image of $T$.  More generally, for any cartesian monad $T$ on $\Set$ or a presheaf category, we can think of this as telling us that, for an object $X$, each element of $TX$ has a fixed ``shape'' that is preserved by any map in the image of $T$.  This goes some way towards explaining why cartesian monads are appropriate for generating the arities of generalised operads; arities are supposed to be the shapes that the inputs of an operation can take, so we need a monad for which the elements of $T1$ are fixed shapes.

  We now recall the definition of $T$-collections, the underlying data for $T$-operads.

  \begin{defn}  \label{defn:globcoll}
    Let $\mathcal{C}$ be a cartesian category with a terminal object $1$, and let $T$ be a cartesian monad on $\mathcal{C}$.  The \emph{category of $T$-collections} is the slice category $\mathcal{C}/T1$.  Explicitly, this is the category with:
      \begin{itemize}
        \item objects: an object of $\mathcal{C}/T1$, called an \emph{$T$-collection} (or simply, a \emph{collection}), consists of an object $K$ of $\mathcal{C}$, and a map
      \[
        \xymatrix{
          K \ar[d]^-{k} \\
          T1
                 }
      \]
  in $\mathcal{C}$;
    \item morphisms: a morphism of $f \colon K \rightarrow K'$, called a \emph{map of collections}, is a map of the underlying objects of $\mathcal{C}$ such that
      \[
        \xy
          % POINTS
          (-10, 14)*+{K}="K";
          (10, 14)*+{K'}="K'";
          (0, 0)*+{T1}="T";
          % ARROWS
          {\ar^{f} "K" ; "K'"};
          {\ar_{k} "K" ; "T"};
          {\ar^{k'} "K'" ; "T"}
        \endxy
      \]
  commutes.
      \end{itemize}

  We obtain from $\mathcal{C}/T1$ the monoidal category of collections $\TColl$ by equipping it with a tensor product.  Let $k \colon K \rightarrow T1$, $k' \colon K' \rightarrow T1$ be collections.  Then their tensor product is defined to be the composite along the top of the diagram
    \[
      \xy
        % POINTS
        (0, 0)*+{K \otimes K'}="KK'";
        (20, 0)*+{TK'}="TK'";
        (40, 0)*+{T^2 1}="T21";
        (60, 0)*+{T1}="T";
        (0, -16)*+{K}="K";
        (20, -16)*+{T1}="Tb";
        % PULLBACK STUFF
        (7,-2)*{}; (7,-6)*{} **\dir{-};
        (3,-6)*{}; (7,-6)*{} **\dir{-};
        % ARROWS
        {\ar "KK'" ; "TK'"};
        {\ar^{Tk'} "TK'" ; "T21"};
        {\ar^{\mu^T_1} "T21" ; "T"};
        {\ar "KK'" ; "K"};
        {\ar^{T!} "TK'" ; "Tb"};
        {\ar_{k} "K" ; "Tb"};
      \endxy
    \]
  where $!$ is the unique map $K' \rightarrow 1$ in $\mathcal{C}$ (since $1$ is terminal).  The unit for this tensor is the collection
      \[
        \xymatrix{
          1 \ar[d]^-{\eta^T_1} \\
          T1.
                 }
      \]

  In the case in which $\mathcal{C} = \nGSet$, and $T$ is the free strict $n$-category monad, a $T$-collection is called an \emph{$n$-globular collection}.  We write $\nColl$ for the monoidal category of $n$-globular collections.
  \end{defn}

  To understand this definition, we return to the example of classical non-symmetric operads, in which $\mathcal{C} = \Set$ and $T$ is the free monoid monad.  In this case we have $T1 \cong \mathbb{N}$, so a collection consists of a set $K$ of operations and a map $k \colon K \rightarrow \mathbb{N}$ that assigns an arity in $\mathbb{N}$ to each operation in $K$.  The tensor product will be used to define composition in a $T$-operad, via a multiplication map $\mu^K \colon K \otimes K \rightarrow K$.  In this case, an element $K \otimes K$ consists of an operation of arity $n$ for some $n \in \mathbb{N}$, together with a string of $n$ operations to be composed into each of the first operation's $n$ inputs; the pullback checks that the arity of the first operation is the same as the length of this string.  Thus $\mu^K$ takes this information an gives a single operation of $K$, with the appropriate arity.

  We now explain the role the tensor product of collections will play in the definition of $n$-globular operad, i.e. in the case $\mathcal{C} = \nGSet$, and $T$ is the free strict $n$-category monad.  As in the case above, in an $n$-globular operad with underlying collection $\xymatrix{ K \ar[r]^k & T1 }$, composition of operations will be defined as a map of collections $\mu^K \colon K \otimes K \rightarrow K$, so our explanation will focus on $K \otimes K$.

  We think of a typical element of $K \otimes K$ as looking like:
      \[
        \xy
          % POINTS
          % THETA
          (0, 0)*+{\bullet}="1";
          (16, 0)*+{\bullet}="2";
          (32, 0)*+{\bullet}="3";
          (8, 3.5)*+{\Downarrow};
          (8, -3.5)*+{\Downarrow};
          (24, 0)*+{\Downarrow};
          (-5, 0)*+{\theta =};
          % THETA_1
          (-34, 12)*+{\bullet}="11";
          (-24, 12)*+{\bullet}="12";
          (-14, 12)*+{\bullet}="13";
          (-29, 12)*+{\Downarrow};
          (-19, 15)*+{\Downarrow};
          (-19, 9)*+{\Downarrow};
          (-40, 12)*+{\theta_1 =};
          % THETA_2
          (-34, -12)*+{\bullet}="21";
          (-24, -12)*+{\bullet}="22";
          (-14, -12)*+{\bullet}="23";
          (-19, -12)*+{\Downarrow};
          (-40, -12)*+{\theta_2 =};
          % THETA_3
          (44, 12)*+{\bullet}="31";
          (54, 12)*+{\bullet}="32";
          (64, 12)*+{\bullet}="33";
          (49, 12)*+{\Downarrow};
          (70, 12)*+{= \theta_3};
          % ARROWS
          % THETA
          {\ar@/^1.75pc/ "1" ; "2"};
          {\ar "1" ; "2"};
          {\ar@/_1.75pc/ "1" ; "2"};
          {\ar@/^1.25pc/ "2" ; "3"};
          {\ar@/_1.25pc/ "2" ; "3"};
          % THETA_1
          {\ar@/^1.25pc/ "11" ; "12"};
          {\ar@/_1.25pc/ "11" ; "12"};
          {\ar@/^1.75pc/ "12" ; "13"};
          {\ar "12" ; "13"};
          {\ar@/_1.75pc/ "12" ; "13"};
          % THETA_2
          {\ar "21" ; "22"};
          {\ar@/^1.25pc/ "22" ; "23"};
          {\ar@/_1.25pc/ "22" ; "23"};
          % THETA_3
          {\ar@/^1.25pc/ "31" ; "32"};
          {\ar@/_1.25pc/ "31" ; "32"};
          {\ar "32" ; "33"};
          % Composition arrows
          \SelectTips{lu}{12}
          {\ar@/^1pc/@{.>} (-14, 16) ; (4, 8)}
          {\ar@/_1pc/@{.>} (-15, -15) ; (4, -8)}
          {\ar@/_1pc/@{.>} (42, 16) ; (26, 6)}
        \endxy
      \]
  where $\theta$, $\theta_1$, $\theta_2$, $\theta_3$ are $m$-cell in $K$ (with $\theta_1$, $\theta_2$, $\theta_3$ the ``labels'' of the $m$-cell in $TK$).  Applying the composite
    \[
      \xy
        % POINTS
        (0, 0)*+{K \otimes K}="KK";
        (16, 0)*+{TK}="TK";
        (30, 0)*+{T^2 1}="TT";
        (43, 0)*+{T1}="T";
        % ARROWS
        {\ar "KK" ; "TK"};
        {\ar^{Tk} "TK" ; "TT"};
        {\ar^{\mu^T_1} "TT" ; "T"};
      \endxy
    \]
  combines the arities of these operations to give:
    \[
      \xy
        % POINTS
          (0, 0)*+{\bullet}="1";
          (16, 0)*+{\bullet}="2";
          (32, 0)*+{\bullet}="3";
          (48, 0)*+{\bullet}="4";
          (64, 0)*+{\bullet}="5";
          (8, 0)*+{\Downarrow};
          (24, 0)*+{\Downarrow};
          (24, 8)*+{\Downarrow};
          (24, -8.5)*+{\Downarrow};
          (40, 0)*+{\Downarrow};
        % ARROWS
          {\ar@/^1.25pc/ "1" ; "2"};
          {\ar@/_1.25pc/ "1" ; "2"};
          {\ar@/^1.25pc/ "2" ; "3"};
          {\ar@/_1.25pc/ "2" ; "3"};
          {\ar@/^2.75pc/ "2" ; "3"};
          {\ar@/_2.75pc/ "2" ; "3"};
          {\ar@/^1.25pc/ "3" ; "4"};
          {\ar@/_1.25pc/ "3" ; "4"};
          {\ar "4" ; "5"};
      \endxy
    \]
  which is the arity of the operation we obtain by applying the multiplication map $\mu^K \colon K \otimes K \rightarrow K$, since $\mu^K$ is a map of collections.

  We now give the definition of a $T$-operad.

  \begin{defn}  \label{defn:globop}
    Let $\mathcal{C}$ be a cartesian category with a terminal object $1$, and let $T$ be a cartesian monad on $\mathcal{C}$.  A \emph{$T$-operad} is a monoid in the monoidal category $\TColl$.  Thus, an $T$-operad consists of:
      \begin{itemize}
        \item a collection
          \[
            \xymatrix{
              K \ar[d]^-k \\
              T1;
                     }
          \]
        \item a unit map $\eta^K \colon 1 \rightarrow K$ in $\mathcal{C}$ such that
          \[
            \xy
              % POINTS
              (-10, 14)*+{1}="1";
              (10, 14)*+{K}="K";
              (0, 0)*+{T1}="T";
              % ARROWS
              {\ar^{\eta^K} "1" ; "K"};
              {\ar_{\eta^T_1} "1" ; "T"};
              {\ar^{k} "K" ; "T"};
            \endxy
          \]
          commutes;
        \item a multiplication map $\mu^K \colon K \otimes K \rightarrow K$ such that the triangle in the diagram
          \[
            \xy
            % POINTS
              (0, 0)*+{K \otimes K}="KK";
              (-10, -10)*+{K}="K";
              (10, -10)*+{TK}="TK";
              (0, -20)*+{T1}="Tl";
              (20, -20)*+{T^2 1}="TT";
              (30, -30)*+{T1}="Tr";
              (30, 0)*+{K}="Kr";
            % ARROWS
              {\ar "KK" ; "K"};
              {\ar "KK" ; "TK"};
              {\ar_k "K" ; "Tl"};
              {\ar^{T!} "TK" ;"Tl"};
              {\ar_{Tk} "TK" ; "TT"};
              {\ar_{\mu^T_1} "TT" ; "Tr"};
              {\ar^{\mu^K} "KK" ; "Kr"};
              {\ar^-{k} "Kr" ; "Tr"};
              % PULLBACK STUFF
              (-3, -5)*{}; (0,-8)*{} **\dir{-};
              (3, -5)*{}; (0,-8)*{} **\dir{-};
            \endxy
          \]
          commutes.
      \end{itemize}
    These must satisfy the usual monoid axioms.  Note that we usually refer to such an operad as simply ``an operad $K$''.

    A \emph{map of $T$-operads} $f \colon K \rightarrow K'$  is a map of monoids.  This consists of a map $f \colon K \rightarrow K'$ of underlying collections such that the diagrams
      \[
        \xy
          % POINTS
          (0, 0)*+{1}="1";
          (-10, -14)*+{K}="K";
          (10, -14)*+{K'}="K'";
          % ARROWS
          {\ar_{\eta^K} "1" ; "K"};
          {\ar^{\eta^{K'}} "1" ; "K'"};
          {\ar_{f} "K" ; "K'"};
        \endxy
      \]
    and
      \[
        \xy
          % POINTS
          (-10, 0)*+{K \otimes K}="K2";
          (10, 0)*+{K' \otimes K'}="K'2";
          (-10, -16)*+{K}="K";
          (10, -16)*+{K'}="K'";
          % ARROWS
          {\ar^-{f \otimes f} "K2" ; "K'2"};
          {\ar_{f} "K" ; "K'"};
          {\ar_{\mu^K} "K2" ; "K"};
          {\ar^{\mu^{K'}} "K'2" ; "K'"};
        \endxy
      \]
    commute.

    In the case in which $\mathcal{C} = \nGSet$, and $T$ is the free strict $n$-category monad, a $T$-operad is called an \emph{$n$-globular operad}.  Since $n$-globular operads are the only kind of operads used in this thesis, we will often refer to them simply as ``operads''.
  \end{defn}

  To see that this is a generalisation of the definition of classical non-symmetric operad, we once again return to the case in which $\mathcal{C} = \Set$ and $T$ is the free monoid monad.  In a $T$-operad with underlying collection $k \colon K \rightarrow \mathbb{N}$, the composition of operations is given by the multiplication map $\mu^K \colon K \otimes K \rightarrow K$. This map takes an element of the tensor product, that is an operation of arity $n$ and a string of $n$ operations of arities $i_1, \dotsc, i_n$, and composes to give a single operation in $K$.  The commuting triangle in the diagram defining $\mu^K$ ensures that the arity of the composite operation is $i_1 + \dotsc + i_n$.  Identities come from the unit map $\eta^K \colon 1 \rightarrow K$, which picks out a single operation, and the commuting triangle ensures that the arity of this operation is $1$.

  The algebras for a $T$-operad are the algebras for a particular induced monad, which we now define.

  \begin{defn}  \label{defn:indmonad}
    Let $\mathcal{C}$ be a cartesian category with a terminal object $1$, let $T$ be a cartesian monad on $\mathcal{C}$ and let $K$ be a $T$-operad.  Then there is an induced monad on $\mathcal{C}$, which by abuse of notation we denote $(K, \eta^K, \mu^K)$ (so the endofunctor part of the monad is denoted by the same letter as the underlying $n$-globular set of the operad, and we use the same notation for the unit and multiplication of the monad as we do for those of the operad).  The endofunctor
      \[
        K \colon \mathcal{C} \rightarrow \mathcal{C}
      \]
    is defined as follows: on objects, given an object $X$ in $\mathcal{C}$, $KX$ is defined by the pullback:
      \[
        \xy
          % POINTS
          (0, 0)*+{KX}="KX";
          (16, 0)*+{K}="K";
          (0, -16)*+{TX}="TX";
          (16, -16)*+{T1,}="T";
          % ARROWS
          {\ar^-{K!} "KX" ; "K"};
          {\ar_{k_X} "KX" ; "TX"};
          {\ar_-{T!} "TX" ; "T"};
          {\ar^{k} "K" ; "T"};
          % PULLBACK STUFF
          (6,-1)*{}; (6,-5)*{} **\dir{-};
          (2,-5)*{}; (6,-5)*{} **\dir{-};
        \endxy
     \]
    where $!$ is the unique morphism $X \rightarrow 1$ in $\mathcal{C}$; on morphisms, given a morphism $u \colon X \rightarrow Y$ in $\mathcal{C}$, $Ku$ is defined to be the unique map induced by the universal property of the pullback defining $KY$ such that the diagram
      \[
        \xy
          % POINTS
          (-16, 0)*+{KX}="KX";
          (0, 0)*+{KY}="KY";
          (16, 0)*+{K}="K";
          (-16, -16)*+{TX}="TX";
          (0, -16)*+{TY}="TY";
          (16, -16)*+{T1}="T";
          % ARROWS
          {\ar@{-->}^-{Ku} "KX" ; "KY"};
          {\ar^-{K!} "KY" ; "K"};
          {\ar_{k_X} "KX" ; "TX"};
          {\ar_{k_Y} "KY" ; "TY"};
          {\ar_-{Tu} "TX" ; "TY"};
          {\ar_-{T!} "TY" ; "T"};
          {\ar^{k} "K" ; "T"};
          {\ar@/^1.5pc/^{K!} "KX" ; "K"};
          {\ar@/_1.5pc/_{T!} "TX" ; "T"};
          % PULLBACK STUFF
          (6,-1)*{}; (6,-5)*{} **\dir{-};
          (2,-5)*{}; (6,-5)*{} **\dir{-};
          (-10,-1)*{}; (-10,-5)*{} **\dir{-};
          (-14,-5)*{}; (-10,-5)*{} **\dir{-};
        \endxy
     \]
    commutes.  Observe that commutativity of the left-hand square in the diagram above shows that $k$ is a natural transformation $K \Rightarrow T$; the fact that this square is a pullback square shows that this natural transformation is cartesian.

    The unit map $\eta^K \colon 1 \Rightarrow K$ for the monad $K$ has, for each $X \in \mathcal{C}$, a component $\eta^K_X \colon X \rightarrow KX$ which is the unique map such that the diagram
      \[
        \xy
          % POINTS
          (-12, 12)*+{X}="X";
          (16, 12)*+{1}="1";
          (0, 0)*+{KX}="KX";
          (16, 0)*+{K}="K";
          (0, -16)*+{TX}="TX";
          (16, -16)*+{T1,}="T";
          % ARROWS
          {\ar^-{K!} "KX" ; "K"};
          {\ar_{k_X} "KX" ; "TX"};
          {\ar_-{T!} "TX" ; "T"};
          {\ar^{k} "K" ; "T"};
          {\ar^{!} "X" ; "1"};
          {\ar@{-->}^{\eta^K_X} "X" ; "KX"};
          {\ar^{\epsilon} "1" ; "K"};
          {\ar@/_1pc/_{\eta^T_X} "X" ; "TX"};
          {\ar@/^1.5pc/^{\eta^T_1} "1" ; "T"};
          % PULLBACK STUFF
          (6,-1)*{}; (6,-5)*{} **\dir{-};
          (2,-5)*{}; (6,-5)*{} **\dir{-};
        \endxy
     \]
    commutes.

    The multiplication map $\mu^K \colon K^2 \Rightarrow K$ for the monad $K$ has, for each object $X$ in $\mathcal{C}$, a component $\mu^K_A \colon K^2 X \rightarrow K X$ which is the defined to be unique map such that the diagram
      \[
        \xy
          % POINTS
            (0, 0)*+{K^2 X}="KKX";
            (-12, -12)*+{K \otimes K}="KK";
            (12, -12)*+{TKX}="TKX";
            (-24, -24)*+{K}="Kt";
            (0, -24)*+{TK}="TK";
            (24, -24)*+{T^2 X}="TTX";
            (-12, -36)*+{T1}="Tt";
            (12, -36)*+{T^2 1}="TT";
            (0, -48)*+{KX}="KX";
            (-12, -60)*+{K}="Kb";
            (12, -60)*+{TX}="TX";
            (0, -72)*+{T1}="Tb";
          % ARROWS
            {\ar "KKX" ; "KK"};
            {\ar "KKX" ; "TKX"};
            {\ar "KK" ; "Kt"};
            {\ar "KK" ; "TK"};
            {\ar "TKX" ; "TK"};
            {\ar "TKX" ; "TTX"};
            {\ar_k "Kt" ; "Tt"};
            {\ar^{T!} "TK" ; "Tt"};
            {\ar_{Tk} "TK" ; "TT"};
            {\ar^{T^2 !} "TTX" ; "TT"};
            {\ar "KX" ; "Kb"};
            {\ar "KX" ; "TX"};
            {\ar_k "Kb" ; "Tb"};
            {\ar^{T!} "TX" ; "Tb"};
            {\ar@{-->}@/^1.5pc/^{\mu^K_X} "KKX" ; "KX"};
            {\ar@/_5pc/_{\mu^K} "KK" ; "Kb"};
            {\ar@/^1.7pc/^{\mu^T_X} "TTX" ; "TX"};
          % PULLBACK STUFF
            (-3, -5)*{}; (0,-8)*{} **\dir{-};
            (3, -5)*{}; (0,-8)*{} **\dir{-};
            (-15, -17)*{}; (-12,-20)*{} **\dir{-};
            (-9, -17)*{}; (-12,-20)*{} **\dir{-};
            (9, -17)*{}; (12,-20)*{} **\dir{-};
            (15, -17)*{}; (12,-20)*{} **\dir{-};
            (-3, -53)*{}; (0,-56)*{} **\dir{-};
            (3, -53)*{}; (0,-56)*{} **\dir{-};
        \endxy
      \]
    commutes.
  \end{defn}

  \begin{defn}
    Let $\mathcal{C}$ be a cartesian category with a terminal object $1$, let $T$ be a cartesian monad on $\mathcal{C}$ and let $K$ be a $T$-operad.  An \emph{algebra} for the operad $K$, referred to as a \emph{$K$-algebra}, is defined to be an algebra for the induced monad $(K, \eta^K, \mu^K)$.  Similarly, a \emph{map of algebras for the operad $K$} is a map of algebras for the induced monad, and the category of algebras for the operad $K$ is $K\Alg$, the category of algebras for the induced monad.
  \end{defn}

  Batanin and Leinster each define weak $n$-categories to be the algebras for a particular $n$-globular operad; we recall the definitions of these in Sections~\ref{sect:Batanin} and \ref{sect:Leinster} respectively.

  \section{Batanin weak $n$-categories}  \label{sect:Batanin}

  In this section we recall the definition of Batanin weak $n$-category, which was originally given by Batanin in \cite{Bat98}.  Batanin weak $n$-categories and Leinster weak $n$-categories are defined to be the algebras for particular $n$-globular operads.  In order to identify an appropriate operad to use, Batanin's approach is to define two pieces of extra structure on an operad:
    \begin{itemize}
      \item a system of compositions: this picks out binary composition operations at each dimension;
      \item a contraction on the underlying collection: this ensures that we have contraction operations which give the constraint cells in algebras for the operad; it also ensures that composition is strict at dimension $n$.
    \end{itemize}
  Operads equipped with contractions and systems of compositions form a category, and this category has an initial object; a Batanin weak $n$-category is defined to be an algebra for this initial operad.

  In fact, the approach described here is slightly different from that of \cite{Bat98}, in which Batanin uses contractible operads rather than operads equipped with a specified contraction.  Since contractibility is non-algebraic, there is no initial object in the category of contractible operads with systems of compositions, so Batanin explicitly constructs an operad that is weakly initial in this category.  He claims without proof that, if we use specified contractions, this operad is initial \cite[Section 8, Remark 2]{Bat98}, so the operad we describe is the same as Batanin's, even though the approach is slightly different.

  We begin by defining what it means for an operad to be equipped with a system of compositions.  To do this, we define a collection
    \[
      \xy
        % POINTS
        (0, 0)*+{S}="S";
        (0, -16)*+{T1}="T";
        % ARROWS
        {\ar^s "S" ; "T"};
      \endxy
    \]
  that contains precisely one binary composition operation for each dimension of cell and boundary;  in order for the sources and targets of these operations to be well-defined, $S$ also contains a unary operation (i.e. one whose arity is a single globular cell) at each dimension, but otherwise contains no other operations.  The collection $S$ comes equipped with a unit map, which picks out the unary operation at each dimension.  Note that it is not possible to equip $S$ with an operad structure, since it does not have operations of all the arities we would require in order to define a multiplication map on $S$.  For example, in $S \otimes S$ we have $1$-cells such as
    \[
      \xy
        % POINTS
        (0, 0)*+{\bullet}="1";
        (16, 0)*+{\bullet}="2";
        (32, 0)*+{\bullet}="3";
        (-22, 12)*+{\bullet}="1t";
        (-10, 12)*+{\bullet}="2t";
        (2, 12)*+{\bullet}="3t";
        % ARROWS
        {\ar "1" ; "2"};
        {\ar "2" ; "3"};
        {\ar "1t" ; "2t"};
        {\ar "2t" ; "3t"};
        % DOTTED ARROWS
        \SelectTips{lu}{12}
        {\ar@/^0.25pc/@{.>} (-8, 11) ; (7, 1)}
      \endxy
    \]
  but there is no way to define the action of the multiplication on this cell since there is no operation of arity
    \[
      \xy
        % POINTS
        (0, 0)*+{\bullet}="1";
        (16, 0)*+{\bullet}="2";
        (32, 0)*+{\bullet}="3";
        (48, 0)*+{\bullet}="4";
        % ARROWS
        {\ar "1" ; "2"};
        {\ar "2" ; "3"};
        {\ar "3" ; "4"};
      \endxy
    \]
  in $S$.  Once we have defined $S$, we define a system of compositions on an operad $K$ to be a map of collection $S \rightarrow K$, which picks out the desired binary composition operations in $K$; this map is required to interact properly with the unit maps for $S$ and $K$.

  \begin{defn}  \label{defn:SoC}
    Let $0 \leq m \leq n$, and write $\eta_m := \eta^T_m(1)$, the single $m$-cell in the image of the unit map $\eta^T \colon 1 \rightarrow T1$.  Define, for $0 \leq p \leq m \leq n$,
      \[
        \beta^m_p =
            \left\{
              \begin{array}{ll}
              \eta_m & \text{if } p = m, \\
              \eta_m \comp^m_p \eta_m & \text{if } p < m.
              \end{array}
            \right.
      \]
    Define an $n$-globular collection $\xymatrix{ S \ar[r]^s & T1 }$, in which
      \[
        S_m := \{ \beta^m_p \gt 0 \leq p \leq m \leq n \} \subseteq T1_m,
      \]
    and define the unit map $\eta^S \colon 1 \rightarrow S$ by $\eta^S_m(1) = \beta^m_m$.

    Let $\xymatrix{ K \ar[r]^k & T1 }$ be an $n$-globular operad.  A \emph{system of compositions} on $K$ consists of a map of collections
      \[
        \xy
          % POINTS
          (-10, 14)*+{S}="S";
          (10, 14)*+{K}="K";
          (0, 0)*+{T1}="T";
          % ARROWS
          {\ar^{\sigma} "S" ; "K"};
          {\ar_{s} "S" ; "T"};
          {\ar^{k} "K" ; "T"}
        \endxy
      \]
    such that the diagram
      \[
        \xy
        % POINTS
        (-15, 0)*+{1}="1";
        (0, 0)*+{S}="S";
        (15, 0)*+{K}="K";
        % ARROWS
        {\ar^{\eta^S} "1" ; "S"};
        {\ar^{\sigma} "S" ; "K"};
        {\ar@/_1.5pc/_{\eta^K} "1" ; "K"}
        \endxy
      \]
    commutes.
  \end{defn}

  The notion of contraction on a collection used to define Batanin weak $n$-categories is the same as the notion of contraction, from Definition~\ref{defn:contr}, on a map of $n$-globular sets
    \[
      f \colon X \rightarrow R,
    \]
  where $R$ is the underlying $n$-globular set of a strict $n$-category.  In the case of a contraction on a collection, this strict $n$-category is always $T1$, the free strict $n$-category on $1$.  We will restate the definition of contraction in this case in an alternative equivalent way;  the reason for doing this is that it allows for easier comparison between contractions and the unbiased contractions of Leinster, which we recall in Section~\ref{sect:Leinster}.  Before giving this alternative definition of contraction, we establish some notation that will be used in the definition.  This notation is more general than is necessary at this stage, but will be used in its full generality in the definition of unbiased contractions.

  Let $\xymatrix{ K \ar[r]^k & T1 }$ be an $n$-globular collection.  We will define, for each globular pasting diagram $\pi$, a set $C_K(\pi)$ whose elements are parallel pairs of cells in $K$, the first of which maps to the source of $\pi$ under $k$, and the second of which maps to the target of $\pi$ under $k$.  When $\pi = \id_{\alpha}$ for some $\alpha \in T1$, we can think of $C_K(\pi)$ as a set of contraction cells living over $\pi$, since every such pair requires a contraction cell for there to be a contraction on the map $k$.  To modify the definition of contraction to a definition of unbiased contraction in Section~\ref{sect:Leinster}, we use all pasting diagrams $\pi$ in $T1$, not just those of the form $\pi = \id_{\alpha}$ for some $\alpha \in T1$.

  To define $C_K(\pi)$, we first define, for all $0 \leq m \leq n$, $x \in T1_m$, a set
    \[
      K(x) = \{ a \in K_m \gt k(a) = x \};
    \]
  that is, the preimage of $x$ under $k$.  Then, for all $1 \leq m \leq n$, $\pi \in T1_m$, we define
    \[
      C_K(\pi) =
          \left\{
            \begin{array}{ll}
            K(s(\pi)) \times K(t(\pi)) & \text{if } m = 1, \\
            \{ (a, b) \in K(s(\pi)) \times K(t(\pi)) \gt s(a) = s(b), t(a) = t(b) \} & \text{if } m > 1.
            \end{array}
          \right.
    \]

  \begin{defn}  \label{defn:Bcontr}
    A \emph{contraction} $\gamma$ on an $n$-globular collection $\xymatrix{ K \ar[r]^k & T1 }$ consists of, for all $1 \leq m \leq n$, and for each $\alpha \in (T1)_{m - 1}$, a function
      \[
        \gamma_{\id_{\alpha}} \colon C_K(\id_{\alpha}) \rightarrow K(\id_{\alpha})
      \]
    such that, for all $(a, b) \in C_K(\id_{\alpha})$,
      \[
        s\gamma_{\id_{\alpha}}(a, b) = a, \; t\gamma_{\id_{\alpha}}(a, b) = b
        % I'm not sure how acceptable this spacing is.
      \]
    We also require the following tameness condition, as in Definition~\ref{defn:contr}: for $\alpha$, $\beta \in K_n$, if
      \[
        s(\alpha) = s(\beta), \; t(\alpha) = t(\beta), \; k(\alpha) = k(\beta),
      \]
    then $\alpha = \beta$.
  \end{defn}

  Operads with contractions and systems of compositions form a category, which we now define.

  \begin{defn}  \label{defn:OCS}
    Define $\OCS$ to be the category with
      \begin{itemize}
        \item objects: an object of $\OCS$ is an operad
          \[
            \xy
              % POINTS
              (0, 0)*+{K}="K";
              (0, -16)*+{T1}="T";
              % ARROWS
              {\ar^k "K" ; "T"};
            \endxy
          \]
        equipped with a contraction $\gamma$ and a system of compositions $\sigma \colon S \rightarrow K$;
        \item morphisms: for operads $\xymatrix{ K \ar[r]^k & T1 }$, $\xymatrix{ K' \ar[r]^{k'} & T1 }$, respectively equipped with contraction $\gamma$, $\gamma'$, and systems of compositions $\sigma$, $\sigma'$, a morphism $u \colon K \rightarrow K'$ consists of a map $u$ of the underlying operads such that
              \begin{itemize}
                \item the diagram
                  \[
                    \xy
                      % POINTS
                      (0, 0)*+{S}="S";
                      (-10, -15)*+{K}="K";
                      (10, -15)*+{K'}="K'";
                      % ARROWS
                      {\ar_{\sigma} "S" ; "K"};
                      {\ar^{\sigma'} "S" ; "K'"};
                      {\ar_{u} "K" ; "K'"};
                    \endxy
                  \]
                commutes;
                \item for all $1 \leq m \leq n$, $\alpha \in T1_{m - 1}$, $(a, b) \in C_K(\id_{\alpha})$,
                  \[
                    u_m(\gamma_{\id_{\alpha}}(a, b)) = \gamma'_{\id_{\alpha}}(u_{m - 1}(a), u_{m - 1}(b)).
                  \]
              \end{itemize}
      \end{itemize}
  \end{defn}

  We often refer to an operad with a contraction and system of compositions simply as a \emph{Batanin operad}.  The category $\OCS$ has an initial object
          \[
            \xy
              % POINTS
              (0, 0)*+{B}="B";
              (0, -16)*+{T1,}="T";
              % ARROWS
              {\ar^b "B" ; "T"};
            \endxy
          \]
  the existence of which we prove in Section~\ref{sect:initial}.  This initial object is in some sense the ``simplest'' operad in $\OCS$.  It has precisely the operations required to have a system of compositions, a contraction, and an operad structure, and no more; furthermore, it has no spurious relations between these operations.

  \begin{defn}
    A Batanin weak $n$-category is an algebra for the $n$-globular operad $\xymatrix{ B \ar[r]^b & T1 }$.  The category of Batanin weak $n$-categories is $B\Alg$.
  \end{defn}

  Note that the presence of a system of compositions and a contraction on an operad does not affect the category of algebras for that operad.  The algebras depend only on the operad itself; systems of compositions and contractions are used purely as a tool for selecting an appropriate choice of operad.

  \section{Initial object in $\OCS$}  \label{sect:initial}

  We now prove that the category $\OCS$ has an initial object.  This has been believed for some time \cite{Bat98, Lei02}, but has not previously been proved.  Our proof is based on a proof by Leinster \cite[Appendix G]{Lei04} of the existence of the operad for Leinster weak $n$-categories, which is defined as the initial operad in a different, but similar, category of operads, as we shall see in the next section.

  The idea of this proof is as follows:  the category $\nColl$ has an initial object
    \[
      \xy
        % POINTS
        (0, 0)*+{\emptyset}="0";
        (16, 0)*+{T1.}="1";
        % ARROW
        {\ar^-{!} "0" ; "1"};
      \endxy
    \]
  There is a forgetful functor
    \[
      \OCS \longrightarrow \nColl,
    \]
  which sends an operad to its underlying collection, and this forgetful functor has a left adjoint.  The initial collection is the colimit of the empty diagram in $\nColl$, and left adjoints preserve colimits, so applying the left adjoint to the initial collection gives the initial object in $\OCS$.

  Thus we can prove the existence of the initial Batanin operad $B$ by proving the existence of this left adjoint.  To do so we use the following monadicity result, due to Kelly~\cite[27.1]{Kel80} (which appears in this form in \cite[Appendix G]{Lei04}):

  \begin{prop}  \label{prop:Kelly}
    Let
      \[
        \xy
          % POINTS
          (0, 0)*+{\mathcal{D}}="D";
          (16, 0)*+{\mathcal{C}}="C";
          (0, -16)*+{\mathcal{B}}="B";
          (16, -16)*+{\mathcal{A}}="A";
          % ARROWS
          {\ar "D" ; "C"};
          {\ar "D" ; "B"};
          {\ar_-{U} "B" ; "A"};
          {\ar^{V} "C" ; "A"};
          % PULLBACK STUFF
          (6,-1)*{}; (6,-5)*{} **\dir{-};
          (2,-5)*{}; (6,-5)*{} **\dir{-};
        \endxy
     \]
    be a pullback square in $\mathbf{CAT}$.  If $\mathcal{A}$ is locally finitely presentable and each of $U$ and $V$ is finitary and monadic, then the functor $\mathcal{D} \rightarrow \mathcal{A}$ is monadic.
  \end{prop}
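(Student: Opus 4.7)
The plan is to realise $\mathcal{D} \to \mathcal{A}$ as the category of algebras for an explicit finitary monad on $\mathcal{A}$, obtained as a coproduct of the monads associated with $U$ and $V$. Since $U$ and $V$ are monadic, they induce finitary monads $T_U$ and $T_V$ on $\mathcal{A}$, with equivalences $\mathcal{B} \simeq T_U\Alg$ and $\mathcal{C} \simeq T_V\Alg$ commuting with the forgetful functors down to $\mathcal{A}$.

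The key input is that the category $\mathbf{Mnd}_f(\mathcal{A})$ of finitary monads on an lfp category is itself cocomplete. One can prove this either by identifying $\mathbf{Mnd}_f(\mathcal{A})$ with the category of models of a suitable limit sketch and invoking Gabriel--Ulmer duality, or via Kelly's transfinite construction of free monads over pointed endofunctors, which converges at a small ordinal precisely because the generating endofunctors are finitary and $\mathcal{A}$ is lfp. Granting this, I would form the coproduct $T := T_U \ast T_V$ in $\mathbf{Mnd}_f(\mathcal{A})$.

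By the universal property of the coproduct, specifying a $T$-algebra structure on an object $X \in \mathcal{A}$ amounts to specifying a $T_U$-algebra structure and a $T_V$-algebra structure on $X$, with no further compatibility required beyond sharing the underlying object. Translating back through the equivalences $\mathcal{B} \simeq T_U\Alg$ and $\mathcal{C} \simeq T_V\Alg$, a $T$-algebra is precisely a pair $(B, C) \in \mathcal{B} \times \mathcal{C}$ with $UB = VC$, and morphisms of $T$-algebras are pairs of morphisms agreeing in $\mathcal{A}$. This is exactly the description of the pullback $\mathcal{D} = \mathcal{B} \times_{\mathcal{A}} \mathcal{C}$ in $\mathbf{CAT}$, so the induced comparison gives an equivalence $\mathcal{D} \simeq T\Alg$ over $\mathcal{A}$, and the functor $\mathcal{D} \to \mathcal{A}$ is (finitarily) monadic.

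The main obstacle is the construction of the coproduct $T_U \ast T_V$ in $\mathbf{Mnd}_f(\mathcal{A})$; this is where both hypotheses genuinely bite. Local presentability of $\mathcal{A}$ supplies enough colimits and enough $\aleph_0$-presentable objects to carry out the transfinite iteration, while finitariness of $T_U$ and $T_V$ ensures both that this iteration converges and that the resulting monad is itself finitary with algebras of the stated form. Without either hypothesis the coproduct of monads can fail to exist, even when the underlying coproduct of endofunctors is perfectly well behaved, so any proof must make essential use of both.
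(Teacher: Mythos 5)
The paper does not actually prove this proposition: it is quoted from Kelly (\S 27 of the transfinite-constructions paper, via Leinster's Appendix~G) and used as a black box, so there is no in-paper argument to compare against. Your proof is, in outline, precisely the argument of the cited sources: pass to the finitary monads $T_U$, $T_V$ induced by $U$ and $V$, form their coproduct $T = T_U \ast T_V$ among finitary monads on the locally finitely presentable category $\mathcal{A}$ (existence being the real content, supplied by Kelly's transfinite construction), and identify $T\Alg$ over $\mathcal{A}$ with $\mathcal{B} \times_{\mathcal{A}} \mathcal{C}$. This is the right strategy, and you correctly locate where both hypotheses bite.

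One step is presented as immediate when it is not. The universal property of the coproduct in the category of \emph{finitary} monads only classifies monad maps out of $T_U \ast T_V$ into finitary monads, whereas the standard device for classifying $T$-algebra structures on an object $X$ --- monad maps $T \rightarrow \langle X, X\rangle$ into the endomorphism monad --- involves a monad that is not finitary in general. So ``a $T$-algebra structure on $X$ is exactly a pair of structures'' does not follow from the bare universal property you invoke. Two standard repairs: (i) note that the inclusion of finitary monads into all monads is a left adjoint (the finitary coreflection of an endofunctor is lax monoidal for composition, hence induces a right adjoint at the level of monads), so the coproduct computed among finitary monads is also the coproduct among all monads, after which the endomorphism-monad argument applies on objects, with a parallel argument on morphisms; or (ii) work, as Kelly does, with the notion of \emph{algebraic} colimit of monads, whose defining property is exactly that its category of algebras is the limit of the categories of algebras, and whose existence is what the transfinite construction actually delivers. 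With either repair your argument is complete, and it gives the slightly stronger conclusion --- consistent with how the proposition is deployed in Proposition~\ref{prop:initial} --- that $\mathcal{D} \rightarrow \mathcal{A}$ is finitarily monadic.
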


  To apply this result to our situation, we take $\mathcal{A} = \nColl$, $\mathcal{D} = \OCS$; to see what $\mathcal{B}$ and $\mathcal{C}$ should be, observe that in a Batanin operad the contraction structure exists independently of the operad structure (though note that the system of compositions cannot exist without the unit of the operad structure).  Thus we have categories
    \begin{itemize}
      \item $\Contr$ of collections equipped with contractions;
      \item $\SoC$ of operads equipped with systems of compositions (``$\SoC$'' stands for ``system of compositions'');
    \end{itemize}
  and we can write $\OCS$ as the pullback
    \[
      \xy
        % POINTS
        (0, 0)*+{\OCS}="0,0";
        (16, 0)*+{\Contr}="1,0";
        (0, -16)*+{\SoC}="0,1";
        (16, -16)*+{\nColl.}="1,1";
        % ARROWS
        {\ar "0,0" ; "1,0"};
        {\ar "0,0" ; "0,1"};
        {\ar^V "1,0" ; "1,1"};
        {\ar_-U "0,1" ; "1,1"};
        % PULLBACK STUFF
        (6,-1)*{}; (6,-5)*{} **\dir{-};
        (2,-5)*{}; (6,-5)*{} **\dir{-};
      \endxy
    \]
  The composite (of either side, since this diagram commutes) is the forgetful functor $\OCS \rightarrow \nColl$.  Thus we take $\mathcal{B} = \SoC$ and $\mathcal{C} = \Contr$.

  Note that this is not the same as an interleaving construction;  Proposition~\ref{prop:Kelly} does not require us to decompose the left adjoints to $U$ and $V$ dimension by dimension (or even construct them explicitly), and it does not give an explicit description of the left adjoint $\mathcal{A} \rightarrow \mathcal{D}$.

  We now define the categories $\Contr$ and $\SoC$ formally.

  \begin{defn}
    Define $\Contr$ to be the category with
      \begin{itemize}
        \item objects: an object of $\Contr$ consists of a collection $\xymatrix{ X \ar[r]^x & T1 }$ equipped with a contraction $\gamma$;
        \item morphisms: for collections $\xymatrix{ X \ar[r]^x & T1 }$, $\xymatrix{ X' \ar[r]^{x'} & T1 }$, respectively equipped with contractions $\gamma$, $\gamma'$, a morphism $u \colon X \rightarrow X'$ consists of a map $u$ of the underlying collections such that, for all $1 \leq m \leq n$, $\alpha \in (T1)_{m - 1}$, $(a, b) \in C_X(\id_{\alpha})$,
                  \[
                    u_m(\gamma_{\id_{\alpha}}(a, b)) = \gamma'_{\id_{\alpha}}(u_{m - 1}(a), u_{m - 1}(b)).
                  \]
      \end{itemize}

    Define $\SoC$ to be the category with
      \begin{itemize}
        \item objects: an object consists of an operad $\xymatrix{ K \ar[r]^k & T1 }$ equipped with a system of compositions $\sigma$;
        \item morphisms: for operads $\xymatrix{ K \ar[r]^k & T1 }$, $\xymatrix{ K' \ar[r]^{k'} & T1 }$, respectively equipped with contraction $\gamma$, $\gamma'$, and systems of compositions $\sigma$, $\sigma'$, a morphism $u \colon K \rightarrow K'$ consists of a map $u$ of the underlying operads such that the diagram
                  \[
                    \xy
                      % POINTS
                      (0, 0)*+{S}="S";
                      (-10, -15)*+{K}="K";
                      (10, -15)*+{K'}="K'";
                      % ARROWS
                      {\ar_{\sigma} "S" ; "K"};
                      {\ar^{\sigma'} "S" ; "K'"};
                      {\ar_{u} "K" ; "K'"};
                    \endxy
                  \]
                commutes.
      \end{itemize}
  \end{defn}

  To show that the conditions of Proposition~\ref{prop:Kelly} hold in our case, we must prove that the forgetful functors
    \[
      U \colon \SoC \longrightarrow \nColl,
    \]
    \[
      V \colon \Contr \longrightarrow \nColl,
    \]
  which send objects to their underlying collections, are monadic.  To do so, we use Beck's monadicity theorem~\cite[Theorem 4.4.4]{Bor94}:

  \begin{thm}[Beck's monadicity theorem]  \label{thm:Beck}
    A functor $U \colon \mathcal{D} \rightarrow \mathcal{C}$ is monadic if and only if
      \begin{itemize}
        \item $U$ has a left adjoint;
        \item $U$ reflects isomorphisms;
        \item given a pair of morphisms
      \[
        \xy
          % POINTS
          (0, 0)*+{X}="X";
          (16, 0)*+{Y}="Y";
          % ARROWS
          {\ar@<1ex>^{f} "X" ; "Y"};
          {\ar@<-1ex>_{g} "X" ; "Y"};
        \endxy
      \]
        in $\mathcal{D}$ such that
      \[
        \xy
          % POINTS
          (0, 0)*+{U(X)}="X";
          (16, 0)*+{U(Y)}="Y";
          % ARROWS
          {\ar@<1ex>^{Uf} "X" ; "Y"};
          {\ar@<-1ex>_{Ug} "X" ; "Y"};
        \endxy
      \]
        has a split coequaliser in $\mathcal{C}$, then $(f, g)$ has a coequaliser in $\mathcal{D}$ which is preserved by $U$.
      \end{itemize}
  \end{thm}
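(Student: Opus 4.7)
The plan is to analyse the canonical comparison functor $K \colon \mathcal{D} \to T\Alg$, where $T = UF$ is the monad on $\mathcal{C}$ induced by the adjunction $F \ladj U$ guaranteed by the first hypothesis, and $K$ sends $X$ to the $T$-algebra $(UX, U\epsilon_X)$ with $\epsilon$ the counit. By definition $U$ is monadic precisely when $K$ is an equivalence, so it suffices to show that the three listed conditions are equivalent to $K$ being an equivalence of categories.

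For the ``only if'' direction, assume $U$ is monadic; without loss of generality $\mathcal{D} = T\Alg$ with $U$ the forgetful functor. The left adjoint is the free-algebra functor; reflection of isomorphisms holds because the underlying inverse of an invertible algebra map automatically commutes with the algebra structure; and the coequaliser clause follows from the classical fact that $U$-split coequalisers are absolute, so they lift canonically to $T\Alg$ and are preserved by $U$.

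For the ``if'' direction I would construct a pseudo-inverse $L \colon T\Alg \to \mathcal{D}$. Given a $T$-algebra $(A, h \colon TA \to A)$, form the parallel pair
\[
Fh, \; \epsilon_{FA} \colon FTA \rightrightarrows FA
\]
in $\mathcal{D}$. Applying $U$ yields $(Th, \mu^T_A) \colon T^2 A \rightrightarrows TA$, and by the monad and algebra axioms this extends to a split coequaliser with coequalising map $h \colon TA \to A$ and splittings $\eta^T_{TA}$ and $\eta^T_A$ (the canonical Beck split coequaliser). The third hypothesis therefore produces a coequaliser $q \colon FA \to LA$ in $\mathcal{D}$ that is preserved by $U$, and uniqueness of coequalisers gives a canonical isomorphism $ULA \cong A$ compatible with the algebra structures; this construction is functorial in $(A,h)$ and satisfies $KL \cong 1_{T\Alg}$ essentially by construction.

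For the other half $LK \cong 1_{\mathcal{D}}$, given $X \in \mathcal{D}$ consider the pair $F(U\epsilon_X), \epsilon_{FUX} \colon FUFUX \rightrightarrows FUX$; its image under $U$ is a split coequaliser with coequalising map $U\epsilon_X$ and splittings supplied by $\eta^T$, and by naturality of $\epsilon$ the map $\epsilon_X \colon FUX \to X$ coequalises the original pair. The third hypothesis then produces $LKX$ together with a comparison map $LKX \to X$ whose image under $U$ is an isomorphism; the second hypothesis upgrades this to an isomorphism in $\mathcal{D}$, naturally in $X$. The main technical obstacle is the bookkeeping needed to verify naturality and the triangle identities exhibiting $L$ as quasi-inverse to $K$; once the two Beck split coequaliser diagrams are correctly identified, all remaining verifications are formal and reduce to the universal properties of $F \ladj U$ and of the coequalisers.
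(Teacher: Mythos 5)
The paper does not prove this statement: Beck's monadicity theorem is quoted verbatim from Borceux \cite[Theorem 4.4.4]{Bor94} and used as a black box (in the proofs of Lemmas~\ref{lem:Vmonadic} and \ref{lem:Umonadic}), so there is no in-paper argument to compare against. Your sketch is the standard textbook proof and is essentially correct: the two Beck split coequalisers you identify (the canonical presentation $FTA \rightrightarrows FA$ of an algebra $(A,h)$, split by $\eta^T_{TA}$ and $\eta^T_A$, and the pair $FUFUX \rightrightarrows FUX$ split over $U\epsilon_X$) are exactly the right ones, and the roles of the three hypotheses are correctly located — the left adjoint to form $T$ and $K$, the coequaliser clause to build the pseudo-inverse $L$, and reflection of isomorphisms to upgrade the comparison $LKX \to X$ from a $U$-isomorphism to an isomorphism in $\mathcal{D}$. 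The only points you wave at that genuinely require checking are (i) that the canonical isomorphism $ULA \cong A$ is an isomorphism of $T$-algebras, i.e.\ intertwines $U\epsilon_{LA}$ with $h$, which is what makes $KL \cong 1$ an isomorphism in $T\Alg$ rather than merely in $\mathcal{C}$, and (ii) for the ``only if'' direction, that the three conditions are invariant under composing $U$ with an equivalence (since monadicity asserts only that $K$ is an equivalence, not an isomorphism of categories); both are routine and standard.
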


  \begin{lemma}  \label{lem:Vmonadic}
    The functor
      \[
        V \colon \Contr \rightarrow \nColl
      \]
    is monadic.
  \end{lemma}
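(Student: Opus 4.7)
The plan is to apply Beck's monadicity theorem (Theorem~\ref{thm:Beck}) to $V$, verifying in turn that $V$ has a left adjoint, reflects isomorphisms, and creates $V$-split coequalisers.

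For the left adjoint, observe that a contraction on a collection $k \colon K \to T1$ in the sense of Definition~\ref{defn:Bcontr} coincides with a contraction in the sense of Definition~\ref{defn:contr} on $k$ regarded as a map of $n$-globular sets into the underlying $n$-globular set of $T1$. The dimension-by-dimension construction of Subsection~\ref{subsect:contr} therefore applies: I would factor $V$ through a tower of intermediate categories of collections equipped with $k$-contractions for $0 \leq k \leq n + 1$, each forgetful step of which admits a left adjoint by direct analogues of Proposition~\ref{prop:CladjD} and its counterpart at dimension $n + 1$. Composing these gives a left adjoint to $V$.

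That $V$ reflects isomorphisms is routine: if $u \colon X \to X'$ is a morphism in $\Contr$ whose underlying map of collections is bijective at each dimension and compatible with the projections to $T1$, then the inverse bijection is a morphism of collections, and its compatibility with the contraction structures follows from that of $u$ together with bijectivity.

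For the third condition, suppose $(f, g) \colon X \rightrightarrows Y$ is a parallel pair in $\Contr$ whose image under $V$ extends to a split coequaliser
\[
\xymatrix{VX \ar@<0.5ex>[r]^{Vf} \ar@<-0.5ex>[r]_{Vg} & VY \ar[r]^{q} & Q}
\]
in $\nColl$, with section $\sigma \colon Q \to VY$. Since split coequalisers are absolute, $q$ is surjective at every dimension, so the tameness condition on $Y$ descends to $Q$. For $(a, b) \in C_Q(\id_\alpha)$, define $\gamma^Q_{\id_\alpha}(a, b) := q \gamma^Y_{\id_\alpha}(\sigma(a), \sigma(b))$; the splitting identities, together with the fact that $f$ and $g$ respect contractions, imply that this is independent of the choice of lift, is forced by the requirement that $q$ be a morphism in $\Contr$, and equips $Q$ with a contraction structure. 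The resulting lift is the coequaliser of $(f, g)$ in $\Contr$, and is manifestly preserved by $V$. The main obstacle is the left-adjoint construction, particularly the tameness step at dimension $n$, which must be handled by a coequaliser analogous to $C_{n + 1}$ in Definition~\ref{defn:freenplusonecontr}; the other two conditions of Beck's theorem are essentially formal consequences of the algebraic nature of the contraction operations.
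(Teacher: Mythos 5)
Your proposal is correct and follows essentially the same route as the paper: Beck's monadicity theorem, with the left adjoint obtained by restricting the dimension-by-dimension free-contraction construction of Subsection~\ref{subsect:contr} to objects over $T1$, and the contraction on the coequaliser defined by $\gamma^Q_{\id_\alpha}(a,b) = q\,\gamma^Y_{\id_\alpha}(\sigma(a),\sigma(b))$ using the splitting, exactly as in the paper. (One small remark: tameness descends to $Q$ via the section $\sigma$ and tameness of $Y$, not merely from surjectivity of $q$; but this is a minor point and your argument otherwise matches the paper's.)
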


  \begin{proof}
    We show that $V$ is monadic by checking that it satisfies the conditions in Beck's monadicity theorem (Theorem~\ref{thm:Beck}).  The functor $V$ has a left adjoint, which can be constructed using exactly the same method as was used to construct the free contractions in Definitions~\ref{defn:freekcontr} and \ref{defn:freenplusonecontr}; we just restrict to objects of $\Rn$ and $\Qn$ with $T1$ as their strict $n$-category parts.  Since $V$ leaves the underlying maps of collections unchanged, it reflects isomorphisms.  Thus we only need to check the condition regarding coequalisers.

    Consider a pair of maps
      \[
        \xy
          % POINTS
          (0, 0)*+{X}="X";
          (20, 0)*+{Y}="Y";
          (10, -14)*+{T1}="T";
          % ARROWS
          {\ar@<1ex>^{f} "X" ; "Y"};
          {\ar@<-1ex>_{g} "X" ; "Y"};
          {\ar_x "X" ; "T"};
          {\ar^y "Y" ; "T"};
        \endxy
      \]
    in $\Contr$ such that its image under $V$ has a split coequaliser.  We have
      \[
        \xy
          % POINTS
          (0, 0)*+{X}="X";
          (20, 0)*+{Y}="Y";
          (40, 0)*+{Z}="Z";
          (20, -14)*+{T1}="T";
          % ARROWS
          {\ar@<1ex>^{f} "X" ; "Y"};
          {\ar@<-1ex>_{g} "X" ; "Y"};
          {\ar@/_1.5pc/_q "Y" ; "X"};
          {\ar@/_1.5pc/_p "Z" ; "Y"};
          {\ar^e "Y" ; "Z"};
          {\ar_x "X" ; "T"};
          {\ar^y "Y" ; "T"};
          {\ar^z "Z" ; "T"};
        \endxy
      \]
    in $\nColl$, where $\xymatrix{ Z \ar[r]^{z} & T1 }$ is the coequaliser of $(f, g)$, and $p$ and $q$ satisfy the following equations:
      \[
        ep = \id_Z, \; fq = \id_Y, \; gq = pe.
      \]
    To show that $(f, g)$ also has a coequaliser in $\Contr$, we need to show that we can equip $Z$ with a contraction in such a way that $e$ and any maps induced by the universal property preserve contractions.

    Write $\gamma$ for the contraction on $\xymatrix{ Y \ar[r]^{y} & T1 }$, so for all $0 < m \leq n$, and for all $\alpha \in T1_{m - 1}$, we have a function
      \[
        \gamma_{\id_{\alpha}} \colon C_Y(\id_{\alpha}) \rightarrow Y(\id_{\alpha}).
      \]
    Suppose we have $0 < m \leq n$ and $\alpha \in T1_{m - 1}$.  We define a function
      \[
        \delta_{\id_{\alpha}} \colon C_Z(\id_{\alpha}) \rightarrow Z(\id_{\alpha})
      \]
    as follows: given $(a, b) \in C_Z(\id_{\alpha})$,
      \[
        \delta_{\id_{\alpha}}(a, b) = \gamma_{\id_{\alpha}}(p(a), p(b)).
      \]
    We need to check that this defines a contraction $\delta$ on $\xymatrix{ Z \ar[r]^{z} & T1 }$.  Since $ep = \id_{\alpha}$, $\delta_{\id_{\alpha}}(a, b)$ has the correct source and target, and since $y = ze$, we have
      \[
        ze\delta_{\id_{\alpha}}(a, b) = y\gamma_{\id_{\alpha}}(p(a), p(b)) = \id_{\alpha}.
      \]
    Thus $\delta$ is a contraction on $\xymatrix{ Z \ar[r]^{z} & T1 }$, and by definition of $\delta$, $e$ preserves the contraction structure.

    Now suppose we have a collection with contraction $\xymatrix{ W \ar[r]^{w} & T1 }$ and a map $r \colon Y \rightarrow W$ in $\Contr$ such that $wf = wg$. There is a unique map $u \colon Z \rightarrow W$ in $\nColl$ such that $ue = r$. For all $0 < m \leq n$, $\alpha \in T1_{m - 1}$, and $(a, b) \in C_Z(\id_{\alpha})$, we have
      \[
        u\delta_{\id_{\alpha}}(a, b) = ue\gamma_{\id_{\alpha}}(p(a), p(b)) = r\gamma_{\id_{\alpha}}(p(a), p(b)),
      \]
    so since $r$ preserves contraction cells, so does $u$; hence $u$ is a map in $\Contr$.  Thus $\xymatrix{ Z \ar[r]^{z} & T1 }$ is the coequaliser of $(f, g)$ in $\Contr$.

    Hence $V$ is monadic, as required.
  \end{proof}

  To prove that the functor $U \colon \SoC \rightarrow \nColl$ is monadic, we first prove that it has a left adjoint. Observe that, if we did not require a system of compositions, we could use the free monoid construction of Kelly~\cite{Kel80}, since an operad is a monoid in $\nColl$.  However, we cannot simply add a system of compositions to our generating data and then apply the free monoid construction, because in order to define the sources and targets of the operations in a system of compositions, we require a unit operation at the dimensions below.  Thus we construct the left adjoint to $U$ via an interleaving-style construction, similar to that used by Cheng in \cite{Che10}.  At each dimension we freely add the binary composition operations required for a system of compositions, then apply the free monoid construction at that dimension to generate the operad structure freely; we then move up to the next dimension and repeat the process.  Note that this is not a true interleaving since the system of compositions cannot exist independently of the operad structure.

  \begin{lemma}  \label{lem:SoCladj}
    The functor
      \[
        U \colon \SoC \longrightarrow \nColl
      \]
    has a left adjoint.
  \end{lemma}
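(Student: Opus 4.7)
The plan is to construct the left adjoint by an interleaving of two constructions, in the spirit of Penon's left adjoint in Section~\ref{sect:ladj} and following Cheng~\cite{Che10}. Since an operad is a monoid in $\nColl$, Kelly's free monoid construction~\cite{Kel80} supplies a left adjoint to the forgetful functor from operads to collections. We cannot, however, simply freely adjoin a system of compositions to the generating collection and then apply Kelly's construction, because the source and target of each $\beta^m_p$ involve the unit operation at dimension $m - 1$, which does not yet exist; hence the two structures must be built up together, one dimension at a time.

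For each $0 \leq m \leq n$, I would define an intermediate category $\SoC_m$ whose objects are collections equipped with an operad multiplication and unit, and a system of compositions, both defined at dimensions $\leq m$ (and compatible in the appropriate sense). These fit into a chain of forgetful functors
\[
\nColl \longleftarrow \SoC_0 \longleftarrow \SoC_1 \longleftarrow \dotsb \longleftarrow \SoC_n = \SoC
\]
whose composite is $U$. I would construct a left adjoint to each link in two stages: first, freely adjoin to the dimension-$m$ cells a unit $\eta_m$ together with one operation living over each $\beta^m_p = \eta_m \comp^m_p \eta_m$ for $0 \leq p < m$, whose sources and targets are expressed in terms of the unit and composition operations at dimension $m - 1$ constructed at the previous step; second, apply Kelly's free monoid construction~\cite{Kel80} restricted to dimension $m$, holding all lower-dimensional structure fixed, in order to freely generate the dimension-$m$ part of the operad multiplication acting on these new operations. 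The desired left adjoint to $U$ is then the composite of the $n + 1$ left adjoints so obtained.

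The main obstacle is in justifying the second stage: the tensor product on $\nColl$ couples cells of many dimensions at once, so Kelly's construction is genuinely global, and we must verify that it can be carried out one dimension at a time while respecting the lower-dimensional operad structure already in place. This requires a dimensional analysis of the monoidal structure on $\nColl$ analogous to the treatment of magma structures in Section~\ref{sect:ladj}, together with a stability result (analogous to Lemmas~\ref{lem:magstab} and~\ref{lem:contrstab}) ensuring that the dimension-$m$ free monoid step leaves the lower-dimensional operad structure and system of compositions unchanged. The relevant hypotheses for Kelly's theorem are readily available since $\nColl$ is a slice of a presheaf category and is therefore locally finitely presentable. Once these points are settled, the triangle identities for the composite adjunction follow from those of its components.
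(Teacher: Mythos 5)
Your overall strategy is the same as the paper's: decompose $U$ into a chain of forgetful functors that strip away, one dimension at a time, alternately the system of compositions and the operad structure, and build the left adjoint as a composite of dimensionwise left adjoints. The paper's chain is slightly finer than yours (it alternates functors $B_{k,j}$ forgetting the compositions at dimension $j$ with functors $D_{k,j}$ forgetting the operad structure at dimension $k$, passing through categories $(k,j)\text{-}\SoC$), and the left adjoint to each $B_{k,j}$ is exactly what you describe for your first stage: a coproduct $X_j \amalg S^{(j)}_j$ adjoining one operation over each $\beta^j_p$ with $p<j$, with sources and targets given by the lower-dimensional multiplication. The point where your argument stops short is precisely the step you flag as ``the main obstacle'': the dimensionwise version of Kelly's free monoid construction. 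This is a genuine gap in your write-up as it stands, but it is not a new difficulty to be overcome here — it is the content of Cheng's Proposition~2.1 in \cite{Che10}, which the paper simply cites to obtain the left adjoints $C_k \dashv D_k$ between the categories $\kOpd$; what remains to be checked in the paper (your ``stability result'') is only that $C_k$ preserves a $j$-system of compositions, which is immediate for $j<k$ and, for $j=k$, follows by composing with the unit inclusion $X_k \hookrightarrow C_kX_k$.

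One smaller correction: do not adjoin the unit $\eta_m$ freely in your first stage. The unit at dimension $m$ is produced by the free operad (Kelly) step at dimension $m$; this is why the paper's collection $S^{(j)}$ deliberately omits $\beta^j_j$ at the top dimension $j$, and why the compatibility of a $j$-system of compositions with the unit is only imposed on the $k$-truncation where the operad structure already exists. If you adjoin $\eta_m$ as an extra generator and then run the free monoid construction at dimension $m$, you generate a second, distinct unit; you would have to replace Kelly's construction by a free-monoid-on-a-pointed-object variant, which is an unnecessary complication.
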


  \begin{proof}
    To describe the interleaving-style construction we must define what it means for a collection to have an operad structure up to dimension $k$ for some $k \leq n$.  To do this, we use a truncation functor, defined as follows: for each $k \leq n$, we have
      \[
        \Tr_k \colon \nGSet \longrightarrow \kGSet
      \]
    which sends an $n$-globular set to its underlying $k$-globular set, which has the same set of $m$-cells for all $0 \leq m \leq k$, and the same source and target maps.  Since $\Tr_k(T1) = T(\Tr_k 1)$, this induces a functor
      \[
        \xy
          % POINTS
            (0, 0)*+{\Tr_k \colon \nColl}="nColl";
            (30, 0)*+{\kColl}="kColl";
            (4, -8)*+{X}="X";
            (30, -8)*+{\Tr_k X}="TrX";
            (4, -24)*+{T1}="T1";
            (30, -24)*+{T \Tr_k 1.}="TTr";
          % ARROWS
            {\ar "nColl" ; "kColl"};
            {\ar_x "X" ; "T1"};
            {\ar^{\Tr_k x} "TrX" ; "TTr"};
            {\ar@{|->} (10, -16) ; (24, -16)};
        \endxy
      \]
    Note that we denote both functors by $\Tr_k$; it will be clear from the context which we are using.

    For each $0 \leq k \leq n$, define $\kOpd$ to be the category with:
      \begin{itemize}
        \item objects: collections $\xymatrix{ X \ar[r]^{x} & T1 }$ such that $\Tr_k(\xymatrix{ X \ar[r]^{x} & T1 })$ has the structure of a $k$-operad;
        \item morphisms: maps of collections that preserve the $k$-operad structure.
      \end{itemize}
    We can equip an object of $\kOpd$ with a system of compositions at every dimension up to $(k + 1)$.  Since we have no operad structure at dimension $(k + 1)$, this system of compositions cannot pick out the unit operation at this dimension.

    For each $0 \leq j \leq n$, define an $n$-collection $\xymatrix{ S^{(j)} \ar[r]^{s^{(j)}} & T1 }$ by
    \[
      S^{(j)}_m :=
          \left\{
            \begin{array}{ll}
            S_m & \text{if } m < j, \\
            \{ \beta^m_p \gt 0 \leq p < m \} & \text{if } m = j, \\
            \emptyset & \text{if } m > j; \\
            \end{array}
          \right.
    \]
    with $s^{(j)}$ the inclusion into $T1$.  Given an object $\xymatrix{ X \ar[r]^{x} & T1 }$ of $\kOpd$, where $k \geq j - 1$, a $j$-system of compositions on $X$ consists of a map of collections
      \[
        \xy
          % POINTS
          (-10, 14)*+{S^{(j)}}="S";
          (10, 14)*+{X}="X";
          (0, 0)*+{T1}="T";
          % ARROWS
          {\ar^{\sigma^{(j)}} "S" ; "X"};
          {\ar_{s^{(j)}} "S" ; "T"};
          {\ar^{x} "X" ; "T"}
        \endxy
      \]
    such that the diagram
      \[
        \xy
        % POINTS
        (-20, 0)*+{1}="1";
        (0, 0)*+{\Tr_k S^{(j)}}="S";
        (20, 0)*+{\Tr_k X}="X";
        % ARROWS
        {\ar^{\eta^S} "1" ; "S"};
        {\ar^{\sigma^{(j)}} "S" ; "X"};
        {\ar@/_1.5pc/_{\eta^X} "1" ; "X"}
        \endxy
      \]
    commutes.

    Let $k \geq j - 1$, and define $(k, j)\text{-}\SoC$ to be the category with:
      \begin{itemize}
        \item objects:  an object of $(k, j)\text{-}\SoC$ is a collection $\xymatrix{ X \ar[r]^{x} & T1 }$ such that $\Tr_k( \xymatrix{ X \ar[r]^{x} & T1 })$ has the structure of a $k$-operad, equipped with a $j$-system of compositions;
        \item morphisms:  a morphism in $(k, j)\text{-}\SoC$ is a map of the underlying collections that preserves both the $k$-operad structure and the $j$-system of compositions.
      \end{itemize}
    For each $0 \leq j < n$, we have an inclusion $S^{(j)} \hookrightarrow S^{(j + 1)}$, giving a forgetful functor
      \[
        B_{k, j} \colon (k, j)\text{-}\SoC \longrightarrow (k, j - 1)\text{-}\SoC
      \]
    which forgets the system of compositions at dimension $j$.  When $j < k + 1$, we also have a forgetful functor
      \[
        D_{k, j} \colon (k, j)\text{-}\SoC \longrightarrow (k - 1, j)\text{-}\SoC,
      \]
    which forgets the operad structure at dimension $k$.  We also have a forgetful functor
      \[
        D_{0, 0} \colon (0, 0)\text{-}\SoC = 0\text{-}\mathbf{Opd} \longrightarrow \nColl.
      \]
    Thus we can write the functor $U \colon \SoC \rightarrow \nColl$ as the composite
      \[
        \xymatrix{
          \SoC \iso (n, n)\text{-}\SoC \ar[r]^-{D_{n, n}} & (n - 1, n)\text{-}\SoC \ar[r]^-{B_{n - 1, n}} & (n - 1, n - 1)\text{-}\SoC \ar[r]^-(0.65){D_{n - 1, n - 1}} & \dotsb
                 }
      \]
      \[
        \xymatrix{
          \dotsb \ar[r]^-{B_{1, 2}} & (1, 1)\text{-}\SoC \ar[r]^-{D_{1, 1}} & (0, 1)\text{-}\SoC \ar[r]^-{B_{0, 1}} & (0, 0)\text{-}\SoC \ar[r]^-{D_{0, 0}} & \nColl.
                 }
      \]
    We show that $U$ has a left adjoint by showing that each of its factors has a left adjoint.

    To show that each functor $D_{k, j}$ has a left adjoint, we observe that, for each $0 \leq k \leq n$, there is a forgetful functor
      \[
        D_k \colon \kOpd \longrightarrow (k - 1)\text{-}\mathbf{Opd}
      \]
    that forgets the operad structure at dimension $k$ (when $k = 0$, we take $(k - 1)\text{-}\mathbf{Opd} = \nColl$, and $D_{0} = D_{0, 0}$).  We recall from \cite[Proposition 2.1]{Che10} that each $D_k$ has a left adjoint, which we denote by $C_k$; this is constructed using a dimension-by-dimension decomposition of Kelly's free monoid construction from \cite{Kel80}.  To show that this lifts to a functor
      \[
        C_{k, j} \colon (k - 1, j)\text{-}\SoC \longrightarrow (k, j)\text{-}\SoC,
      \]
    we must check that, when we apply $C_k$ to a $(k - 1)$-operad in $(k - 1, j)\text{-}\SoC$, it retains its $j$-system of compositions.  This is true when $j < k$, since $C_k$ leaves dimensions below dimension $k$ unchanged.  When $j = k$, given an object $X$ of $(k - 1, k)\text{-}\SoC$, with $k$-system of compositions $\sigma^{(k)}$, we have an inclusion map
      \[
        X_k \hookrightarrow C_k X_k
      \]
    given by the component of the unit of the adjunction $C_k \ladj D_k$ at dimension $k$.  Thus we can equip $C_k X$ with a $k$-system of compositions, given by $\sigma^{(k)}$ at all dimension less than $k$, and at dimension $k$ given by
      \[
        \xy
          % POINTS
          (0, 0)*+{S^{(k)}_k}="0";
          (16, 0)*+{X_k}="1";
          (34, 0)*+{C_k X_k.}="2";
          % ARROWS
          {\ar^-{\sigma^{(k)}_k} "0" ; "1"};
          {\ar@{^(->} "1" ; "2"};
        \endxy
      \]
    Thus for each $0 \leq j \leq k \leq n$ we have an adjunction $C_{k, j} \ladj D_{k, j}$.

    Let $0 \leq j \leq k \leq n$.  We now construct a putative left adjoint
      \[
        A_{k, j} \colon (k, j - 1)\text{-}\SoC \longrightarrow (k, j)\text{-}\SoC
      \]
    to the functor $B_{k, j}$.  We first describe the action on objects.  Let $\xymatrix{ X \ar[r]^{x} & T1 }$ be an object of $(k, j - 1)\text{-}\SoC$, and write $\sigma \colon S^{(j - 1)} \rightarrow \Tr_{j - 1} X$ for its $(j - 1)$-system of compositions.  We define
      \[
        A_{k, j}(\xymatrix{ X \ar[r]^{x} & T1 }) = (\xymatrix{ \tilde{X} \ar[r]^{\tilde{x}} & T1 }),
      \]
    where
      \begin{itemize}
        \item $\tilde{X}$ is defined by
    \[
      \tilde{X}_m :=
          \left\{
            \begin{array}{ll}
            X_m & \text{if } m \neq j, \\
            X_m \amalg S^{(j)}_m & \text{if } m = j, \\
            \end{array}
          \right.
    \]
      with source and target maps $s$, $t \colon \tilde{X}_j \rightarrow \tilde{X}_{j - 1} = X_{j - 1}$ given by
    \[
      s(\beta^m_p) = t(\beta^m_p) :=
          \left\{
            \begin{array}{ll}
            \mu^X(\beta^{m - 1}_p, \eta_{m - 1} \comp^{m - 1}_p \eta_{m - 1}) & \text{if } m - 1 \neq p, \\
            \eta^X_{m - 1}(1) & \text{if } m - 1 = p; \\
            \end{array}
          \right.
    \]
      \item $\tilde{x}$ is defined by
    \[
      \tilde{x}_m :=
          \left\{
            \begin{array}{ll}
            x_m & \text{if } m \neq j, \\
            x_m \amalg s^{(j)}_m & \text{if } m = j; \\
            \end{array}
          \right.
    \]
      \item the $j$-system of compositions $\sigma^{(j)} \colon S^{(j)} \rightarrow \Tr_j \tilde{X}$ is given by $\sigma^{(j)}_m = \sigma^{(j - 1)}_m$ for $m < j$, and
            \[
              \sigma^{(j)}_j \colon S^{(j)}_j \hookrightarrow \tilde{X}_j = X_j \amalg S^{(j)}_j
            \]
          is given by the coprojection into the coproduct.
      \end{itemize}

    For the action on morphisms, given a map $f \colon X \rightarrow Y$ in $(k, j - 1)\text{-}\SoC$, we define $A_{k, j}(f) = \tilde{f}$, where
    \[
      \tilde{f}_m :=
          \left\{
            \begin{array}{ll}
            f_m & \text{if } m \neq j, \\
            f_m \amalg \id_{S_m} & \text{if } m = j. \\
            \end{array}
          \right.
    \]

    We now show that $A_{k, j} \ladj B_{k, j}$.  Define a natural transformation $\alpha \colon 1 \Rightarrow B_{k, j}A_{k, j}$ whose component at $\xymatrix{ X \ar[r]^{x} & T1 }$ in $(k, j - 1)\text{-}\SoC$ is given by
    \[
      (\alpha_X)_m :=
          \left\{
            \begin{array}{ll}
            \id_{X_m} & \text{if } m \neq j, \\
            X_m \hookrightarrow X_m \amalg S^{(j)}_m & \text{if } m = j. \\
            \end{array}
          \right.
    \]
    Define a natural transformation $\beta \colon A_{k, j}B_{k, j} \Rightarrow 1$ whose component at $\xymatrix{ X \ar[r]^{x} & T1 }$ in $(k, j)\text{-}\SoC$ is given by
    \[
      (\beta_X)_m :=
          \left\{
            \begin{array}{ll}
            \id_{X_m} & \text{if } m \neq j, \\
            \id_{X_m} \amalg \sigma^{(j)}_m & \text{if } m = j; \\
            \end{array}
          \right.
    \]
    where $\sigma^{(j)} \colon S^{(j)} \rightarrow X$ is the $j$-system of compositions on $X$.  We now check the triangle identities to show that $A_{k, j} \ladj B_{k, j}$ with unit $\alpha$ and counit $\beta$.  Since the components of $\alpha$ and $\beta$ are equal to the identity at all dimensions other than $j$, we need only check that the triangle identities hold at dimension $j$.  For $\xymatrix{ X \ar[r]^{x} & T1 }$ in $(k, j - 1)\text{-}\SoC$, the diagram
      \[
        \xy
          % POINTS
          (0, 0)*+{X_j \amalg S^{(j)}_j}="XSl";
          (40, 0)*+{(X_j \amalg S^{(j)}_j) \amalg S^{(j)}_j}="XSS";
          (40, -14)*+{X_j \amalg S^{(j)}_j}="XSb";
          % ARROWS
          {\ar^-{(\alpha_{X \amalg S^{(j)}})_j} "XSl" ; "XSS"};
          {\ar^{(\beta_{(X \amalg S^{(j)}) \amalg S^{(j)}})_j} "XSS" ; "XSb"};
          {\ar_{\id} "XSl" ; "XSb"};
        \endxy
      \]
    commutes.  For $\xymatrix{ X \ar[r]^{x} & T1 }$ in $(k, j)\text{-}\SoC$, the diagram
      \[
        \xy
          % POINTS
          (0, 0)*+{X_j}="Xl";
          (24, 0)*+{X_j \amalg S^{(j)}_j}="XS";
          (24, -14)*+{X_j}="Xb";
          % ARROWS
          {\ar^-{(\alpha_X)_j} "Xl" ; "XS"};
          {\ar^{(\beta_{X \amalg S^{(j)}})_j} "XS" ; "Xb"};
          {\ar_{\id} "Xl" ; "Xb"};
        \endxy
      \]
    commutes.  Thus, $A_{k, j} \ladj B_{k, j}$.

    Hence, using the decomposition of $U$ described above, $U$ has a left adjoint, as required.
  \end{proof}

  \begin{lemma}  \label{lem:Umonadic}
    The functor
      \[
        U \colon \SoC \rightarrow \nColl
      \]
    is monadic.
  \end{lemma}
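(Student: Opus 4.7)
The plan is to verify the three conditions of Beck's monadicity theorem (Theorem~\ref{thm:Beck}) for $U$. The existence of a left adjoint is already established by Lemma~\ref{lem:SoCladj}, so only the remaining two conditions need attention.

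To see that $U$ reflects isomorphisms, suppose $f \colon X \to Y$ is a morphism in $\SoC$ such that $Uf$ is an isomorphism of collections, i.e.\ $f_m \colon X_m \to Y_m$ is a bijection for each $0 \leq m \leq n$. The inverse $f^{-1}$ is automatically a map of collections, and a direct check shows that it preserves both the operad structure (apply $f^{-1}$ to the equations $f\mu^X = \mu^Y(f \otimes f)$ and $f\eta^X = \eta^Y$) and the system of compositions (using $f\sigma = \sigma'$), so it is a morphism in $\SoC$; hence $f$ is an isomorphism in $\SoC$.

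For the split coequaliser condition, suppose we have a parallel pair $f,g \colon X \rightrightarrows Y$ in $\SoC$ whose image in $\nColl$ admits a split coequaliser
\[
  \xy
    (0,0)*+{X}="X"; (20,0)*+{Y}="Y"; (40,0)*+{Z}="Z";
    {\ar@<1ex>^{Uf} "X";"Y"}; {\ar@<-1ex>_{Ug} "X";"Y"};
    {\ar^{e} "Y";"Z"};
    {\ar@/_1.5pc/_{q} "Y";"X"}; {\ar@/_1.5pc/_{p} "Z";"Y"};
  \endxy
\]
with $ep = \id_Z$, $fq = \id_Y$, and $gq = pe$. The aim is to transport the operad structure and system of compositions from $Y$ to $Z$ along $p$, so that $e$ becomes a map in $\SoC$ and the coequaliser is created by $U$. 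Concretely, define the unit on $Z$ by $\eta^Z := e \comp \eta^Y \colon 1 \rightarrow Z$, the multiplication as the composite
\[
  \xymatrix{ Z \otimes Z \ar[r]^-{p \otimes p} & Y \otimes Y \ar[r]^-{\mu^Y} & Y \ar[r]^-{e} & Z, }
\]
and the system of compositions by $\sigma^Z := e \comp \sigma^Y \colon S \rightarrow Z$. The main work is to check that these data genuinely define an object of $\SoC$ (i.e.\ the operad axioms and the compatibility $\sigma^Z \comp \eta^S = \eta^Z$ hold) and that $e$ preserves all the structure. Each axiom reduces, via the split coequaliser equations together with the fact that $f$ and $g$ are morphisms in $\SoC$, to the corresponding axiom on $Y$; the computation is routine but somewhat repetitive, and is the main technical obstacle in the proof. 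For instance, associativity of $\mu^Z$ uses $ep = \id_Z$ to cancel internal factors of $pe$, and compatibility of $\sigma^Z$ with $\eta^Z$ follows from $\sigma^Y \eta^S = \eta^Y$ after composing with $e$.

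Finally, one verifies the universal property in $\SoC$: given any morphism $r \colon Y \to W$ in $\SoC$ with $rf = rg$, the induced collection map $u \colon Z \to W$ satisfying $ue = r$ exists by the coequaliser property in $\nColl$, and one shows that $u$ preserves the operad structure and system of compositions by a diagram chase using $ue = r$ and the fact that the corresponding structures on $Z$ were defined via $p$, exactly as in the proof of Lemma~\ref{lem:Vmonadic}. Since $U$ plainly takes this coequaliser in $\SoC$ back to the given split coequaliser in $\nColl$, all three conditions of Beck's theorem are satisfied and $U$ is monadic.
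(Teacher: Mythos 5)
Your proposal is correct and follows essentially the same route as the paper: Beck's theorem, with the left adjoint from Lemma~\ref{lem:SoCladj}, isomorphism reflection from $U$ being identity on underlying maps, and the split coequaliser handled by transporting the unit, multiplication, and system of compositions from $Y$ to $Z$ via exactly the composites $e \comp \eta^Y$, $e \comp \mu^Y \comp (p \otimes p)$, and $e \comp \sigma^Y$ that the paper uses. The verification that $e$ preserves multiplication via $pe = gq$, $fq = \id_Y$, and the fact that $f$, $g$ are operad maps is precisely the paper's diagram chase.
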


  \begin{proof}
    We show that $U$ is monadic by checking that it satisfies the conditions in Beck's monadicity theorem (Theorem~\ref{thm:Beck}).  By Lemma~\ref{lem:SoCladj}, $U$ has a left adjoint, and since $U$ is the identity on maps, it reflects isomorphisms.  Thus we only need to check the condition regarding coequalisers.

    Consider a pair of maps
      \[
        \xy
          % POINTS
          (0, 0)*+{X}="X";
          (20, 0)*+{Y}="Y";
          (10, -14)*+{T1}="T";
          % ARROWS
          {\ar@<1ex>^{f} "X" ; "Y"};
          {\ar@<-1ex>_{g} "X" ; "Y"};
          {\ar_x "X" ; "T"};
          {\ar^y "Y" ; "T"};
        \endxy
      \]
    in $\SoC$ such that its image under $U$ has a split coequaliser.  We have
      \[
        \xy
          % POINTS
          (0, 0)*+{X}="X";
          (20, 0)*+{Y}="Y";
          (40, 0)*+{Z}="Z";
          (20, -14)*+{T1}="T";
          % ARROWS
          {\ar@<1ex>^{f} "X" ; "Y"};
          {\ar@<-1ex>_{g} "X" ; "Y"};
          {\ar@/_1.5pc/_q "Y" ; "X"};
          {\ar@/_1.5pc/_p "Z" ; "Y"};
          {\ar^e "Y" ; "Z"};
          {\ar_x "X" ; "T"};
          {\ar^y "Y" ; "T"};
          {\ar^z "Z" ; "T"};
        \endxy
      \]
    in $\nColl$, where $\xymatrix{ Z \ar[r]^{z} & T1 }$ is the coequaliser of $(f, g)$, and $p$ and $q$ satisfy the following equations:
      \[
        ep = \id_Z, \; fq = \id_Y, \; gq = pe.
      \]
    To show that $(f, g)$ also has a coequaliser in $\SoC$, we need to show that we can equip $Z$ with an operad structure and a system of compositions, in such a way that $e$ and any maps induced by the universal property of the coequaliser preserve both the operad structure and the system of compositions.

    For the operad structure, define the unit map $\eta^Z$ to be the composite
      \[
        \xy
          % POINTS
          (0, 0)*+{1}="1";
          (14, 0)*+{Y}="Y";
          (28, 0)*+{Z}="Z";
          (14, -14)*+{T1}="T";
          % ARROWS
          {\ar_{\eta^T_1} "1" ; "T"};
          {\ar^y "Y" ; "T"};
          {\ar^z "Z" ; "T"};
          {\ar^{\eta^Y} "1" ; "Y"};
          {\ar^{e} "Y" ; "Z"};
        \endxy
      \]
    and define the multiplication map $\mu^Z$ to be the composite along the top of
          \[
            \xy
            % POINTS
              (0, 0)*+{Z \otimes Z}="ZZ";
              (20, 0)*+{Y\otimes Y}="YY";
              (36, 0)*+{Y}="Y";
              (-10, -10)*+{Z}="Z";
              (10, -10)*+{TZ}="TZ";
              (30, -10)*+{TY}="TY";
              (0, -20)*+{T1}="Tl";
              (30, -20)*+{T^2 1}="TT";
              (50, -30)*+{T1.}="Tr";
              (50, 0)*+{Z}="Zr";
            % ARROWS
              {\ar "ZZ" ; "Z"};
              {\ar^-{p \otimes p} "ZZ" ; "YY"};
              {\ar^-{\mu^Y} "YY" ; "Y"};
              {\ar "YY" ; "TY"};
              {\ar^-{e} "Y" ; "Zr"};
              {\ar_{Tp} "TZ" ; "TY"};
              {\ar^{Ty} "TY" ; "TT"};
              {\ar "ZZ" ; "TZ"};
              {\ar_{z} "Z" ; "Tl"};
              {\ar^{T!} "TZ" ;"Tl"};
              {\ar_{Tz} "TZ" ; "TT"};
              {\ar_{\mu^T_1} "TT" ; "Tr"};
              {\ar^-{z} "Zr" ; "Tr"};
              {\ar^-{y} "Y" ; "Tr"};
              % PULLBACK STUFF
              (-3, -5)*{}; (0,-8)*{} **\dir{-};
              (3, -5)*{}; (0,-8)*{} **\dir{-};
            \endxy
          \]
    The diagrams above show that $\eta^Z$ and $\mu^Z$ are both maps in $\nColl$.

    For the system of compositions, define $\sigma^Z \colon S \rightarrow Z$ to be the composite
      \[
        \xy
          % POINTS
          (0, 0)*+{S}="S";
          (14, 0)*+{Y}="Y";
          (28, 0)*+{Z}="Z";
          (14, -14)*+{T1.}="T";
          % ARROWS
          {\ar_{s} "S" ; "T"};
          {\ar^y "Y" ; "T"};
          {\ar^z "Z" ; "T"};
          {\ar^{\sigma^Y} "S" ; "Y"};
          {\ar^{e} "Y" ; "Z"};
        \endxy
      \]
    The diagram above shows that $\sigma^Z$ is a map in $\nColl$.

    We now check that $e$ preserves the operad structure and the system of compositions.  That $e$ preserves the unit and system of compositions is immediate from the definitions of $\eta^Z$ and $\sigma^Z$; for the multiplication, the diagram
      \[
        \xy
          % POINTS
          (0, 0)*+{Y \otimes Y}="YY";
          (40, 0)*+{Z \otimes Z}="ZZ";
          (20, -14)*+{X \otimes X}="XX";
          (40, -14)*+{Y \otimes Y}="YYr";
          (20, -28)*+{X}="X";
          (40, -28)*+{Y}="Yr";
          (0, -42)*+{Y \otimes Y}="YYb";
          (20, -42)*+{Y}="Yb";
          (40, -42)*+{Z}="Zb";
          % ARROWS
          {\ar^{e \otimes e} "YY" ; "ZZ"};
          {\ar_{\id_{Y \otimes Y}} "YY" ; "YYb"};
          {\ar^{q \otimes q} "YY" ; "XX"};
          {\ar^{p \otimes p} "ZZ" ; "YYr"};
          {\ar^{g \otimes g} "XX" ; "YYr"};
          {\ar_{f \otimes f} "XX" ; "YYb"};
          {\ar^{\mu^X} "XX" ; "X"};
          {\ar^{\mu^Y} "YYr" ; "Yr"};
          {\ar^{g} "X" ; "Yr"};
          {\ar_{f} "X" ; "Yb"};
          {\ar^{e} "Yr" ; "Zb"};
          {\ar_{\mu^Y} "YYb" ; "Yb"};
          {\ar_e "Yb" ; "Zb"};
        \endxy
      \]
    commutes.  Hence $e$ is a map in $\SoC$.

    We now check that the maps induced by the universal property of $Z$ also preserve the operad structure and the system of compositions.  Suppose we have an operad $K$ in $\SoC$, and a map $h \colon Y \rightarrow K$ in $\SoC$.  The universal property of $Z$ in $\nColl$ gives us a unique map $u \colon Z \rightarrow K$ making the diagram
      \[
        \xy
          % POINTS
          (0, 0)*+{X}="X";
          (14, 0)*+{Y}="Y";
          (28, 0)*+{Z}="Z";
          (42, -10)*+{K}="K";
          (14, -20)*+{T1}="T";
          % ARROWS
          {\ar@<1ex>^{f} "X" ; "Y"};
          {\ar@<-1ex>_{g} "X" ; "Y"};
          {\ar_x "X" ; "T"};
          {\ar^y "Y" ; "T"};
          {\ar^z "Z" ; "T"};
          {\ar^{e} "Y" ; "Z"};
          {\ar_{h} "Y" ; "K"};
          {\ar@{-->}^{u} "Z" ; "K"};
          {\ar^k "K" ; "T"};
        \endxy
      \]
    commute in $\nColl$.  The diagrams
      \[
        \xy
          % POINTS
          (0, 0)*+{1}="1";
          (0, -10)*+{Y}="Y";
          (0, -20)*+{Z}="Z";
          (20, -20)*+{K}="K";
          % ARROWS
          {\ar^{\eta^Y} "1" ; "Y"};
          {\ar@/_1pc/_{\eta^Z} "1" ; "Z"};
          {\ar@/^1pc/^{\eta^K} "1" ; "K"};
          {\ar^h "Y" ; "K"};
          {\ar^e "Y" ; "Z"};
          {\ar_u "Z" ; "K"};
        \endxy
      \]
    and
      \[
        \xy
          % POINTS
          (0, 0)*+{Z \otimes Z}="ZZ";
          (20, 0)*+{K \otimes K}="KK";
          (0, -14)*+{Y \otimes Y}="YY";
          (0, -28)*+{Y}="Y";
          (0, -42)*+{Z}="Z";
          (20, -42)*+{K}="K";
          % ARROWS
          {\ar^{u \otimes u} "ZZ" ; "KK"};
          {\ar_{p \otimes p} "ZZ" ; "YY"};
          {\ar_{h \otimes h} "YY" ; "KK"};
          {\ar_{\mu^Y} "YY" ; "Y"};
          {\ar^{\mu^K} "KK" ; "K"};
          {\ar_{h} "Y" ; "K"};
          {\ar_{e} "Y" ; "Z"};
          {\ar_u "Z" ; "K"};
        \endxy
      \]
    commute, so $u$ is a map of operads.  Also, the diagram
      \[
        \xy
          % POINTS
          (0, 0)*+{S}="S";
          (0, -10)*+{Y}="Y";
          (0, -20)*+{Z}="Z";
          (20, -20)*+{K}="K";
          % ARROWS
          {\ar^{\sigma^Y} "S" ; "Y"};
          {\ar@/_1pc/_{\sigma^Z} "S" ; "Z"};
          {\ar@/^1pc/^{\sigma^K} "S" ; "K"};
          {\ar^h "Y" ; "K"};
          {\ar^e "Y" ; "Z"};
          {\ar_u "Z" ; "K"};
        \endxy
      \]
    commutes, so $u$ preserves the system of compositions on $Z$.  Thus $Z$ is the coequaliser of $(f, g)$ in $\SoC$, as required, so $U$ is monadic.
  \end{proof}

  The final step we need to take in order to use Proposition~\ref{prop:Kelly} to prove that $\OCS$ has an initial object is to prove that the functors
    \[
      U \colon \SoC \longrightarrow \nColl,
    \]
  and
    \[
      V \colon \Contr \longrightarrow \nColl
    \]
  are finitary.  To do so, we first give a result describing colimits in slice categories, which gives us a description of colimits in $\nColl$.

  \begin{lemma}  \label{lem:ncollcolims}
    Let $\mathcal{C}$ be a cocomplete category, let $Z$ be an object of $\mathcal{C}$, and let $D \colon \mathbb{I} \rightarrow \mathcal{C}/Z$ be a diagram in the slice category $\mathcal{C}/Z$.  For each $i \in \mathbb{I}$, write
      \[
        \xy
          % POINTS
          (0, 0)*+{X^{(i)}}="Xi";
          (0, -16)*+{Z}="T";
          % ARROWS
          {\ar^{x^{(i)}} "Xi" ; "T"};
        \endxy
      \]
    for the object $D(i)$ in $\mathcal{C}/Z$.  Write
      \[
        X := \colim_{i \in \mathbb{I}} X^{(i)}
      \]
    for the colimit in $\mathcal{C}$, and write
      \[
        c_i \colon X^{(i)} \rightarrow X
      \]
    for the coprojections.  Then the colimit
      \[
        \colim_{i \in \mathbb{I}} D(i)
      \]
    in $\mathcal{C}/Z$ is given by
      \[
        \xy
          % POINTS
          (0, 0)*+{X}="X";
          (0, -16)*+{Z,}="T";
          % ARROWS
          {\ar^{x} "X" ; "T"};
        \endxy
      \]
    where $x$ is the unique map such that, for all $i \in \mathbb{I}$,
      \[
        \xy
          % POINTS
          (0, 0)*+{X^{(i)}}="Xi";
          (16, 0)*+{X}="X";
          (8, -14)*+{Z}="T";
          % ARROWS
          {\ar^{c_i} "Xi" ; "X"};
          {\ar_{x^{(i)}} "Xi" ; "T"};
          {\ar^{x} "X" ; "T"};
        \endxy
      \]
    commutes.
  \end{lemma}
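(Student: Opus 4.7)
The plan is to verify the lemma in two steps: first construct the cone structure on $X$ (which is essentially what the statement does), then verify the universal property of $X$ as a colimit in the slice category. Both parts reduce to applying the universal property of $X$ as a colimit in $\mathcal{C}$.

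First I would observe that the maps $x^{(i)} \colon X^{(i)} \rightarrow Z$ form a cocone on the underlying diagram of $D$ in $\mathcal{C}$. Indeed, for any morphism $f \colon i \rightarrow j$ in $\mathbb{I}$, the map $D(f) \colon X^{(i)} \rightarrow X^{(j)}$ is a morphism in $\mathcal{C}/Z$, which by definition means $x^{(j)} \comp D(f) = x^{(i)}$; this is precisely the cocone condition. Hence by the universal property of $X = \colim_{i \in \mathbb{I}} X^{(i)}$ in $\mathcal{C}$, there exists a unique map $x \colon X \rightarrow Z$ such that $x \comp c_i = x^{(i)}$ for all $i \in \mathbb{I}$. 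These equations say exactly that each coprojection $c_i$ lifts to a map $(x^{(i)}) \rightarrow (x)$ in $\mathcal{C}/Z$, so $(x \colon X \rightarrow Z)$ equipped with the $c_i$ is a cocone on $D$ in $\mathcal{C}/Z$.

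Next I would check the universal property. Suppose we are given a cocone on $D$ in $\mathcal{C}/Z$ with vertex $(y \colon Y \rightarrow Z)$, consisting of morphisms $g_i \colon X^{(i)} \rightarrow Y$ in $\mathcal{C}$ satisfying $y \comp g_i = x^{(i)}$ and compatible with the maps $D(f)$. Forgetting down to $\mathcal{C}$, the $g_i$ form a cocone on the underlying diagram, so the universal property of $X$ in $\mathcal{C}$ produces a unique map $g \colon X \rightarrow Y$ with $g \comp c_i = g_i$ for all $i$. To show $g$ is a morphism in $\mathcal{C}/Z$, we must verify $y \comp g = x$; but both $y \comp g$ and $x$ are maps $X \rightarrow Z$ satisfying $(y \comp g) \comp c_i = y \comp g_i = x^{(i)} = x \comp c_i$ for all $i$, so the uniqueness clause in the universal property of $X$ forces $y \comp g = x$. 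Uniqueness of $g$ as a map in $\mathcal{C}/Z$ follows immediately from its uniqueness as a map in $\mathcal{C}$.

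The statement is a standard fact about slice categories, and there is no real obstacle; the only thing to be careful about is applying the universal property of $X$ twice, once for existence of $x$ and $g$, and once for the uniqueness clause that forces $y \comp g = x$. Since this lemma will be applied to the case $\mathcal{C} = \nGSet$ and $Z = T1$, giving colimits in $\nColl$, it is worth emphasising in the exposition that the colimit is computed at the level of the underlying objects, with the map to $T1$ being the unique induced one.
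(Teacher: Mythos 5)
Your proposal is correct and follows essentially the same route as the paper's proof: induce the mediating map from the universal property of $X$ in $\mathcal{C}$, then use the uniqueness clause of that same universal property to deduce $y \comp g = x$ (the paper phrases this as ``both $u = x$ and $u = yf$ make the diagram commute''). Your explicit first step constructing $x$ as the map induced by the cocone of the $x^{(i)}$ is a slightly fuller account of what the statement itself already asserts, but the substance of the argument is identical.
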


  \begin{proof}
    Let
      \[
        \xy
          % POINTS
          (0, 0)*+{Y}="Y";
          (0, -16)*+{Z}="T";
          % ARROWS
          {\ar^{y} "Y" ; "T"};
        \endxy
      \]
    be an object of $\mathcal{C}/Z$, and let
      \[
        \xy
          % POINTS
          (0, 0)*+{X^{(i)}}="Xi";
          (16, 0)*+{Y}="Y";
          (8, -14)*+{Z}="T";
          % ARROWS
          {\ar^{f_i} "Xi" ; "Y"};
          {\ar_{x^{(i)}} "Xi" ; "T"};
          {\ar^{y} "Y" ; "T"};
        \endxy
      \]
    be a cocone under the diagram $D$ in $\mathcal{C}/Z$.  The universal property of $X$ in $\mathcal{C}$ induces a unique map $f \colon X \rightarrow Y$ such that, for every $i \in \mathbb{I}$, the diagram
      \[
        \xy
          % POINTS
          (0, 0)*+{X^{(i)}}="Xi";
          (-8, -14)*+{X}="X";
          (8, -14)*+{Y}="Y";
          % ARROWS
          {\ar_{c_i} "Xi" ; "X"};
          {\ar^{f_i} "Xi" ; "Y"};
          {\ar_{f} "X" ; "Y"};
        \endxy
      \]
    commutes.  We need to check that $f$ is a map in $\mathcal{C}/Z$, i.e. that the diagram
      \[
        \xy
          % POINTS
          (0, 0)*+{X}="X";
          (16, 0)*+{Y}="Y";
          (8, -14)*+{Z}="T";
          % ARROWS
          {\ar^{f} "X" ; "Y"};
          {\ar_{x} "X" ; "T"};
          {\ar^{y} "Y" ; "T"};
        \endxy
      \]
    commutes.  The maps
      \[
        \xy
          % POINTS
          (0, 0)*+{X^{(i)}}="Xi";
          (16, 0)*+{Y}="Y";
          (32, 0)*+{Z}="T";
          % ARROWS
          {\ar^-{f_i} "Xi" ; "Y"};
          {\ar^-{y} "Y" ; "T"};
        \endxy
      \]
    define a cocone under the diagram defining $X$ in $\nGSet$, so there is a unique map $u \colon X \rightarrow Z$ such that each
      \[
        \xy
          % POINTS
          (0, 0)*+{X^{(i)}}="Xi";
          (16, 0)*+{X}="X";
          (0, -16)*+{Y}="Y";
          (16, -16)*+{Z}="T";
          % ARROWS
          {\ar^-{c_i} "Xi" ; "X"};
          {\ar_{f_i} "Xi" ; "Y"};
          {\ar@{-->}^{u} "X" ; "T"};
          {\ar_{y} "Y" ; "T"};
        \endxy
      \]
    commutes.  Both $u = x$ and $u = yf$ make this diagram commute; thus $x = yf$, so $f$ is a map in $\mathcal{C}/Z$.  Thus $\colim_{i \in \mathbb{I}} D(i)$ is given by
      \[
        \xy
          % POINTS
          (0, 0)*+{X}="X";
          (0, -16)*+{Z,}="T";
          % ARROWS
          {\ar^{x} "X" ; "T"};
        \endxy
      \]
    as required.
  \end{proof}

  \begin{lemma}  \label{lem:Vfinitary}
    The functor
      \[
        V \colon \Contr \rightarrow \nColl
      \]
    is finitary.
  \end{lemma}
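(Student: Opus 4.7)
The plan is to show that $V$ preserves filtered colimits by building the colimit of a filtered diagram in $\Contr$ out of the colimit in $\nColl$, and then equipping it with a contraction induced from the contractions on the diagram.

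Let $D \colon \mathbb{I} \to \Contr$ be a filtered diagram with $D(i) = (\xymatrix{X^{(i)} \ar[r]^{x^{(i)}} & T1})$ and contraction $\gamma^{(i)}$. By Lemma~\ref{lem:ncollcolims} the colimit of $VD$ in $\nColl$ is $\xymatrix{X \ar[r]^{x} & T1}$, where $X = \colim_{i} X^{(i)}$ in $\nGSet$ (computed pointwise in $\Set$) and $x$ is the unique induced map. Write $c_i \colon X^{(i)} \rightarrow X$ for the coprojections. I would first construct a putative contraction $\gamma$ on this collection, and then verify that the result is a colimit in $\Contr$.

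The key tool for constructing $\gamma$ is Lemma~\ref{lem:maclane}: for fixed $1 \leq m \leq n$ and $\alpha \in T1_{m-1}$, the set $C_X(\id_\alpha)$ is defined by a finite limit (a pullback of preimages $X(s(\id_\alpha))$ and $X(t(\id_\alpha))$, together in the case $m > 1$ with the finite-limit condition that the sources and targets agree). Since $\mathbb{I}$ is filtered and each $X^{(i)}$ is computed in $\Set$, Mac Lane's lemma gives a canonical isomorphism
\[
C_X(\id_\alpha) \;\iso\; \colim_{i \in \mathbb{I}} C_{X^{(i)}}(\id_\alpha),
\]
and similarly $X(\id_\alpha) \iso \colim_i X^{(i)}(\id_\alpha)$. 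The family of maps $\gamma^{(i)}_{\id_\alpha} \colon C_{X^{(i)}}(\id_\alpha) \to X^{(i)}(\id_\alpha)$ is compatible with the diagram (because maps in $\Contr$ preserve contractions), so it induces a unique map $\gamma_{\id_\alpha} \colon C_X(\id_\alpha) \to X(\id_\alpha)$. The source, target, and $x$-image conditions for $\gamma$ follow by checking them on each $C_{X^{(i)}}(\id_\alpha)$ and passing to the colimit.

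For the tameness condition at dimension $n$, suppose $\alpha, \beta \in X_n$ are parallel with $x(\alpha) = x(\beta)$. Since $X_n = \colim_i X^{(i)}_n$ is a filtered colimit in $\Set$, we may pick a common index $i$ with representatives $\alpha', \beta' \in X^{(i)}_n$. The relations $s(\alpha) = s(\beta)$, $t(\alpha) = t(\beta)$, and $x^{(i)}(\alpha') = x^{(i)}(\beta')$ hold in the filtered colimits $X_{n-1}$ and $T1_n$ respectively, so after moving along a map $i \to j$ in $\mathbb{I}$ they hold in $X^{(j)}$; the tameness of $\gamma^{(j)}$ then forces the images of $\alpha'$ and $\beta'$ in $X^{(j)}_n$ to coincide, hence $\alpha = \beta$ in $X_n$.

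Finally, to conclude that $(\xymatrix{X \ar[r]^{x} & T1}, \gamma)$ is the colimit of $D$ in $\Contr$, observe that any cocone in $\Contr$ induces a cocone in $\nColl$, hence a unique map $u \colon X \to Y$ of collections by the universal property of Lemma~\ref{lem:ncollcolims}; since $u$ is defined via the $c_i$ and each leg of the cocone preserves contractions, compatibility with the induced contraction $\gamma$ reduces to compatibility at each $X^{(i)}$, which holds by hypothesis. Thus $V$ preserves filtered colimits. The main obstacle in this argument is the careful interaction between Mac Lane's lemma and the tameness condition at dimension $n$; once one recognises that tameness is a statement about equalities in filtered colimits of sets, it becomes routine, but it is the one place where the argument uses something beyond formal manipulation of colimits and pullbacks.
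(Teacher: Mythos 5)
Your proof is correct and follows essentially the same route as the paper's: compute the colimit in $\nColl$, use filteredness of $\mathbb{I}$ to transport the contractions $\gamma^{(i)}$ to the colimit, and verify the universal property in $\Contr$. Your packaging of the ``choose representatives at a common stage'' step as an instance of Lemma~\ref{lem:maclane} is only a cosmetic difference from the paper's hands-on argument, and your explicit verification of the tameness condition at dimension $n$ is a welcome addition that the paper's proof in fact omits.
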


  \begin{proof}
    First observe that, by Lemma~\ref{lem:ncollcolims} and the fact that $\nGSet$ is a presheaf category and is thus cocomplete, $\nColl$ is cocomplete.  Thus our approach is to show that $V$ creates filtered colimits; since all filtered colimits exist in $\nColl$, this implies that $V$ preserves filtered colimits.

    Let $D \colon \mathbb{I} \rightarrow \Contr$ be a filtered diagram in $\Contr$.  Since $\nColl$ is cocomplete, $VD$ has a colimit in $\nColl$.  Write
      \[
        \xy
          % POINTS
          (0, 0)*+{X^{(i)}}="Xi";
          (0, -16)*+{T1}="T";
          % ARROWS
          {\ar^{x^{(i)}} "Xi" ; "T"};
        \endxy
      \]
    for the underlying collection of the object $D(i)$ in $\Contr$, and write $\gamma^{(i)}$ for the contraction on this collection.  Write
      \[
        \xy
          % POINTS
          (0, 0)*+{X}="X";
          (0, -16)*+{T1,}="T";
          % ARROWS
          {\ar^{x} "X" ; "T"};
        \endxy
      \]
    for the colimit $\colim_{i \in \mathbb{I}} VD(i)$ in $\nColl$, and write $c_i \colon X^{(i)} \rightarrow X$ for the coprojections.  We will show that there is a unique way to equip $X$ with a contraction in such a way that each $c_i$ preserves the contraction structure, and that this gives the colimit of $D$ in $\Contr$.

    Let $0 \leq m < n$, and let $a$, $b$ be parallel $m$-cells in $X$ with $x(a) = x(b)$.  By Lemma~\ref{lem:ncollcolims},
      \[
        X = \colim_{i \in \mathbb{I}} X^{(i)}
      \]
    in $\nGSet$, and since colimits are computed pointwise in presheaf categories,
      \[
        X_m = \colim_{i \in \mathbb{I}} X^{(i)}_m.
      \]
    Thus there exist $i$, $j \in \mathbb{I}$ and $u \in X^{(i)}_m$, $v \in X^{(j)}_m$ such that
      \[
        c_i(u) = a, \; c_j(v) = b.
      \]
    By definition of the map $x$, we have
      \[
        x^{(i)}c_i(u) = x(a) = x(b) = x^{(j)}c_j(v).
      \]
    Since $\mathbb{I}$ is filtered, we have a cocone in the diagram $D$ under $X^{(i)}_m$ and $X^{(j)}_m$, with vertex $X^{(k)}_m$ for some $k \in \mathbb{I}$ and maps $d_i \colon X^{(i)}_m \rightarrow X^{(K)}_m$, $d_j \colon X^{(j)}_m \rightarrow X^{(K)}_m$, such that $d_i(u)$ and $d_j(v)$ are parallel in $X^{(k)}$.  Thus we can define a contraction $\gamma$ on $x \colon X \rightarrow T1$ by
      \[
        \gamma(a, b) = c_k \gamma^{(k)}(d_i(u), d_j(v)).
      \]
    The fact that $\mathbb{I}$ is filtered, and commutativity of the universal cocone defining $X$, ensure that this definition is independent of the choice of $i$, $j$, $k$.  By definition of $\gamma$, each $c_i$ preserves the contraction structure, and furthermore this is the only way to equip $x \colon X \rightarrow T1$ with a contraction in a such a way that the $c_i$'s preserve contractions.

    We now check that maps induced by the universal property of $X$ also preserve contractions.  Suppose we have a collection
      \[
        \xy
          % POINTS
          (0, 0)*+{K}="K";
          (0, -16)*+{T1}="T";
          % ARROWS
          {\ar^{k} "K" ; "T"};
        \endxy
      \]
    with contraction $\delta$, and a cocone
      \[
        \xy
          % POINTS
          (0, 0)*+{X^{(i)}}="Xi";
          (16, 0)*+{K}="K";
          (8, -14)*+{T1}="T";
          % ARROWS
          {\ar^{c_i} "Xi" ; "K"};
          {\ar_{x^{(i)}} "Xi" ; "T"};
          {\ar^{k} "K" ; "T"};
        \endxy
      \]
    in $\Contr$.  Then there is a unique map of collections $u \colon X \rightarrow K$ such that each
      \[
        \xy
          % POINTS
          (0, 0)*+{X^{(i)}}="Xi";
          (16, 0)*+{X}="X";
          (8, -14)*+{K}="K";
          % ARROWS
          {\ar^{c_i} "Xi" ; "X"};
          {\ar_{r_i} "Xi" ; "K"};
          {\ar^{u} "X" ; "K"};
        \endxy
      \]
    commutes.  We must show that $u$ preserves the contraction structure.  Let $0 \leq m < n$, and let $(a, b) \in X^c_{m + 1}$.  By the definition of $\gamma$, we can pick $k \in \mathbb{I}$ such that there exist $u$, $v \in X^{(k)}_m$ with
      \[
        \gamma(a, b) = c_k\gamma^{(k)}(u, v).
      \]
    Thus
      \[
        u\gamma(a, b) = uc_k\gamma^{(k)}(u, v) = r_k\gamma^{(k)}(u, v) = \delta(r_k(u), r_k(v)),
      \]
    as required.  Hence $u$ is a map in $\Contr$.

    So $V$ creates filtered colimits; hence, since $\nColl$ is cocomplete, $V$ preserves filtered colimits.
  \end{proof}

  \begin{lemma}  \label{lem:Ufinitary}
    The functor
      \[
        U \colon \SoC \rightarrow \nColl
      \]
    is finitary.
  \end{lemma}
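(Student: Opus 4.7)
The plan is to mirror the proof of Lemma~\ref{lem:Vfinitary} by showing that $U$ creates filtered colimits; since $\nColl$ is cocomplete (by Lemma~\ref{lem:ncollcolims}), this will imply that $U$ preserves them. Let $D \colon \mathbb{I} \to \SoC$ be a filtered diagram; write $\xymatrix{X^{(i)} \ar[r]^{x^{(i)}} & T1}$ for the underlying collection of $D(i)$, with operad structure $(\eta^{(i)}, \mu^{(i)})$ and system of compositions $\sigma^{(i)}$, and let $\xymatrix{X \ar[r]^{x} & T1}$ with coprojections $c_i \colon X^{(i)} \to X$ be the colimit in $\nColl$.

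The key preliminary observation is that the free strict $n$-category monad $T$ is finitary; combined with the fact that $T$ is cartesian and that finite limits commute with filtered colimits in $\Set$, and hence pointwise in $\nGSet$ (Lemma~\ref{lem:maclane}), this implies that the tensor product $- \otimes -$ on $\nColl$, which is built from $T$ and a pullback over $T1$, preserves filtered colimits in each variable separately. Since $\mathbb{I}$ is filtered, the diagonal $\mathbb{I} \to \mathbb{I} \times \mathbb{I}$ is final, so we obtain a canonical isomorphism $\colim_{i \in \mathbb{I}} X^{(i)} \otimes X^{(i)} \iso X \otimes X$.

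We now equip $X$ with the required structure. For the unit, the maps $\eta^{(i)} \colon 1 \to X^{(i)}$ are compatible with $D$ (since morphisms in $\SoC$ preserve units), so the composites $c_i \comp \eta^{(i)}$ agree for all $i$ and define a unique map $\eta^X \colon 1 \to X$ such that every $c_i$ preserves units. For the multiplication, the maps $c_i \comp \mu^{(i)} \colon X^{(i)} \otimes X^{(i)} \to X$ form a cocone under the diagram $(i \mapsto X^{(i)} \otimes X^{(i)})$, so by the isomorphism above they induce a unique map $\mu^X \colon X \otimes X \to X$. This $\mu^X$ is a map in $\nColl$ (by the usual diagram chase over $T1$), and associativity and unitality follow from the universal property together with the fact that these axioms hold in each $X^{(i)}$. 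For the system of compositions, the composites $c_i \comp \sigma^{(i)} \colon S \to X$ agree for all $i$, giving a unique $\sigma^X$ compatible with $\eta^X$. Finally, given an operad $K$ in $\SoC$ and a cocone $(r_i \colon X^{(i)} \to K)$ in $\SoC$, the induced map $u \colon X \to K$ in $\nColl$ preserves units and the system of compositions by uniqueness of the corresponding factorisations through $X$, and preserves multiplication by uniqueness of the factorisation through $X \otimes X$.

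The main obstacle is the multiplication step: unlike the contraction case treated in Lemma~\ref{lem:Vfinitary}, where cells of $X$ could be chosen from a single $X^{(i)}$ and pairs of parallel cells be wrangled into a common $X^{(k)}$ by filteredness, the domain $X \otimes X$ of the multiplication is not given directly as a filtered colimit of underlying sets but is built via a pullback involving $T$. Thus the argument genuinely rests on the fact that $T$ is finitary and cartesian, so that the tensor commutes with the filtered colimit; once this is established, everything else reduces to routine applications of universal properties, and we conclude that $U$ creates, and hence preserves, filtered colimits.
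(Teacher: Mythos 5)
Your proposal is correct, and its overall strategy is the same as the paper's: show that $U$ creates filtered colimits and conclude preservation from cocompleteness of $\nColl$; the unit and the system of compositions are handled identically (as the essentially constant cocones $c_i \comp \eta^{(i)}$ and $c_i \comp \sigma^{(i)}$). Where you genuinely diverge is the multiplication. The paper defines $\mu^X$ elementwise: given $(\alpha, \beta) \in X \otimes X$, it uses filteredness of $\mathbb{I}$ to drag $\alpha$ and all the finitely many generating cells of $\beta$ into a single $X^{(p)}$, sets $\mu^X(\alpha,\beta) := c_p\mu^{(p)}(d_i(a), b)$, and checks well-definedness and the collection axiom by hand. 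You instead prove once and for all that $- \otimes -$ preserves filtered colimits (using that $T$ is finitary together with Lemma~\ref{lem:maclane}, plus finality of the diagonal for filtered $\mathbb{I}$), obtain $X \otimes X \iso \colim_i \bigl(X^{(i)} \otimes X^{(i)}\bigr)$, and let the universal property do the work. The two arguments encode the same underlying fact — the paper's element-chase is precisely a hands-on verification that every element of $X \otimes X$ lives in some $X^{(p)} \otimes X^{(p)}$ — but your packaging is cleaner: it removes the need to check independence of the choices of $i$, $j_k$, $p$, and it makes the verification of the monoid axioms and of the universal property a routine consequence of uniqueness of induced maps (the paper is in fact rather terse on the associativity and unit axioms for $\mu^X$, whereas your formulation covers them uniformly). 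The only cost is that you must invoke finitariness of $T$, an external result of Leinster that the paper only cites later (in Lemma~\ref{lem:opsfiltcolims}); this is legitimate and not circular. One minor remark: cartesianness of $T$ is not actually needed for your key isomorphism — only finitariness of $T$ and the commutation of pullbacks with filtered colimits — so that hypothesis can be dropped from your preliminary observation.
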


  \begin{proof}
    Our approach is analogous to that used in the proof of Lemma~\ref{lem:Vfinitary}; we show that $U$ creates filtered colimits, and since $\nColl$ is cocomplete (and in particular, it has all filtered colimits) this implies that $U$ preserves filtered colimits.

    Let $D \colon \mathbb{I} \rightarrow \SoC$ be a filtered diagram in $\SoC$.  Since $\nColl$ is cocomplete, $UD$ has a colimit in $\nColl$.  Write
      \[
        \xy
          % POINTS
          (0, 0)*+{X^{(i)}}="Xi";
          (0, -16)*+{T1}="T";
          % ARROWS
          {\ar^{x^{(i)}} "Xi" ; "T"};
        \endxy
      \]
    for the underlying collection of the object $D(i)$ in $\SoC$; write $\eta^{(i)}$ and $\mu^{(i)}$ for the unit and multiplication maps for the operad structure on this collection, and $\sigma^{(i)}$ for its system of compositions.  Write
      \[
        \xy
          % POINTS
          (0, 0)*+{X}="X";
          (0, -16)*+{T1,}="T";
          % ARROWS
          {\ar^{x} "X" ; "T"};
        \endxy
      \]
    for the colimit $\colim_{i \in \mathbb{I}} UD(i)$ in $\nColl$, and write $c_i \colon X^{(i)} \rightarrow X$ for the coprojections.  We will show that there is a unique way to equip $X$ with an operad structure and a system of compositions in such a way that each $c_i$ preserves the operad structure and the system of compositions, and that this gives the colimits of $D$ in $\SoC$.

    Let $i \in \mathbb{I}$ and define the unit $\eta^X$ and system of compositions $\sigma^X$ on $X$ to be given by the composites
      \[
        \xy
          % POINTS
          (0, 0)*+{1}="1";
          (16, 0)*+{X^{(i)}}="Xil";
          (32, 0)*+{X,}="Xl";
          (48, 0)*+{S}="S";
          (64, 0)*+{X^{(i)}}="Xir";
          (80, 0)*+{X.}="Xr";
          % ARROWS
          {\ar^{\eta^{(i)}} "1" ; "Xil"};
          {\ar^{c_i} "Xil" ; "Xl"};
          {\ar@/_1.5pc/_{\eta^X} "1" ; "Xl"};
          {\ar^{\sigma^{(i)}} "S" ; "Xir"};
          {\ar^{c_i} "Xir" ; "Xr"};
          {\ar@/_1.5pc/_{\sigma^X} "S" ; "Xr"};
        \endxy
      \]
    Commutativity of the universal cocone defining $X$ ensures that these maps are well-defined and independent of the choice of $i$.

    To define the multiplication map $\mu^X$ we take an elementary approach.  Let $0 \leq m \leq n$, and let $(\alpha, \beta) \in X \otimes X_m$, so $\alpha \in X_m$, $\beta \in TX_m$, and $z(\alpha) = T!(\beta)$.  Since $X$ is computed pointwise, so each $X_m$ is computed in $\Set$, we can find $i \in \mathbb{I}$ and $a \in X^{(i)}_m$ such that $c_i(a) = \alpha$.  The element $\beta$ is a freely generated composite of $m$-cells $\beta_k \in X_m$, indexed over some set $K$ with $k \in K$.  For each $\beta_k$, we can find $j_k \in \mathbb{I}$ with $b_k \in X^{(j_k)}$ such that $c_{j_k}(b_k) = \beta_k$.  Since $\mathbb{I}$ is filtered, we have a cocone in $\mathbb{I}$ under $X^{(i)}$ and the $X^{(j_k)}$'s.  Write $X^{(p)}$ for the vertex of this cocone, and write
      \[
        d_i \colon X^{(i)} \rightarrow X^{(p)}, \; d_{j_k} \colon X^{(j_k)} \rightarrow X^{(p)}
      \]
    for the coprojections.  We can then find $b \in TX^{p}_m$ with $Tc_p(b) = \beta$, given by the appropriate composite of the $d_{j_k}(b_k)$'s.

    We thus define $\mu^X$ by
      \[
        \mu^X(\alpha, \beta) := c_p\mu^{(p)}(d_i(a), b).
      \]
    Commutativity of the universal cocone defining $X$ ensures that this maps is well-defined and independent of the choices of $i$, $j_k$, and $p$.  We now check that $\mu^X$ is a map of collections.  Let $(\alpha, \beta) \in X \otimes X$.  Then we have
      \begin{align*}
        x\mu^X(\alpha, \beta) & = xc_p\mu^{(p)}(d_i(a), b) \\
          & = x^{(p)}\mu^{(p)}(d_i(a), b) \\
          & = \mu^T_1 \comp Tx^{(p)} \comp x^{(p)}_{X^{(p)}}(d_i(a), b) \\
          & = \mu^T_1 \comp Tx \comp xc_p(d_i(a), b) \\
          & = \mu^T_1 \comp Tx \comp x(\alpha, \beta),
      \end{align*}
    where $i$, $p$, $d_i$, $a$ and $b$ are as above.  Thus the triangle in the diagram
          \[
            \xy
            % POINTS
              (0, 0)*+{X \otimes X}="XX";
              (-10, -10)*+{X}="X";
              (10, -10)*+{TX}="TX";
              (0, -20)*+{T1}="Tl";
              (20, -20)*+{T^2 1}="TT";
              (30, -30)*+{T1}="Tr";
              (30, 0)*+{X}="Xr";
            % ARROWS
              {\ar "XX" ; "X"};
              {\ar "XX" ; "TX"};
              {\ar_x "X" ; "Tl"};
              {\ar^{T!} "TX" ;"Tl"};
              {\ar_{Tx} "TX" ; "TT"};
              {\ar_{\mu^T_1} "TT" ; "Tr"};
              {\ar^{\mu^X} "XX" ; "Xr"};
              {\ar^-{x} "Xr" ; "Tr"};
              % PULLBACK STUFF
              (-3, -5)*{}; (0,-8)*{} **\dir{-};
              (3, -5)*{}; (0,-8)*{} **\dir{-};
            \endxy
          \]
    commutes, so $\mu^X$ is a map of collections.

    It is immediate from the definitions of $\eta^X$, $\mu^X$, and $\sigma^X$ that, for each $i \in \mathbb{I}$, the coprojection $c_i$ preserves the operad structure and the system of compositions on $X^{(i)}$; furthermore, this is the only way to equip $X$ with an operad structure and a system of compositions such that this is true.

    We now check that maps induced by the universal property of $X$ preserve the operad structure and system of compositions.  Suppose we have an operad
      \[
        \xy
          % POINTS
          (0, 0)*+{K}="K";
          (0, -16)*+{T1}="T";
          % ARROWS
          {\ar^{k} "K" ; "T"};
        \endxy
      \]
    with unit $\eta^K$, multiplication $\mu^K$, and system of compositions $\sigma^K$, and a cocone
      \[
        \xy
          % POINTS
          (0, 0)*+{X^{(i)}}="Xi";
          (16, 0)*+{K}="K";
          (8, -14)*+{T1}="T";
          % ARROWS
          {\ar^{c_i} "Xi" ; "K"};
          {\ar_{x^{(i)}} "Xi" ; "T"};
          {\ar^{k} "K" ; "T"};
        \endxy
      \]
    in $\SoC$.  Then there is a unique map of collections $u \colon X \rightarrow K$ such that each
      \[
        \xy
          % POINTS
          (0, 0)*+{X^{(i)}}="Xi";
          (16, 0)*+{X}="X";
          (8, -14)*+{K}="K";
          % ARROWS
          {\ar^{c_i} "Xi" ; "X"};
          {\ar_{r_i} "Xi" ; "K"};
          {\ar^{u} "X" ; "K"};
        \endxy
      \]
    commutes.  We must show that $u$ preserves the operad structure and the system of compositions on $X$.  The diagrams
      \[
        \xy
          % POINTS
          (-16, 0)*+{1}="1";
          (0, 0)*+{X^{(i)}}="Xi";
          (16, 0)*+{X}="X";
          (0, -14)*+{K}="K";
          % ARROWS
          {\ar^{\eta^{(i)}} "1" ; "Xi"};
          {\ar_{\eta^k} "1" ; "K"};
          {\ar@/^1.5pc/^{\eta^X} "1" ; "X"};
          {\ar^{c_i} "Xi" ; "X"};
          {\ar_{r_i} "Xi" ; "K"};
          {\ar^{u} "X" ; "K"};
        \endxy
      \]
    and
      \[
        \xy
          % POINTS
          (-16, 0)*+{S}="S";
          (0, 0)*+{X^{(i)}}="Xi";
          (16, 0)*+{X}="X";
          (0, -14)*+{K}="K";
          % ARROWS
          {\ar^{\sigma^{(i)}} "S" ; "Xi"};
          {\ar_{\sigma^k} "S" ; "K"};
          {\ar@/^1.5pc/^{\sigma^X} "S" ; "X"};
          {\ar^{c_i} "Xi" ; "X"};
          {\ar_{r_i} "Xi" ; "K"};
          {\ar^{u} "X" ; "K"};
        \endxy
      \]
    commute, so $u$ preserves the unit and the system of compositions.  For preservation of the multiplication, we need to show that the diagram
      \[
        \xy
          % POINTS
          (0, 0)*+{X \otimes X}="XX";
          (20, 0)*+{K \otimes K}="KK";
          (0, -16)*+{X}="X";
          (20, -16)*+{K}="K";
          % ARROWS
          {\ar^-{u \otimes u} "XX" ; "KK"};
          {\ar_{\mu^Z} "XX" ; "X"};
          {\ar^{\mu^K} "KK" ; "K"};
          {\ar_{u} "X" ; "K"};
        \endxy
      \]
    commutes; this is true since, given $(\alpha, \beta) \in X \otimes X$, we have
      \begin{align*}
        \mu^K \comp u \otimes u (\alpha, \beta) & = \mu^K(u(\alpha), Tu(\beta))  \\
          & = \mu^K(uc_p d_i(a), T(ic_p)(b))  \\
          & = \mu^K(r_p d_i(a), Tr_p(b))  \\
          & = r_p\mu^{(p)}(d_i(a), b)  \\
          & = uc_p\mu^{(p)}(d_i, b)  \\
          & = u\mu^X(\alpha, \beta),
      \end{align*}
    where $i$, $p$, $d_i$, $a$ and $b$ are as in the definition of $\mu^X$.

    Thus $U$ creates filtered colimits; since $\nColl$ is cocomplete, $U$ preserves filtered colimits.
  \end{proof}

  We now have all the results required to prove the following proposition:

  \begin{prop}  \label{prop:initial}
    The category $\OCS$ has an initial object.
  \end{prop}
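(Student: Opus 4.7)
The plan is to invoke the monadicity result of Kelly stated as Proposition~\ref{prop:Kelly}, applied to the pullback square
\[
  \xy
    (0, 0)*+{\OCS}="0,0";
    (16, 0)*+{\Contr}="1,0";
    (0, -16)*+{\SoC}="0,1";
    (16, -16)*+{\nColl}="1,1";
    {\ar "0,0" ; "1,0"};
    {\ar "0,0" ; "0,1"};
    {\ar^V "1,0" ; "1,1"};
    {\ar_-U "0,1" ; "1,1"};
    (6,-1)*{}; (6,-5)*{} **\dir{-};
    (2,-5)*{}; (6,-5)*{} **\dir{-};
  \endxy
\]
that was exhibited earlier in the section. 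First I would check the three hypotheses of Proposition~\ref{prop:Kelly}: the functor $V \colon \Contr \to \nColl$ is monadic by Lemma~\ref{lem:Vmonadic} and finitary by Lemma~\ref{lem:Vfinitary}; the functor $U \colon \SoC \to \nColl$ is monadic by Lemma~\ref{lem:Umonadic} and finitary by Lemma~\ref{lem:Ufinitary}; and $\nColl = \nGSet \downarrow T1$ is locally finitely presentable because $\nGSet$ is a presheaf category (hence locally finitely presentable) and the slice of a locally finitely presentable category over any fixed object is again locally finitely presentable. Proposition~\ref{prop:Kelly} then yields that the composite forgetful functor $W \colon \OCS \to \nColl$ is monadic, and in particular admits a left adjoint, which I will call $F$.

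Once $F$ is produced, the remainder of the argument is purely formal. Left adjoints preserve colimits, and the initial object is the colimit of the empty diagram, so $F$ sends an initial object of $\nColl$ to an initial object of $\OCS$. It remains only to identify an initial object of $\nColl$: by Lemma~\ref{lem:ncollcolims}, colimits in the slice $\nGSet \downarrow T1$ are computed in $\nGSet$ and then equipped with the induced map to $T1$, so the initial object of $\nColl$ is simply the unique morphism $\emptyset \to T1$ out of the initial (empty) $n$-globular set. Applying $F$ to this collection then gives the desired initial object $b \colon B \to T1$ of $\OCS$.

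All of the real technical work has been absorbed into the preceding four lemmas; no interleaving argument or explicit free construction is required at this final stage, and in fact $F$ itself need not be described explicitly. The only point that is not entirely automatic is the observation that $\nColl$ is locally finitely presentable, but this is a standard consequence of the fact that both presheaf categories and slice categories of locally finitely presentable categories remain locally finitely presentable (see, for instance, the finitely presentable objects in $\mathcal{E}/Z$ being precisely those whose domain is finitely presentable in $\mathcal{E}$). Thus no step here is expected to present a genuine obstacle, and the proposition follows by a short direct appeal to Proposition~\ref{prop:Kelly}.
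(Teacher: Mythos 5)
Your proposal is correct and follows essentially the same route as the paper: both verify the hypotheses of Proposition~\ref{prop:Kelly} using Lemmas~\ref{lem:Vmonadic}, \ref{lem:Umonadic}, \ref{lem:Vfinitary} and \ref{lem:Ufinitary}, note that $\nColl$ is locally finitely presentable (the paper justifies this via the equivalence of a slice of a presheaf category with a presheaf category on the category of elements, rather than the general fact about slices of locally finitely presentable categories, but either is fine), and then apply the resulting left adjoint to the initial collection $\emptyset \to T1$. No gaps.
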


  \begin{proof}
    Our aim is to show that the category $\OCS$ has an initial object.  Recall that there is a forgetful functor
      \[
        \OCS \longrightarrow \nColl,
      \]
    and that our approach is to show that this forgetful functor has a left adjoint, then apply that left adjoint to the initial object
      \[
        \xy
          % POINTS
          (0, 0)*+{\emptyset}="K";
          (0, -16)*+{T1}="T";
          % ARROWS
          {\ar^{!} "K" ; "T"};
        \endxy
      \]
    in $\nColl$; since the initial object is the colimit of the empty diagram, and left adjoints preserve colimits, this will give us the initial object in $\OCS$.

    The forgetful functor $\OCS \rightarrow \nColl$ can be factorised as either of the composites in the pullback square
      \[
        \xy
          % POINTS
          (0, 0)*+{\mathcal{\OCS}}="D";
          (16, 0)*+{\mathcal{\Contr}}="C";
          (0, -16)*+{\mathcal{\SoC}}="B";
          (16, -16)*+{\nColl,}="A";
          % ARROWS
          {\ar "D" ; "C"};
          {\ar "D" ; "B"};
          {\ar_-{U} "B" ; "A"};
          {\ar^{V} "C" ; "A"};
          % PULLBACK STUFF
          (6,-1)*{}; (6,-5)*{} **\dir{-};
          (2,-5)*{}; (6,-5)*{} **\dir{-};
        \endxy
     \]
    and, by Proposition~\ref{prop:Kelly}, it has a left adjoint if $\nColl$ is locally finitely presentable and each of $U$ and $V$ is finitary and monadic.  The functor $U$ is monadic by Lemma~\ref{lem:Umonadic} and finitary by Lemma~\ref{lem:Ufinitary}; the functor $V$ is monadic by Lemma~\ref{lem:Vmonadic} and finitary by Lemma~\ref{lem:Vfinitary}; since $\nColl$ is a slice of a presheaf category, it is also a presheaf category, so is locally finitely presentable (see \cite[Proposition~1.1.7 and Appendix~G]{Lei04} and \cite[Example~5.2.2(b)]{Bor94}). Specifically, given a small category $\mathbb{A}$ and a presheaf $X \colon \mathbb{A}^{\op} \rightarrow \Set$, we have
      \[
        [\mathbb{A}^{\op}, \Set]/X \simeq [(\mathbb{A}/X)^{\op}, \Set],
      \]
    where $\mathbb{A}/X$ is the \emph{category of elements of $X$}, which has:
      \begin{itemize}
        \item objects: pairs $(A, x)$, where $A \in \mathbb{A}$ and $x \in XA$;
        \item morphisms: a morphism $f \colon (A, x) \rightarrow (A', x')$ in $\mathbb{A}/X$ consists of a map
          \[
            f \colon A \rightarrow A'
          \]
        in $\mathbb{A}$ such that $Xf(x') = x$.
      \end{itemize}
    In the case of $\nColl$ where $\mathbb{A} = \nGSet$ and $X = T1$, the category of elements $(\mathbb{A}/X)^{\op}$ has one object for each globular pasting diagram, and morphisms generated by source and target maps.

    Thus, the forgetful functor $\OCS \rightarrow \nColl$ has a left adjoint, so $\OCS$ has an initial object.
  \end{proof}

  \section{Leinster weak $n$-categories}  \label{sect:Leinster}

  Various authors \cite{Ber02, Lei02, Lei00, Cis07, Gar10, vdBG11, Che11} have considered variants of Batanin's definition.  Many of these variants take the approach of relaxing the choice of operad, by defining a weak $n$-category to be an algebra for any operad that can be equipped with a contraction and system of compositions.  In this section, we recall a variant of the definition that takes a different approach, due to Leinster~\cite{Lei98};  we refer to the resulting notion of weak $n$-category as a ``Leinster weak $n$-category''.  The key distinction between Leinster's variant and Batanin's original definition is that, rather than using a contraction and system of compositions, Leinster ensures the existence of both composition operations and contraction operations using a single piece of extra structure, called an ``unbiased contraction'' (note that Leinster simply uses the term ``contraction'' for this concept, and uses the term ``coherence'' for Batanin's contractions).  An unbiased contraction on an operad lifts all cells from $T1$, not just identity cells as in a contraction.  As well as giving the usual constraint cells, an unbiased contraction gives a composition operation for each non-identity cell in $T1$.  Thus for any globular pasting diagram there is an operation, specified by the unbiased contraction, which we think of as telling us how to compose a pasting diagram of that shape ``all at once''.  Operads equipped with unbiased contractions form a category, and this category has an initial object; a Leinster weak $n$-category is defined to be an algebra for this initial operad.

   We recall briefly the definition of Leinster weak $n$-categories.  The majority of this section concerns the relationship between the unbiased contractions of Leinster and the contractions and systems of compositions of Batanin.  First we recall a result of Leinster~\cite[Examples 10.1.2 and 10.1.4]{Lei04} that any operad with an unbiased contraction can be equipped with a contraction and system of compositions in a canonical way.  We then prove a conjecture of Leinster~\cite[10.1]{Lei04} which states that any operad with a contraction and system of compositions can be equipped with an unbiased contraction.  The proof consists of picking a binary bracketing for each pasting diagram in $T1$, then composing these bracketings with contraction cells to obtain unbiased contraction cells with the correct sources and targets.  Since we have to make arbitrary choices of bracketings during this process, there is no canonical way doing this.

   We begin by recalling the definition of unbiased contraction \cite{Lei98}.

  \begin{defn}
    An \emph{unbiased contraction} $\gamma$ on an $n$-globular collection
      \[
        \xymatrix{ K \ar[r]^k & T1 }
      \]
    consists of, for all $1 \leq m \leq n$, and for each $\pi \in (T1)_m$, a function
      \[
        \gamma_{\pi} \colon C_K(\pi) \rightarrow K(\pi)
      \]
    such that, for all $(a, b) \in C_K(\pi)$,
      \[
        s\gamma_{\pi}(a, b) = a, \; t\gamma_{\pi}(a, b) = b.
      \]
    We also require that, for $\alpha$, $\beta \in K_n$, if
      \[
        s(\alpha) = s(\beta), \; t(\alpha) = t(\beta), \; k(\alpha) = k(\beta),
      \]
    then $\alpha = \beta$.
  \end{defn}

  The key difference between the unbiased contractions of Leinster and the (biased) contractions of Batanin, defined in Definition~\ref{defn:Bcontr}, is that unbiased contractions lift all cells from $T1$, not just the identities.  Thus in an operad $\xymatrix{ K \ar[r]^k & T1 }$ equipped with an unbiased contraction, there is a contraction cell for each cell of $T1$, giving us a composition operation in $K$ of each arity.  This gives unbiased composition in $K$ (rather than just the binary composition given by a system of compositions); thus, when using operads with unbiased contraction, we have no need for a system of compositions.

  \begin{defn}  \label{defn:OUC}
    Define $\OUC$ to be the category with
      \begin{itemize}
        \item objects: operads $\xymatrix{ K \ar[r]^k & T1 }$ equipped with an unbiased contraction $\gamma$;
        \item morphisms: for operads $\xymatrix{ K \ar[r]^k & T1 }$, $\xymatrix{ K' \ar[r]^{k'} & T1 }$, respectively equipped with unbiased contractions $\gamma$, $\gamma'$, a morphisms $u \colon K \rightarrow K'$ consists of a map of the underlying operads such that, for all $1 \leq m \leq n$, $\pi \in (T1)_m$, $(a, b) \in C_K(\pi)$,
              \[
                u_m(\gamma_{\pi}(a, b)) = \gamma'_{\pi}(u_{m - 1}(a), u_{m - 1}(b)).
              \]
      \end{itemize}
  \end{defn}

  We often refer to an operad with an unbiased contraction simply as a \emph{Leinster operad}.

  \begin{lemma}
    The category $\OUC$ has an initial object, denoted $\xymatrix{ L \ar[r]^l & T1 }$.
  \end{lemma}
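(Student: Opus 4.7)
The plan is to adapt the strategy of Proposition~\ref{prop:initial} to the unbiased setting: I will express the forgetful functor $\OUC \to \nColl$ as the top of a pullback square and apply Kelly's monadicity result (Proposition~\ref{prop:Kelly}) to deduce that it has a left adjoint; I will then apply this left adjoint to the initial collection $\xymatrix{\emptyset \ar[r]^-{!} & T1}$, and since left adjoints preserve colimits (and the initial object is the colimit of the empty diagram), the image is the desired initial object $\xymatrix{L \ar[r]^l & T1}$ of $\OUC$.

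To set up the pullback square, I would define a category $\mathbf{UContr}$ whose objects are $n$-globular collections equipped with an unbiased contraction, and whose morphisms are maps of collections that preserve the unbiased contraction structure in the sense of Definition~\ref{defn:OUC}. Writing $\mathbf{Opd}$ for the category of $n$-globular operads, we then have a pullback square
  \[
    \xy

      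(0, 0)*+{\OUC}="D";
      (20, 0)*+{\mathbf{UContr}}="C";
      (0, -16)*+{\mathbf{Opd}}="B";
      (20, -16)*+{\nColl}="A";

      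{\ar "D" ; "C"};
      {\ar "D" ; "B"};
      {\ar_-{U'} "B" ; "A"};
      {\ar^{V'} "C" ; "A"};

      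(6,-1)*{}; (6,-5)*{} **\dir{-};
      (2,-5)*{}; (6,-5)*{} **\dir{-};
    \endxy
  \]
in $\mathbf{CAT}$, both of whose composites give the forgetful functor $\OUC \to \nColl$. Since $\nColl$ is a slice of a presheaf category and is therefore locally finitely presentable (as observed in the proof of Proposition~\ref{prop:initial}), it remains to verify that $U'$ and $V'$ are both monadic and finitary.

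For $V'$, the proofs of Lemmas~\ref{lem:Vmonadic} and \ref{lem:Vfinitary} carry over almost verbatim: the only structural difference is that the index set for the lifting maps $\gamma_\pi$ runs over \emph{all} cells $\pi \in T1$ rather than just identity cells $\id_\alpha$, and both Beck's monadicity theorem and the construction of the contraction on a filtered colimit go through using the fact that filtered colimits in $\nGSet$ are computed pointwise in $\Set$, together with the fact that the conditions $(s(a), t(a), k(a)) = (s(b), t(b), k(b))$ defining $C_K(\pi)$ are all equational in finitely many arguments. For $U'$, the situation is strictly simpler than for $U$ in Lemma~\ref{lem:Umonadic} since no system of compositions is present: the existence of the left adjoint follows directly from Kelly's free monoid construction~\cite{Kel80} applied dimension-by-dimension (exactly as in \cite[Proposition~2.1]{Che10}), and the monadicity and finitariness follow by the same Beck-theorem argument as in Lemma~\ref{lem:Umonadic}, with the interleaving step replaced by the plain free $k$-operad construction.

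The main potential obstacle is checking that filtered colimits of Leinster collections inherit an unbiased contraction that is \emph{unique} subject to preservation by the coprojections; this amounts to showing that for any parallel pair $(a,b) \in C_X(\pi)$ in the colimit $X = \colim X^{(i)}$ we can find a single index $k$ lifting both $a$ and $b$ and realising $\pi$, which is exactly the filteredness argument used for $V$, and there is no additional subtlety arising from the larger index set of $\pi$'s. Once this is verified, Proposition~\ref{prop:Kelly} applies, the forgetful functor $\OUC \to \nColl$ is monadic (hence in particular has a left adjoint), and evaluating this left adjoint at the empty collection yields the initial operad $\xymatrix{L \ar[r]^l & T1}$ of $\OUC$.
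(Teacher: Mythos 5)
Your argument is sound, but it is worth being clear that the paper does not actually prove this lemma at all: it is stated with a citation to Leinster's thesis \cite{Lei00} for the original proof and to Cheng \cite{Che10} for an explicit construction of $\xymatrix{L \ar[r]^l & T1}$. Indeed, the thesis's own Proposition~\ref{prop:initial} (initiality in $\OCS$) is explicitly modelled on Leinster's proof of the present lemma from \cite[Appendix~G]{Lei04}, so what you have done is run that adaptation in reverse, and it works. Your pullback square is correct, and is in fact \emph{simpler} than the one used for $\OCS$: since an unbiased contraction is purely a lifting structure on the underlying collection (unlike a system of compositions, which needs the operad unit to define sources and targets), the left-hand leg can be taken to be the plain category of operads with the plain free-monoid left adjoint, and no interleaving-style construction is needed for it. The monadicity and finitariness arguments for the unbiased-contraction leg do carry over from Lemmas~\ref{lem:Vmonadic} and \ref{lem:Vfinitary} with the index set enlarged from $\{\id_\alpha\}$ to all of $T1$, and the filteredness argument you single out as the main obstacle is indeed the only point requiring care; it goes through exactly as you say. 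The trade-off against the cited route is the usual one: your Kelly-style argument gives abstract existence of the left adjoint to $\OUC \to \nColl$ (hence of $L$ as its value at the initial collection) without describing $L$, whereas Cheng's construction in \cite{Che10} builds $L$ explicitly by a dimension-by-dimension interleaving of free operad structure and free unbiased contraction structure; for the purposes of this thesis, where $L$ is only ever used via its universal property, either suffices.
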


  This lemma was originally proved by Leinster in his thesis \cite{Lei00}; an explicit construction of $\xymatrix{ L \ar[r]^l & T1 }$ is given by Cheng in \cite{Che10}.

  \begin{defn}
    A Leinster weak $n$-category is an algebra for the $n$-globular operad $\xymatrix{ L \ar[r]^l & T1 }$.  The category of Leinster weak $n$-categories is $L\Alg$.
  \end{defn}

  We now discuss the relationship between Leinster operads and Batanin operads.  We first recall a theorem of Leinster~\cite[Examples 10.1.2 and 10.1.4]{Lei04}, which states that every Leinster operad can be equipped with a contraction and system of compositions (thus giving it the structure of a Batanin operad) in a canonical way.  We then prove a conjecture of Leinster~\cite[Section~10.1]{Lei04}, which states that any Batanin operad can be equipped with an unbiased contraction (giving it the structure of a Leinster operad), though not in a canonical way;  the proof of this result is new.

  Since the algebras for an operad are not affected by a choice of system of compositions, contraction, or unbiased contraction, one consequence of these theorems is that any result that holds for algebras for a Batanin operad also holds for algebras for a Leinster operad (and vice versa).  We use this fact in Section~\ref{sect:globopcoh} to prove several coherence theorems that are valid for both Batanin and Leinster weak $n$-categories, whilst working with whichever notion is more technically convenient in the case of each proof.

  The following the theorem is due to Leinster~\cite[Examples 10.1.2 and 10.1.4]{Lei04}.

  \begin{thm}  \label{thm:OUCtoOCS}
    Let $K$ be an $n$-globular operad with unbiased contraction $\gamma$.  Then $K$ can be equipped with a contraction and a system of compositions in a canonical way.
  \end{thm}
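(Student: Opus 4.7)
The proof naturally splits into two parts: extracting a contraction from $\gamma$ and building a system of compositions. The contraction is essentially free. Since an unbiased contraction provides a lift $\gamma_\pi \colon C_K(\pi) \to K(\pi)$ for every pasting diagram $\pi \in T1_m$, I simply restrict $\gamma$ to the identity pasting diagrams, defining the (biased) contraction by $\gamma'_{\id_\alpha} := \gamma_{\id_\alpha}$ for each $\alpha \in T1_{m-1}$. The required source and target equations, and the fact that $\gamma'_{\id_\alpha}(a,b)$ lifts $\id_\alpha$, hold by restriction; the tameness condition at dimension $n$ is already part of the data of an unbiased contraction.

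For the system of compositions, I need a collection map $\sigma \colon S \to K$ with $\sigma \comp \eta^S = \eta^K$. Recall that $S_m = \{\beta^m_p \gt 0 \leq p \leq m\}$ with $\beta^m_m = \eta_m$ and, for $p < m$, $\beta^m_p = \eta_m \comp^m_p \eta_m$. I define $\sigma$ by induction on dimension. Set $\sigma(\eta_0) := \eta^K_0(1)$. At dimension $m > 0$, put $\sigma(\eta_m) := \eta^K_m(1)$ (which secures the unit compatibility $\sigma \eta^S = \eta^K$), and for each $p < m$ set
  \[
    \sigma(\beta^m_p) := \gamma_{\beta^m_p}\bigl(\sigma(s(\beta^m_p)),\, \sigma(t(\beta^m_p))\bigr).
  \]
The crucial observation making this well-defined is that the sources and targets in $T1$ stay inside $S$: a direct check of the globularity identities shows that for every $\beta^m_p$ one has $s(\beta^m_p) = t(\beta^m_p) = \beta^{m-1}_{p'}$ for the appropriate $p'$ (specifically $p' = m-1$ if $p \geq m-1$, and $p' = p$ otherwise). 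Consequently $\sigma(s(\beta^m_p)) = \sigma(t(\beta^m_p))$, the pair is trivially parallel in $K$, and it lies in $C_K(\beta^m_p)$ by the inductive hypothesis. Thus $\gamma_{\beta^m_p}$ can be applied, and its output lies in $K(\beta^m_p)$, showing that $\sigma$ is a map of collections.

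Finally, canonicity means functoriality: a morphism $u \colon K \to K'$ in $\OUC$ induces a morphism of the resulting Batanin operads, i.e.\ $u$ preserves the restricted contractions and carries $\sigma^K$ to $\sigma^{K'}$. For contractions this is immediate since $u$ commutes with $\gamma_\pi$ for every $\pi$, so in particular for $\pi = \id_\alpha$. For systems of compositions one proves $u_m \comp \sigma^K_m = \sigma^{K'}_m$ by induction on $m$, using that $u$ preserves the operad unit (so the claim holds on $\eta_m$) and that $u$ commutes with $\gamma$ (so $\sigma^K(\beta^m_p)$ is sent to $\gamma_{\beta^m_p}(u\sigma^K(s\beta^m_p), u\sigma^K(t\beta^m_p)) = \sigma^{K'}(\beta^m_p)$ by the inductive hypothesis).

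There is no serious obstacle here: the construction is essentially forced, and the main subtlety is just the bookkeeping around parallelism, which collapses once one notices that $s(\beta^m_p) = t(\beta^m_p)$ in $T1$. The depth of the result lies less in the proof than in the way unbiased contractions package both composition and coherence into a single piece of structure; this will contrast sharply with the reverse direction (operads with system of compositions and contraction giving unbiased contractions), where arbitrary binary bracketings must be chosen and canonicity fails.
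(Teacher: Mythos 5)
Your construction is correct and is exactly the standard one the paper invokes (the theorem is stated with a citation to Leinster rather than proved in the text): restrict the unbiased contraction to identity arities to get the contraction, and define $\sigma(\beta^m_p)$ inductively as the unbiased contraction cell over $\beta^m_p$ on the (equal, hence trivially parallel) pair $\bigl(\sigma(\beta^{m-1}_p),\sigma(\beta^{m-1}_p)\bigr)$, with $\sigma(\beta^m_m)=\eta^K_m(1)$ securing unit compatibility. Your functoriality check also matches how the paper later uses canonicity, namely as a functor $\OUC\rightarrow\OCS$ that is the identity on underlying operads.
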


  The converse of this theorem is a conjecture of Leinster~\cite[Section~10.1]{Lei04}; we now prove it for the first time.

  \begin{thm}  \label{thm:OCStoOUC}
    Let $K$ be an $n$-globular operad with contraction $\gamma$ and system of compositions $\sigma$.  Then $K$ can be equipped with an unbiased contraction.
  \end{thm}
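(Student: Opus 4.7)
The goal is to construct, for each $\pi \in T1_m$ and each parallel pair $(a, b) \in C_K(\pi)$, an $m$-cell $\gamma^u_\pi(a, b) \in K_m$ of arity $\pi$ with source $a$ and target $b$, together with the tameness condition at dimension $n$. The plan is to combine three ingredients: the binary composition operations $\sigma(\beta^m_p) \in K_m$ supplied by the system of compositions; the identity-lifting $m$-cells supplied by the contraction $\gamma$; and the strict identity laws in $T1$, which, being the free strict $n$-category on $1$, satisfies $\id_{t(\pi)} \comp^m_{m-1} \pi = \pi = \pi \comp^m_{m-1} \id_{s(\pi)}$ for $\pi \in T1_m$. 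No canonical choice is expected: arbitrary binary bracketings will be made throughout, contrasting with the canonical construction in Theorem~\ref{thm:OUCtoOCS}.

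The main construction goes as follows. For each $\pi \in T1_m$, fix a binary bracketing of $\pi$ into the generators $\beta^m_p$ and interpret it in $K$ via $\sigma$ and $\mu^K$ to obtain an operation $\phi_\pi \in K_m$ of arity $\pi$; the cells $s(\phi_\pi), t(\phi_\pi) \in K_{m-1}$ are determined by this bracketing and in general differ from the prescribed $a, b$. Provided $(a, s(\phi_\pi)) \in C_K(\id_{s(\pi)})$ and $(t(\phi_\pi), b) \in C_K(\id_{t(\pi)})$, the contraction $\gamma$ produces $m$-cells $c_s, c_t \in K_m$ bridging $a$ to $s(\phi_\pi)$ and $t(\phi_\pi)$ to $b$ with arities $\id_{s(\pi)}$ and $\id_{t(\pi)}$ respectively, and one sets
\[
  \gamma^u_\pi(a, b) := c_t \comp^m_{m-1} \phi_\pi \comp^m_{m-1} c_s,
\]
formed by iterated $\mu^K$-multiplication against $\sigma(\beta^m_{m-1})$. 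By the strict identity laws in $T1$ the arity of this composite collapses to $\pi$, while by construction its source and target are $a$ and $b$.

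The hard part will be securing the parallelism in $K_{m-1}$ that the application of $\gamma$ requires. At $m = 1$ this is automatic because $0$-cells are vacuously parallel, but for $m \geq 2$ the $(m-2)$-boundaries $s(s(\phi_\pi)), s(a) \in K_{m-2}$ (and similarly at the target) can disagree, so $\gamma$ cannot be applied directly. The remedy is induction on $m$: assuming $\gamma^u$ has been constructed at all dimensions $m' < m$, the lower-dimensional unbiased contraction cells are used to interpolate additional identity-arity adjustments into the bracketing of $\phi_\pi$, so that its $(m-1)$-boundary becomes parallel to the boundary of $a, b$ before the dimension-$m$ adjustment above is performed; the strict identity laws in $T1$ absorb every such interpolated piece, leaving the global arity $\pi$ unchanged. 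Finally, the tameness of $\gamma^u$ at dimension $n$ is inherited from that of $\gamma$, since two $n$-cells of $K$ sharing source, target, and $k$-image must coincide.
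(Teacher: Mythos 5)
Your proposal is correct and follows essentially the same route as the paper: your bracketed representative $\phi_\pi$ is the section $\hat{k}(\pi)$ of Lemma~\ref{lem:khat}, and your boundary correction by contraction cells of identity arity, absorbed by the strict unit laws of $T1$, is exactly the paper's construction of the intermediate cells $\delta^j_{\pi}(a,b)$. The only bookkeeping difference is that the paper's induction runs over the codimension of the boundary being corrected within a fixed $m$ (fixing the $0$-cell boundary first, then the $1$-cell boundary, and so on, each step's parallelism being guaranteed by the previous one), and it uses only the biased contraction $\gamma$ at identity arities throughout --- no appeal to the already-constructed unbiased contraction at lower dimensions is needed, since every interpolated cell has an identity arity.
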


  Our approach to prove this is as follows: first, we define a map $\hat{k} \colon T1 \rightarrow K$, which uses the contraction on $k$ to lift identity cells in $T1$, and picks a binary bracketing for each non-identity cell.  This bracketing is constructed using the system of compositions on $K$; the choice of bracketing is arbitrary.  To extend this to an unbiased contraction on $k$ we need to specify, for all $1 \leq m \leq n$, and for each $\pi \in T1_m$ and $a$, $b \in C_K(\pi)$, an unbiased contraction cell
    \[
      \gamma_{\pi}(a, b) \colon a \longrightarrow b.
    \]
  To obtain this unbiased contraction cell we start with the cell $\hat{k}(\pi)$; since $\hat{k}$ is a section to $k$ this cell maps to $\pi$ under $k$, but in general it does not have the desired source and target.  In order to obtain a cell with source $a$ and target $b$ we compose $\hat{k}(\pi)$ with contraction cells, first composing $\hat{k}(\pi)$ with contraction $1$-cells to obtain a cell with the desired source and target $0$-cells, then composing the resulting cell with contraction $2$-cells to obtain a cell with the desired source and target $1$-cells, and so on; this composition is performed using the system of compositions on $K$.  The resulting cell has the desired source and target and, since contraction cells map to identities under $k$, and $\hat{k}$ is a section to $k$, this cell maps to $\pi$ under $k$.  Note that the section $\hat{k}$ is not just used to prove that $k$ is surjective at each dimension; it is also used in the construction of an unbiased contraction on a Batanin operad in the proof of Theorem~\ref{thm:OCStoOUC}.

  \begin{lemma}  \label{lem:khat}
    Let $K$ be an $n$-globular operad with contraction $\gamma$ and system of compositions $\sigma$.  Then $k$ has a section $\hat{k} \colon T1 \rightarrow K$ in $\nGSet$, so for all $0 \leq m  \leq n$, $k_m \colon K_m \rightarrow T1_m$ is surjective.
  \end{lemma}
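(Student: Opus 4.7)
The plan is to construct the section $\hat{k}$ by induction on dimension, using three ingredients from the structure on $K$: the operad unit $\eta^K$ for the atomic generator $\eta_m \in T1_m$, the contraction $\gamma$ for identity pasting diagrams, and the multiplication $\mu^K$ together with the system of compositions $\sigma$ for non-trivial binary composites. Surjectivity of each $k_m$ is then immediate, since a right inverse forces surjectivity at every dimension.

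For the base case, set $\hat{k}_0(1) := \eta^K_0(1)$; the operad axiom $k \circ \eta^K = \eta^T_1$ gives $k_0 \circ \hat{k}_0 = \id$. For the inductive step, suppose $\hat{k}$ has been defined at all dimensions $< m$, satisfying $k \circ \hat{k} = \id$ and commuting with $s$ and $t$. Given $\pi \in T1_m$, split into cases. If $\pi = \id_\alpha$ for some $\alpha \in T1_{m - 1}$, then by induction $(\hat{k}(\alpha), \hat{k}(\alpha))$ is a parallel pair in $K$ mapping to $\alpha$ under $k$, hence lies in $C_K(\id_\alpha)$, and we set $\hat{k}(\pi) := \gamma_{\id_\alpha}(\hat{k}(\alpha), \hat{k}(\alpha))$; the contraction axioms ensure $k \hat{k}(\pi) = \id_\alpha$ and the correct source and target. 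If $\pi = \eta_m$, set $\hat{k}(\pi) := \eta^K_m(1)$, which satisfies the requisite conditions because $\eta^K$ is a map of collections, hence of $n$-globular sets, and commutes with $k$. Otherwise $\pi$ is a non-trivial binary composite; choose a decomposition $\pi = \pi_1 \circ^m_p \pi_2$ for some $0 \leq p < m$ and define
\[
  \hat{k}(\pi) := \mu^K\bigl(\sigma(\beta^m_p),\, (\hat{k}(\pi_1), \hat{k}(\pi_2))\bigr),
\]
so that $\hat{k}(\pi_1)$ and $\hat{k}(\pi_2)$ are composed along a $p$-cell using the binary operation picked out by $\sigma$. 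Since $\sigma$ and $\mu^K$ are both maps of collections (commuting with $k$) and $k(\beta^m_p) = \beta^m_p$ is the arity of the $p$-composite of two $m$-cells, we obtain $k\hat{k}(\pi) = \pi$.

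The main obstacle is the coherence of these choices with source and target across dimensions. When $p = m - 1$ the source and target of $\hat{k}(\pi)$ are inherited directly from $\hat{k}(\pi_1)$ and $\hat{k}(\pi_2)$, via the unit axiom applied to $\sigma(\eta_{m - 1}) = \eta^K_{m - 1}(1)$ (which holds because $\sigma \circ \eta^S = \eta^K$). When $p < m - 1$, the source $s(\pi) = s(\pi_1) \circ^{m - 1}_p s(\pi_2)$ is itself a non-trivial binary composite, and one must ensure that the inductively defined $\hat{k}(s(\pi))$ at dimension $m - 1$ agrees with the binary $p$-composite (in $K$) of $\hat{k}(s(\pi_1))$ and $\hat{k}(s(\pi_2))$. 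We arrange this by fixing once and for all a systematic rule for selecting binary decompositions of pasting diagrams (for instance, always splitting at the highest admissible boundary dimension, then associating to the left) and verifying that the chosen decomposition of $s(\pi_1) \circ^{m - 1}_p s(\pi_2)$ coincides with the one induced by taking sources in $\pi = \pi_1 \circ^m_p \pi_2$. With such a coherent rule in place, the fact that $\mu^K$ is a map of $n$-globular sets (commuting with $s$ and $t$) forces the sources and targets of $\hat{k}(\pi)$ to agree with $\hat{k}(s(\pi))$ and $\hat{k}(t(\pi))$, completing the induction.
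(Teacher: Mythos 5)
Your overall strategy is the same as the paper's: build $\hat{k}$ recursively from $\eta^K$ on the generator, the contraction on identities, and $\sigma$ together with $\mu^K$ on composites, accepting that arbitrary bracketing choices make the section non-canonical. The point where your proposal falls short is exactly the one you flag as ``the main obstacle'': the existence of a system of binary decompositions of pasting diagrams that is coherent with sources and targets. You assert that the rule ``split at the highest admissible boundary dimension, then associate to the left'' can be verified to work, but this verification is the real content of the step, and the rule as stated has unresolved problems. First, ``admissible'' needs care: every $\pi$ splits trivially as $\id^m_{t(\pi)} \comp^m_{m-1} \pi$, so admissibility must be tied to a size-decreasing condition, and then splits at the highest dimension typically require padding one factor with identity diagrams (e.g.\ two columns of different heights composed along a $0$-cell do split vertically, but only after inserting identity $m$-cells), so the split at a given dimension is not unique and a further convention is needed. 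Second, the compatibility you need --- that the chosen decomposition of $s(\pi)$ is literally $s(\pi_1)\comp^{m-1}_p s(\pi_2)$ with matching sub-decompositions --- is a nontrivial combinatorial statement about how splittings of $\pi$ and of its boundary interact, and it is not established by fixing the rule; without it, $\hat{k}$ need not be a map of $n$-globular sets, since $K$ carries no interchange law identifying the two candidate values. There is also a minor issue that your induction must be on dimension \emph{and} on the size of the diagram within a dimension, since the composite case recurses at fixed $m$.

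The paper avoids all of this by working with Leinster's explicit description $T1_0 = 1$, $T1_m = T1^*_{m-1}$: every $m$-pasting diagram is \emph{canonically} a string $(\pi_1,\dotsc,\pi_i)$ of $(m-1)$-diagrams, to be composed along $0$-cells after raising dimension. The recursion is then uniform --- apply $\hat{k}_{m-1}$ to each $\pi_j$, ``shift up'' by replacing $\eta_{m-1}$, $\id_{m-1}$ and $\comp^{m-1}_p$ with $\eta_m$, $\id_m$ and $\comp^m_{p+1}$, and compose along $0$-cells with a fixed left bracketing --- so the only arbitrary choice is the bracketing, no case analysis on the shape of $\pi$ is needed, and compatibility with $s$ and $t$ is built into the description of $T1$ rather than being a property one must check of a chosen decomposition scheme. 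If you want to salvage your version, the cleanest fix is to replace your ad hoc splitting rule with this canonical one (always split along dimension $0$ into columns); otherwise you must actually prove the coherence claim you currently only state.
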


  \begin{proof}
    Our approach is first to define $\hat{k}$, then show it is a section to $k$ and therefore each $k_m$ is surjective. To define $\hat{k} \colon T1 \rightarrow K$, we use a description of $T1$ due to Leinster~\cite[Section~8.1]{Lei04}.  For a set $X$, write $X^*$ for the underlying set of the free monoid on $X$ (so $X^*$ is the set of all finite strings of elements of $X$, including the empty string, which we write as $\emptyset$).  Define $T1$ inductively as follows:
      \begin{itemize}
        \item $T1_0 = 1$;
        \item for $1 \leq m \leq n$, $T1_m = T1^*_{m - 1}$.
      \end{itemize}
    The source and target maps are defined as follows:
      \begin{itemize}
        \item for $m = 1$, $s = t = \, ! \colon T1_m \rightarrow T1_0$;
        % m = 1 -- weird; spacing around pling is also weird/wrong
        \item for $m > 1$, $s = t \colon T1_m \rightarrow T1_{m - 1}$ is defined by, for $(\pi_1, \pi_2, \dotsc, \pi_i) \in T1_m$,
              \[
                s(\pi_1, \pi_2, \dotsc, \pi_i) = (s(\pi_1), s(\pi_2), \dotsc, s(\pi_i)).
              \]
      \end{itemize}

    This description of $T1$ is technically convenient, but it hides what is going on conceptually.  The element $(\pi_1, \pi_2, \dotsc, \pi_i)$ of $T1_m$ should not be visualised as a string of $(m - 1)$-cells; instead, we increase the dimension of each cell in each $\pi_i$ by $1$, then compose $\pi_1$, $\pi_2$, $\dotsc$, $\pi_i$ along their boundary $0$-cells.  So the element
      \[
        \underbrace{(\bullet, \bullet, \dotsc, \bullet)}_i
      \]
    of $T1_1$ should be thought of as
      \[
        \underbrace{
        \xy
          % POINTS
          (0, 0)*+{\bullet}="1";
          (10, 0)*+{\bullet}="2";
          (20, 0)*+{\bullet}="3";
          (30, 0)*+{\bullet}="4";
          (25, 0)*+{\dotsc};
          (40, 0)*+{\bullet}="5";
          (42, -1)*+{,};
          % ARROWS
          {\ar "1" ; "2"};
          {\ar "2" ; "3"};
          {\ar "4" ; "5"};
        \endxy
        }_{i \text{ } 1 \text{-cells}}
      \]
    the element
      \[
        (\emptyset, \bullet \longrightarrow \bullet \longrightarrow \bullet, \bullet \longrightarrow \bullet)
      \]
    of $T1_2$ should be thought of as
      \[
        \xy
          % POINTS
          (-16, 0)*+{\bullet}="0";
          (0, 0)*+{\bullet}="1";
          (16, 0)*+{\bullet}="2";
          (32, 0)*+{\bullet}="3";
          (34, -1)*+{,};
          (8, 3.5)*+{\Downarrow};
          (8, -3.5)*+{\Downarrow};
          (24, 0)*+{\Downarrow};
          % ARROWS
          {\ar "0" ; "1"};
          {\ar@/^1.75pc/ "1" ; "2"};
          {\ar "1" ; "2"};
          {\ar@/_1.75pc/ "1" ; "2"};
          {\ar@/^1.25pc/ "2" ; "3"};
          {\ar@/_1.25pc/ "2" ; "3"};
        \endxy
      \]
    and so on.

    We now define $\hat{k} \colon T1 \rightarrow K$ by defining its components $\hat{k}_m$, for $0 \leq m \leq n$, inductively over $m$.  We use the following notational abbreviations:
      \begin{itemize}
        \item for each $m$ we write $\eta_m$ for the $m$-cell $\sigma_m(\beta^m_m) = \eta^K(1)$ of $K$;
        \item for $m \geq 1$ we write $\id_m$ for the identity $m$-cell on $\eta_0$.  Recall that identity cells in $K$ are defined via the contraction on $k$, so $\id_m$ is defined inductively over $m$ as follows:
              \begin{itemize}
                \item when $m = 1$, $\id_m := \gamma_{\emptyset}(1, 1)$;
                \item when $m > 1$, $\id_m := \gamma_{\emptyset}(\id_{m - 1}, \id_{m - 1})$.
              \end{itemize}
      \end{itemize}
    We also denote binary composition of $m$-cells along $p$-cells, defined using the system of compositions on $K$, by $\comp^m_p$, the same notation used in the definition of magma, Definition~\ref{defn:magma}.

    When $m = 0$, define
      \[
        \hat{k}_0(1) = \eta_0.
      \]

    When $1 \leq m \leq n$ the construction becomes notationally complicated, so we first describe it by example in the cases $m = 1$, $2$.

    When $m = 1$, by the construction of $T1$ above, an element of $T1_m$ is a string
      \[
        \underbrace{(\bullet, \bullet, \dotsc, \bullet)}_i
      \]
    for some natural number $i$.  When $i = 0$, define
      \[
        \hat{k}_1(\emptyset) = \id_1.
      \]
    When $i \geq 1$, there are three steps to the construction of $\hat{k}_1$.  First, we apply $\hat{k}_0$ to all elements in the string, which gives
      \[
        \underbrace{(\eta_0, \eta_0, \dotsc, \eta_0)}_i,
      \]
    a string of $0$-cells in $K$.  Now, we add $1$ to the dimension of each cell in the string by replacing each instance of $\eta_0$ with $\eta_1$, which gives
      \[
        \underbrace{(\eta_1, \eta_1, \dotsc, \eta_1)}_i,
      \]
    a string of $1$-cells in $K$.  Finally, we compose these $1$-cells along boundary $0$-cells, using the system of compositions on $K$, with the bracketing on the left.  Thus, for example, in the case of $i = 4$, we obtain
      \[
        \hat{k}_1(\bullet, \bullet, \bullet, \bullet) := \left( \left( \eta_1 \comp^1_0 \eta_1 \right) \comp^1_0 \eta_1 \right) \comp^1_0 \eta_1.
      \]

    When $m = 2$, an element of $T1_m$ is a string of elements of $T1_1$
      \[
        \mathbf{\pi} = (\pi_1, \pi_2, \dotsc, \pi_i),
      \]
    for some natural number $i$.  When $i = 0$, define
      \[
        \hat{k}_2(\emptyset) = \id_2.
      \]
    For the case $i \geq 1$, we explain with reference to the example
      \[
        \xy
          % POINTS
          (-16, 0)*+{\bullet}="0";
          (0, 0)*+{\bullet}="1";
          (16, 0)*+{\bullet}="2";
          (32, 0)*+{\bullet}="3";
          (34, -1)*+{.};
          (8, 3.5)*+{\Downarrow};
          (8, -3.5)*+{\Downarrow};
          (24, 0)*+{\Downarrow};
          % ARROWS
          {\ar "0" ; "1"};
          {\ar@/^1.75pc/ "1" ; "2"};
          {\ar "1" ; "2"};
          {\ar@/_1.75pc/ "1" ; "2"};
          {\ar@/^1.25pc/ "2" ; "3"};
          {\ar@/_1.25pc/ "2" ; "3"};
        \endxy
      \]
    Recall that, as a string of elements of $T1_1$, this is written as
      \[
        (\pi_1, \pi_2, \pi_3) = (\emptyset, \bullet \longrightarrow \bullet \longrightarrow \bullet, \bullet \longrightarrow \bullet).
      \]
    As in the case $m = 1$, there are three steps to the construction of $\hat{k}_2(\pi_1, \pi_2, \pi_3)$.  First, we apply $\hat{k}_1$ to all elements in the string, which gives
      \[
        \left( \hat{k}_1(\pi_1), \hat{k}_1(\pi_2), \hat{k}_1(\pi_3) \right) = \left( \id_1, \eta_1 \comp^1_0 \eta_1, \eta_1 \right) .
      \]
    In general each $\hat{k}_1(\pi_j)$ is either $\id_1$ or a composite of $\eta_1$'s.  The next step is to add $1$ to the dimension of each $\hat{k}_1(\pi_j)$ by replacing
      \begin{itemize}
        \item every instance of $\id_1$ with $\id_2$;
        \item every instance of $\eta_1$ with $\eta_2$;
        \item every instance of $\comp^1_0$ with $\comp^2_1$.
      \end{itemize}
    The cell we obtain from $\hat{k}_1(\pi_j)$ is denoted $\hat{k}^+_1(\pi_j)$.  Thus our example becomes
      \[
        \left( \hat{k}^+_1(\pi_1), \hat{k}^+_1(\pi_2), \hat{k}^+_1(\pi_3) \right) = \left( \id_2, \eta_2 \comp^2_1 \eta_2, \eta_2 \right) .
      \]
    Finally, we compose these cells along boundary $0$-cells, using the system of compositions on $K$, with the bracketing on the left.  In our example, this gives
      \[
        \hat{k}_2(\pi) := \left( \eta_2 \comp^2_1 \left( \eta_2 \comp^2_1 \eta_2 \right) \right) \comp^2_0 \id_2.
      \]

    We now describe the construction in general for $1 \leq m \leq n$.  Suppose that we have defined $\hat{k}_{m - 1}$ in such a way that, for all $\pi \in T1_{m - 1}$, $\hat{k}_{m - 1}(\pi)$ consists of a composite of copies of $\eta_{m - 1}$ and $\id_{m - 1}$, composed via operations of the form $\comp^{m - 1}_p$ for some $0 \leq p < m - 1$.

    Let $(\pi_1, \pi_2, \dotsc, \pi_i)$ be an element of $T1_m$.  When $i = 0$, we define
      \[
        \hat{k}_m(\pi_1, \pi_2, \dotsc, \pi_i) = \hat{k}_m(\emptyset) = \id_m.
      \]
    When $i \geq 1$ we define $\hat{k}_m(\pi_1, \pi_2, \dotsc, \pi_i)$ in three steps, as described above.  First, we apply $\hat{k}_{m - 1}$ to each $\pi_j$ to obtain
      \[
        \left( \hat{k}_{m - 1}(\pi_1), \hat{k}_{m - 1}(\pi_2), \dotsc, \hat{k}_{m - 1}(\pi_i) \right) .
      \]
    Next, we obtain from each $\hat{k}_{m - 1}(\pi_j)$ a cell $\hat{k}^+_{m - 1}(\pi_j) \in K_m$ by replacing
      \begin{itemize}
        \item every instance of $\id_{m - 1}$ with $\id_m$;
        \item every instance of $\eta_{m - 1}$ with $\eta_m$;
        \item every instance of $\comp^{m - 1}_p$, for all $0 \leq p < m - 1$, with $\comp^m_{p + 1}$.
      \end{itemize}
    This gives
      \[
        \left( \hat{k}^+_{m - 1}(\pi_1), \hat{k}^+_{m - 1}(\pi_2), \dotsc, \hat{k}^+_{m - 1}(\pi_i) \right) .
      \]
    Finally, we compose these cells along boundary $0$-cells, using the system of compositions on $K$, with the bracketing on the left.  This gives
      \begin{align*}
        & \hat{k}_m(\pi_1, \pi_2, \dotsc, \pi_i) :=  \\
        & \Big( \dotsc \Big( \hat{k}^+_{m - 1}(\pi_i) \comp^m_0 \hat{k}^+_{m - 1}(\pi_{i - 1}) \Big) \comp^m_0 \dotsb \comp^m_0 \hat{k}^+_{m - 1}(\pi_2) \Big) \comp^m_0 \hat{k}^+_{m - 1}(\pi_1).
      \end{align*}

    This defines a map of $n$-globular sets $\hat{k} \colon T1 \longrightarrow K$.

    We now show that $\hat{k}$ is a section to $k$.  At dimension $0$, $k_0 \hat{k}_0 = \id_{T1_0}$ since $T1_0$ is terminal, so $\hat{k}_0$ is a section to $k_0$.  Suppose we have shown that, for $1 \leq m \leq n$, $k_{m - 1} \hat{k}_{m - 1} = \id_{T1_{m - 1}}$.  For $\pi \in T1_{m - 1}$,
      \[
        k_m\hat{k}^+_{m - 1}(\pi) = (\pi),
      \]
    so for $(\pi_1, \pi_2, \dotsc, \pi_i) \in T1_m$, we have
      \[
        k_m\hat{k}_m(\pi_1, \pi_2, \dotsc, \pi_i) = (\pi_1, \pi_2, \dotsc, \pi_i),
      \]
    as required.  When $i = 0$,
      \[
        k_m\hat{k}_m(\emptyset) = \emptyset.
      \]
    Hence $\hat{k}$ is a section to $k$.
  \end{proof}

  We now use the map $\hat{k}$ to define an unbiased contraction on $k \colon K \rightarrow T1$.

  \begin{proof}[Proof of Theorem \ref{thm:OCStoOUC}]
    We define an unbiased contraction $\delta$ on the operad $K$; that is, for all $1 \leq m \leq n$, and for each $\pi \in T1_m$, a function
      \[
        \delta_{\pi} \colon C_K(\pi) \rightarrow K(\pi)
      \]
    such that, for all $(a, b) \in C_K(\pi)$,
      \[
        s \delta_{\pi}(a, b), \; t \delta_{\pi}(a, b) = b.
      \]

    To make the construction easier to follow, we first present the cases $m = 1$ and $m = 2$ separately, before giving the construction for general $m$.  Throughout the construction, we use the map $\hat{k} \colon T1 \rightarrow K$ defined in the proof of Lemma~\ref{lem:khat}, which we showed to be a section to $k \colon K \rightarrow T1$.

    Let $m = 1$, let $\pi \in T1_m = T1_1$, and let $(a, b) \in C_K(\pi)$.  If $\pi = \id_{\alpha}$ for some $\alpha \in T1_0$ we already have a corresponding contraction cell from the contraction $\gamma$ on $k$, so we define
      \[
        \delta_{\pi}(a, b) := \gamma_{\id_{\alpha}}(a, b).
      \]
    Now suppose that $\pi \neq \id_{\alpha}$ for any $\alpha \in T1_0$.  We seek a $1$-cell
      \[
        \delta_{\pi}(a, b) \colon a \longrightarrow b
      \]
    in $K$ such that $k_1\delta_{\pi}(a, b) = \pi$.  We have a $1$-cell $\hat{k}_1(\pi)$ in $K$, and since $\hat{k}$ is a section to $k$, we have
      \[
        k_1 \hat{k}_1(\pi) = \pi.
      \]
    However, $\hat{k}_1(\pi)$ does not necessarily have the required source and target.  In order to obtain a cell with the desired source and target, we first observe that
      \[
        k_1s\hat{k}_1(\pi) = sk_1\hat{k}_1(\pi) = s(\pi)
      \]
    and
      \[
        k_1t\hat{k}_1(\pi) = tk_1\hat{k}_1(\pi) = t(\pi).
      \]
    Thus, from the contraction $\gamma$, we have contraction $1$-cells
      \[
        \gamma_{\id_{k_0(a)}}(a, s\hat{k}_1(\pi)) \colon a \longrightarrow s\hat{k}_1(\pi)
      \]
    and
      \[
        \gamma_{\id_{k_0(b)}}(t\hat{k}_1(\pi), b) \colon t\hat{k}_1(\pi) \longrightarrow b
      \]
    in $K$.  Thus in $K$ we have composable $1$-cells
      \[
        \xy
          % POINTS
          (0, 0)*+{a}="a";
          (16, 0)*+{\bullet}="s";
          (32, 0)*+{\bullet}="t";
          (48, 0)*+{b,}="b";
          % ARROWS
          {\ar@{-->} "a" ; "s"};
          {\ar^{\hat{k}_1(\pi)} "s" ; "t"};
          {\ar@{-->} "t" ; "b"};
        \endxy
      \]
    where the dashed arrows denote the contraction cells.  We define the contraction cell $\delta_{\pi}(a, b)$ to be given by a composite of these cells; as in the definition of $\hat{k}$, we bracket this composite on the left, so
      \[
        \delta_{\pi}(a, b) := \Big( \gamma_{\id_{k(b)}}(t\hat{k}(\pi), b) \comp^1_0 \hat{k}(\pi) \Big) \comp^1_0 \gamma_{\id_{k(a)}}(a, s\hat{k}(\pi)).
      \]
    Since $k$ maps the contraction cells to identities and $\hat{k}_1(\pi)$ to $\pi$, and since in $K$ the arity of a composite is the composite of the arities, we have
      \[
        k\delta_{\pi}(a, b) = \pi,
      \]
    as required.  This defines the unbiased contraction $\delta$ on $k \colon K \rightarrow T1$ at dimension $1$.

    Before defining $\delta$ for $m = 2$ or for general $m$, we establish some notation.  For repeated application of source and target maps in $K$, we write
      \[
        s^p := \underbrace{s \comp s \comp \dotsb \comp s}_{p \text{ times}}, \; t^p := \underbrace{t \comp t \comp \dotsb \comp t}_{p \text{ times}},
      \]
    so for $1 \leq p < m \leq n$, and for an $m$-cell $\alpha$ of $K$, $s^p(\alpha)$ is the source $(m - p)$-cell of $\alpha$, and $t^p(\alpha)$ is the target $(m - p)$-cell of $\alpha$.  For all $m < l \leq n$, we write $\id^l \alpha$ for the identity $l$-cell on $\alpha$; so, for example,
      \[
        \id^{m + 1} \alpha = \id_{\alpha}, \; \id^{m + 2} \alpha = \id_{\id_{\alpha}},
      \]
    and so on.

    Now let $m = 2$, let $\pi \in T1_m = T1_2$, and let $(a, b) \in C_K(\pi)$.  As in the case $m = 1$, if $\pi = \id_{\alpha}$ for some $\alpha \in T1_1$, define
      \[
        \delta_{\pi}(a, b) := \gamma_{\id_{\alpha}}(a, b)
      \]
    for all $(a, b) \in C_K(\pi)$.

    Now suppose that $\pi \neq \id_{\alpha}$ for any $\alpha \in T1_1$.  We seek a $2$-cell
      \[
        \delta_{\pi}(a, b) \colon a \Longrightarrow b
      \]
    in $K$ such that $k_2\delta_{\pi}(a, b) = \pi$.  We have a $2$-cell $\hat{k}_2(\pi)$ in $K$, and since $\hat{k}$ is a section to $k$, we have
      \[
        k_2\hat{k}_2(\pi) = \pi.
      \]
    However, $\hat{k}_2(\pi)$ does not necessarily have the required source and target cells at any dimension.  We construct $\delta_{\pi}(a, b)$ from $\hat{k}_2(\pi)$ in two stages: first we compose with contraction $1$-cells to obtain a $2$-cell with the required source and target $0$-cells, then we compose this with contraction $2$-cells to obtain a $2$-cell with the required source and target $1$-cells.  To obtain a cell with the required source and target $0$-cells, observe that, since $T1_0$ is terminal,
      \[
        ks(a) = ks^2\hat{k}(\pi)
      \]
    and
      \[
        kt(b) = kt^2\hat{k}(\pi).
      \]
    Thus, from the contraction $\gamma$, we have contraction $1$-cells
      \[
         \gamma_{\id_1}(s(a), s^2\hat{k}(\pi)) \colon s(a) \longrightarrow s^2\hat{k}(\pi)
      \]
    and
      \[
        \gamma_{\id_1}(t^2\hat{k}(\pi), t(b)) \colon t^2\hat{k}(\pi) \longrightarrow t(b)
      \]
    in $K$.  Thus we have the following composable diagram of cells in $K$:
      \[
        \xy
          % POINTS
          (0, 0)*+{s(a)}="a";
          (16, 0)*+{\bullet}="s";
          (36, 0)*+{\bullet}="t";
          (52, 0)*+{t(b),}="b";
          % ARROWS
          {\ar@{-->} "a" ; "s"};
          {\ar@/^1.5pc/ "s" ; "t"};
          {\ar@/_1.5pc/ "s" ; "t"};
          {\ar@{-->} "t" ; "b"};
          {\ar@{=>}^{\hat{k}_2(\pi)} (26, 4) ; (26, -4)};
        \endxy
      \]
    where the dashed arrows denote identity $2$-cells on the contraction cells mentioned above.  We compose this diagram to obtain a $2$-cell in $K$ with the required source and target $0$-cells, which we denote $\delta^0_{\pi}(a, b)$.  Formally, this is defined by
      \[
        \delta^0_{\pi}(a, b) := \Big( \id^2\gamma_{\id_1}(t^2\hat{k}(\pi), t(b)) \comp^2_0 \hat{k}(\pi) \Big) \comp^2_0 \id^2\gamma_{\id_1}(s(a), s^2\hat{k}(\pi)).
      \]
    As before, we bracket this composite on the left, though this choice is arbitrary.

    We now repeat this process at dimension $2$ to obtain a cell with the required source and target $1$-cells.  We have
      \[
        s(a) = s(b) = s^2\delta^0_{\pi}(a, b)
      \]
    and
      \[
        t(a) = t(b) = t^2\delta^0_{\pi}(a, b),
      \]
    so we have contraction $2$-cells
      \[
        \gamma_{\id_{k(a)}}(a, s\delta^0_{\pi}(a, b)) \colon a \Longrightarrow s\delta^0_{\pi}(a, b)
      \]
    and
      \[
        \gamma_{\id_{k(b)}}(t\delta^0_{\pi}(a, b), b) \colon t\delta^0_{\pi}(a, b) \Longrightarrow b
      \]
    in $K$.  Thus we have the following composable diagram of cells in $K$:
      \[
        \xy
          % POINTS
          (0, 0)*+{s(a)}="a";
          (16, 0)*+{\bullet}="s";
          (36, 0)*+{\bullet}="t";
          (52, 0)*+{t(b),}="b";
          % ARROWS
          {\ar@/^4pc/^a "a" ; "b"};
          {\ar@{==>} (26, 14) ; (26, 8)};
          {\ar@{-->} "a" ; "s"};
          {\ar@/^1.5pc/ "s" ; "t"};
          {\ar@/_1.5pc/ "s" ; "t"};
          {\ar@{-->} "t" ; "b"};
          {\ar@{=>}^{\hat{k}_2(\pi)} (26, 4) ; (26, -4)};
          {\ar@{==>} (26, -8) ; (26, -14)};
          {\ar@/_4pc/_b "a" ; "b"};
        \endxy
      \]
    where the dashed arrows denote contraction cells.  We compose this diagram to obtain the unbiased contraction cell $\delta_{\pi}(a, b)$ in $K$.  Formally, this is defined by
      \[
        \delta_{\pi}(a, b) := \Big( \gamma_{\id_{k(b)}}(t\delta^0_{\pi}(a, b), b) \comp^2_1 \delta^0_{\pi}(a, b) \Big) \comp^2_1 \gamma_{\id_{k(a)}}(a, s\delta^0_{\pi}(a, b)).
      \]
    By construction, we see that $s\delta_{\pi}(a, b) = a$, $t\delta_{\pi}(a, b) = b$.  As before, since $k$ maps the contraction cells to identities and $\hat{k}_2(\pi)$ to $\pi$, and since in $K$ the arity of a composite is the composite of the arities, we have
      \[
        k\delta_{\pi}(a, b) = \pi,
      \]
    as required.  This defines the unbiased contraction $\delta$ on $k \colon K \rightarrow T1$ at dimension $2$.

    We now give the definition of $\delta$ for higher dimensions.  Our approach is the same as that for dimensions $1$ and $2$; we build our contraction cells in stages, first constructing a cell with the desired source and target $0$-cells, then constructing from that a cell with the desired source and target $1$-cells, and so on.

    Let $3 \leq m \leq n$, let $\pi \in T1_m$, and let $(a, b) \in C_K(\pi)$.  If $\pi = \id_{\alpha}$ for some $\alpha \in T1_{m - 1}$, we define
      \[
        \delta_{\pi}(a, b) := \gamma_{\id_{\alpha}}(a, b).
      \]

    Now suppose that $\pi \neq \id_{\alpha}$ for any $\alpha \in T1_{m - 1}$.  We seek an $m$-cell
      \[
        \delta_{\pi}(a, b) \colon a \longrightarrow b
      \]
    in $K$ such that $k_m\delta_{\pi}(a, b) = \pi$.  As before, we have an $m$-cell $\hat{k}_m(\pi)$ in $K$, and since $\hat{k}$ is a section to $k$, we have
      \[
        k_m\hat{k}_m(\pi) = \pi.
      \]
    However, $\hat{k}_m(\pi)$ does not necessarily have the required source and target cells at any dimension.  We obtain a cell with the required source and target by defining, for each $0 \leq j \leq m - 1$, an $m$-cell $\delta^j_{\pi}(a, b)$ which has the required source and target $j$-cells, and maps to $\pi$ under $k$.  We define this by induction over $j$.  Note that, since this construction is very notation heavy, we henceforth omit subscripts indicating the dimensions of components of maps of $n$-globular sets, so we write $k$ for $k_m$, $\hat{k}$ for $\hat{k}_m$, etc.

    Let $j = 0$.  Since $T1_0$ is the terminal set, we have
      \[
        ks^{m - 1}(a) = ks^m\hat{k}(\pi)
      \]
    and
      \[
        kt^{m - 1}(b) = kt_m\hat{k}(\pi)
      \]
    in $K$, so we have contraction $1$-cells
      \[
        \gamma_{\id_1}(s^{m - 1}(a), s^m\hat{k}(\pi))
      \]
    and
      \[
        \gamma_{\id_1}(t^m\hat{k}(\pi), t_{m - 1}(b))
      \]
    in $K$.  We obtain $\delta^0_{\pi}(a, b)$ by composing $\hat{k}(\pi)$ with the $m$-cell identities on these contraction cells, so we define
      \[
        \delta^0_{\pi}(a, b) := \Big( \id^m\gamma_{\id_1}(t^m\hat{k}(\pi), t_{m - 1}(b)) \comp^m_0 \hat{k}(\pi) \Big) \comp^m_0 \id^m\gamma_{\id_1}(s^{m - 1}(a), s^m\hat{k}(\pi)).
      \]
    By construction, we have
      \[
        s^{m - 1}(a) = s^{m - 1}(b) = s^m\delta^0_{\pi}(a, b)
      \]
    and
      \[
        t^{m - 1}(a) = t^{m - 1}(b) = t^m\delta^0_{\pi}(a, b),
      \]
    so this has the required source and target $0$-cells.  Since $k$ sends contraction cells to identities, and since $\hat{k}$ is a section to $k$, we have
      \[
        k\delta^0_{\pi}(a, b) = \pi.
      \]

    Now let $0 \leq j < m - 1$, and suppose we have defined $\delta^j_{\pi}(a, b)$ such that
      \[
        s^{m - j - 1}(a) = s^{m - j - 1}(b) = s^{m - j}\delta^j_{\pi}(a, b),
      \]
      \[
        t^{m - j - 1}(a) = t^{m - j - 1}(b) = t^{m - j}\delta^j_{\pi}(a, b),
      \]
    so $\delta^j_{\pi}(a, b)$ has the required source and target $j$-cells, and
      \[
        k\delta^j_{\pi}(a, b) = \pi.
      \]
    Applying $k$ to the source and target conditions above, we have
      \[
        ks^{m - j - 2}(a) = ks^{m - j - 1}\delta^j_{\pi}(a, b)
      \]
    and
      \[
        kt^{m - j - 2}(b) = kt^{m - j - 1}\delta^j_{\pi}(a, b).
      \]
    Thus we have contraction cells
      \[
        \gamma_{\id_{ks^{m - j - 2}(a)}}(s^{m - j - 2}(a), s^{m - j - 1}\delta^j_{\pi}(a, b)),
      \]
    and
      \[
        \gamma_{\id_{ks^{m - j - 2}(b)}}(t^{m - j - 1}\delta^j_{\pi}(a, b), t^{m - j - 2}(b)).
      \]
    in $K$.  We obtain $\delta^{j + 1}_{\pi}(a, b)$ by composing $\delta^j_{\pi}(a, b)$ with the $m$-cell identities on these contraction cells (or with the contraction cells themselves in the case $j + 1 = m$), so we define
      \begin{align*}
        \delta^{j + 1}_{\pi}(a, b) := \Big( & \id^m\gamma_{\id_{ks^{m - j - 2}(b)}}(t^{m - j - 1}\delta^j_{\pi}(a, b), t^{m - j - 2}(b)) \comp^m_{j + 1} \delta^j_{\pi}(a, b) \Big)  \\
        & \comp^m_{j + 1} \id^m\gamma_{\id_{ks^{m - j - 2}(a)}}(s^{m - j - 2}(a), s^{m - j - 1}\delta^j_{\pi}(a, b)).
      \end{align*}
    By construction, we see that
      \[
        s^{m - j - 1}\delta^{j + 1}_{\pi}(a, b) = s^{m - j - 2}(a)
      \]
    and
      \[
        t^{m - j - 1}\delta^{j + 1}_{\pi}(a, b) = t^{m - j - 2}(b),
      \]
    so $\delta^{j + 1}_{\pi}(a, b)$ has the required source and target $(j + 1)$-cells.  Since
      \[
        k\delta^{j}_{\pi}(a, b) = \pi,
      \]
    and $k$ maps contraction cells to identities, we have
      \[
        k\delta^{j + 1}_{\pi}(a, b) = \pi.
      \]

    This defines an $m$-cell $\delta^{j}_{\pi}(a, b)$ in $K$, for each $0 \leq j \leq m - 1$, with the required source and target $j$-cells, and such that
      \[
        k\delta^{j}_{\pi}(a, b) = \pi.
      \]
    In particular, we have
      \[
        \delta^{m - 1}_{\pi}(a, b) \colon a \longrightarrow b.
      \]
    Thus we define
      \[
        \delta_{\pi}(a, b) := \delta^{m - 1}_{\pi}(a, b).
      \]
    This defines an unbiased contraction $\delta$ on the operad $K$, as required.
  \end{proof}

  Thus any operad with a contraction and system of compositions can be equipped with an unbiased contraction.  In the proof above we had to make several arbitrary choices.  Most of these involved picking a binary bracketing for a composite; we also chose to define the unbiased contraction to be the same as the original contraction on all cells for which this makes sense, which we did not have to do.  There is no canonical choice in any of these cases, and thus no canonical way of equipping an operad in $\OCS$ with an unbiased contraction.

  Note that various authors use variants of Batanin's definition in which a choice of globular operad is not specified, and instead a weak $n$-category is defined either to be an algebra for any operad that \emph{can be} equipped with a contraction and system of compositions, or an algebra for any operad that \emph{can be} equipped with an unbiased contraction (\cite[Definitions~B2 and L2]{Lei02}, \cite{Ber02, Gar10, vdBG11, Che11}).  By Theorems~\ref{thm:OUCtoOCS} and \ref{thm:OCStoOUC}, these two ``less algebraic'' variants of Batanin's definition are equivalent, since any operad that can be equipped with a contraction and system of compositions can also be equipped with an unbiased contraction, and vice versa.

  \section{Coherence for algebras for $n$-globular operads}  \label{sect:globopcoh}

  In this section we prove three new coherence theorems for algebras for any Batanin operad or Leinster operad $K$.  Roughly speaking, our coherence theorems say the following:
    \begin{itemize}
      \item every free $K$-algebra is equivalent to a free strict $n$-category;
      \item every diagram of constraint $n$-cells commutes in a free $K$-algebra;
      \item in any $K$-algebra there is a certain class of diagrams of constraint $n$-cells that always commute; these should be thought of as the diagrams of shapes that can arise in a free algebra.
    \end{itemize}
  In the first two of these theorems freeness is crucial; these theorems do not hold in general for non-free $K$-algebras, so this does not mean that every weak $n$-category is equivalent to a strict one, which we know should not be true for $n \geq 3$ in a fully weak theory.  All of these theorems have analogues in the case of tricategories, which appear in Gurski's thesis~\cite{Gur06} and book~\cite{Gur13} on coherence for tricategories; these are noted throughout the section.  Note that there is no theorem corresponding to the coherence theorem for tricategories that states ``every tricategory is triequivalent to a $\mathbf{Gray}$-category'' \cite[Theorem 8.1]{GPS95}, since we have no analogue of $\mathbf{Gray}$-categories in this case.  There are also no coherence theorems for maps of $K$-algebras, since there is no established notion of weak map of $K$-algebras.

  These coherence theorems hold for Batanin weak $n$-categories and Leinster weak $n$-categories; in Section~\ref{sect:Penonop} we prove that there is a Batanin operad whose algebras are Penon weak $n$-categories, so the theorems in this section also hold for Penon weak $n$-categories.  Note that, by Theorems~\ref{thm:OUCtoOCS} and \ref{thm:OCStoOUC}, we need only prove each coherence theorem either in the case of algebras for a Batanin operad or algebras for a Leinster operad; thus in each case we use whichever of these is more technically convenient for the purposes of the proof.  Throughout this section we write $K$ to denote either a Batanin operad or Leinster operad (with the exception of Definition~\ref{defn:TAlgintoKAlg} and Proposition~\ref{prop:TAlgintoKAlg}, in which a little more generality is possible).

  Our first coherence theorem corresponds to the coherence theorem for tricategories stating that the free tricategory on a $\Cat$-enriched $2$-graph $X$ is triequivalent to the free strict $3$-category on $X$ \cite[Theorem~10.4]{Gur13}. Since the theorem involves comparing $K$-algebras with strict $n$-categories, before stating the theorem we first define, for any $n$-globular operad $K$, a functor $T\Alg \rightarrow K\Alg$; in fact, we do this for a $T$-operad $K$ for any suitable choice of monad $T$.  This functor is induced by the natural transformation $k \colon K \Rightarrow T$.  We then prove that, under certain circumstances (and in particular, when $K$ is an $n$-globular operad with unbiased contraction), this functor is full, faithful, and injective on objects, so we can consider $T\Alg$ to be a full subcategory of $K\Alg$.  This tells us that, for any definition of weak $n$-categories as algebras for an $n$-globular operad, every strict $n$-category is a weak $n$-category.  The fact that the inclusion functor is full comes from the fact that, since $K\Alg$ is the category of algebras for a monad, we only have strict maps of $K$-algebras.

  \begin{defn} \label{defn:TAlgintoKAlg}
    Let $T$ be a cartesian monad on a cartesian category $\mathcal{C}$, which has an initial object $1$, and let $K$ be a $T$-operad.  Then there is a functor $- \comp k \colon T\Alg \rightarrow K\Alg$ defined by
      \[
        \xy
          % POINTS
            (0, 0)*+{\longrightarrow};
            (0, -14)*+{\longmapsto};
            (-18, 0)*+{- \comp k \colon T\Alg};
            (12, 0)*+{K\Alg};
            (-13, -6)*+{X}="Al";
            (-29, -6)*+{TX}="TAl";
            (-13, -22)*+{Y}="Bl";
            (-29, -22)*+{TY}="TBl";
            (12, -6)*+{KX}="KAr";
            (28, -6)*+{TX}="TAr";
            (44, -6)*+{X}="Ar";
            (12, -22)*+{KY}="KBr";
            (28, -22)*+{TY}="TBr";
            (44, -22)*+{Y}="Br";
          % ARROWS
            {\ar_{Tu} "TAl" ; "TBl"};
            {\ar^{u} "Al" ; "Bl"};
            {\ar^{\phi} "TAl" ; "Al"};
            {\ar_{\psi} "TBl" ; "Bl"};
            {\ar_{Ku} "KAr" ; "KBr"};
            {\ar^{k_X} "KAr" ; "TAr"};
            {\ar_{k_Y} "KBr" ; "TBr"};
            {\ar^{u} "Ar" ; "Br"};
            {\ar^{\phi} "TAr" ; "Ar"};
            {\ar_{\psi} "TBr" ; "Br"};
        \endxy
      \]
  \end{defn}

  \begin{prop} \label{prop:TAlgintoKAlg}
    Let $T$ be a cartesian monad on a cartesian category $\mathcal{C}$, which has an initial object $1$, and let $K$ be a $T$-operad such that, for any object $X$ in $\mathcal{C}$, the component $k_X \colon KX \rightarrow TX$ of the natural transformation $k \colon K \Rightarrow T$ is an epimorphism.  Then the functor $- \comp k \colon T\Alg \rightarrow K\Alg$ is full, faithful, and injective on objects; hence we can consider $T\Alg$ to be a full subcategory of $K\Alg$.
  \end{prop}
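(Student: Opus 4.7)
The plan is to verify each of the three properties directly, with the epimorphism hypothesis on $k_X$ doing essentially all the work in two of the three cases.

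For faithfulness, observe that the functor $- \comp k$ leaves each morphism unchanged as an arrow in $\mathcal{C}$: if $u, v \colon (X, \phi) \rightarrow (Y, \psi)$ are parallel maps of $T$-algebras whose images under $- \comp k$ coincide, then $u = v$ already as morphisms in $\mathcal{C}$, hence as maps of $T$-algebras. So faithfulness is immediate.

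For fullness, let $(X, \phi)$ and $(Y, \psi)$ be $T$-algebras and suppose $u \colon X \rightarrow Y$ underlies a map of $K$-algebras from $(X, \phi \comp k_X)$ to $(Y, \psi \comp k_Y)$, so that
    \[
      u \comp \phi \comp k_X = \psi \comp k_Y \comp Ku.
    \]
Applying the naturality square for $k \colon K \Rightarrow T$ at $u$ rewrites the right-hand side as $\psi \comp Tu \comp k_X$, giving
    \[
      (u \comp \phi) \comp k_X = (\psi \comp Tu) \comp k_X.
    \]
Since $k_X$ is an epimorphism, we may cancel it on the right to conclude $u \comp \phi = \psi \comp Tu$, so $u$ is a map of $T$-algebras.

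For injectivity on objects, suppose $(X, \phi)$ and $(X, \phi')$ are $T$-algebras with the same underlying object that become equal as $K$-algebras, i.e. $\phi \comp k_X = \phi' \comp k_X$; cancelling the epimorphism $k_X$ yields $\phi = \phi'$. The main (minor) subtlety to check alongside these three verifications is that the functor is well-defined, i.e. that $\phi \comp k_X$ really satisfies the $K$-algebra axioms; this follows from the fact that $k$ is a map of monads together with the unit and multiplication axioms for the $T$-algebra $\phi$, which are standard diagram chases and do not require the epimorphism hypothesis. The substantive hypothesis is thus confined to the cancellation steps in the fullness and injectivity arguments, and in the intended application to $n$-globular operads equipped with an unbiased contraction (or, equivalently via Theorem~\ref{thm:OCStoOUC}, with a contraction and system of compositions) the section $\hat{k}$ constructed in Lemma~\ref{lem:khat} provides the epimorphism property componentwise.
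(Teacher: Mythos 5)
Your proof is correct and follows essentially the same route as the paper's: faithfulness is immediate from the functor acting as the identity on underlying morphisms, and both fullness and injectivity on objects come from cancelling the epimorphism $k_X$ after invoking naturality of $k$. The extra remark on well-definedness of the $K$-algebra structure is harmless (the paper absorbs it into the preceding definition), and your identification of where the epimorphism hypothesis is actually used matches the paper exactly.
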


  \begin{proof}
    First, faithfulness is immediate since when we apply $- \comp k$ to a map of $T$-algebras it retains the same underlying map of $n$-globular sets.

    For fullness, suppose we have $T$-algebras $\xymatrix{ TX \ar[r]^-{\phi} & X }$, $\xymatrix{ TY \ar[r]^-{\psi} & Y }$, and a map $u$ between their images in $K\Alg$.  By naturality of $k$,
      \[
        \xy
          % POINTS
          (-8, 0)*+{KX}="KA";
          (8, 0)*+{KY}="KB";
          (-8, -16)*+{TX}="TA";
          (8, -16)*+{TY}="TB";
          % ARROWS
          {\ar^-{Ku} "KA" ; "KB"};
          {\ar_{k_X} "KA" ; "TA"};
          {\ar^{k_Y} "KB" ; "TB"};
          {\ar_-{Tu} "TA" ; "TB"};
        \endxy
     \]
    commutes, so
      \[
        \xy
          % POINTS
          (-8, 0)*+{KX}="KA";
          (8, 0)*+{TX}="TAr";
          (-8, -16)*+{TX}="TA";
          (8, -16)*+{TY}="TB";
          (-8, -32)*+{X}="A";
          (8, -32)*+{Y}="B";
          % ARROWS
          {\ar^{k_X} "KA" ; "TAr"};
          {\ar_{k_X} "KA" ; "TA"};
          {\ar^{Tu} "TAr" ; "TB"};
          {\ar_{\phi} "TA" ; "A"};
          {\ar^{\psi} "TB" ; "B"};
          {\ar_{u} "A" ; "B"};
        \endxy
     \]
    commutes.  Since $k_X$ is an epimorphism, the diagram above gives us that
      \[
        \xy
          % POINTS
          (-8, 0)*+{TX}="TA";
          (8, 0)*+{TY}="TB";
          (-8, -16)*+{X}="A";
          (8, -16)*+{Y}="B";
          % ARROWS
          {\ar^{Tu} "TA" ; "TB"};
          {\ar_{\phi} "TA" ; "A"};
          {\ar^{\psi} "TB" ; "B"};
          {\ar_{u} "A" ; "B"};
        \endxy
     \]
    commutes, so $u$ is a map of $T$-algebras.  Hence $- \comp k$ is full.

    Finally, suppose we have $T$-algebras $\xymatrix{ TX \ar[r]^-{\phi} & X }$, $\xymatrix{ TX \ar[r]^-{\psi} & X }$, with
      \[
        - \comp k \big( \xymatrix { TX \ar[r]^-{\phi} & X } \big) = - \comp k \big( \xymatrix { TX \ar[r]^-{\psi} & X } \big).
      \]
    Then
      \[
        \xy
          % POINTS
          (-8, 0)*+{KX}="KA";
          (8, 0)*+{TX}="TAr";
          (-8, -16)*+{TX}="TA";
          (8, -16)*+{X}="A";
          % ARROWS
          {\ar^{k_X} "KA" ; "TAr"};
          {\ar_{k_X} "KA" ; "TA"};
          {\ar^{\psi} "TAr" ; "A"};
          {\ar_{\phi} "TA" ; "A"};
        \endxy
     \]
    commutes.  Since $k_X$ is an epimorphism, this gives us that $\phi = \psi$, so $- \comp k$ is injective on objects.
  \end{proof}

  In the case in which $K$ is a Batanin operad or Leinster operad, each component $k_X$ is surjective on all dimensions of cell, so we have the following corollary.

  \begin{corol}
    Let $K$ be a Batanin operad or Leinster operad.  Then the functor $- \comp k \colon T\Alg \rightarrow K\Alg$ is full, faithful, and injective on objects.
  \end{corol}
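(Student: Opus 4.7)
The plan is to invoke Proposition~\ref{prop:TAlgintoKAlg} directly. That proposition tells us that the functor $- \comp k \colon T\Alg \to K\Alg$ is full, faithful, and injective on objects whenever every component $k_X \colon KX \to TX$ of the natural transformation $k \colon K \Rightarrow T$ is an epimorphism in $\mathcal{C} = \nGSet$. So the corollary reduces to verifying this epimorphism condition for any Batanin or Leinster operad $K$.

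Since $\nGSet$ is a presheaf category, epimorphisms are exactly the pointwise surjective maps, so it suffices to show that $k_X$ is surjective at every dimension $0 \leq m \leq n$. Recall from Definition~\ref{defn:indmonad} that $KX$ is defined by the pullback
    \[
      \xy

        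(0, 0)*+{KX}="KX";
        (16, 0)*+{K}="K";
        (0, -16)*+{TX}="TX";
        (16, -16)*+{T1,}="T";

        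{\ar^-{K!} "KX" ; "K"};
        {\ar_{k_X} "KX" ; "TX"};
        {\ar_-{T!} "TX" ; "T"};
        {\ar^{k} "K" ; "T"};

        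(6,-1)*{}; (6,-5)*{} **\dir{-};
        (2,-5)*{}; (6,-5)*{} **\dir{-};
      \endxy
    \]
so an $m$-cell of $KX$ is a pair $(\theta, \pi)$ with $\theta \in K_m$, $\pi \in TX_m$, and $k(\theta) = T!(\pi)$, and $k_X$ is the projection $(\theta, \pi) \mapsto \pi$. Given any $\pi \in TX_m$, I would use Lemma~\ref{lem:khat} to produce a section $\hat{k} \colon T1 \to K$ of $k$ and set $\theta := \hat{k}(T!(\pi))$; then $k(\theta) = T!(\pi)$ by the section property, so $(\theta, \pi) \in KX_m$ and $k_X(\theta, \pi) = \pi$, giving surjectivity.

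The only subtlety is that Lemma~\ref{lem:khat} is stated for operads equipped with both a contraction and a system of compositions, i.e. Batanin operads. In the Batanin case this applies directly. In the Leinster case I would first apply Theorem~\ref{thm:OUCtoOCS} to equip the underlying Leinster operad with a (canonical) contraction and system of compositions, which does not change the underlying map $k \colon K \to T1$, and then invoke Lemma~\ref{lem:khat} to obtain the required section. Both cases thus reduce to the same argument, and the corollary follows immediately from Proposition~\ref{prop:TAlgintoKAlg}.

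There is no substantial obstacle here: the real work has already been carried out in Lemma~\ref{lem:khat} (the explicit construction of $\hat{k}$ using the system of compositions to pick bracketings, and the contraction to handle identities) and Proposition~\ref{prop:TAlgintoKAlg}. The corollary is a bookkeeping consequence, the only nontrivial point being the observation that the Leinster case reduces to the Batanin case via Theorem~\ref{thm:OUCtoOCS}.
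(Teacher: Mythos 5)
Your proposal is correct and matches the paper's (very terse) justification: the paper simply notes that for a Batanin or Leinster operad each component $k_X$ is surjective in every dimension and then invokes Proposition~\ref{prop:TAlgintoKAlg}, with the surjectivity resting on the section $\hat{k}$ from Lemma~\ref{lem:khat} exactly as you spell out. Your explicit pullback argument and the reduction of the Leinster case to the Batanin case via Theorem~\ref{thm:OUCtoOCS} are precisely the details the paper leaves implicit.
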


  For the remainder of this section, when we say ``strict $n$-category'', we mean it in the sense of a $K$-algebra in the image of the functor $- \comp k \colon T\Alg \rightarrow K\Alg$.

  Before we state our first coherence theorem, we must also define what it means for two $K$-algebras to be equivalent.

  \begin{defn}  \label{defn:globopalgequiv}
    Let $K$ be an $n$-globular operad, and let
      \[
        \xymatrix{ KX \ar[r]^-{\theta} & X }, \xymatrix{ KY \ar[r]^-{\phi} & Y }
      \]
    be $K$-algebras.  We say that the algebras $\xymatrix{ KX \ar[r]^-{\theta} & X }$ and $\xymatrix{ KY \ar[r]^-{\phi} & Y }$ are \emph{equivalent} if there exists a map of $K$-algebras $u \colon X \rightarrow Y$ or $u \colon Y \rightarrow X$ such that $u$ is surjective on $0$-cells, full on $m$-cells for all $1 \leq m \leq n$, and faithful on $n$-cells.  The map $u$ is referred to as an \emph{equivalence of $K$-algebras}.
  \end{defn}

  Observe that, since maps of $K$-algebras preserve the $K$-algebra structure strictly, this definition of equivalence is much more strict (and thus much less general) than it ``ought'' to be.  This is also why we require that the map $u$ can go in either direction; having a map $X \rightarrow Y$ satisfying the conditions does not imply the existence of a map $Y \rightarrow X$ satisfying the conditions.  We will use this definition of equivalence only in the next theorem, and, in spite of its lack of generality, it is sufficient for our purposes.  If we required a more general definition of equivalence of $K$-algebras, there are various approaches we could take.  One option would be to replace the map $u$ with a weak map of $K$-algebras; a definition of weak maps of $K$-algebras is given by Garner in \cite{Gar10}, and is valid for any $n$-globular operad $K$.  Another option is to replace the map $u$ with a span of maps of $K$-algebras, similar to the approach used by Smyth and Woolf to define an equivalence of Whitney $n$-categories \cite{SW11}.  However, pursuing definitions of equivalence given by either of these approaches is beyond the scope of this thesis.

  In this definition of equivalence we asked for surjectivity on $0$-cells, rather than essential surjectivity.  This is another way in which our definition of equivalence is less general than it ``ought'' to be, but once again, asking for surjectivity is enough for our purposes.  This approach of using surjectivity instead of essential surjectivity to simplify the definition of equivalence has previously been taken by Simpson~\cite{Sim97}; we discuss this in more detail in the definition of Tamsamani--Simpson weak $n$-category in Section~\ref{subsect:tamsimn}.

  \begin{thm}  \label{thm:Kalgcohone}
    Let $K$ be an $n$-globular operad with unbiased contraction $\gamma$, and let $X$ be an $n$-globular set.  Then the free $K$-algebra on $X$ is equivalent to the free strict $n$-category on $X$.
  \end{thm}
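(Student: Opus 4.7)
The plan is to show that the component $k_X \colon KX \to TX$ of the cartesian natural transformation $k \colon K \Rightarrow T$ is itself an equivalence of $K$-algebras in the sense of Definition~\ref{defn:globopalgequiv}, where $TX$ is regarded as a $K$-algebra via the functor $-\comp k$ of Definition~\ref{defn:TAlgintoKAlg}. The first step is to check that $k_X$ is a map of $K$-algebras; this follows at once from the fact that $k$ is a map of monads from $K$ to $T$, since the monad map condition $k \comp \mu^K = \mu^T \comp Tk \comp kK$ together with naturality of $k$ gives exactly the required identity $k_X \comp \mu^K_X = \mu^T_X \comp k_{TX} \comp K(k_X)$.

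Next I would exploit the pullback description from Definition~\ref{defn:indmonad}: an $m$-cell of $KX$ is a pair $(\alpha, \theta)$ with $\alpha \in TX_m$, $\theta \in K_m$ and $T!(\alpha) = k(\theta)$, sources and targets acting componentwise, and $k_X$ is the first projection. Under this description the three conditions of Definition~\ref{defn:globopalgequiv} become lifting problems through $k$. Surjectivity on $0$-cells is immediate since the unit map $\eta^K \colon 1 \to K$ provides an element of $K_0$ and $T1_0$ is terminal. Fullness on $m$-cells for $1 \leq m \leq n$ is the main point: given parallel cells $(a, \phi), (b, \psi) \in KX_{m - 1}$ and an $m$-cell $\alpha \in TX_m$ with $s(\alpha) = a$ and $t(\alpha) = b$, a short verification using naturality of $k$ shows $(\phi, \psi) \in C_K(T!(\alpha))$ (in particular $k(\phi) = T!(a) = s(T!(\alpha))$ and similarly for $\psi$, and when $m > 1$, $\phi$ and $\psi$ are parallel in $K$ because $(a, \phi)$ and $(b, \psi)$ are parallel in $KX$); the unbiased contraction then supplies $\theta := \gamma_{T!(\alpha)}(\phi, \psi) \in K_m$ with $s(\theta) = \phi$, $t(\theta) = \psi$ and $k(\theta) = T!(\alpha)$, so $(\alpha, \theta) \in KX_m$ is the required lift of $\alpha$ with source $(a, \phi)$ and target $(b, \psi)$. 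Faithfulness on $n$-cells follows from the tameness clause in the definition of unbiased contraction, since two parallel $n$-cells $(\alpha, \theta_1), (\alpha, \theta_2) \in KX_n$ with the same image under $k_X$ give parallel $\theta_1, \theta_2 \in K_n$ with $k(\theta_1) = T!(\alpha) = k(\theta_2)$, forcing $\theta_1 = \theta_2$.

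The crucial point is that $\gamma_\pi$ is defined for every globular pasting diagram $\pi$, not just for identity cells, which is precisely what is needed because $T!(\alpha)$ can be an arbitrary element of $T1$. If one were instead to work with a biased contraction and system of compositions one would have to build the lift out of a section of $k$ composed with many contraction cells, along the lines of the construction of $\hat{k}$ in Lemma~\ref{lem:khat} and the proof of Theorem~\ref{thm:OCStoOUC}; the appeal to unbiased contractions therefore removes essentially all technical complication from the argument, and no single step presents a serious obstacle.
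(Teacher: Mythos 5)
Your proposal is correct and follows essentially the same route as the paper: check that $k_X$ is a $K$-algebra map via the monad-map axioms, get surjectivity on $0$-cells from the unit, produce fillers for fullness as $(\pi, \gamma_{T!(\pi)}(p,q))$ using the unbiased contraction over the arbitrary pasting diagram $T!(\pi)$, and deduce faithfulness at dimension $n$ from the tameness clause. The only cosmetic difference is that you phrase the $0$-cell step via the pullback description and the nonemptiness of $K_0$ rather than via the unit triangle $k_X \comp \eta^K_X = \eta^T_X$, which amounts to the same thing.
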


  \begin{proof}
    As a $K$-algebra, the free strict $n$-category on $X$ is
      \[
        \xy
          % POINTS
          (-18, 0)*+{KTX}="KTX";
          (0, 0)*+{T^2 X}="TTX";
          (18, 0)*+{TX.}="TX";
          % ARROWS
          {\ar^{k_{TX}} "KTX" ; "TTX"};
          {\ar^{\mu^T_X} "TTX" ; "TX"};
        \endxy
      \]
      We first show that $k_X$ is a map of $K$-algebras, and then show that it is an equivalence of $K$-algebras.

      The diagram
        \[
          \xy
          % POINTS
          (-18, 0)*+{K^2 X}="KKA";
          (18, 0)*+{KTX}="KTA";
          (0, -16)*+{TKX}="TKA";
          (18, -16)*+{T^2 X}="TTA";
          (-18, -32)*+{KX}="KA";
          (18, -32)*+{TX}="TA";
          % ARROWS
          {\ar^{k_{KX}} "KKA" ; "TKA"};
          {\ar^{Kk_X} "KKA" ; "KTA"};
          {\ar^{Tk_X} "TKA" ; "TTA"};
          {\ar_{\mu^K_X} "KKA" ; "KA"};
          {\ar^{k_{TX}} "KTA" ; "TTA"};
          {\ar^{\mu^T_X} "TTA" ; "TA"};
          {\ar_{k_X} "KA" ; "TA"};
          \endxy
        \]
      commutes; the top square is a naturality square for $k$, and the bottom part is axiom for the monad map $k$.  Thus $k_X$ is a map of $K$-algebras, as required.

      We now show that $k_X$ is surjective on $0$-cells.  By definition of the unit $\eta^K \colon 1 \Rightarrow K$, the diagram
        \[
          \xy
            % POINTS
            (0, 0)*+{X}="X";
            (16, 0)*+{KX}="KX";
            (16, -16)*+{TX}="TX";
            % ARROWS
            {\ar^-{\eta^K_X} "X" ; "KX"};
            {\ar_{\eta^T_X} "X" ; "TX"};
            {\ar^{k_X} "KX" ; "TX"};
          \endxy
        \]
      commutes.  We have $TX_0 = X_0$, and $(\eta^T_X)_0 = \id_X$, so at dimension $0$ the diagram above becomes
        \[
          \xy
            % POINTS
            (0, 0)*+{X_0}="X";
            (16, 0)*+{KX_0}="KX";
            (16, -16)*+{X_0.}="TX";
            % ARROWS
            {\ar^-{(\eta^K_X)_0} "X" ; "KX"};
            {\ar_{\id_{X_0}} "X" ; "TX"};
            {\ar^{(k_X)_0} "KX" ; "TX"};
          \endxy
        \]
      Hence $(k_X)_0$ is surjective, i.e. $k_X$ is surjective on $0$-cells.

      We now show that $k_X$ is full on $m$-cells for all $1 \leq m \leq n$.  Let $(\alpha, p)$, $(\beta, q) \in KX_{m - 1}$ be parallel $(m - 1)$-cells, and let $\pi \colon k_X(f) \rightarrow k_X(g)$ be an $m$-cell in $TX$.  Then we have an $m$-cell
        \[
          (\pi, \gamma_{T!(\pi)}(p, q)) \colon (\alpha, p) \longrightarrow (\beta, q)
        \]
      in $KX$ with $k_X(\pi, \gamma_{T!(\pi)}(p, q)) = \pi$.  Hence $k_X$ is full at dimension $m$.

      Finally, we show that $k_X$ is faithful at dimension $n$.  Let $(\alpha, p)$, $(\beta, q)$ be $n$-cells in $KA$, such that
        \[
          s(\alpha, p) = s(\beta, q), \; t(\alpha, p) = t(\beta, q), \; k_X(\alpha, p) = k_X(\beta, q).
        \]
      The first two equations above give us that $s(p) = s(q)$ and $t(p) = t(q)$, and the third equation gives
        \[
          \alpha = k_X(\alpha, p) = k_X(\beta, q) = \beta.
        \]
      Now, since $(\alpha, p)$, $(\beta, q) \in KX_n$, and since $\alpha = \beta$, we have
        \[
          k(p) = T!(\alpha) = T!(\beta) = k(q),
        \]
      and since $k$ has an unbiased contraction $\gamma$, it is faithful at dimension $n$, and we get that $p = q$.  Hence $(\alpha, p) = (\beta, q)$, so $k_X$ is faithful at dimension $n$.

      Hence $k_X$ is an equivalence of $K$-algebras, so the free $K$-algebra on $X$ is equivalent to the free strict $n$-category on $X$.
  \end{proof}

  The remaining coherence theorems require only a contraction on the operad $K$, not a system of compositions or an unbiased contraction.  These theorems concern which diagrams of constraint cells commute in a $K$-algebra, so in order to state them, we must first define what we mean by a ``diagram'' in a $K$-algebra, and what it means for a diagram to commute.

  \begin{defn}
    Let $K$ be an $n$-globular operad, let $\xymatrix{ KX \ar[r]^-{\theta} & X }$ be a $K$-algebra, and let $1 \leq m \leq n$.  A \emph{diagram of $m$-cells} in $\xymatrix{ KX \ar[r]^-{\theta_X} & X }$ consists of an unordered pair of $m$-cells $(\alpha, p)$, $(\beta, q) \in KX_m$ such that $\theta(\alpha, p)$ and $\theta(\beta, q)$ are parallel, i.e.
      \[
        s\theta(\alpha, p) = s\theta(\beta, q), \; t\theta(\alpha, p) = t\theta(\beta, q).
      \]
    We write such a diagram as $((\alpha, p), (\beta, q))$.

    We say that the diagram $((\alpha, p), (\beta, q))$ \emph{commutes} if
      \[
        \theta(\alpha, p) = \theta(\beta, q).
      \]
  \end{defn}

  Our second coherence theorem states that in a free $K$-algebra every diagram of constraint $n$-cells commutes.  This corresponds to the coherence theorem for tricategories due to Gurski which states that, in the free tricategory on a $\Cat$-enriched $2$-graph whose set of $3$-cells is empty, every diagram of $3$-cells commutes (\cite[Corollary~10.6]{Gur13}, originally \cite[Theorem~10.2.2]{Gur06}).  Since the constraint $3$-cells in a free tricategory do not depend on the generating $3$-cells, this implies that in a free tricategory all diagrams of constraint $3$-cells commute.  Our theorem is analogous to this last result, and our approach is the same as that of Gurski: first, we prove a lemma which states that, in the free $K$-algebra on an $n$-globular set whose set of $n$-cells is empty, all diagrams of $n$-cells commute;  note that in a free $K$-algebra of this type, all $n$-cells are constraint cells.  We then use this lemma, combined with the fact that the constraint $n$-cells in a free $K$-algebra depend only on dimension $n - 1$, to prove the theorem.

  \begin{lemma}  \label{thm:Kalgcohtwo}
    Let $K$ be an $n$-globular operad with contraction $\gamma$, and let $X$ be an $n$-globular set with $X_n = \emptyset$.  Then in the free $K$-algebra on $X$, every diagram of $n$-cells commutes.
  \end{lemma}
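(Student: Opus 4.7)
The plan is to reduce the statement to a claim about parallel $n$-cells in $KX$ itself and then invoke the tameness clause of the contraction $\gamma$. Unpacking the definition, a diagram of $n$-cells in the free $K$-algebra $\mu^K_X \colon K(KX) \rightarrow KX$ consists of a pair $((\alpha, p), (\beta, q)) \in K(KX)_n \times K(KX)_n$ whose images under $\mu^K_X$ are parallel $n$-cells in $KX$, and it commutes precisely when these images coincide in $KX$. Proving the lemma therefore reduces to showing that any two parallel $n$-cells in $KX$ are equal.

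To establish this, I would use the pullback description $KX_n = K_n \times_{T1_n} TX_n$, in which an $n$-cell is a pair $(\alpha, p)$ with $T!(\alpha) = k(p)$, and source and target are computed componentwise. The hypothesis $X_n = \emptyset$ means there are no generating $n$-cells in $X$, and since identities compose strictly to identities in the free strict $n$-category $TX$, the set $TX_n$ consists entirely of identities on $(n - 1)$-cells of $TX$, with a unique identity on each such $(n - 1)$-cell. Hence every $n$-cell of $KX$ has the form $(\id_a, p)$ for some $a \in TX_{n - 1}$ and $p \in K_n$ with $k(p) = \id_{T!(a)}$.

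Given two parallel $n$-cells $(\id_a, p)$ and $(\id_b, q)$ in $KX_n$, parallelism together with $s(\id_a) = t(\id_a) = a$ forces $a = b$, $s(p) = s(q)$ and $t(p) = t(q)$. The pullback condition also gives $k(p) = \id_{T!(a)} = k(q)$, so the tameness clause in Definition \ref{defn:Bcontr} yields $p = q$, whence $(\id_a, p) = (\id_b, q)$. The main obstacle is mild: it amounts to verifying carefully that $TX_n$ collapses to the set of identities on $TX_{n - 1}$, which is a direct consequence of the strict composition of identities, and then keeping track of the layered notation across $X$, $TX$, $KX$ and $K(KX)$. The real conceptual content is that the tameness clause of the contraction is precisely the tool needed to pin down operad elements of identity arity by their source and target.
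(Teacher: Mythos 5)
Your proposal is correct and follows essentially the same route as the paper: reduce to showing that parallel $n$-cells of $KX$ are equal, observe that $X_n = \emptyset$ forces $TX_n$ to consist only of identities so that the $TX$-components must agree, and then use the tameness clause of the contraction to identify the $K$-components. The only (cosmetic) difference is that you invoke tameness directly from Definition~\ref{defn:Bcontr}, whereas the paper cites the faithfulness of $k_X$ at dimension $n$ established in the proof of Theorem~\ref{thm:Kalgcohone}; both amount to the same use of the tameness condition.
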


  \begin{proof}
    Let $((\alpha, p), (\beta, q))$ be a diagram of $n$-cells in $\xymatrix{ K^2 X \ar[r]^-{\mu^X_A} & KX }$.  Since $X_n = \emptyset$, the only $n$-cells in $TX$ are identities, so we have $TX_n \iso TX_{n - 1}$, and the source and target maps $s$, $t \colon TX_n \rightarrow TX_{n - 1}$ are isomorphisms with $s(\pi) = t(\pi)$ for all $\pi \in TX_n$.  Since $(\alpha, p)$, $(\beta, q)$ are parallel and $k_X$ is a map of $n$-globular sets, so preserves sources and targets, $k_X(\alpha, p)$ and $k_X(\beta, q)$ are parallel $n$-cells in $TX$ so must be equal.  As shown in the proof of Theorem~\ref{thm:Kalgcohone}, $k_X$ is faithful at dimension $n$, hence $(\alpha, p) = (\beta, q)$.
  \end{proof}

  Before we use Lemma~\ref{thm:Kalgcohtwo} to prove our second coherence theorem, we must first give a formal definition of constraint cells in a $K$-algebra.  Constraint cells are cells that arise from the contraction on $k \colon K \rightarrow T1$; these include identities, and mediating cells between different composites of the same pasting diagram.  Note that constraint $m$-cells for $m < n$ depend on the choice of contraction on $k$, even though the algebras for $K$ do not; constraint $n$-cells do not depend on the choice of contraction on $k$, since faithfulness of $k$ at dimension $n$ ensures that there is only ever one valid choice at this dimension.

  \begin{defn}  \label{defn:constraintcell}
    Let $K$ be an $n$-globular operad with unbiased contraction $\gamma$, and let $\xymatrix{ KX \ar[r]^{\theta} & X }$ be a $K$-algebra.  There is a contraction $\delta$ on $k_X \colon KX \rightarrow TX$ given by, for each $1 \leq m \leq n$, $\pi \in TX_{m - 1}$, the function
      \[
        \xy
          % POINTS
            (0, 0)*+{\longrightarrow};
            (0, -6)*+{\longmapsto};
            (-18, 0)*+{\delta_{\id_{\pi}} \colon C_{KX}(\id_{\pi})};
            (18, 0)*+{KX(\id_{\pi})};
            (-18, -6)*+{((\pi, p), (\pi, q))};
            (18, -6)*+{(\pi, \gamma_{T!(\id_{\pi})}(p, q)).};
        \endxy
      \]
    A \emph{constraint $m$-cell} in $\xymatrix{ KX \ar[r]^{\theta} & X }$ is an $m$-cell of $X$ in the image of the map
      \[
        \xy
          % POINTS
          (-36, 0)*+{C_{KX}(\id_{\pi})}="CKA";
          (-12, 0)*+{KX(\id_{\pi})}="KAid";
          (9, 0)*+{(KX)_m}="KAm";
          (27, 0)*+{X_m,}="Am";
          % ARROWS
          {\ar^{\delta_{\id_{\pi}}} "CKA" ; "KAid"};
          {\ar@{^{(}->} "KAid" ; "KAm"};
          {\ar^{\theta_m} "KAm" ; "Am"};
        \endxy
      \]
    for some $\pi \in TX_{m - 1}$.
  \end{defn}

  \begin{corol}  \label{lem:diaginfree}
    Let $K$ be an $n$-globular operad with contraction $\gamma$, and let $X$ be an $n$-globular set.  In the free $K$-algebra on $X$, $\xymatrix{ K^2 X \ar[r]^{\mu^K_X} & KX }$, every diagram of constraint $n$-cells commutes.
  \end{corol}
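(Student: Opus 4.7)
The plan is to reduce the corollary to Lemma~\ref{thm:Kalgcohtwo} via a truncation argument, exploiting the fact that constraint $n$-cells depend only on data at dimensions below $n$. First I would introduce the $n$-globular set $X'$ defined by $X'_m = X_m$ for $m < n$ and $X'_n = \emptyset$, together with the inclusion $\iota \colon X' \hookrightarrow X$. Functoriality of the monad $K$ yields a map of $K$-algebras $K\iota \colon KX' \to KX$ whose component at each dimension $m < n$ is a bijection; at dimension $n$ it is an injection, because $T\iota_n$ is an injection into $TX_n$ (indeed $TX'_n$ consists only of identities on $(n-1)$-globular pasting diagrams) and $KX$ is constructed as a pullback of $K$ and $TX$ over $T1$.

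The key observation is that every constraint $n$-cell in the free $K$-algebra $\mu^K_X \colon K(KX) \to KX$ lies in the image of $(K\iota)_n$. Unpacking Definition~\ref{defn:constraintcell}, such a cell has the form $\mu^K_X(\id_\pi, \gamma_{\id_{T!(\pi)}}(p,q))$ for some $\pi \in T(KX)_{n-1}$ and some parallel pair $p, q \in K_{n-1}$. Since $KX$ and $KX'$ agree at all dimensions below $n$, we have $T(KX)_{n-1} = T(KX')_{n-1}$, so $\pi$ already lies in $T(KX')_{n-1}$; naturality of $\mu^K$ then identifies the constraint cell with $(K\iota)(\mu^K_{X'}(\id_\pi, \gamma_{\id_{T!(\pi)}}(p,q)))$, exhibiting it as the image under $K\iota$ of a cell in $KX'_n$.

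Given a diagram $(A, B)$ of constraint $n$-cells in the free $K$-algebra on $X$, I would apply the previous paragraph to find $A', B' \in K(KX')_n$ with $K\iota(\mu^K_{X'}(A')) = \mu^K_X(A)$ and $K\iota(\mu^K_{X'}(B')) = \mu^K_X(B)$. Because $K\iota$ is bijective at dimension $n-1$, the pair $\mu^K_{X'}(A'), \mu^K_{X'}(B')$ is parallel in $KX'$, so $(A', B')$ is a diagram of $n$-cells in the free $K$-algebra on $X'$. Lemma~\ref{thm:Kalgcohtwo} then gives $\mu^K_{X'}(A') = \mu^K_{X'}(B')$, and injectivity of $(K\iota)_n$ yields $\mu^K_X(A) = \mu^K_X(B)$, which is the required commutativity.

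The main technical obstacle is the second step, showing that every constraint $n$-cell of $KX$ arises from data already present at dimensions below $n$. This reduces to inspecting Definition~\ref{defn:Bcontr}: the biased contraction at $\id_{T!(\pi)}$ takes as input a parallel pair of $(n-1)$-cells of $K$ and produces an $n$-cell of $K$ using no top-dimensional data of $X$, so the whole constraint cell is determined by $(n-1)$-dimensional data from $K$ and $X$. Once this is spelled out, the remaining steps are essentially formal manipulations with the pullback description of $KX$ and the naturality square for $\mu^K$.
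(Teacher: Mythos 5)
Your argument is correct and takes essentially the same route as the paper's own proof: truncate $X$ to $X'$ with $X'_n = \emptyset$, observe that constraint $n$-cells are determined by $(n-1)$-dimensional data and hence lift along $K\iota$ to the free algebra on $X'$, and then invoke Lemma~\ref{thm:Kalgcohtwo}. The only cosmetic difference is that your last step appeals to injectivity of $(K\iota)_n$, whereas simply applying $(K\iota)_n$ to the equality obtained in $KX'$ already gives the required equality in $KX$.
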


  \begin{proof}
    Write $X'$ for the $n$-globular set defined by
      \[
        X'_m =
            \left\{
              \begin{array}{ll}
              X_m & \text{if } m < n, \\
              \emptyset & \text{if } m = n,
              \end{array}
            \right.
      \]
    with source and target maps the same as those in $X$ for dimensions $m < n$; write $u \colon X' \rightarrow X$ for the map which is the identity on all dimensions $m < n$.  For all $\pi \in TKX_{n - 1} = TKX'_{n - 1}$ we have $C_{K^2 X'}(\id_{\pi}) = C_{K^2 X}(\id_{\pi})$, and the diagram
      \[
        \xy
          % POINTS
          (-36, 0)*+{C_{K^2 X'}(\id_{\pi})}="CKA'";
          (-12, 0)*+{K^2 X'(\id_{\pi})}="KA'id";
          (9, 0)*+{(K^2 X')_n}="KA'n";
          (27, 0)*+{KX'_m,}="A'n";
          (-36, -16)*+{C_{K^2 X}(\id_{\pi})}="CKA";
          (-12, -16)*+{K^2 X(\id_{\pi})}="KAid";
          (9, -16)*+{K^2 X_n}="KAn";
          (27, -16)*+{KX_m,}="An";
          % ARROWS
          {\ar^{{\delta'}_{\id_{\pi}}} "CKA'" ; "KA'id"};
          {\ar@{^{(}->} "KA'id" ; "KA'n"};
          {\ar^{\mu^K_{X'}} "KA'n" ; "A'n"};
          {\ar^{\delta_{\id_{\pi}}} "CKA" ; "KAid"};
          {\ar@{^{(}->} "KAid" ; "KAn"};
          {\ar^{\mu^K_X} "KAn" ; "An"};
          {\ar@{=} "CKA'" ; "CKA"};
          {\ar_{K^2u_n} "KA'id" ; "KAid"};
          {\ar_{K^2u_n} "KA'n" ; "KAn"};
          {\ar^{Ku_n} "A'n" ; "An"};
        \endxy
      \]
    commutes.

    Let $((\alpha, p), (\beta, q))$ be a diagram of $n$-cells in $K^2 X$ such that $\alpha$, $\beta \in TKX_n$ are composites of constraint $n$-cells of $KX$.  Since constraint $n$-cells are determined by $(n - 1)$-cells, and $TKX_{n - 1} = TKX'_{n - 1}$, we have $(\alpha, p)$, $(\beta, q) \in K^2 X'_n$, with $\mu^K_{X'}(\alpha, p)$ and $\mu^K_{X'}(\beta, q)$ parallel.  Thus, by Theorem~\ref{thm:Kalgcohtwo}, $(\alpha, p) = (\beta, q)$.
  \end{proof}

  The final coherence theorem describes a class of diagrams of constraint $n$-cells which commute in any $K$-algebra.  These diagrams should be thought of as those that are ``free-shaped'', i.e. they are diagrams of constraint cells that could arise in a free $K$-algebra.  This rules out diagrams in which the sources and targets of the constraint cells involve non-constraint cells with constraint cells in their boundaries, and non-composite cells with composites in their boundaries.  This is the analogue of a coherence theorem for tricategories due to Gurski, which describes a similar class of diagrams of constraint $3$-cells in the context of tricategories \cite[Corollary 10.2.5]{Gur06}.  We call such a diagram $F_K$-admissible, where $F_K$ is the left adjoint to the forgetful functor
    \[
      U_K \colon K\Alg \longrightarrow \nGSet,
    \]
  which sends a $K$-algebra to its underlying $n$-globular set; this terminology is taken from the theorem of Gurski mentioned above.

  \begin{defn}
    Let $K$ be an $n$-globular operad with contraction $\gamma$, and let $\xymatrix{ KX \ar[r]^{\theta} & X }$ be a $K$-algebra.  A diagram $((\alpha, p), (\beta, q))$ of constraint $n$-cells in $X$ is said to be \emph{$F_K$-admissible} if there exists a sub-$n$-globular set $E$ of $X$, with $E_n = \emptyset$ and inclusion map $i \colon E \hookrightarrow X$, and a diagram $((\alpha', p'), (\beta', q'))$ of constraint $n$-cells in $F_K E$ such that $((\alpha, p), (\beta, q))$ is the image of $((\alpha', p'), (\beta', q'))$ under the map
      \[
        \xy
          % POINTS
          (0, 0)*+{K^2 E}="0,0";
          (16, 0)*+{KX}="1,0";
          (0, -16)*+{KE}="0,1";
          (16, -16)*+{X,}="1,1";
          % ARROWS
          {\ar^-{K\overline{i}} "0,0" ; "1,0"};
          {\ar_{\mu^K_E} "0,0" ; "0,1"};
          {\ar^{\theta} "1,0" ; "1,1"};
          {\ar_-{\overline{i}} "0,1" ; "1,1"};
        \endxy
      \]
    where $\overline{i}$ is the transpose under the adjunction $F_K \ladj U_K$ of $i$.
  \end{defn}

  The following is now an immediate corollary of Lemma~\ref{thm:Kalgcohtwo}, and the fact that $\overline{i}$ is a map of $K$-algebras.

  \begin{corol}
    Let $K$ be an $n$-globular operad with contraction $\gamma$, and let $\xymatrix{ KX \ar[r]^{\theta} & X }$ be a $K$-algebra.  Then every $F_K$-admissible diagram of constraint $n$-cells in $\xymatrix{ KX \ar[r]^{\theta} & X }$ commutes.
  \end{corol}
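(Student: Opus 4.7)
The plan is to unfold the definition of $F_K$-admissibility, apply Lemma~\ref{thm:Kalgcohtwo} to the distinguished sub-$n$-globular set $E$, and then transport the resulting equality across the algebra homomorphism $\overline{i}$. Concretely, suppose $((\alpha, p), (\beta, q))$ is an $F_K$-admissible diagram of constraint $n$-cells in $\xymatrix{ KX \ar[r]^{\theta} & X }$. Then by definition there is a sub-$n$-globular set $E \subseteq X$ with $E_n = \emptyset$ and inclusion $i \colon E \hookrightarrow X$, and a diagram $((\alpha', p'), (\beta', q'))$ of constraint $n$-cells in the free $K$-algebra $F_K E = \xymatrix{ K^2 E \ar[r]^-{\mu^K_E} & KE }$, such that $(\alpha, p) = K\overline{i}(\alpha', p')$ and $(\beta, q) = K\overline{i}(\beta', q')$.

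Since $E_n = \emptyset$, I may apply Lemma~\ref{thm:Kalgcohtwo} directly to $E$ (the lemma concerns arbitrary diagrams of $n$-cells, not merely constraint ones, so no extra work is needed). This yields
\[
  \mu^K_E(\alpha', p') = \mu^K_E(\beta', q')
\]
in $KE$. Now $\overline{i} \colon F_K E \to (\xymatrix{KX \ar[r]^{\theta} & X})$ is a map of $K$-algebras by construction (it is the transpose of $i$ under $F_K \dashv U_K$), so the defining square
\[
  \xy
    (0, 0)*+{K^2 E}="00";
    (20, 0)*+{KX}="10";
    (0, -16)*+{KE}="01";
    (20, -16)*+{X}="11";
    {\ar^-{K\overline{i}} "00" ; "10"};
    {\ar_{\mu^K_E} "00" ; "01"};
    {\ar^{\theta} "10" ; "11"};
    {\ar_-{\overline{i}} "01" ; "11"};
  \endxy
\]
commutes. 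Applying $\overline{i}$ to both sides of the previous equation and chasing around this square gives
\[
  \theta(\alpha, p) = \theta \comp K\overline{i}(\alpha', p') = \overline{i} \comp \mu^K_E(\alpha', p') = \overline{i} \comp \mu^K_E(\beta', q') = \theta \comp K\overline{i}(\beta', q') = \theta(\beta, q),
\]
which is exactly the statement that $((\alpha, p), (\beta, q))$ commutes in $\xymatrix{KX \ar[r]^{\theta} & X}$.

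There is no real obstacle here; the work has already been done in Lemma~\ref{thm:Kalgcohtwo}. The only subtlety worth verifying is the compatibility of dimensions and cell-types across $\overline{i}$: that $K\overline{i}$ sends constraint $n$-cells of $F_K E$ to constraint $n$-cells of $\xymatrix{KX \ar[r]^{\theta} & X}$ (which follows because $\overline{i}$ is a map of $K$-algebras, hence commutes with the contraction structure inherited from $k$, so it preserves the image of $\delta_{\id_\pi}$ as in Definition~\ref{defn:constraintcell}), and that parallelism of $\theta(\alpha, p)$ and $\theta(\beta, q)$ in $X$ is indeed guaranteed by the analogous parallelism in $F_K E$. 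Both are routine.
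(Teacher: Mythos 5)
Your proof is correct and is exactly the argument the paper has in mind: the paper states the result as an immediate consequence of Lemma~\ref{thm:Kalgcohtwo} together with the fact that $\overline{i}$ is a map of $K$-algebras, and your writeup simply spells out that same two-step argument (apply the lemma in $F_K E$, then transport the equality along the commuting square for $\overline{i}$). No gaps.
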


  \chapter{Comparisons between algebraic definitions of weak $n$-category}  \label{chap:Penonop}

  In this chapter we investigate the connections between the algebraic definitions of weak $n$-category given in the previous two chapters.  We begin by recalling that there is a Batanin operad whose algebras are Penon weak $n$-categories, which was originally proved by Batanin~\cite{Bat02}; we give a new, more direct proof of this using our construction of Penon's left adjoint from Section~\ref{sect:ladj}.  This tells us that the coherence theorems from Section~\ref{sect:globopcoh} hold for Penon weak $n$-categories.  It also allows for comparison of Penon's definition with other operadic definitions of weak $n$-category, though we make no such comparison here.

  We then take the first steps towards a comparison between Batanin weak $n$-categories and Leinster weak $n$-categories, using the correspondence between Batanin operads and Leinster operads from Section~\ref{sect:Leinster}.  It has long been believed that these definitions are in some sense equivalent \cite[end of Section~4.5]{Lei00}, but formalising this statement is difficult since it is not clear what ``equivalent'' should mean in this context, so no such comparison has previously been made.  We derive comparison functors between the categories of Batanin weak $n$-categories and Leinster weak $n$-categories using the universal properties of the operads $B$ and $L$.  These functors should be higher-dimensional equivalences of some kind;  there is currently no way of stating this formally, so we give a preliminary approximation of what this might mean.

  \section{The operad for Penon weak $n$-categories}  \label{sect:Penonop}

  In~\cite{Bat02}, Batanin proved that there is an $n$-globular operad whose algebras are Penon weak $n$-categories, and that this operad can be equipped with a contraction and system of compositions.  In this section we give a new, alternative proof of this fact using the construction of Penon's left adjoint from Section~\ref{sect:ladj}.  Although it is not a new result, our proof is more direct than that of Batanin, offering an alternative point of view in a way that elucidates the structure of the operad, and makes clear the fact that the contraction and system of compositions arise naturally from the contraction and magma structure in the original definition of the monad $P$.

  Note that we do not use this fact, or the proof, elsewhere in the thesis.  The result implies that the coherence theorems from Section~\ref{sect:globopcoh} apply to Penon weak $n$-categories.  In order to prove it, we use an alternative statement of the definition of $n$-globular operad (see \cite{Web04}), which describes an $n$-globular operad as a cartesian map of monads.

  \begin{prop}  \label{defn:Weberops}
    An \emph{$n$-globular operad} consists of a monad $K$ on $\nGSet$, and a cartesian map of monads $k \colon K \Rightarrow T$  (by which we mean a cartesian natural transformation $k \colon K \Rightarrow T$ respecting the monad structure).  Given operads $k \colon K \Rightarrow T$, $k' \colon K' \Rightarrow T$, a \emph{map of operads} $f \colon K \Rightarrow K'$ is a map of monads such that the diagram
      \[
        \xy
          % POINTS
          (-10, 14)*+{K}="K";
          (10, 14)*+{K'}="K'";
          (0, 0)*+{T1}="T";
          % ARROWS
          {\ar^{f} "K" ; "K'"};
          {\ar_{k} "K" ; "T"};
          {\ar^{k'} "K'" ; "T"}
        \endxy
      \]
    commutes.  The category of algebras for an operad $k \colon K \Rightarrow T$ is the category $K\Alg$ of algebras for the monad $K$.
  \end{prop}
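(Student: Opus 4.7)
The plan is to show that the alternative definition given in the statement is equivalent to Definition~\ref{defn:globop}: there is a bijection between monoids in the monoidal category $\TColl$ and cartesian monad maps into $T$, and this bijection respects the respective notions of morphism.

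For the forward direction, I would start with a $T$-operad $(K, k, \eta^K, \mu^K)$ as in Definition~\ref{defn:globop} and apply the construction of Definition~\ref{defn:indmonad} to produce an induced monad on $\nGSet$ together with a natural transformation to $T$; this natural transformation is cartesian by the pullback defining each $KX$, and it is a map of monads by the commuting diagrams defining $\eta^K_X$ and $\mu^K_X$. For the reverse direction, given a cartesian monad map $k \colon K \Rightarrow T$, I would evaluate at $1$ to obtain a collection $k_1 \colon K1 \to T1$ in $\nGSet$, equipped with the unit $\eta^K_1 \colon 1 \to K1$. The key observation is that, because $k$ is cartesian, the naturality square for $! \colon K1 \to 1$ exhibits $K(K1)$ as the pullback $K1 \times_{T1} T(K1)$, which is precisely the tensor product $K1 \otimes K1$ in $\TColl$; thus $\mu^K_1 \colon K^2 1 \to K1$ supplies the monoid multiplication on the collection. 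The monad-map axioms $k \comp \eta^K = \eta^T$ and $k \comp \mu^K = \mu^T \comp Tk \comp kK$ translate directly into the requirements that $\eta^K_1$ and this multiplication are maps of collections.

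I would then check that the two constructions are mutually inverse. Starting from a monoid $k \colon K \to T1$ and running both constructions, the pullback defining $K1$ is along the identity on $T1$, so one recovers $K$ itself together with its original unit and multiplication. Starting from a cartesian monad map and running both constructions, the cartesian naturality square for $! \colon X \to 1$ supplies a canonical isomorphism between the rebuilt endofunctor $K1 \times_{T1} T(-)$ and the original $K$, and uniqueness in the pullback universal property forces the units and multiplications to match. The main obstacle will be checking that the associativity axiom for the monoid corresponds to the associativity axiom for the monad; this is a pullback diagram chase using the cartesianness of $k$, naturality of $\mu^K$, and the induced description of $K^2 X$ as an iterated pullback, but it is essentially formal. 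Finally, maps of operads correspond bijectively under the same construction: a map of monads commuting with the projections to $T$ restricts at $1$ to a morphism of collections respecting the monoid structure, and extends uniquely to general $X$ by the pullback property, so the two notions of category of operads agree.
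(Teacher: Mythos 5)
The paper does not actually supply a proof of this proposition: immediately after stating it, the author writes that proving the equivalence with Definition~\ref{defn:globop} ``is a straightforward and enlightening exercise\dots\ We leave this to the reader.'' Your proposal is a correct filling-in of that exercise, and it follows the route one would expect. The forward direction is exactly the content of Definition~\ref{defn:indmonad} (the paper itself remarks there that the left-hand square being a pullback shows $k \colon K \Rightarrow T$ is cartesian, and the diagrams defining $\eta^K_X$ and $\mu^K_X$ give the monad-map axioms). Your key observation in the reverse direction is also right: the cartesian naturality square of $k$ at $! \colon K1 \rightarrow 1$ identifies $K^2 1$ with the pullback of $k_1$ against $T! \colon T(K1) \rightarrow T1$, which is precisely $K1 \otimes K1$ as defined in Definition~\ref{defn:globcoll}, so $\mu^K_1$ restricts to the operad multiplication, and the monad-map axioms evaluated at $1$ are exactly the collection-map conditions on $\eta^K_1$ and $\mu^K_1$. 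The round-trip arguments (pullback along $\mathrm{id}_{T1}$ in one direction, the cartesian square at $! \colon X \rightarrow 1$ in the other) are the standard ones. The only things you leave implicit are the associativity diagram chase, which you correctly identify as formal, and the paper's additional side remark that the induced monad $K$ is itself cartesian (which follows from $T$ being cartesian together with $k$ being a cartesian transformation, but is not needed for the equivalence itself).
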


  It is a straightforward and enlightening exercise to prove that this definition is equivalent to Definition~\ref{defn:globop}, and also that the monad $K$ is necessarily cartesian.  We leave this to the reader.

  To prove that there is an operad whose algebras are Penon weak $n$-categories using Proposition~\ref{defn:Weberops} we must prove three facts:  that there is a natural transformation
    \[
      p \colon P \Longrightarrow T,
    \]
  that this natural transformation is cartesian, and that it is a map of monads.  Note that we know that the source of this natural transformation must be $P$ to ensure that the algebras for the resulting operad are indeed $P$-algebras.

  \begin{prop}  \label{prop:pnatural}
    Recall from Definition~\ref{defn:Penonncat} that $P \colon \nGSet \rightarrow \nGSet$ is the monad induced by the adjunction
        \[
          \xy
            % POINTS
            (0, 0)*+{\nGSet}="nG";
            (20, 0)*+{\Qn .}="Qn";
            % ARROWS
            {\ar@<1ex>^-{F}_-*!/u1pt/{\labelstyle \bot} "nG" ; "Qn"};
            {\ar@<1ex>^-{U} "Qn" ; "nG"};
          \endxy
       \]
    There is a natural transformation $p \colon P \Rightarrow T$ whose component $p_X$ at $X~\in~\nGSet$ is given by the map part of
      \[
        F(X) = (\xymatrix{ PX \ar[r]^{p_X} & TX }),
      \]
    an object of $\Qn$.
  \end{prop}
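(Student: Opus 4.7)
The plan is to unpack the construction of $F$ from Section~\ref{sect:ladj} and observe that none of its constituent functors ever modifies the strict $n$-category part of an object; once this is established, naturality of $p$ reduces to the fact that every morphism in $\Qn$ is by definition a commuting square.

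First I would recall the factorisation $F = J \comp H$, where $H \colon \nGSet \rightarrow \Rn$ is the left adjoint from Subsection~\ref{subsect:HladjV} and $J \colon \Rn \rightarrow \Qn$ is the composite
\[
  J = C_{n, n + 1} \comp M_{n, n} \comp C_{n - 1, n} \comp \dotsb \comp M_{1, 1} \comp C_{0, 1}
\]
assembled by the interleaving argument in Subsection~\ref{subsect:interleaving}. By construction $H(X) = (\eta^T_X \colon X \rightarrow TX)$, so the strict $n$-category part of $H(X)$ is $TX$ and the strict $n$-category component of $H(f)$ is $Tf$. The substance of the proposition is then the observation that each of the functors $C_k$, $C_{n + 1}$, and $M_j$ (as defined in Definitions~\ref{defn:freekcontr}, \ref{defn:freenplusonecontr}, and \ref{defn:Mj}, and lifted in Lemmas~\ref{lem:magstab} and \ref{lem:contrstab}) leaves the strict $n$-category part of its input untouched, both on objects and on morphisms. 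An inductive sweep through $J$ then shows that $F(X)$ has the form $(p_X \colon PX \rightarrow TX)$, with $PX = UFX$, which identifies $p_X$ as claimed.

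For naturality, let $f \colon X \rightarrow Y$ be a map in $\nGSet$. By the same stability observation, $F(f)$ is a morphism $(Pf, Tf) \colon F(X) \rightarrow F(Y)$ in $\Qn$, and a morphism in $\Qn$ is by definition a commuting square
\[
  \xy
    (0, 0)*+{PX}="A";
    (20, 0)*+{PY}="B";
    (0, -16)*+{TX}="C";
    (20, -16)*+{TY}="D";
    {\ar^{Pf} "A"; "B"};
    {\ar_{p_X} "A"; "C"};
    {\ar^{p_Y} "B"; "D"};
    {\ar_{Tf} "C"; "D"};
  \endxy
\]
in $\nGSet$, which is exactly the naturality square of $p$ at $f$.

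There is no substantial obstacle in this argument; the proposition is really a bookkeeping consequence of the explicit description of $F$ given in the previous section. The only task requiring any care is checking the stability claim for each atomic step of the interleaving, and in each case this is immediate from the formulas defining the action of $C_k$, $C_{n + 1}$ and $M_j$ on objects and morphisms, where the strict $n$-category parts are transported unchanged. The value of the proposition lies in what it enables later (the identification of $p$ as the underlying natural transformation of a cartesian map of monads $P \Rightarrow T$), rather than in the proof itself.
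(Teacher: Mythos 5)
Your argument is correct, and it reaches the conclusion by a more elementary route than the paper does. The paper's proof does not unpack the interleaving at all: it regards $\Rn = \nGSet \downarrow U_T$ as a $2$-limit (comma object) in $\CAT$, notes that $\pi_1 \comp W \comp F = P$ and $\pi_2 \comp W \comp F = F_T$ (the free strict $n$-category functor), and obtains $p \colon P \Rightarrow U_T F_T = T$ by whiskering the canonical $2$-cell $\pi_1 \Rightarrow U_T\pi_2$ of the comma object with $W \comp F$; naturality is then automatic from the universal property. Your version makes explicit the fact that the paper's diagram leaves implicit, namely that the strict $n$-category part of $F(X)$ really is $TX$ and that of $F(f)$ really is $Tf$ --- this requires exactly the dimension-by-dimension stability check you describe for $H$, the $C_k$'s, $C_{n+1}$ and the $M_j$'s, and your observation that naturality is then just the defining commuting square of a morphism in $\Qn$ is the elementary shadow of the paper's appeal to the comma object's $2$-cell. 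The paper's formulation buys a cleaner statement of uniqueness of $p$ and sets up the $2$-categorical language reused later; yours buys transparency about where the identification $\pi_2 W F = F_T$ actually comes from. Both are complete proofs.
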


  \begin{proof}
    Recall that there is a forgetful functor
      \[
        U_T \colon \nCat \longrightarrow \nGSet
      \]
    that sends a strict $n$-category to its underlying $n$-globular set, and that the category $\Rn$ can be considered as the comma category
      \[
        \nGSet \downarrow U_T.
      \]
    Write
      \[
        \pi_1 \colon \nGSet \downarrow U_T \rightarrow \nGSet
      \]
    and
      \[
        \pi_2 \colon \nGSet \downarrow U_T \rightarrow \nCat
      \]
    for the projection maps, and consider the following diagram:
      \[
        \xy
          % POINTS
          (0, 0)*+{\nGSet}="0,0";
          (0, -10)*+{\Qn}="0,1";
          (0, -20)*+{\nGSet \downarrow G}="0,2";
          (-16, -36)*+{\nGSet}="-1,3";
          (16, -36)*+{\nCat.}="1,3";
          % ARROWS
          {\ar^F "0,0" ; "0,1"};
          %{\ar@/_1.5pc/ "0,0" ; "-1,3"};
          %{\ar@/^1.5pc/ "0,0" ; "1,3"};
          {\ar^W "0,1" ; "0,2"};
          {\ar@/_0.5pc/_-{\pi_1} "0,2" ; "-1,3"};
          {\ar@/^0.5pc/^-{\pi_2} "0,2" ; "1,3"};
          {\ar^G "1,3" ; "-1,3"};
          {\ar@{=>} (-4, -27) ; (4, -31)};
        \endxy
      \]
    Then the universal property of $\nGSet \downarrow U_T$ as a $2$-limit induces a unique natural transformation $p \colon P \Rightarrow T$ such that
      \[
        \xy
          % POINTS
          (0, 0)*+{\nGSet}="0,0";
          (0, -10)*+{\Qn}="0,1";
          (0, -20)*+{\nGSet \downarrow U_T}="0,2";
          (-16, -36)*+{\nGSet}="-1,3";
          (16, -36)*+{\nCat}="1,3";
          (-58, 0)*+{\nGSet}="-3,0";
          (-42, -36)*+{\nCat}="-2,3";
          (-74, -36)*+{\nGSet}="-4,3";
          (-29, -20)*+{=};
          % ARROWS
          {\ar^F "0,0" ; "0,1"};
          {\ar@/_1.5pc/_P "0,0" ; "-1,3"};
          {\ar@/^1.5pc/^{F_T} "0,0" ; "1,3"};
          {\ar^W "0,1" ; "0,2"};
          {\ar@/_0.5pc/_-{\pi_1} "0,2" ; "-1,3"};
          {\ar@/^0.5pc/^-{\pi_2} "0,2" ; "1,3"};
          {\ar^G "1,3" ; "-1,3"};
          {\ar@{=>} (-4, -27) ; (4, -31)};
          % ARROWS for left-hand section
          {\ar@/_1.5pc/_P "-3,0" ; "-4,3"};
          {\ar@/^1.5pc/^{F_T} "-3,0" ; "-2,3"};
          {\ar^G "-2,3" ; "-4,3"};
          {\ar@{=>}^{p} (-62, -18) ; (-54, -22)};
        \endxy
      \]
    where $F_T$ is the free strict $n$-category functor.
  \end{proof}

  \begin{prop}  \label{prop:pcartes}
    The natural transformation $p \colon P \Rightarrow T$ is cartesian.
  \end{prop}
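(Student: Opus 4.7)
The plan is to prove cartesianness of $p$ by exploiting the explicit dimension-by-dimension construction of the left adjoint $F \colon \nGSet \to \Qn$ from Section~\ref{sect:ladj}. The naturality square of $p$ at a morphism $f \colon X \to Y$ in $\nGSet$ is exactly the underlying square in $\nGSet$ of the morphism $F(f) \colon F(X) \to F(Y)$ in $\Qn$, so it suffices to show that $F$ sends every morphism to one whose underlying square in $\nGSet$ is a pullback. Call such a morphism, in any of the categories $\Rn$, $\ContrR_k$, $\MagR_j$, $\mathcal{R}_{j, k}$, \emph{good}; since pullbacks in $\nGSet$ are computed pointwise in $\Set$, goodness can be checked one dimension at a time.

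The starting point $H(f) = (f, Tf) \colon H(X) \to H(Y)$ is good because $T$ is a cartesian monad and hence $\eta^T$ has pullback naturality squares. I would then check that each of the functors appearing in the decomposition $J = C_{n, n + 1} \comp M_{n, n} \comp C_{n - 1, n} \comp \dotsb \comp M_{1, 1} \comp C_{0, 1}$ preserves goodness. Each of these functors acts nontrivially at a single dimension, so only that dimension requires verification. For $C_{k - 1, k}$ one has $\tilde{X}_k = X_k \amalg X^c_k$, and distributivity of pullbacks over coproducts in the extensive category $\Set$ reduces goodness at dimension $k$ to the identity $X^c_k \iso TX_k \times_{TY_k} Y^c_k$; this follows by pasting the defining pullback of $X^c_k$ with the goodness squares already established at dimensions $k - 1$ and $k$, together with cartesianness of $\eta^T$ (to know that $X_m \to TX_m$ forms a pullback square along $Tf$). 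The case of $C_{n + 1, n + 1}$ is similar, using that the coequaliser defining $\tilde{X}_n$ is taken over a relation that is itself built by pullback over $T1$, and coequalisers in $\Set$ of such compatible equivalence data are stable under pullback.

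The free magma step $M_{j, j}$ is the main technical obstacle. There one must show that the colimit $\hatX_j = \colim_{k \geq 0} X_j^{(k)}$ fits into a pullback square $\hatX_j \iso TX_j \times_{TY_j} \hatY_j$. I would argue by induction on the depth $k$: the recursion $X_j^{(k)} = X_j + \coprod_{0 \leq p < j} X_j^{(k - 1)} \times_{X_p} X_j^{(k - 1)}$ preserves goodness because pullbacks compose, pullbacks distribute over coproducts in $\Set$, and the inductive hypothesis supplies goodness for the lower-depth terms (with goodness at dimensions $p < j$ already secured earlier in the interleaving). Passing to the colimit preserves goodness by Lemma~\ref{lem:maclane}, since filtered colimits commute with finite limits in $\Set$. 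Composing all the steps shows that $F(f)$ is good, which is precisely the assertion that $p \colon P \Rightarrow T$ is cartesian. The main difficulty is the bookkeeping in the $M_j$ step --- maintaining the pullback identity over $TX \to TY$ (rather than merely over $T1$) through the iterated colimit --- but this is routine once extensivity of $\Set$ and Lemma~\ref{lem:maclane} are in hand.
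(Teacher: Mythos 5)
Your proposal takes essentially the same route as the paper's proof: reduce to showing that $J = C_{n, n + 1} \comp M_{n, n} \comp \dotsb \comp C_{0, 1}$ sends the pullback square $(f, Tf)$ to a pullback square, and verify this factor by factor using distributivity of pullbacks over coproducts in $\Set$ for the contraction steps, and commutation of pullbacks with filtered colimits (Lemma~\ref{lem:maclane}) together with a depth induction for the magma steps. The one place your sketch is thinner than the paper's argument is the $C_{n, n + 1}$ step: there is no general stability of coequalisers under pullback in $\Set$, and the paper instead verifies directly that the canonical map $w \colon X_n \to TA_n \times_{TB_n} \tilde{Y}_n$ identifies exactly the pairs in $X^c_{n + 1}$ and is surjective, so that it coincides with the coequaliser map $q$.
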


  To prove this, we must show that each naturality square for $p$ is a pullback square.  To do so, we use the construction of the adjunction
        \[
          \xy
            % POINTS
            (0, 0)*+{\nGSet}="nG";
            (20, 0)*+{\Qn .}="Qn";
            % ARROWS
            {\ar@<1ex>^-{F}_-*!/u1pt/{\labelstyle \bot} "nG" ; "Qn"};
            {\ar@<1ex>^-{U} "Qn" ; "nG"};
          \endxy
       \]
  from Section~\ref{sect:Penondefn}.  Recall that this adjunction can be decomposed as
      \[
        \xy
          % POINTS
          (0, 0)*+{\nGSet}="nG";
          (20, 0)*+{\Rn}="Rn";
          (36, 0)*+{\Qn.}="Qn";
          % ARROWS
          {\ar@<1ex>^-{H}_-*!/u1pt/{\labelstyle \bot} "nG" ; "Rn"};
          {\ar@<1ex>^-{V} "Rn" ; "nG"};
          {\ar@<1ex>^-{J}_-*!/u1pt/{\labelstyle \bot} "Rn" ; "Qn"};
          {\ar@<1ex>^-{W} "Qn" ; "Rn"};
        \endxy
     \]
  Given a map $f \colon X \rightarrow Y$ in $\nGSet$, the corresponding naturality square is obtained by applying the functor $J \colon \Rn \rightarrow \Qn$ to the map
      \[
        \xy
          % POINTS
          (0, 0)*+{X}="A";
          (16, 0)*+{Y}="B";
          (0, -16)*+{TX}="TA";
          (16, -16)*+{TY.}="TB";
          % ARROWS
          {\ar^-{f} "A" ; "B"};
          {\ar_{\eta^T_X} "A" ; "TA"};
          {\ar_-{Tf} "TA" ; "TB"};
          {\ar^{\eta^T_Y} "B" ; "TB"};
        \endxy
      \]
  in $\Rn$, which is a pullback square in $\nGSet$, since the free strict $n$-category monad $T$ is cartesian \cite[4.1.18 and F.2.2]{Lei04}.  Thus we prove that $p$ is cartesian by proving that the functor $J$ sends maps that are pullback squares to maps that are pullback squares (in fact, we do so only for a certain class of such maps).  Recall that the adjunction $J \ladj W$ can be decomposed as the following chain of adjunctions:
    \[
      \xymatrix{
        \Rn = \mathcal{R}_{0, 0} \ar@<1ex>[r]^-{C_{0, 1}}_-*!/u1pt/{\labelstyle \bot} & \mathcal{R}_{0, 1} \ar@<1ex>[l]^-{D_{0, 1}} \ar@<1ex>[r]^-{M_{1, 1}}_-*!/u1pt/{\labelstyle \bot} & \dotsc \ar@<1ex>[l]^-{N_{1, 1}} \ar@<1ex>[r]^-{C_{n - 1, n}}_-*!/u1pt/{\labelstyle \bot} & \mathcal{R}_{n - 1, n} \ar@<1ex>[l]^-{D_{n - 1, n}} \ar@<1ex>[r]^-{M_{n, n}}_-*!/u1pt/{\labelstyle \bot}  & \mathcal{R}_{n, n} \ar@<1ex>[l]^-{N_{n, n}} \ar@<1ex>[r]^-{C_{n, n + 1}}_-*!/u1pt/{\labelstyle \bot} & \mathcal{R}_{n, n + 1} = \Qn \ar@<1ex>[l]^-{D_{n, n + 1}},
               }
    \]
  where the functor $C_{m, m + 1}$ freely adds the contraction structure at dimension $m + 1$, and the functor $M_{m, m}$ freely adds the magma structure at dimension $m$.  We now prove three lemmas to show that each of these functors sends maps that are pullback squares to maps that are pullback squares, thus showing that their composite $J$ does so as well.  Note that there are three lemmas since the functor $C_{n, n + 1}$ must be treated separately from the functors $C_{m, m + 1}$ for $0 \leq m \leq n - 1$.

  Note that we only consider maps whose the strict $n$-category part is a map in the image of $T$ between free strict $n$-categories; this is as general as we need it to be to prove Proposition~\ref{prop:pcartes}, and it allows us to use the fact that $T$ is cartesian in the proofs of the lemmas.

  \begin{lemma}  \label{lem:Cpbstopbs}
    Let $0 \leq m \leq n - 1$ and suppose we have a morphism
      \[
        \xy
          % POINTS
          (0, 0)*+{X}="X";
          (16, 0)*+{Y}="Y";
          (0, -16)*+{TA}="TA";
          (16, -16)*+{TB}="TB";
          % ARROWS
          {\ar^-{u} "X" ; "Y"};
          {\ar_{x} "X" ; "TA"};
          {\ar_-{Tf} "TA" ; "TB"};
          {\ar^{y} "Y" ; "TB"};
          % PULLBACK STUFF
          (6,-1)*{}; (6,-5)*{} **\dir{-};
          (2,-5)*{}; (6,-5)*{} **\dir{-};
        \endxy
      \]
    in $\mathcal{R}_{m, m}$ that is a pullback square in $\nGSet$.  Then its image under the functor
      \[
        C_{m, m + 1} \colon \mathcal{R}_{m, m} \longrightarrow \mathcal{R}_{m, m + 1}
      \]
    is also a pullback square in $\nGSet$.
  \end{lemma}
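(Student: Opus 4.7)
The plan is to verify the pullback property dimension by dimension. Since $C_{m, m+1}$ leaves everything unchanged at dimensions $j \neq m+1$, the pullback property at those dimensions is immediate from the hypothesis. The work therefore concentrates at dimension $m+1$, where $\tilde{X}_{m+1} = X_{m+1} \amalg X^c_{m+1}$ and $\tilde{Y}_{m+1} = Y_{m+1} \amalg Y^c_{m+1}$, with $\tilde{u}_{m+1}$ acting componentwise and $\tilde{x}_{m+1}$ sending $(a,b) \in X^c_{m+1}$ to the identity $1_{x_m(a)} \in TA_{m+1}$.

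I would take a compatible pair $(\beta, \pi) \in \tilde{Y}_{m+1} \times_{TB_{m+1}} TA_{m+1}$ and split into two cases according to which summand of the coproduct contains $\beta$. If $\beta \in Y_{m+1}$, the hypothesised pullback at dimension $m+1$ yields a unique preimage in $X_{m+1}$, and disjointness of the coproduct rules out further preimages in $X^c_{m+1}$. If $\beta = (a', b') \in Y^c_{m+1}$, then $\tilde{y}_{m+1}(\beta) = 1_{y_m(a')}$, so compatibility forces $Tf_{m+1}(\pi)$ to be an identity in $TB_{m+1}$. The key step is then to invoke the cartesian structure of $T$ to conclude that $\pi$ itself is an identity $1_c$ for some $c \in TA_m$ satisfying $Tf_m(c) = y_m(a')$. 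The pullback property at dimension $m$ produces unique $a, b \in X_m$ with $u_m(a) = a'$, $u_m(b) = b'$, and $x_m(a) = x_m(b) = c$; applying the pullback property at dimensions below $m$ via uniqueness arguments on sources and targets confirms that $a$ and $b$ are parallel, so $(a,b) \in X^c_{m+1}$ furnishes the required preimage, and uniqueness follows from disjointness of the coproduct together with the pullback property at dimension $m$.

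The main obstacle is the step asserting that any $\pi \in TA_{m+1}$ with $Tf_{m+1}(\pi)$ an identity must itself be an identity. Intuitively this is transparent from the description of cells in $TA$ as labelled globular pasting diagrams: an identity $(m+1)$-cell corresponds to a pasting diagram with no generating $(m+1)$-cells, and since generators map to generators under $Tf$ by naturality of $\eta^T$, the property of being trivial at the top dimension is reflected by $Tf_{m+1}$. Formalising this amounts to showing that the square with vertical maps the identity inclusions $TA_m \hookrightarrow TA_{m+1}$, $TB_m \hookrightarrow TB_{m+1}$ and horizontal maps $Tf_m$, $Tf_{m+1}$ is a pullback in $\Set$, which follows from the cartesianness of $T$ together with the pasting-diagram description of $T1$ recalled in the Introduction.
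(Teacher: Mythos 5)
Your proof is correct and follows essentially the same route as the paper's: your case split on which summand of $\tilde{Y}_{m+1} = Y_{m+1} \amalg Y^c_{m+1}$ contains $\beta$ is the element-wise form of the paper's appeal to the commutation of coproducts with pullbacks in $\Set$, and your Case 2 is exactly the paper's verification that the square on the contraction-cell summands is a pullback. The step you single out as the main obstacle --- that $Tf_{m+1}$ reflects identity cells --- is genuinely needed, and the paper relies on it as well (asserting without further justification that every cell in the image of $v_2$ is an identity); your sketch of how to establish it from the pasting-diagram description of $T1$ and the fact that $Tf$ preserves arities is sound.
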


  \begin{proof}
    The idea of the proof is as follows: the functor $C_{m, m + 1}$ freely adds contraction $(m + 1)$-cells to $X$ and $Y$.  These contraction cells are obtained by taking pullbacks in $\Set$, and then added to the sets of $(m + 1)$-cells $X_{m + 1}$ and $Y_{m + 1}$ by taking coproducts in $\Set$.  The action of $C_{m, m + 1}$ on the map itself is then induced by the universal properties of these pullbacks and coproducts.  Thus the image of this map under the functor $C_{m, m + 1}$ is a coproduct of pullback squares (with some adjustments at the bottom to ensure that the strict $n$-category parts $TX$ and $TY$ remain unchanged).  Since pullbacks commute with coproducts in $\Set$ \cite[IX.2 Exercise 3]{Mac98}, this coproduct of pullback squares is itself a pullback square.

    Recall from Definition~\ref{defn:contr} that we have
      \[
        \xy
          % POINTS
          (0, 0)*+{X^c_{m + 1}}="Xc";
          (36, 0)*+{X_m}="Xr";
          (0, -16)*+{X_m}="Xb";
          (36, -16)*+{X_{m - 1} \times X_{m - 1} \times TA_m}="XXTA";
          % ARROWS
          {\ar "Xc" ; "Xr"};
          {\ar "Xc" ; "Xb"};
          {\ar_-{(s, t, x_m)} "Xb" ; "XXTA"};
          {\ar^{(s, t, x_m)} "Xr" ; "XXTA"};
          % PULLBACK STUFF
          (6,-1)*{}; (6,-5)*{} **\dir{-};
          (2,-5)*{}; (6,-5)*{} **\dir{-};
        \endxy
      \]
    For $k \neq m + 1$, we have $C_{m, m}(u, Tf)_k = (u, Tf)_k$, and since pullbacks in $\nGSet$ are computed pointwise, we only need to check that $C_{m, m}(u, Tf)_{m + 1}$ is a pullback square, i.e. that
      \[
        \xy
          % POINTS
          (0, 0)*+{X_{m + 1} \amalg X^c_{m + 1}}="X";
          (44, 0)*+{Y_{m + 1} \amalg X^c_{m + 1}}="Y";
          (0, -16)*+{TA_{m + 1}}="TA";
          (44, -16)*+{TB_{m + 1}}="TB";
          % ARROWS
          {\ar^-{u_{m + 1} \amalg u^c_{m + 1}} "X" ; "Y"};
          {\ar_{x_{m + 1} \amalg x^c_{m + 1}} "X" ; "TA"};
          {\ar_-{Tf_{m + 1}} "TA" ; "TB"};
          {\ar^{y_{m + 1} \amalg y^c_{m + 1}} "Y" ; "TB"};
        \endxy
      \]
    is a pullback square.  Since coproducts commute with pullbacks in $\Set$ \cite[IX.2, exercise 3]{Mac98}, this is true if the squares
      \[
        \xy
          % POINTS
          (0, 0)*+{X_{m + 1}}="X";
          (24, 0)*+{Y_{m + 1}}="Y";
          (0, -16)*+{TA_{m + 1}}="TA";
          (24, -16)*+{TB_{m + 1}}="TB";
          (48, 0)*+{X^c_{m + 1}}="Xc";
          (72, 0)*+{Y^c_{m + 1}}="Yc";
          (48, -16)*+{TA_{m + 1}}="TAc";
          (72, -16)*+{TB_{m + 1}}="TBc";
          % ARROWS
          {\ar^-{u_{m + 1}} "X" ; "Y"};
          {\ar_{x_{m + 1}} "X" ; "TA"};
          {\ar_-{Tf_{m + 1}} "TA" ; "TB"};
          {\ar^{y_{m + 1}} "Y" ; "TB"};
          {\ar^-{u^c_{m + 1}} "Xc" ; "Yc"};
          {\ar_{x^c_{m + 1}} "Xc" ; "TAc"};
          {\ar_-{Tf_{m + 1}} "TAc" ; "TBc"};
          {\ar^{y^c_{m + 1}} "Yc" ; "TBc"};
        \endxy
      \]
    are both pullback squares.  The left-hand square is a pullback square by hypothesis.  For the right-hand square, suppose we have a cone
      \[
        \xy
          % POINTS
          (0, 0)*+{V}="V";
          (24, 0)*+{Y^c_{m + 1}}="Yc";
          (0, -16)*+{TA_{m + 1}}="TAc";
          (24, -16)*+{TB_{m + 1}}="TBc";
          % ARROWS
          {\ar^-{v_1} "V" ; "Yc"};
          {\ar_{v_2} "V" ; "TAc"};
          {\ar_-{Tf_{m + 1}} "TAc" ; "TBc"};
          {\ar^{y^c_{m + 1}} "Yc" ; "TBc"};
        \endxy
      \]
    in $\Set$.  Recall that we have source and target maps $s$, $t \colon Y^c_{m + 1} \rightarrow Y_m$ given by the projections from the pullback defining $Y^c_{m + 1}$.  Composing with these, and source and target maps for $TA$ and $TB$, induces maps
      \[
        \xy
          % POINTS
          (-10, 10)*+{V}="V";
          (0, 0)*+{X_m}="X";
          (20, 0)*+{Y_m}="Y";
          (0, -16)*+{TA_m}="TA";
          (20, -16)*+{TB_m,}="TB";
          (44, 10)*+{V}="Vr";
          (54, 0)*+{X_m}="Xr";
          (74, 0)*+{Y_m}="Yr";
          (54, -16)*+{TA_m}="TAr";
          (74, -16)*+{TB_m.}="TBr";
          % ARROWS
          {\ar^-{u_m} "X" ; "Y"};
          {\ar_{x_m} "X" ; "TA"};
          {\ar_-{Tf_m} "TA" ; "TB"};
          {\ar^{y_m} "Y" ; "TB"};
          {\ar@/^1pc/^-{sv_1} "V" ; "Y"};
          {\ar@/_1pc/_{sv_2} "V" ; "TA"};
          {\ar@{-->}^{!\sigma} "V" ; "X"};
          {\ar^-{u_m} "Xr" ; "Yr"};
          {\ar_{x_m} "Xr" ; "TAr"};
          {\ar_-{Tf_m} "TAr" ; "TBr"};
          {\ar^{y_m} "Yr" ; "TBr"};
          {\ar@/^1pc/^-{tv_1} "Vr" ; "Yr"};
          {\ar@/_1pc/_{tv_2} "Vr" ; "TAr"};
          {\ar@{-->}^{!\tau} "Vr" ; "Xr"};
          % PULLBACK STUFF
          (6,-1)*{}; (6,-5)*{} **\dir{-};
          (2,-5)*{}; (6,-5)*{} **\dir{-};
          (60,-1)*{}; (60,-5)*{} **\dir{-};
          (56,-5)*{}; (60,-5)*{} **\dir{-};
        \endxy
      \]
    The maps $\sigma$ and $\tau$ give us a cone over the pullback square defining $X^c_{m + 1}$;  commutativity of this cone comes from the globularity conditions, and the fact that every cell in the image of $v_2$ is an identity, so has the same source and target.  Thus the universal property of $X^c_{m + 1}$ induces a unique map such that the diagram
      \[
        \xy
          % POINTS
          (-10, 10)*+{V}="V";
          (0, 0)*+{X^c_{m + 1}}="Xc";
          (38, 0)*+{X_m}="Xr";
          (0, -16)*+{X_m}="Xb";
          (38, -16)*+{X_{m - 1} \times X_{m - 1} \times TA_m.}="bl";
          % ARROWS
          {\ar@/^1pc/^-{\sigma} "V" ; "Xr"};
          {\ar@/_1pc/_{\tau} "V" ; "Xb"};
          {\ar@{-->}^v "V" ; "Xc"};
          {\ar "Xc" ; "Xr"};
          {\ar "Xc" ; "Xb"};
          {\ar_-{(s, t, x_m)} "Xb" ; "bl"};
          {\ar^{(s, t, x_m)} "Xr" ; "bl"};
          % PULLBACK STUFF
          (6,-1)*{}; (6,-5)*{} **\dir{-};
          (2,-5)*{}; (6,-5)*{} **\dir{-};
        \endxy
      \]
    commutes.

    We now check that $v$ makes the diagram
      \[
        \xy
          % POINTS
          (-10, 10)*+{V}="V";
          (0, 0)*+{X^c_{m + 1}}="Xc";
          (24, 0)*+{Y^c_{m + 1}}="Yc";
          (0, -16)*+{TA_{m + 1}}="TAc";
          (24, -16)*+{TB_{m + 1}}="TBc";
          % ARROWS
          {\ar@/^1pc/^-{v_1} "V" ; "Yc"};
          {\ar@/_1pc/_{v_2} "V" ; "TAc"};
          {\ar^v "V" ; "Xc"};
          {\ar^{u^c_{m + 1}} "Xc" ; "Yc"};
          {\ar^{x^c_{m + 1}} "Xc" ; "TAc"};
          {\ar_-{Tf_{m + 1}} "TAc" ; "TBc"};
          {\ar^{y^c_{m + 1}} "Yc" ; "TBc"};
        \endxy
      \]
    commute.  To show that the top triangle commutes, observe that the map $v_1 = u^c_{m + 1} \comp v$ makes the following diagram commute:
      \[
        \xy
          % POINTS
          (-40, 20)*+{V}="V";
          (-20, 10)*+{X^c_{m + 1}}="Xc";
          (18, 10)*+{X_m}="Xr";
          (-20, -6)*+{X_m}="Xb";
          (0, 0)*+{Y^c_{m + 1}}="Yc";
          (38, 0)*+{Y_m}="Yr";
          (0, -16)*+{Y_m}="Yb";
          (38, -16)*+{Y_{m - 1} \times Y_{m - 1} \times TB_m.}="bl";
          % ARROWS
          {\ar^v "V" ; "Xc"};
          {\ar@/^1pc/^{\sigma} "V" ; "Xr"};
          {\ar@/_1pc/_{\tau} "V" ; "Xb"};
          {\ar^{u^c_{m + 1}} "Xc" ; "Yc"};
          {\ar^s "Xc" ; "Xr"};
          {\ar_t "Xc" ; "Xb"};
          {\ar^{u_m} "Xr" ; "Yr"};
          {\ar_{u_m} "Xb" ; "Yb"};
          {\ar^s "Yc" ; "Yr"};
          {\ar_t "Yc" ; "Yb"};
          {\ar_-{(s, t, y_m)} "Yb" ; "bl"};
          {\ar^{(s, t, y_m)} "Yr" ; "bl"};
          % PULLBACK STUFF
          (6,-1)*{}; (6,-5)*{} **\dir{-};
          (2,-5)*{}; (6,-5)*{} **\dir{-};
        \endxy
      \]
    Since $u_m \sigma = sv_1$ and $u_m \tau = tv_1$, by the universal property of $Y^c_{m + 1}$, we have $u^c_{m + 1} \comp v = v_1$.

    To show that the left-hand triangle commutes, write $i \colon TA_m \rightarrow TA_{m + 1}$ for the map that sends an $m$-cell to its identity $(m + 1)$-cell, and consider that we can factorise $x^c_{m + 1} \comp v$ as
      \[
        \xy
          % POINTS
          (-20, 0)*+{V}="V";
          (0, 0)*+{X^c_{m + 1}}="Xc";
          (20, 0)*+{X_m}="X";
          (40, 0)*+{TA_m}="TA";
          (60, 0)*+{TA_{m + 1}.}="TAr";
          % ARROWS
          {\ar^v "V" ; "Xc"};
          {\ar^{s} "Xc" ; "X"};
          {\ar@/^1.5pc/^{x^c_{m + 1}} "Xc" ; "TAr"};
          {\ar^{x_m} "X" ; "TA"};
          {\ar^i "TA" ; "TAr"};
          {\ar@/_1.5pc/_{\sigma} "V" ; "X"};
          {\ar@/_3pc/_{sv_2} "V" ; "TA"};
        \endxy
      \]
    Thus we have $x^c_{m + 1} \comp v = isv_2 = v_2$, since all cells in the image of $v_2$ are identities.

    Finally, uniqueness of $v$ comes from the universal property of $X^c_{m + 1}$.  Hence
      \[
        \xy
          % POINTS
          (0, 0)*+{X^c_{m + 1}}="Xc";
          (24, 0)*+{Y^c_{m + 1}}="Yc";
          (0, -16)*+{TA_{m + 1}}="TAc";
          (24, -16)*+{TB_{m + 1}}="TBc";
          % ARROWS
          {\ar^-{u^c_{m + 1}} "Xc" ; "Yc"};
          {\ar_{x^c_{m + 1}} "Xc" ; "TAc"};
          {\ar_-{Tf_{m + 1}} "TAc" ; "TBc"};
          {\ar^{y^c_{m + 1}} "Yc" ; "TBc"};
          % PULLBACK STUFF
          (6,-1)*{}; (6,-5)*{} **\dir{-};
          (2,-5)*{}; (6,-5)*{} **\dir{-};
        \endxy
      \]
    is a pullback square, so $C_{m, m + 1}(u, Tf)$ is a pullback square.
  \end{proof}

  We must treat the case $m = n$ separately.

  \begin{lemma}  \label{lem:Cnpbstopbs}
    Suppose we have a morphism
      \[
        \xy
          % POINTS
          (0, 0)*+{X}="X";
          (16, 0)*+{Y}="Y";
          (0, -16)*+{TA}="TA";
          (16, -16)*+{TB}="TB";
          % ARROWS
          {\ar^-{u} "X" ; "Y"};
          {\ar_{x} "X" ; "TA"};
          {\ar_-{Tf} "TA" ; "TB"};
          {\ar^{y} "Y" ; "TB"};
          % PULLBACK STUFF
          (6,-1)*{}; (6,-5)*{} **\dir{-};
          (2,-5)*{}; (6,-5)*{} **\dir{-};
        \endxy
      \]
    in $\mathcal{R}_{n, n}$ that is a pullback square in $\nGSet$.  Then its image under the functor
      \[
        C_{n, n + 1} \colon \mathcal{R}_{n, n} \longrightarrow \mathcal{R}_{n, n + 1} = \Qn
      \]
    is also a pullback square in $\nGSet$.
  \end{lemma}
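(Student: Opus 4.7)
The plan is as follows. Since pullbacks in $\nGSet$ are computed pointwise, and since the functor $C_{n, n+1}$ leaves the $m$-cells unchanged for all $m < n$ (and leaves the strict $n$-category part $Tf \colon TA \to TB$ entirely untouched), the components of $C_{n,n+1}(u,Tf)$ at dimensions other than $n$ coincide with those of the original square and are therefore still pullbacks. It thus suffices to verify the pullback property at dimension $n$.

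Recall that $\tilde{X}_n$ is the coequaliser in $\Set$ of the two projections $\pi_1, \pi_2 \colon X^c_{n+1} \rightrightarrows X_n$, where
\[
X^c_{n+1} = \{(a,b) \in X_n \times X_n : s(a) = s(b),\ t(a) = t(b),\ x_n(a) = x_n(b)\}.
\]
This relation is visibly reflexive, symmetric and transitive, so $\tilde{X}_n = X_n/{\sim}$ is exactly the set of equivalence classes; analogously, $\tilde{Y}_n = Y_n/{\approx}$ using $y_n$ in place of $x_n$. The induced maps $\tilde{u}_n$ and $\tilde{x}_n$ are well-defined by the universal property of the coequaliser, since $u_n$ preserves sources and targets and satisfies $y_n \comp u_n = Tf_n \comp x_n$.

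The core task is to exhibit the comparison map $\tilde{X}_n \to \tilde{Y}_n \times_{TB_n} TA_n$ as a bijection. Surjectivity is immediate from the pullback property of the original square: given a compatible pair $([\alpha], \tau)$, there exists $a \in X_n$ with $u_n(a) = \alpha$ and $x_n(a) = \tau$, and $[a]$ maps to $([\alpha], \tau)$. The key difficulty---and the step that I expect to require the most care---is injectivity. Suppose $[a], [b] \in \tilde{X}_n$ both map to $([\alpha], \tau)$. Then $x_n(a) = x_n(b) = \tau$ and $u_n(a) \approx u_n(b)$ in $Y_n$, so $u_{n-1}(s(a)) = s(u_n(a)) = s(u_n(b)) = u_{n-1}(s(b))$, and moreover $x_{n-1}(s(a)) = s(\tau) = x_{n-1}(s(b))$. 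This is where I would invoke the pullback property of the \emph{original} square at dimension $n - 1$: the map $(u_{n-1}, x_{n-1}) \colon X_{n-1} \to Y_{n-1} \times_{TB_{n-1}} TA_{n-1}$ is a bijection, which forces $s(a) = s(b)$; the identical argument gives $t(a) = t(b)$. Together with $x_n(a) = x_n(b)$, this yields $(a,b) \in X^c_{n+1}$, hence $[a] = [b]$.

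Commutativity of $C_{n,n+1}(u, Tf)$ and the fact that the map exhibited above is the canonical comparison into the pullback are both routine consequences of the universal property of the coequaliser together with commutativity of the original square, so this would finish the verification that $C_{n,n+1}(u, Tf)$ is a pullback square in $\nGSet$.
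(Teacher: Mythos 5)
Your proposal is correct and takes essentially the same route as the paper's proof: reduce to dimension $n$, note that $\tilde{X}_n$ is the quotient of $X_n$ by the equivalence relation $X^c_{n+1}$, establish injectivity of the comparison map by using the pullback property of the original square at dimension $n-1$ to recover $s(a)=s(b)$ and $t(a)=t(b)$, and establish surjectivity by lifting along the coequaliser map and applying the original pullback at dimension $n$. The only cosmetic difference is that the paper phrases the argument via a map $w \colon X_n \to \tilde{Y}_n \times_{TB_n} TA_n$ shown to identify exactly the pairs in $X^c_{n+1}$, rather than directly as a bijection out of $\tilde{X}_n$.
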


  \begin{proof}
   Recall from Definition~\ref{defn:freenplusonecontr} that we have
      \[
        \xy
          % POINTS
          (0, 0)*+{X^c_{n + 1}}="Xc";
          (38, 0)*+{X_n}="Xr";
          (0, -16)*+{X_n}="Xb";
          (38, -16)*+{X_{n - 1} \times X_{n - 1} \times TA_n,}="bl";
          % ARROWS
          {\ar^{\pi_1} "Xc" ; "Xr"};
          {\ar_{\pi_2} "Xc" ; "Xb"};
          {\ar_-{(s, t, x_n)} "Xb" ; "bl"};
          {\ar^{(s, t, x_n)} "Xr" ; "bl"};
          % PULLBACK STUFF
          (6,-1)*{}; (6,-5)*{} **\dir{-};
          (2,-5)*{}; (6,-5)*{} **\dir{-};
        \endxy
      \]
    and that $\tilde{X}_n$ is defined to be the coequaliser of the diagram
      \[
        \xy
          % POINTS
          (0, 0)*+{X^c_{n + 1}}="Xc";
          (20, 0)*+{X_n}="X";
          % ARROWS
          {\ar@<1ex>^-{\pi_1} "Xc" ; "X"};
          {\ar@<-1ex>_-{\pi_2} "Xc" ; "X"};
        \endxy
      \]
    in $\Set$.  We write $q \colon X_n \rightarrow \tilde{X}_n$ for the coprojection.  The set $\tilde{Y}_n$ is defined similarly, and we write $r \colon Y_n \rightarrow \tilde{Y}_n$ for the coprojection.  For all $0 \leq m < n$ we have
      \[
        C_{n, n + 1}(u, Tf)_m = (u, Tf)_m,
      \]
    and for $m = n$, we have that $C_{n, n + 1}(u, Tf)_n$ is given by
      \[
        \xy
          % POINTS
          (0, 0)*+{\tilde{X}_n}="X";
          (16, 0)*+{\tilde{Y}_n}="Y";
          (0, -16)*+{TA_n}="TA";
          (16, -16)*+{TB_n,}="TB";
          % ARROWS
          {\ar^-{\tilde{u}_n} "X" ; "Y"};
          {\ar_{\tilde{x}_n} "X" ; "TA"};
          {\ar_-{Tf_n} "TA" ; "TB"};
          {\ar^{\tilde{y}_n} "Y" ; "TB"};
        \endxy
      \]
    so we only need to check that this is a pullback square in $\Set$.

    Write $w$ for the unique map making the diagram
      \[
        \xy
          % POINTS
          (-10, 10)*+{X_n}="X";
          (0, 0)*+{\bullet}="pb";
          (6, 10)*+{Y_n}="Yt";
          (16, 0)*+{\tilde{Y}_n}="Y";
          (0, -16)*+{TA_n}="TA";
          (16, -16)*+{TB_n}="TB";
          % ARROWS
          {\ar^{u_n} "X" ; "Yt"};
          {\ar^{r} "Yt" ; "Y"};
          {\ar "pb" ; "Y"};
          {\ar "pb" ; "TA"};
          {\ar_-{Tf_n} "TA" ; "TB"};
          {\ar^{\tilde{y}_n} "Y" ; "TB"};
          {\ar@/_1pc/_{x_n} "X" ; "TA"};
          {\ar@{-->}^w "X" ; "pb"};
          % PULLBACK STUFF
          (6,-1)*{}; (6,-5)*{} **\dir{-};
          (2,-5)*{}; (6,-5)*{} **\dir{-};
        \endxy
      \]
    commute.  We will show that, for $a$, $b \in X_n$, $w(a) = w(b)$ if and only if $(a, b) \in X^c_{n + 1}$, and also that $w$ is surjective; and thus $C_{n, n + 1}(u, Tf)_n$ is a pullback square and $w = q$.

    Let $(a, b) \in X^c_{n + 1}$, so $x_n(a) = x_n(b)$, $s(a) = s(b)$, $t(a) = t(b)$.  We have $(u_n(a), u_n(b)) \in Y^c_{n + 1}$, so $ru_n(a) = ru_n(b)$.  Thus
      \[
        w(a) = (x_n(a), ru_n(a)) = (x_n(b), ru_n(b)) = w(b).
      \]

    Now let $a$, $b \in X_n$ with $w(a) = w(b)$, so $x_n(a) = x_n(b)$, $ru_n(a) = ru_n(b)$.  The source map $s \colon X_n \rightarrow X_{n - 1}$ is the unique map making the diagram
      \[
        \xy
          % POINTS
          (-16, 10)*+{X_n}="Xt";
          (4, 10)*+{Y_n}="Yt";
          (-16, -6)*+{TA_n}="TAt";
          (0, 0)*+{X_{n - 1}}="X";
          (20, 0)*+{Y_{n - 1}}="Y";
          (0, -16)*+{TA_{n - 1}}="TA";
          (20, -16)*+{TB_{n - 1}}="TB";
          % ARROWS
          {\ar^-{u_{n - 1}} "X" ; "Y"};
          {\ar^{x_{n - 1}} "X" ; "TA"};
          {\ar_-{Tf_{n - 1}} "TA" ; "TB"};
          {\ar^{y_{n - 1}} "Y" ; "TB"};
          {\ar^{u_n} "Xt" ; "Yt"};
          {\ar^s "Xt" ; "X"};
          {\ar_{x_n} "Xt" ; "TAt"};
          {\ar_s "TAt" ; "TA"};
          {\ar^s "Yt" ; "Y"};
          % PULLBACK STUFF
          (6,-1)*{}; (6,-5)*{} **\dir{-};
          (2,-5)*{}; (6,-5)*{} **\dir{-};
        \endxy
      \]
    commute.  Thus, since $su_n(a) = su_n(b)$ and $sx_n(a) = sx_n(b)$, we have $s(a) = s(b)$.  Similarly, $t(a) = t(b)$. Hence $(a, b) \in X^c_{n + 1}$.

    Now let $\pi \in TA_n$, $c \in \tilde{Y}_n$, with $Tf_n(\pi) = \tilde{y}_n(c)$.  We wish to show that there is some $a \in X_n$ with $w(a) = (\pi, c)$, and thus that $w$ is surjective.  Since $\tilde{y}_n$ is surjective, there exists $c' \in Y_n$ with $r(c') = c$.  Since $X_n$ is given by the pullback
      \[
        \xy
          % POINTS
          (0, 0)*+{X_n}="X";
          (16, 0)*+{Y_n}="Y";
          (0, -16)*+{TA_n}="TA";
          (16, -16)*+{TB_n}="TB";
          % ARROWS
          {\ar^-{u_n} "X" ; "Y"};
          {\ar_{x_n} "X" ; "TA"};
          {\ar_-{Tf_n} "TA" ; "TB"};
          {\ar^{y_n} "Y" ; "TB"};
          % PULLBACK STUFF
          (6,-1)*{}; (6,-5)*{} **\dir{-};
          (2,-5)*{}; (6,-5)*{} **\dir{-};
        \endxy
      \]
    and $yr(c') = Tf_n(\pi)$, we have $a \in X_n$ with $x_n(a) = \pi$, $u_n(a) = c'$.  Thus $w(a) = (\pi, c)$, so $w$ is surjective.  Hence
      \[
        \xy
          % POINTS
          (0, 0)*+{\tilde{X}_n}="X";
          (16, 0)*+{\tilde{Y}_n}="Y";
          (0, -16)*+{TA_n}="TA";
          (16, -16)*+{TB_n}="TB";
          % ARROWS
          {\ar^-{\tilde{u}_n} "X" ; "Y"};
          {\ar_{\tilde{x}_n} "X" ; "TA"};
          {\ar_-{Tf_n} "TA" ; "TB"};
          {\ar^{\tilde{y}_n} "Y" ; "TB"};
          % PULLBACK STUFF
          (6,-1)*{}; (6,-5)*{} **\dir{-};
          (2,-5)*{}; (6,-5)*{} **\dir{-};
        \endxy
      \]
    is a pullback square.
  \end{proof}

  Thus we have shown that the functors adding the free contraction cells send maps that are pullback square to maps that are pullback squares.  We now do the same for the functors adding the free magma structure.

  \begin{lemma}  \label{lem:Mpbstopbs}
    Let $0 < m \leq n$ and suppose we have a morphism
      \[
        \xy
          % POINTS
          (0, 0)*+{X}="X";
          (16, 0)*+{Y}="Y";
          (0, -16)*+{TA}="TA";
          (16, -16)*+{TB}="TB";
          % ARROWS
          {\ar^-{u} "X" ; "Y"};
          {\ar_{x} "X" ; "TA"};
          {\ar_-{Tf} "TA" ; "TB"};
          {\ar^{y} "Y" ; "TB"};
          % PULLBACK STUFF
          (6,-1)*{}; (6,-5)*{} **\dir{-};
          (2,-5)*{}; (6,-5)*{} **\dir{-};
        \endxy
      \]
    in $\mathcal{R}_{m - 1, m}$ that is a pullback square in $\nGSet$.  Then its image under the functor
      \[
        M_{m, m} \colon \mathcal{R}_{m - 1, m} \longrightarrow \mathcal{R}_{m, m}
      \]
    is also a pullback square in $\nGSet$.
  \end{lemma}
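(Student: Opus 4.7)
The plan is to reduce to dimension $m$ via the pointwise computation of pullbacks, prove the pullback property inductively for each iterate $X_m^{(k)}$ in the colimit defining $\hatX_m$, and then conclude by Mac Lane's lemma on filtered colimits and finite limits.

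First, since pullbacks in $\nGSet$ are computed pointwise in $\Set$, and since $M_{m, m}$ leaves the underlying sets of cells unchanged at every dimension other than $m$ (see Definition~\ref{defn:Mj}), it suffices to establish the pullback property at dimension $m$. Recall that $\hatX_m = \colim_{k \geq 0} X_m^{(k)}$, where $X_m^{(0)} = X_m$ and, for $k \geq 1$,
\[
  X_m^{(k)} = X_m + \coprod_{0 \leq p < m} X_m^{(k - 1)} \times_{X_p} X_m^{(k - 1)},
\]
with the map $f_m^{(k)} \colon X_m^{(k)} \to TA_m$ built inductively from $x_m$ using the composition operations in the strict $n$-category $TA$.

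I would then prove by induction on $k$ that the canonical map $X_m^{(k)} \to Y_m^{(k)} \times_{TB_m} TA_m$ is a bijection. The base case $k = 0$ is the hypothesis. For the inductive step, decompose $X_m^{(k)}$ as above; since coproducts commute with pullbacks in $\Set$, it suffices to handle each summand separately. The $X_m$ summand is the base case. For each summand $X_m^{(k - 1)} \times_{X_p} X_m^{(k - 1)}$, combine the inductive hypothesis applied twice with the pullback pasting lemma, using that the original square is a pullback pointwise at dimension $p < m$. The remaining ingredient is that the composition map $\comp^m_p$ in $TA$ is cartesian-natural with respect to $Tf$, in the sense that the square formed by $\comp^m_p \colon TA_m \times_{TA_p} TA_m \to TA_m$ and its analogue in $TB$, joined by the relevant components of $Tf$, is a pullback square in $\Set$. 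This follows from cartesianness of the monad $T$, since composition operations in free strict $n$-categories are built from (cartesian) components of $\mu^T$ and $\eta^T$, and cartesian natural transformations are preserved by pasting and composing pullback squares.

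Finally, to pass to the colimit, note that the coprojection maps $c_m^{(k)} \colon X_m^{(k)} \to \hatX_m$ are compatible with the structure maps to $TA_m$, so by Lemma~\ref{lem:maclane} we obtain
\[
  \hatX_m = \colim_k X_m^{(k)} \iso \colim_k \bigl( Y_m^{(k)} \times_{TB_m} TA_m \bigr) \iso \hatY_m \times_{TB_m} TA_m,
\]
as required. I expect the main obstacle to be the inductive step, specifically verifying that the composition maps in the strict $n$-categories $TA$ and $TB$ interact with $Tf$ via pullback squares; this is precisely where the restriction to morphisms whose strict $n$-category part lies in the image of $T$, together with cartesianness of $T$, is essential, since for an arbitrary map of strict $n$-categories these naturality squares need not be pullbacks.
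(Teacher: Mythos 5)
Your proposal is correct and follows essentially the same route as the paper's proof: reduce to dimension $m$ pointwise, induct over the depth $k$ of the iterates $X_m^{(k)}$ using that coproducts and pullbacks commute in $\Set$, split the inductive step into a ``pullback of pullbacks'' square and a square comparing $\comp^m_p$ in $TA$ and $TB$ over $Tf$ (which is a pullback by cartesianness of $T$ and the construction of $T$ via pullbacks), and conclude with Lemma~\ref{lem:maclane}. The point you flag as the main obstacle is exactly the step the paper also singles out (and leaves as an exercise), with the same justification.
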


  \begin{proof}
    The idea of this proof is similar to that of the proof of Lemma~\ref{lem:Cpbstopbs}, but is slightly more complicated since the construction of $M_{m, m}$ uses filtered colimits as well as coproducts.  The functor $M_{m, m}$ freely adds binary composites of $m$-cells to $X$ and $Y$.  These composites are added through a process of taking pullbacks, coproducts, and filtered colimits in $\Set$.  The action of $M_{m, m}$ on the map itself is then induced by the universal properties of these pullbacks, coproducts, and filtered colimits.  Thus the image of this map under the functor $M_{m, m}$ is a filtered colimit of coproducts of pullback squares (with some adjustments at the bottom to ensure that the strict $n$-category parts $TX$ and $TY$ remain unchanged).  Since pullbacks commute with both coproducts and filtered colimits in $\Set$ \cite[IX.2, Exercise 3 and Theorem 1]{Mac98}, this filtered colimit of coproducts of pullback squares is itself a pullback square.

    Recall the notation from Definition~\ref{defn:Mj}: we write
      \[
        M_{m, m}(\xymatrix{ X \ar[r]^x & TA }) = \xymatrix{ \hat{X} \ar[r]^{\hat{x}} & TA },
      \]
      \[
        M_{m, m}(\xymatrix{ Y \ar[r]^y & TB }) = \xymatrix{ \hat{Y} \ar[r]^{\hat{y}} & TB }.
      \]
    Since $M_{m, m}$ changes only dimension $m$, and since pullbacks in $\nGSet$ are computed pointwise, we just need to check that
      \[
        \xy
          % POINTS
          (0, 0)*+{\hat{X}_m}="X";
          (16, 0)*+{\hat{Y}_m}="Y";
          (0, -16)*+{TA_m}="TA";
          (16, -16)*+{TB_m}="TB";
          % ARROWS
          {\ar^-{\hat{u}_m} "X" ; "Y"};
          {\ar_{\hat{x}_m} "X" ; "TA"};
          {\ar_-{Tf_m} "TA" ; "TB"};
          {\ar^{\hat{y}_m} "Y" ; "TB"};
        \endxy
      \]
    is a pullback square in $\Set$.  Recall that $\hat{X}_m$ and $\hat{Y}_m$ are defined as filtered colimits in $\Set$, with
      \[
        \hat{X}_m := \colim_{j \geq 1} X^{(j)}_{m}, \; \hat{Y}_m := \colim_{j \geq 1} Y^{(j)}_{m}.
      \]
    Since pullbacks commute with filtered colimits in $\Set$, we can prove that the above diagram is a pullback square by proving that, for each $j \geq 1$, the diagram
      \[
        \xy
          % POINTS
          (0, 0)*+{X^{(j)}_m}="X";
          (16, 0)*+{Y^{(j)}_m}="Y";
          (0, -16)*+{TA_m}="TA";
          (16, -16)*+{TB_m}="TB";
          % ARROWS
          {\ar^-{u^{(j)}_m} "X" ; "Y"};
          {\ar_{x^{(j)}_m} "X" ; "TA"};
          {\ar_-{Tf_m} "TA" ; "TB"};
          {\ar^{y^{(j)}_m} "Y" ; "TB"};
        \endxy
      \]
    is a pullback square in $\Set$.  We do this by induction.  When $j = 1$, we have $X^{(j)}_m = X_m$, $Y^{(j)}_m = Y_m$, and the square above becomes is a pullback square by hypothesis.

    Now suppose that $j > 1$, and we have shown that
      \[
        \xy
          % POINTS
          (0, 0)*+{X^{(j - 1)}_m}="X";
          (24, 0)*+{Y^{(j- 1)}_m}="Y";
          (0, -16)*+{TA_m}="TA";
          (24, -16)*+{TB_m}="TB";
          % ARROWS
          {\ar^-{u^{(j - 1)}_m} "X" ; "Y"};
          {\ar_{x^{(j - 1)}_m} "X" ; "TA"};
          {\ar_-{Tf_m} "TA" ; "TB"};
          {\ar^{y^{(j - 1)}_m} "Y" ; "TB"};
          % PULLBACK STUFF
          (6,-1)*{}; (6,-5)*{} **\dir{-};
          (2,-5)*{}; (6,-5)*{} **\dir{-};
        \endxy
      \]
    is a pullback square; we will show that
      \[
        \xy
          % POINTS
          (0, 0)*+{X^{(j)}_m}="X";
          (16, 0)*+{Y^{(j)}_m}="Y";
          (0, -16)*+{TA_m}="TA";
          (16, -16)*+{TB_m}="TB";
          % ARROWS
          {\ar^-{u^{(j)}_m} "X" ; "Y"};
          {\ar_{x^{(j)}_m} "X" ; "TA"};
          {\ar_-{Tf_m} "TA" ; "TB"};
          {\ar^{y^{(j)}_m} "Y" ; "TB"};
        \endxy
      \]
    is a pullback square.  Recall that $X^{(j)}_m$ is defined by
      \[
        X^{(j)}_m := X_m \amalg \coprod_{0 \leq p < m} X^{(j - 1)}_m \times_{X_p} X^{(j - 1)}_m,
      \]
    and similarly for $Y^{(j)}_m$.  Since pullbacks commute with coproducts in $\Set$, the above diagram is a pullback square if, for all $0 \leq p < m$, the diagram
    % Need labels on maps at the sides.
    % Might be nicer with j, not j - 1
      \[
        \xy
          % POINTS
          (0, 0)*+{X^{(j - 1)}_m \times_{X_p} X^{(j - 1)}_m}="X";
          (50, 0)*+{Y^{(j - 1)}_m \times_{Y_p} Y^{(j - 1)}_m}="Y";
          (0, -16)*+{TA_m}="TA";
          (50, -16)*+{TB_m}="TB";
          % ARROWS
          {\ar^-{(u^{(j - 1)}_m, u^{(j - 1)}_m)} "X" ; "Y"};
          {\ar "X" ; "TA"};
          {\ar_-{Tf_m} "TA" ; "TB"};
          {\ar "Y" ; "TB"};
        \endxy
      \]
    is a pullback square.  We can write this as
      \[
        \xy
          % POINTS
          (0, 0)*+{X^{(j - 1)}_m \times_{X_p} X^{(j - 1)}_m}="X";
          (50, 0)*+{Y^{(j - 1)}_m \times_{Y_p} Y^{(j - 1)}_m}="Y";
          (0, -16)*+{TA_m \times_{TA_p} TA_m}="TATA";
          (50, -16)*+{TB_m \times_{TB_p} TB_m}="TBTB";
          (0, -32)*+{TA_m}="TA";
          (50, -32)*+{TB_m.}="TB";
          % ARROWS
          {\ar^-{(u^{(j - 1)}_m, u^{(j - 1)}_m)} "X" ; "Y"};
          {\ar_{(x^{(j, 1)}_m, x^{(j, 1)}_m)} "X" ; "TATA"};
          {\ar_-{(Tf_m, Tf_m)} "TATA" ; "TBTB"};
          {\ar^{(y^{(j, 1)}_m, y^{(j, 1)}_m)} "Y" ; "TBTB"};
          {\ar_{\comp^m_p} "TATA" ; "TA"};
          {\ar^{\comp^m_p} "TBTB" ; "TB"};
          {\ar_-{Tf_m} "TA" ; "TB"};
        \endxy
      \]
    The top square is a pullback of pullback squares, and hence is itself a pullback square.  The fact that the bottom square is a pullback square is left as a straightforward exercise to the reader; it is an application of the fact that $T$ is a cartesian monad \cite[Example 4.1.18 and Theorem F.2.2]{Lei04}, so the naturality squares for its multiplication $\mu^T$ are pullbacks squares, and the fact that $T^2 A$ and $T^2 B$ can be constructed via a series a pullbacks in $\nGSet$ (see \cite[F.1]{Lei04} and \cite{Che11b}, which give constructions of $T$ using this method).

    Thus the diagram
      \[
        \xy
          % POINTS
          (0, 0)*+{\hat{X}_m}="X";
          (16, 0)*+{\hat{Y}_m}="Y";
          (0, -16)*+{TA_m}="TA";
          (16, -16)*+{TB_m}="TB";
          % ARROWS
          {\ar^-{\hat{u}_m} "X" ; "Y"};
          {\ar_{\hat{x}_m} "X" ; "TA"};
          {\ar_-{Tf_m} "TA" ; "TB"};
          {\ar^{\hat{y}_m} "Y" ; "TB"};
          % PULLBACK STUFF
          (6,-1)*{}; (6,-5)*{} **\dir{-};
          (2,-5)*{}; (6,-5)*{} **\dir{-};
        \endxy
      \]
    is a pullback square in $\nGSet$.  Hence $M_{m, m}$ sends maps that are pullback squares to maps that are pullback squares, as required.
  \end{proof}

  We now combine these results to prove that $p \colon P \Rightarrow T$ is cartesian.

  \begin{proof}[Proof of Proposition~\ref{prop:pcartes}]
    Combining the above results, and using the fact that $J \colon \Rn \rightarrow \Qn$ is defined as the composite
      \[
        J = C_{n, n + 1} \comp M_{n, n} \comp C_{n - 1, n} \comp \dotsb \comp M_{1, 1} \comp C_{0, 1},
      \]
    we see that, given a map $(u, Tf)$ in $\Rn$ such that
      \[
        \xy
          % POINTS
          (0, 0)*+{X}="X";
          (16, 0)*+{Y}="Y";
          (0, -16)*+{TA}="TA";
          (16, -16)*+{TB}="TB";
          % ARROWS
          {\ar^-{u} "X" ; "Y"};
          {\ar_{x} "X" ; "TA"};
          {\ar_-{Tf} "TA" ; "TB"};
          {\ar^{y} "Y" ; "TB"};
          % PULLBACK STUFF
          (6,-1)*{}; (6,-5)*{} **\dir{-};
          (2,-5)*{}; (6,-5)*{} **\dir{-};
        \endxy
      \]
    is a pullback square in $\nGSet$, the map $J(u, Tf)$ in $\Qn$ is also a pullback square in $\nGSet$.  Take $(u, Tf)$ to be
      \[
        \xy
          % POINTS
          (0, 0)*+{A}="A";
          (16, 0)*+{B}="B";
          (0, -16)*+{TA}="TA";
          (16, -16)*+{TB}="TB";
          % ARROWS
          {\ar^-{f} "A" ; "B"};
          {\ar_{\eta^T_A} "A" ; "TA"};
          {\ar_-{Tf} "TA" ; "TB"};
          {\ar^{\eta^T_B} "B" ; "TB"};
          % PULLBACK STUFF
          (6,-1)*{}; (6,-5)*{} **\dir{-};
          (2,-5)*{}; (6,-5)*{} **\dir{-};
        \endxy
      \]
    for any $f \colon A \rightarrow B$ in $\nGSet$, which is a pullback square since $T$ is cartesian.  Applying $J$ gives us that
      \[
        \xy
          % POINTS
          (0, 0)*+{PA}="PA";
          (16, 0)*+{PB}="PB";
          (0, -16)*+{TA}="TA";
          (16, -16)*+{TB}="TB";
          % ARROWS
          {\ar^-{Pf} "PA" ; "PB"};
          {\ar_{p_A} "PA" ; "TA"};
          {\ar_-{Tf} "TA" ; "TB"};
          {\ar^{p_B} "PB" ; "TB"};
          % PULLBACK STUFF
          (6,-1)*{}; (6,-5)*{} **\dir{-};
          (2,-5)*{}; (6,-5)*{} **\dir{-};
        \endxy
     \]
    is a pullback square in $\nGSet$.  Thus $p \colon P \Rightarrow T$ is a cartesian natural transformation.
  \end{proof}

  Thus the natural transformation $p \colon P \Rightarrow T$ satisfies one of the conditions in Proposition~\ref{defn:Weberops}; to prove that it is an operad, we now only need to prove the following:

  \begin{prop}  \label{prop:pmonadmap}
    The natural transformation $p \colon P \Rightarrow T$ is a map of monads.
  \end{prop}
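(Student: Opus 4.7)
The plan is to verify the two monad-morphism axioms directly, by exploiting the explicit decomposition of the adjunction $F \ladj U$ from Section~\ref{sect:ladj}.  Recall that $F = J \comp H$ and $U = V \comp W$, so the unit and counit of $F \ladj U$ decompose as
\[
  \eta^{F \ladj U}_X = V\eta^{J \ladj W}_{HX} \comp \eta^{H \ladj V}_X, \quad \epsilon^{F \ladj U}_Y = \epsilon^{J \ladj W}_Y \comp J\epsilon^{H \ladj V}_{WY},
\]
and crucially, neither $J$ nor $W$ alters the strict $n$-category part of an object of $\Rn$ or $\Qn$.  Throughout, I will use that $F(X) = (PX \xrightarrow{p_X} TX)$ by construction, with strict $n$-category part $TX$ equipped with its free $T$-algebra structure $\mu^T_X$.

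For the unit axiom, we must show $p_X \comp \eta^P_X = \eta^T_X$.  Since $\eta^{H \ladj V}_X = \id_X$ by Proposition~\ref{prop:HladjV}, we have $\eta^P_X = V\eta^{J \ladj W}_{HX}$.  Now $\eta^{J \ladj W}_{HX} \colon H(X) \rightarrow WF(X)$ is a morphism in $\Rn$ from $(X \xrightarrow{\eta^T_X} TX)$ to $(PX \xrightarrow{p_X} TX)$, whose strict $n$-category part is $\id_{TX}$ (since $J$ fixes this part, so its unit acts as the identity there).  Commutativity of this morphism square is exactly $p_X \comp \eta^P_X = \eta^T_X$, which is the desired identity.

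For the multiplication axiom, we must show $p_X \comp \mu^P_X = \mu^T_X \comp T(p_X) \comp p_{PX}$.  Here $\mu^P_X = U\epsilon^{F \ladj U}_{FX}$, so I would analyse $\epsilon^{F \ladj U}_{FX} \colon F(PX) \rightarrow F(X)$, which is a morphism in $\Qn$ from $(P^2 X \xrightarrow{p_{PX}} T(PX))$ to $(PX \xrightarrow{p_X} TX)$.  Its $\nGSet$-part (under $U$) is $\mu^P_X$; the task is to identify its strict $n$-category part.  Using the decomposition of the counit, since $\epsilon^{J \ladj W}_{F(X)}$ has strict $n$-category part $\id_{TX}$ (as $J$ and $W$ both fix strict $n$-category parts), it suffices to compute the strict $n$-category part of $J\epsilon^{H \ladj V}_{WF(X)}$, which is just that of $\epsilon^{H \ladj V}_{WF(X)}$ itself.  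Applying the formula from Proposition~\ref{prop:HladjV} to the object $p_X \colon PX \rightarrow TX$ of $\Rn$ with $T$-algebra structure $\mu^T_X$ on $TX$, we obtain the strict $n$-category part $\mu^T_X \comp T(p_X) \colon TPX \rightarrow TX$.  Reading off the commutativity of $\epsilon^{F \ladj U}_{FX}$ as a morphism in $\Qn$ then yields the desired equation $p_X \comp \mu^P_X = \mu^T_X \comp T(p_X) \comp p_{PX}$.

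The main obstacle, such as it is, lies not in a conceptually difficult step but in carefully tracking the decomposition of the counit $\epsilon^{F \ladj U}$ through the interleaving chain of adjunctions, and verifying that the intermediate counits from the $C_k$, $M_j$ and $C_{n+1}$ adjunctions of Section~\ref{sect:ladj} each leave the strict $n$-category part unchanged (so that only the $H \ladj V$ counit contributes to the strict $n$-category part of $\epsilon^{F \ladj U}_{FX}$).  This is immediate from the construction of each $C_k$ and $M_j$ in Subsections~\ref{subsect:contr} and \ref{subsect:magma}, where the strict $n$-category part plays no role, but should be recorded explicitly as a lemma before assembling the argument.
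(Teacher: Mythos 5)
Your proposal is correct and follows essentially the same route as the paper: decompose $F \ladj U$ through $\Rn$, use $\eta^{H\ladj V} = \id$ so that $\eta^P = V\kappa H$ and read the unit axiom off the commuting square $\kappa_{HX}$ in $\Rn$, then read the multiplication axiom off the commuting square $\epsilon_{FX}$ in $\Qn$, whose strict $n$-category part is $\mu^T_X \comp Tp_X$. The paper simply asserts the form of these two squares where you track the counit decomposition more explicitly, but the argument is the same.
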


  \begin{proof}
    We need to check that $p$ satisfies the monad map axioms.  To do so, recall that $P$ is the monad induced by the adjunction
      \[
        \xy
          % POINTS
          (0, 0)*+{\nGSet}="nG";
          (20, 0)*+{\Qn}="Qn";
          % ARROWS
          {\ar@<1ex>^-{F}_-*!/u1pt/{\labelstyle \bot} "nG" ; "Qn"};
          {\ar@<1ex>^-{U} "Qn" ; "nG"};
        \endxy
     \]
    defined in Section~\ref{sect:ladj}, and that this adjunction can be decomposed as
      \[
        \xy
          % POINTS
          (0, 0)*+{\nGSet}="nG";
          (20, 0)*+{\Rn}="Rn";
          (36, 0)*+{\Qn .}="Qn";
          % ARROWS
          {\ar@<1ex>^-{H}_-*!/u1pt/{\labelstyle \bot} "nG" ; "Rn"};
          {\ar@<1ex>^-{V} "Rn" ; "nG"};
          {\ar@<1ex>^-{J}_-*!/u1pt/{\labelstyle \bot} "Rn" ; "Qn"};
          {\ar@<1ex>^-{W} "Qn" ; "Rn"};
        \endxy
     \]
    Write $\alpha$, $\beta$ for the unit and counit of $H \ladj V$, and write $\kappa$, $\zeta$ for the unit and counit of $J \ladj W$.  Then the unit $\eta = \eta^P$ of the adjunction $F \ladj U$ is given by the composite
      \[
        \xy
          % POINTS
          (0, 0)*+{1}="1";
          (12, 0)*+{VH}="VH";
          (36, 0)*+{VWJH = UF}="VWJH";
          % ARROWS
          {\ar^-{\alpha} "1" ; "VH"};
          {\ar^-{V\kappa H} "VH" ; "VWJH"};
        \endxy
     \]
    and the counit $\epsilon$ of $F \ladj U$ is given by the composite
      \[
        \xy
          % POINTS
          (0, 0)*+{FU = JHVW}="JHVW";
          (24, 0)*+{JW}="JW";
          (36, 0)*+{1.}="1";
          % ARROWS
          {\ar^-{J \beta W} "JHVW" ; "JW"};
          {\ar^-{\zeta} "JW" ; "1"};
        \endxy
     \]

    To show that $p$ satisfies the axioms for a monad map we consider the unit $\eta^P$ and counit $\epsilon$ for the adjunction $F \ladj U$.  Write $\alpha$ for the unit of the adjunction $H \ladj V$ and $\kappa$ for the unit of the adjunction $J \ladj W$.  By Proposition~\ref{prop:HladjV}, $\alpha = \id$, so $\eta^P = V \kappa H$.  For all $X \in \nGSet$, $\kappa_{HX}$ is the map
      \[
        \xy
          % POINTS
          (0, 0)*+{X}="1";
          (16, 0)*+{PX}="P1";
          (0, -16)*+{TX}="T1";
          (16, -16)*+{TX}="T1r";
          % ARROWS
          {\ar^-{\eta^P_X} "1" ; "P1"};
          {\ar_{\eta^T_X} "1" ; "T1"};
          {\ar_-{\id_{TX}} "T1" ; "T1r"};
          {\ar^{p_X} "P1" ; "T1r"};
        \endxy
     \]
    in $\Rn$.  Commutativity of this diagram shows that $p$ satisfies the first axiom for a monad map.

    For all $X \in \nGSet$, $\epsilon_{FX}$ is the map
      \[
        \xy
          % POINTS
          (0, 0)*+{P^2 X}="PP";
          (32, 0)*+{PX}="P1";
          (0, -16)*+{TPX}="TP1";
          (16, -16)*+{T^2 X}="TT";
          (32, -16)*+{TX}="T1";
          % ARROWS
          {\ar^-{\mu^P_X} "PP" ; "P1"};
          {\ar_-{p_{PX}} "PP" ; "TP1"};
          {\ar^-{p_{X}} "P1" ; "T1"};
          {\ar_-{Tp_X} "TP1" ; "TT"};
          {\ar_-{\mu^T_X} "TT" ; "T1"};
        \endxy
      \]
    in $\Qn$.  Commutativity of this diagram shows $p$ satisfies the second axiom for a monad map.

    Thus $p \colon P \Rightarrow T$ is a monad map.
  \end{proof}

  Combining Propositions~\ref{prop:pcartes} and \ref{prop:pmonadmap} gives us the following theorem:

  \begin{thm}
    There is an operad whose algebras are Penon weak $n$-categories, given by the cartesian map of monads $p \colon P \Rightarrow T$.
  \end{thm}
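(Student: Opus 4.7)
The plan is to assemble the three preceding propositions. Proposition~\ref{prop:pnatural} produces the natural transformation $p \colon P \Rightarrow T$ via the universal property of the comma category $\nGSet \downarrow U_T$. Proposition~\ref{prop:pcartes} shows that $p$ is cartesian, by decomposing $p$ through the interleaving adjunctions $C_{m,m+1}$ and $M_{m,m}$ of Section~\ref{sect:ladj} and invoking Lemmas~\ref{lem:Cpbstopbs}, \ref{lem:Cnpbstopbs} and \ref{lem:Mpbstopbs} to verify that each factor sends the naturality square arising from a pullback square in $\nGSet$ (of the particular form with strict part in the image of $T$) to another such pullback square. Proposition~\ref{prop:pmonadmap} then verifies the monad map axioms, using the factorisation $F = J \circ H$ to read off the unit and counit of $F \dashv U$ and observing that the resulting triangles literally express the two monad map axioms for $p$.

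With these three ingredients in hand, I would simply appeal to the alternative formulation of the definition of an $n$-globular operad given in Proposition~\ref{defn:Weberops}: an $n$-globular operad is precisely a cartesian map of monads $k \colon K \Rightarrow T$ on $\nGSet$, and its category of algebras is the category of algebras for the monad $K$. Since $p \colon P \Rightarrow T$ is a cartesian monad map (by Propositions~\ref{prop:pcartes} and \ref{prop:pmonadmap}), it defines an $n$-globular operad. The category of algebras of this operad is $P\Alg$, which by Definition~\ref{defn:Penonncat} is exactly the category of Penon weak $n$-categories.

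There is no remaining obstacle: all the substantive work sits inside the preceding propositions, and the present statement is a one-line synthesis. The only subtlety worth flagging is the choice to use the cartesian-monad-map formulation of Proposition~\ref{defn:Weberops} rather than the monoid-in-collections formulation of Definition~\ref{defn:globop}; using the former lets us bypass an explicit construction of the underlying collection and its multiplication. If desired, one can recover the underlying collection as $\xymatrix{P1 \ar[r]^-{p_1} & T1}$, with unit and multiplication induced from those of the monad $P$, but this is unnecessary for the statement as given.
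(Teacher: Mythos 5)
Your proposal is correct and follows exactly the paper's argument: the theorem is proved by combining Propositions~\ref{prop:pcartes} and \ref{prop:pmonadmap} and then invoking the cartesian-monad-map formulation of an $n$-globular operad from Proposition~\ref{defn:Weberops}, whose category of algebras is $P\Alg$. Your accompanying summaries of how those propositions are themselves established also match the paper.
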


  \begin{proof}
    The natural transformation $p \colon P \Rightarrow T$ is cartesian by Proposition~\ref{prop:pcartes}, and is a monad map by Proposition~\ref{prop:pmonadmap}.  Thus it is an operad, and its category of algebras is $P\Alg$, the category of Penon weak $n$-categories.
  \end{proof}

  In keeping with our notation for the operads for Batanin weak $n$-categories and Leinster weak $n$-categories, we henceforth abuse notation write $P := P1$ and $p := p_1$, so the underlying collection of this operad is denoted
      \[
        \xymatrix{
          P \ar[d]^-{p} \\
          T1.
                 }
      \]

  \begin{prop}
    The operad $P$ for Penon weak $n$-categories can be equipped with a contraction and system of compositions which arise naturally from the contraction on $p_1 \colon P1 \rightarrow T1$ and the magma structure on $P1$ respectively.
  \end{prop}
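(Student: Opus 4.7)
The plan is to observe that $F(1) = (p_1 \colon P1 \rightarrow T1)$ is by construction an object of $\Qn$, so its underlying data already carries a contraction on $p_1$ in the sense of Definition~\ref{defn:contr} and an $n$-magma structure on $P1$ with respect to which $p_1$ is a map of $n$-magmas. Both required structures on the operad $P$ arise by repackaging these, so the argument is essentially bookkeeping with no arbitrary choices.

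For the Batanin contraction on the collection $p \colon P \rightarrow T1$, the contraction on $p_1$ assigns to each pair $(a, b) \in P1^c_{m + 1}$ an $(m+1)$-cell $\gamma_{m + 1}(a, b) \in P1_{m + 1}$ lying over the identity on the common image $p_1(a) = p_1(b)$. After the standard index shift relating Definitions~\ref{defn:contr} and~\ref{defn:Bcontr}, this is exactly a function $\gamma_{\id_\alpha} \colon C_P(\id_\alpha) \rightarrow P(\id_\alpha)$ for each $\alpha \in T1_m$, and the source, target and arity conditions of Definition~\ref{defn:Bcontr} match directly. The tameness condition at dimension $n$ coincides with the tameness of $p_1$ built into the definition of $\Qn$.

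For the system of compositions, I would define $\sigma \colon S \rightarrow P$ at dimension $m$ by $\sigma(\beta^m_m) = \eta^P_m(1)$ and, for $p < m$, by $\sigma(\beta^m_p) = \eta^P_m(1) \comp^m_p \eta^P_m(1)$, using the binary composition from the $n$-magma structure on $P1$ together with the unit $\eta^P \colon 1 \rightarrow P$ of the operad. That $\sigma$ is a map of $n$-globular sets is a routine induction on dimension using naturality of $\eta^P$ and the source/target clauses of Definition~\ref{defn:magma}. The collection condition $p \comp \sigma = s$ reduces to two checks: the unary case is the monad-map identity $p \comp \eta^P = \eta^T$ supplied by Proposition~\ref{prop:pmonadmap}, and the binary case uses that $p_1$ is a map of $n$-magmas to give $p(\eta^P_m(1) \comp^m_p \eta^P_m(1)) = \eta_m \comp^m_p \eta_m = \beta^m_p$ in $T1$. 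The axiom $\sigma \comp \eta^S = \eta^P$ then holds by construction.

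There is no genuine obstacle to this proposition; the substantive work on Penon's monad was already carried out in Section~\ref{sect:ladj}, and this statement merely translates that work into operadic language. The only point that requires care, and in my view the closest thing to a ``main step'', is verifying that the two tameness conditions really do coincide and that the definitions of $C_P(\id_\alpha)$ and $P1^c_{m+1}$ are literally the same set after reindexing; once this identification is made explicit, the contraction and system of compositions read off from the $\Qn$-structure on $F(1)$ with no further choices.
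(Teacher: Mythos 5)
Your proposal is correct and follows essentially the same route as the paper: the contraction is read off directly from the $\Qn$-structure on $p_1 \colon P1 \rightarrow T1$ (the paper has already observed, just before Definition~\ref{defn:Bcontr}, that a Batanin contraction on a collection is literally a contraction in the sense of Definition~\ref{defn:contr} on a map into $T1$), and the system of compositions is defined exactly as you do, via the unit of $P$ and the free magma structure, with the collection and unit axioms checked the same way.
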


  \begin{proof}
    The presence of the contraction is immediate, since
      \[
        \xymatrix{
          P \ar[d]^-{p} \\
          T1
                 }
      \]
    is an object of $\Qn$, so is equipped with a contraction as constructed in Section~\ref{sect:ladj}.  Similarly, $P$, is equipped with a magma structure;  we use this to define a system of compositions
      \[
        \xy
          % POINTS
          (-10, 14)*+{S}="S";
          (10, 14)*+{P}="P";
          (0, 0)*+{T1}="T";
          % ARROWS
          {\ar^{\sigma} "S" ; "P"};
          {\ar_{s} "S" ; "T"};
          {\ar^{p} "P" ; "T"}
        \endxy
      \]
    as follows: for all $0 \leq m \leq n$,
      \begin{itemize}
        \item $\sigma_m(\beta^m_m) := (\eta^P_1)_m(1) = 1$;
        \item for $0 \leq l \leq m$, $\sigma_m(\beta^m_l) := 1 \comp^m_l 1$.
      \end{itemize}
    From the definition of the magma structure on $P$ given in Definition~\ref{defn:Mj}, this satisfies the source and target conditions for a map of $n$-globular sets, and the commutativity conditions required to be a map of collections.  By definition of $\sigma_m(\beta^m_m)$,
      \[
        \xy
        % POINTS
        (-15, 0)*+{1}="1";
        (0, 0)*+{S}="S";
        (15, 0)*+{P}="P";
        % ARROWS
        {\ar^{\epsilon^S} "1" ; "S"};
        {\ar^{\sigma} "S" ; "P"};
        {\ar@/_1.5pc/_{\epsilon^P = \eta^P_1} "1" ; "P"}
        \endxy
      \]
    commutes.  Thus, $\sigma$ is a system of compositions on $P$.
  \end{proof}

  Thus we now have $P$ as an operad with a contraction and a system of compositions.  Consequently, the coherence theorems from Section~\ref{sect:globopcoh} are valid for Penon weak $n$-categories.

\section{Towards a comparison between $B\Alg$ and $L\Alg$}  \label{sect:compareBL}

  In this section we give some steps towards a comparison between Batanin weak $n$-categories and Leinster weak $n$-categories; everything in this section is new.  The fact that both Batanin weak $n$-categories and Leinster weak $n$-categories are defined as algebras for $n$-globular operads means we can make some statements about the relationship between the two definitions by comparing the operads $B$ and $L$.  Some of these statements are preliminary, but we hope that they will pave the way for a more comprehensive comparison in the future.

  We use the correspondence between Batanin operads and Leinster operads (Theorems~\ref{thm:OUCtoOCS} and \ref{thm:OCStoOUC}), along with the universal properties of the operads $B$ and $L$, to derive comparison functors
    \[
      u_* \colon L\Alg \longrightarrow B\Alg \text{ and } v_* \colon B\Alg \longrightarrow L\Alg.
    \]

  We then give an explicit construction of a left adjoint to $u_*$.  We can think of $u_*$ as a forgetful functor that forgets the unbiased composition structure on an $L$-algebra, and remembers only its binary-biased composition structure.  Although the existence of the left adjoint to $u_*$ can be proved by abstract means, our construction illustrates the fact that the left adjoint freely adds unbiased composites to a $B$-algebra, while leaving the original $B$-algebra structure unchanged.  The construction is also applicable in a more general context; the exact level of generality is noted at the beginning of the subsection.

  The functors $u_*$ and $v_*$ are not equivalences of categories; they should be higher-dimensional equivalences of some kind, but we do not have a formal way of saying this, so instead we approximate this statement.  To do so, we consider what happens when we start with an $L$-algebra, apply $u_*$ to obtain a $B$-algebra, then apply $v_*$ to that to obtain an $L$-algebra;  in particular, we take some steps investigating the relationship between the resulting $L$-algebra and the original $L$-algebra.  We expect these $L$-algebras to be in some sense equivalent, but it is not clear how to make this precise, due to the lack of a well-established notion of weak map of $L$-algebras.  The underlying $n$-globular sets of these $L$-algebras are the same; they differ only on their algebra actions.  We argue that these algebra actions differ only ``up to a constraint cell''; we make this statement precise, defining a new notion of weak map of $L$-algebras in the process.

  \subsection{Comparison functors between $B\Alg$ and $L\Alg$}  \label{subsect:defineuandv}

  Recall from Definition~\ref{defn:OCS} that we write $\OCS$ for the category of Batanin operads, and from Definition~\ref{defn:OUC} that we write $\OUC$ for the category of Leinster operads.  By Theorem~\ref{thm:OUCtoOCS} we have a canonical functor
    \[
      \OUC \longrightarrow \OCS
    \]
  which is the identity on the underlying operads.  Applying this functor to $L$ equips it with a contraction and a system of compositions.  Thus, since $B$ is initial in $\OCS$, there is a unique map
      \[
        \xy
          % POINTS
          (-10, 14)*+{B}="B";
          (10, 14)*+{L}="L";
          (0, 0)*+{T1}="T";
          % ARROWS
          {\ar^{u} "B" ; "L"};
          {\ar_{b} "B" ; "T"};
          {\ar^{l} "L" ; "T"}
        \endxy
      \]
  in $\OCS$.

  By Theorem~\ref{thm:OCStoOUC} we can equip the operad $B$ with an unbiased contraction to obtain an object of $\OUC$.  However, unlike the process of equipping $L$ with a contraction and system of compositions, there is no canonical way of doing this; the unbiased contraction on $B$ depends on a choice of section to $b$, as described in Lemma~\ref{lem:khat}.  Suppose we have chosen a section to $b$ and thus equipped $B$ with an unbiased contraction.  Since $L$ is initial in $\OUC$, there is a unique map
      \[
        \xy
          % POINTS
          (-10, 14)*+{L}="L";
          (10, 14)*+{B}="B";
          (0, 0)*+{T1}="T";
          % ARROWS
          {\ar^{v} "L" ; "B"};
          {\ar^{b} "B" ; "T"};
          {\ar_{l} "L" ; "T"}
        \endxy
      \]
  in $\OUC$.

  Recall from Proposition~\ref{defn:Weberops} that every map of operads gives rise to a corresponding map of the induced monads.  Thus the maps $u$ and $v$ induce functors between the categories of algebras $B\Alg$ and $L\Alg$; we write
    \[
      u_* \colon L\Alg \longrightarrow B\Alg
    \]
  for the functor induced by $u$, and
    \[
      v_* \colon B\Alg \longrightarrow L\Alg
    \]
  for the functor induced by $v$.

  \begin{prop}  \label{prop:uvid}
    The functor $v_*$ is a retraction of the functor $u_*$, i.e.
      \[
        u_*v_* = \id_{B\Alg}.
      \]
  \end{prop}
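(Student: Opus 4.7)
The plan is to reduce the proposition to the equality $v \circ u = \id_B$ of operad maps. Since the operad maps $u,v$ correspond to natural transformations $u,v$ of the induced monads, this equality implies that the underlying natural transformation $B \Rightarrow B$ is the identity, whence for any $B$-algebra $(X, \psi)$ one has
\[
u_* v_*(X, \psi) = (X, \psi \circ v_X \circ u_X) = (X, \psi),
\]
and so $u_* v_* = \id_{B\Alg}$.

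To establish $v \circ u = \id_B$, I would appeal to the initiality of $B$ in $\OCS$ (Proposition \ref{prop:initial}). Since $\id_B$ is trivially a morphism in $\OCS$, it suffices to show that the composite operad map $v \circ u$ is also a morphism in $\OCS$, i.e., that it preserves the contraction $\gamma^B$ and the system of compositions $\sigma^B$. Preservation of $\gamma^B$ is achieved by a chain of compatibilities: $u \in \OCS$ sends $\gamma^B$ to $\gamma^L$; by Theorem \ref{thm:OUCtoOCS}, the canonical $\gamma^L$ agrees with $\delta^L$ on identity pasting diagrams; $v \in \OUC$ sends $\delta^L$ to $\delta^B$; and by the construction of $\delta^B$ in Theorem \ref{thm:OCStoOUC}, $\delta^B_{\id_\alpha}$ agrees with $\gamma^B_{\id_\alpha}$. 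For the system of compositions, one exploits the canonical expression $\sigma^L(\beta^m_p) = \delta^L_{\beta^m_p}(\theta^L_s, \theta^L_t)$, where $\theta^L_s, \theta^L_t$ are the canonical composition operations over the source and target of $\beta^m_p$, built inductively from $\eta^L$ and $\delta^L$; applying $v$ (which preserves $\eta$, operadic multiplication, and $\delta$) yields the analogous cell $\delta^B_{\beta^m_p}(\theta^B_s, \theta^B_t)$ in $B$, and one then invokes a compatibility between $\delta^B$ and $\sigma^B$ built into the construction of $\delta^B$ via the section $\hat{b}$ of Lemma \ref{lem:khat}.

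The hard part will be this last step: verifying the identity $\delta^B_{\beta^m_p}(\theta^B_s, \theta^B_t) = \sigma^B(\beta^m_p)$ inside $B$. The section $\hat{b}$ is constructed so that $\hat{b}(\beta^m_p) = \sigma^B(\beta^m_p)$, but the iterative definition of $\delta^B_\pi(a, b)$ in Theorem \ref{thm:OCStoOUC} composes $\hat{b}(\pi)$ with ambient contraction cells $\gamma^B_{\id_\bullet}$ inserted to adjust source and target, and these do not visibly collapse to identities when applied to the canonical pair $(\theta^B_s, \theta^B_t)$. One must either argue that these adjustment cells are absorbed by the operad axioms of $B$ (for instance, by showing that their composite with $\hat{b}(\pi)$ coincides with $\sigma^B(\beta^m_p)$ via the operad unit and multiplication identities), or revisit the construction of $\delta^B$ so that the induced canonical system of compositions agrees with $\sigma^B$ on the nose. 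Once this verification is complete, $v \circ u$ lies in $\OCS$, initiality of $B$ in $\OCS$ forces $v \circ u = \id_B$, and the proposition follows.
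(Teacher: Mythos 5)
Your strategy is the paper's strategy: reduce everything to the equality $vu = \id_B$ of operad maps, and obtain that equality by exhibiting $vu$ as an endomorphism of the initial object of $\OCS$. The paper's actual proof, however, is three lines long: it applies the functor $\OUC \rightarrow \OCS$ of Theorem~\ref{thm:OUCtoOCS} to $v$, asserts that $v$ is thereby a map in $\OCS$, and concludes $vu = \id_B$ from initiality of $B$. None of the verification you attempt appears there.

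The point at which you stall is a genuine one, and your proposal does not close it. For $vu$ to be an endomorphism of $(B, \gamma^B, \sigma^B)$ in $\OCS$ --- rather than merely a map into $B$ equipped with the contraction and system of compositions canonically induced by its chosen unbiased contraction $\delta^B$ --- one needs those induced structures to coincide with the original ones. Your argument for the contraction half is fine: $\delta^B$ is defined in the proof of Theorem~\ref{thm:OCStoOUC} to restrict to $\gamma^B$ on identity pasting diagrams, and $u$, $v$ preserve the respective structures. For the system of compositions, the obstruction you isolate is exactly where the difficulty lies: $vu(\sigma^B(\beta^m_p)) = v(\sigma^L(\beta^m_p))$ is the unbiased contraction cell $\delta^B_{\beta^m_p}$ applied to the canonical pair, and by the construction of $\delta^B$ this is $\hat{b}(\beta^m_p)$ pre- and post-composed with contraction cells of the form $\gamma^B_{\id_{\alpha}}(x,x)$, i.e.\ operadic identities. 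Since binary composition in $B$ is not strictly unital, there is no visible reason for this composite to equal $\sigma^B(\beta^m_p)$ on the nose when $n \geq 2$: the two are merely parallel operations with the same arity, hence related by a contraction cell one dimension up rather than equal. So either this identity must be verified, or, as you suggest, the unbiased contraction on $B$ must be chosen so that it holds by construction, before initiality can be invoked. Be aware that the paper's own proof is silent on this point; it does not supply the missing verification, it simply does not raise the question.
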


  \begin{proof}
    Recall that we have a functor $\OUC \rightarrow \OCS$.  Applying this functor to $v$ gives that $v$ is a map in $\OCS$, so the composite
      \[
        \xy
          % POINTS
          (-16, 14)*+{B}="Bl";
          (0, 14)*+{L}="L";
          (16, 14)*+{B}="Br";
          (0, 0)*+{T1}="T";
          % ARROWS
          {\ar^{u} "Bl" ; "L"};
          {\ar^{v} "L" ; "Br"};
          {\ar_-{b} "Bl" ; "T"};
          {\ar^-{l} "L" ; "T"};
          {\ar^-{b} "Br" ; "T"};
        \endxy
      \]
    is the identity $\id_B$, since $B$ is initial in $\OCS$.  Thus $u_*v_*$ is the functor induced by $vu = \id_B$, so $u_*v_* = \id_{B\Alg}$, as required.
  \end{proof}

  We now consider the composite
    \[
      \xy
        % POINTS
        (0, 0)*+{L\Alg}="0";
        (20, 0)*+{B\Alg}="1";
        (40, 0)*+{L\Alg.}="2";
        % ARROWS
        {\ar^{u_*} "0" ; "1"};
        {\ar^{v_*} "1" ; "2"};
      \endxy
    \]
  Note that $v_* u_*$ does not change the underlying $n$-globular set of an $L$-algebra, it only changes the algebra structure.  We describe a small example which illustrates the way in which the new algebra structure differs from the original one.  In Section~\ref{subsect:v*u*} we investigate the relationship between an $L$-algebra and its image under the functor $v_* u_*$ more fully, using this example to motivate a definition of weak map of $L$-algebras that we use in the general case.  Let $n \geq 2$ and let $A$ denote the $n$-globular set consisting of three composable $1$-cells:
    \[
      \xy
        % POINTS
        (0, 0)*+{\bullet}="0";
        (16, 0)*+{\bullet}="1";
        (32, 0)*+{\bullet}="2";
        (48, 0)*+{\bullet}="3";
        % ARROWS
        {\ar^{f} "0" ; "1"};
        {\ar^{g} "1" ; "2"};
        {\ar^{h} "2" ; "3"};
      \endxy
    \]
  We consider the free $L$-algebra on $A$, i.e.
    \[
      \xy
        % POINTS
        (0, 0)*+{L^2 A}="LLA";
        (0, -16)*+{LA.}="LA";
        % ARROWS
        {\ar^{\mu^L_A} "LLA" ; "LA"};
      \endxy
    \]
  This has:
    \begin{itemize}
      \item $0$-cells: the same as those of $A$;
      \item $1$-cells:
        \begin{itemize}
          \item generating cells $f$, $g$, $h$,
          \item binary composites $g \comp f$, $h \comp g$, $h \comp (g \comp f)$, $(h \comp g) \comp f$,
          \item a ternary composite $h \comp g \comp f$,
          \item identities and composites involving identities;
        \end{itemize}
      \item $2$-cells: for every pair of parallel $1$-cells $a$, $b \in LA_1$, a constraint cell which we write as
        \[
          [a, b] \colon a \Longrightarrow b.
        \]
        In particular, this includes constraint cells mediating between different composites of the same cells, e.g.
          \[
            [h \comp g \comp f , (h \comp g) \comp f],
          \]
          \[
            [(h \comp g) \comp f, h \comp (g \comp f)],
          \]
        etc.  We also have freely generated composites of these;
      \item $m$-cells for $m \geq 3$: constraint cells, and composites of constraint cells.
    \end{itemize}

    Applying $u_*v_*$ to this gives the $L$-algebra
    \[
      \xy
        % POINTS
        (0, 0)*+{L^2 A}="LLAt";
        (0, -10)*+{BLA}="BLA";
        (0, -20)*+{L^2 A}="LLA";
        (0, -30)*+{LA,}="LA";
        % ARROWS
        {\ar^{v_{LA}} "LLAt" ; "BLA"};
        {\ar^{u_{LA}} "BLA" ; "LLA"};
        {\ar^{\mu^L_A} "LLA" ; "LA"};
      \endxy
    \]
  which has the same underlying $n$-globular set as the free $L$-algebra on $A$, but has a different composition structure.  Write $\newcomp$ for the new composition operation on $1$-cells, which is defined as follows:
    \begin{itemize}
      \item binary composition remains the same, so we have
        \[
          g \newcomp f = g \comp f, \; h \newcomp g = h \comp g, \; h \newcomp (g \newcomp f) = h \comp (g \comp f),
        \]
      etc.;
      \item ternary composition is given by bracketing on the left, i.e.
        \[
          h \newcomp g \newcomp f = (h \comp g) \comp f.
        \]
    \end{itemize}

  Consider the diagram
    \[
      \xy
        % POINTS
        (0, 0)*+{L^2 A}="LLAt";
        (20, 0)*+{L^2 A}="LLAr";
        (0, -10)*+{BLA}="BLA";
        (0, -20)*+{L^2 A}="LLA";
        (0, -30)*+{LA}="LA";
        (20, -30)*+{LA}="LAr";
        % ARROWS
        {\ar^-{L\id_{LA}} "LLAt" ; "LLAr"};
        {\ar_{v_{LA}} "LLAt" ; "BLA"};
        {\ar^{\mu^L_A} "LLAr" ; "LAr"};
        {\ar_{u_{LA}} "BLA" ; "LLA"};
        {\ar_{\mu^L_A} "LLA" ; "LA"};
        {\ar_-{\id_{LA}} "LA" ; "LAr"};
      \endxy
    \]
  in $\nGSet$.  If this diagram commuted it would be a map of $L$-algebras, and since its underlying map of $n$-globular sets is an identity, this would show that the free $L$-algebra on $A$ is isomorphic to its image under $v_*u_*$.  In fact this diagram does not commute; although it does commute on the underlying $B$-algebra structure, i.e. it commutes on generating cells, binary composites, and constraint cells mediating between these, it does not commute on cells that only exist in the $L$-algebra structure, such as ternary composites.  Consider the freely generated ternary composite
    \[
      h \comp g \comp f \in L^2 A.
    \]
  We have
    \begin{itemize}
      \item $\theta \comp L\id_{LA} (h \comp g \comp f) = \theta(h \comp g \comp f) = h \comp g \comp f$;
      \item $\id_{LA} \comp \theta \comp u_{LA} \comp v_{LA}(h \comp g \comp f) = h \newcomp g \newcomp f = (h \comp g) \comp f$,
    \end{itemize}
  and
    \[
      h \comp g \comp f \neq (h \comp g) \comp f,
    \]
  so the diagram does not commute.  However, there is a constraint cell mediating between these two $1$-cells:
    \[
      [h \comp g \comp f, (h \comp g) \comp f] \colon h \comp g \comp f \Rightarrow (h \comp g) \comp f.
    \]
  Similarly, for any other cell in $L^2 A$ that is not part of the underlying $B$-algebra structure (such as non-binary composites of $1$-cells involving identities, and non-binary composites at higher dimensions) we also have a constraint cell mediating between its images under the maps $\theta \comp L\id_{LA}$ and $\id_{LA} \comp \theta \comp u_{LA} \comp v_{LA}$.  Thus, we can think of the diagram as ``commuting up to a constraint cell''.  By the definition of constraint cells as those induced by the contraction $L$, combined with the fact that all diagrams of constraint $n$-cells commute in a free $L$-algebra (Theorem~\ref{lem:diaginfree}), these constraint cells are equivalences in the $L$-algebra, and any diagram of them commutes up to a constraint cell at the dimension above, with strict commutativity for diagrams of constraint $n$-cells.  Thus these constraint cells are ``well-behaved enough'' to act as the mediating cells in a weak map; any commutativity conditions we would need to check are automatically satisfied by coherence for $L$-algebras.

  \subsection{Left adjoint to $u_*$}  \label{subsect:ladjtou}

  We now construct a functor
    \[
      F \colon B\Alg \longrightarrow L\Alg,
    \]
  and prove that this is left adjoint to the functor $u_*$.  Recall that $u_*$ is the functor induced by the unique map of operads with contractions and systems of compositions
    \[
      u \colon B \longrightarrow L
    \]
  induced by the universal property of $B$, the initial object in $\OCS$.  We can think of $u_*$ as a forgetful functor that sends an $L$-algebra to its underlying $B$-algebra by forgetting its unbiased composition structure, and remembering only the binary composition structure and the necessary constraint cells.  The left adjoint $F$ takes a $B$-algebra and freely adds an unbiased composition structure, along with all the required constraint cells to make an $L$-algebra, but retains the original binary composition structure (note that new binary composites are not added freely).

  It is a result of Blackwell--Kelly--Power \cite[Theorem 5.12]{BKP} that any functor induced by a map of monads has a left adjoint (their result is for $2$-monads, but can be applied to monads by considering them as a special case of $2$-monads).  Consequently, one may ask why the adjunction
    \[
          \xy
            % POINTS
            (0, 0)*+{B\Alg}="BAlg";
            (20, 0)*+{L\Alg}="LAlg";
            % ARROWS
            {\ar@<1ex>_-*!/u1pt/{\labelstyle \bot}^-F "BAlg" ; "LAlg"};
            {\ar@<1ex>^-{u_*} "LAlg" ; "BAlg"};
          \endxy
    \]
  should be considered significant, and, in particular, why it is more significant than the adjunction in which $v_*$ is the right adjoint.  There are two reasons for this.  First, $u \colon B \rightarrow L$ is canonical in the sense that it is the only such map of monads that preserves the contraction and system of compositions on $B$.  In contrast, $v \colon L \rightarrow B$ is not canonical; there is no canonical way of equipping $B$ with an unbiased contraction (Theorem~\ref{thm:OCStoOUC}), so $v$ depends on the choices we made when doing so.  Second, this adjunction formalises the idea that the key difference between $B$-algebras and $L$-algebras is that $B$-algebras have binary-biased composition whereas $L$-algebras have unbiased composition, and describes how to obtain an $L$-algebra from a $B$-algebra by adding unbiased composites, as well as the necessary constraint cells, freely.

  The construction of the left adjoint described in this section is valid in greater generality than just this case;  we can replace $\nGSet$ with any cocomplete category, $L$ with any finitary monad, $B$ with any other monad on the same category, and $u \colon B \rightarrow L$ with any map of monads.  We first explain the construction with reference to the specific case of a left adjoint to $u_*$, then state the construction in more generality.

  Note that the left adjoint we construct is not induced by a map of monads;  a functor $B\Alg \rightarrow L\Alg$
  induced by a map of monads $L \Rightarrow B$ would leave the underlying $n$-globular set of a $B$-algebra unchanged, but the left adjoint to $u_*$ freely adds unbiased composites (and various contraction cells) to obtain an $L$-algebra structure, rather than using cells already present in the original $B$-algebra.

  Let
      \[
        \xy
          % POINTS
          (0, 0)*+{BX}="BX";
          (0, -16)*+{X,}="X";
          % ARROWS
          {\ar^-{\theta} "BX" ; "X"};
        \endxy
      \]
  be a $B$-algebra; we will now construct an $n$-globular set $\bar{X}$, which will be the underlying $n$-globular set of the $L$-algebra obtained by applying $F$ to the $B$-algebra above.  First, we apply $L$ to $X$, which freely adds an $L$-algebra structure, while ignoring the existing $B$-algebra structure.  This free $L$-algebra structure has a free $B$-algebra structure inside it, which is picked out by the map
    \[
      u_X \colon BX \longrightarrow LX.
    \]
  We identify this free $B$-algebra structure with the original $B$-algebra structure on $X$ by taking the following pushout:
      \[
        \xy
          % POINTS
          (0, 0)*+{BX}="BX";
          (16, 0)*+{LX}="LX";
          (0, -16)*+{X}="X";
          (16, -16)*+{X^{(1)}.}="X1";
          % ARROWS
          {\ar^{u_A} "BX" ; "LX"};
          {\ar_-{\theta} "BX" ; "X"};
          {\ar^{\phi^{(1)}} "LX" ; "X1"};
          {\ar_-{x^{(1)}} "X" ; "X1"};
          % PUSHOUT STUFF
          (10,-14)*{}; (10,-10)*{} **\dir{-};
          (14,-10)*{}; (10,-10)*{} **\dir{-};
        \endxy
      \]
  Taking this pushout identifies any cell in the free $B$-algebra structure inside $LX$ with the corresponding cell in the original $B$-algebra on $X$.  So, for example, any free binary composite in $LX$ is identified with the binary composite of the same cells, as evaluated by $\theta$, in $X$.

  However, this is not the end of the construction for two reasons:  first, in principle the act of identifying cells causes more cells to share common boundaries, thus making more cells composable; second, taking this pushout does nothing to cells in $LX$ that involve both the $B$-algebra and non-$B$-algebra structure.  Such cells include non-binary composites of binary composites; for example, suppose we have a string of four composable $1$-cells
    \[
      \xy
        % POINTS
        (0, 0)*+{\bullet}="0";
        (16, 0)*+{\bullet}="1";
        (32, 0)*+{\bullet}="2";
        (48, 0)*+{\bullet}="3";
        (64, 0)*+{\bullet}="4";
        % ARROWS
        {\ar^a "0" ; "1"};
        {\ar^b "1" ; "2"};
        {\ar^c "2" ; "3"};
        {\ar^d "3" ; "4"};
      \endxy
    \]
  in $X$.  In $X^{(1)}$, we have distinct cells
    \[
      d \comp c \comp (b \comp a) \neq d \comp c \comp \theta(b \comp a),
    \]
  but in the $L$-algebra we are constructing we want these cells to be equal.

  To rectify these problems we apply $L$ to $X^{(1)}$, thus freely adding composites of the newly composable cells, then identify the free $L$-algebra structure on $LX^{(1)}$ with the partial $L$-algebra structure on $X^{(1)}$ given by $\phi^{(1)} \colon LX \rightarrow X^{(1)}$ by taking the pushout
      \[
        \xy
          % POINTS
          (0, 0)*+{LX}="LX";
          (0, -16)*+{X^{(1)}}="X1";
          (0, 16)*+{L^2X}="LLX";
          (20, 16)*+{LX^{(1)}}="LX1";
          (20, -16)*+{X^{(2)}.}="X2";
          % ARROWS
          {\ar_{\phi^{(1)}} "LX" ; "X1"};
          {\ar_{\mu^L_X} "LLX" ; "LX"};
          {\ar^-{L\phi^{(1)}} "LLX" ; "LX1"};
          {\ar^{\phi^{(2)}} "LX1" ; "X2"};
          {\ar_-{x^{(2)}} "X1" ; "X2"};
          % PUSHOUT STUFF
          (14,-14)*{}; (14,-10)*{} **\dir{-};
          (18,-10)*{}; (14,-10)*{} **\dir{-};
        \endxy
      \]
  Once again, the act of identifying cells causes more cells to become composable.  Also, although in $X^{(1)}$ we now have the desired equalities between non-binary composites involving binary composites such as
    \[
      d \comp c \comp (b \comp a) = d \comp c \comp \theta(b \comp a),
    \]
  this is not true for composites whose binary parts appear at greater ``depths'', such as non-binary composites of non-binary composites of binary composites.  We thus must repeat the procedure above indefinitely to obtain the following sequence of pushouts in $\nGSet$:
      \[
        \xy
          % POINTS
          (0, 0)*+{BX}="BX";
          (20, 0)*+{LX}="LX";
          (0, -16)*+{X = X^{(0)}}="X";
          (20, -16)*+{X^{(1)}}="X1";
          (20, 16)*+{L^2X}="LLX";
          (40, 16)*+{LX^{(1)}}="LX1";
          (40, -16)*+{X^{(2)}}="X2";
          (40, 32)*+{L^2X^{(1)}}="LLX1";
          (60, 32)*+{LX^{(2)}}="LX2";
          (60, -16)*+{X^{(3)}}="X3";
          (70, 32)*+{\dotsb};
          (70, -16)*+{\dotsb};
          % ARROWS
          {\ar^{u_A} "BX" ; "LX"};
          {\ar_-{\phi^{(0)} = \theta} "BX" ; "X"};
          {\ar^{\phi^{(1)}} "LX" ; "X1"};
          {\ar_-{x^{(1)}} "X" ; "X1"};
          {\ar_{\mu^L_X} "LLX" ; "LX"};
          {\ar^-{L\phi^{(1)}} "LLX" ; "LX1"};
          {\ar^{\phi^{(2)}} "LX1" ; "X2"};
          {\ar_-{x^{(2)}} "X1" ; "X2"};
          {\ar_{\mu^L_{X^{(1)}}} "LLX1" ; "LX1"};
          {\ar^-{L\phi^{(2)}} "LLX1" ; "LX2"};
          {\ar^{\phi^{(3)}} "LX2" ; "X3"};
          {\ar_-{x^{(3)}} "X2" ; "X3"};
          % PUSHOUT STUFF
          (14,-14)*{}; (14,-10)*{} **\dir{-};
          (18,-10)*{}; (14,-10)*{} **\dir{-};
          (34,-14)*{}; (34,-10)*{} **\dir{-};
          (38,-10)*{}; (34,-10)*{} **\dir{-};
          (54,-14)*{}; (54,-10)*{} **\dir{-};
          (58,-10)*{}; (54,-10)*{} **\dir{-};
        \endxy
      \]
  The bottom row of this diagram is a sequence of $n$-globular sets
      \[
        \{ X^{(i)} \}_{i \geq 0}.
      \]
  We define $\bar{X}$ to be given by
      \[
        \bar{X} := \colim_{i \geq 0} X^{(i)},
      \]

  We now describe the construction in general.  Throughout the rest of this section, let $\mathcal{C}$ denote a cocomplete category, let $R$ and $S$ be monads on $\mathcal{C}$ with $S$ finitary (i.e. the functor part of $S$ preserves filtered colimits), and let $p \colon R \rightarrow S$ be a map of monads.  The map $p$ induces a functor
    \[
      p_* \colon S\Alg \longrightarrow R\Alg,
    \]
  and we will construct a left adjoint $F$ to $p_*$.  Let
      \[
        \xy
          % POINTS
          (0, 0)*+{RX}="RX";
          (0, -16)*+{X,}="X";
          % ARROWS
          {\ar^-{\theta} "RX" ; "X"};
        \endxy
      \]
  be an $R$-algebra.  We define a sequence
      \[
        \{ X^{(i)} \}_{i \geq 0}.
      \]
  of objects in $\mathcal{C}$ by the following sequence of pushouts in $\mathcal{C}$:
      \[
        \xy
          % POINTS
          (0, 0)*+{RX}="RX";
          (20, 0)*+{SX}="SX";
          (0, -16)*+{X = X^{(0)}}="X";
          (20, -16)*+{X^{(1)}}="X1";
          (20, 16)*+{S^2X}="SSX";
          (40, 16)*+{SX^{(1)}}="SX1";
          (40, -16)*+{X^{(2)}}="X2";
          (40, 32)*+{S^2X^{(1)}}="SSX1";
          (60, 32)*+{SX^{(2)}}="SX2";
          (60, -16)*+{X^{(3)}}="X3";
          (70, 32)*+{\dotsb};
          (70, -16)*+{\dotsb};
          % ARROWS
          {\ar^{u_A} "RX" ; "SX"};
          {\ar_-{\phi^{(0)} = \theta} "RX" ; "X"};
          {\ar^{\phi^{(1)}} "SX" ; "X1"};
          {\ar_-{x^{(1)}} "X" ; "X1"};
          {\ar_{\mu^S_X} "SSX" ; "SX"};
          {\ar^-{S\phi^{(1)}} "SSX" ; "SX1"};
          {\ar^{\phi^{(2)}} "SX1" ; "X2"};
          {\ar_-{x^{(2)}} "X1" ; "X2"};
          {\ar_{\mu^S_{X^{(1)}}} "SSX1" ; "SX1"};
          {\ar^-{S\phi^{(2)}} "SSX1" ; "SX2"};
          {\ar^{\phi^{(3)}} "SX2" ; "X3"};
          {\ar_-{x^{(3)}} "X2" ; "X3"};
          % PUSHOUT STUFF
          (14,-14)*{}; (14,-10)*{} **\dir{-};
          (18,-10)*{}; (14,-10)*{} **\dir{-};
          (34,-14)*{}; (34,-10)*{} **\dir{-};
          (38,-10)*{}; (34,-10)*{} **\dir{-};
          (54,-14)*{}; (54,-10)*{} **\dir{-};
          (58,-10)*{}; (54,-10)*{} **\dir{-};
        \endxy
      \]
  We then define an object $\bar{X}$ of $\mathcal{C}$ by
      \[
        \bar{X} := \colim_{i \geq 0} X^{(i)}.
      \]
  This will be the underlying object of $\mathcal{C}$ of the $S$-algebra obtained by applying the functor $F$ to the $R$-algebra $\theta \colon RX \rightarrow X$.

  We now equip $\bar{X}$ with an $S$-algebra action
    \[
      \phi \colon S\bar{X} \longrightarrow \bar{X}.
    \]
  Since $S$ is finitary, we can write $S\bar{X}$ as
      \[
        S\bar{X} = \colim_{i \geq 0} SX^{(i)}.
      \]
  We wish to use the universal property of this colimit to define the $S$-algebra action $\phi$.  To do so, we now describe the cocone that induces $\phi$, and prove that it commutes.

  \begin{lemma}
    There is a cocone under the diagram
      \[
        \{ SX^{(i)} \}_{i \geq 0}
      \]
    with vertex $\bar{X}$, given by
      \[
        \xy
          % POINTS
          (0, 0)*+{SX^{(0)}}="SX0";
          (20, 0)*+{SX^{(1)}}="SX1";
          (40, 0)*+{SX^{(2)}}="SX2";
          (60, 0)*+{\dotsb}="dots";
          (0, -16)*+{X^{(1)}}="X1";
          (20, -16)*+{X^{(2)}}="X2";
          (40, -16)*+{X^{(3)}}="X3";
          (30, -32)*+{\bar{X}}="barX";
          % ARROWS
          {\ar^-{Sx^{(1)}} "SX0" ; "SX1"};
          {\ar_{\phi^{(1)}} "SX0" ; "X1"};
          {\ar^-{Sx^{(2)}} "SX1" ; "SX2"};
          {\ar_{\phi^{(2)}} "SX1" ; "X2"};
          {\ar^-{Sx^{(3)}} "SX2" ; "dots"};
          {\ar_{\phi^{(3)}} "SX2" ; "X3"};
          {\ar@/_1pc/_-{c^{(1)}} "X1" ; "barX"};
          {\ar_-{c^{(2)}} "X2" ; "barX"};
          {\ar^-{c^{(3)}} "X3" ; "barX"};
        \endxy
      \]
  \end{lemma}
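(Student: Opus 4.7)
The plan is to verify that, for each $i \geq 0$, the leg identity
\[
  c^{(i+2)} \phi^{(i+2)} \circ Sx^{(i+1)} = c^{(i+1)} \phi^{(i+1)}
\]
holds. Because the $c^{(j)}$ are the coprojections into $\bar{X} = \colim_{j \geq 0} X^{(j)}$, we have $c^{(i+1)} = c^{(i+2)} x^{(i+2)}$; hence it suffices to prove the stronger identity
\[
  \phi^{(i+2)} \circ Sx^{(i+1)} = x^{(i+2)} \circ \phi^{(i+1)}
\]
in $\mathcal{C}$. The strategy for this is to establish an auxiliary \emph{unit-type} identity $x^{(i+1)} = \phi^{(i+1)} \circ \eta^{S}_{X^{(i)}}$, apply $S$ to it to factor $Sx^{(i+1)}$ through $S\phi^{(i+1)}$, and then combine the pushout relation $\phi^{(i+2)} \circ S\phi^{(i+1)} = x^{(i+2)} \circ \phi^{(i+1)} \circ \mu^{S}_{X^{(i)}}$ with the monad unit law $\mu^{S} \circ S\eta^{S} = \id$ to collapse the composite into $x^{(i+2)} \circ \phi^{(i+1)}$.

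The core step is therefore to prove $x^{(i+1)} = \phi^{(i+1)} \circ \eta^{S}_{X^{(i)}}$ by induction on $i$. For the base case $i = 0$, I would use that $p \colon R \to S$ is a monad map, so $\eta^{S}_{X} = p_{X} \circ \eta^{R}_{X}$ (writing $u_X$ for the general case), combined with the commutativity $\phi^{(1)} \circ p_{X} = x^{(1)} \circ \theta$ of the base pushout and the $R$-algebra unit law $\theta \circ \eta^{R}_{X} = \id_{X}$. For the inductive step, I would appeal to the universal property of the pushout defining $X^{(i+1)}$: two maps out of $X^{(i+1)}$ agree iff they agree after precomposition with both $\phi^{(i+1)}$ and $x^{(i+1)}$; wait --- more precisely, two maps out of $X^{(i+1)}$ (equivalently two maps out of the target of the pushout whose domain is $X^{(i)}$) are determined by composing with $\phi^{(i)}$ and $x^{(i)}$ to the preceding stage. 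Concretely, to show $\phi^{(i+1)} \eta^{S}_{X^{(i)}} = x^{(i+1)}$ as maps $X^{(i)} \to X^{(i+1)}$, one verifies the two equations obtained by precomposing with the pushout legs $\phi^{(i)}$ and $x^{(i)}$ into $X^{(i)}$: the first uses naturality of $\eta^{S}$ at $\phi^{(i)}$, the pushout relation at stage $i+1$, and the unit law $\mu^{S} \circ \eta^{S}_{S(-)} = \id$; the second uses naturality of $\eta^{S}$ at $x^{(i)}$ together with the inductive hypothesis applied twice and the identity $\phi^{(i+1)} S x^{(i)} = x^{(i+1)} \phi^{(i)}$ (itself a consequence of the inductive hypothesis and the pushout relation).

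Putting the pieces together, once $x^{(i+1)} = \phi^{(i+1)} \eta^{S}_{X^{(i)}}$ is in hand, one computes
\[
  \phi^{(i+2)} \circ Sx^{(i+1)} \;=\; \phi^{(i+2)} \circ S\phi^{(i+1)} \circ S\eta^{S}_{X^{(i)}} \;=\; x^{(i+2)} \circ \phi^{(i+1)} \circ \mu^{S}_{X^{(i)}} \circ S\eta^{S}_{X^{(i)}} \;=\; x^{(i+2)} \circ \phi^{(i+1)},
\]
and the cocone condition follows by postcomposing with $c^{(i+2)}$ and applying $c^{(i+2)} x^{(i+2)} = c^{(i+1)}$.

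The main obstacle is the inductive identity $x^{(i+1)} = \phi^{(i+1)} \eta^{S}_{X^{(i)}}$: one must be careful in the inductive step to apply the pushout's universal property to the correct pair of precomposition legs (namely, the legs $\phi^{(i)} \mu^{S}_{X^{(i-1)}} \colon S^{2} X^{(i-1)} \to X^{(i)}$ via $\phi^{(i)}$ and the earlier stage's structure, and the identity-like leg $x^{(i)} \colon X^{(i-1)} \to X^{(i)}$), and to track the various uses of naturality and the two monad unit laws cleanly. Once that bookkeeping is handled, the remainder of the argument is a short diagram chase.
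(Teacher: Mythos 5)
Your proposal is correct, and its overall architecture matches the paper's: both reduce the cocone condition to the auxiliary identity $\phi^{(i+1)}\circ\eta^S_{X^{(i)}} = x^{(i+1)}$, prove the base case from the pushout square defining $X^{(1)}$ together with the $R$-algebra unit law and the unit axiom for the monad map, and then assemble the leg identity from the pushout relation and the monad unit law $\mu^S\circ S\eta^S = \mathrm{id}$. Where you diverge is in the inductive step. The paper precomposes only with $\phi^{(i)}$, obtains $\phi^{(i+1)}\eta^S_{X^{(i)}}\phi^{(i)} = x^{(i+1)}\phi^{(i)}$ from naturality of $\eta^S$, the pushout square, and the unit law, and then \emph{cancels} $\phi^{(i)}$; this requires a separate induction showing each $\phi^{(i)}$ is an epimorphism (using that $\theta$ and $\mu^S$ are split epis and that pushouts of epimorphisms are epimorphisms). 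You instead invoke the uniqueness clause of the universal property of the pushout defining $X^{(i)}$: since $\phi^{(i)}$ and $x^{(i)}$ are its coprojections, two maps out of $X^{(i)}$ agree once they agree after precomposition with both, so you check the second equation $\phi^{(i+1)}\eta^S_{X^{(i)}}x^{(i)} = x^{(i+1)}x^{(i)}$ as well (which follows from naturality, the inductive hypothesis, and the identity $\phi^{(i+1)}Sx^{(i)} = x^{(i+1)}\phi^{(i)}$, exactly as you indicate). Your route buys you independence from the epimorphism lemma at the cost of one extra precomposition check; the paper's route buys a shorter diagram chase at the cost of establishing and invoking that $\phi^{(i)}$ is epic. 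Both are sound.
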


  \begin{proof}
    We must show that, for each $i \geq 0$, the diagram
      \[
        \xy
          % POINTS
          (20, 0)*+{SX^{(i)}}="SX1";
          (44, 0)*+{SX^{(i + 1)}}="SX2";
          (20, -16)*+{X^{(i + 1)}}="X2";
          (44, -16)*+{X^{(i + 2)}}="X3";
          (32, -32)*+{\bar{X}}="barX";
          % ARROWS
          {\ar^-{Sx^{(i + 1)}} "SX1" ; "SX2"};
          {\ar_{\phi^{(i + 1)}} "SX1" ; "X2"};
          {\ar^{\phi^{(i + 2)}} "SX2" ; "X3"};
          {\ar_-{c^{(i + 1)}} "X2" ; "barX"};
          {\ar^-{c^{(i + 2)}} "X3" ; "barX"};
        \endxy
      \]
    commutes.  We can write this diagram as
      \[
        \xy
          % POINTS
          (0, 0)*+{SX^{(i)}}="SXit";
          (0, -16)*+{S^2X^{(i)}}="SSXi";
          (24, -16)*+{SX^{(i + 1)}}="SXi1";
          (0, -32)*+{SX^{(i)}}="SXi";
          (0, -48)*+{X^{(i + 1)}}="Xi1";
          (24, -48)*+{X^{(i + 2)}}="Xi2";
          (24, -64)*+{\bar{X}.}="barX";
          % ARROWS
          {\ar_{S\eta^S_{X^{(i)}}} "SXit" ; "SSXi"};
          {\ar^{Sx^{(i + 1)}} "SXit" ; "SXi1"};
          {\ar@/_4pc/_{\id_{SX^{(i)}}} "SXit" ; "SXi"};
          {\ar_{S\phi^{(i + 1)}} "SSXi" ; "SXi1"};
          {\ar_{\mu^S_{X^{(i)}}} "SSXi" ; "SXi"};
          {\ar^{\phi^{(i + 2)}} "SXi1" ; "Xi2"};
          {\ar_{\phi^{(i + 1)}} "SXi" ; "Xi1"};
          {\ar_{x^{(i + 2)}} "Xi1" ; "Xi2"};
          {\ar_{c^{(i + 1)}} "Xi1" ; "barX"};
          {\ar^{c^{(i + 2)}} "Xi2" ; "barX"};
        \endxy
      \]
    The rectangle commutes since it is the pushout square defining $X^{(i + 2)}$, the top-left triangle commutes by the unit axiom for the monad $S$, and the bottom triangle commutes by definition of $\bar{X}$; thus we need only check that the top-right triangle commutes.  We do so by showing that, for all $i \geq 0$, the diagram
     \[
       \xy
         % POINTS
         (0, 0)*+{X^{(i)}}="Xi";
         (0, -16)*+{SX^{(i)}}="SXi";
         (20, -16)*+{X^{(i + 1)}}="Xi1";
         % ARROWS
         {\ar_{\eta^S_{X^{(i)}}} "Xi" ; "SXi"};
         {\ar^{x^{(i + 1)}} "Xi" ; "Xi1"};
         {\ar_-{\phi^{(i + 1)}} "SXi" ; "Xi1"};
       \endxy
     \]
    commutes, then applying $S$ to this diagram.

    When $i = 0$, the diagram above can be written as
     \[
       \xy
         % POINTS
         (0, 16)*+{X}="Xt";
         (0, 0)*+{RX}="RX";
         (20, 0)*+{SX}="SX";
         (0, -16)*+{X}="X";
         (20, -16)*+{X^{(1)}.}="X1";
         % ARROWS
         {\ar_{\eta^R_X} "Xt" ; "RX"};
         {\ar@/_2pc/_{\id_X} "Xt" ; "X"};
         {\ar^{\eta^S_X} "Xt" ; "SX"};
         {\ar^{p_X} "RX" ; "SX"};
         {\ar_{\theta} "RX" ; "X"};
         {\ar^{\phi^{(1)}} "SX" ; "X1"};
         {\ar_{x^{(1)}} "X" ; "X1"};
       \endxy
     \]
    The square commutes since it is the pushout square defining $X^{(1)}$, the left-hand triangle commutes by the unit axiom for $\theta$, and the top-right triangle commutes by the unit axiom for the monad map $p$.  Thus this diagram commutes.

    Now let $i \geq 1$.  The diagram
      \[
        \xy
          % POINTS
          (0, 0)*+{SX^{(i - 1)}}="SXim1t";
          (20, 0)*+{X^{(i)}}="Xit";
          (0, -16)*+{S^2X^{(i - 1)}}="SSXim1";
          (20, -16)*+{SX^{(i)}}="SXi";
          (0, -32)*+{SX^{(i - 1)}}="SXim1";
          (0, -48)*+{X^{(i)}}="Xi";
          (20, -48)*+{X^{(i + 1)}}="Xi1";
          % ARROWS
          {\ar_{\eta^S_{SX^{(i)}}} "SXim1t" ; "SSXim1"};
          {\ar^-{\phi^{(i)}} "SXim1t" ; "Xit"};
          {\ar^{\eta^S_{X^{(i)}}} "Xit" ; "SXi"};
          {\ar@/_4pc/_{\id_{SX^{(i - 1)}}} "SXim1t" ; "SXim1"};
          {\ar_{S\phi^{(i)}} "SSXim1" ; "SXi"};
          {\ar_{\mu^S_{X^{(i - 1)}}} "SSXim1" ; "SXim1"};
          {\ar^{\phi^{(i + 1)}} "SXi" ; "Xi1"};
          {\ar_{\phi^{(i)}} "SXim1" ; "Xi"};
          {\ar_-{x^{(i + 1)}} "Xi" ; "Xi1"};
        \endxy
      \]
   commutes; the bottom rectangle commutes since it is the pushout square defining $X^{(i + 1)}$, the top square commutes since it is a naturality square for $\eta^S$, and the top-left triangle commutes by the unit axiom for the monad $S$. We wish to cancel the $\phi^{(i)}$'s in the diagram above, in order to obtain the desired triangle.  We can do this if $\phi^{(i)}$ is an epimorphism; we now show that this is true by induction over $i$.

   To show that this is true when $i = 1$, observe that $\eta^R_X$ is a section to $\theta$, so $\theta$ is epic; since the pushout of an epimorphism is also an epimorphism \cite[Proposition 2.5.3]{Bor94a}, we have that $\phi^{(1)}$ is epic.

   Now suppose $i > 1$ and that we have shown that $\phi^{(i - 1)}$ is epic.  By the unit axiom for the monad $S$, $\eta^S_{X^{(i - 2)}}$ is a section to $\mu^S_{X^{(i - 2)}}$, so $\mu^S_{X^{(i - 2)}}$ is epic.  Hence the composite
     \[
       \phi^{i - 1} \comp \mu^S_{X^{(i - 2)}}
     \]
   is epic; since the pushout of an epimorphism is also an epimorphism, we have that $\phi^{(i)}$ is epic.

   Hence, for each $i \geq 0$, the diagram
     \[
       \xy
         % POINTS
         (0, 0)*+{X^{(i)}}="Xi";
         (0, -16)*+{SX^{(i)}}="SXi";
         (20, -16)*+{X^{(i + 1)}}="Xi1";
         % ARROWS
         {\ar_{\eta^S_{X^{(i)}}} "Xi" ; "SXi"};
         {\ar^{x^{(i + 1)}} "Xi" ; "Xi1"};
         {\ar_-{\phi^{(i + 1)}} "SXi" ; "Xi1"};
       \endxy
     \]
   commutes, and thus the diagram
      \[
        \xy
          % POINTS
          (20, 0)*+{SX^{(i)}}="SX1";
          (44, 0)*+{SX^{(i + 1)}}="SX2";
          (20, -16)*+{X^{(i + 1)}}="X2";
          (44, -16)*+{X^{(i + 2)}}="X3";
          (32, -32)*+{\bar{X}}="barX";
          % ARROWS
          {\ar^-{Sx^{(i + 1)}} "SX1" ; "SX2"};
          {\ar_{\phi^{(i + 1)}} "SX1" ; "X2"};
          {\ar^{\phi^{(i + 2)}} "SX2" ; "X3"};
          {\ar_-{c^{(i + 1)}} "X2" ; "barX"};
          {\ar^-{c^{(i + 2)}} "X3" ; "barX"};
        \endxy
      \]
   commutes, as required.
  \end{proof}

   We now define $\phi \colon S\bar{X} \rightarrow \bar{X}$ to be the unique map induced by the universal property of $S\bar{X}$ such that, for all $i \geq 0$, the diagram
     \[
       \xy
         % POINTS
         (0, 0)*+{SX^{(i)}}="SXi";
         (20, 0)*+{S\bar{X}}="SbarX";
         (0, -16)*+{X^{(i + 1)}}="Xi1";
         (20, -16)*+{\bar{X}}="barX";
         % ARROWS
         {\ar^{Sc^{(i)}} "SXi" ; "SbarX"};
         {\ar_{\phi^{(i + 1)}} "SXi" ; "Xi1"};
         {\ar@{-->}^{\phi} "SbarX" ; "barX"};
         {\ar_{c^{(i + 1)}} "Xi1" ; "barX"};
       \endxy
     \]
   commutes.  To check that
      \[
        \xy
          % POINTS
          (0, 0)*+{S\bar{X}}="SbarX";
          (0, -16)*+{\bar{X}}="barX";
          % ARROWS
          {\ar^-{\phi} "SbarX" ; "barX"};
        \endxy
      \]
   is an $S$-algebra we must show that it satisfies the $S$-algebra axioms.

  \begin{lemma}
    The map $\phi \colon S\bar{X} \rightarrow \bar{X}$ satisfies the $S$-algebra axioms.
  \end{lemma}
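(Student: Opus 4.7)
The plan is to verify the two $S$-algebra axioms for $\phi$ by exploiting the fact that both $S\bar{X}$ and $S^2\bar{X}$ are filtered colimits (since $S$ is finitary), so that maps out of them are determined by their restrictions along the coprojections $Sc^{(i)} \colon SX^{(i)} \to S\bar{X}$ and $S^2c^{(i)} \colon S^2X^{(i)} \to S^2\bar{X}$.

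For the unit axiom $\phi \circ \eta^S_{\bar{X}} = \id_{\bar{X}}$, I would show that the two maps agree after precomposition with each $c^{(i)} \colon X^{(i)} \to \bar{X}$. By naturality of $\eta^S$ we have $\eta^S_{\bar{X}} \circ c^{(i)} = Sc^{(i)} \circ \eta^S_{X^{(i)}}$; the defining property of $\phi$ gives $\phi \circ Sc^{(i)} = c^{(i+1)} \circ \phi^{(i+1)}$; and the triangle $\phi^{(i+1)} \circ \eta^S_{X^{(i)}} = x^{(i+1)}$ established in the previous lemma, together with the cocone identity $c^{(i+1)} \circ x^{(i+1)} = c^{(i)}$, finishes the computation.

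For the associativity axiom $\phi \circ S\phi = \phi \circ \mu^S_{\bar{X}}$, I would precompose with each $S^2c^{(i)}$. On the right-hand side, naturality of $\mu^S$ gives $\mu^S_{\bar{X}} \circ S^2c^{(i)} = Sc^{(i)} \circ \mu^S_{X^{(i)}}$, and applying the defining property of $\phi$ yields $c^{(i+1)} \circ \phi^{(i+1)} \circ \mu^S_{X^{(i)}}$. On the left-hand side, two applications of the defining property of $\phi$ (noting that $S\phi \circ S^2c^{(i)} = S(\phi \circ Sc^{(i)}) = Sc^{(i+1)} \circ S\phi^{(i+1)}$) yield $c^{(i+2)} \circ \phi^{(i+2)} \circ S\phi^{(i+1)}$. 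The key step is then the commutativity of the pushout square defining $X^{(i+2)}$, which gives $\phi^{(i+2)} \circ S\phi^{(i+1)} = x^{(i+2)} \circ \phi^{(i+1)} \circ \mu^S_{X^{(i)}}$; combined with the cocone identity $c^{(i+2)} \circ x^{(i+2)} = c^{(i+1)}$, this reduces the left-hand side to the right-hand side.

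The main obstacle is conceptual rather than technical: one must be confident that $S\bar{X}$ and $S^2\bar{X}$ are genuinely the colimits of $\{SX^{(i)}\}$ and $\{S^2X^{(i)}\}$, so that the universal property can be invoked. This is precisely where the hypothesis that $S$ is finitary is used (applied twice for the associativity check, since filtered colimits are preserved by finitary functors and hence by $S^2$). Once this is in hand, everything else is a diagram chase that unfolds the pushout squares and the colimit cocone, and there are no further choices to make.
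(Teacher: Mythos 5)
Your proposal is correct and follows essentially the same route as the paper: both arguments test the two axioms against the coprojections of the filtered colimits $S\bar{X} = \colim_i SX^{(i)}$ and $S^2\bar{X} = \colim_i S^2X^{(i)}$ (using that $S$ is finitary), reduce the unit axiom to the triangle $\phi^{(i+1)} \comp \eta^S_{X^{(i)}} = x^{(i+1)}$ from the preceding lemma, and reduce associativity to the commutativity of the pushout square defining $X^{(i+2)}$ together with the cocone identities. The only difference is presentational — you precompose with coprojections where the paper phrases the same computation as an equality of cocones.
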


  \begin{proof}
   For the unit axiom, we must check that the diagram
      \[
        \xy
          % POINTS
          (-16, 0)*+{\bar{X}}="barXl";
          (0, 0)*+{S\bar{X}}="SbarX";
          (0, -16)*+{\bar{X}}="barX";
          % ARROWS
          {\ar^{\eta^S_{\bar{X}}} "barXl" ; "SbarX"};
          {\ar_{\id_{\bar{X}}} "barXl" ; "barX"};
          {\ar^-{\phi} "SbarX" ; "barX"};
        \endxy
      \]
   commutes.  Since $\bar{X}$ is defined as a colimit, we check this by comparing the cocones corresponding to the maps on either side of the diagram.  The cocone corresponding to $\phi \comp \eta^S_{\bar{X}}$ is given by, for each $i \geq 0$, the composite
     \[
       \xy
         % POINTS
         (0, 0)*+{X^{(i)}}="Xi";
         (20, 0)*+{SX^{(i)}}="SXi";
         (42, 0)*+{X^{(i + 1)}}="Xi1";
         (60, 0)*+{\bar{X}.}="barX";
         % ARROWS
         {\ar^-{\eta^S_{X^{(i)}}} "Xi" ; "SXi"};
         {\ar^-{\phi^{(i + 1)}} "SXi" ; "Xi1"};
         {\ar^-{c^{(i + 1)}} "Xi1" ; "barX"};
       \endxy
     \]
   The cocone corresponding to $\id_{\bar{X}}$ is the universal cocone given by the coprojections $c^{(i)}$.  The diagram
     \[
       \xy
         % POINTS
         (0, 0)*+{X^{(i)}}="Xi";
         (20, 0)*+{SX^{(i)}}="SXi";
         (20, -16)*+{X^{(i + 1)}}="Xi1";
         (20, -32)*+{\bar{X}.}="barX";
         % ARROWS
         {\ar^-{\eta^S_{X^{(i)}}} "Xi" ; "SXi"};
         {\ar^{x^{(i)}} "Xi" ; "Xi1"};
         {\ar_{c^{(i)}} "Xi" ; "barX"};
         {\ar^-{\phi^{(i + 1)}} "SXi" ; "Xi1"};
         {\ar^-{c^{(i + 1)}} "Xi1" ; "barX"};
       \endxy
     \]
   commutes, so these cocones are equal.  Thus the unit axiom is satisfied.

   For the associativity axiom, we must check that the diagram
     \[
       \xy
         % POINTS
         (0, 0)*+{S^2\bar{X}}="SSX";
         (16, 0)*+{S\bar{X}}="SXr";
         (0, -16)*+{S\bar{X}}="SXl";
         (16, -16)*+{\bar{X}}="X";
         % ARROWS
         {\ar^{S\phi} "SSX" ; "SXr"};
         {\ar_{\mu^S_{\bar{X}}} "SSX" ; "SXl"};
         {\ar^{\phi} "SXr" ; "X"};
         {\ar_{\phi} "SXl" ; "X"};
       \endxy
     \]
   commutes.  Since $S$ is finitary, we have
     \[
       S^2 X = \colim_{i \geq 0} S^2 X^{(i)}.
     \]
   Thus we can check that the diagram commutes by comparing the cocones corresponding to the maps on either side of the diagram.  The cocone corresponding to $\phi \comp S \phi$ is given by, for each $i \geq 0$, the composite
     \[
       \xy
         % POINTS
         (0, 0)*+{S^2 X^{(i)}}="SSXi";
         (22, 0)*+{S X^{(i + 1)}}="SXi1";
         (44, 0)*+{X^{(i + 2)}}="Xi2";
         (64, 0)*+{\bar{X}}="barX";
         % ARROWS
         {\ar^-{S\phi^{(i + 1)}} "SSXi" ; "SXi1"};
         {\ar^-{\phi^{(i + 2)}} "SXi1" ; "Xi2"};
         {\ar^-{c^{(i + 2)}} "Xi2" ; "barX"};
       \endxy
     \]
   The cocone corresponding to $\phi \comp \mu^S_{\bar{X}}$ is given by, for each $i \geq 0$, the composite
     \[
       \xy
         % POINTS
         (0, 0)*+{S^2 X^{(i)}}="SSXi";
         (22, 0)*+{S X^{(i)}}="SXi";
         (44, 0)*+{X^{(i + 1)}}="Xi1";
         (64, 0)*+{X^{(i + 2)}}="Xi2";
         (84, 0)*+{\bar{X}.}="barX";
         % ARROWS
         {\ar^-{\mu^S_{X^{(i)}}} "SSXi" ; "SXi"};
         {\ar^-{\phi^{(i + 1)}} "SXi" ; "Xi1"};
         {\ar^-{x^{(i + 2)}} "Xi1" ; "Xi2"};
         {\ar^-{c^{(i + 2)}} "Xi2" ; "barX"};
       \endxy
     \]
   From the definition of $\bar{X}$, for all $i \geq 0$, the diagram
      \[
        \xy
          % POINTS
          (0, 0)*+{S^2X^{(i)}}="SSXim1";
          (20, 0)*+{SX^{(i + 1)}}="SXi";
          (0, -16)*+{SX^{(i)}}="SXim1";
          (0, -32)*+{X^{(i + 1)}}="Xi";
          (20, -32)*+{X^{(i + 2)}}="Xi1";
          (20, -48)*+{\bar{X}}="barX";
          % ARROWS
          {\ar^-{S\phi^{(i + 1)}} "SSXim1" ; "SXi"};
          {\ar_{\mu^S_{X^{(i)}}} "SSXim1" ; "SXim1"};
          {\ar^{\phi^{(i + 2)}} "SXi" ; "Xi1"};
          {\ar_{\phi^{(i + 1)}} "SXim1" ; "Xi"};
          {\ar_-{x^{(i + 2)}} "Xi" ; "Xi1"};
          {\ar_{c^{(i + 1)}} "Xi" ; "barX"};
          {\ar^{c^{(i + 2)}} "Xi1" ; "barX"};
        \endxy
      \]
   commutes, so these cocones are equal.  Thus the associativity axiom is satisfied.

   Hence
      \[
        \xy
          % POINTS
          (0, 0)*+{S\bar{X}}="SbarX";
          (0, -16)*+{\bar{X}}="barX";
          % ARROWS
          {\ar^-{\phi} "SbarX" ; "barX"};
        \endxy
      \]
   is an $S$-algebra.
  \end{proof}

  This gives us the action of the left adjoint to $p_*$ on objects.  To prove that this gives a left adjoint, we use the following result of Mac Lane~\cite[Theorem~IV.1.2]{Mac98}, which allows us to avoid describing the action of the left adjoint on morphisms.

  \begin{lemma} \label{lem:adjuniarrow}
    Given a functor $U \colon \mathcal{D} \longrightarrow \mathcal{C}$, an adjunction
        \[
          \xy
            % POINTS
            (0, 0)*+{\mathcal{C}}="C";
            (16, 0)*+{\mathcal{D}}="D";
            % ARROWS
            {\ar@<1ex>^-{F}_-*!/u1pt/{\labelstyle \bot} "C" ; "D"};
            {\ar@<1ex>^-{U} "D" ; "C"};
          \endxy
       \]
    is completely determined by, for all objects $x$ in $\mathcal{C}$, an object $F_0(x)$ in $\mathcal{D}$ and a universal arrow $\eta_x \colon x \rightarrow UF_0(x)$ from $x$ to $U$.
  \end{lemma}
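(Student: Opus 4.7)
The plan is to construct the functor $F$ and the adjunction data from the object-level data $(F_0, \eta)$, using the universal property of each $\eta_x$ as the sole tool. This is a standard argument, so the proof is largely a matter of careful bookkeeping rather than overcoming a genuine conceptual obstacle; the interesting content is that the universal property forces every piece of the adjunction structure.

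First I would extend $F_0$ to a functor $F \colon \mathcal{C} \rightarrow \mathcal{D}$ as follows. Given a morphism $f \colon x \rightarrow y$ in $\mathcal{C}$, consider the morphism $\eta_y \circ f \colon x \rightarrow UF_0(y)$ in $\mathcal{C}$. By the universal property of the arrow $\eta_x \colon x \rightarrow UF_0(x)$, there is a unique morphism $\bar{f} \colon F_0(x) \rightarrow F_0(y)$ in $\mathcal{D}$ such that $U(\bar{f}) \circ \eta_x = \eta_y \circ f$; define $F(f) := \bar{f}$. Functoriality (preservation of identities and composition) is then immediate from the uniqueness clause in the universal property, since in each case the two candidate maps both satisfy the same defining equation through $\eta_x$. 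The very same defining equation $U(F(f)) \circ \eta_x = \eta_y \circ f$ is precisely the naturality square for $\eta$, so $\eta \colon 1_{\mathcal{C}} \Rightarrow UF$ is automatically a natural transformation.

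Next I would produce the adjunction bijection. For each $x \in \mathcal{C}$ and $d \in \mathcal{D}$, define
\[
  \Phi_{x, d} \colon \mathcal{D}(F(x), d) \longrightarrow \mathcal{C}(x, U(d)), \qquad g \longmapsto U(g) \circ \eta_x.
\]
The universal property of $\eta_x$ says precisely that for every morphism $h \colon x \rightarrow U(d)$ in $\mathcal{C}$ there is a unique $g \colon F(x) \rightarrow d$ in $\mathcal{D}$ with $U(g) \circ \eta_x = h$; this is exactly the statement that $\Phi_{x,d}$ is a bijection. Naturality of $\Phi$ in $d$ is straightforward from functoriality of $U$, and naturality in $x$ follows from the defining equation for $F$ on morphisms together with uniqueness.

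Finally, I would verify that the adjunction so obtained has $\eta$ as its unit, which fixes the data uniquely. The component of the unit at $x$ is $\Phi_{x, F(x)}(\id_{F(x)}) = U(\id_{F(x)}) \circ \eta_x = \eta_x$, as required. Any other adjunction $F' \dashv U$ with the same unit $\eta$ must satisfy, for each $f \colon x \rightarrow y$, that $U(F'(f)) \circ \eta_x = \eta_y \circ f$, and hence by uniqueness in the universal property $F'(f) = F(f)$; likewise the hom-set bijection of any such adjunction must agree with $\Phi$. The only step requiring any real care is checking that the defining equation for $F$ on morphisms genuinely forces both functoriality and naturality of $\eta$ simultaneously, but since both properties reduce in one line to uniqueness in the universal property of $\eta_x$, there is no substantive obstacle.
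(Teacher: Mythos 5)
Your proof is correct and complete. The paper does not prove this lemma at all — it cites it as Mac Lane's Theorem IV.1.2 — and your argument is exactly the standard one given there: defining $F$ on morphisms by the universal property, reading off functoriality, naturality of $\eta$, and the hom-set bijection from the uniqueness clause.
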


  As is suggested by the notation, here the assignment $F_0(x)$ gives the action of the left adjoint $F$ on objects, and the maps $\eta_x$ are the components of the unit of the adjunction.

  \begin{prop}
    There is an adjunction $F \ladj p_*$.
  \end{prop}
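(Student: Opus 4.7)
The plan is to apply Lemma~\ref{lem:adjuniarrow}: since $F$ has already been defined on objects as $F_0(X,\theta) = (\bar{X}, \phi)$, I need only exhibit, for each $R$-algebra $(X,\theta)$, a universal arrow from $(X,\theta)$ to $p_*$. I take this arrow to be the coprojection $\iota_X := c^{(0)} \colon X = X^{(0)} \to \bar{X}$ (equivalently, $c^{(1)} \comp x^{(1)}$, since $X^{(0)} \to X^{(1)}$ is the first map of the colimit sequence). First I would verify that $\iota_X$ is a map of $R$-algebras $(X,\theta) \to p_*(\bar{X},\phi)$. The $R$-action on $p_*(\bar{X},\phi)$ is $\phi \comp p_{\bar{X}}$, and the required equality $\iota_X \comp \theta = \phi \comp p_{\bar{X}} \comp R\iota_X$ reduces, via naturality of $p$ and the defining relation $\phi \comp Sc^{(0)} = c^{(1)} \comp \phi^{(1)}$, to commutativity of the initial pushout square defining $X^{(1)}$.

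For the universal property, suppose we are given an $S$-algebra $(Y,\psi)$ together with an $R$-algebra map $f \colon (X,\theta) \to p_*(Y,\psi)$. I would construct maps $\tilde{f}^{(i)} \colon X^{(i)} \to Y$ in $\mathcal{C}$ by induction on $i$, setting $\tilde{f}^{(0)} := f$. At the inductive step, the two composites $\psi \comp S\tilde{f}^{(i)} \colon SX^{(i)} \to Y$ and $\tilde{f}^{(i)} \comp \phi^{(i)} \colon SX^{(i-1)} \to X^{(i)} \to Y$ (the latter replaced, at $i=0$, by the $R$-algebra condition $f \comp \theta = \psi \comp p_Y \comp Rf = \psi \comp Sf \comp p_X$ combined with $p$'s naturality) agree on the span defining the pushout $X^{(i+1)}$. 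The universal property of that pushout then produces a unique $\tilde{f}^{(i+1)}$ extending $\tilde{f}^{(i)}$ along $x^{(i+1)}$. The $\tilde{f}^{(i)}$ form a cocone under the sequence $\{X^{(i)}\}_{i \geq 0}$, which induces a unique $\tilde{f} \colon \bar{X} \to Y$ with $\tilde{f} \comp c^{(i)} = \tilde{f}^{(i)}$; in particular $\tilde{f} \comp \iota_X = f$.

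The main step to check is that $\tilde{f}$ is actually a map of $S$-algebras, i.e.\ $\tilde{f} \comp \phi = \psi \comp S\tilde{f}$. This is the place I expect the bookkeeping to be most delicate, but it is not deep: since $S$ is finitary we have $S\bar{X} = \colim_{i \geq 0} SX^{(i)}$, so it suffices to check the equation after precomposing with each coprojection $Sc^{(i)} \colon SX^{(i)} \to S\bar{X}$. Using the defining relation $\phi \comp Sc^{(i)} = c^{(i+1)} \comp \phi^{(i+1)}$ for $\phi$ together with the inductive pushout equation $\tilde{f}^{(i+1)} \comp \phi^{(i+1)} = \psi \comp S\tilde{f}^{(i)}$, both sides of the $S$-algebra equation reduce to $\tilde{f}^{(i+1)} \comp \phi^{(i+1)}$, giving the result.

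Finally, uniqueness of $\tilde{f}$ follows by reversing the construction: any $S$-algebra map $g \colon (\bar{X},\phi) \to (Y,\psi)$ satisfying $g \comp \iota_X = f$ must agree with $\tilde{f}^{(i)}$ on $X^{(i)}$ for each $i$, by an induction using the pushout universal property at each stage together with the $S$-algebra condition on $g$ applied to cells in the image of $\phi^{(i+1)}$; the universal property of $\bar{X}$ as a colimit then forces $g = \tilde{f}$. Hence $\iota_X$ is a universal arrow, and Lemma~\ref{lem:adjuniarrow} yields the adjunction $F \ladj p_*$.
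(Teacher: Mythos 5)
Your proposal is correct and follows essentially the same route as the paper: both invoke Lemma~\ref{lem:adjuniarrow}, take the coprojection $c^{(0)}\colon X \to \bar{X}$ as the universal arrow, build the induced map by induction through the pushouts $X^{(i)}$, verify the $S$-algebra condition using that $S$ is finitary, and get uniqueness from the colimit. Your uniqueness step is spelled out slightly more carefully than the paper's, but the argument is the same.
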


  \begin{proof}
    As discussed above, we prove this using Lemma~\ref{lem:adjuniarrow}, thus allowing us to avoid constructing the action of $F$ on morphisms.  Let
      \[
        \xy
          % POINTS
          (0, 0)*+{RX}="RX";
          (0, -16)*+{X}="X";
          % ARROWS
          {\ar^-{\theta} "RX" ; "X"};
        \endxy
      \]
    be an $R$-algebra.  By the construction described earlier we have a corresponding $S$-algebra
      \[
        \xy
          % POINTS
          (0, 0)*+{S\bar{X}}="SbarX";
          (0, -16)*+{\bar{X}.}="barX";
          % ARROWS
          {\ar^-{\phi} "SbarX" ; "barX"};
        \endxy
      \]
    To show that this gives the action on objects of the left adjoint $F \colon R\Alg \rightarrow S\Alg$ to $p_*$, we require a map of $R$-algebras
      \[
        \eta_X \colon X \longrightarrow \bar{X}
      \]
    which is a universal arrow from $X$ to $p_*$.  This is given by the coprojection map $c^{(0)} \colon X \rightarrow \bar{X}$.  This is indeed a map of $R$-algebras, since the diagram
      \[
        \xy
          % POINTS
          (0, 0)*+{RX}="RX";
          (32, 0)*+{R\bar{X}}="RbarX";
          (16, -16)*+{SX}="SX";
          (32, -16)*+{S\bar{X}}="SbarX";
          (0, -32)*+{X}="X";
          (16, -32)*+{X^{(1)}}="X1";
          (32, -32)*+{\bar{X}}="barX";
          % ARROWS
          {\ar^{Rc^{(0)}} "RX" ; "RbarX"};
          {\ar^{p_X} "RX" ; "SX"};
          {\ar_{\theta} "RX" ; "X"};
          {\ar^{p_{\bar{X}}} "RbarX" ; "SbarX"};
          {\ar^{Sc^{(0)}} "SX" ; "SbarX"};
          {\ar_{\phi^{(1)}} "SX" ; "X1"};
          {\ar^{\phi} "SbarX" ; "barX"};
          {\ar_{x^{(1)}} "X" ; "X1"};
          {\ar_{c^{(1)}} "X1" ; "barX"};
          {\ar@/_2pc/_{c^{(0)}} "X" ; "barX"};
        \endxy
      \]
    commutes.  We now show universality.  Suppose we have an $S$-algebra
      \[
        \xy
          % POINTS
          (0, 0)*+{SY}="SY";
          (0, -16)*+{Y}="Y";
          % ARROWS
          {\ar^-{\psi} "SY" ; "Y"};
        \endxy
      \]
    and a map of $R$-algebras
      \[
        \xy
          % POINTS
          (0, 0)*+{RX}="RX";
          (16, 0)*+{RY}="RY";
          (16, -10)*+{SY}="SY";
          (0, -20)*+{X}="X";
          (16, -20)*+{Y.}="Y";
          % ARROWS
          {\ar^{Rf} "RX" ; "RY"};
          {\ar_-{\theta} "RX" ; "X"};
          {\ar^-{p_Y} "RY" ; "SY"};
          {\ar^-{\psi} "SY" ; "Y"};
          {\ar_{f} "X" ; "Y"};
        \endxy
      \]
    We seek a unique map of $S$-algebras
      \[
        \xy
          % POINTS
          (0, 0)*+{S\bar{X}}="SX";
          (16, 0)*+{SY}="SY";
          (0, -16)*+{\bar{X}}="X";
          (16, -16)*+{Y}="Y";
          % ARROWS
          {\ar^{S\bar{f}} "SX" ; "SY"};
          {\ar_-{\phi} "SX" ; "X"};
          {\ar^-{\psi} "SY" ; "Y"};
          {\ar_{\bar{f}} "X" ; "Y"};
        \endxy
      \]
    such that the diagram
      \[
        \xy
          % POINTS
          (0, 0)*+{X}="X";
          (16, 0)*+{\bar{X}}="barX";
          (16, -16)*+{Y}="Y";
          % ARROWS
          {\ar^-{c^{(0)}} "X" ; "barX"};
          {\ar_-{f} "X" ; "Y"};
          {\ar^-{\bar{f}} "barX" ; "Y"};
        \endxy
      \]
    commutes.  We define $\bar{f}$ by defining a cocone
    \[
      \bar{f}^{(i)} \colon X^{(i)} \longrightarrow \bar{Y}
    \]
  by induction over $i$.

  When $i = 0$, $X^{(i)} = X$, and we define $\bar{f}^{(i)} = \bar{f}^{(0)}$ to be given by $f \colon X \rightarrow Y$.

  When $i = 1$, $\bar{f}^{(i)} = \bar{f}^{(1)}$ is the unique map such that the diagram
          \[
            \xy
              % POINTS
              (0, 0)*+{RX}="RX";
              (16, 0)*+{SX}="SXr";
              (0, -16)*+{X}="X";
              (16, -16)*+{X^{(1)}}="X1";
              (32, -10)*+{SY}="SY";
              (32, -26)*+{Y}="Y";
              % ARROWS
              {\ar^{p_X} "RX" ; "SXr"};
              {\ar^{\phi^{(1)}} "SXr" ; "X1"};
              {\ar^{Sf} "SXr" ; "SY"};
              {\ar^{\psi} "SY" ; "Y"};
              {\ar_{\theta} "RX" ; "X"};
              {\ar_{x^{(1)}} "X" ; "X1"};
              {\ar@/_1.5pc/_{f} "X" ; "Y"};
              {\ar@{-->}^{\epsilon^{(1)}} "X1" ; "Y"};
              % PUSHOUT STUFF
              (10,-14)*{}; (10,-10)*{} **\dir{-};
              (14,-10)*{}; (10,-10)*{} **\dir{-};
            \endxy
          \]
  commutes.  To check that this is well-defined we must check that the outside of this diagram commutes; this is true, since
          \[
            \xy
              % POINTS
              (0, 0)*+{RX}="RX";
              (16, 0)*+{SX}="SXr";
              (16, -10)*+{RY}="RY";
              (0, -16)*+{X}="X";
              (32, -10)*+{SY}="SY";
              (32, -26)*+{Y}="Y";
              % ARROWS
              {\ar^{p_X} "RX" ; "SXr"};
              {\ar_-{Rf} "RX" ; "RY"};
              {\ar_{p_Y} "RY" ; "SY"};
              {\ar^{Sf} "SXr" ; "SY"};
              {\ar^{\psi} "SY" ; "Y"};
              {\ar_{\theta} "RX" ; "X"};
              {\ar@/_1.5pc/_{f} "X" ; "Y"};
            \endxy
          \]
  commutes.

  Now let $i > 1$ and suppose that we have defined $\bar{f}^{(i - 1)} \colon Y^{(i - 1)} \rightarrow \bar{Y}$.  We define $\bar{f}^{(i)}$ to be the unique map induced by the universal property of the pushout $X^{(i)}$ such that the diagram
          \[
            \xy
              % POINTS
              (0, 0)*+{S^2X^{(i - 2)}}="SSX";
              (24, 0)*+{SX^{(i - 1)}}="SXr";
              (0, -16)*+{SX^{(i - 2)}}="SXl";
              (40, -16)*+{SY}="SY";
              (0, -32)*+{X^{(i - 1)}}="X";
              (24, -32)*+{X^{(i)}}="X1";
              (40, -48)*+{Y}="Y";
              % ARROWS
              {\ar^{S\phi^{(i - 1)}} "SSX" ; "SXr"};
              {\ar_{\mu^S_{X^{(i - 2)}}} "SSX" ; "SXl"};
              {\ar^{\phi^{(i)}} "SXr" ; "X1"};
              {\ar^{S\bar{f}^{(i - 1)}} "SXr" ; "SY"};
              {\ar_{\phi^{(i - 1)}} "SXl" ; "X"};
              {\ar^{\psi} "SY" ; "Y"};
              {\ar_{x^{(i)}} "X" ; "X1"};
              {\ar@/_1.5pc/_{\bar{f}^{(i - 1)}} "X" ; "Y"};
              {\ar@{-->}^-{\bar{f}^{(i)}} "X1" ; "Y"};
              % PUSHOUT STUFF
              (18,-30)*{}; (18,-26)*{} **\dir{-};
              (22,-26)*{}; (18,-26)*{} **\dir{-};
            \endxy
          \]
  commutes.  Note that the fact that the bottom triangle in this diagram commutes gives us commutativity of the cocone.  To check that this is well-defined we must check that the outside of this diagram commutes; this is true, since
      \[
        \xy
          % POINTS
          (0, 0)*+{S^2 X^{(i - 2)}}="SSXim2";
          (24, 0)*+{SX^{(i - 1)}}="SXim1";
          (44, 0)*+{SY}="SXr";
          (24, -12)*+{S^2 Y}="SSX";
          (0, -20)*+{SX^{(i - 2)}}="SXim2";
          (24, -28)*+{SY}="SXm";
          (0, -40)*+{X^{(i - 1)}}="Xim1";
          (44, -40)*+{Y}="X";
          % ARROWS
          {\ar^-{S\phi^{(i - 1)}} "SSXim2" ; "SXim1"};
          {\ar_{S^2 \bar{f}^{(i - 2)}} "SSXim2" ; "SSX"};
          {\ar_{\mu^S_{X^{(i - 2)}}} "SSXim2" ; "SXim2"};
          {\ar^-{S\bar{f}^{(i - 1)}} "SXim1" ; "SXr"};
          {\ar^{\psi} "SXr" ; "X"};
          {\ar_{S\psi} "SSX" ; "SXr"};
          {\ar^{\mu^S_{X}} "SSX" ; "SXm"};
          {\ar_{S\bar{f}^{(i - 2)}} "SXim2" ; "SXm"};
          {\ar_{\phi^{(i - 1)}} "SXim2" ; "Xim1"};
          {\ar_{\psi} "SXm" ; "X"};
          {\ar_{\bar{f}^{(i - 1)}} "Xim1" ; "X"};
        \endxy
      \]
  commutes.

  We then define $\bar{f}$ to be the unique map such that, for each $i \geq 0$, the diagram
      \[
        \xy
          % POINTS
          (0, 0)*+{X^{(i)}}="X";
          (16, 0)*+{\bar{X}}="barX";
          (16, -16)*+{Y}="Y";
          % ARROWS
          {\ar^-{c^{(i)}} "X" ; "barX"};
          {\ar_-{\bar{f}^{(i)}} "X" ; "Y"};
          {\ar^-{\bar{f}} "barX" ; "Y"};
        \endxy
      \]
  commutes.  When $i = 0$ this gives us the required commutativity condition for $c^{(0)}$ to be a universal arrow, and uniqueness of $\bar{f}$ comes from the universal property of $\bar{X}$.  All that remains is to check that $\bar{f}$ is a map of $S$-algebras, i.e. that the diagram
      \[
        \xy
          % POINTS
          (0, 0)*+{S\bar{X}}="SX";
          (16, 0)*+{SY}="SY";
          (0, -16)*+{\bar{X}}="X";
          (16, -16)*+{Y}="Y";
          % ARROWS
          {\ar^{S\bar{f}} "SX" ; "SY"};
          {\ar_-{\phi} "SX" ; "X"};
          {\ar^-{\psi} "SY" ; "Y"};
          {\ar_{\bar{f}} "X" ; "Y"};
        \endxy
      \]
  commutes.  Since $S$ is finitary, we can write $S\bar{X}$ as
      \[
        S\bar{X} = \colim_{i \geq 0} SX^{(i)}.
      \]
    Thus we can check that the square above commutes by comparing the cocones corresponding to the maps $\psi \comp S\bar{f}$ and $\bar{f} \comp \phi$.  The cocone corresponding to $\psi \comp S\bar{f}$ has components given, for each $i \geq 1$, by the composite
      \[
        \xy
          % POINTS
          (0, 0)*+{SX^{(i)}}="SXi";
          (20, 0)*+{SY}="SbarY";
          (36, 0)*+{Y.}="barY";
          % ARROWS
          {\ar^-{\bar{f}^{(i)}} "SXi" ; "SbarY"};
          {\ar^-{\psi} "SbarY" ; "barY"};
        \endxy
      \]
    The cocone corresponding to $\bar{f} \comp \phi$ has components given, for each $i \geq 1$, by the composite
      \[
        \xy
          % POINTS
          (0, 0)*+{SX^{(i)}}="SXi";
          (20, 0)*+{X^{(i + 1)}}="Xi1";
          (36, 0)*+{Y.}="barY";
          % ARROWS
          {\ar^-{\phi^{(i + 1)}} "SXi" ; "Xi1"};
          {\ar^-{\bar{f}^{(i + 1)}} "Xi1" ; "barY"};
        \endxy
      \]
    From the definition of $\bar{f}^{(i + 1)}$ we see that the diagram
      \[
        \xy
          % POINTS
          (0, 0)*+{SX^{(i)}}="SXi";
          (20, 0)*+{SY}="SbarY";
          (0, -16)*+{X^{(i + 1)}}="Xi1";
          (20, -16)*+{Y}="barY";
          % ARROWS
          {\ar^-{\bar{f}^{(i)}} "SXi" ; "SbarY"};
          {\ar_-{\phi^{(i + 1)}} "SXi" ; "Xi1"};
          {\ar^-{\psi} "SbarY" ; "barY"};
          {\ar_-{\bar{f}^{(i + 1)}} "Xi1" ; "barY"};
        \endxy
      \]
    commutes for all $i \geq 0$; hence the cocones described above are equal, so $\bar{f}$ is a map of $S$-algebras.

    Hence we have an adjunction $F \ladj p_*$, as required.
  \end{proof}

  Finally, to show that this construction does indeed give an adjunction
    \[
          \xy
            % POINTS
            (0, 0)*+{B\Alg}="BAlg";
            (20, 0)*+{L\Alg}="LAlg";
            % ARROWS
            {\ar@<1ex>_-*!/u1pt/{\labelstyle \bot}^-F "BAlg" ; "LAlg"};
            {\ar@<1ex>^-{u_*} "LAlg" ; "BAlg"};
          \endxy
    \]
  in the case $\mathcal{C} = \nGSet$, $R = B$, $S = L$, $p = u$, we must show that $L$ is finitary.  In fact, this is true of any monad induced by an $n$-globular operad.

  \begin{lemma}  \label{lem:opsfiltcolims}
    Let $K$ be an $n$-globular operad.  Then the monad induced by $K$ is finitary, i.e. its underlying endofunctor preserves filtered colimits.
  \end{lemma}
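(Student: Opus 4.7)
The plan is to exploit the pullback description of the endofunctor $K$ from Definition~\ref{defn:indmonad}, namely that for any $Y \in \nGSet$,
\[
  KY \iso K \times_{T1} TY,
\]
and reduce the claim to two ingredients: (i) the free strict $n$-category monad $T$ is finitary, and (ii) pullbacks in $\nGSet$ commute with filtered colimits. Ingredient~(ii) is immediate, since pullbacks and filtered colimits in the presheaf category $\nGSet$ are computed pointwise in $\Set$, and Lemma~\ref{lem:maclane} says that in $\Set$ filtered colimits commute with all finite limits.

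The substantive step is therefore~(i), that $T$ preserves filtered colimits. One way to see this is to use the explicit description of $T$ built from globular pasting diagrams: for each $m$ and each $\pi \in T1_m$, the set of cells of $\pi$ is finite, so the set of $\pi$-shaped labellings in $X$ is a finite limit (over matching source and target data) of the cell-sets of $X$; summing over $\pi \in T1_m$ gives
\[
  TX_m \iso \coprod_{\pi \in T1_m} X^{\mathrm{cells}(\pi)},
\]
where the exponent denotes the appropriate finite limit. Finite limits commute with filtered colimits in $\Set$ by Lemma~\ref{lem:maclane}, and coproducts commute with all colimits; since colimits in $\nGSet$ are pointwise, this gives $T(\colim_i D(i)) \iso \colim_i TD(i)$ for any filtered diagram $D \colon \mathbb{I} \to \nGSet$. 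Equivalently, and perhaps more conceptually, strict $n$-categories are models for an essentially algebraic theory of finite arity (the binary composition operations $\comp^m_p$ together with finitely-expressed associativity, unit, and interchange equations), so the forgetful functor $U_T \colon \nCat \to \nGSet$ creates filtered colimits and $T = U_T F_T$ preserves them.

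With~(i) and~(ii) in hand, the conclusion is formal. Given a filtered diagram $D \colon \mathbb{I} \to \nGSet$ with colimit $X$, we compute
\[
  \colim_{i \in \mathbb{I}} KD(i) \iso \colim_{i \in \mathbb{I}} \bigl(K \times_{T1} TD(i)\bigr) \iso K \times_{T1} \colim_{i \in \mathbb{I}} TD(i) \iso K \times_{T1} TX \iso KX,
\]
where the first isomorphism is the pullback definition, the second uses that filtered colimits commute with the pullback (a finite limit) pointwise in $\Set$, the third uses that $T$ is finitary, and the fourth is again the pullback definition. A routine check, which I would not spell out in detail, confirms that this isomorphism agrees with the canonical comparison map $\colim_i KD(i) \to K(\colim_i D(i))$ induced by the coprojections. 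The main obstacle is thus ingredient~(i); once finitariness of $T$ is in hand, the pullback construction of $K$ together with Lemma~\ref{lem:maclane} makes the rest of the proof essentially automatic.
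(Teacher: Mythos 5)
Your proof is correct and follows essentially the same route as the paper: express $KY$ as the pullback $K \times_{T1} TY$, invoke finitariness of $T$, and use Lemma~\ref{lem:maclane} pointwise to commute the pullback past the filtered colimit. The only difference is that you sketch a proof that $T$ is finitary, whereas the paper simply cites Leinster's Theorem~F.2.2; your sketch is a reasonable outline of that result.
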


  \begin{proof}
    It is a result of Leinster that the free strict $n$-category monad $T$ is finitary \cite[Theorem F.2.2]{Lei04}; the proof that the monad $K$ is finitary is an application of this and of the fact that filtered colimits commute with pullbacks in $\Set$ (Lemma~\ref{lem:maclane}).

    Let $\mathbb{I}$ be a small, filtered category and let
      \[
        D \colon \mathbb{I} \longrightarrow \nGSet
      \]
    be a diagram in $\nGSet$.  Then for each $i \in \mathbb{I}$, $KD(i)$ is given by the pullback
      \[
        \xy
          % POINTS
          (0, 0)*+{KD(i)}="KX";
          (16, 0)*+{K}="K";
          (0, -16)*+{TD(i)}="TX";
          (16, -16)*+{T1,}="T";
          % ARROWS
          {\ar^-{K!} "KX" ; "K"};
          {\ar_{k_{D(i)}} "KX" ; "TX"};
          {\ar_-{T!} "TX" ; "T"};
          {\ar^{k} "K" ; "T"};
          % PULLBACK STUFF
          (6,-1)*{}; (6,-5)*{} **\dir{-};
          (2,-5)*{}; (6,-5)*{} **\dir{-};
        \endxy
     \]
    in $\nGSet$.  Write
      \[
        X := \colim_{i \in \mathbb{I}} D(i).
      \]
    Then $KX$ is given by the pullback
      \[
        \xy
          % POINTS
          (0, 0)*+{KX}="KX";
          (16, 0)*+{K}="K";
          (0, -16)*+{TX}="TX";
          (16, -16)*+{T1,}="T";
          % ARROWS
          {\ar^-{K!} "KX" ; "K"};
          {\ar_{k_X} "KX" ; "TX"};
          {\ar_-{T!} "TX" ; "T"};
          {\ar^{k} "K" ; "T"};
          % PULLBACK STUFF
          (6,-1)*{}; (6,-5)*{} **\dir{-};
          (2,-5)*{}; (6,-5)*{} **\dir{-};
        \endxy
     \]
    in $\nGSet$.  Since $T$ is finitary, we have
      \[
        TX \iso \colim_{i \in \mathbb{I}} TD(i).
      \]
    Since filtered colimits commute with pullbacks in $\Set$, and since limits and colimits are computed pointwise in $\nGSet$, we have that filtered colimits commutes with pullbacks in $\nGSet$, so
      \[
        KX = \colim_{i \in \mathbb{I}} KD(i).
      \]
    Hence $K$ preserves filtered colimits, i.e. $K$ is finitary.
  \end{proof}

  Hence there is an adjunction $F \ladj u_*$.

  \subsection{The relationship between $u_*$ and $v_*$}  \label{subsect:v*u*}

  Recall that, in Proposition~\ref{prop:uvid}, we showed that $u_* v_* = \id_{B\Alg}$.  We then gave a small example of an $L$-algebra
    \[
      \xy
        % POINTS
        (0, 0)*+{L^2 A}="LLA";
        (0, -16)*+{LA.}="LA";
        % ARROWS
        {\ar^{\mu^L_A} "LLA" ; "LA"};
      \endxy
    \]
  and described its image under the composite
    \[
      \xy
        % POINTS
        (0, 0)*+{L\Alg}="0";
        (20, 0)*+{B\Alg}="1";
        (40, 0)*+{L\Alg.}="2";
        % ARROWS
        {\ar^{u_*} "0" ; "1"};
        {\ar^{v_*} "1" ; "2"};
      \endxy
    \]
  Specifically, we argued that the diagram
    \[
      \xy
        % POINTS
        (0, 0)*+{L^2 A}="LLAt";
        (20, 0)*+{L^2 A}="LLAr";
        (0, -10)*+{BLA}="BLA";
        (0, -20)*+{L^2 A}="LLA";
        (0, -30)*+{LA}="LA";
        (20, -30)*+{LA}="LAr";
        % ARROWS
        {\ar^-{L\id_{LA}} "LLAt" ; "LLAr"};
        {\ar_{v_{LA}} "LLAt" ; "BLA"};
        {\ar^{\mu^L_A} "LLAr" ; "LAr"};
        {\ar_{u_{LA}} "BLA" ; "LLA"};
        {\ar_{\mu^L_A} "LLA" ; "LA"};
        {\ar_-{\id_{LA}} "LA" ; "LAr"};
      \endxy
    \]
  in $\nGSet$ ``commutes up to a contraction cell''.  We now extend these ideas to a definition of weak map of $L$-algebras that uses constraint cells for mediating cells, in order to formalise this idea and thus investigate the relationship between a general $L$-algebra and its image under the functor $v_* u_*$ more fully.  The idea is that, by using constraint cells, any axioms we would require will automatically be satisfied, so we do not have to state any axioms in the definition.  This approach is beneficial, since it is straightforward to specify the data required for a weak map of $L$-algebras (i.e. to specify where we require mediating cells), but difficult to state the axioms that this data must satisfy.

  Note that the definition of weak map that this approach gives is not optimal, for several reasons.  First, the fact that the mediating cells must be constraint cells means that this definition lacks generality, since in a fully general definition of weak map we would be able to use any choice of cells that interacted with one another in a suitably coherent way.  Second, the composite of two weak maps is not necessarily a weak map, since the mediating cells in the composite are composites of constraint cells, and these are not necessarily constraint cells.  Finally, in a non-free $L$-algebra not all diagrams of constraint $n$-cells commute, and not all diagrams of constraint cells commute up to a higher constraint cell.

  \begin{defn} \label{defn:weakmaps}
    Let $K$ be an $n$-globular operad with a contraction and system of compositions and let
        \[
          \xy
            % POINTS
            (0, 0)*+{KX}="KX";
            (0, -16)*+{X,}="X";
            (16, 0)*+{KY}="KY";
            (16, -16)*+{Y,}="Y";
            % ARROWS
            {\ar_-{\theta} "KX" ; "X"};
            {\ar^-{\phi} "KY" ; "Y"};
          \endxy
        \]
    be $K$-algebras.  A \emph{weak map of $K$-algebras} consists of a (not necessarily commuting) square
        \[
          \xy
            % POINTS
            (0, 0)*+{KX}="KX";
            (0, -16)*+{X}="X";
            (16, 0)*+{KY}="KY";
            (16, -16)*+{Y}="Y";
            % ARROWS
            {\ar^-{Kf} "KX" ; "KY"};
            {\ar_-{\theta} "KX" ; "X"};
            {\ar^-{\phi} "KY" ; "Y"};
            {\ar_-f "X" ; "Y"};
          \endxy
        \]
    in $\nGSet$, equipped with the following constraint cells:
      \begin{itemize}
        \item for all $0$-cells $x$ in $KX$, a constraint $1$-cell
          \[
            f_x \colon \phi \comp Kf(x) \longrightarrow f \comp \theta(x)
          \]
            in $Y$;
        \item for all $1$-cells $a \colon x \rightarrow y$ in $KX$, a constraint $2$-cell
          \[
            \xy
              % POINTS
              (0, 0)*+{\phi \comp Kf(x)}="0,0";
              (28, 0)*+{\phi \comp Kf(y)}="1,0";
              (0, -16)*+{f \comp \theta(x)}="0,1";
              (28, -16)*+{f \comp \theta(y)}="1,1";
              % ARROWS
              {\ar^-{\phi \comp Kf(a)} "0,0" ; "1,0"};
              {\ar_{f_x} "0,0" ; "0,1"};
              {\ar^{f_y} "1,0" ; "1,1"};
              {\ar_-{f \comp \theta(a)} "0,1" ; "1,1"};
              {\ar@{=>}_{f_a} (18, -4) ; (10, -12)};
            \endxy
          \]
        in $Y$;
        \item for all $2$-cells
          \[
            \xy
              % POINTS
              (0, 0)*+{x}="0,0";
              (16, 0)*+{y}="1,0";
              % ARROWS
              {\ar@/^1.5pc/ "0,0" ; "1,0"};
              {\ar@/_1.5pc/ "0,0" ; "1,0"};
              {\ar@{=>}^{\alpha} (8, 3) ; (8, -3)};
            \endxy
          \]
        in $KX$, a constraint $3$-cell
          \[
            \xy
              % POINTS
              (0, 0)*+{\bullet}="0,0";
              (24, 0)*+{\bullet}="1,0";
              (0, -16)*+{\bullet}="0,1";
              (24, -16)*+{\bullet}="1,1";
              (36,-8)*+{\Rrightarrow};
              (48, 0)*+{\bullet}="2,0";
              (72, 0)*+{\bullet}="3,0";
              (48, -16)*+{\bullet}="2,1";
              (72, -16)*+{\bullet}="3,1";
              % ARROWS -- FIRST CYLINDER
              {\ar@/^1.5pc/ "0,0" ; "1,0"};
              {\ar@/_1.5pc/ "0,0" ; "1,0"};
              {\ar@{=>}^{\phi \comp Kf(\alpha)} (8, 3) ; (8, -3)};
              {\ar_{f_x} "0,0" ; "0,1"};
              {\ar^{f_y} "1,0" ; "1,1"};
              {\ar@/_1.5pc/ "0,1" ; "1,1"};
              {\ar@{=>}^{f_b} (15, -10) ; (9, -15)};
              % ARROWS -- SECOND CYLINDER
              {\ar@/^1.5pc/ "2,0" ; "3,0"};
              {\ar_{f_x} "2,0" ; "2,1"};
              {\ar^{f_y} "3,0" ; "3,1"};
              {\ar@{=>}^{f_a} (63, -1) ; (57, -6)};
              {\ar@/^1.5pc/ "2,1" ; "3,1"};
              {\ar@/_1.5pc/ "2,1" ; "3,1"};
              {\ar@{=>}^{f \comp \theta(\alpha)} (60, -13) ; (60, -19)};
              % 3-CELL LABEL
              {\ar@{}_{f_{\alpha}} (24, -10) ; (48, -10)};
            \endxy
          \]
        in $Y$.  We abuse notation slightly and write this as
          \[
            f_{\alpha} \colon f_{b} \comp (\phi \comp Kf(\alpha)) \Rrightarrow (f \comp \theta(\alpha)) \comp f_a,
          \]
        omitting the $1$-cells $f_x$ and $f_y$; this makes little difference here, but at higher dimensions it allows us to avoid unwieldy notation;
        \item for $3 \leq m \leq n - 1$, and for all $m$-cells $\alpha$ in $KX$, a constraint cell
          \[
            f_{\alpha} \colon f_{t(\alpha)} \comp (\phi \comp Kf(\alpha)) \rightarrow (f \comp \theta(\alpha)) \comp f_{s(\alpha)}.
          \]
        As described above, we omit lower-dimensional constraint cells from the source and target to avoid unwieldy notation.  When $m = 3$, $f_{\alpha}$ is a constraint $4$-cell with source
          \[
            \xy
              % SOURCE CYLINDER
              % POINTS
              (0, 0)*+{\bullet}="0,0";
              (24, 0)*+{\bullet}="1,0";
              (12, -2)*+{\Rrightarrow};
              (0, -16)*+{\bullet}="0,1";
              (24, -16)*+{\bullet}="1,1";
              (36, -8)*+{\Rrightarrow};
              (48, 0)*+{\bullet}="2,0";
              (72, 0)*+{\bullet}="3,0";
              (48, -16)*+{\bullet}="2,1";
              (72, -16)*+{\bullet}="3,1";
              % INVISIBLE POINTS FOR 2-CELLS
              (9, 2)*+{}="1s";
              (9, -5)*+{}="1t";
              (15, 2)*+{}="2s";
              (15, -5)*+{}="2t";
              % ARROWS -- FIRST CYLINDER HALF
              {\ar@{}^{\phi \comp Kf(\alpha)} (0, 1) ; (24, 1)};
              {\ar@/^1.5pc/ "0,0" ; "1,0"};
              {\ar@/_1.5pc/ "0,0" ; "1,0"};
              {\ar@{=>}@/_.4pc/ "1s" ; (9, -4.85)};
              {\ar@{=}@/_.4pc/ "1s" ; "1t"};
              {\ar@{=>}@/^.4pc/ "2s" ; (15, -4.85)};
              {\ar@{=}@/^.4pc/ "2s" ; "2t"};
              {\ar "0,0" ; "0,1"};
              {\ar "1,0" ; "1,1"};
              {\ar@/_1.5pc/ "0,1" ; "1,1"};
              {\ar@{=>}^{f_{tt(\alpha)}} (15, -10) ; (9, -15)};
              % ARROWS -- SECOND CYLINDER HALF
              {\ar@/^1.5pc/ "2,0" ; "3,0"};
              {\ar "2,0" ; "2,1"};
              {\ar "3,0" ; "3,1"};
              {\ar@{=>}^{f_{ss(\alpha)}} (63, -1) ; (57, -6)};
              {\ar@/^1.5pc/ "2,1" ; "3,1"};
              {\ar@/_1.5pc/ "2,1" ; "3,1"};
              {\ar@{=>}^{f \comp \theta(t(\alpha))} (56, -13) ; (56, -19)};
              % 3-CELL LABEL
              {\ar@{}_{f_{t(\alpha)}} (24, -10) ; (48, -10)};
            \endxy
          \]
        and target
          \[
            \xy
              % TARGET CYLINDER
              % POINTS
              (0, 0)*+{\bullet}="0,0";
              (24, 0)*+{\bullet}="1,0";
              (0, -16)*+{\bullet}="0,1";
              (24, -16)*+{\bullet}="1,1";
              (36, -8)*+{\Rrightarrow};
              (48, 0)*+{\bullet}="2,0";
              (72, 0)*+{\bullet}="3,0";
              (48, -16)*+{\bullet}="2,1";
              (72, -16)*+{\bullet;}="3,1";
              (60, -14)*+{\Rrightarrow};
              % INVISIBLE POINTS FOR 2-CELLS
              (57, -11)*+{}="1s";
              (57, -18)*+{}="1t";
              (63, -11)*+{}="2s";
              (63, -18)*+{}="2t";
              % ARROWS -- FIRST CYLINDER HALF
              {\ar@/^1.5pc/ "0,0" ; "1,0"};
              {\ar@/_1.5pc/ "0,0" ; "1,0"};
              {\ar@{=>}^{\phi \comp Kf(s(\alpha))} (6, 3) ; (6, -3)};
              {\ar "0,0" ; "0,1"};
              {\ar "1,0" ; "1,1"};
              {\ar@/_1.5pc/ "0,1" ; "1,1"};
              {\ar@{=>}^{f_{tt(\alpha)}} (15, -10) ; (9, -15)};
              % ARROWS -- SECOND CYLINDER HALF
              {\ar@/^1.5pc/ "2,0" ; "3,0"};
              {\ar "2,0" ; "2,1"};
              {\ar "3,0" ; "3,1"};
              {\ar@{=>}^{f_{ss(\alpha)}} (63, -1) ; (57, -6)};
              {\ar@/^1.5pc/ "2,1" ; "3,1"};
              {\ar@/_1.5pc/ "2,1" ; "3,1"};
              {\ar@{=>}@/_.4pc/ "1s" ; (57, -17.85)};
              {\ar@{=}@/_.4pc/ "1s" ; "1t"};
              {\ar@{=>}@/^.4pc/ "2s" ; (63, -17.85)};
              {\ar@{=}@/^.4pc/ "2s" ; "2t"};
              {\ar@{}_{f \comp \theta(\alpha)} (48, -17.5) ; (72, -17.5)};
              % 3-CELL LABEL
              {\ar@{}_{f_{s(\alpha)}} (24, -10) ; (48, -10)};
            \endxy
          \]
        \item for all $n$-cells $\alpha$ in $KX$, an equality (which we can think of as a ``constraint $(n + 1)$-cell'')
          \[
            f_{t(\alpha)} \comp (\phi \comp Kf(\alpha)) = (f \comp \theta(\alpha)) \comp f_{s(\alpha)}.
          \]
      \end{itemize}
  \end{defn}

  Note that, given two weak maps of $K$-algebras
        \[
          \xy
            % POINTS
            (0, 0)*+{KX}="KX";
            (0, -16)*+{X}="X";
            (16, 0)*+{KY}="KY";
            (16, -16)*+{Y}="Y";
            (32, 0)*+{KZ}="KZ";
            (32, -16)*+{Z,}="Z";
            % ARROWS
            {\ar^{Kf} "KX" ; "KY"};
            {\ar_-{\theta} "KX" ; "X"};
            {\ar^{Kg} "KY" ; "KZ"};
            {\ar^-{\phi} "KY" ; "Y"};
            {\ar^-{\psi} "KZ" ; "Z"};
            {\ar_f "X" ; "Y"};
            {\ar_g "Y" ; "Z"};
          \endxy
        \]
  although their underlying maps of $n$-globular sets are composable, this composite is not necessarily a weak map of $K$-algebras, since in a weak map we require the mediating cells to be constraint cells, whereas in the composite $gf$ we only have composites of constraint cells.  Thus there is no category of $K$-algebras with morphisms given by weak maps.  There are two ways we could get around this: we could either modify our definition of weak map to allow us to use composites of constraint cells as mediating cells, or we could take the closure under composition of the class of weak maps; either approach is beyond the scope of this thesis.

  Recall that in Section~\ref{sect:globopcoh} we gave a definition of equivalence of $K$-algebras which used strict maps (Definition~\ref{defn:globopalgequiv}).  We now modify this definition by using weak maps instead of strict maps, to obtain a notion of weak equivalence of $K$-algebras.

  \begin{defn}  \label{defn:weakequiv}
    Let $K$ be an $n$-globular operad with a contraction and system of compositions.  We say that two $K$-algebras
        \[
          \xy
            % POINTS
            (0, 0)*+{KX}="KX";
            (0, -16)*+{X,}="X";
            (16, 0)*+{KY}="KY";
            (16, -16)*+{Y}="Y";
            % ARROWS
            {\ar_-{\theta} "KX" ; "X"};
            {\ar^-{\phi} "KY" ; "Y"};
          \endxy
        \]
    are \emph{weakly equivalent} if there exists a weak map
        \[
          \xy
            % POINTS
            (0, 0)*+{KX}="0,0";
            (0, -16)*+{X}="0,1";
            (16, 0)*+{KY}="1,0";
            (16, -16)*+{Y}="1,1";
            (26, 0)*+{\text{or}};
            (36, 0)*+{KY}="2,0";
            (36, -16)*+{Y}="2,1";
            (52, 0)*+{KX}="3,0";
            (52, -16)*+{X}="3,1";
            % ARROWS
            {\ar^-{Kf} "0,0" ; "1,0"};
            {\ar_-{\theta} "0,0" ; "0,1"};
            {\ar^-{\phi} "1,0" ; "1,1"};
            {\ar_-f "0,1" ; "1,1"};
            {\ar^-{Kf} "2,0" ; "3,0"};
            {\ar_-{\phi} "2,0" ; "2,1"};
            {\ar^-{\theta} "3,0" ; "3,1"};
            {\ar_-f "2,1" ; "3,1"};
          \endxy
        \]
    such that $f$ is surjective on $0$-cells, full on $m$-cells for all $1 \leq m \leq n$, and faithful on $n$-cells.  The map $f$ is referred to as an \emph{weak equivalence of $K$-algebras}.
  \end{defn}

  As in Definition~\ref{defn:globopalgequiv}, in this definition we require that the weak equivalence can go in either direction, since having a weak equivalence in one direction does not guarantee the existence of a weak equivalence in the opposite direction.  This is caused by the fact that in our definition of weak map only allows for mediating cells that are constraint cells, rather than allowing any suitably coherent choice of cells.

  We now consider the composite
    \[
      \xy
        % POINTS
        (0, 0)*+{L\Alg}="0";
        (20, 0)*+{B\Alg}="1";
        (40, 0)*+{L\Alg,}="2";
        % ARROWS
        {\ar^{u_*} "0" ; "1"};
        {\ar^{v_*} "1" ; "2"};
      \endxy
    \]
  and show that any $L$-algebra is weakly equivalent to its image under this functor.

  \begin{prop}
    Let
      \[
        \xymatrix{ LX \ar[r]^-{\theta} & X }
      \]
    be an $L$-algebra.  Then the diagram
    \[
      \xy
        % POINTS
        (0, 0)*+{LX}="LLAt";
        (20, 0)*+{LX}="LLAr";
        (0, -10)*+{BX}="BLA";
        (0, -20)*+{LX}="LLA";
        (0, -30)*+{X}="LA";
        (20, -30)*+{X}="LAr";
        % ARROWS
        {\ar^-{L\id_{X}} "LLAt" ; "LLAr"};
        {\ar_{v_{X}} "LLAt" ; "BLA"};
        {\ar^{\theta} "LLAr" ; "LAr"};
        {\ar_{u_{X}} "BLA" ; "LLA"};
        {\ar_{\theta} "LLA" ; "LA"};
        {\ar_-{\id_{X}} "LA" ; "LAr"};
      \endxy
    \]
    can be equipped with the structure of a weak map of $L$-algebras, and this weak map is a weak equivalence.
  \end{prop}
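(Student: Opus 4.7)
The underlying map of the square is $\id_X$, which is trivially a weak equivalence in the sense of Definition~\ref{defn:weakequiv}: it is surjective on $0$-cells, full on $m$-cells for every $1 \leq m \leq n$, and faithful on $n$-cells. Consequently the substance of the proof lies in equipping the square with the structure of a weak map of $L$-algebras in the sense of Definition~\ref{defn:weakmaps}; this requires producing, for each $m$-cell $\alpha \in LX$, a mediating constraint $(m+1)$-cell in $X$ (respectively, an equality when $m=n$) comparing $\theta(\alpha) = \theta \comp L\id_X(\alpha)$ with $\theta \comp u_X \comp v_X(\alpha) = \id_X \comp \theta \comp u_X \comp v_X(\alpha)$, composed appropriately with the mediating cells at lower dimensions.

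The key preparatory observation is that the operad map $uv \colon L \to L$ respects the collection structure ($l \comp uv = l$) and preserves sources and targets. A short induction on dimension shows that $uv$ acts as the identity on $L_0$, and that for each $m$-cell $p \in L$ the cells $p$ and $uv(p)$ are parallel, with $l(p) = l(uv(p))$. Hence for each $m$-cell $\alpha = (\pi, p) \in LX_m$, the cells $\alpha$ and $u_X v_X(\alpha) = (\pi, uv(p))$ satisfy the hypotheses of the contraction $\gamma$ on $l$ obtained by applying Theorem~\ref{thm:OUCtoOCS} to the unbiased contraction on $L$.

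First I would use this contraction to produce, for each $m < n$, a contraction $(m+1)$-cell $\gamma_{\id_{l(p)}}(p, uv(p)) \colon p \to uv(p)$ in $L$, and lift it to the cell $(\id_{\pi}, \gamma_{\id_{l(p)}}(p, uv(p)))$ in $LX_{m+1}$ with source $(\pi, p)$ and target $(\pi, uv(p))$; at dimension $n$ the tameness condition in Definition~\ref{defn:Bcontr} forces $p = uv(p)$ and hence the required equality. Applying $\theta$ to these cells, and whiskering with the previously constructed lower-dimensional mediating cells $f_{s(\alpha)}$ and $f_{t(\alpha)}$ using the system of compositions on $L$ supplied by Theorem~\ref{thm:OUCtoOCS}, yields cells with the boundary prescribed by Definition~\ref{defn:weakmaps}. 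By Definition~\ref{defn:constraintcell} the resulting cells are constraint cells in $X$, since they are $\theta$-images of composites of contraction cells in $LX$; there are no axioms to verify, because Definition~\ref{defn:weakmaps} uses constraint cells as mediating data and the relevant coherence conditions follow from the coherence results of Section~\ref{sect:globopcoh} applied to $\theta$.

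The main obstacle will be matching source and target on the nose: the cell produced by $\gamma$ has boundaries $(\pi, p)$ and $(\pi, uv(p))$, whereas Definition~\ref{defn:weakmaps} demands boundaries involving whiskered composites with $f_{s(\alpha)}$ and $f_{t(\alpha)}$. Reconciling these requires an inductive choice of the mediating cells and careful use of the contraction at dimension $m+2$ to absorb the discrepancy between the constraint cell we have in hand and the constraint cell Definition~\ref{defn:weakmaps} expects --- an absorption justified, at least in the free case, by Corollary~\ref{lem:diaginfree}. In a non-free $L$-algebra these higher coherences need not hold strictly, which is ultimately why the notion of weak map in Definition~\ref{defn:weakmaps} is, as the author has acknowledged, only a preliminary approximation to the ``correct'' notion of weak map of $L$-algebras; a complete proof therefore proves exactly as much as this preliminary notion permits.
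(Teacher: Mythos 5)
Your proposal correctly identifies the two ingredients — that the underlying globular map is $\id_X$ (so weak equivalence is automatic once the weak map structure exists), and that for each cell $(\alpha,\chi)\in LX$ the operad cells $\chi$ and $uv(\chi)$ are parallel with $l(\chi)=l(uv(\chi))$ — but it stalls exactly at the step you flag as ``the main obstacle,'' and the fix you sketch there does not work. A mediating cell in the sense of Definition~\ref{defn:weakmaps} must be a constraint cell whose source is \emph{literally} $f_{t(\alpha)}\comp(\theta\comp u_Xv_X(\alpha))$ and whose target is \emph{literally} $(\theta(\alpha))\comp f_{s(\alpha)}$; producing instead a cell $\theta(\id_\pi,\gamma(\chi,uv(\chi)))$ with boundary $\theta(\alpha,\chi)\to\theta(\alpha,uv(\chi))$ and then trying to ``absorb the discrepancy'' with a contraction cell one dimension higher does not supply the required datum, and your appeal to Corollary~\ref{lem:diaginfree} only covers free algebras, which is why you end up doubting the non-free case.

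The paper resolves this by reversing the order of operations: the whiskering is performed \emph{inside the operad $L$ before} the contraction is applied. Inductively, having defined $\kappa_{s(\chi)}$ and $\kappa_{t(\chi)}$, one forms the two composite operad cells $\kappa_{t(\chi)}\comp\chi$ and $uv(\chi)\comp\kappa_{s(\chi)}$ using the system of compositions on $L$ (at dimension~$1$ these are $\id\comp p$ and $uv(p)\comp\id$). These are parallel, and since $l$ sends contraction cells to identities and $l\comp uv=l$, they have the same image under $l$; hence the contraction on $L$ supplies a single cell $\kappa_\chi:=\gamma(\kappa_{t(\chi)}\comp\chi,\,uv(\chi)\comp\kappa_{s(\chi)})$ (an equality when $m=n$, by tameness). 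Because $\theta$ is a strict algebra map, $\theta(\alpha,\kappa_{t(\chi)}\comp\chi)=f_{t(\alpha,\chi)}\comp\theta(\alpha,\chi)$ and likewise for the target, so $f_{(\alpha,\chi)}:=\theta(\id_\alpha,\kappa_\chi)$ has exactly the boundary Definition~\ref{defn:weakmaps} demands — no higher-dimensional absorption is needed, and the argument applies to arbitrary $L$-algebras because all the coherence is arranged in $L$ (equivalently in the free algebra $\mu^L_X\colon L^2X\to LX$) before $\theta$ is applied. You should restructure your induction so that the contraction is invoked on the already-whiskered pair rather than on $(\chi,uv(\chi))$.
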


  \begin{proof}
    We need only show that this diagram can be equipped with the structure of a weak map of $L$-algebras; if so, it will automatically be a weak equivalence since its underlying map of $n$-globular sets is the identity.  In this proof we write $f := \id_X$ to avoid double subscripts and misleading notation for the mediating constraint cells.  Note that although $L$ is defined using an unbiased contraction, in this proof we only use its contraction and system of compositions, as described in Theorem~\ref{thm:OUCtoOCS}.

    Recall from Definition~\ref{defn:indmonad} that $LX$ is defined by the pullback
      \[
        \xy
          % POINTS
          (0, 0)*+{LX}="LX";
          (16, 0)*+{L}="L";
          (0, -16)*+{TX}="TX";
          (16, -16)*+{T1.}="T";
          % ARROWS
          {\ar^-{L!} "LX" ; "L"};
          {\ar_{l_X} "LX" ; "TX"};
          {\ar_-{T!} "TX" ; "T"};
          {\ar^{l} "L" ; "T"};
          % PULLBACK STUFF
          (6,-1)*{}; (6,-5)*{} **\dir{-};
          (2,-5)*{}; (6,-5)*{} **\dir{-};
        \endxy
     \]
    For $1 \leq m \leq n$ we write the elements of $LX_m$ in the form $(\alpha, \chi)$, where $\alpha \in TX$, $\chi \in L$, and $T!(\alpha) = l(\chi)$ (note that we do not do this when $m = 0$ since $L_0$ has only one element).  Our approach is to find the required constraint cells using the fact that
      \[
        \xy
          % POINTS
          (-16, 14)*+{L}="Ll";
          (0, 14)*+{B}="B";
          (16, 14)*+{L}="Lr";
          (0, 0)*+{T1}="T";
          % ARROWS
          {\ar^-{u} "Ll" ; "B"};
          {\ar^-{v} "B" ; "Lr"};
          {\ar_{l} "Ll" ; "T"};
          {\ar^{b} "B" ; "T"};
          {\ar^{l} "Lr" ; "T"};
        \endxy
      \]
    commutes; thus when we apply $u_X v_X$ to a cell in $LX$ we end up ``a contraction cell away'' from where we started.

    Since the lower-dimensional mediating cells appear in the sources and targets of the mediating cells at the dimensions above, we must define our choice of mediating cells by induction over dimension.  As in Definition~\ref{defn:weakmaps}, for dimensions greater than $1$ we abuse notation slightly by omitting lower-dimensional constraint cells from sources and targets.

    At dimension $0$ the diagram commutes, so for all $x \in LX_0$ we define
      \[
        f_x := \id_{\theta(x)} \colon \theta(x) \longrightarrow \theta u_Xv_X(x).
      \]

    At dimension $1$, let
      \[
        (a, p) \colon x \longrightarrow y
      \]
    be a $1$-cell in $LX$, where $a \in TX_1$ and $p \in L_1$.  We seek a constraint $2$-cell
          \[
            \xy
              % POINTS
              (0, 0)*+{\phi \comp Kf(x)}="0,0";
              (28, 0)*+{\phi \comp Kf(y)}="1,0";
              (0, -16)*+{f \comp \theta(x)}="0,1";
              (28, -16)*+{f \comp \theta(y)}="1,1";
              % ARROWS
              {\ar^-{\phi \comp Kf(a, p)} "0,0" ; "1,0"};
              {\ar_{f_x = \id_{\theta(x)}} "0,0" ; "0,1"};
              {\ar^{f_y = \id_{\theta(y)}} "1,0" ; "1,1"};
              {\ar_-{f \comp \theta(a, p)} "0,1" ; "1,1"};
              {\ar@{=>}_{f_{(a, p)}} (18, -4) ; (10, -12)};
            \endxy
          \]
    in $LX$.  We have
      \[
        u_X v_X(a, p) = (a, uv(p)),
      \]
    and we can write the source and target of the required $2$-cell as
      \[
        \id_{\theta(y)} \comp \theta(a, p) = \theta(a, \id \comp p)
      \]
    and
      \[
        \theta(a, uv(p)) \comp \id_{\theta(x)} = \theta(a, uv(p) \comp \id)
      \]
    respectively, where $\id$ denotes the identity on the unique $0$-cell of $L$.  Now, since $(uv)_0 = \id_{LX}$, we have
      \[
        s(\id \comp p) = s(uv(p) \comp \id),
      \]
      \[
        t(\id \comp p) = t(uv(p) \comp \id),
      \]
      \[
        l(\id \comp p) = l(uv(p) \comp \id).
      \]
    Hence there is a contraction $2$-cell $\gamma(\id \comp p, uv(p) \comp \id)$ in $L$.  We denote this by $\kappa_p$, and define $f_{(a, p)}$ to be the constraint cell
      \[
        f_{(a, p)} := \theta \big( \id_a, \kappa_p \big) \colon \theta(a, \id \comp p) \Rightarrow \theta(a, uv(p) \comp \id)
      \]
    in $X$.

    Now let $2 \leq m \leq n$, and suppose that for all $j < m$ and for all $j$-cells $(a, p)$ in $LX$ we have defined a constraint cell
      \[
        f_{(a, p)} = \theta(\id_a, \kappa_p) \colon \theta(a, \kappa_{t(p)} \comp p) \Rightarrow \theta(a, uv(p) \comp \kappa_{s(p)})
      \]
    in $X$.  Let $(\alpha, \chi)$ be an $m$-cell in $LX$.  We have $u_X v_X(\alpha, \chi) = (\alpha, uv(\chi))$, so we seek a constraint $(m + 1)$-cell
      \[
        f_{(\alpha, \chi)} \colon f_{t(\alpha, \chi)} \comp \theta(\alpha, \chi) \rightarrow \theta(\alpha, uv(\chi)) \comp f_{s(\alpha, \chi)}.
      \]
    We can write the source of this as
      \[
        f_{t(\alpha, \chi)} \comp \theta(\alpha, \chi) = \theta(\alpha, \kappa_{t(\chi)} \comp \chi),
      \]
    and the target as
      \[
        \theta(\alpha, uv(\chi)) \comp f_{s(\alpha, \chi)} = \theta(\alpha, uv(\chi) \comp \kappa_{s(\chi)}).
      \]
    The cells $\kappa_{t(\chi)} \comp \chi$ and $uv(\chi) \comp \kappa_{s(\chi)}$ are parallel;  since the diagram
      \[
        \xy
          % POINTS
          (-16, 14)*+{L}="Ll";
          (0, 14)*+{B}="B";
          (16, 14)*+{L}="Lr";
          (0, 0)*+{T1}="T";
          % ARROWS
          {\ar^-{u} "Ll" ; "B"};
          {\ar^-{v} "B" ; "Lr"};
          {\ar_{l} "Ll" ; "T"};
          {\ar^{b} "B" ; "T"};
          {\ar^{l} "Lr" ; "T"};
        \endxy
      \]
    commutes, and since $l$ maps contraction cells to identities in $T1$, we have
      \[
        l(\kappa_{t(\chi)} \comp \chi) = l(\chi) = luv(\chi) = l(uv(\chi) \comp \kappa_{s(\chi)}).
      \]
    Hence there is a contraction $(m + 1)$-cell
      \[
        \gamma(\kappa_{t(\chi)} \comp \chi, uv(\chi) \comp \kappa_{s(\chi)})
      \]
    in $L$ (an equality if $m = n$).  We denote this by $\kappa_{\chi}$, and define $f_{(\alpha, \chi)}$ to be the constraint cell
      \[
        f_{(\alpha, \chi)} = \theta(\id_{\alpha}, \kappa_{\chi}) \colon \theta(\alpha, \kappa_{t(\chi)} \comp \chi) \rightarrow \theta(\alpha, uv(\chi) \comp \kappa_{s(\chi)}).
      \]

    This equips the diagram
    \[
      \xy
        % POINTS
        (0, 0)*+{LX}="LLAt";
        (20, 0)*+{LX}="LLAr";
        (0, -10)*+{BX}="BLA";
        (0, -20)*+{LX}="LLA";
        (0, -30)*+{X}="LA";
        (20, -30)*+{X}="LAr";
        % ARROWS
        {\ar^-{L\id_{X}} "LLAt" ; "LLAr"};
        {\ar_{v_{X}} "LLAt" ; "BLA"};
        {\ar^{\theta} "LLAr" ; "LAr"};
        {\ar_{u_{X}} "BLA" ; "LLA"};
        {\ar_{\theta} "LLA" ; "LA"};
        {\ar_-{\id_{X}} "LA" ; "LAr"};
      \endxy
    \]
    with the structure of a weak map of $L$-algebras; thus, since its underlying map of $n$-globular sets is $\id_{X}$, it is a weak equivalence of $L$-algebras.
  \end{proof}

  We now justify that, in the example above, the use of constraint cells as mediating cells allows us to avoid having to check any axioms.  All of the constraint cells we used in this example were first formed in $LX$, with the correct source and target; we then applied $\theta$ to obtain a constraint cell in $X$.  Thus any diagram we would want to commute, as one of the axioms for a weak map, is the image under $\theta$ of a diagram of constraint cells in the free $L$-algebra
    \[
      \xy
        % POINTS
        (0, 0)*+{L^2 X}="0";
        (16, 0)*+{LX;}="1";
        % ARROW
        {\ar^-{\mu^L_X} "0" ; "1"};
      \endxy
    \]
  thus any such diagram commutes (up to a constraint cell of the dimension above in the case of diagrams of cells of dimension less than $n$), by coherence for $L$-algebras (see Corollary~\ref{lem:diaginfree}).  Note that this will not be true for a general weak map of $L$-algebras, since in a non-free $L$-algebra not all diagrams of constraint cells commute.

  All of this highlights many of the difficulties involved in defining and working with weak maps when using an algebraic definition of weak $n$-category.  The natural notion of map in the algebraic setting is that of strict map; to define a notion of weak map we need to specify a large amount of extra structure and axioms.  Our approach to defining weak maps, using constraint cells to give maps that are ``automatically coherent'' without the need to check any axioms, is comparable to the approach taken in non-algebraic definitions of weak $n$-category.  In the non-algebraic setting it is meaningless to say that a map is strict, since we have no specified composites for maps to preserve.  The natural notion of map is more like a weak map (or a normalised map if the definition has a notion of degeneracies), and consists simply of a map of the underlying data.  This is sufficient since the roles of the cells are encoded in their shapes, which are recorded in the underlying data;  this is in contrast to the algebraic case in which, once we have applied an algebra action, all cells are globular so we are unable to tell what role they play in the algebra.  Maps in the non-algebraic case are more comparable to weak maps , and it is meaningless to ask for a map to be strict since we do not have specified composites.  This concludes our analysis of algebraic definitions, and leads us on to Part~\ref{Part2}, in which take the first steps towards a comparison between the algebraic definition of Penon and the non-algebraic definition of Tamsamani--Simpson.

\part{A multisimplicial nerve construction for Penon weak $n$-categories}  \label{Part2}

\chapter{Tamsamani--Simpson weak $n$-categories}  \label{chap:TamSim}  \label{chap:nerves}

  We now move on to the study of non-algebraic definitions of weak $n$-category.  This part focusses on the construction of a nerve functor for Penon weak $n$-categories, which allows us to compare them with a non-algebraic definition of weak $n$-category, specifically Simpson's variant of Tamsamani's definition.  This definition, which originates in \cite{Tam99}, is a higher-dimensional generalisation of the nerve of a category; Tamsamani's idea was, instead of using simplicial sets, presheaves on $\Delta$, to use $n$-simplicial sets, presheaves on $\Delta^n$.  He defined a weak $n$-category to be an $n$-simplicial set satisfying a generalisation of the nerve condition, called the Segal condition; this name is inherited from an analogous condition that arises in the study of Segal categories \cite{Seg74, DKS89}.  The definition we use is a variant of Tamsamani's definition, given by Simpson \cite{Sim97}.  Simpson refined and slightly simplified Tamsamani's definition, giving a more direct approach at the cost of a little generality.

  This definition is appropriate to use for the purposes of making a comparison between an algebraic and non-algebraic definition, since in a Tamsamani--Simpson weak $n$-category the cells are separated by dimension, as they are in a globular algebraic definition such as that of Penon.  This is often not the case with non-algebraic definitions of weak $n$-category.

  In this chapter we recall the nerve construction for categories, and the definition of Tamsamani--Simpson weak $n$-category.  None of the material in this chapter is new.

\section{Nerves of categories} \label{subsect:nercat}

  It is well-known that every small category has associated to it a simplicial set, known as its \emph{nerve} \cite{Seg68}.  The nerve is constructed by expanding upon the underlying graph of a category in a way that captures all the information about composition, identities, and associativity, information which is not present in the underlying graph.  The nerve construction gives a \emph{nerve functor} \hbox{$\nerve \colon \Cat \rightarrow \SSet$}, which sends a category to its nerve.  This functor is full and faithful \cite[Section 3.3]{Lei04}, so functors between categories correspond precisely to morphisms between their nerves.

  Not every simplicial set arises as the nerve of a category, but those that do arise in this way can be characterised using a number of different (equivalent) conditions.  Such a condition is called a \emph{nerve condition}.  Nerve conditions check whether or not we can define composition from a simplicial set in a way that is well-defined, associative, and unital.  Along with fullness and faithfulness of the nerve functor, they allow us to give alternative statements of the definitions of category and functor purely in terms of simplicial sets.  There are various nerve conditions; the nerve condition we use is that used by Tamsamani~\cite{Tam99}, since this is the condition that is generalised in the definition of Tamsamani--Simpson weak $n$-category.  This condition is a \emph{Segal condition}, and originates in \cite{Seg68}.

  We begin by recalling the definition of simplicial sets.  There are several different ways of defining simplicial sets, and the morphisms between them.  For our purposes, we will think of the category of simplicial sets as the category of presheaves on the \emph{simplicial category} $\Delta$.

  \begin{defn}
    We write $\Delta$ for the category with
      \begin{itemize}
        \item objects: for each $k = 0$, $1$, $2$, $\dotsc$, the totally ordered set $[k] = \{ 0, 1, \dotsc, k \}$;
        \item morphisms: order preserving functions.
      \end{itemize}
    The \emph{category of simplicial sets} $\SSet$ is defined to be the presheaf category
      \[
        \SSet := [\Delta^{\op}, \Set].
      \]
  \end{defn}

  We now establish some notation and terminology that we will use when talking about simplicial sets, all of which is standard.  For a simplicial set $A \colon \Delta^{\op} \rightarrow \Set$, we usually write $A_k$ to denote the set $A[k]$, where $[k] \in \Delta$.  We refer to the elements of $A_k$ as the ``(simplicial) $k$-cells of $A$''.  We refer to maps $A[p]$, where $[p] \colon [j] \rightarrow [k]$ in $\Delta$ is injective, as ``face maps'', since such maps give us the $j$-cell faces of the $k$-cells of $A$.  For all $k > 0$, $0 \leq i \leq k$, we have a map $d_i \colon [k - 1] \rightarrow [k]$ in $\Delta$ given by
    \[
      d_{i}(j) =
            \left\{ \begin{array}{ll}
                j & \text{if } j < i, \\
                j + 1 & \text{if } j \geq i,
            \end{array} \right.
    \]
  and the face maps $A(d_i)$ generate all face maps in $A$.  Similarly, we refer to maps $A[q]$, where $[q] \colon [j] \rightarrow [k]$ is surjective, as ``degeneracy maps'' since such maps give us degenerate $j$-cells; in the nerve of a category, these degenerate cells will be those made up at least partially from identities.

  An ordered set can be considered as a category in which the elements of the set are the objects of the category and, for elements $a$, $b$, there is one morphism $a \rightarrow b$ if $a \leq b$, and no such morphism otherwise.  Order-preserving maps then correspond precisely to functors.  Thus we can consider $\Delta$ to be a full subcategory of $\Cat$; we write
    \[
      I \colon \Delta \longrightarrow \Cat
    \]
  for the inclusion functor.  For each $[k] \in \Delta$, $I[k]$ is the category consisting of a string of $k$ composable morphisms.  In the nerve of a category $\mathcal{C}$, the set of $k$-cells is given by $\Cat(I[k], \mathcal{C})$, the set of composable strings of length $k$ of morphisms in $\mathcal{C}$.

  We now use this inclusion functor to define the nerve functor for categories.

  \begin{defn}
    The \emph{nerve functor} $\nerve \colon \Cat \rightarrow \SSet$ is defined to be:
      \[
        \xymatrix@C=0pt@R=2pt{
          \nerve \colon \Cat & \longrightarrow & [\Delta^{\op}, \Set]  \\
          \mathcal{C} \ar[dddd]_{F} && \Cat(I(-), \mathcal{C}) \ar[dddd]^{F \comp -}  \\
          \\
          & \longmapsto & \\
          \\
          \mathcal{D} && \Cat(I(-), \mathcal{D}),
                 }
      \]
    where $\Cat(I(-), \mathcal{C})$, referred to as the \emph{nerve} of the category $\mathcal{C}$, is the simplicial set defined by
      \[
        \xymatrix@C=0pt@R=2pt{
          \Cat(I(-), \mathcal{C}) \colon \Delta^{\op} & \longrightarrow & \Set  \\
          [k] \ar[dddd]_{p} && \Cat(I[k], \mathcal{C}) \ar[dddd]^{- \comp p}  \\
          \\
          & \longmapsto & \\
          \\
          [j] && \Cat(I[j], \mathcal{C}).
                 }
      \]
  \end{defn}

   In the nerve of a category $\mathcal{C}$, the set of $0$-cells $\Cat(I[0], \mathcal{C})$ is the set of objects in $\mathcal{C}$, and the set of $1$-cells $\Cat(I[1], \mathcal{C})$ is the set of morphisms in $\mathcal{C}$, with the face maps giving the sources and targets.  The set $\Cat(I[2], \mathcal{C})$ of composable pairs of morphisms tells us about composition in $\mathcal{C}$, and the sets of $k$-cells for $k > 2$ tell us about associativity.  Since identities in the nerve are given by the degeneracy maps, when we take the nerve of a category we retain all the information in that category.

  The following result is well-known (see, for example, \cite[Section 3.3]{Lei04}).

  \begin{prop}  \label{prop:nfandf}
    The nerve functor $\nerve \colon \Cat \rightarrow \SSet$ is full and faithful.
  \end{prop}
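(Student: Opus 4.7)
The plan is to exhibit an explicit inverse to the map
\[
\nerve_{\mathcal{C},\mathcal{D}} \colon \Cat(\mathcal{C}, \mathcal{D}) \longrightarrow \SSet(\nerve\mathcal{C}, \nerve\mathcal{D}).
\]
Faithfulness will be essentially immediate: if $F, G \colon \mathcal{C} \rightarrow \mathcal{D}$ have $\nerve F = \nerve G$, then since $I[0]$ is the terminal category and $I[1]$ is the walking arrow, the component at $[0]$ encodes the action on objects and the component at $[1]$ encodes the action on morphisms, so $F = G$. The substance of the proof is fullness.

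For fullness, given a simplicial map $\alpha \colon \nerve\mathcal{C} \rightarrow \nerve\mathcal{D}$, I would first define a candidate functor $F \colon \mathcal{C} \rightarrow \mathcal{D}$ by declaring $F$ on objects to be $\alpha_{[0]}$ and $F$ on morphisms to be $\alpha_{[1]}$. The functoriality of $F$ then has to be checked from naturality of $\alpha$ against the maps of $\Delta$: compatibility with the two face maps $d_0, d_1 \colon [0] \rightarrow [1]$ gives preservation of source and target; compatibility with the unique degeneracy $[1] \rightarrow [0]$ gives preservation of identities; and preservation of composition is read off from $\alpha_{[2]}$, using that a $2$-cell of $\nerve\mathcal{C}$ is precisely a composable pair $(f,g)$ in $\mathcal{C}$ together with its (forced) composite $g \comp f$, whose three face maps $d_2, d_0, d_1$ produce $f$, $g$, and $g \comp f$ respectively. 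Applying $\alpha_{[2]}$ and using naturality of $\alpha$ against each $d_i$ then forces $F(g \comp f) = F(g) \comp F(f)$.

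It then remains to show that $\nerve F = \alpha$ at every dimension $[k]$, not merely at $[0]$, $[1]$, and $[2]$. For this I would use the fact that any functor out of $I[k]$ is determined by its values on the $k$ generating morphisms (the ``edges'' of the simplex) together with their prescribed composites, so a $k$-simplex of $\nerve\mathcal{C}$ is entirely determined by its $1$-dimensional faces via the face maps $\nerve\mathcal{C}_{[k]} \rightarrow \nerve\mathcal{C}_{[1]}$. Naturality of $\alpha$ against these face maps then pins down $\alpha_{[k]}$ in terms of $\alpha_{[1]}$, and the same description applies to $(\nerve F)_{[k]}$; comparing the two gives $\nerve F = \alpha$.

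The main obstacle, and the step that requires the most care, is the last one: verifying that a $k$-simplex of the nerve is rigidly determined by its edges, so that knowing $\alpha$ on $[1]$ (and its compatibility at $[2]$ for composition) really forces the value at every higher $[k]$. The cleanest way to formulate this is to note that each $I[k]$ is the colimit in $\Cat$ of its diagram of edges $I[1]$ glued along vertices $I[0]$, so $\Cat(I[k], -)$ is the corresponding limit; translated through the definition of $\nerve$, this says the face maps exhibit $\nerve\mathcal{C}_{[k]}$ as the set of composable $k$-strings in $\mathcal{C}$, which is precisely what is needed. Once this is in hand, $F \mapsto \nerve F$ and $\alpha \mapsto F$ are mutually inverse, giving fullness and completing the proof.
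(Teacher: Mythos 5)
Your argument is correct and is the standard proof of this well-known fact; the paper itself does not supply a proof but simply cites Leinster, and your construction of the inverse to $F \mapsto \nerve F$ (reading off objects from $\alpha_{[0]}$, morphisms from $\alpha_{[1]}$, functoriality from naturality at $[2]$, and then using that a $k$-simplex of the nerve is determined by its edges) is exactly the argument that reference gives.
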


  Thus functors between categories correspond precisely to maps of simplicial sets between their nerves.

  We now recall the nerve condition that will be generalised in the definition of Tamsamani--Simpson weak $n$-category in the next section.  The nerve condition we use is a Segal condition, rather than a horn-filling condition.

  In  the category $\Delta$, for each object $[k]$, $k \geq 1$, and for each $i$, $0 \leq i \leq k - 1$, we have a map
    \begin{align*}
      \iota_{i + 1} \colon [1] & \rightarrow [k] \\
      0 & \mapsto i \\
      1 & \mapsto i + 1. \\
    \end{align*}
  We also have maps $\sigma$, $\tau: [0] \rightarrow [1]$, with $\sigma(0) = 0$ and $\tau(0) = 1$.  Thus we can construct the following diagram in $\Delta$:
     \[
       \xymatrix@C=12pt@R=12pt{
       &&&&& [k] \\
       \\
       [1] \ar[uurrrrr]^(.3){\iota_1} && [1] \ar[uurrr]^(.25){\iota_2} && [1] \ar[uur]^(.25){\iota_3} && \dotsc && [1] \ar[uulll]_(.25){\iota_{k - 1}} && [1] \ar[uulllll]_(.3){\iota_{k}} \\
       & [0] \ar[ul]^{\tau} \ar[ur]_{\sigma} && [0] \ar[ul]^{\tau} \ar[ur]_{\sigma} &&&&&& [0] \ar[ul]^{\tau} \ar[ur]_{\sigma}
                }
     \]
  and this diagram commutes.

  Let $A \colon \Delta^{\op} \rightarrow \Set$ be a simplicial set.  Applying $A \colon \Delta^{\op} \rightarrow \Set$ to the diagram above gives the following diagram in $\Set$:
    \[
      \xymatrix@C=12pt@R=12pt{
        &&&&& A_k \ar[ddlllll]_(.7){i_1} \ar[ddlll]_(.75){i_2} \ar[ddl]_(.75){i_3} \ar[ddrrr]^(.75){i_{k - 1}} \ar[ddrrrrr]^(.7){i_k} \\
        \\
        A_1 \ar[dr]_{t} && A_1 \ar[dl]^{s} \ar[dr]_{t} && A_1 \ar[dl]^{s} && \dotsc && A_1 \ar[dr]_{t} && A_1 \ar[dl]^{s} \\
        & A_0 && A_0 &&&&&& A_0
               }
    \]
  where $i_j$ sends a $k$-cell to the $j$th $1$-cell in its spine, and $s$ and $t$ send a $1$-cell to its source and target $0$-cells respectively.  Since functors preserve commutative diagrams, this diagram commutes and is a cone with vertex $A_k$ over the diagram
    \[
      \overbrace{
      \xymatrix@C=12pt@R=12pt{
        A_1 \ar[dr]_{t} && A_1 \ar[dl]^{s} \ar[dr]_{t} && A_1 \ar[dl]^{s} && \dotsc && A_1 \ar[dr]_{t} && A_1 \ar[dl]^{s} \\
        & A_0 && A_0 &&&&&& A_0
               }
               }^k
    \]
  Since $\Set$ is complete, we can take the limit of this diagram, called a wide pullback, which we denote by $\underbrace{A_1 \times_{A_0} \dotsb \times_{A_0} A_1}_k$.  The $k$th Segal map
  \[
    S_k: A_k \rightarrow \underbrace{A_1 \times_{A_0} \dotsb \times_{A_0} A_1}_k
  \]
  is the unique map induced by the universal property of the wide pullback such that the diagram
    \[
      \xymatrix@C=14pt@R=14pt{
        &&& A_k \ar@/_2.5pc/[ddddlll]_(.5){i_1} \ar@/_2pc/[ddddl]_(.5){i_2} \ar@{-->}[dd]^{!S_k}  \ar@/^2pc/[ddddr]^(.5){i_{k - 1}} \ar@/^2.5pc/[ddddrrr]^(.5){i_k} \\
        \\
        &&& A_1 \times_{A_0} \dotsb \times_{A_0} A_1 \ar[ddlll] \ar[ddl] \ar[ddr] \ar[ddrrr] \\
        \\
        A_1 \ar[dr]_{t} && A_1 \ar[dl]^{s} & \dotsc & A_1 \ar[dr]_{t} && A_1 \ar[dl]^{s} \\
        & A_0 &&&& A_0
               }
    \]
  commutes.

  The set $A_k$ is the set of simplicial $k$-cells in $A$.  If $A$ arises as the nerve of a category, we should be able to think of these as composable strings of $k$ morphisms, together with their composites.  Now, an element of
    \[
      \underbrace{A_1 \times_{A_0} \dotsb \times_{A_0} A_1}_k
    \]
  is a $k$-tuple $(f_1, \dotsc, f_k)$ of $1$-cells in $A$, such that $t(f_i) = s(f_{i+1})$ for $0 \leq i < k$.  We can think of such a $k$-tuple as being like a string of $k$ ``composable'' $1$-cells in our simplicial set.

  \begin{defn} \label{defn:nervecond}
    A simplicial set $A: \Delta^{\op} \rightarrow \Set$, is said to satisfy the \emph{nerve condition} if, for all $k$, the $k$th Segal map $S_k$ is an isomorphism.
  \end{defn}

  The following result originates in \cite{Seg68}.

  \begin{prop} \label{prop:nervecond}
    Given a category $\mathcal{C}$, the nerve $\nerve(\mathcal{C})$ satisfies the nerve condition.  Given a simplicial set $A$, if $A$ satisfies the nerve condition then there exists a category $\mathcal{C}$ such that $\nerve(\mathcal{C}) \iso A$.
  \end{prop}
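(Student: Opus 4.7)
The plan is to prove the two halves of the proposition in turn, with the second half being the more substantial direction.

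For the first half, I would observe that $I[k]$ is the free category on the linear graph $0 \to 1 \to \dotsb \to k$, with generating morphisms $\iota_1, \dotsc, \iota_k$. Hence a functor $F \colon I[k] \to \mathcal{C}$ is determined freely and uniquely by the $k$-tuple $(F\iota_1, \dotsc, F\iota_k)$ of morphisms of $\mathcal{C}$ satisfying $t(F\iota_j) = s(F\iota_{j+1})$, which is exactly an element of the wide pullback. Unpacking the definition of $S_k$ shows that this bijection \emph{is} the Segal map, proving the first statement.

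For the second half, given $A$ satisfying the nerve condition, I would construct a candidate category $\mathcal{C}$ with object set $A_0$ and morphism set $A_1$, with source and target given by the two face maps $A_1 \rightrightarrows A_0$, identities given by the degeneracy $A_0 \to A_1$, and composition defined by
\[
  A_1 \times_{A_0} A_1 \xrightarrow{S_2^{-1}} A_2 \xrightarrow{A(d_1)} A_1,
\]
where $d_1 \colon [1] \to [2]$ is the face map skipping the middle vertex. The unit axioms then follow from the simplicial identities relating $d_1$ with the two degeneracies $[2] \to [1]$ and from the fact that $S_2$ sends the image of these degeneracies to pairs involving an identity. For associativity I would apply $S_3^{-1}$ to a composable triple $(h,g,f)$ to obtain an element of $A_3$, and observe that the two bracketings $h \circ (g \circ f)$ and $(h \circ g) \circ f$ arise as the images of this same element of $A_3$ under two different composites of face maps $[1] \to [3]$; the simplicial identities (together with the fact that $S_2$ is invertible) force these to agree.

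To identify $\nerve(\mathcal{C})$ with $A$, I would define, for each $k$, a map $\nerve(\mathcal{C})_k \to A_k$ by sending a composable string $(f_1,\dotsc,f_k)$ to $S_k^{-1}(f_1,\dotsc,f_k)$, and verify that the Segal condition on $A$ makes this a bijection at each dimension. Naturality in $[k] \in \Delta$ reduces, by generation, to checking compatibility with the elementary face and degeneracy maps; faces correspond to extracting subsequences of morphisms (with inner faces using the composition defined via $S_2^{-1}$), and degeneracies correspond to inserting identities, so compatibility follows from the construction of composition and identities in $\mathcal{C}$.

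The main obstacle I expect is the bookkeeping for associativity and for naturality of the isomorphism $\nerve(\mathcal{C}) \iso A$ in the simplicial direction: both amount to chasing the universal property of the wide pullbacks through the simplicial identities, and one must be careful to check that the inner face map $A(d_1) \colon A_2 \to A_1$ used to define composition is genuinely compatible with the higher Segal isomorphisms $S_k$ when $k \geq 3$. Once the $k=2$ and $k=3$ cases are in hand, the general $k$ case follows by writing an arbitrary element of the pullback of $k$ copies of $A_1$ as a glueing of $2$-simplices whose joint data is controlled by $S_k^{-1}$.
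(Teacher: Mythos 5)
Your argument is correct, and it is the standard proof of this classical result; note that the paper itself gives no proof of Proposition~\ref{prop:nervecond}, simply citing its origin in Segal's work, so there is nothing to compare against. Both halves of your sketch are sound: the first half correctly identifies $I[k]$ as the free category on the linear graph, so that $\Cat(I[k],\mathcal{C})$ is literally the wide pullback and $S_k$ is a bijection; for the converse, your construction of composition via $S_2^{-1}$ followed by the inner face, the unit laws via the degeneracies $[2]\to[1]$, and associativity by exhibiting both bracketings as the $03$-edge of the $3$-simplex $S_3^{-1}(f,g,h)$ are all exactly the right moves, with the invertibility of $S_2$ supplying the needed uniqueness of $2$-simplices over a given spine. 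You have also correctly flagged the one point requiring care, namely that the inner face $A(d_i)$ of a $k$-simplex has as its spine the original spine with $f_{i+1}\circ f_i$ substituted in position $i$; this follows from applying the $S_2$-uniqueness to the face on vertices $i-1$, $i$, $i+1$, and with it the naturality of the isomorphism $\nerve(\mathcal{C})\iso A$ reduces to the generating faces and degeneracies as you say.
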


  Proposition~\ref{prop:nervecond} and Proposition~\ref{prop:nfandf} allow us to give the following alternative statement of the definition of the category of small categories.

  \begin{defn}  \label{defn:catasnerve}
    The category of small categories $\Cat$ is the full subcategory of $[\Delta^{\op}, \Set]$ whose objects are those simplicial sets that satisfy the nerve condition.
  \end{defn}

\section{Tamsamani--Simpson weak $n$-categories} \label{subsect:tamsimn}

  In this section we recall Simpson's variant of Tamsamani's definition of weak $n$-category \cite{Tam99, Sim97}.  We begin by generalising the definition of simplicial set to that of an \emph{$n$-simplicial set} (often known as a \emph{multisimplicial set} when not specifying the value of $n$).

  \begin{defn}
    The \emph{category of $n$-simplicial sets} $\nSSet$ is defined inductively as follows:
      \begin{itemize}
        \item $0\text{-}\SSet := \Set$;
        \item for $n \geq 1$, $n\text{-}\SSet := [\Delta^{\op}, (n - 1)\text{-}\SSet] \iso [(\Delta^n)^{\op}, \Set]$, by cartesian closedness of $\Cat$.
      \end{itemize}
    \end{defn}

  We could have defined $n$-simplicial sets to be presheaves on $\Delta^n$ directly, but the form of the definition stated above highlights the fact that $n$-simplicial sets can be obtained by a process of repeated internalisation, which is a well-established method of adding extra dimensions; thus this illustrates why $\Delta^n$ is a reasonable category on which to take presheaves in a definition of weak $n$-category.  Note that the inductive nature of this definition means that the definition of Tamsamani--Simpson weak $n$-category does not a priori allow for the case $n = \omega$.  We write an object of $\Delta^n$ as an $n$-tuple
    \[
      (k_1, k_2, \dotsc, k_n),
    \]
  where, for all $1 \leq i \leq n$, $k_i \in \mathbb{N}$.

  We now explain how we should think of the shapes of cells in an $n$-simplicial set for the purposes of the definition of Tamsamani--Simpson weak $n$-category.  In $\Delta$, the object $[k]$ can be thought of as a string of $k$ composable morphisms.  Similarly, in an $n$-simplicial set $A \colon (\Delta^n)^{\op} \rightarrow \Set$, the set $A(k_1, k_2, \dotsc, k_n)$ can be thought of as the set of pasting diagrams called ``cuboidal'' by Leinster~\cite{Lei04}.  A cuboidal pasting diagram $(k_1, k_1, \dotsc, k_n) \in \Delta^n$ can be thought of as a grid of $n$-cells which is $k_1$ $n$-cells long, $k_2$ $n$-cells high, \ldots, and $k_n$ $n$-cells wide; for example, the cuboidal pasting diagram $(2,3) \in \Delta^2$ is shown in the diagram below.
            \[
              \xymatrix@R=7pt@C=40pt{
                & {} \ar@{=>}[dd] && {} \ar@{=>}[dd] & \\
                \\
                & {} \ar@{=>}[dd] && {} \ar@{=>}[dd] & \\
                \bullet \ar@/^3.5pc/[rr] \ar@/^1.3pc/[rr] \ar@/_1.3pc/[rr] \ar@/_3.5pc/[rr] && \bullet \ar@/^3.5pc/[rr] \ar@/^1.3pc/[rr] \ar@/_1.3pc/[rr] \ar@/_3.5pc/[rr] && \bullet \\
                & {} \ar@{=>}[dd] && {} \ar@{=>}[dd] & \\
                \\
                & {} && {}
                       }
            \]

  For this to give a globular notion of weak $n$-category, we need to ensure that, if $g:x \rightarrow x'$ is a $k$-cell in a weak $n$-category, then the $(k - 1)$-cells $x$ and $x'$ have the same source and the same target.  To do so we require that, for any $j$, if $k_j = 0$, i.e. the pasting diagram is $0$ $j$-cells wide, then $j - 1$ should be the maximum dimension of cell in the diagram.  In order to deal with this issue we use Simpson's method, which is to use presheaves on a quotient of $\Delta^n$, denoted $\Theta^n$, rather than using presheaves on $\Delta^n$ itself.  Note that if we do not ensure that our cells are globular, we obtain a definition of weak $n$-tuple category (also known as a weak $n$-fold category).

  We define $\Theta^n$ as a coequaliser in $\Cat$.  The idea is to identify objects in $\Delta^n$ if they are to be thought of as the same cuboidal pasting diagram.  For example, in $\Theta^2$, given an object $(j, k)$, if $j = 0$ the pasting diagram has zero width, so the value of $k$ should make no difference since the pasting diagram must also have zero height.  Thus in $\Theta^2$ we identify all objects of the form $(0, k)$, so $\Theta^2$ looks like:
    \[
      \xy
        % POINTS
        (0, 0)*+{(0, 0)}="0,0";
        (0, 4)*+{\phantom{(0, 0)}}="ph0,0";
        (24, 0)*+{(1, 0)}="1,0";
        (48, 0)*+{(2, 0)}="2,0";
        (56, 0)*+{\dotsc};
        (24, 16)*+{(1, 1)}="1,1";
        (48, 16)*+{(2, 1)}="2,1";
        (56, 16)*+{\dotsc};
        (24, 32)*+{(1, 2)}="1,2";
        (48, 32)*+{(2, 2)}="2,2";
        (56, 32)*+{\dotsc};
        (24, 40)*+{\vdots};
        (48, 40)*+{\vdots};
        % ARROWS
        % HORIZONTAL
        {\ar@<1ex> "0,0" ; "1,0"};
        {\ar@<-1ex> "0,0" ; "1,0"};
        {\ar "1,0" ; "0,0"};
        {\ar@<1ex>@/^1pc/ "0,0" ; "1,1"};
        {\ar@<-1ex>@/^1pc/ "0,0" ; "1,1"};
        {\ar@/_1pc/ "1,1" ; "0,0"};
        {\ar@<1ex>@/^2pc/ "ph0,0" ; "1,2"};
        {\ar@<-1ex>@/^2pc/ "ph0,0" ; "1,2"};
        {\ar@/_2pc/ "1,2" ; "ph0,0"};
        {\ar@<2ex> "1,0" ; "2,0"};
        {\ar "1,0" ; "2,0"};
        {\ar@<-2ex> "1,0" ; "2,0"};
        {\ar@<1ex> "2,0" ; "1,0"};
        {\ar@<-1ex> "2,0" ; "1,0"};
        {\ar@<2ex> "1,1" ; "2,1"};
        {\ar "1,1" ; "2,1"};
        {\ar@<-2ex> "1,1" ; "2,1"};
        {\ar@<1ex> "2,1" ; "1,1"};
        {\ar@<-1ex> "2,1" ; "1,1"};
        {\ar@<2ex> "1,2" ; "2,2"};
        {\ar "1,2" ; "2,2"};
        {\ar@<-2ex> "1,2" ; "2,2"};
        {\ar@<1ex> "2,2" ; "1,2"};
        {\ar@<-1ex> "2,2" ; "1,2"};
        % VERTICAL
        {\ar@<1ex> "1,0" ; "1,1"};
        {\ar@<-1ex> "1,0" ; "1,1"};
        {\ar "1,1" ; "1,0"};
        {\ar@<1ex> "2,0" ; "2,1"};
        {\ar@<-1ex> "2,0" ; "2,1"};
        {\ar "2,1" ; "2,0"};
        {\ar@<2ex> "1,1" ; "1,2"};
        {\ar "1,1" ; "1,2"};
        {\ar@<-2ex> "1,1" ; "1,2"};
        {\ar@<1ex> "1,2" ; "1,1"};
        {\ar@<-1ex> "1,2" ; "1,1"};
        {\ar@<2ex> "2,1" ; "2,2"};
        {\ar "2,1" ; "2,2"};
        {\ar@<-2ex> "2,1" ; "2,2"};
        {\ar@<1ex> "2,2" ; "2,1"};
        {\ar@<-1ex> "2,2" ; "2,1"};
      \endxy
    \]
  Similarly, for higher values of $n$, objects of $\Delta^n$ are identified in $\Theta^n$ if they differ only after a $0$.

  \begin{defn}  \label{defn:Theta}
    We define a category $\Theta^n$ as a coequaliser in $\Cat$ as follows:  first, let $R$ be the subcategory of $\Delta^n \times \Delta^n$ with
      \begin{itemize}
        \item objects: for all objects $(k_1, k_2, \dotsc, k_n)$ of $\Delta^n$,
          \[
            ((k_1, k_2, \dotsc, k_n), (k_1, k_2, \dotsc, k_n))
          \]
        is in $R$; also, for a fixed $j$ with $1 \leq j < n$,
          \[
            ((k_1, k_2, \dotsc, k_j, \dotsc, k_n), (k'_1, k'_2, \dotsc, k'_j, \dotsc, k'_n))
          \]
        is in $R$ if $k_j = 0 = k'_j$ and $k_i = k'_i$ for all $i < j$;
        \item morphisms: let $l$, $m \leq n$, and let $(\vectk,0) = (k_1, \dotsc, k_l, 0, \dotsc, 0)$ and $(\vectk^\prime,0) = (k^\prime_1, \dotsc, k^\prime_m, 0, \dotsc, 0)$ be objects of $\Delta^n$.  Then the morphism
              \[
                (\phi, \psi) \colon ((\vectk, 0), (\vectk, 0)) \rightarrow ((\vectk^\prime, 0), (\vectk^\prime, 0)),
              \]
            where $\phi = (\phi_1, \dotsc, \phi_n)$, $\psi = (\psi_1, \dotsc, \psi_n)$, is in $R$ if
      \begin{itemize}
        \item for all $i \leq j$, $\phi_i = \psi_i$;
        \item $\phi_j: [k_j] \rightarrow [{k^\prime}_j]$ factors through $[0]$ in $\Delta$.
      \end{itemize}
      \end{itemize}

    Since $R$ is a subcategory of $\Delta^n \times \Delta^n$, it comes equipped with projection maps $\pi_1$, $\pi_2 \colon R \rightarrow \Delta^n$.  The category $\Theta^n$ is defined to be the coequaliser of the diagram
      \[
        \xy
          % POINTS
          (0, 0)*+{R}="R";
          (16, 0)*+{\Delta^n}="Deltan";
          % ARROWS
          {\ar@<1ex>^-{\pi_1} "R" ; "Deltan"};
          {\ar@<-1ex>_-{\pi_2} "R" ; "Deltan"};
        \endxy
      \]
    in $\Cat$.  A presheaf
      \[
        A \colon (\Theta^n)^{\op} \longrightarrow \Set
      \]
    is called an \emph{$n$-precategory}.
  \end{defn}

  Note that this is not the only way of ensuring that we have globular cells; in the original definition, Tamsamani takes presheaves on $\Delta^n$, then includes an extra condition to ensure that the cells are globular.  In their expositions of Simpson's definition, both Cheng and Lauda \cite{Guide} and Leinster \cite{Lei02} also take this approach.  Using Simpson's approach does make a difference, since it leads to a definition of a weak $n$-category as a presheaf satisfying the Segal condition, with no extra conditions; this allows us to work with a presheaf category, with all the usual desirable properties these have, such as completeness, cocompleteness, and the existence of the Yoneda embedding.

  We now discuss the Segal condition, Tamsamani's $n$-dimensional generalisation of the nerve condition for categories given in Definition~\ref{defn:nervecond}.  Like the nerve condition, the Segal condition is a condition on a family of morphisms of $n$-precategories, called the \emph{Segal maps}; these Segal maps are defined to be induced by wide pullbacks in a way analogous to the definition of the Segal maps in Section~\ref{subsect:nercat}.

  In the nerve condition we required that the Segal maps were isomorphisms, to ensure that well-defined, associative, unital composition could be extracted from the nerve.  In the Segal condition for weak $n$-categories, we wish to weaken this since we only want composition that is associative and unital up to coherent isomorphism.  If the Segal maps were maps of $n$-categories we would instead require them to be equivalences.  However, the Segal maps are merely maps of $n$-precategories, so we cannot use the same notion of equivalence.  A functor is an equivalence if it is full, faithful, and essentially surjective on objects; for a map of $n$-precategories, we can still define fullness and faithfulness in the same way, but we cannot define what it means for a map to be essentially surjective since we do not have a composition structure, and thus no notion of isomorphism between cells.

  It was Simpson's insight that, instead of asking for essential surjectivity, we can demand surjectivity on $0$-cells.  Simpson observed that the resulting notion, which we call \emph{contractibility}, is enough for the purposes of the Segal condition, although it is not enough to define equivalences in general.  Recall that we previously used this insight of Simpson when defining equivalences of algebras for an operad in Definition~\ref{defn:globopalgequiv}, since contractibility was all that we required in this context as well.

  Before defining contractibility, we establish some notation used in the definition.  Let $0 \leq p \leq n$, and write $\vectone_p$ for the equivalence class in $\Theta^n$ of the object
      \[
        (\underbrace{1, \dotsc, 1}_p, \underbrace{0, \dotsc, 0}_{n - p})
      \]
    of $\Delta^n$, which should be thought of as a single globular $p$-cell.

    Let $A \colon (\Theta^n)^{\op} \rightarrow \Set$ be an $n$-precategory.  In $\Delta$, we have maps $\sigma$, $\tau: [0] \rightarrow [1]$, with $\sigma(0) = 0$ and $\tau(0) = 1$.  We define the source and target maps (denoted $s$ and $t$ respectively) in $A$, for each $p$, as follows:
      \begin{align*}
        s = A(\underbrace{\id, \dotsc, \id}_{p - 1}, \sigma, \id, \dotsc, \id): A(\vectone_p) & \rightarrow A(\vectone_{p - 1}); \\
        t = A(\underbrace{\id, \dotsc, \id}_{p - 1}, \tau, \id, \dotsc, \id): A(\vectone_p) & \rightarrow A(\vectone_{p - 1}).
      \end{align*}
  Note that this defines the underlying $n$-globular set of the $n$-precategory $A$, with the set of $p$-cells for each $- \leq p \leq n$ given by $A(\vectone_p)$.

  We now give the definition of contractibility.

  \begin{defn} \label{defn:simcontr}
    Let $m \geq 1$, let $A$, $B \colon (\Theta^m)^{\op} \rightarrow \Set$ be $m$-precategories, and let $\alpha \colon A \rightarrow B$ be a map of $m$-precategories.  For each $0 \leq p \leq m - 1$, we write $A(\vectone_p) \times_{B(\vectone_p)} B(\vectone_{p + 1}) \times_{B(\vectone_p)} A(\vectone_p)$ for the limit of the diagram
      \[
        \xymatrix{
          && A(\vectone_p) \ar[d]^{\alpha_{\vectone_p}}  \\
          & B(\vectone_{p + 1}) \ar[r]_s \ar[d]^t & B(\vectone_p)  \\
          A(\vectone_p) \ar[r]_{\alpha_{\vectone_p}} & B(\vectone_p)
                 }
      \]
    in $\Set$.  We also have a cone over this diagram with vertex $A(\vectone_{p + 1})$, as shown in the diagram below:
      \[
        \xymatrix{
          A(\vectone_{p + 1}) \ar[rr]^s \ar[dd]_t \ar[dr]_{\alpha_{\vectone_{p + 1}}} && A(\vectone_p) \ar[d]^{\alpha_{\vectone_p}}  \\
          & B(\vectone_{p + 1}) \ar[r]_s \ar[d]^t & B(\vectone_p)  \\
          A(\vectone_p) \ar[r]_{\alpha_{\vectone_p}} & B(\vectone_p)
                 }
      \]
    The universal property of the limit induces a unique map
      \[
        \tilde{\alpha}_{\vectone_{p + 1}} \colon A(\vectone_{p + 1}) \rightarrow A(\vectone_p) \times_{B(\vectone_p)} B(\vectone_{p + 1}) \times_{B(\vectone_p)} A(\vectone_p)
      \]
    such that
      \[
        \xymatrix{
          A_{\vectone_{p + 1}} \ar@/^1.5pc/[drrr]^s \ar@/_4pc/[dddr]_t \ar@/_4pc/[ddrr]_{\alpha_{\vectone_{p + 1}}} \ar@{-->}[dr]^{\tilde{\alpha}_{\vectone_{p + 1}}} \\
          & A(\vectone_p) \times_{B(\vectone_p)} B(\vectone_{p + 1}) \times_{B(\vectone_p)} A(\vectone_p) \ar[rr] \ar[dd] \ar[dr] && A(\vectone_p) \ar[d]^{\alpha_{\vectone_p}}  \\
          && B(\vectone_{p + 1}) \ar[r]_s \ar[d]^t & B(\vectone_p)  \\
          & A(\vectone_p) \ar[r]_{\alpha_{\vectone_p}} & B(\vectone_p)
                 }
      \]
    commutes.

    The map $\alpha \colon A \rightarrow B$ is  said to be contractible if:
      \begin{itemize}
        \item the map $\alpha_{\vectone_0} \colon A(\vectone_0) \rightarrow B(\vectone_0)$ is surjective (this is surjectivity of $\alpha$ on objects);
        \item for each $0 \leq p \leq m - 1$, the map
          \[
            \tilde{\alpha}_{\vectone_{p + 1}} \colon A(\vectone_{p + 1}) \rightarrow A(\vectone_p) \times_{B(\vectone_p)} B(\vectone_{p + 1}) \times_{B(\vectone_p)} A(\vectone_p)
          \]
           is surjective (this gives fullness at dimension $(p + 1)$);
        \item for each $p = m - 1$, the map
          \[
            \tilde{\alpha}_{\vectone_{p + 1}} \colon A(\vectone_{p + 1}) \rightarrow A(\vectone_p) \times_{B(\vectone_p)} B(\vectone_{p + 1}) \times_{B(\vectone_p)} A(\vectone_p)
          \]
           is injective (this gives faithfulness at dimension $m$).
      \end{itemize}
  \end{defn}

  Note that the definition of contractibility above is only concerned with the effect of $A$ and $B$ on $\vectone_p$.  The set $A(\vectone_p)$ is the set of ``globular $p$-cells'', i.e. $p$-cells in $A$ that are one $1$-cell long, one $2$-cell high, etc.; there are no cells composed end-to-end (and similarly for $B$).

  We now give the construction of the Segal maps.  In the nerve condition for categories we considered composable strings of $k$ morphisms for every $k \in \mathbb{N}$; here we consider, for every $0 \leq m \leq n$, the composable strings of $k$ $m$-cells for every $k$ and every composite of $m$-cells.

  Let $A \colon (\Theta^n)^{\op} \rightarrow \Set$ be an $n$-precategory.  Then, for all $1 \leq m \leq n$, and all $\vectk = (k_1, \dotsc, k_{m - 1})$, we have a functor
    \begin{align*}
      A(\vectk, -, -): \Delta^{\op} & \rightarrow [(\Theta^{n - m})^{\op}, \Set] \\
      [k] & \mapsto A(\vectk, k, -),
    \end{align*}
  with the effect on morphisms given by composition.

  Consider, as in Subsection~\ref{subsect:nercat}, the following diagram in $\Delta$:
     \[
       \xymatrix@C=12pt@R=12pt{
       &&&&& [k] \\
       \\
       [1] \ar[uurrrrr]^(.3){\iota_1} && [1] \ar[uurrr]^(.25){\iota_2} && [1] \ar[uur]^(.25){\iota_3} && \dotsc && [1] \ar[uulll]_(.25){\iota_{k - 1}} && [1] \ar[uulllll]_(.3){\iota_{k}} \\
       & [0] \ar[ul]^{\tau} \ar[ur]_{\sigma} && [0] \ar[ul]^{\tau} \ar[ur]_{\sigma} &&&&&& [0] \ar[ul]^{\tau} \ar[ur]_{\sigma}
                }
     \]

  Applying the functor $A(\vectk, -, -)$ to this diagram gives us the following diagram in $[(\Theta^{n - m})^{\op}, \Set]$:
     \[
      \xymatrix@C=-6pt@R=12pt{
        &&& A(\vectk,k,-) \ar[ddlll]_(.7){i_1} \ar[ddl]_(.75){i_2} \ar[ddr]^(.75){i_{k - 1}} \ar[ddrrr]^(.7){i_k} \\
        \\
        A(\vectk,1,-) \ar[dr]_{t} && A(\vectk,1,-) \ar[dl]^{s} & \dotsc & A(\vectk,1,-) \ar[dr]_{t} && A(\vectk,1,-) \ar[dl]^{s} \\
        & A(\vectk,0,-) &&&& A(\vectk,0,-)
               }
     \]
  and this is a cone over the diagram:
    \[
      \overbrace{
      \xymatrix@C=-1pt@R=12pt{
        A(\vectk,1,-) \ar[dr]_{t} && A(\vectk,1,-) \ar[dl]^{s} & \dotsc & A(\vectk,1,-) \ar[dr]_{t} && A(\vectk,1,-) \ar[dl]^{s} \\
        & A(\vectk,0,-) &&&& A(\vectk,0,-)
               }
               }^k
    \]

  Since $\Set$ is complete, $[(\Theta^{n - m})^{\op}, \Set]$ is complete, so we can take the limit of this diagram, denoted
  \[
    A(\vectk, 1, -) \times_{A(\vectk, 0, -)} \dotsb \times_{A(\vectk, 0, -)} A(\vectk, 1, -),
  \]
  called a ``wide pullback''.  The universal property of this wide pullback induces a unique morphism such that the diagram
    \[
      \xy
        % POINTS
        (0, 8)*+{A(\vectk,k,-)}="Akk";
        (0, -16)*+{A(\vectk,1,-) \times_{A(\vectk,0,-)} \dotsb \times_{A(\vectk,0,-)} A(\vectk,1,-)}="wpb";
        (-54, -32)*+{A(\vectk,1,-)}="Ak11";
        (-36, -32)*+{A(\vectk,1,-)}="Ak12";
        (36, -32)*+{A(\vectk,1,-)}="Ak13";
        (54, -32)*+{A(\vectk,1,-)}="Ak14";
        (-45, -48)*+{A(\vectk,0,-)}="Ak01";
        (45, -48)*+{A(\vectk,0,-)}="Ak02";
        % ARROWS
        {\ar@{-->}^{S_{\vectk, k}} "Akk" ; "wpb"};
        {\ar@/_3.5pc/_{i_1} "Akk" ; "Ak11"};
        {\ar@/_3pc/_{i_2} "Akk" ; "Ak12"};
        {\ar@/^3pc/^{i_{k - 1}} "Akk" ; "Ak13"};
        {\ar@/^3.5pc/^{i_k} "Akk" ; "Ak14"};
        {\ar "wpb" ; "Ak11"};
        {\ar "wpb" ; "Ak12"};
        {\ar "wpb" ; "Ak13"};
        {\ar "wpb" ; "Ak14"};
        {\ar_t "Ak11" ; "Ak01"};
        {\ar^s "Ak12" ; "Ak01"};
        {\ar_t "Ak13" ; "Ak02"};
        {\ar^s "Ak14" ; "Ak02"};
      \endxy
    \]
  commutes.  The maps $S_{\vectk, k}$, for all $\vectk = (k_1, \dotsc, k_{m - 1})$ and all $k \in \mathbb{N}$, are called the \emph{Segal maps}.

  We now give Simpson's variant of Tamsamani's definition of weak $n$-category.

  \begin{defn} \label{defn:simncat}
    Let $n \in \mathbb{N}$. A Tamsamani--Simpson weak $n$-category is an $n$-precategory $A \colon (\Theta^n)^{\op} \rightarrow \Set$ such that, for all $1 \leq m \leq n$, $\vectk = (k_1, \dotsc, k_{m - 1}) \in \Delta^m$, and $[k] \in \Delta$, the Segal map
      \[
        S_{\vectk, k}: A(\vectk, k, -) \rightarrow A(\vectk, 1, -) \times_{A(\vectk, 0, -)} \dotsb \times_{A(\vectk, 0, -)} A(\vectk, 1, -)
      \]
    is contractible.
  \end{defn}

\chapter{Nerves of Penon weak $2$-categories} \label{chap:nerveconstr}

  This chapter concerns our nerve construction for Penon weak $2$-categories.  We begin by recalling a nerve construction for bicategories due to Leinster~\cite{Lei02} in which the nerve of a bicategory is a $2$-precategory, and we prove that this nerve is a Tamsamani--Simpson weak $2$-category (a result previously stated without proof by Leinster).  We then define our own nerve functor for Penon weak $2$-categories using Leinster's nerve construction for bicategories as a prototype. Finally, we prove that the nerve of a Penon weak $2$-category is a Tamsamani--Simpson weak $2$-category.

  Note that, throughout this chapter, given a $2$-precategory
    \[
      A \colon (\Theta^2)^{\op} \longrightarrow \Set,
    \]
  we refer to an element of the set $A(j, k)$, for $j$, $k \in \mathbb{N}$, as a ``$(j, k)$-cell''.

\section{Leinster's nerve construction for bicategories}  \label{sect:Leinsternerves}

  In this section we describe and expand upon a nerve construction for bicategories originally given by Leinster in \cite{Lei02}, which will serve as a prototype for our nerve construction for Penon weak $n$-categories in Section~\ref{sect:2nerveconstr}.  This nerve construction takes a bicategory and produces from it a $2$-precategory as its nerve.  Leinster defines this nerve construction only on objects; we extend the construction to a definition of a nerve functor
    \[
      \nerve \colon \Bicat \longrightarrow [(\Theta^2)^{\op}, \Set]
    \]
  by describing the action on morphisms.  Leinster states without proof that the nerve $\nerve \mathcal{B}$ satisfies the Segal condition, and is thus a Tamsamani--Simpson weak $2$-category; we prove this result for the first time.  Apart from the definition of the action of $\nerve$ on objects (Definition~\ref{defn:Leinsterbicatnerve}) and an unpacked version of the definition of Tamsamani--Simpson weak $n$-category in the case $n = 2$ (Definition~\ref{defn:2catv1}), everything in this section is new.

  Before stating the formal definition of this nerve functor, we discuss the shapes of the simplicial cells in the nerve.  The reason for giving this explanation is that the nerve construction is somewhat notation-heavy since it is defined using a completely elementary method; for a bicategory $\mathcal{B}$, the set of $(j, k)$-cells in the nerve of $\mathcal{B}$ is defined by simply specifying the cells of $\mathcal{B}$ which, together, make up a $(j, k)$-cell in the nerve of $\mathcal{B}$.  This explanation of shapes of cells also helps motivate the shapes of cells used in our nerve construction for Penon weak $n$-categories.

  Recall that, for all $k > 0$, $0 \leq i \leq k$, we have a map $d_i \colon [k - 1] \rightarrow [k]$ in $\Delta$ given by
    \[
      d_{i}(j) =
            \left\{ \begin{array}{ll}
                j & \text{if } j < i, \\
                j + 1 & \text{if } j \geq i.
            \end{array} \right.
    \]
  In the nerve of a category $\nerve \mathcal{C}$, a simplicial $k$-cell consists of a string of $k$ composable morphisms, and the face maps $\nerve \mathcal{C}(d_i)$ are defined either to omit a single cell at one end of this string, or to compose a single pair of cells within the string.  One would expect the definition of a $(k, 0)$-cell in the nerve of a bicategory to be similar; however, we cannot define these face maps in exactly the same way, since composition of $1$-cells in a bicategory is not associative.  We now explain why this causes problems.

  Suppose we define a $(k, 0)$-cell in the nerve of a bicategory to consist just of a string of $k$ composable morphisms, which we write as $(f_1, f_2, \dotsc, f_k)$, with the face maps defined using composition in the same way as in the nerve of a category.  In $\Delta^2$, the diagram
    \[
      \xy
        % POINTS
        (0, 0)*+{(3, 0)}="A";
        (20, 0)*+{(2, 0)}="B";
        (0, -16)*+{(2, 0)}="C";
        (20, -16)*+{(1, 0)}="D";
        % ARROWS
        {\ar_-{(d_1, 1)} "B" ; "A"};
        {\ar^{(d_2, 1)} "C" ; "A"};
        {\ar_{(d_1, 1)} "D" ; "B"};
        {\ar^-{(d_1, 1)} "D" ; "C"};
      \endxy
    \]
  commutes.  Write $\nerve \mathcal{B}$ for the nerve of $\mathcal{B}$; then, in order for $\nerve \mathcal{B}$ to be a bisimplicial set, the diagram
    \[
      \xy
        % POINTS
        (0, 0)*+{\nerve \mathcal{B}(3, 0)}="A";
        (30, 0)*+{\nerve \mathcal{B}(2, 0)}="B";
        (0, -20)*+{\nerve \mathcal{B}(2, 0)}="C";
        (30, -20)*+{\nerve \mathcal{B}(1, 0)}="D";
        % ARROWS
        {\ar^-{\nerve \mathcal{B}(d_1, 1)} "A" ; "B"};
        {\ar_{\nerve \mathcal{B}(d_2, 1)} "A" ; "C"};
        {\ar^{\nerve \mathcal{B}(d_1, 1)} "B" ; "D"};
        {\ar_-{\nerve \mathcal{B}(d_1, 1)} "C" ; "D"};
      \endxy
    \]
  must commute in $\Set$.  However, consider a $(3, 0)$-cell $(f, g, h) \in \nerve \mathcal{B}(3, 0)$.  Applying the maps along the top and right of the diagram above gives
    \[
      \xymatrix{
        (f, g, h) \ar@{|->}[rr]^-{\nerve \mathcal{B}(d_1, 1)} && (g \comp f, h) \ar@{|->}[rr]^-{\nerve \mathcal{B}(d_1, 1)} && (h \comp (g \comp f)),
               }
    \]
  whereas applying the maps along the left and bottom of the diagram gives
    \[
      \xymatrix{
        (f, g, h) \ar@{|->}[rr]^-{\nerve \mathcal{B}(d_2, 1)} && (f, h \comp g) \ar@{|->}[rr]^-{\nerve \mathcal{B}(d_1, 1)} && ((h \comp g) \comp f),
               }
    \]
  so the diagram does not commute.

  Thus instead we define a $(k, 0)$-cell in the nerve of a bicategory to consist not only of a string of $k$ composable $1$-cells, but of a whole $k$-simplex of $1$-cells and isomorphism $2$-cells; the data for each $(k, 0)$-cell includes all of its faces, not just those which make up the composable string of $1$-cells.  For example, a $(2, 0)$-cell looks like:
    \[
      \xy
        % POINTS
        (0, 10)*+{a_1}="a1";
        (-10, -10)*+{a_0}="a0";
        (10, -10)*+{a_2}="a2";
        (0,0)*{}="i0";
        (0, -5)*{}="i1";
        % ARROWS
        {\ar^{f_{01}} "a0" ; "a1"};
        {\ar^{f_{12}} "a1" ; "a2"};
        {\ar_{f_{02}} "a0" ; "a2"};
        {\ar@{=>}^{i_{012}}_{\sim} "i0" ; "i1"}
      \endxy
    \]
  This should be thought of as a pair of composable $1$-cells, together with another $1$-cell that would be a ``valid choice'' for their composite (but not necessarily their actual composite in the bicategory).

  Similarly, a $(3, 0)$-cell looks like
  \[
    \xy
      % POINTS
      (-50, 15)*+{a_1}="a1";
      (-30, 15)*+{a_2}="a2";
      (-60, -5)*+{a_0}="a0";
      (-20, -5)*+{a_3}="a3";
      (-5, 5)*+{=};
      (20, 15)*+{a_1}="a1r";
      (40, 15)*+{a_2}="a2r";
      (10, -5)*+{a_0}="a0r";
      (50, -5)*+{a_3}="a3r";
      % INVISIBLE POINTS
      (-48, 13)*+{}="i0s";
      (-43, 8)*+{}="i0t";
      (38, 13)*+{}="i1s";
      (33, 8)*+{}="i1t";
      (-32, 5)*+{}="i2s";
      (-34, -1)*+{}="i2t";
      (22, 5)*+{}="i3s";
      (24, -1)*+{}="i3t";
      % ARROWS
      {\ar^{f_{01}} "a0" ; "a1"};
      {\ar^{f_{12}} "a1" ; "a2"};
      {\ar^{f_{23}} "a2" ; "a3"};
      {\ar_{f_{03}} "a0" ; "a3"};
      {\ar_{f_{02}} "a0" ; "a2"};
      {\ar^{f_{01}} "a0r" ; "a1r"};
      {\ar^{f_{12}} "a1r" ; "a2r"};
      {\ar^{f_{23}} "a2r" ; "a3r"};
      {\ar_{f_{03}} "a0r" ; "a3r"};
      {\ar_{f_{13}} "a1r" ; "a3r"};
      % 2-CELLS:
      {\ar@{=>}_{\sim}^{\iota_{012}} "i0s" ; "i0t"};
      {\ar@{=>}^{\sim}_{\iota_{123}} "i1s" ; "i1t"};
      {\ar@{=>}^{\sim}_{\iota_{023}} "i2s" ; "i2t"};
      {\ar@{=>}_{\sim}^{\iota_{013}} "i3s" ; "i3t"};
    \endxy
  \]
  i.e. a commuting tetrahedron whose faces are isomorphism $2$-cells.

  The $(j, k)$-cells in the nerve, for $k > 0$, are ``simplicially weakened'' versions cuboidal pasting diagrams. We usually draw these as grids of $2$-cells; for example, we draw a $(3, 2)$-cell as:
    \[
      \xymatrix@R=0pt{
        & \big{\Downarrow} \alpha^1_{01} && \big{\Downarrow} \alpha^1_{12} && \big{\Downarrow} \alpha^1_{23}  \\
        a_0 \ar@/^2.75pc/[rr]^{f^0_{01}} \ar[rr]^(0.3){f^1_{01}} \ar@/_2.75pc/[rr]_{f^2_{01}} && a_1 \ar@/^2.75pc/[rr]^{f^0_{12}} \ar[rr]^(0.3){f^1_{12}} \ar@/_2.75pc/[rr]_{f^2_{12}} && a_2 \ar@/^2.75pc/[rr]^{f^0_{23}} \ar[rr]^(0.3){f^1_{23}} \ar@/_2.75pc/[rr]_{f^2_{23}} && a_3.  \\
        & \big{\Downarrow} \alpha^2_{01} && \big{\Downarrow} \alpha^2_{12} && \big{\Downarrow} \alpha^2_{23}  \\
               }
    \]
  However, such diagrams are misleading since they do not capture the whole simplicial shape of the cell.  In fact, each string of $k$ composable $1$-cells on the same ``level'' (i.e. with the same superscript) is a $(k, 0)$-cell, and all diagrams of $2$-cells within each $(j, k)$-cell commute.

  Note that the notation used in the diagrams above is the notation we use in the construction.  The subscripts and superscripts decorating each cell should be thought of as the coordinates of that cell, with the subscripts giving the horizontal coordinates, and superscripts giving the vertical coordinates.

  We state the definition of this nerve functor for bicategories in three parts.  In Definition~\ref{defn:Leinsterbicatnerve} we define, for a bicategory $\mathcal{B}$ and for each object $(j, k)$ in $\Theta^2$, a set $\NB(j, k)$, which is the set of $(j, k)$-cells in the nerve of $\mathcal{B}$;  this is the only part of the definition that Leinster states formally in \cite{Lei02}.  Then, in Definition~\ref{defn:Leinsterbicatnerveonmaps}, we extend this to a definition of a $2$-precategory
    \[
      \NB \colon (\Theta^2)^{\op} \longrightarrow \Set
    \]
  by defining the action of this presheaf on maps.  This defines the action of the nerve functor
    \[
      \nerve \colon \Bicat \longrightarrow [(\Theta^2)^{\op}, \Set].
    \]
  on objects; in Definition~\ref{defn:Leinsterbicatnervefunctor} we define the action of this functor on maps.

  Recall that an object of $\Theta^2$ is an equivalence class or objects of $\Delta^2$.  An object of $\Delta^2$ is in an equivalence class with more than one member if and only if it is of the form $(0, k)$.  Thus, for the purposes of the following definition we treat the equivalence class of $(0, k)$ as the object $(0, 0)$ of $\Delta^2$; all other equivalence classes are treated as their sole member.  Note that the exact choice of representative does not make a difference to the definition.

  Note that, ideally, we would give an abstract definition of the nerve of a bicategory by first defining a functor $i \colon \Theta^2 \rightarrow \Bicat$, then defining the nerve of a bicategory $\mathcal{B}$ to be given by $\Bicat(i(-), \mathcal{B})$, as we did when defining the nerve of a category.  However, this is not practical in the case of bicategories since the bicategories in the image of the functor $i$ are difficult to describe (in particular, they are not free, unlike in the case of the nerve of a category).  We believe that describing these bicategories would require extra machinery (for example, we believe it could be done using computads) and is thus beyond the scope of this thesis.  Note that this is one of the reasons for using Penon weak $n$-categories in the remainder of the thesis; in the case of Penon weak $n$-categories we are able to construct the nerve in this abstract way, by modifying the construction of a free Penon weak $n$-category, in a way that is not possible with bicategories.  We do this in Sections~\ref{sect:2nerveconstr} and \ref{sect:nnerveconstr}.

  In the original version of this definition in \cite{Lei02}, Leinster used $\Delta^2$ rather than $\Theta^2$, and then enforced a non-cubical condition, as described immediately after the definition of $\Theta^n$, Definition~\ref{defn:Theta}.  This is the only non-cosmetic difference between the following definition and Leinster's original definition; we also write the axioms as diagrams rather than equations.

  \begin{defn}  \label{defn:Leinsterbicatnerve}
    Let $\mathcal{B}$ be a bicategory.  We associate to $\mathcal{B}$ a $2$-precategory $\nerve \mathcal{B} \colon (\Theta^2)^{\op} \rightarrow \Set$, called the \emph{nerve of $\mathcal{B}$}, as follows:

    Given $(j,k) \in \Theta^2$, $\nerve \mathcal{B}(j,k)$ is the set which has as its elements all quadruples
      \[
        \left( (a_u)_{0 \leq u \leq j}, (f^z_{uv})_{\substack{0 \leq u < v \leq j \\ 0 \leq z \leq k}}, (\alpha^z_{uv})_{\substack{0 \leq u < v \leq j \\ 1 \leq z \leq k}}, (\iota^z_{uvw})_{\substack{0 \leq u < v < w \leq j \\ 0 \leq z \leq k}} \right)
      \]
    where
      \begin{itemize}
        \item each $a_u$ is an object of $\mathcal{B}$;
        \item each $f^z_{uv}: a_u \rightarrow a_v$ is a $1$-cell of $\mathcal{B}$;
        \item each $\alpha^z_{uv}: f^{z-1}_{uv} \rightarrow f^z_{uv}$ is a $2$-cell of $\mathcal{B}$;
        \item each $\iota^z_{uvw}: f^z_{vw} \comp f^z_{uv} \rightarrow f^z_{uw}$ is an isomorphism $2$-cell of $\mathcal{B}$, with inverse $(\iota^z_{uvw})^{-1}$;
      \end{itemize}
    and these cells satisfy the following axioms:
      \begin{itemize}
        \item for all $0 \leq u < v < w \leq j$, $1 \leq z \leq k$, the diagram
          \[
            \xy
              % POINTS
              (0, 0)*+{f^{z-1}_{vw} \comp f^{z-1}_{uv}}="0,0";
              (24, 0)*+{f^{z-1}_{uw}}="1,0";
              (0, -16)*+{f^{z}_{vw} \comp f^{z}_{uv}}="0,1";
              (24, -16)*+{f^{z}_{uw}}="1,1";
              % ARROWS
              {\ar^-{\iota^{z-1}_{uvw}} "0,0" ; "1,0"};
              {\ar_{\alpha^z_{vw} * \alpha^z_{uv}} "0,0" ; "0,1"};
              {\ar^{\alpha^z_{uw}} "1,0" ; "1,1"};
              {\ar_-{\iota^z_{uvw}} "0,1" ; "1,1"};
            \endxy
          \]
          commutes; alternatively, we can draw this axiom as
      \[
        \xy
          % POINTS
          (0, 0)*+{\bullet}="0,0";
          (32, 0)*+{\bullet}="2,0";
          (16, 16)*+{\bullet}="1,-1";
          (40, 0)*+{=};
          (48, 0)*+{\bullet}="3,0";
          (64, 0)*+{\bullet}="4,0";
          (80, 0)*+{\bullet}="5,0";
          % ARROWS
          {\ar@/^1.5pc/ "0,0" ; "2,0"};
          {\ar@/_1.5pc/ "0,0" ; "2,0"};
          {\ar@/^0.75pc/ "0,0" ; "1,-1"};
          {\ar@/^0.75pc/ "1,-1" ; "2,0"};
          {\ar@{=>}^{\iota^{z - 1}_{uvw}} (16, 14) ; (16, 8)};
          {\ar@{=>}^{\alpha^z_{uw}} (16, 3) ; (16, -3)};
          {\ar@/^1.5pc/ "3,0" ; "4,0"};
          {\ar@/_1.5pc/ "3,0" ; "4,0"};
          {\ar@/^1.5pc/ "4,0" ; "5,0"};
          {\ar@/_1.5pc/ "4,0" ; "5,0"};
          {\ar@/_3pc/ (48, -2) ; (80, -2)};
          {\ar@{=>}^{\alpha^z_{uv}} (56, 3) ; (56, -3)};
          {\ar@{=>}^{\alpha^z_{vw}} (72, 3) ; (72, -3)};
          {\ar@{=>}^{\iota^{z}_{uvw}} (64, -6) ; (64, -12)};
        \endxy
      \]
        \item for all $0 \leq u < v < w < x \leq j$, $0 \leq z \leq k$, the diagram
          \[
            \xy
              % POINTS
              (-2, 0)*+{(f^z_{wx} \comp f^z_{vw}) \comp f^z_{uv}}="0,0";
              (38, 0)*+{f^z_{wx} \comp (f^z_{vw} \comp f^z_{uv})}="1,0";
              (-10,-18)*+{f^z_{vx} \comp f^z_{uv}}="0,1";
              (46, -18)*+{f^z_{wx} \comp f^z_{uw}}="1,1";
              (18, -32)*+{f^z_{ux}}="0,2";
              % ARROWS
              {\ar^{s_{uvwx}} "0,0" ; "1,0"};
              {\ar_{\iota^z_{vwx} * 1_{f^z_{uv}}} "0,0" ; "0,1"};
              {\ar^{1_{f^z_{wx}} * \iota^z_{uvw}} "1,0" ; "1,1"};
              {\ar_{\iota^z_{uvx}} "0,1" ; "0,2"};
              {\ar^{\iota^z_{uwx}} "1,1" ; "0,2"};
            \endxy
          \]
          commutes, where
              \[
                s_{uvwx}:(f^z_{wx} \circ f^z_{vw}) \circ f^z_{uv} \rightarrow f^z_{wx} \circ (f^z_{vw} \circ f^z_{uv})
              \]
            is the component of the appropriate associativity isomorphism for $\mathcal{B}$; alternatively, we can draw this axiom as
  \[
    \xy
      % POINTS
      (-50, 15)*+{a_v}="a1";
      (-30, 15)*+{a_w}="a2";
      (-60, -5)*+{a_u}="a0";
      (-20, -5)*+{a_x}="a3";
      (-9, 5)*+{=};
      (12, 15)*+{a_v}="a1r";
      (32, 15)*+{a_w}="a2r";
      (2, -5)*+{a_u}="a0r";
      (42, -5)*+{a_x}="a3r";
      % INVISIBLE POINTS
      (-48, 13)*+{}="i0s";
      (-43, 8)*+{}="i0t";
      (30, 13)*+{}="i1s";
      (25, 8)*+{}="i1t";
      (-32, 5)*+{}="i2s";
      (-34, -1)*+{}="i2t";
      (14, 5)*+{}="i3s";
      (16, -1)*+{}="i3t";
      % ARROWS
      {\ar^{f^z_{uv}} "a0" ; "a1"};
      {\ar^{f^z_{vw}} "a1" ; "a2"};
      {\ar^{f^z_{wx}} "a2" ; "a3"};
      {\ar_{f^z_{ux}} "a0" ; "a3"};
      {\ar_(0.4){f^z_{uw}} "a0" ; "a2"};
      {\ar^{f^z_{uv}} "a0r" ; "a1r"};
      {\ar^{f^z_{vw}} "a1r" ; "a2r"};
      {\ar^{f^z_{wx}} "a2r" ; "a3r"};
      {\ar_{f^z_{ux}} "a0r" ; "a3r"};
      {\ar_(0.6){f^z_{vx}} "a1r" ; "a3r"};
      % 2-CELLS:
      {\ar@{=>}_{\sim}^{\iota_{uvw}} "i0s" ; "i0t"};
      {\ar@{=>}^{\sim}_{\iota_{vwx}} "i1s" ; "i1t"};
      {\ar@{=>}^{\sim}_{\iota_{uwx}} "i2s" ; "i2t"};
      {\ar@{=>}_{\sim}^{\iota_{uvx}} "i3s" ; "i3t"};
    \endxy
  \]
      \end{itemize}
  \end{defn}

  In \cite{Lei02} Leinster does not define the action of the presheaf $\NB$ on morphisms formally; he simply states that $\NB$ is ``defined on maps by a combination of inserting identities and forgetting data''.  We now explain this idea in more detail, then make it precise in the next definition.  Given a map $(p, q) \colon (l, m) \rightarrow (j, k)$ in $\Theta^2$, we define a map
      \[
        \nerve \mathcal{B}(p, q) \colon \nerve \mathcal{B}(j, k) \rightarrow \nerve \mathcal{B}(l, m).
      \]
  To understand what this map does, recall that an element of $\NB(j, k)$ consists of a collection of cells of $\mathcal{B}$ which form a $(j, k)$-cell, and that each of these cells has subscripts and (in some cases) superscripts which we think of as the coordinates of this cell within the $(j, k)$-cell.  Given an element of $\NB(j, k)$, its image under $\NB(p, q)$ is the element of $\NB(l, m)$ made up of those cells whose horizontal coordinates are in the image of $p$ and, where appropriate, whose vertical coordinate is in the image of $q$; any cells whose coordinates are not in the images of $p$ and $q$ are omitted, and cells with repeated coordinates are taken to be identities (or unitors in some cases).

  \begin{defn}  \label{defn:Leinsterbicatnerveonmaps}
    Let $\mathcal{B}$ be a bicategory, and write $l$ and $r$ for its left and right unitors respectively.  Let $(p, q) \colon (l, m) \rightarrow (j, k)$ be a map in $\Theta^2$.  We define a function of sets
      \[
        \nerve \mathcal{B}(p, q) \colon \nerve \mathcal{B}(j, k) \rightarrow \nerve \mathcal{B}(l, m)
      \]
    as follows:
      \begin{align*}
        \nerve \mathcal{B}(p, q) & : \left( (a_u)_{0 \leq u \leq j}, (f^z_{uv})_{\substack{0 \leq u < v \leq j \\ 0 \leq z \leq k}}, (\alpha^z_{uv})_{\substack{0 \leq u < v \leq j \\ 1 \leq z \leq k}}, (\iota^z_{uvw})_{\substack{0 \leq u < v < w \leq j \\ 0 \leq z \leq k}} \right) \\
        & \longmapsto \left( (b_u)_{0 \leq u \leq l}, (g^z_{uv})_{\substack{0 \leq u < v \leq l \\ 0 \leq z \leq m}}, (\beta^z_{uv})_{\substack{0 \leq u < v \leq l \\ 1 \leq z \leq m}}, (\kappa^z_{uvw})_{\substack{0 \leq u < v < w \leq l \\ 0 \leq z \leq m}} \right)
      \end{align*}
    where
      \begin{itemize}
        \item $b_u = a_{p(u)}$
        \item $g^z_{uv} =
            \left\{ \begin{array}{ll}
                f^{q(z)}_{p(u)p(v)} & \text{if } p(u) \neq p(v), \\
                \id_{a_{p(u)}} & \text{if } p(u) = p(v);
            \end{array} \right. $
        \item $\beta^z_{uv} =
            \left\{ \begin{array}{ll}
                \alpha^{q(z)}_{p(u)p(v)} & \text{if } p(u) \neq p(v), q(z - 1) \neq q(z), \\
                \id_{f^{q(z)}_{p(u)p(v)}} & \text{if } p(u) \neq p(v), q(z - 1) = q(z), \\
                \id_{\id_{a_{p(u)}}} & \text{if } p(u) = p(v);
            \end{array} \right. $
        \item $\kappa^z_{uvw} =
            \left\{ \begin{array}{ll}
                \iota^{q(z)}_{p(u)p(v)p(w)} & \text{if } p(u) \neq p(v) \neq p(w), \\
                l_{f^{q(z)}_{p(u)p(v)}} & \text{if } p(u) \neq p(v) = p(w), \\
                r_{f^{q(z)}_{p(u)p(v)}} & \text{if } p(u) = p(v) \neq p(w), \\
                \id_{\id_{a_{p(u)}}} & \text{if } p(u) = p(v) = p(w).
            \end{array} \right.$
      \end{itemize}
  \end{defn}

  This defines the action of the nerve functor on objects; we now give a new definition which extends this to a definition of a nerve functor
    \[
      \nerve \colon \Bicat \longrightarrow [(\Theta^2)^{\op}, \Set],
    \]
  by describing the action of this functor on morphisms.

  \begin{defn}  \label{defn:Leinsterbicatnervefunctor}
    Let $F \colon \mathcal{A} \rightarrow \mathcal{B}$ be a strict functor of bicategories.  We define a map of bisimplicial sets $\nerve F \colon \nerve \mathcal{A} \rightarrow \nerve \mathcal{B}$ to be the map whose component $\nerve F_{(j, k)} \colon \nerve \mathcal{A}(j, k) \rightarrow \nerve \mathcal{B}(j, k)$, for each $(j, k) \in \Delta^2$, is given by
      \begin{align*}
        \nerve F_{(j,k)} & \left( (a_u)_{0 \leq u \leq j}, (f^z_{uv})_{\substack{0 \leq u < v \leq j \\ 0 \leq z \leq k}}, (\alpha^z_{uv})_{\substack{0 \leq u < v \leq j \\ 1 \leq z \leq k}}, (\iota^z_{uvw})_{\substack{0 \leq u < v < w \leq j \\ 0 \leq z \leq k}} \right) \\
        = & \left( (F(a_u))_{0 \leq u \leq j}, (Ff^z_{uv})_{\substack{0 \leq u < v \leq j \\ 0 \leq z \leq k}}, (F\alpha^z_{uv})_{\substack{0 \leq u < v \leq j \\ 1 \leq z \leq k}}, (F\iota^z_{uvw})_{\substack{0 \leq u < v < w \leq j \\ 0 \leq z \leq k}} \right).
      \end{align*}

    The above defines a functor $\nerve \colon \Bicat \rightarrow [(\Theta^2)^{\op}, \Set]$, called the \emph{nerve functor}.
  \end{defn}

  In \cite{Lei02}, Leinster stated without proof that the nerve of a bicategory satisfies the Segal condition, and is thus a Tamsamani--Simpson weak $2$-category.  We will prove this for the first time; before doing so, we recall the definition of Tamsamani--Simpson weak $n$-category (Definition~\ref{defn:simncat}) in the case $n = 2$; the following is a slight unpacking of the definition, which treats Segal maps of the forms $S_k$ and $S_{j, k}$ separately.

  \begin{defn} \label{defn:2catv1}
    A \emph{Tamsamani--Simpson weak $2$-category} is a functor
      \[
        A \colon (\Theta^2)^{\op} \rightarrow \Set
      \]
    such that
      \begin{enumerate}[(i)]
        \item for each $k \geq 0$, the Segal map
          \[
            S_k: A(k,-) \longrightarrow \underbrace{A(1,-) \times_{A(0,1)} \dotsb \times_{A(0,1)} A(1,-)}_k
          \]
              is contractible, i.e. it is surjective on objects, and full and faithful on $1$-cells;
        \item for each $m$, $k \geq 0$, the Segal map
          \[
            S_{j,k}: A(j,k) \longrightarrow \underbrace{A(j,1) \times_{A(j,0)} \dotsb \times_{A(j,0)} A(j,1)}_k
          \]
              is a bijection.
      \end{enumerate}
  \end{defn}

  Thus to prove that the nerve of a bicategory is a Tamsamani--Simpson weak $2$-category, we break this statement down into four propositions: one stating that each of the Segal maps $S_{j, k}$ is a bijection, and the other three stating the three conditions required for contractibility of the Segal maps $S_k$.

  \begin{prop}  \label{prop:bicatSegalbij}
    Let $\mathcal{B}$ be a bicategory.  For all $j$, $k \geq 0$, the Segal map
          \[
            S_{j,k}: \NB(j,k) \longrightarrow \underbrace{\NB(j,1) \times_{\NB(j,0)} \dotsb \times_{\NB(j,0)} \NB(j,1)}_k
          \]
    is a bijection.
  \end{prop}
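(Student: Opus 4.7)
The plan is to exhibit an explicit inverse to $S_{j,k}$ by showing that a $(j,k)$-cell is exactly determined by, and can be reconstructed from, the $k$-tuple of $(j,1)$-cells obtained by slicing it between consecutive horizontal levels. Write out the Segal map concretely first: an element of $\NB(j,k)$ is a quadruple $\big((a_u), (f^z_{uv}), (\alpha^z_{uv}), (\iota^z_{uvw})\big)$, and $S_{j,k}$ sends it to the tuple $(X^1, \dotsc, X^k)$ where $X^z$ is the $(j,1)$-cell
\[
\Big((a_u)_{0 \leq u \leq j},\ (f^{z-1}_{uv}, f^{z}_{uv})_{0 \leq u < v \leq j},\ (\alpha^{z}_{uv})_{0 \leq u < v \leq j},\ (\iota^{z-1}_{uvw}, \iota^{z}_{uvw})_{0 \leq u < v < w \leq j}\Big).
\]
The $(j,0)$-face maps $\NB(j,1) \to \NB(j,0)$ extract the top-row data $(a_u, f^{z-1}_{uv}, \iota^{z-1}_{uvw})$ and bottom-row data $(a_u, f^{z}_{uv}, \iota^{z}_{uvw})$, so the compatibility required to lie in the wide pullback says exactly that the bottom row of $X^z$ agrees with the top row of $X^{z+1}$ for each $z$.

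Next, I will define a candidate inverse $R_{j,k}$: given a compatible tuple $(X^1, \dotsc, X^k)$ in the pullback, concatenate the data. The compatibility means the $a_u$'s are common to all $X^z$, the $1$-cell data $f^z_{uv}$ is unambiguously defined for every $0 \leq z \leq k$ (it appears as the bottom row of $X^z$ and the top row of $X^{z+1}$, which coincide by compatibility), the $\alpha^z_{uv}$ comes from $X^z$, and the $\iota^z_{uvw}$ is likewise common to the adjacent slices. This produces a candidate quadruple of the right shape; bijectivity of the assembly/disassembly on raw data is immediate.

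The only content is to check that the assembled quadruple satisfies the two axioms of Definition~\ref{defn:Leinsterbicatnerve}. The key observation — and the reason there is no obstruction — is that both axioms are \emph{local in $z$}. The first axiom, comparing $\iota^{z-1}_{uvw}$ and $\iota^{z}_{uvw}$ via the $\alpha^z$'s, involves only two consecutive levels and therefore lives entirely inside a single slice $X^z$, where it holds by assumption. The second axiom, the pentagon-type compatibility between $\iota^z$ and the associator of $\mathcal{B}$, involves only a single level $z$; for $1 \leq z \leq k$ it lives inside $X^z$, for $z=0$ it lives inside $X^1$, and again holds by assumption. Conversely, the axioms for each individual $X^z = S_{j,k}(\cdot)$ are specialisations of axioms already holding in the $(j,k)$-cell, so $S_{j,k}$ is well defined. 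Thus $R_{j,k}$ and $S_{j,k}$ are mutually inverse set maps, and $S_{j,k}$ is a bijection.

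The hard part will simply be the bookkeeping of indices when writing the inverse down carefully; there is no conceptual obstacle, because all the coherence axioms for $\NB(j,k)$ are either axioms at a single horizontal level or axioms relating two adjacent horizontal levels, and both of these are exactly the axioms recorded in the $(j,1)$-slice pieces. I would present the argument by writing $R_{j,k}$ explicitly in one display, verifying $S_{j,k} \circ R_{j,k} = \id$ and $R_{j,k} \circ S_{j,k} = \id$ by direct inspection of the formula, and then noting that the axioms transfer by the locality argument above.
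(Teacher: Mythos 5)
Your proposal is correct and takes essentially the same approach as the paper: slice a $(j,k)$-cell into its $k$ constituent $(j,1)$-cells and observe that the passage is reversible on compatible tuples. The paper's proof is terser — it asserts injectivity because every datum of the $(j,k)$-cell appears in the image, and asserts surjectivity without spelling out the axiom check — whereas you make explicit the (correct) locality argument that both axioms of Definition~\ref{defn:Leinsterbicatnerve} involve at most two adjacent levels and hence transfer from the slices.
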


  \begin{proof}
    Let
      \[
        \left( (a_u)_{0 \leq u \leq j}, (f^z_{uv})_{\substack{0 \leq u < v \leq j \\ 0 \leq z \leq k}}, (\alpha^z_{uv})_{\substack{0 \leq u < v \leq j \\ 1 \leq z \leq k}}, (\iota^z_{uvw})_{\substack{0 \leq u < v < w \leq j \\ 0 \leq z \leq k}} \right)
      \]
    be an element of $\NB(j, k)$.  The function $S_{j, k}$ maps this to
      \begin{align*}
        \bigg( & \Big( (a_u)_{0 \leq u \leq j}, (f^z_{uv})_{\substack{0 \leq u < v \leq j \\ 0 \leq z \leq 1}}, (\alpha^1_{uv})_{\substack{0 \leq u < v \leq j}}, (\iota^z_{uvw})_{\substack{0 \leq u < v < w \leq j \\ 0 \leq z \leq 1}} \Big),  \\
        & \Big( (a_u)_{0 \leq u \leq j}, (f^z_{uv})_{\substack{0 \leq u < v \leq j \\ 1 \leq z \leq 2}}, (\alpha^2_{uv})_{\substack{0 \leq u < v \leq j}}, (\iota^z_{uvw})_{\substack{0 \leq u < v < w \leq j \\ 1 \leq z \leq 2}} \Big),  \\
        & \dotsc,  \\
        & \Big( (a_u)_{0 \leq u \leq j}, (f^z_{uv})_{\substack{0 \leq u < v \leq j \\ k - 1 \leq z \leq k}}, (\alpha^k_{uv})_{\substack{0 \leq u < v \leq j}}, (\iota^z_{uvw})_{\substack{0 \leq u < v < w \leq j \\ k - 1 \leq z \leq k}} \Big) \bigg).
      \end{align*}
    Every cell listed in the original element of $\NB(j, k)$ is listed in its image under $S_{j, k}$, so this function is injective.  Furthermore, any element of the wide pullback
      \[
        \underbrace{\NB(j,1) \times_{\NB(j,0)} \dotsb \times_{\NB(j,0)} \NB(j,1)}_k
      \]
    can be written in the form above.  Thus $S_{j, k}$ is surjective.

    Hence $S_{j, k}$ is a bijection.
  \end{proof}

  \begin{prop}  \label{prop:bicatSegalsurj}
    Let $\mathcal{B}$ be a bicategory.  For all $k \geq 0$, the Segal map
          \[
            S_k: \NB(k,-) \longrightarrow \underbrace{\NB(1,-) \times_{\NB(0,0)} \dotsb \times_{\NB(0,0)} \NB(1,-)}_k
          \]
    is surjective on objects.
  \end{prop}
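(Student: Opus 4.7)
The plan is to show that any element of the wide pullback, i.e., a composable string of $1$-cells in $\mathcal{B}$
\[
  \xymatrix{a_0 \ar[r]^{f_{01}} & a_1 \ar[r]^{f_{12}} & a_2 \ar[r] & \dotsb \ar[r] & a_{k-1} \ar[r]^{f_{(k-1)k}} & a_k,}
\]
can be lifted to a full $(k,0)$-cell of $\NB$, i.e., a quadruple satisfying the tetrahedral coherence axiom of Definition~\ref{defn:Leinsterbicatnerve}. Recall that applying $S_k$ simply restricts a $(k,0)$-cell to its ``spine'' of $1$-cells $f_{u(u+1)}$, so constructing such a lift proves surjectivity on objects.

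First I would choose, for each pair $0 \leq u < v \leq k$, a canonical $1$-cell $f_{uv} \colon a_u \rightarrow a_v$ by taking a fixed bracketing of the binary composite $f_{(v-1)v} \comp f_{(v-2)(v-1)} \comp \dotsb \comp f_{u(u+1)}$ in $\mathcal{B}$, for instance the fully left-bracketed one; this agrees with the given data when $v = u+1$. Then for each triple $0 \leq u < v < w \leq k$ I would define the isomorphism $2$-cell
\[
  \iota_{uvw} \colon f_{vw} \comp f_{uv} \Longrightarrow f_{uw}
\]
to be the unique coherence isomorphism built from associators (and their inverses) in $\mathcal{B}$ mediating between these two bracketings of the same underlying string. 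These are isomorphisms because associators are.

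The remaining task is to verify the single axiom of Definition~\ref{defn:Leinsterbicatnerve} (there is no $\alpha$-axiom to check since $k = 0$), namely that for every $0 \leq u < v < w < x \leq k$ the pentagonal diagram built from $\iota_{uvw}$, $\iota_{vwx}$, $\iota_{uwx}$, $\iota_{uvx}$ and a component of the associator $s_{uvwx}$ commutes. By construction every arrow in this diagram is a coherence isomorphism between two bracketings of the same underlying string $f_{(x-1)x} \comp \dotsb \comp f_{u(u+1)}$, so this is precisely a diagram of associators and their inverses among iterated binary composites. By Mac Lane's coherence theorem for bicategories (equivalently, for monoidal-like structures in hom-categories), any such diagram commutes. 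This verifies the axiom.

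The main obstacle is not any one of these steps individually but rather handling the bookkeeping cleanly: one must make a coherent global choice of bracketings so that the $\iota_{uvw}$ are well-defined and the coherence argument applies uniformly to the pentagon. Using a single convention (e.g.\ always left-bracketed) and invoking coherence for bicategories avoids having to write explicit composites of associators. Once this is done, the resulting quadruple $\bigl((a_u), (f_{uv}), \emptyset, (\iota_{uvw})\bigr)$ lies in $\NB(k, -)$ and maps under $S_k$ to the given spine, proving surjectivity on objects.
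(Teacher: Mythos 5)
Your proposal is correct and follows essentially the same route as the paper's proof: lift the spine by taking left-bracketed composites for the $f^0_{uv}$, take the $\iota^0_{uvw}$ to be the unique coherence isomorphisms between bracketings, and appeal to coherence for bicategories. The only difference is that you spell out the verification of the pentagon axiom explicitly, which the paper leaves implicit in its appeal to coherence.
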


  \begin{proof}
    Let
      \[
        \Big(
        \big( \xy (0, 0)*+{a_0}="0"; (14, 0)*+{a_1}="1"; {\ar^{f^0_{01}} "0" ; "1"}; \endxy \big),
        \big( \xy (0, 0)*+{a_1}="0"; (14, 0)*+{a_2}="1"; {\ar^{f^0_{12}} "0" ; "1"}; \endxy \big),
        \dotsc,
        \big( \xy (0, 0)*+{a_{k - 1}}="0"; (14, 0)*+{a_k}="1"; {\ar^{f^0_{k - 1, k}} "0" ; "1"}; \endxy \big)
        \Big)
      \]
    be an element of
      \[
        \underbrace{A(1,0) \times_{A(0,0)} \dotsb \times_{A(0,0)} A(1,0)}_k.
      \]
    This is a string of $k$ composable $1$-cells in $\mathcal{B}$.  We seek an element of $\NB(k, 0)$ that maps to this under $(S_{k})_0$.  We define an element
      \[
        \Big( (a_u)_{0 \leq u \leq j}, (f^0_{uv})_{\substack{0 \leq u < v \leq j}}, (\iota^0_{uvw})_{\substack{0 \leq u < v < w \leq j}} \Big)
      \]
    of $\NB(k, 0)$; to do so we must define $f^0_{uv}$ for every $v > u + 1$, and we must define the $\iota^0_{uvw}$ for all $0 \leq u < v < w \leq k$.  Our approach is to define the $f^0_{uv}$'s to be composites of the $f^0_{u, u + 1}$'s, then define the $\iota^0_{uvw}$'s to be composites of constraint cells in $\mathcal{B}$ that mediate between these composites.

    Let $0 \leq u < u + 1 < v \leq j$, and define $f^0_{uv}$ to be given by the composite
      \[
        f^0_{uv} := ( \dotsb (f^0_{v - 1, v} \comp f^0_{v - 2, v - 1}) \comp \dotsb ) \comp f^0_{u, u + 1}.
      \]
    Then, for all $0 \leq u < v < w \leq j$, there is a composite of constraint isomorphism $2$-cells
      \[
        \iota^0_{uvw}: f^0_{vw} \comp f^0_{uv} \rightarrow f^0_{uw}
      \]
    in $\mathcal{B}$, which is unique by coherence for bicategories \cite{GPS95, Lei98b}.

    This defines an element of $\NB(k, 0)$; by construction we see that this element maps to
      \[
        \Big(
        \big( \xy (0, 0)*+{a_0}="0"; (14, 0)*+{a_1}="1"; {\ar^{f^0_{01}} "0" ; "1"}; \endxy \big),
        \big( \xy (0, 0)*+{a_1}="0"; (14, 0)*+{a_2}="1"; {\ar^{f^0_{12}} "0" ; "1"}; \endxy \big),
        \dotsc,
        \big( \xy (0, 0)*+{a_{k - 1}}="0"; (14, 0)*+{a_k}="1"; {\ar^{f^0_{k - 1, k}} "0" ; "1"}; \endxy \big)
        \Big)
      \]
    under $(S_{k})_0$, as required.  Hence $S_k$ is surjective on objects.
  \end{proof}

  To show that the Segal maps are full and faithful on $1$-cells, we use the fact that there is some redundancy in the definition of $\NB(j, k)$.  Specifically, to specify an element of $\NB(j, k)$ we only need to specify $\alpha^z_{uv}$ for $v = u + 1$, rather than for all $u < v < j$ (note that we still have to specify every $a_u$, $f^z_{uv}$ and $\iota^z_{uvw}$).  Since this fact is used in the proofs of both fullness and faithfulness, we state and prove it as a separate lemma:

  \begin{lemma}  \label{lem:alphas}
    Let $\mathcal{B}$ be a bicategory, let $j$, $k \in \mathbb{N}$, and suppose we have the following data:
      \begin{itemize}
        \item for all $0 \leq u \leq j$, an object $a_u$ of $\mathcal{B}$;
        \item for all $0 \leq u < v \leq j$, $0 \leq z \leq k$, a $1$-cell $f^z_{uv}: a_u \rightarrow a_v$ in $\mathcal{B}$;
        \item for all $0 \leq u < j$, $1 \leq z \leq k$, a $2$-cell $\alpha^z_{u, u + 1}: f^{z-1}_{u, u + 1} \rightarrow f^z_{u, u + 1}$ in $\mathcal{B}$;
        \item for all $0 \leq u < v < w \leq j$, $0 \leq z \leq k$, an isomorphism $2$-cell $\iota^z_{uvw}: f^z_{vw} \comp f^z_{uv} \rightarrow f^z_{uw}$ in $\mathcal{B}$, with inverse $(\iota^z_{uvw})^{-1}$;
      \end{itemize}
    such that the isomorphism $2$-cells $\iota^z_{uvw}$ satisfy the pentagon axiom from the definition of $\NB$ on objects, Definition~\ref{defn:Leinsterbicatnerve}.  Then this specifies a unique element
      \[
        \left( (a_u)_{0 \leq u \leq j}, (f^z_{uv})_{\substack{0 \leq u < v \leq j \\ 0 \leq z \leq k}}, (\alpha^z_{uv})_{\substack{0 \leq u < v \leq j \\ 1 \leq z \leq k}}, (\iota^z_{uvw})_{\substack{0 \leq u < v < w \leq j \\ 0 \leq z \leq k}} \right)
      \]
    of $\NB(j, k)$.
  \end{lemma}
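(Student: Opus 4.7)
The plan is to show that the $\alpha^z_{uv}$ for $v > u+1$ are forced by the given data together with the axiom for $\alpha$, and then to verify that the definition we are forced into is consistent. First, observe that the axiom for $\alpha$ from Definition~\ref{defn:Leinsterbicatnerve}, applied to any triple $u < v < w$, is equivalent (using invertibility of $\iota^{z-1}_{uvw}$) to the formula
\[
  \alpha^z_{uw} \;=\; \iota^z_{uvw} \comp (\alpha^z_{vw} * \alpha^z_{uv}) \comp (\iota^{z-1}_{uvw})^{-1}.
\]
This immediately gives uniqueness, since specialising to $v = u+1$ recursively expresses every $\alpha^z_{uw}$ in terms of the given $\alpha^z_{r, r+1}$ together with the $\iota$'s.

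For existence, I would take this specialisation as a definition: by induction on $w - u$, set $\alpha^z_{uw} := \iota^z_{u,u+1,w} \comp (\alpha^z_{u+1,w} * \alpha^z_{u,u+1}) \comp (\iota^{z-1}_{u,u+1,w})^{-1}$, where $\alpha^z_{u+1,w}$ is given by the inductive hypothesis. This produces a $2$-cell $\alpha^z_{uw} \colon f^{z-1}_{uw} \to f^z_{uw}$ of the correct type. It then remains to check that the axiom for $\alpha$ holds for every triple $(u,v,w)$, not merely those with $v = u+1$.

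The main obstacle, and the substantive step of the proof, is verifying this axiom for arbitrary $(u,v,w)$. I would proceed by a second induction, this time on $v - u$, with the base case $v = u + 1$ holding by definition. For the inductive step, let $m = u + 1 < v$ and consider the rectangle one would like to commute for $(u,v,w)$. I would paste this rectangle together with the rectangles for the triples $(u,m,v)$, $(m,v,w)$ and $(u,m,w)$ (which all commute by the inductive hypothesis, or by the definition of $\alpha^z_{uw}$) and with two instances of the pentagon axiom for the $\iota^{z-1}_{umvw}$ and $\iota^z_{umvw}$ data, together with the middle-four interchange law governing horizontal and vertical composition in $\mathcal{B}$. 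The pentagons supply exactly the associativity coherence needed to rebracket $(\alpha^z_{vw} * \alpha^z_{uv})$ as $(\alpha^z_{vw} * (\alpha^z_{mv} * \alpha^z_{um}))$ up to the coherent associator in $\mathcal{B}$, and to combine the $\iota$'s on either side of the rectangle into the required ones.

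Once the axiom is verified for all triples, the quadruple
$((a_u), (f^z_{uv}), (\alpha^z_{uv}), (\iota^z_{uvw}))$
satisfies both axioms of Definition~\ref{defn:Leinsterbicatnerve} and so defines an element of $\NB(j,k)$, and the argument at the start establishes uniqueness. I expect the pentagon/interchange paste diagram to be the only real content of the proof; everything else is bookkeeping forced by the axioms.
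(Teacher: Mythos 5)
Your proposal is correct and follows essentially the same route as the paper: both define $\alpha^z_{uv}$ for $v > u+1$ by the composite forced by the square axiom and then invoke the pentagon axiom (together with bicategorical coherence) to ensure consistency across all choices of intermediate vertex. The only organisational difference is that the paper defines $\alpha^z_{uv}$ using an arbitrary intermediate $w$ and asserts that the pentagon makes this independent of $w$, whereas you fix $w = u+1$ and then verify the square axiom for arbitrary triples by a second induction — these are two formulations of the same consistency check.
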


  \begin{proof}
    We need to show that, for all $0 \leq u < u + 1 < v \leq j$, $1 \leq z \leq k$, there is a unique choice of $2$-cell $\alpha^z_{uv}$ in $\mathcal{B}$ such that the axioms for an element of $\NB(j, k)$ are satisfied.  We do this by strong induction over $v$.

    First, let $v = u + 2$.  For all $1 \leq z \leq k$, write $w := u + 1$, and define $\alpha^z_{uv} = \alpha^z_{u, u + 2}$ to be given by the composite
      \[
        \xy
          % POINTS
          (0, 0)*+{\bullet}="3,0";
          (20, 0)*+{\bullet}="4,0";
          (40, 0)*+{\bullet}="5,0";
          % ARROWS
          {\ar@/^1.5pc/ "3,0" ; "4,0"};
          {\ar@/_1.5pc/ "3,0" ; "4,0"};
          {\ar@/^1.5pc/ "4,0" ; "5,0"};
          {\ar@/_1.5pc/ "4,0" ; "5,0"};
          {\ar@/_3.25pc/ (0, -2) ; (40, -2)};
          {\ar@/^3.25pc/ (0, 2) ; (40, 2)};
          {\ar@{=>}^{(\iota^{z - 1}_{uwv})^{-1}} (20, 12) ; (20, 6)};
          {\ar@{=>}^{\alpha^z_{uw}} (10, 3) ; (10, -3)};
          {\ar@{=>}^{\alpha^z_{wv}} (30, 3) ; (30, -3)};
          {\ar@{=>}^{\iota^{z}_{uwv}} (20, -6) ; (20, -12)};
        \endxy
      \]
    in $\mathcal{B}$.  By considering the composite $\alpha^z_{uv} \comp \iota^{z - 1}_{uwv}$, we see that $\alpha^z_{uv}$ satisfies the square axiom from the definition of $\NB(j, k)$, Definition~\ref{defn:Leinsterbicatnerve};  furthermore, it is the only $2$-cell of $\mathcal{B}$ satisfying these axioms, given that $\alpha^z_{uw}$, $\alpha^z_{wv}$, $\iota^{z - 1}_{uwv}$ and $\iota^{z}_{uwv}$ are fixed.

    Now let $m \geq 1$ and suppose we have defined $\alpha^z_{uv}$ for all $u + 1 \leq v \leq u + m$.  We define $\alpha^z_{uv}$ for $v = u + m + 1$ as follows:  let $w$ be a natural number with $u < w < v$, and define $\alpha^z_{uv}$ to be given by the composite
      \[
        \xy
          % POINTS
          (0, 0)*+{\bullet}="3,0";
          (20, 0)*+{\bullet}="4,0";
          (40, 0)*+{\bullet}="5,0";
          % ARROWS
          {\ar@/^1.5pc/ "3,0" ; "4,0"};
          {\ar@/_1.5pc/ "3,0" ; "4,0"};
          {\ar@/^1.5pc/ "4,0" ; "5,0"};
          {\ar@/_1.5pc/ "4,0" ; "5,0"};
          {\ar@/_3.25pc/ (0, -2) ; (40, -2)};
          {\ar@/^3.25pc/ (0, 2) ; (40, 2)};
          {\ar@{=>}^{(\iota^{z - 1}_{uwv})^{-1}} (20, 12) ; (20, 6)};
          {\ar@{=>}^{\alpha^z_{uw}} (10, 3) ; (10, -3)};
          {\ar@{=>}^{\alpha^z_{wv}} (30, 3) ; (30, -3)};
          {\ar@{=>}^{\iota^{z}_{uwv}} (20, -6) ; (20, -12)};
        \endxy
      \]
    Note that the pentagon axiom from the definition of $\NB(j, k)$ ensures that this is independent of our choice of $w$.  As before, by considering the composite $\alpha^z_{uv} \comp \iota^{z - 1}_{uwv}$, we see that $\alpha^z_{uv}$ satisfies the square axiom from the definition of $\NB(j, k)$, Definition~\ref{defn:Leinsterbicatnerve};  furthermore, it is the only $2$-cell of $\mathcal{B}$ satisfying these axioms, given that $\alpha^z_{uw}$, $\alpha^z_{wv}$, $\iota^{z - 1}_{uwv}$ and $\iota^{z}_{uwv}$ are fixed.

    This defines a unique element
      \[
        \left( (a_u)_{0 \leq u \leq j}, (f^z_{uv})_{\substack{0 \leq u < v \leq j \\ 0 \leq z \leq k}}, (\alpha^z_{uv})_{\substack{0 \leq u < v \leq j \\ 1 \leq z \leq k}}, (\iota^z_{uvw})_{\substack{0 \leq u < v < w \leq j \\ 0 \leq z \leq k}} \right)
      \]
    of $\NB(j, k)$, as required.
  \end{proof}

  This now allows us to prove the Segal maps are full and faithful on $1$-cells.

  \begin{prop}  \label{prop:bicatSegalfull}
    Let $\mathcal{B}$ be a bicategory.  For all $k \geq 0$, the Segal map
          \[
            S_k: \NB(k,-) \longrightarrow \underbrace{\NB(1,-) \times_{\NB(0,0)} \dotsb \times_{\NB(0,0)} \NB(1,-)}_k
          \]
    is full on $1$-cells.
  \end{prop}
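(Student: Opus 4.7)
The plan is to reduce the proposition to Lemma~\ref{lem:alphas}, which establishes that specifying the $2$-cells $\alpha^z_{u, u+1}$ only along adjacent vertices (together with the rest of the data of a $(j, k)$-cell) determines a unique element of $\NB(j, k)$. Unpacking Definition~\ref{defn:simcontr} in the case $m = 1$, fullness of $S_k$ on $1$-cells amounts to surjectivity of the induced map
\[
(\tilde{S}_k)_{\vectone_1} \colon \NB(k, 1) \longrightarrow \NB(k, 0) \times_{P_0} P_1 \times_{P_0} \NB(k, 0),
\]
where $P_\ell$ denotes the wide pullback at level $\ell$, that is, the set $\underbrace{\NB(1, \ell) \times_{\NB(0, 0)} \dotsb \times_{\NB(0, 0)} \NB(1, \ell)}_k$. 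So I would fix a triple $(A, \Phi, B)$ in the codomain and aim to build a $(k, 1)$-cell $X$ with $s(X) = A$, $t(X) = B$, and $S_k(X) = \Phi$.

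Next I would unpack what such a triple records. Writing $A = \bigl( (a_u), (f^0_{uv}), (\iota^0_{uvw}) \bigr)$ and $B = \bigl( (b_u), (f^1_{uv}), (\iota^1_{uvw}) \bigr)$, the compatibility conditions between $\Phi$ and the pair $S_k(A)$, $S_k(B)$ force $b_u = a_u$ for every $u$, and force $\Phi$ to be a $k$-tuple $(\phi_1, \dotsc, \phi_k)$ in which $\phi_i$ is a $(1, 1)$-cell
\[
\alpha^1_{i - 1, i} \colon f^0_{i - 1, i} \Longrightarrow f^1_{i - 1, i}.
\]
Collecting this gives precisely the hypotheses of Lemma~\ref{lem:alphas} at $(j, k) = (k, 1)$: all objects $a_u$, all $1$-cells $f^z_{uv}$, the adjacent $2$-cells $\alpha^1_{u, u+1}$, and all the $\iota^z_{uvw}$'s; the pentagon axiom is automatic because it is already satisfied inside $A$ at level $z = 0$ and inside $B$ at level $z = 1$.

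Applying the lemma will then produce a unique element $X \in \NB(k, 1)$ extending this data. I would finish with a direct check that $X$ lifts $(A, \Phi, B)$: reading off level $0$ of $X$ returns exactly $A$, reading off level $1$ returns exactly $B$, and projecting $X$ onto its adjacent-vertex spine at level $1$ returns $(\alpha^1_{u, u+1})_u = \Phi$. All of the genuine combinatorial content is absorbed into Lemma~\ref{lem:alphas}; within the proof of fullness itself the only subtlety is to verify that the equalities imposed by the wide pullback on the codomain line up correctly with the input signature demanded by the lemma, and this is a routine bookkeeping step rather than a substantive obstacle.
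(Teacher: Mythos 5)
Your proposal is correct and follows essentially the same route as the paper: both reduce fullness to lifting a triple $(f,\alpha,g)$ with $s(\alpha)=S_k(f)$, $t(\alpha)=S_k(g)$ to an element of $\NB(k,1)$, and both obtain that lift by feeding the objects, the $1$-cells and $\iota$'s from the two $(k,0)$-cells, and the adjacent $2$-cells $\alpha^1_{u,u+1}$ into Lemma~\ref{lem:alphas}. The concluding checks (source, target, and image under $S_k$) are likewise the same as in the paper's proof.
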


  \begin{proof}
    Suppose we have two elements $f$, $g \in \NB(k, 0)$, which we denote
      \[
        f = \Big( (a_u)_{0 \leq u \leq k}, (f^0_{uv})_{\substack{0 \leq u < v \leq k}}, (\iota^0_{uvw})_{\substack{0 \leq u < v < w \leq k}} \Big)
      \]
    and
      \[
        g =\Big( (b_u)_{0 \leq u \leq k}, (g^0_{uv})_{\substack{0 \leq u < v \leq k}}, (\kappa^0_{uvw})_{\substack{0 \leq u < v < w \leq k}} \Big),
      \]
    and suppose we have an element $\alpha$ of
      \[
        \underbrace{\NB(1,1) \times_{\NB(0,0)} \dotsb \times_{\NB(0,0)} \NB(1,1)}_k,
      \]
    with $s(\alpha) = S_k(f)$ and $t(\alpha) = S_k(g)$.  Then, for all $0 \leq u \leq k$, $a_u = b_u$, and we can write $\alpha$ as
      \[
        \alpha = \left(
        \Bigg(
        \xy
          % POINTS
          (0, 0)*+{a_0}="0";
          (20, 0)*+{a_1}="1";
          % ARROWS
          {\ar@/^1.5pc/^{f^0_{01}} "0" ; "1"};
          {\ar@/_1.5pc/_{g^0_{01}} "0" ; "1"};
          {\ar@{=>}^{\alpha^1_{01}} (8, 3) ; (8, -3)};
        \endxy
        \Bigg),
        \Bigg(
        \xy
          % POINTS
          (0, 0)*+{a_1}="0";
          (20, 0)*+{a_2}="1";
          % ARROWS
          {\ar@/^1.5pc/^{f^0_{12}} "0" ; "1"};
          {\ar@/_1.5pc/_{g^0_{12}} "0" ; "1"};
          {\ar@{=>}^{\alpha^1_{12}} (8, 3) ; (8, -3)};
        \endxy
        \Bigg),
        \dotsc,
        \Bigg(
        \xy
          % POINTS
          (0, 0)*+{a_{k - 1}}="0";
          (20, 0)*+{a_k}="1";
          % ARROWS
          {\ar@/^1.5pc/^{f^0_{k - 1, k}} "0" ; "1"};
          {\ar@/_1.5pc/_{g^0_{k - 1, k}} "0" ; "1"};
          {\ar@{=>}^{\alpha^1_{k - 1, k}} (7, 3) ; (7, -3)};
        \endxy
        \Bigg)
        \right).
      \]
    By Lemma~\ref{lem:alphas}, $\alpha$, combined with the isomorphism $2$-cells $\iota^0_{uvw}$ and $\kappa^0_{uvw}$, defines a unique element
      \[
        \left( (a_u)_{0 \leq u \leq k}, (f^z_{uv})_{\substack{0 \leq u < v \leq k \\ 0 \leq z \leq 1}}, (\alpha^1_{uv})_{0 \leq u < v \leq k}, (\iota^z_{uvw})_{\substack{0 \leq u < v < w \leq k \\ 0 \leq z \leq 1}} \right)
      \]
    of $\NB(k, 1)$, where
      \begin{itemize}
        \item for all $0 \leq u < v \leq k$, $f^1_{uv} = g^0_{uv}$;
        \item for all $0 \leq u < v < w \leq k$, $\iota^1_{uvw} = \kappa^0_{uvw}$.
      \end{itemize}
    Denote this by $\hat{\alpha}$; then $s(\hat{\alpha}) = f$, $t(\hat{\alpha}) = g$, and $S_k(\hat{\alpha}) = \alpha$, so $S_k$ is full on $1$-cells.
  \end{proof}

  \begin{prop}  \label{prop:bicatSegalfaithful}
    Let $\mathcal{B}$ be a bicategory.  For all $k \geq 0$, the Segal map
          \[
            S_k: \NB(k,-) \longrightarrow \underbrace{\NB(1,-) \times_{\NB(0,0)} \dotsb \times_{\NB(0,0)} \NB(1,-)}_k
          \]
    is faithful on $1$-cells.
  \end{prop}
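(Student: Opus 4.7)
The plan is to apply Lemma~\ref{lem:alphas} (the redundancy lemma for the $\alpha^z_{uv}$'s) in essentially the same way that it was used for fullness in Proposition~\ref{prop:bicatSegalfull}, but in the opposite direction: instead of building an element of $\NB(k,1)$ from data living only on adjacent indices, we show that any two such elements which agree on that restricted data must coincide.

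First, I would unpack what faithfulness on $1$-cells means in this context. The Segal map $S_k$ is a map of $1$-precategories (i.e.\ of simplicial sets), so by Definition~\ref{defn:simcontr} faithfulness on $1$-cells says that given $\hat{\alpha}, \hat{\beta} \in \NB(k,1)$ with $s(\hat{\alpha}) = s(\hat{\beta})$, $t(\hat{\alpha}) = t(\hat{\beta})$ and $S_k(\hat{\alpha}) = S_k(\hat{\beta})$, we have $\hat{\alpha} = \hat{\beta}$. Writing each $(k,1)$-cell as a quadruple in the form of Definition~\ref{defn:Leinsterbicatnerve}, equality of sources and targets forces the tuples of objects $(a_u)$, of $1$-cells $(f^z_{uv})$ for $z=0,1$, and of constraint isomorphisms $(\iota^z_{uvw})$ to agree for $\hat{\alpha}$ and $\hat{\beta}$, because these pieces of data are precisely what specifies the source $0$-cell and target $0$-cell in $\NB(k,0)$.

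The only remaining data in a $(k,1)$-cell are the $2$-cells $\alpha^1_{uv}$ for $0 \leq u < v \leq k$. Equality $S_k(\hat{\alpha}) = S_k(\hat{\beta})$ directly supplies equality of $\alpha^1_{u,u+1}$ for every $0 \leq u < k$, since these are exactly the components read off by the Segal map on adjacent indices. Now I would invoke Lemma~\ref{lem:alphas}: it states that, given the objects, all the $f^z_{uv}$'s, the $\iota^z_{uvw}$'s (satisfying the pentagon axiom), and only the ``adjacent'' $2$-cells $\alpha^1_{u,u+1}$, there is at most one way to fill in the remaining $\alpha^1_{uv}$'s for $v > u+1$ so that the square axiom of Definition~\ref{defn:Leinsterbicatnerve} holds. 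Since $\hat{\alpha}$ and $\hat{\beta}$ each provide such a filling with the same input data, the lemma forces $\alpha^1_{uv} = \beta^1_{uv}$ for all $u < v$, and hence $\hat{\alpha} = \hat{\beta}$.

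There is no real obstacle here; the content of the proposition is already absorbed into Lemma~\ref{lem:alphas}, and the present argument is essentially bookkeeping. The only point that needs a moment of care is recording that the isomorphism $2$-cells $\iota^z_{uvw}$ (which govern the inductive definition of the $\alpha^z_{uv}$'s in the proof of Lemma~\ref{lem:alphas}) are genuinely determined by the source and target in $\NB(k,0)$ rather than by the Segal image, so that the uniqueness clause of Lemma~\ref{lem:alphas} can be applied with the $\iota$'s held fixed.
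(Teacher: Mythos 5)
Your proposal is correct and follows essentially the same route as the paper's proof: use parallelism to fix all data except the $2$-cells $\alpha^1_{uv}$, read off the adjacent components $\alpha^1_{u,u+1}$ from the equality of Segal images, and then apply the uniqueness clause of Lemma~\ref{lem:alphas} to force the remaining $\alpha^1_{uv}$ to agree. Your closing remark that the $\iota^z_{uvw}$'s are determined by the source and target in $\NB(k,0)$ is exactly the observation the paper makes when it notes that two parallel elements of $\NB(k,1)$ can differ only in their $2$-cell parts.
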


  \begin{proof}
    Suppose we have two parallel elements $\alpha$, $\beta \in \NB(k, 1)$ such that $(S_k)_1(\alpha) = (S_k)_1(\beta)$.  We wish to show that $\alpha = \beta$.  We can write $f$ and $g$ as
      \[
        \alpha = \left( (a_u)_{0 \leq u \leq k}, (f^z_{uv})_{\substack{0 \leq u < v \leq k \\ 0 \leq z \leq 1}}, (\alpha^1_{uv})_{0 \leq u < v \leq k}, (\iota^z_{uvw})_{\substack{0 \leq u < v < w \leq k \\ 0 \leq z \leq 1}} \right)
      \]
    and
      \[
        \beta = \left( (a_u)_{0 \leq u \leq k}, (f^z_{uv})_{\substack{0 \leq u < v \leq k \\ 0 \leq z \leq 1}}, (\beta^1_{uv})_{0 \leq u < v \leq k}, (\iota^z_{uvw})_{\substack{0 \leq u < v < w \leq k \\ 0 \leq z \leq 1}} \right).
      \]
    Note that the fact $\alpha$ and $\beta$ are parallel tells us that they can only differ on their $2$-cell parts.  We write $(S_k)_1(\alpha) = (S_k)_1(\beta)$ as
      \[
        \left(
        \Bigg(
        \xy
          % POINTS
          (0, 0)*+{a_0}="0";
          (20, 0)*+{a_1}="1";
          % ARROWS
          {\ar@/^1.5pc/^{f^0_{01}} "0" ; "1"};
          {\ar@/_1.5pc/_{g^0_{01}} "0" ; "1"};
          {\ar@{=>}^{\gamma^1_{01}} (8, 3) ; (8, -3)};
        \endxy
        \Bigg),
        \Bigg(
        \xy
          % POINTS
          (0, 0)*+{a_1}="0";
          (20, 0)*+{a_2}="1";
          % ARROWS
          {\ar@/^1.5pc/^{f^0_{12}} "0" ; "1"};
          {\ar@/_1.5pc/_{g^0_{12}} "0" ; "1"};
          {\ar@{=>}^{\gamma^1_{12}} (8, 3) ; (8, -3)};
        \endxy
        \Bigg),
        \dotsc,
        \Bigg(
        \xy
          % POINTS
          (0, 0)*+{a_{k - 1}}="0";
          (20, 0)*+{a_k}="1";
          % ARROWS
          {\ar@/^1.5pc/^{f^0_{k - 1, k}} "0" ; "1"};
          {\ar@/_1.5pc/_{g^0_{k - 1, k}} "0" ; "1"};
          {\ar@{=>}^{\gamma^1_{k - 1, k}} (7, 3) ; (7, -3)};
        \endxy
        \Bigg)
        \right),
      \]
    which is an element of
      \[
        \underbrace{\NB(1,1) \times_{\NB(0,0)} \dotsb \times_{\NB(0,0)} \NB(1,1)}_k.
      \]
    Furthermore, since $(S_k)_1(\alpha) = (S_k)_1(\beta)$, we have that, for all $0 \leq u < k$,
      \[
        \alpha^1_{u, u + 1} = \gamma^1_{u, u + 1} = \beta^1_{u, u + 1}.
      \]
    Thus, by Lemma~\ref{lem:alphas}, for all $0 \leq u < v \leq k$, we have
      \[
        \alpha^1_{uv} = \gamma^1_{uv} = \beta^1_{uv},
      \]
    so $\alpha = \beta$, as required.
  \end{proof}

  We now have everything we need to prove that the nerve of a bicategory satisfies the Segal condition.

  \begin{thm}  \label{thm:bicatnerveSegalcond}
    Let $\mathcal{B}$ be a bicategory.  Then the nerve of $\mathcal{B}$, $\nerve \mathcal{B}$, satisfies the Segal condition, and is thus a Tamsamani--Simpson weak $2$-category.
  \end{thm}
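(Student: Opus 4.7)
The plan is to prove the theorem by appealing directly to the four propositions immediately preceding it, which together establish exactly the conditions required by the unpacked form of the definition of Tamsamani--Simpson weak $2$-category given in Definition~\ref{defn:2catv1}. All of the technical work has already been carried out, so this theorem is essentially an assembly step.

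First, I would recall Definition~\ref{defn:2catv1}, noting that a $2$-precategory $A \colon (\Theta^2)^{\op} \rightarrow \Set$ is a Tamsamani--Simpson weak $2$-category precisely when
\begin{enumerate}[(i)]
  \item for each $k \geq 0$, the Segal map $S_k$ is contractible in the sense of Definition~\ref{defn:simcontr}, i.e.\ surjective on objects and full and faithful on $1$-cells;
  \item for each $j$, $k \geq 0$, the Segal map $S_{j,k}$ is a bijection.
\end{enumerate}
Thus it suffices to show that $\nerve \mathcal{B}$ satisfies both (i) and (ii).

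Condition (ii) is immediate from Proposition~\ref{prop:bicatSegalbij}, which establishes that each $S_{j,k}$ is a bijection; this is where the fact that the data specifying a $(j,k)$-cell of $\nerve \mathcal{B}$ decomposes cleanly into $k$ overlapping $(j,1)$-cells sharing their $(j,0)$-cell boundaries is used. Condition (i) decomposes into three separate claims matching the three clauses of contractibility: surjectivity of $S_k$ on objects is supplied by Proposition~\ref{prop:bicatSegalsurj} (using that a string of composable $1$-cells can be extended to a simplex of $1$-cells and mediating constraint $2$-cells via coherence for bicategories); fullness of $S_k$ on $1$-cells is supplied by Proposition~\ref{prop:bicatSegalfull}; and faithfulness on $1$-cells is supplied by Proposition~\ref{prop:bicatSegalfaithful}. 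The latter two both rely on Lemma~\ref{lem:alphas}, which tells us that the $2$-cells $\alpha^z_{uv}$ for $v > u+1$ are uniquely determined by the $\alpha^z_{u,u+1}$'s together with the $\iota^z_{uvw}$'s.

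Combining these results yields that $\nerve \mathcal{B}$ satisfies the Segal condition, and is therefore a Tamsamani--Simpson weak $2$-category. Since every substantive step has been isolated into a separate proposition, there is no serious obstacle remaining; the only thing to be careful about is matching the indexing conventions between Definition~\ref{defn:simncat}, Definition~\ref{defn:2catv1}, and the statements of Propositions~\ref{prop:bicatSegalbij}--\ref{prop:bicatSegalfaithful}, so that the two families of Segal maps are treated in the correct cases.
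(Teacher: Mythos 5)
Your proposal is correct and matches the paper's proof exactly: the paper likewise assembles Theorem~\ref{thm:bicatnerveSegalcond} by citing Proposition~\ref{prop:bicatSegalbij} for the bijectivity of the maps $S_{j,k}$ and Propositions~\ref{prop:bicatSegalsurj}, \ref{prop:bicatSegalfull} and \ref{prop:bicatSegalfaithful} for the three clauses of contractibility of the maps $S_k$. No further comment is needed.
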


  \begin{proof}
    For all $j$, $k \geq 0$, the Segal map
          \[
            S_{j,k}: \NB(j,k) \longrightarrow \underbrace{\NB(j,1) \times_{\NB(j,0)} \dotsb \times_{\NB(j,0)} \NB(j,1)}_k
          \]
    is a bijection by Proposition~\ref{prop:bicatSegalbij}.

    For all $k \geq 0$, the Segal map
          \[
            S_k: \NB(k,-) \longrightarrow \underbrace{\NB(1,-) \times_{\NB(0,0)} \dotsb \times_{\NB(0,0)} \NB(1,-)}_k
          \]
    is surjective on $0$-cells by Proposition~\ref{prop:bicatSegalsurj}, full on $1$-cells by Proposition~\ref{prop:bicatSegalfull}, and faithful on $1$-cells by Proposition~\ref{prop:bicatSegalfaithful}.

    Thus $\NB$ satisfies the Segal condition, so it is a Tamsamani--Simpson weak $2$-category.
  \end{proof}

\section{The nerve construction for $n = 2$}  \label{sect:2nerveconstr}

  In this section we construct a nerve functor for Penon weak $2$-categories.  The construction for the case of general $n$ is given in the next chapter; we present the $2$-dimensional case separately since it is simpler, both conceptually and notationally, than the general case, but not too simple to exhibit all the key features of the $n$-dimensional construction.  We are also able to prove that nerves satisfy the Segal condition in the case $n = 2$; we do this in Section~\ref{sect:Segal}.  We use Leinster's nerve construction for bicategories as the prototype for our construction, and also use his notation.

  Recall that, when defining the nerve of a category, we defined a functor $I \colon \Delta \hookrightarrow \Cat$, and then defined the nerve $\nerve \mathcal{C}$ of a category $\mathcal{C}$ to be given by $\nerve \mathcal{C} = \Cat(I(-), \mathcal{C})$.  In analogy with this, to define our nerve functor for Penon weak $2$-categories, we first define a functor
    \[
      I_2: \Theta^2 \longrightarrow P\Alg.
    \]
  This functor should give us, for each object of $\Theta^2$, the corresponding cuboidal $2$-pasting diagram, expressed as a freely generated Penon weak $2$-category.  However, we have to be very careful about what we mean by ``freely generated'' in this context.  Each cuboidal $2$-pasting diagram has associated to it a $2$-globular set whose cells are those which we draw in the pasting diagram.  We could simply define $I_2$ to give us the free $P$-algebra on these $2$-globular sets.  Let $(j, k) \in \Theta^2$ and write $F_{P}(j, k)$ for the free $P$-algebra on the corresponding $2$-globular set.  We would then have, for a Penon weak $2$-category $\mathcal{A}$, the nerve defined by
    \[
      \nerve \mathcal{A}(j, k) = P\Alg(F_{P}(j, k), \mathcal{A}).
    \]
  Consider the object $(2, 0)$ of $\Theta^2$; writing $f$ and $g$ for the generating $1$-cells, the free $P$-algebra on the corresponding $2$-globular set looks like
    \[
      \xymatrix@C=1.25em@R=1.25em{
        & \bullet \ar[ddr]^g  \\
        \\
        \bullet \ar[uur]^f \ar[rr]_{g \comp f} && \bullet,
               }
    \]
  (omitting identities and any composites involving identities).  Thus, for $\mathcal{A} \in P\Alg$, the set $P\Alg(F_P(2, 0), \mathcal{A})$ is the set of all composable pairs of $1$-cells in $\mathcal{A}$.  However, we want an element of $\nerve \mathcal{A}(2, 0)$ to consist of a composable pair of $1$-cells together with a choice of alternative composite, so we want $I_2(2, 0)$ to look like
    \[
      \xymatrix@C=1.25em@R=1.25em{
        & \bullet \ar[ddr]^g  \\
        \\
        \bullet \ar[uur]^f \ar[rr]^{g \comp f}_{\rotatebox[origin=c]{270}{ $\iso$}} \ar@/_2pc/[rr]_h && \bullet,
               }
    \]
  (once again omitting identities, etc.), where $h$ is the choice of alternative composite.  Note that these alternative composites are also required to allow us to define the face maps in our nerve;  we cannot define the face maps using composition, as in the nerve of a category, because composition of $1$-cells is not strictly associative in a Penon weak $2$-category.  We can think of this as weakening the maps in $\nerve \mathcal{A}(2, 0)$ on composites, but keeping them strict on identities.  Thus, we may think we want to use a notion of normalised maps of Penon weak $n$-categories;  that is, maps which preserve identities strictly but preserve composition only up to coherent isomorphism (note that there is no established definition of normalised maps of $P$-algebras, but for the purposes of this thought experiment this is not important).  We would thus define
    \[
      \nerve \mathcal{A}(j, k) := P\Alg_{\mathrm{norm}}(F_{P}(j, k), \mathcal{A}),
    \]
  where $P\Alg_{\mathrm{norm}}$ is the category of $P$-algebras and normalised maps.  In fact, normalised maps turn out to be too weak, as we will now demonstrate.  Consider the pasting diagram $(2, 2)$ shown below:
    \[
      \xy
        % POINTS
        (0, 0)*+{a_0}="0";
        (20, 0)*+{a_1}="1";
        (40, 0)*+{a_2}="2";
        % ARROWS
        {\ar@/^2pc/^{f_1} "0" ; "1"};
        {\ar_(0.3){g_1} "0" ; "1"};
        {\ar@/_2pc/_{h_1} "0" ; "1"};
        {\ar@/^2pc/^{f_2} "1" ; "2"};
        {\ar_(0.3){g_2} "1" ; "2"};
        {\ar@/_2pc/_{h_2} "1" ; "2"};
        {\ar@{=>} (10, 7) ; (10, 2)};
        {\ar@{=>} (30, 7) ; (30, 2)};
        {\ar@{=>} (10, -2) ; (10, -7)};
        {\ar@{=>} (30, -2) ; (30, -7)};
      \endxy
    \]
  If we use normalised maps, we will add an extra $1$-cell isomorphic to each of the binary composites of $f$'s, $g$'s and $h$'s.  However, owing to the simplicial nature of Tamsamani--Simpson weak $n$-categories, we only wish to add such extra $1$-cells in place of $f_2 \comp f_1$, $g_2 \comp g_1$, and $h_2 \comp h_1$.  This is because we should have a $2$-simplex of $1$-cells at each ``level'' of the pasting diagram (here we have three such levels, one containing $f_1$ and $f_2$, one containing $g_1$ and $g_2$, and one containing $h_1$ and $h_2$), but there should be no extra interaction between the levels.  Recall from Definition~\ref{defn:2catv1} that the Segal map $S_{2, 2}$ divides pasting diagrams of this shape along the $1$-cells $g_1$ and $g_2$, and the Segal condition requires this map to be an isomorphism; if we add extra cells isomorphic to $h_2 \comp f_1$ and $h_1 \comp f_2$ to the diagram above, these cells are forgotten by $S_{2, 2}$ so it is not an isomorphism.

  We therefore want a method of weakening $P$-algebras that is biased towards specific choices of simplicial shapes. Such a method cannot be defined for a general $P$-algebra, since in general we have no notion of ``level'' like we do in a $2$-pasting diagram.  Thus, we define this weakening by explicitly stating which extra cells we are going to add.  We do so by modifying the construction of the free Penon weak $2$-category on a $2$-globular set, using the construction of Penon's left adjoint from Section~\ref{sect:ladj}.

  Recall from Section~\ref{sect:ladj} that the adjunction inducing $P$ can be decomposed as
      \[
        \xy
          % POINTS
          (0, 0)*+{\nGSet}="nG";
          (20, 0)*+{\Rn}="Rn";
          (36, 0)*+{\Qn .}="Qn";
          % ARROWS
          {\ar@<1ex>^-{H}_-*!/u1pt/{\labelstyle \bot} "nG" ; "Rn"};
          {\ar@<1ex>^-{V} "Rn" ; "nG"};
          {\ar@<1ex>^-{J}_-*!/u1pt/{\labelstyle \bot} "Rn" ; "Qn"};
          {\ar@<1ex>^-{W} "Qn" ; "Rn"};
        \endxy
     \]
  Thus we can write the free $P$-algebra functor as the composite
      \[
        \xy
          % POINTS
          (0, 0)*+{2\mathbf{GSet}}="nG";
          (20, 0)*+{\Rn}="Rn";
          (36, 0)*+{\Qn}="Qn";
          (56, 0)*+{P\Alg,}="PAlg";
          % ARROWS
          {\ar^-{H} "nG" ; "Rn"};
          {\ar^-{J} "Rn" ; "Qn"};
          {\ar^-{K} "Qn" ; "PAlg"};
        \endxy
     \]
  where $K$ is the Eilenberg--Moore comparison functor.  Thus, instead of starting in $2\mathbf{GSet}$, we can start with an object of $\Rn$ and apply $KJ$ to obtain a $P$-algebra that is ``partially free'' in the sense that the constraint cells and composites are still added freely (by the functor $J$), but the contraction is now taken over a different map, rather than a component of $\eta^T$.  This allows us to add the isomorphism $2$-cells we want using the contraction, thus avoiding the need to specify these cells individually.

  Before defining the process in general we first describe a small example; specifically, we construct the $P$-algebra $I_2(2, 1)$.  Write $X(2, 1)$ for the $2$-globular set illustrated below:
    \[
      \xy
        % POINTS
        (0, 0)*+{a_0}="0";
        (20, 0)*+{a_1}="1";
        (40, 0)*+{a_2}="2";
        % ARROWS
        {\ar@/^1.5pc/^{f^0_{01}} "0" ; "1"};
        {\ar@/_1.5pc/_{f^1_{01}} "0" ; "1"};
        {\ar@/^1.5pc/^{f^0_{12}} "1" ; "2"};
        {\ar@/_1.5pc/_{f^1_{12}} "1" ; "2"};
        {\ar@{=>}^{\alpha^1_{01}} (10, 4) ; (10, -4)}
        {\ar@{=>}^{\alpha^1_{12}} (30, 4) ; (30, -4)}
      \endxy
    \]
  This is the associated $2$-globular set of the pasting diagram, a concept introduced by Batanin~\cite[Proof of Proposition 4.2]{Bat98}.  As explained earlier, we want $I_2(2, 1)$ to be a ``simplicially weakened'' version of the free $P$-algebra on this $2$-globular set, and to do so we construct an object of $\Rn$, then generate the ``partially free'' $P$-algebra on it. We take the strict $2$-category part of this object of $\Rn$ to be the free strict $2$-category on $X(2, 1)$.  To obtain the $2$-globular set part of this object of $\Rn$ we add extra cells to $X(2, 1)$ in the places where we want to weaken the diagram.  Specifically, we add $1$-cells
        \[
          \xy
          % POINTS
          (0, 0)*+{a_0}="0";
          (16, 0)*+{a_2}="1";
          (32, 0)*+{a_0}="2";
          (48, 0)*+{a_2.}="3";
          (24, 0)*+{\text{and}};
          % ARROWS
          {\ar^{f^0_{02}} "0" ; "1"};
          {\ar^{f^1_{02}} "2" ; "3"};
          \endxy
        \]
  Based on Leinster's nerve construction for bicategories, we might also expect that we need to add a $2$-cell
    \[
      \xy
        % POINTS
        (0, 0)*+{a_0}="0";
        (20, 0)*+{a_2,}="1";
        % ARROWS
        {\ar@/^1.5pc/^{f^0_{02}} "0" ; "1"};
        {\ar@/_1.5pc/_{f^1_{02}} "0" ; "1"};
        {\ar@{=>}^{\alpha^1_{02}} (10, 4) ; (10, -4)}
      \endxy
    \]
  but this will be added automatically as a composite of other $2$-cells, as we shall see later.  We write $R(2, 1)$ for the resulting $2$-globular set; it can be drawn as:
      \[
        \xy
          % POINTS
          (0, 0)*+{a_0}="3,0";
          (16, 0)*+{a_1}="4,0";
          (32, 0)*+{a_2}="5,0";
          % ARROWS
          {\ar@/^3pc/^{f^0_{02}} (0, 2) ; (32, 2)};
          {\ar@/^1.5pc/^{f^0_{01}} "3,0" ; "4,0"};
          {\ar@/_1.5pc/_(0.6){f^1_{01}} "3,0" ; "4,0"};
          {\ar@/^1.5pc/^{f^0_{12}} "4,0" ; "5,0"};
          {\ar@/_1.5pc/_(0.4){f^1_{12}} "4,0" ; "5,0"};
          {\ar@/_3pc/_{f^1_{02}} (0, -2) ; (32, -2)};
          {\ar@{=>}^{\alpha^1_{01}} (8, 3) ; (8, -3)};
          {\ar@{=>}^{\alpha^1_{12}} (24, 3) ; (24, -3)};
        \endxy
      \]
  To get an object of $\Rn$, we define a map
    \[
      \theta_{(2, 1)} \colon R(2, 1) \longrightarrow TX(2, 1)
    \]
  as follows: $\theta_{(2, 1)}$ leaves cells in $R(2, 1)$ that are also in $X(2, 1)$ unchanged; on the extra cells, we have
    \begin{itemize}
      \item $\theta_{(2, 1)}(f^0_{02}) = f^0_{12} \comp f^0_{01}$;
      \item $\theta_{(2, 1)}(f^1_{02}) = f^1_{12} \comp f^1_{01}$.
    \end{itemize}

  We now explain what happens when we apply the functor
    \[
      J \colon \Rn \longrightarrow \Qn
    \]
  to
    \[
      \xy
        (0, 0)*+{R(2, 1)}="0";
        (24, 0)*+{TX(2, 1),}="1";
        {\ar^-{\theta_{(2, 1)}} "0" ; "1"};
      \endxy
    \]
  using the interleaving construction from Section~\ref{sect:ladj}.  First we add contraction $1$-cells; since $R(2, 1)$ and $TX(2, 1)$ have the same $0$-cells, this just adds identities.  We then generate composites of $1$-cells freely; this adds $f^0_{12} \comp f^0_{01}$, $f^1_{12} \comp f^1_{01}$, $f^1_{12} \comp f^0_{01}$ and $f^0_{12} \comp f^1_{01}$, as well as composites involving identities.  Next we add contraction $2$-cells; this is where the ``simplicial weakening'' manifests itself.  Observe that, after having generated $1$-cell composites, we have pairs of $1$-cells:
    \begin{itemize}
      \item $f^0_{02}$ and $f^0_{12} \comp f^0_{01}$, which are parallel and are mapped to the same cell in $TX(2, 1)$;
      \item $f^1_{02}$ and $f^1_{12} \comp f^1_{01}$, which are parallel and are mapped to the same cell in $TX(2, 1)$.
    \end{itemize}
  Thus, as well as the usual identities, associators, and unitors, generating contraction $2$-cells freely adds the following cells:
    \[
      \xy
        % POINTS
        (10, 0)*+{a_1}="1,1";
        (0, -16)*+{a_0}="1,0";
        (20, -16)*+{a_2}="1,2";
        (50, 0)*+{a_1}="2,1";
        (40, -16)*+{a_0}="2,0";
        (60, -16)*+{a_2}="2,2";
        % ARROWS
        {\ar^{f^0_{01}} "1,0" ; "1,1"};
        {\ar^{f^0_{12}} "1,1" ; "1,2"};
        {\ar_{f^0_{02}} "1,0" ; "1,2"};
        {\ar@{=>} (10, -6.5) ; (10, -12.5)};
        {\ar^{f^0_{01}} "2,0" ; "2,1"};
        {\ar^{f^0_{12}} "2,1" ; "2,2"};
        {\ar_{f^0_{02}} "2,0" ; "2,2"};
        {\ar@{=>} (50, -12.5) ; (50, -6.5)};
      \endxy
    \]
    \[
      \xy
        % POINTS
        (10, 0)*+{a_1}="1,1";
        (0, -16)*+{a_0}="1,0";
        (20, -16)*+{a_2}="1,2";
        (50, 0)*+{a_1}="2,1";
        (40, -16)*+{a_0}="2,0";
        (60, -16)*+{a_2}="2,2";
        % ARROWS
        {\ar^{f^1_{01}} "1,0" ; "1,1"};
        {\ar^{f^1_{12}} "1,1" ; "1,2"};
        {\ar_{f^1_{02}} "1,0" ; "1,2"};
        {\ar@{=>} (10, -6.5) ; (10, -12.5)};
        {\ar^{f^1_{01}} "2,0" ; "2,1"};
        {\ar^{f^1_{12}} "2,1" ; "2,2"};
        {\ar_{f^1_{02}} "2,0" ; "2,2"};
        {\ar@{=>} (50, -12.5) ; (50, -6.5)};
      \endxy
    \]
  We generate composites of $2$-cells, then ``add contraction $3$-cells'', which forces all diagrams of $2$-cells to commute.  In particular, this forces the pairs of triangular cells shown above to be inverses of one another (and thus isomorphisms), and also gives us a $2$-cell
      \[
        \xy
          % POINTS
          (0, 0)*+{a_0}="0,0";
          (32, 0)*+{a_2}="2,0";
          (40, 0)*+{=};
          (48, 0)*+{a_0}="3,0";
          (64, 0)*+{a_1}="4,0";
          (80, 0)*+{a_2}="5,0";
          % ARROWS
          {\ar@/^1.5pc/^(0.3){f^0_{02}} "0,0" ; "2,0"};
          {\ar@/_1.5pc/_{f^1_{02}} "0,0" ; "2,0"};
          {\ar@{=>}^{\alpha^1_{02}} (16, 3) ; (16, -3)};
          {\ar@/^3pc/^{f^0_{02}} (48, 2) ; (80, 2)};
          {\ar@/^1.5pc/^{f^0_{01}} "3,0" ; "4,0"};
          {\ar@/_1.5pc/_(0.6){f^1_{01}} "3,0" ; "4,0"};
          {\ar@/^1.5pc/^{f^0_{12}} "4,0" ; "5,0"};
          {\ar@/_1.5pc/_(0.4){f^1_{12}} "4,0" ; "5,0"};
          {\ar@/_3pc/_{f^1_{02}} (48, -2) ; (80, -2)};
          {\ar@{=>} (64, 12) ; (64, 6)};
          {\ar@{=>}^{\alpha^1_{01}} (56, 3) ; (56, -3)};
          {\ar@{=>}^{\alpha^1_{12}} (72, 3) ; (72, -3)};
          {\ar@{=>} (64, -6) ; (64, -12)};
        \endxy
      \]
  Observe that this corresponds to the first axiom from Leinster's nerve construction (see Definition~\ref{defn:Leinsterbicatnerve}); adding ``contraction $3$-cells'' also ensures that the second axiom holds when we perform this construction for longer cuboidal pasting diagrams.

  This whole process gives an object of $\Qn$, denoted
    \[
      \xy
        (0, 0)*+{Q(j, k)}="0";
        (24, 0)*+{TX(j, k).}="1";
        {\ar^-{\phi_{(j, k)}} "0" ; "1"};
      \endxy
    \]
  We obtain the $P$-algebra $I_2(2, 1)$ by applying the Eilenberg--Moore comparison functor; the resulting $P$-algebra has as its underlying magma the magma part of the object of $\Qn$ above.

  Note that the triangular cells added by the free contraction are considered contraction cells in the object of $\Qn$, but when we apply the Eilenberg--Moore comparison functor they are not contraction cells from the point of view of the $P$-algebra action.  They retain their commutativity properties, however, so given any other $P$-algebra $\mathcal{A}$, a map of $P$-algebras
    \[
      I_2(2, 1) \longrightarrow \mathcal{A}
    \]
  can map these cells to any suitably coherent choice of cells in $\mathcal{A}$; their images need not be contraction cells.

  We now describe this construction for a general object of $\Theta^2$.  As above, we use Leinster's notation from his nerve construction for bicategories (Section~\ref{sect:Leinsternerves}).  Recall that the subscripts and superscripts adorning each cell should be thought of as being the ``coordinates'' of that cell within the pasting diagram; the subscripts are the horizontal coordinates, and the superscripts are the vertical coordinates.

  Note that an object of $\Theta^2$ is an equivalence class of objects of $\Delta^2$.  An object of $\Delta^2$ is in an equivalence class with more than one member if and only if it has a $0$ in the first position.  Thus, for the purposes of the following definition we represent the equivalence class of $(0, k)$ for all $k \in \mathbb{N}$ by the object $(0, 0)$ of $\Delta^2$; all other equivalence classes are represented by their sole member.

  Let $(j, k)$ be an object of $\Theta^2$; we first define the $2$-globular set $X(j, k)$, the associated $2$-globular set of the cuboidal pasting diagram $(j, k)$, as follows:
    \begin{itemize}
      \item $X(j, k)_0 = \{ a_u \gt u \in \mathbb{N}, 0 \leq u \leq j \}$;
      \item $X(j, k)_1 = \{ f^z_{u, u + 1} \gt u, z \in \mathbb{N}, 0 \leq u < j, 0 \leq z \leq k \}$;
      \item $X(j, k)_2 = \{ \alpha^z_{u, u + 1} \gt u, z \in \mathbb{N}, 0 \leq u < j, 1 \leq z \leq k \}$,
    \end{itemize}
  with source and target maps given by
    \[
      s(f^z_{u, u + 1}) = a_u, \; t(f^z_{u, u + 1}) = a_{u + 1},
    \]
    \[
      s(\alpha^z_{u, u + 1}) = f^{z - 1}_{u, u + 1}, \; t(\alpha^z_{u, u + 1}) = f^z_{u, u + 1}.
    \]
  We then add extra $1$- and $2$-cells to this to obtain a $2$-globular set $R(j, k)$, defined as follows:
    \begin{itemize}
      \item $R(j, k)_0 = \{ a_u \gt u \in \mathbb{N}, 0 \leq u \leq j \}$;
      \item $R(j, k)_1 = \{ f^z_{uv} \gt u, v, z \in \mathbb{N}, 0 \leq u < v \leq j, 0 \leq z \leq k \}$;
      \item $R(j, k)_2 = \{ \alpha^z_{u, u + 1} \gt u, z \in \mathbb{N}, 0 \leq u < j, 1 \leq z \leq k \}$,
    \end{itemize}
  with source and target maps given by
    \[
      s(f^z_{uv}) = a_u, \; t(f^z_{uv}) = a_v,
    \]
    \[
      s(\alpha^z_{u, u + 1}) = f^{z - 1}_{u, u + 1}, \; t(\alpha^z_{u, u + 1}) = f^z_{u, u + 1}.
    \]
  It is important to note that, in spite of the notation, this does not define functors $X$ and $R$ into $2\text{-}\GSet$.  This is because, at this stage of the construction, there is no way to define the effect on maps in $\Theta^2$, since we cannot map cells to identities as we do not have these in the $2$-globular sets.

  We now construct, for each $(j, k) \in \Theta^2$, an object
    \[
      \xy
        (0, 0)*+{R(j, k)}="0";
        (24, 0)*+{TX(j, k)}="1";
        {\ar^-{\theta_{(j, k)}} "0" ; "1"};
      \endxy
    \]
  of $\Rtwo$.  We define the map $\theta_{(j, k)}$ as follows:
    \begin{itemize}
      \item on $0$-cells, $\theta_{(j, k)0}(a_u) = a_u$;
      \item on $1$-cells, $\theta_{(j, k)1}(f^z_{uv}) = f^z_{v - 1, v} \comp f^z_{v - 2, v - 1} \comp \dotsb \comp f^z_{u, u + 1}$;
      \item on $2$-cells, $\theta_{(j, k)2}(\alpha^z_{u, u + 1}) = \alpha^z_{u, u + 1}$.
    \end{itemize}
  This map coincides with $\eta^T_{X(j, k)}$, the unit for the monad $T$, for all cells in $X(j, k)$; the extra cells in $R(j, k)$ can be thought of as weakenings of the composites at each level of the cuboidal pasting diagram, and $\theta_{(j, k)}$ maps each of these cells to the corresponding freely generated strict composite in $TX(j, k)$.

  We now apply the functor $J \colon \Rtwo \rightarrow \Qtwo$ to the object of $\Rtwo$ described above; this adds to $R(j, k)$ all the required composites and contraction cells.  As demonstrated in the example above, this includes contraction cells in both directions between each of the extra $1$-cells (those in $R(j, k)_1$ but not in $X(j, k)_1$) and the corresponding freely generated composites at the same level of the pasting diagram (i.e. of cells with the same $z$-coordinate).  The tameness condition in the contraction ensures that these contraction $2$-cells are isomorphisms.  The extra $1$-cells will give us the choices of alternative composites in the nerve, and the contraction cells ensure that these are coherently isomorphic to the composites we originally had in the Penon weak $2$-category whose nerve we are taking.  We denote the resulting object of $\Qtwo$ by
    \[
      \xy
        (0, 0)*+{Q(j, k)}="0";
        (24, 0)*+{TX(j, k).}="1";
        {\ar^-{\phi_{(j, k)}} "0" ; "1"};
      \endxy
    \]

  We now extend this to a definition of a functor $E_2 \colon \Theta^2 \rightarrow \Qtwo$, with the action on objects as described above.  To describe the action on a morphism in $\Theta^2$, we first define a morphism in $\Rtwo$, and then take its transpose under the adjunction
      \[
        \xy
          % POINTS
          (0, 0)*+{\Rtwo}="Rn";
          (16, 0)*+{\Qtwo}="Qn";
          % ARROWS
          {\ar@<1ex>^-{J}_-*!/u1pt/{\labelstyle \bot} "Rn" ; "Qn"};
          {\ar@<1ex>^-{W} "Qn" ; "Rn"};
        \endxy
     \]
  to obtain a morphism in $\Qtwo$.

  Let $(p, q) \colon (j, k) \rightarrow (l, m)$ be a morphism in $\Theta^2$.  We define the strict $2$-category part of the morphism of $\Rtwo$ first.  Define a map of $2$-globular sets $x(p, q) \colon X(j, k) \rightarrow TX(l, m)$ as follows:
    \begin{itemize}
      \item for $a_u \in X(j, k)_0$, $x(p, q)_0(a_u) = a_{p(u)}$;
      \item for $f^z_{u, u + 1} \in X(j, k)_1$, $x(p, q)_1(f^z_{u, u + 1}) =$
        \[
            \left\{
              \begin{array}{ll}
              f^{q(z)}_{p(u + 1) - 1, p(u + 1)} \comp \dotsb \comp f^{q(z)}_{p(u), p(u) + 1} & \text{if } p(u) < p(u + 1), \\
              1_{a_{p(u)}} & \text{if } p(u) = p(u + 1);
              \end{array}
            \right.
        \]
      \item for $\alpha^z_{u, u + 1} \in X(j, k)_2$, $x(p, q)_2(\alpha^z_{u, u + 1}) =$
        \[
            \left\{
              \begin{array}{ll}
              \alpha^{q(z)}_{p(u + 1) - 1, p(u + 1)} * \dotsb * \alpha^{q(z)}_{p(u), p(u) + 1} & \text{if } p(u) < p(v), q(z - 1) < q(z), \\
              1_{TX(p, q)_1(f^z_{u, u + 1})} & \text{if } q(z - 1) = q(z).
              \end{array}
            \right.
        \]
    \end{itemize}
  To obtain a map $TX(j, k) \rightarrow TX(l, m)$ we apply $T$ and compose this with the multiplication for $T$, giving
    \[
      \xy
        % POINTS
        (0, 0)*+{TX(j, k)}="0";
        (28, 0)*+{T^2X(l, m)}="1";
        (56, 0)*+{TX(l, m)}="2";
        % ARROWS
        {\ar^-{Tx(p, q)} "0" ; "1"};
        {\ar^-{\mu^T_{X(l, m)}} "1" ; "2"};
      \endxy
    \]
  We now define a map
    \[
      \xy
        % POINTS
        (0, 0)*+{R(j, k)}="0,0";
        (56, 0)*+{Q(l, m)}="2,0";
        (0, -20)*+{TX(j, k)}="0,1";
        (28, -20)*+{T^2X(l, m)}="1,1";
        (56, -20)*+{TX(l, m),}="2,1";
        % ARROWS
        {\ar^{r(p, q)} "0,0" ; "2,0"};
        {\ar_{\theta_{(j, k)}} "0,0" ; "0,1"};
        {\ar^{\phi_{(l, m)}} "2,0" ; "2,1"};
        {\ar_-{Tx(p, q)} "0,1" ; "1,1"};
        {\ar_-{\mu^T_{X(l, m)}} "1,1" ; "2,1"};
      \endxy
    \]
  where the map $r(p, q)$ is defined as follows:
    \begin{itemize}
      \item for $a_u \in R(j, k)_0$, $R(p, q)_0(a_u) = a_{p(u)}$;
      \item for $f^z_{uv} \in R(j, k)_1$,
        \[
          R(p, q)_1(f^z_{uv}) =
            \left\{
              \begin{array}{rl}
              f^{q(z)}_{p(u) p(v)} & \text{if } p(u) < p(v), \\
              1_{a_{p(u)}} & \text{if } p(u) = p(v);
              \end{array}
            \right.
        \]
      \item for $\alpha^z_{uv} \in R(j, k)_2$,
        \[
          R(p, q)_2(\alpha^z_{uv}) =
            \left\{
              \begin{array}{rl}
              \alpha^{q(z)}_{p(u) p(v)} & \text{if } p(u) < p(v), q(z - 1) < q(z), \\
              1_{f^{q(z)}_{p(u) p(v)}} & \text{if } p(u) < p(v), q(z - 1) = q(z), \\
              1_{1_{a_{p(u)}}} & \text{if } p(u) = p(v).
              \end{array}
            \right.
        \]
    \end{itemize}

  Finally, we take the transpose of this map under the adjunction
      \[
        \xy
          % POINTS
          (0, 0)*+{\Rtwo}="Rn";
          (16, 0)*+{\Qtwo.}="Qn";
          % ARROWS
          {\ar@<1ex>^-{J}_-*!/u1pt/{\labelstyle \bot} "Rn" ; "Qn"};
          {\ar@<1ex>^-{W} "Qn" ; "Rn"};
        \endxy
     \]
  We write $\epsilon \colon JW \Rightarrow 1$ for the counit of this adjunction, and $\epsilon_{\phi_{(l, m)}}$ for the component corresponding to
    \[
      \xy
        (0, 0)*+{Q(l, m)}="0";
        (24, 0)*+{TX(l, m).}="1";
        {\ar^-{\phi_{(l, m)}} "0" ; "1"};
      \endxy
    \]
  Then the transpose is given by the composite
    \[
      \epsilon_{\phi_{(l, m)}} \comp J\big( r(p, q), \mu^T_{X(l, m)} \comp Tx(p, q) \big).
    \]
  This allows us to define the functors $E_2 \colon \Theta^2 \rightarrow \Qtwo$ and $I_2 \colon \Theta^2 \rightarrow P\Alg$.

  \begin{defn}
    Define a functor $E_2 \colon \Theta^2 \rightarrow \Qtwo$ as follows:
      \begin{itemize}
        \item given an object $(j, k) \in \Theta^2$, $E_2(j, k)$ is defined to be the object
      \[
      \xy
        (0, 0)*+{Q(j, k)}="0";
        (24, 0)*+{TX(j, k).}="1";
        {\ar^-{\phi_{(j, k)}} "0" ; "1"};
      \endxy
      \]
    of $\Qtwo$;
        \item given a morphism $(p, q): (j, k) \rightarrow (l, m)$ in $\Theta^2$, $E_2(p, q)$ is defined to be the map
    \[
      \epsilon_{\phi_{(l, m)}} \comp J\big( r(p, q), \mu^T_{X(l, m)} \comp Tx(p, q) \big).
    \]
      \end{itemize}
    Write $K \colon \Qtwo \rightarrow P\Alg$ for the Eilenberg--Moore comparison functor for the adjunction
      \[
        \xy
          % POINTS
          (0, 0)*+{\nGSet}="Rn";
          (24, 0)*+{\Qtwo.}="Qn";
          % ARROWS
          {\ar@<1ex>^-{F}_-*!/u1pt/{\labelstyle \bot} "Rn" ; "Qn"};
          {\ar@<1ex>^-{U} "Qn" ; "Rn"};
        \endxy
     \]
    We define a functor $I_2 := K \comp E_2 : \Theta^2 \rightarrow P\Alg$.
  \end{defn}

  We can now define the nerve functor for Penon weak $2$-categories.

  \begin{defn}
    The \emph{nerve functor} $\nerve$ for Penon weak $2$-categories is defined by
      \[
        \xymatrix@C=0pt@R=2pt{
          \nerve \colon P\Alg & \longrightarrow & [(\Theta^2)^{\op}, \Set]  \\
          \mathcal{A} \ar[dddd]_f && P\Alg(I_2(-), \mathcal{A}) \ar[dddd]^{f \comp -}  \\
          \\
          & \longmapsto & \\
          \\
          \mathcal{B} && P\Alg(I_2(-), \mathcal{B}).
                 }
      \]
    For a $P$-algebra $\mathcal{A}$, the presheaf $\nerve \mathcal{A} = P\Alg(I_2(-), \mathcal{A})$ is called the \emph{nerve of $\mathcal{A}$}.
  \end{defn}

\section{The Segal condition}  \label{sect:Segal}
  In this section we prove that the nerve of a Penon weak $2$-category satisfies the Segal condition, and is therefore a Tamsamani--Simpson weak $2$-category.  Recall from Definition~\ref{defn:2catv1} that $\nerve\mathcal{A}$ satisfies the Segal condition if
      \begin{enumerate}[(i)]
        \item for all $j \geq 0$, the Segal map
          \[
            S_j: \nerve\mathcal{A}(j,-) \longrightarrow \underbrace{\nerve\mathcal{A}(1,-) \times_{\nerve\mathcal{A}(0,1)} \dotsb \times_{\nerve\mathcal{A}(0,1)} \nerve\mathcal{A}(1,-)}_j
          \]
              is contractible, i.e. surjective on objects, full and faithful on $1$-cells;
        \item for all $j$, $k \geq 0$, the Segal map
          \[
            S_{j,k}: \nerve\mathcal{A}(j,k) \longrightarrow \underbrace{\nerve\mathcal{A}(j,1) \times_{\nerve\mathcal{A}(j,0)} \dotsb \times_{\nerve\mathcal{A}(j,0)} \nerve\mathcal{A}(j,1)}_k
          \]
              is a bijection.
      \end{enumerate}
  Our approach is to use the way in which nerve functor is defined to rewrite the Segal maps in terms of composition with certain maps of $P$-algebras;  this then allows us to express most parts of the Segal condition (everything except surjectivity on objects) as statements describing certain $P$-algebras in the image of $I_2$ as colimits of diagrams in the image of $I_2$.

  Before doing this, we establish some notation for certain free $P$-algebras in the image of $I_2$ that can be expressed as colimits of others; these $P$-algebras arise in the reformulation of the Segal condition described above.  Observe that the free $P$-algebra functor $F_{P}$ can be factorised as
    \[
      \xymatrix{
        \twoGSet \ar[rr]^{F_{P}} \ar[dr]_{F} && P\Alg  \\
        & \Qtwo \ar[ur]_{K}
               }
    \]
  Thus, we see from the construction of $I_2$ that, for $j$, $k \in \mathbb{N}$, if $R(j, k) = X(j, k)$, then $I_2(j, k) = F_{P}X(j, k)$.  Since $R(j, k)$ and $X(j, k)$ differ only on $1$-cells, this happens precisely when $R(j, k)_1 = X(j, k)_1$, which is true when $j = 0$ and $j = 1$:
    \begin{itemize}
      \item for $j = 0$, $R(j, k)_1 = \emptyset = X(j, k)_1$;
      \item for $j = 1$, $R(j, k)_1 = \{ f^z_{01} \gt 0 \leq z \leq k \} = X(j, k)_1$.
    \end{itemize}
  Thus $I_2(0, 0) = F_{P}X(0, 0)$, and $I_2(1, k) = F_{P}X(1, k)$ for all $k \in \mathbb{N}$.  For $j \geq 2$, we have $f^0_{02} \in R(j, k)$, but $f^0_{02} \not\in X(j, k)$, so this does not hold for $j \geq 2$.

  Recall that, for all $k > 0$, $0 \leq i \leq k$, we have a map $d_i \colon [k - 1] \rightarrow [k]$ in $\Delta$ given by
    \[
      d_{i}(j) =
            \left\{ \begin{array}{ll}
                j & \text{if } j < i, \\
                j + 1 & \text{if } j \geq i,
            \end{array} \right.
    \]
  and consider the following diagram in $P\Alg$:
    \[
      \underbrace{
        \xymatrix@C=12pt{
        & I_2(0, 0) \ar[dl]_{I_2(d_0, 1)} \ar[dr]^{I_2(d_1, 1)} &&&& I_2(0, 0) \ar[dl]_{I_2(d_0, 1)} \ar[dr]^{I_2(d_1, 1)}  \\
        I_2(1, 0) && I_2(1, 0) & \dotsc & I_2(1, 0) && I_2(1, 0).
               }
                  }_{j \text{ copies of } I_2(1, 0)}
    \]
  Write $I_2(1, 0)^{\pushj}$ for the colimit of this diagram in $P\Alg$.  By the observations above, this diagram is the image under $F_{P}$ of the diagram
    \[
      \underbrace{
        \xymatrix@C=12pt{
        & X(0, 0) \ar[dl]_{a_1} \ar[dr]^{a_0} &&&& X(0, 0) \ar[dl]_{a_1} \ar[dr]^{a_0}  \\
        X(1, 0) && X(1, 0) & \dotsc & X(1, 0) && X(1, 0)
               }
                  }_{j \text{ copies of } X(1, 0)}
    \]
  in $\twoGSet$, where $a_0 \colon X(0, 0) \rightarrow X(1, 0)$ maps the single $0$-cell of $X(0, 0)$ to $a_0$, and similarly for $a_1$.  The colimit in $\twoGSet$ of this diagram is $X(j, 0)$, and thus
    \[
      I_2(1, 0)^{\pushj} = F_{P}X(j, 0),
    \]
  the free $P$-algebra on a composable string of $j$ $1$-cells.

  Similarly, we write $I_2(1, 1)^{\pushj}$ for the colimit in $P\Alg$ of the diagram
    \[
      \underbrace{
        \xymatrix@C=12pt{
        & I_2(0, 1) \ar[dl]_{I_2(d_0, 1)} \ar[dr]^{I_2(d_1, 1)} &&&& I_2(0, 1) \ar[dl]_{I_2(d_0, 1)} \ar[dr]^{I_2(d_1, 1)}  \\
        I_2(1, 1) && I_2(1, 1) & \dotsc & I_2(1, 1) && I_2(1, 1),
               }
                  }_{j \text{ copies of } I_2(1, 1)}
    \]
  which is the image under $F_{P}$ of the diagram
    \[
      \underbrace{
        \xymatrix@C=12pt{
        & X(0, 1) \ar[dl]_{a_1} \ar[dr]^{a_0} &&&& X(0, 1) \ar[dl]_{a_1} \ar[dr]^{a_0}  \\
        X(1, 1) && X(1, 1) & \dotsc & X(1, 1) && X(1, 1)
               }
                  }_{j \text{ copies of } X(1, 1)}
    \]
  in $\twoGSet$.  The colimit in $\twoGSet$ of this diagram is $X(j, 1)$, and thus
    \[
      I_2(1, 1)^{\pushj} = F_{P}X(j, 1),
    \]
  the free $P$-algebra on a string of $j$ $2$-cells composable along boundary $0$-cells.

  We now rewrite the Segal maps of the form $S_j$ in terms of composition with certain maps of $P$-algebras.

  \begin{lemma}  \label{lem:sk}
    Let $\mathcal{A}$ be a Penon weak $2$-category.  For all $j > 0$, we have
      \[
        \underbrace{\nerve\mathcal{A}(1, -) \times_{\nerve\mathcal{A}(0, -)} \dotsb \times_{\nerve\mathcal{A}(0, -)} \nerve\mathcal{A}(1, -)}_j \iso P\Alg(I_2(1, -)^{\pushj}, \mathcal{A})
      \]
    and the Segal map $S_j$ is given by
      \[
        S_j = \dotblank \comp d^{\pushj} \colon P\Alg(I_2(j, -), \mathcal{A}) \longrightarrow P\Alg(I_2(1, -)^{\pushj}, \mathcal{A}),
      \]
    where $d^{\pushj} \colon I_2(1, -)^{\pushj} \rightarrow I_2(j, -)$ is a map in $[\Delta, P\Alg]$, defined in the proof.
  \end{lemma}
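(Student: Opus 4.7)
The plan is to exploit the fact that the contravariant hom-functor $P\Alg(-,\mathcal{A})$ sends colimits in $P\Alg$ to limits in $\Set$, combined with functoriality of $I_2$ applied to the standard universal cone in $\Theta^2$ (or really in $\Delta$, via the first coordinate).

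First I would construct $d^{\pushj}$. Recall from Subsection~\ref{subsect:nercat} that in $\Delta$ the maps $\iota_{i+1}\colon[1]\to[j]$ (with $\iota_{i+1}(0)=i$, $\iota_{i+1}(1)=i+1$) together with $\sigma,\tau\colon[0]\to[1]$ form a cone under the standard diagram defining the $j$-fold spine of $[j]$. Applying the functor $I_2(-,k)\colon\Theta^2\to P\Alg$ to this commuting diagram yields, for each $k\in\mathbb{N}$, a cocone with vertex $I_2(j,k)$ under the diagram whose colimit defines $I_2(1,k)^{\pushj}$. The universal property induces a unique map $d^{\pushj}_k\colon I_2(1,k)^{\pushj}\to I_2(j,k)$, and since the whole construction is natural in $k$ (functoriality of $I_2$ in the second coordinate), these assemble into a natural transformation $d^{\pushj}\colon I_2(1,-)^{\pushj}\Rightarrow I_2(j,-)$ in $[\Delta,P\Alg]$.

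Second I would establish the isomorphism. For each fixed $k$, the colimit $I_2(1,k)^{\pushj}$ in $P\Alg$ is preserved as a limit by $P\Alg(-,\mathcal{A})$, which (being a representable hom-functor on any category with the relevant colimits) turns pushouts/colimits into pullbacks/limits. Explicitly,
\[
  P\Alg(I_2(1,k)^{\pushj},\mathcal{A}) \iso \underbrace{P\Alg(I_2(1,k),\mathcal{A}) \times_{P\Alg(I_2(0,0),\mathcal{A})} \dotsb \times_{P\Alg(I_2(0,0),\mathcal{A})} P\Alg(I_2(1,k),\mathcal{A})}_{j},
\]
and because $I_2(0,k)=I_2(0,0)$ for all $k$ (by the $\Theta^2$-quotient), the right-hand side is precisely $\underbrace{\nerve\mathcal{A}(1,k)\times_{\nerve\mathcal{A}(0,-)}\dotsb\times_{\nerve\mathcal{A}(0,-)}\nerve\mathcal{A}(1,k)}_{j}$, as required. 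Naturality in $k$ gives the isomorphism at the level of simplicial sets.

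Third I would identify $S_j$ with $-\comp d^{\pushj}$. Tracing through the construction of the Segal map in Section~\ref{subsect:tamsimn}, $S_j$ is the unique map induced by the universal property of the wide pullback from the maps $i_{i+1}\colon\nerve\mathcal{A}(j,-)\to\nerve\mathcal{A}(1,-)$, and by definition of $\nerve$ each $i_{i+1}$ equals precomposition with $I_2(\iota_{i+1},\id)$. On the other hand, precomposition with $d^{\pushj}$ has the same defining property under the isomorphism above, since $d^{\pushj}$ is characterised by the fact that its composite with each coprojection into the colimit $I_2(1,-)^{\pushj}$ is $I_2(\iota_{i+1},\id)$. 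Uniqueness in the universal property then forces $S_j=-\comp d^{\pushj}$.

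The only real work is bookkeeping: one must check that the maps $I_2(\iota_{i+1},\id)\colon I_2(1,-)\to I_2(j,-)$ really do form a cocone (which reduces to functoriality of $I_2$ applied to the commuting diagram of $\iota$'s, $\sigma$'s and $\tau$'s in $\Delta$), and that the pullback-preserving argument respects the variable $k$ in a way compatible with the face and degeneracy maps of $\nerve\mathcal{A}(j,-)$. Neither presents a conceptual obstacle, but the identification of the pullback over $\nerve\mathcal{A}(0,-)$ with a pullback over the constant object $\nerve\mathcal{A}(0,0)$ is where one must be most careful.
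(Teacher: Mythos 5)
Your proposal is correct and follows essentially the same route as the paper: apply $I_2$ to the standard spine diagram in $\Delta$, use the universal property of the colimit $I_2(1,-)^{\pushj}$ to induce $d^{\pushj}$, and then use representability of $P\Alg(-,\mathcal{A})$ to turn that colimit into the wide pullback defining the Segal map. The only cosmetic difference is that the paper performs the construction once in the functor category $[\Delta,P\Alg]$ rather than pointwise in $k$ followed by a naturality check, but since colimits there are computed pointwise this is the same argument.
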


  \begin{proof}
    We have the following functors:
      \[
        \xymatrix@C=0pt@R=2pt{
          \nerve^2\mathcal{A}(\dotblank, -) \colon \Delta^{\op} & \longrightarrow & [\Delta^{\op}, \Set]  \\
          k \ar[dddd]_{\alpha} && P\Alg(I_2(k, -), \mathcal{A}) \ar[dddd]^{\dotblank \comp I_2(\alpha, -)}  \\
          \\
          & \longmapsto & \\
          \\
          j && P\Alg(I_2(j, -), \mathcal{A}),
                 }
      \]
      \[
        \xymatrix@C=0pt@R=2pt{
          I_2(\dotblank, -) \colon \Delta & \longrightarrow & [\Delta, P\Alg]  \\
          j \ar[dddd]_{\alpha} && I_2(j, -) \ar[dddd]^{I_2(\alpha, -)}  \\
          \\
          & \longmapsto & \\
          \\
          k && I_2(k, -),
                 }
      \]
  and
      \[
        \xymatrix@C=0pt@R=2pt{
          P\Alg(-, \mathcal{A}) \colon [\Delta, P\Alg]^{\op} & \longrightarrow & [\Delta^{\op}, \Set]  \\
          X \ar[dddd]_{\delta} && P\Alg(X(-), \mathcal{A}) \ar[dddd]^{- \comp \delta}  \\
          \\
          & \longmapsto & \\
          \\
          Y && P\Alg(Y(-), \mathcal{A}).
                 }
      \]

  We can factorise $\nerve\mathcal{A}(\dotblank, -)$ as follows:
    \[
      \xymatrix{
        \Delta^{\op} \ar[rrrr]^{\nerve\mathcal{A}(\dotblank, -)} \ar[drr]_{I_2(\dotblank, -)} &&&& [\Delta^{\op}, \Set]  \\
        && [\Delta, P\Alg]^{\op} \ar[urr]_{\, \, \, \, \, P\Alg(-, \mathcal{A})}
               }
    \]

  For each, $[j] \in \Delta$, we consider the actions of the functors $\nerve\mathcal{A}(\dotblank, -)$ and $I_2(\dotblank, -)$ on the diagram
     \[
       \xymatrix@C=12pt@R=12pt{
       &&&&& [j] \\
       \\
       [1] \ar[uurrrrr]^(.3){\iota_1} && [1] \ar[uurrr]^(.25){\iota_2} && [1] \ar[uur]^(.25){\iota_3} && \dotsc && [1] \ar[uulll]_(.25){\iota_{j - 1}} && [1] \ar[uulllll]_(.3){\iota_{j}} \\
       & [0] \ar[ul]^{\tau} \ar[ur]_{\sigma} && [0] \ar[ul]^{\tau} \ar[ur]_{\sigma} &&&&&& [0] \ar[ul]^{\tau} \ar[ur]_{\sigma}
                }
     \]
  in $\Delta$.

  Applying $\nerve\mathcal{A}(\dotblank, -)$ to this diagram gives
    \[
      \xy
        % POINTS
        (0, 0)*+{P\Alg(I_2(j, -), \mathcal{A})}="0,0";
        (-45, -16)*+{P\Alg(I_2(1, -), \mathcal{A})}="-2,1";
        (-24, -24)*+{P\Alg(I_2(1, -), \mathcal{A})}="-1,1";
        (24, -24)*+{P\Alg(I_2(1, -), \mathcal{A})}="1,1";
        (45, -16)*+{P\Alg(I_2(1, -), \mathcal{A})}="2,1";
        (-40,-40)*+{P\Alg(I_2(0, -), \mathcal{A})}="-1,2";
        (40,-40)*+{P\Alg(I_2(0, -), \mathcal{A})}="1,2";
        (0, -24)*+{\dotsc};
        % ARROWS
        {\ar "0,0" ; "-2,1"};
        {\ar "0,0" ; "-1,1"};
        {\ar "0,0" ; "1,1"};
        {\ar "0,0" ; "2,1"};
        {\ar_t "-2,1" ; "-1,2"};
        {\ar^s "-1,1" ; "-1,2"};
        {\ar_t "1,1" ; "1,2"};
        {\ar^s "2,1" ; "1,2"};
      \endxy
    \]
  which is a cone over the diagram
    \[
      \xy
        % POINTS
        (-45, 0)*+{P\Alg(I_2(1, -), \mathcal{A})}="-2,1";
        (-24, -8)*+{P\Alg(I_2(1, -), \mathcal{A})}="-1,1";
        (24, -8)*+{P\Alg(I_2(1, -), \mathcal{A})}="1,1";
        (45, 0)*+{P\Alg(I_2(1, -), \mathcal{A})}="2,1";
        (-40,-24)*+{P\Alg(I_2(0, -), \mathcal{A})}="-1,2";
        (40,-24)*+{P\Alg(I_2(0, -), \mathcal{A})}="1,2";
        (0, -8)*+{\dotsc};
        % ARROWS
        {\ar_t "-2,1" ; "-1,2"};
        {\ar^s "-1,1" ; "-1,2"};
        {\ar_t "1,1" ; "1,2"};
        {\ar^s "2,1" ; "1,2"};
      \endxy
    \]

  Applying $I_2(\dotblank, -)^{\op}$ to the original diagram gives
     \[
      \xymatrix@C=-6pt@R=12pt{
        &&& I_2(j, -) \ar[ddlll] \ar[ddl] \ar[ddr] \ar[ddrrr] \\
        \\
        I_2(1, -) \ar[dr]_{t} && I_2(1, -) \ar[dl]^{s} & \dotsc & I_2(1, -) \ar[dr]_{t} && I_2(1, -) \ar[dl]^{s} \\
        & I_2(0, -) &&&& I_2(0, -)
               }
     \]
  in $[\Delta, P\Alg]^{\op}$, which is a cone over the diagram
     \[
      \overbrace{\xymatrix@C=0pt@R=12pt{
        I_2(1, -) \ar[dr]_{t} && I_2(1, -) \ar[dl]^{s} & \dotsc & I_2(1, -) \ar[dr]_{t} && I_2(1, -) \ar[dl]^{s} \\
        & I_2(0, -) &&&& I_2(0, -)
               }}^j
     \]
  The limit of this diagram is $I_2(1, -)^{\pushj}$, and this limit induces a unique map $d^{\pushj}$ such that the diagram
     \[
      \xymatrix@C=-6pt@R=12pt{
        &&& I_2(j, -) \ar@/_2.7pc/[ddddlll] \ar@/_1.5pc/[ddddl] \ar@/^1.5pc/[ddddr] \ar@/^2.7pc/[ddddrrr] \ar@{-->}[dd]^{!d^{\pushj}} \\
        \\
        &&& I_2(1, -)^{\pushj} \ar[ddlll] \ar[ddl] \ar[ddr] \ar[ddrrr] \\
        \\
        I_2(1, -) \ar[dr]_{t} && I_2(1, -) \ar[dl]^{s} & \dotsc & I_2(1, -) \ar[dr]_{t} && I_2(1, -) \ar[dl]^{s} \\
        & I_2(0, -) &&&& I_2(0, -)
               }
     \]
  Applying $P\Alg(-, \mathcal{A})$ to this diagram, we get:
    \[
      \xy
        % POINTS
        (0, 20)*+{P\Alg(I_2(j, -), \mathcal{A})}="0,-1";
        (0, 0)*+{P\Alg(I_2(1, -)^{\pushj}, \mathcal{A})}="0,0";
        (-45, -16)*+{P\Alg(I_2(1, -), \mathcal{A})}="-2,1";
        (-24, -24)*+{P\Alg(I_2(1, -), \mathcal{A})}="-1,1";
        (24, -24)*+{P\Alg(I_2(1, -), \mathcal{A})}="1,1";
        (45, -16)*+{P\Alg(I_2(1, -), \mathcal{A})}="2,1";
        (-40,-40)*+{P\Alg(I_2(0, -), \mathcal{A})}="-1,2";
        (40,-40)*+{P\Alg(I_2(0, -), \mathcal{A})}="1,2";
        (0, -24)*+{\dotsc};
        % ARROWS
        {\ar@{-->}^{- \comp d^{\pushj}} "0,-1" ; "0,0"};
        {\ar@/_3pc/ "0,-1" ; "-2,1"};
        {\ar@/_2pc/ "0,-1" ; "-1,1"};
        {\ar@/^2pc/ "0,-1" ; "1,1"};
        {\ar@/^3pc/ "0,-1" ; "2,1"};
        {\ar "0,0" ; "-2,1"};
        {\ar "0,0" ; "-1,1"};
        {\ar "0,0" ; "1,1"};
        {\ar "0,0" ; "2,1"};
        {\ar_t "-2,1" ; "-1,2"};
        {\ar^s "-1,1" ; "-1,2"};
        {\ar_t "1,1" ; "1,2"};
        {\ar^s "2,1" ; "1,2"};
      \endxy
    \]
  Since $P\Alg(-, \mathcal{A})$ is representable, it preserves limits \cite[V.6 Theorem 3]{Mac98}, so we have that
    \begin{align*}
      & \underbrace{P\Alg(I_2(1, -), \mathcal{A}) \times_{P\Alg(I_2(0, -), \mathcal{A})} \dotsb \times_{P\Alg(I_2(0, -), \mathcal{A})} P\Alg(I_2(1, -), \mathcal{A})}_k  \\
      & \iso P\Alg(I_2(1, -)^{\pushj}, \mathcal{A})
    \end{align*}
  and the Segal map $S_j$ is given by composition with $d^{\pushj}$, as required.
  \end{proof}

  Similarly, we now rewrite the Segal maps of the form $S_{j, k}$ in terms of composition with certain maps of $P$-algebras.

  \begin{lemma}  \label{lem:sjk}
    Let $\mathcal{A}$ be a Penon weak $2$-category.  For all $j$, $k > 0$, we have
      \[
        \underbrace{\nerve\mathcal{A}(j, 1) \times_{\nerve\mathcal{A}(j, 0)} \dotsb \times_{\nerve\mathcal{A}(j, 0)} \nerve\mathcal{A}(j, 1)}_k \iso P\Alg(I_2(j, 1)^{\pushk}, \mathcal{A})
      \]
    and the Segal map $S_{j, k}$ is given by
      \[
        S_{j, k} = \dotblank \comp d^{\pushk} \colon P\Alg(I_2(j, k), \mathcal{A}) \longrightarrow P\Alg(I_2(j, 1)^{\pushk}, \mathcal{A}),
      \]
    where $d^{\pushk} \colon I_2(j, 1)^{\pushk} \rightarrow I_2(j, k)$ is a map of $P$-algebras, defined in the proof.
  \end{lemma}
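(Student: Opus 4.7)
The plan is to mirror the proof of Lemma~\ref{lem:sk}, with the roles of the two coordinates now swapped: here the first coordinate $j$ is fixed and the second coordinate varies. First, observe that restricting $I_2$ yields a functor $I_2(j, -) \colon \Delta \rightarrow P\Alg$, and that $\nerve\mathcal{A}(j, -) \colon \Delta^{\op} \rightarrow \Set$ factors as $P\Alg(-, \mathcal{A}) \comp I_2(j, -)^{\op}$. Define $I_2(j, 1)^{\pushk}$ to be the colimit in $P\Alg$ of the spine diagram
\[
\underbrace{
\xymatrix@C=12pt{
& I_2(j, 0) \ar[dl] \ar[dr] &&&& I_2(j, 0) \ar[dl] \ar[dr] & \\
I_2(j, 1) && I_2(j, 1) & \dotsc & I_2(j, 1) && I_2(j, 1)
}
}_{k \text{ copies of } I_2(j, 1)}
\]
consisting of $k$ copies of $I_2(j, 1)$ glued along the images under $I_2$ of the source and target maps $(\id, d_0), (\id, d_1) \colon (j, 0) \rightarrow (j, 1)$ in $\Theta^2$; this colimit exists because $P\Alg$, being the category of algebras for a monad on the presheaf category $\twoGSet$, is cocomplete.

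Next, apply the functor $I_2(j, -)$ to the standard Segal cone in $\Delta$ with apex $[k]$ and base consisting of $k$ copies of $[1]$ meeting at copies of $[0]$. Functoriality immediately yields a cone in $P\Alg$ under the spine diagram above with apex $I_2(j, k)$, and the universal property of $I_2(j, 1)^{\pushk}$ then induces a unique map of $P$-algebras $d^{\pushk} \colon I_2(j, 1)^{\pushk} \rightarrow I_2(j, k)$ compatible with the coprojections.

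Finally, apply the representable functor $P\Alg(-, \mathcal{A}) \colon P\Alg^{\op} \rightarrow \Set$ to the whole configuration. Since representables preserve limits, the colimit in $P\Alg$ is sent to the corresponding wide pullback in $\Set$, yielding
\[
P\Alg(I_2(j, 1)^{\pushk}, \mathcal{A}) \iso \underbrace{\nerve\mathcal{A}(j, 1) \times_{\nerve\mathcal{A}(j, 0)} \dotsb \times_{\nerve\mathcal{A}(j, 0)} \nerve\mathcal{A}(j, 1)}_k,
\]
and tracing through the universal properties shows that the Segal map $S_{j, k}$ is exactly precomposition with $d^{\pushk}$. There is no substantive obstacle: the argument is purely formal, and unlike Lemma~\ref{lem:sk} the other coordinate is held fixed rather than free, so we avoid any detour through the functor category $[\Delta, P\Alg]$, which makes this proof slightly shorter than its horizontal counterpart.
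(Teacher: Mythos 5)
Your proposal is correct and takes essentially the same route as the paper: factor $\nerve\mathcal{A}(j,\dotblank)$ through $I_2(j,\dotblank)$, apply it to the Segal cone in $\Delta$, obtain $d^{\pushk}$ from the universal property of the wide pushout $I_2(j,1)^{\pushk}$, and conclude via preservation of limits by the representable $P\Alg(-,\mathcal{A})$. Your remark that one can work directly in $P\Alg$ rather than in the functor category $[\Delta, P\Alg]$ is a harmless cosmetic simplification of the paper's presentation and does not change the argument.
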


  \begin{proof}
  We take a similar approach to that used in the proof of Lemma~\ref{lem:sk}.  For each $j > 0$, we have the following functors:
      \[
        \xymatrix@C=0pt@R=2pt{
          \nerve^2\mathcal{A}(j, \dotblank) \colon \Delta^{\op} & \longrightarrow & [\Delta^{\op}, \Set]  \\
          l \ar[dddd]_{\alpha} && P\Alg(I_2(j, l), \mathcal{A}) \ar[dddd]^{\dotblank \comp I_2(1_j, \alpha)}  \\
          \\
          & \longmapsto & \\
          \\
          k && P\Alg(I_2(j, k), \mathcal{A}),
                 }
      \]
  and
      \[
        \xymatrix@C=0pt@R=2pt{
          I_2(j, -) \colon \Delta & \longrightarrow & [\Delta, P\Alg]  \\
          k \ar[dddd]_{\alpha} && I_2(j, k) \ar[dddd]^{I_2(1_j, \alpha)}  \\
          \\
          & \longmapsto & \\
          \\
          l && I_2(j, l),
                 }
      \]
  and we can factorise $\nerve\mathcal{A}(j, \dotblank)$ as follows:
    \[
      \xymatrix{
        \Delta^{\op} \ar[rrrr]^{\nerve\mathcal{A}(j, \dotblank)} \ar[drr]_{I_2(j, \dotblank)} &&&& [\Delta^{\op}, \Set]  \\
        && [\Delta, P\Alg]^{\op} \ar[urr]_{\, \, \, \, \, P\Alg(-, \mathcal{A})}
               }
    \]

  For each, $[k] \in \Delta$, we consider the effects of the functors $\nerve\mathcal{A}(j, \dotblank)$ and $I_2(j, \dotblank)$ on the diagram
     \[
       \xymatrix@C=12pt@R=12pt{
       &&&&& [k] \\
       \\
       [1] \ar[uurrrrr]^(.3){\iota_1} && [1] \ar[uurrr]^(.25){\iota_2} && [1] \ar[uur]^(.25){\iota_3} && \dotsc && [1] \ar[uulll]_(.25){\iota_{k - 1}} && [1] \ar[uulllll]_(.3){\iota_{k}} \\
       & [0] \ar[ul]^{\tau} \ar[ur]_{\sigma} && [0] \ar[ul]^{\tau} \ar[ur]_{\sigma} &&&&&& [0] \ar[ul]^{\tau} \ar[ur]_{\sigma}
                }
     \]
  in $\Delta$.  By exactly the same argument as the case of $S_j$, we have a unique map $d^{\pushk}$ such that
     \[
      \xymatrix@C=-6pt@R=12pt{
        &&& I_2(j, k) \ar@/_2.7pc/[ddddlll] \ar@/_1.5pc/[ddddl] \ar@/^1.5pc/[ddddr] \ar@/^2.7pc/[ddddrrr] \ar@{-->}[dd]^{!d^{\pushk}} \\
        \\
        &&& I_2(j, 1)^{\pushk} \ar[ddlll] \ar[ddl] \ar[ddr] \ar[ddrrr] \\
        \\
        I_2(j, 1) \ar[dr]_{t} && I_2(j, 1) \ar[dl]^{s} & \dotsc & I_2(j, 1) \ar[dr]_{t} && I_2(j, 1) \ar[dl]^{s} \\
        & I_2(j, 0) &&&& I_2(j, 0)
               }
     \]
  and applying the functor $P\Alg(-, \mathcal{A})$ gives us the diagram
    \[
      \xy
        % POINTS
        (0, 20)*+{P\Alg(I_2(j, k), \mathcal{A})}="0,-1";
        (0, 0)*+{P\Alg(I_2(j, 1)^{\pushk}, \mathcal{A})}="0,0";
        (-45, -16)*+{P\Alg(I_2(j, 1), \mathcal{A})}="-2,1";
        (-24, -24)*+{P\Alg(I_2(j, 1), \mathcal{A})}="-1,1";
        (24, -24)*+{P\Alg(I_2(j, 1), \mathcal{A})}="1,1";
        (45, -16)*+{P\Alg(I_2(j, 1), \mathcal{A})}="2,1";
        (-40,-40)*+{P\Alg(I_2(j, 0), \mathcal{A})}="-1,2";
        (40,-40)*+{P\Alg(I_2(j, 0), \mathcal{A})}="1,2";
        (0, -24)*+{\dotsc};
        % ARROWS
        {\ar@{-->}^{- \comp d^{\pushk}} "0,-1" ; "0,0"};
        {\ar@/_3pc/ "0,-1" ; "-2,1"};
        {\ar@/_2pc/ "0,-1" ; "-1,1"};
        {\ar@/^2pc/ "0,-1" ; "1,1"};
        {\ar@/^3pc/ "0,-1" ; "2,1"};
        {\ar "0,0" ; "-2,1"};
        {\ar "0,0" ; "-1,1"};
        {\ar "0,0" ; "1,1"};
        {\ar "0,0" ; "2,1"};
        {\ar_t "-2,1" ; "-1,2"};
        {\ar^s "-1,1" ; "-1,2"};
        {\ar_t "1,1" ; "1,2"};
        {\ar^s "2,1" ; "1,2"};
      \endxy
    \]
  Thus we have that
    \begin{align*}
      & \underbrace{P\Alg(I_2(j, 1), \mathcal{A}) \times_{P\Alg(I_2(j, 0), \mathcal{A})} \dotsb \times_{P\Alg(I_2(j, 0), \mathcal{A})} P\Alg(I_2(j, 1), \mathcal{A})}_k  \\
      & \iso P\Alg(I_2(j, 1)^{\pushk}, \mathcal{A})
    \end{align*}
    and the Segal map $S_{j, k}$ is given by composition with $d^{\pushk}$, as required.
  \end{proof}

  We now use Lemmas~\ref{lem:sk} and \ref{lem:sjk} to prove that the nerve of a Penon weak $2$-category satisfies the Segal condition.  We begin with the Segal maps of the form $S_j$.

  \begin{prop}  \label{prop:soo2}
    Let $\mathcal{A}$ be a Penon weak $2$-category.  For all $j > 0$, the Segal map
      \[
        S_j \colon \nerve\mathcal{A}(j, -) \rightarrow \underbrace{\nerve\mathcal{A}(1, -) \times_{\nerve\mathcal{A}(0, -)} \dotsb \times_{\nerve\mathcal{A}(0, -)} \nerve\mathcal{A}(1, -)}_j
      \]
    is surjective on $0$-cells, i.e. the map
      \[
        (S_j)_0 \colon \nerve\mathcal{A}(j, 0) \rightarrow \underbrace{\nerve\mathcal{A}(1, 0) \times_{\nerve\mathcal{A}(0, 0)} \dotsb \times_{\nerve\mathcal{A}(0, 0)} \nerve\mathcal{A}(1, 0)}_j
      \]
    is surjective.
  \end{prop}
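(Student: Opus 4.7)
The plan is to apply Lemma~\ref{lem:sk}, which identifies $(S_j)_0$ with precomposition by the map $d^{\pushj}_0 \colon I_2(1, 0)^{\pushj} \to I_2(j, 0)$ in $P\Alg$. Since $R(0, 0) = X(0, 0)$ and $R(1, 0) = X(1, 0)$, we have $I_2(1, 0)^{\pushj} = F_P X(j, 0)$, the free $P$-algebra on the $2$-globular set $X(j, 0)$ consisting of a composable string of $j$ $1$-cells. Hence, by the adjunction $F_P \dashv U_P$, a map $g \colon I_2(1, 0)^{\pushj} \to \mathcal{A}$ is the same data as a choice of $0$-cells $\bar{a}_u := g(a_u) \in \mathcal{A}$ and $1$-cells $\bar{f}_{u, u+1} := g(f^0_{u, u+1}) \colon \bar{a}_u \to \bar{a}_{u+1}$. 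Surjectivity of $(S_j)_0$ thus amounts to showing that every such $g$ extends to a $P$-algebra map $h \colon I_2(j, 0) \to \mathcal{A}$ with $h \circ d^{\pushj}_0 = g$.

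To construct $h$, I would exploit the description $I_2(j, 0) = K J (R(j, 0) \to T X(j, 0))$ together with the partial freeness of this construction. First, I would extend $g$ to a map of $2$-globular sets $\bar{h} \colon R(j, 0) \to U_P \mathcal{A}$ along the inclusion $X(j, 0) \hookrightarrow R(j, 0)$, by sending each ``extra'' $1$-cell $f^0_{uv}$ with $v > u + 1$ to an explicit composite of $\bar{f}_{u, u+1}, \dots, \bar{f}_{v-1, v}$ in the magma structure on $U_P \mathcal{A}$ induced by the $P$-algebra action (for definiteness, one may use left-bracketed composites). Second, I would promote this to a morphism in $\Rtwo$ from $(R(j, 0) \to T X(j, 0))$ to an object of $\Rtwo$ canonically associated to $\mathcal{A}$ whose strict $2$-category part is compatible with the chosen composites; applying the adjunction $J \dashv W$ transposes this to a morphism in $\Qtwo$ out of $E_2(j, 0)$, and applying the Eilenberg--Moore comparison $K$ together with the canonical comparison into $\mathcal{A}$ yields the sought $h$.

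The main obstacle is the selection and verification of the canonical object of $\Rtwo$ associated to $\mathcal{A}$: one must choose a strict $2$-category together with a magma-preserving map from $U_P \mathcal{A}$ making the commutativity condition with $\theta_{(j, 0)}$ hold, and the resulting transpose through $J$ and $K$ must be shown to respect the contraction cells added by the interleaving construction in a manner compatible with the contraction cells already present in $\mathcal{A}$ via its $P$-algebra structure. Because $I_2(j, 0)$ is partially free --- with composites and contraction cells freely generated above the level of the generators in $R(j, 0)$ --- such a coherent extension exists, and the identity $h \circ d^{\pushj}_0 = g$ then follows by checking on the generating $0$- and $1$-cells of $F_P X(j, 0)$, where both maps agree with $\bar{g}$ by construction.
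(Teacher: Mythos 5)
Your overall strategy is the right one and matches the paper's up to the crucial step: both arguments invoke Lemma~\ref{lem:sk} to rewrite $(S_j)_0$ as $-\comp d^{\pushj}$, identify $I_2(1,0)^{\pushj}$ with $F_{P}X(j,0)$, and then try to extend a given $\phi \colon I_2(1,0)^{\pushj} \to \mathcal{A}$ to a map out of $I_2(j,0)$ by first producing a morphism in $\Rtwo$ with source $\theta_{(j,0)} \colon R(j,0) \to TX(j,0)$ and then transposing through $J \dashv W$ and applying $K$. The gap is in the target of that morphism. You propose to land in ``an object of $\Rtwo$ canonically associated to $\mathcal{A}$'' whose $2$-globular set part is $U_{P}\mathcal{A}$, and you correctly flag the construction of this object as the main obstacle --- but you then assert its existence from the ``partial freeness'' of $I_2(j,0)$, which does not address it. Partial freeness of the \emph{source} says nothing about the existence of a categorical stretching with magma part $A$: an object of $\Qtwo$ over $\mathcal{A}$ would be a map $A \to S$ into a strict $2$-category equipped with a contraction, and a $P$-algebra structure $\theta \colon PA \to A$ does not hand you such a thing (the comparison functor $K$ is not known to be essentially surjective). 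As written, the key object your argument needs is neither constructed nor known to exist.

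The paper avoids this entirely by taking the target to be the \emph{free} object $F(A) = (p_A \colon PA \to TA)$ of $\Qtwo$, which always exists. Concretely, the map $g \colon R(j,0) \to PA$ sends the extra $1$-cells $f^0_{uv}$ to left-bracketed composites formed in the free magma structure on $PA$ (not in $A$), the strict part is $\mu^T_A \comp Th \colon TX(j,0) \to TA$, and only after transposing and applying $K$ --- which yields $\chi \colon I_2(j,0) \to F_{P}A$ --- does one compose with the algebra action $\theta \colon F_{P}A \to \mathcal{A}$, viewed as a map of $P$-algebras, to obtain $\psi = \theta \comp \chi$. This ``factor through the free algebra on the underlying globular set, then apply the action'' step is the idea missing from your proposal; with it, the compatibility with contraction cells is automatic because everything happens inside the genuinely free object $F(A)$, and the identity $\psi \comp d^{\pushj} = \phi$ is then checked on the generating $0$- and $1$-cells exactly as you describe.
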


  \begin{proof}
    By Lemma~\ref{lem:sk}, the Segal map $S_j$ is given by
      \[
        S_j = \dotblank \comp d^{\pushj} \colon P\Alg(I_2(j, -), \mathcal{A}) \rightarrow P\Alg(I_2(1, -)^{\pushj}, \mathcal{A}),
      \]
    so we need to show that
      \[
        (S_j)_0 = \dotblank \comp d^{\pushj} \colon P\Alg(I_2(j, 0), \mathcal{A}) \rightarrow P\Alg(I_2(1, 0)^{\pushj}, \mathcal{A})
      \]
    is surjective.  Let $\phi \colon I_2(1, 0)^{\pushj} \rightarrow \mathcal{A}$ be a map of Penon weak $2$-categories.  We must find a map $\psi \colon I_2(j, 0) \rightarrow \mathcal{A}$ such that $(S_k)_0(\psi) = \phi$, i.e. such that the diagram
      \[
        \xymatrix{
          I_2(1, 0)^{\pushj} \ar[rr]^{\phi} \ar@{^(->}[dr]_{d^{\pushj}} && \mathcal{A}  \\
          & I_2(j, 0) \ar[ur]_{\psi}
                 }
      \]
    commutes.

    Write the $P$-algebra $\mathcal{A}$ as
      \[
        \xy
          % POINTS
          (0, 0)*+{PA}="0";
          (16, 0)*+{A}="1";
          % ARROWS
          {\ar^{\theta} "0" ; "1"};
        \endxy
      \]
    so $U_{P}\mathcal{A} = A$.  We define $\psi$ by factoring through the free algebra $F^{P}A$.  Define a map
      \[
        \xy
          % POINTS
          (0, 0)*+{R(j, 0)}="0,0";
          (40, 0)*+{PA}="2,0";
          (0, -16)*+{TX(j, 0)}="0,1";
          (20, -16)*+{T^2A}="1,1";
          (40, -16)*+{TA}="2,1";
          % ARROWS
          {\ar^-{g} "0,0" ; "2,0"};
          {\ar_{\theta_{(j, 0)}} "0,0" ; "0,1"};
          {\ar^{p_A} "2,0" ; "2,1"};
          {\ar_-{Th} "0,1" ; "1,1"};
          {\ar_-{\mu^T_{A}} "1,1" ; "2,1"};
        \endxy
      \]
    in $\Rtwo$ as follows:

    The map $g \colon R(j, 0) \rightarrow PA$ is defined by:
      \begin{itemize}
        \item for all $a_u \in R(j, 0)_0$, $g_0(a_u) = \phi_0(a_u)$;
        \item for $f^0_{uv} \in R(j, 0)_1$ with $v = u + 1$,
          \[
            g_1(f^0_{uv}) = \phi_1(f^0_{uv});
          \]
        \item for $f^0_{uv} \in R(j, 0)_1$ with $v > u + 1$
          \[
            g_1(f^0_{uv}) = \Big(\Big(\dotsb\Big(\phi_1(f^0_{v - 1, v}) \comp \phi_1(f^0_{v - 2, v - 1})\Big) \comp \dotsb \Big) \comp \phi_1(f^0_{u, u + 1})\Big).
          \]
      \end{itemize}
    Note that $R(j, 0)_2 = \emptyset$, so we do not need to define $g$ on $2$-cells.

    The map $h \colon X(k, 0) \rightarrow TA$ is defined by:
      \begin{itemize}
        \item for all $a_u \in X(j, 0)_0$, $h_0(a_u) = \phi_0(a_u)$;
        \item for all $f^0_{u, u + 1} \in X(j, 1)_1$,
          \[
            h_1(f^0_{u, u + 1}) = p_A \comp \phi_1(f^0_{u, u + 1})
          \]
      \end{itemize}
    Note that $X(j, 0)_2 = \emptyset$, so we do not need to define $h$ on $2$-cells.

    This defines a map in $\Rtwo$.  We then take the transpose of this map under the the adjunction
      \[
        \xy
          % POINTS
          (0, 0)*+{\Rtwo}="Rn";
          (16, 0)*+{\Qtwo}="Qn";
          % ARROWS
          {\ar@<1ex>^-{J}_-*!/u1pt/{\labelstyle \bot} "Rn" ; "Qn"};
          {\ar@<1ex>^-{W} "Qn" ; "Rn"};
        \endxy
     \]
  We write $\epsilon \colon JW \Rightarrow 1$ for the counit of this adjunction, and $\epsilon_{\phi_{\vectk}}$ for the component corresponding to
    \[
      \xy
        (0, 0)*+{Q(j, 0)}="0";
        (24, 0)*+{TX(j, 0).}="1";
        {\ar^-{\phi_{(j, 0)}} "0" ; "1"};
      \endxy
    \]
  Then the transpose is given by the composite
    \[
      \epsilon_{\phi_{(j, 0)}} \comp J(g, \mu^T_A \comp Th).
    \]
  Finally, we apply the Eilenberg--Moore comparison functor $K \colon \Qtwo \rightarrow P\Alg$ to this; we write
    \[
      \chi := K(\epsilon_{\phi_{(j, 0)}} \comp J(g, \mu^T_A \comp Th)),
    \]
  and define
    \[
      \psi := \theta \comp \chi \colon I_2(j, 0) \rightarrow \mathcal{A}.
    \]

    We now check commutativity of the diagram
      \[
        \xymatrix{
          I_2(1, 0)^{\pushj} \ar[rr]^{\phi} \ar@{^(->}[dr]_{d^{\pushj}} && \mathcal{A}  \\
          & I_2(j, 0) \ar[ur]_{\psi}
                 }
      \]
    This commutes if the diagram
      \[
        \xy
          % POINTS
          (0, 0)*+{X(j, 0)}="0,0";
          (28, 0)*+{U_P F_P X(j, 0)}="2,0";
          (56, 0)*+{U_P \mathcal{A}}="4,0";
          (10, -16)*+{U_P F_P X(j, 0)}="1,1";
          (48, -16)*+{U_P I_2(j, 0)}="3,1";
          % ARROWS
          {\ar^-{\eta^P_{X(j, 0)}} "0,0" ; "2,0"};
          {\ar_-{\eta^P_{X(j, 0)}} "0,0" ; "1,1"};
          {\ar^-{U_P \phi} "2,0" ; "4,0"};
          {\ar_-{U_P d^{\amalg j}} "1,1" ; "3,1"};
          {\ar_-{U_P \psi} "3,1" ; "4,0"};
        \endxy
      \]
    commutes; we check this using an elementary approach.  Since $X(j, 0)_2 = \emptyset$, we do not have to check commutativity on $2$-cells.  We have
      \begin{itemize}
        \item for $a_u \in X(j, 0)_0$,
          \[
            U_{P}\psi_0 \comp U_P d^{\pushj}_0 \comp \eta^P_{X(j, 0)}(a_u) = U_{P}\psi_0(a_u) = U_P \phi_0 \comp \eta^P_{X(j, 0)}(a_u);
          \]
        \item for $f^z_{u, u + 1} \in X(j, 0)_1$,
          \[
            U_{P}\psi_1 \comp U_P d^{\pushj}_1 \comp \eta^P_{X(j, 0)}(f^z_{u, u + 1}) = U_{P}\psi_1(f^z_{u, u + 1}) = U_P \phi_1 \comp \eta^P_{X(j, 0)}(f^z_{u, u + 1});
          \]
      \end{itemize}
    hence the diagram commutes.  Hence $S_j$ is surjective on $0$-cells.
  \end{proof}

  We now use Lemma~\ref{lem:sk} us to express the fullness and faithfulness part of the Segal condition in terms of colimits of $P$-algebras.  Recall from Definition~\ref{defn:simcontr} that, given a map of simplicial sets $\alpha \colon A \rightarrow B$, we have an induced map $\tilde{\alpha}_1$ in $\Set$, as shown in the diagram below:
      \[
        \xymatrix{
          A_{1} \ar@/^1.5pc/[drrr]^s \ar@/_4pc/[dddr]_t \ar@/_4pc/[ddrr]_{\alpha_{1}} \ar@{-->}[dr]^{\tilde{\alpha}_{1}} \\
          & A_{0} \times_{B_{0}} B_{1} \times_{B_{0}} A_{1} \ar[rr] \ar[dd] \ar[dr] && A_{0} \ar[d]^{\alpha_{0}}  \\
          && B_{1} \ar[r]_s \ar[d]^t & B_{0}  \\
          & A_{0} \ar[r]_{\alpha_{0}} & B_{0}
                 }
      \]
    and that $\alpha$ is full and faithful on $1$-cells if the map $\tilde{\alpha}_1$ is an isomorphism.  We wish to show that, for all $j \geq 0$, the Segal map
      \[
        S_j \colon P\Alg(I_2(j, -), \mathcal{A}) \longrightarrow P\Alg(I_2(1, -)^{\pushj}, \mathcal{A})
      \]
    is full and faithful on $1$-cells.  By the description of fullness and faithfulness above, this happens when the diagram
      \[
        \xymatrix{
          P\Alg(I_2(j, 1), \mathcal{A}) \ar[rr]^s \ar[dr]^{- \comp (d^{\pushj})_1} \ar[dd]_t && P\Alg(I_2(j, 0), \mathcal{A}) \ar[d]^{- \comp (d^{\pushj})_0}  \\
          & P\Alg(I_2(1, 1)^{\pushj}, \mathcal{A}) \ar[r]_s \ar[d]^t & P\Alg(I_2(1, 0)^{\pushj}, \mathcal{A})  \\
          P\Alg(I_2(j, 0), \mathcal{A}) \ar[r]_{- \comp (d^{\pushj})_0} & P\Alg(I_2(1, 0)^{\pushj}, \mathcal{A}).
                 }
      \]
    is a limit cone in $\Set$.  This cone lies in the image of the functor
      \[
        P\Alg(-, \mathcal{A}) \colon P\Alg^{\op} \longrightarrow \Set,
      \]
    and this functor is representable, so it preserves limits \cite[V.6 Theorem 3]{Mac98}.  Hence $S_j$ is full and faithful on $1$-cells if the diagram
      \[
        \xymatrix{
          & I_2(1, 0)^{\amalg 3} \ar[dl]_{(d^{\amalg 3})_0} \ar[dr]^{I_2(d_1, 1)^{\amalg 3}} && I_2(1, 0)^{\amalg 3} \ar[dl]_{I_2(d_0, 1)^{\amalg 3}} \ar[dr]^{(d^{\amalg 3})_0}  \\
          I_2(3, 0) \ar[drr]_{I_2(1, d_1)} && I_2(1, 1)^{\amalg 3} \ar[d]^{(d^{\amalg 3})_1} && I_2(3, 0) \ar[dll]^{I_2(1, d_0)}  \\
          && I_2(3, 1)
                 }
      \]
    is a colimit cocone in $P\Alg$.

  Before proving this, we describe what this means in the case $j = 3$.  The $P$-algebra $I_2(1, 1)^{\amalg 3}$ consists of three $2$-cells composed horizontally:
    \[
      \xy
        % POINTS
        (0, 0)*+{a_0}="0";
        (20, 0)*+{a_1}="1";
        (40, 0)*+{a_2}="2";
        (60, 0)*+{a_3,}="3";
        % ARROWS
        {\ar@/^1.5pc/^{f^0_{01}} "0" ; "1"};
        {\ar@/_1.5pc/_{f^1_{01}} "0" ; "1"};
        {\ar@/^1.5pc/^{f^0_{12}} "1" ; "2"};
        {\ar@/_1.5pc/_{f^1_{12}} "1" ; "2"};
        {\ar@/^1.5pc/^{f^0_{12}} "2" ; "3"};
        {\ar@/_1.5pc/_{f^1_{12}} "2" ; "3"};
        {\ar@{=>}^{\alpha^1_{01}} (10, 4) ; (10, -4)}
        {\ar@{=>}^{\alpha^1_{12}} (30, 4) ; (30, -4)}
        {\ar@{=>}^{\alpha^1_{23}} (50, 4) ; (50, -4)}
      \endxy
    \]
  with the copies of $I_2(1, 0)^{\amalg 3}$ in the diagram giving its source and target strings of $1$-cells.  The $P$-algebra $I_2(3, 0)$ is a tetrahedron whose faces are isomorphism $2$-cells:
  \[
    \xy
      % POINTS
      (-50, 15)*+{a_1}="a1";
      (-30, 15)*+{a_2}="a2";
      (-60, -5)*+{a_0}="a0";
      (-20, -5)*+{a_3}="a3";
      (-5, 5)*+{=};
      (20, 15)*+{a_1}="a1r";
      (40, 15)*+{a_2}="a2r";
      (10, -5)*+{a_0}="a0r";
      (50, -5)*+{a_3.}="a3r";
      % INVISIBLE POINTS
      (-48, 13)*+{}="i0s";
      (-43, 8)*+{}="i0t";
      (38, 13)*+{}="i1s";
      (33, 8)*+{}="i1t";
      (-32, 5)*+{}="i2s";
      (-34, -1)*+{}="i2t";
      (22, 5)*+{}="i3s";
      (24, -1)*+{}="i3t";
      % ARROWS
      {\ar^{f_{01}} "a0" ; "a1"};
      {\ar^{f_{12}} "a1" ; "a2"};
      {\ar^{f_{23}} "a2" ; "a3"};
      {\ar_{f_{03}} "a0" ; "a3"};
      {\ar_{f_{02}} "a0" ; "a2"};
      {\ar^{f_{01}} "a0r" ; "a1r"};
      {\ar^{f_{12}} "a1r" ; "a2r"};
      {\ar^{f_{23}} "a2r" ; "a3r"};
      {\ar_{f_{03}} "a0r" ; "a3r"};
      {\ar_{f_{13}} "a1r" ; "a3r"};
      % 2-CELLS:
      {\ar@{=>}_{\iso} "i0s" ; "i0t"};
      {\ar@{=>}^{\iso} "i1s" ; "i1t"};
      {\ar@{=>}^{\iso} "i2s" ; "i2t"};
      {\ar@{=>}_{\iso} "i3s" ; "i3t"};
    \endxy
  \]
  Taking the colimit of the diagram glues one of these tetrahedra to the string of source $1$-cells of $I_2(1, 1)^{\amalg 3}$, and the other to the string of target $1$-cells.  Thus the fullness and faithfulness condition tells us that $I_2(3, 1)$ can be obtained this way; it is a simplicially weakened version of the cuboidal pasting diagram $(3, 1)$.

  \begin{lemma}  \label{lem:i2j1}
    For all $j > 0$, the diagram
      \[
        \xymatrix{
          & I_2(1, 0)^{\amalg j} \ar[dl]_{(d^{\amalg j})_0} \ar[dr]^{I_2(d_1, 1)^{\amalg j}} && I_2(1, 0)^{\amalg j} \ar[dl]_{I_2(d_0, 1)^{\amalg j}} \ar[dr]^{(d^{\amalg j})_0}  \\
          I_2(j, 0) \ar[drr]_{I_2(1, d_1)} && I_2(1, 1)^{\amalg j} \ar[d]^{(d^{\amalg j})_1} && I_2(j, 0) \ar[dll]^{I_2(1, d_0)}  \\
          && I_2(j, 1)
                 }
      \]
    is a colimit cocone in $P\Alg$.
  \end{lemma}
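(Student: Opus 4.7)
The plan is to establish the colimit at the level of $\Rtwo$ and transfer it to $P\Alg$ through the factorisation $I_2 = K \comp E_2$, where $E_2(j,k)$ is obtained by applying $J \colon \Rtwo \rightarrow \Qtwo$ to the object $\theta_{(j,k)} \colon R(j,k) \rightarrow TX(j,k)$ of $\Rtwo$, and $K \colon \Qtwo \rightarrow P\Alg$ is the Eilenberg--Moore comparison functor.

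The first step is to exhibit $\theta_{(j,1)}$ as the colimit of the analogous diagram in $\Rtwo$.  Colimits in the comma category $\Rtwo = \twoGSet \downarrow U_T$ are computed componentwise in $\twoGSet$ and in $\nCat$, with the connecting map induced by universality.  The $2$-globular set $R(j,1)$ decomposes explicitly: its $0$-cells together with the $1$-cells $f^0_{uv}$ form the ``source'' copy of $R(j,0)$, the $1$-cells $f^1_{uv}$ form the ``target'' copy, and these overlap with $R(1,1)^{\amalg j}$ precisely at the $1$-cells $f^z_{u, u + 1}$; the $2$-cells $\alpha^1_{u, u + 1}$ all live in $R(1,1)^{\amalg j}$.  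A parallel decomposition of $X(j,1)$ combined with the fact that $T$, as a left adjoint, preserves colimits yields the analogous decomposition of $TX(j,1)$, and the maps $\theta$ are patently compatible; this gives the colimit in $\Rtwo$.

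Applying $J$, which is left adjoint to $W$ and so preserves colimits, transfers this to a colimit cocone in $\Qtwo$ with vertex $E_2(j,1)$.  The main obstacle is the final lift to $P\Alg$ via $K$, since $K$ is not a priori a left adjoint.  My plan is to verify the universal property directly.  A cocone from the diagram to a $P$-algebra $\mathcal{A}$ unpacks, using the fact that $I_2(0,0)$, $I_2(1,0)$ and $I_2(1,1)$ are free $P$-algebras (so maps out are determined by maps of the underlying $2$-globular sets) together with the construction of $I_2(j,0)$, into the following data in $\mathcal{A}$: $0$-cells $a_u$, $1$-cells $f^z_{uv}$, isomorphism $2$-cells $\iota^z_{uvw}$ satisfying the pentagon coherence, and a row of $2$-cells $\alpha^1_{u, u + 1}$.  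This is precisely the data needed to specify a map out of $I_2(j,1)$; the remaining $\alpha^1_{uv}$ for $v > u + 1$ are uniquely determined by a $P$-algebraic analogue of Lemma~\ref{lem:alphas}, which follows because the coherence theorems of Section~\ref{sect:globopcoh} force the relevant diagrams of constraint cells in $\mathcal{A}$ to commute.  Existence and uniqueness of the induced map $I_2(j,1) \rightarrow \mathcal{A}$ follow, completing the proof.
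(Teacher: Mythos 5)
Your opening moves are sound but do not carry the weight of the proof. Computing the colimit of the underlying diagram in $\Rtwo$ componentwise and pushing it through the left adjoint $J$ is fine (modulo the check that the morphisms of the diagram, which are defined as adjoint transposes $\epsilon_{\phi}\comp J(-)$ rather than as images $J(g)$ of $\Rtwo$-morphisms, really do arise from a diagram in $\Rtwo$). But, as you yourself note, the comparison functor $K$ is not a left adjoint, so the colimit you obtain in $\Qtwo$ is never actually used: the entire content of the lemma is concentrated in the ``direct verification'' in $P\Alg$, and that is precisely the part you sketch rather than prove. The paper's proof \emph{is} that direct verification: it enumerates the cells of $I_2(j,1)$, separates the contraction $2$-cells into the algebraic ones (those recognised by the $P$-algebra action, hence automatically preserved by any map of algebras) and the triangular ones $[f^z_{uw}, f^z_{vw}\comp f^z_{uv}]$ (which lie in the images of $I_2(1,d_0)$ and $I_2(1,d_1)$ and are therefore forced by commutation with $g$ and $h$), and uses the fact that $I_2(j,1)$ contains no distinct parallel $2$-cells to express every remaining contraction cell as a composite of these. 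Without that analysis you cannot justify the claim that your listed data ``is precisely the data needed to specify a map out of $I_2(j,1)$'' --- that claim is essentially the lemma itself.

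The concrete gap is the appeal to a ``$P$-algebraic analogue of Lemma~\ref{lem:alphas}'' justified by the coherence theorems of Section~\ref{sect:globopcoh}. Those theorems govern diagrams of \emph{constraint} cells in free algebras (or $F_K$-admissible diagrams); the cells $\alpha^1_{uv}$ are not constraint cells, and the target $\mathcal{A}$ is an arbitrary $P$-algebra in which not every diagram of constraint cells commutes, so the theorems force nothing in $\mathcal{A}$. What actually determines $\psi(\alpha^1_{uv})$ for $v>u+1$ is a relation holding in the \emph{source}: in $I_2(j,1)$ the cell $\alpha^1_{uv}$ equals a specific composite of the $\alpha^1_{w,w+1}$ with contraction $2$-cells, because the coequaliser used to ``add contraction $3$-cells'' identifies all parallel $2$-cells there, and a map of $P$-algebras must preserve that composite. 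The uniqueness argument has to be run inside $I_2(j,1)$, not inside $\mathcal{A}$.
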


  To prove Lemma~\ref{lem:i2j1}, we check directly that $I_2(j, 1)$ satisfies the universal property for the colimit.  In order to do this we must specify maps out of $I_2(j, 1)$ and $I_2(j, k)$, which we define dimension by dimension, starting at dimension $0$ and working up.

  Recall from the construction of $I_2(j, k)$ that at each dimension (excluding dimension $0$), we have three types of cell: generating cells (those in $R(j, k)$), contraction cells, and composites.  Since we are defining a map of $P$-algebras, once we have defined the effect of the map on generating cells and contraction cells, the effect on composites is determined by the fact that the map must preserve the $P$-algebras structure (in a way that we will make precise later).  A similar statement is true for some of the contraction cells, but not all of them; due to the fact that (for $j > 1$) $I_2(j, k)$ is not a free $P$-algebra, only certain contraction cells are required to be preserved by the $P$-algebra structure.  We refer to these cells as ``algebraic contraction cells''.

  To see which contraction cells are algebraic contraction cells, suppose we are defining a map $\psi \colon I_2(j, k) \rightarrow \mathcal{A}$.  This consists of a map of $2$-globular sets $\psi \colon U_{P} I_2(j, k) = Q(j, k) \rightarrow A$ such that
      \[
        \xy
          % POINTS
          (0, 0)*+{PQ(j, k)}="0,0";
          (24, 0)*+{P A}="1,0";
          (0, -16)*+{Q(j, k)}="0,1";
          (24, -16)*+{A}="1,1";
          % ARROWS
          {\ar^-{P\psi} "0,0" ; "1,0"};
          {\ar "0,0" ; "0,1"};
          {\ar^{\theta} "1,0" ; "1,1"};
          {\ar_-{\psi} "0,1" ; "1,1"};
        \endxy
      \]
  commutes, where the left-hand map is the algebra action for $I_2(j, k)$.  The commutativity of this diagram is what ensures that the $P$-algebra structure is preserved.  Thus, the contraction cells that must be preserved are precisely those which are recognised as contraction cells by the $P$-algebra structure, i.e. a contraction cell in $Q(j, k)$ is an algebraic contraction cell if it is the image under the algebra action $PQ(j, k) \rightarrow Q(j, k)$ of a contraction cell in $PQ(j, k)$.  Since the only contraction $1$-cells in $I_2(j, k)$ are the identities, all contraction $1$-cells are algebraic.  The algebraic contraction $2$-cells in $I_2(j, k)$ consist of the identities, and any contraction cells that alter the bracketing of a composite, or alter the number of identities that appear in a composite, but do nothing else.  In particular, the source and target of a non-identity algebraic contraction $2$-cell in $I_2(j, k)$ are always composites of cells in $I_2(j, k)$, and these composites feature the same generating cells in the same order.

  Another pivotal fact about $I_2(j, k)$ is that, in the construction, when we apply the coequaliser (in order to ``add contraction $3$-cells''), we identify all parallel $2$-cells.  Thus in $I_2(j, k)$ there are no distinct parallel $2$-cells.  This allows us to write many of the contraction cells as composites of others.

  \begin{proof}[Proof of Lemma~\ref{lem:i2j1}]
    In this proof, we present the case $j = 3$, before moving on to the case of general $j$, since for a fixed value of $j$ we are able to write down all of the cells in $I_2(j, 1)$ (though note that we still omit certain composites).  We use $j = 3$ rather than $j = 2$ (the simplest case of the lemma) because $I_2(2, 1)$ is too small for this case to exhibit all the features of the general case.

    Suppose we have a $P$-algebra $\mathcal{A}$ and a cocone
      \[
        \xymatrix{
          & I_2(1, 0)^{\amalg 3} \ar[dl]_{(d^{\amalg 3})_0} \ar[dr]^{I_2(d_1, 1)^{\amalg 3}} && I_2(1, 0)^{\amalg 3} \ar[dl]_{I_2(d_0, 1)^{\amalg 3}} \ar[dr]^{(d^{\amalg 3})_0}  \\
          I_2(3, 0) \ar[drr]_g && I_2(1, 1)^{\amalg 3} \ar[d]_{\lambda} && I_2(3, 0) \ar[dll]^h  \\
          && \mathcal{A}
                 }
      \]
    in $P\Alg$.  We define a map of $P$-algebras
      \[
        \psi \colon I_2(3, 1) \rightarrow \mathcal{A}
      \]
    such that the diagram
      \[
        \xymatrix{
          & I_2(1, 0)^{\amalg 3} \ar[dl]_{(d^{\amalg 3})_0} \ar[dr]^{I_2(d_1, 1)^{\amalg 3}} && I_2(1, 0)^{\amalg 3} \ar[dl]_{I_2(d_0, 1)^{\amalg 3}} \ar[dr]^{(d^{\amalg 3})_0}  \\
          I_2(3, 0) \ar@/_2pc/[ddrr]_g \ar[drr]_{I_2(1, d_1)} && I_2(1, 1)^{\amalg 3} \ar@/_4pc/[dd]_{\lambda} \ar[d]^{(d^{\amalg 3})_1} && I_2(3, 0) \ar@/^2pc/[ddll]^h \ar[dll]^{I_2(1, d_0)}  \\
          && I_2(3, 1) \ar@{-->}[d]^{\psi} \\
          && \mathcal{A}
                 }
      \]
    commutes.

    To define the map $\psi$, we first list the cells in $I_2(3, 1)$.  We list the cells by dimension, and for dimensions above $0$, we break the list down further, into generating cells, contraction cells, and composites.
      \begin{itemize}
        \item $0$-cells: $a_u$ for all $ 0 \leq u \leq 3$;
        \item $1$-cells:
          \begin{itemize}
            \item Generating cells:
              \[
                f_{uv}^{z} \text{ for all } 0 \leq u < v \leq 3, \; 0 \leq z \leq 1;
              \]
            \item Contraction cells:
              \[
                [a_u, a_u] = \id_{a_u} \text{ for all } 0 \leq u \leq 3;
              \]
            \item Composites:  Although we don't need to define the action of $\psi$ on composites, since this is determined by the fact that $\psi$ preserves the $P$-algebra structure, it is useful to list them here since we need to know what they are in order to write down the contraction $2$-cells.  Note that this list does not include composites involving identities.
                  \[
                    f_{vw}^z \comp f_{uv}^y \text{ for all } 0 \leq u < v < w \leq 3, \; y, z \in \{ 0, 1 \};
                  \]
                  \[
                    (f_{23}^z \comp f_{12}^y) \comp f_{01}^x, \; f_{23}^z \comp (f_{12}^y \comp f_{01}^x) \text{ for all } x, y, z \in \{ 0, 1 \}
                  \]
          \end{itemize}
        \item $2$-cells:
          \begin{itemize}
            \item Generating cells:
              \[
                \alpha_{uv}^{1} \text{ for all } 0 \leq u < v \leq 3;
              \]
            \item Contraction cells:  There are three different types of contraction cell in $I_2(3, 1)$ -- the algebraic contraction cells, the triangular contraction cells corresponding to the cells denoted $\iota^z_{uvw}$ in Leinster nerve construction (see Section~\ref{sect:Leinsternerves}), and those which are composites of cells of the two other types.

                The algebraic contraction cells are those of the form:
                  \[
                    [(f_{23}^z \comp f_{12}^y) \comp f_{01}^x, f_{23}^z \comp (f_{12}^y \comp f_{01}^x)],
                  \]
                  \[
                    [f_{23}^z \comp (f_{12}^y \comp f_{01}^x), (f_{23}^z \comp f_{12}^y) \comp f_{01}^x],
                  \]
                for all $x$, $y$, $z \in \{0, 1\}$, as well as identities on all $1$-cells.  The triangular contraction cells, all of which lie in the image of either $I_2(1, d_1)$ or $I_2(1, d_0)$, are those of the form:
                  \[
                    [f_{uw}^0, f_{vw}^0 \comp f_{uv}^0] = I_2(1, d_1)[f_{uw}^0, f_{vw}^0 \comp f_{uv}^0],
                  \]
                  \[
                    [f_{vw}^0 \comp f_{uv}^0, f_{uw}^0] = I_2(1, d_1)[f_{vw}^0 \comp f_{uv}^0, f_{uw}^0],
                  \]
                  \[
                    [f_{uw}^1, f_{vw}^1 \comp f_{uv}^1] = I_2(1, d_0)[f_{uw}^0, f_{vw}^0 \comp f_{uv}^0],
                  \]
                  \[
                    [f_{vw}^1 \comp f_{uv}^1, f_{uw}^1] = I_2(1, d_0)[f_{vw}^0 \comp f_{uv}^0, f_{uw}^0],
                  \]
                For all $0 \leq u < v < w \leq 3$.  The remaining contraction cells are composites of those above:
                  \[
                    [f_{13}^0 \comp f_{01}^1, (f_{23}^0 \comp f_{12}^0) \comp f_{01}^1] = [f_{13}^0, f_{23}^0 \comp f_{12}^0] * [f_{01}^1, f_{01}^1],
                  \]
                  \[
                    [(f_{23}^0 \comp f_{12}^0) \comp f_{01}^1, f_{13}^0 \comp f_{01}^1] = [f_{23}^0 \comp f_{12}^0, f_{13}^0] * [f_{01}^1, f_{01}^1],
                  \]
                  \[
                    [f_{13}^1 \comp f_{01}^0, (f_{23}^1 \comp f_{12}^1) \comp f_{01}^0] = [f_{13}^1, f_{23}^1 \comp f_{12}^1] * [f_{01}^0, f_{01}^0],
                  \]
                  \[
                    [(f_{23}^1 \comp f_{12}^1) \comp f_{01}^0, f_{13}^1 \comp f_{01}^0] = [f_{23}^1 \comp f_{12}^1, f_{13}^1] * [f_{01}^0, f_{01}^0],
                  \]
                  \[
                    [f_{23}^0 \comp f_{02}^1, f_{23}^0 \comp (f_{12}^1 \comp f_{01}^1)] = [f_{23}^0, f_{23}^0] * [f_{02}^1, f_{12}^1 \comp f_{01}^1],
                  \]
                  \[
                    [f_{23}^0 \comp (f_{12}^1 \comp f_{01}^1), f_{23}^0 \comp f_{02}^1] = [f_{23}^0, f_{23}^0] * [f_{12}^1 \comp f_{01}^1, f_{02}^1],
                  \]
                  \[
                    [f_{23}^1 \comp f_{02}^0, f_{23}^1 \comp (f_{12}^0 \comp f_{01}^0)] = [f_{23}^1, f_{23}^1] * [f_{02}^0, f_{12}^0 \comp f_{01}^0],
                  \]
                  \[
                    [f_{23}^1 \comp (f_{12}^0 \comp f_{01}^0), f_{23}^1 \comp f_{02}^0] = [f_{23}^1, f_{23}^1] * [f_{12}^0 \comp f_{01}^0, f_{02}^0].
                  \]
          \end{itemize}
      \end{itemize}

    We now define the map $\psi \colon I_2(3, 1) \rightarrow \mathcal{A}$:
      \begin{itemize}
        \item On $0$-cells:
          \[
            \psi_0(a_u) := g_0(a_u) = h_0(a_u) = \lambda_0(a_u).
          \]
        \item On $1$-cells:
        \[
          \psi_1(f^z_{uv}) :=
            \left\{
              \begin{array}{rl}
              g_1(f^z_{uv}) & \text{if } z = 0, \\
              h_1(f^{z - 1}_{uv}) & \text{if } z = 1;
              \end{array}
            \right.
        \]
        \[
          \psi_1[a_u, a_u] = \psi(\id_{a_u}) := \lambda_1(\id_{a_u}) = g_1(\id_{a_u}) = h_1(\id_{a_u}).
        \]
      We do not need to define the action of $\psi_1$ on composites explicitly; this is automatic since $\psi$ must preserve the $P$-algebra structure.
        \item On $2$-cells:
          \[
            \psi_2(\alpha^1_{uv}) := \lambda(\alpha^1_{uv});
          \]
          \[
            \psi_2[(f_{23}^z \comp f_{12}^y) \comp f_{01}^x, f_{23}^z \comp (f_{12}^y \comp f_{01}^x)] := [\psi_1 \big( (f_{23}^z \comp f_{12}^y) \comp f_{01}^x \big), \psi_1\big(f_{23}^z \comp (f_{12}^y \comp f_{01}^x)\big)];
          \]
          \[
            \psi_2[f_{23}^z \comp (f_{12}^y \comp f_{01}^x), (f_{23}^z \comp f_{12}^y) \comp f_{01}^x] := [\psi_1\big(f_{23}^z \comp (f_{12}^y \comp f_{01}^x)\big), \psi_1\big((f_{23}^z \comp f_{12}^y) \comp f_{01}^x\big)];
          \]
          \[
            \psi_2[f^0_{uw}, f^0_{vw} \comp f^0_{uv}] := g_2[f^0_{uw}, f^0_{vw} \comp f^0_{uv}];
          \]
          \[
            \psi_2[f^0_{vw} \comp f^0_{uv}, f^0_{uw}] := g_2[f^0_{vw} \comp f^0_{uv}, f^0_{uw}];
          \]
          \[
            \psi_2[f^1_{uw}, f^1_{vw} \comp f^1_{uv}] := h_2[f^0_{uw}, f^0_{vw} \comp f^0_{uv}];
          \]
          \[
            \psi_2[f^1_{vw} \comp f^1_{uv}, f^1_{uw}] := h_2[f^0_{vw} \comp f^0_{uv}, f^0_{uw}].
          \]
        As with $1$-cells, we do not need to define the action of $\psi_2$ on  composites, including those contraction cells that are composites of others, since $\psi$ must preserve the $P$-algebra structure.
      \end{itemize}

    We see by definition of $\psi$ that it is a map of $P$-algebras, and that it makes the required diagram commute.  It is clear that, at each stage of the construction of $\psi$, if we defined the map differently it would not have satisfied these conditions; in the case of the cells on which $\psi$ is defined explicitly, any other definition would fail to make the diagram commute, and in the case of all other cells, any other definition would fail to give a map of $P$-algebras.

    Thus, $\psi$ is the unique map of $P$-algebras making the required diagram commute, so $I_2(3, 1)$ is the colimit in $P\Alg$ of the diagram
      \[
        \xymatrix{
          & I_2(1, 0)^{\amalg 3} \ar[dl]_{(d^{\amalg 3})_0} \ar[dr]^{I_2(d_1, 1)^{\amalg 3}} && I_2(1, 0)^{\amalg 3} \ar[dl]_{I_2(d_0, 1)^{\amalg 3}} \ar[dr]^{(d^{\amalg 3})_0}  \\
          I_2(3, 0) && I_2(1, 1)^{\amalg 3} && I_2(3, 0)
                 }
      \]

    We now prove the lemma for a general value of $j$.  Suppose we have a $P$-algebra $\mathcal{A}$ and a cocone
      \[
        \xymatrix{
          & I_2(1, 0)^{\amalg j} \ar[dl]_{(d^{\amalg j})_0} \ar[dr]^{I_2(d_1, 1)^{\amalg j}} && I_2(1, 0)^{\amalg j} \ar[dl]_{I_2(d_0, 1)^{\amalg j}} \ar[dr]^{(d^{\amalg j})_0}  \\
          I_2(j, 0) \ar[drr]_g && I_2(1, 1)^{\pushj} \ar[d]_{\lambda} && I_2(j, 0) \ar[dll]^h  \\
          && \mathcal{A}
                 }
      \]
    in $P\Alg$.  We define a map of $P$-algebras
      \[
        \psi \colon I_2(j, 1) \rightarrow \mathcal{A}
      \]
    such that the diagram
      \[
        \xymatrix{
          & I_2(1, 0)^{\pushj} \ar[dl]_{(d^{\pushj})_0} \ar[dr]^{I_2(d_1, 1)^{\pushj}} && I_2(1, 0)^{\pushj} \ar[dl]_{I_2(d_0, 1)^{\pushj}} \ar[dr]^{(d^{\pushj})_0}  \\
          I_2(j, 0) \ar@/_2pc/[ddrr]_g \ar[drr]_{I_2(1, d_1)} && I_2(1, 1)^{\pushj} \ar@/_4pc/[dd]_{\lambda} \ar[d]^{(d^{\pushj})_1} && I_2(j, 0) \ar@/^2pc/[ddll]^h \ar[dll]^{I_2(1, d_0)}  \\
          && I_2(j, 1) \ar@{-->}[d]^{\psi} \\
          && \mathcal{A}
                 }
      \]
    commutes.

    To define the map $\psi$, we first list the cells in $I_2(j, 1)$.  As for the case $j = 3$, we list the cells by dimension, and for dimensions above $0$, we list generating cells and contraction cells separately.  Note that in this case we do not list the composites, since the notation would become very unwieldy; the action of $\psi$ on composites is determined by the fact that fact that it must preserve the $P$-algebra structure, so we do not need to list the composites explicitly.
      \begin{itemize}
        \item $0$-cells: $a_u$ for all $ 0 \leq u \leq j$;
        \item $1$-cells:
          \begin{itemize}
            \item Generating cells:
              \[
                f_{uv}^{z} \text{ for all } 0 \leq u < v \leq j, \; 0 \leq z \leq 1;
              \]
            \item Contraction cells:
              \[
                [a_u, a_u] = \id_{a_u} \text{ for all } 0 \leq u \leq j;
              \]
          \end{itemize}
        \item $2$-cells:
          \begin{itemize}
            \item Generating cells:
              \[
                \alpha_{uv}^{1} \text{ for all } 0 \leq u < v \leq j;
              \]
            \item Contraction cells:  As in the case $j = 3$, we have algebraic contraction cells and triangular contraction cells corresponding to the cells $\iota^z_{uvw}$; since all diagrams of contraction $2$-cells commute in $I_2(j, 1)$, all other contraction cells can be expressed as composites of contraction cells of these two types.

                The algebraic contraction cells are those mediating between differently bracketed composites of the same $1$-cells, and also identities on all $1$-cells.  The triangular contraction cells are those of the form:
                  \[
                    [f_{uw}^0, f_{vw}^0 \comp f_{uv}^0] = I_2(1, d_1)[f_{uw}^0, f_{vw}^0 \comp f_{uv}^0],
                  \]
                  \[
                    [f_{vw}^0 \comp f_{uv}^0, f_{uw}^0] = I_2(1, d_1)[f_{vw}^0 \comp f_{uv}^0, f_{uw}^0],
                  \]
                  \[
                    [f_{uw}^1, f_{vw}^1 \comp f_{uv}^1] = I_2(1, d_0)[f_{uw}^0, f_{vw}^0 \comp f_{uv}^0],
                  \]
                  \[
                    [f_{vw}^1 \comp f_{uv}^1, f_{uw}^1] = I_2(1, d_0)[f_{vw}^0 \comp f_{uv}^0, f_{uw}^0],
                  \]
                for all $0 \leq u < v < w \leq j$.  All remaining contraction cells are horizontal composites of those of the form
                  \[
                    [f^z_{v_{m - 1}, v_m} \comp \dotsb \comp f^z_{v_1, v_2} \comp f^z_{v_0, v_1}, f^z_{u_{l - 1}, u_l} \comp \dotsb \comp f^z_{u_1, u_2} \comp f^z_{u_0, u_1}],
                  \]
                for all $l$, $m \geq 2$, $0 \leq u_0 < u_1 < \dotsb < u_l \leq j$, $u_0 = v_0 < v_1 < \dotsb < v_m = u_l$, $0 \leq z \leq 1$.  Note that we omit the choice of bracketing in the contraction cell above; there is one such cell for each choice of bracketing of the source and target.  Each of these contraction cells can be written as a composite of algebraic contraction cells and the triangular contraction cells above.
          \end{itemize}
      \end{itemize}

    We now define the map $\psi \colon I_2(j, 1) \rightarrow \mathcal{A}$:
      \begin{itemize}
        \item On $0$-cells:
          \[
            \psi_0(a_u) := g_0(a_u) = h_0(a_u) = \lambda_0(a_u).
          \]
        \item On $1$-cells:
        \[
          \psi_1(f^z_{uv}) :=
            \left\{
              \begin{array}{rl}
              g_1(f^z_{uv}) & \text{if } z = 0, \\
              h_1(f^{z - 1}_{uv}) & \text{if } z = 1;
              \end{array}
            \right.
        \]
        \[
          \psi_1[a_u, a_u] = \psi(\id_{a_u}) := \lambda_1(\id_{a_u}) = g_1(\id_{a_u}) = h_1(\id_{a_u}).
        \]
      As in the case $j = 3$, we do not need to describe the action of $\psi$ on composites explicitly, since it must preserve the $P$-algebra structure.
        \item On $2$-cells:
          \[
            \psi_2(\alpha^1_{uv}) := \lambda(\alpha^1_{uv});
          \]
          \[
            \psi_2[f^0_{uw}, f^0_{vw} \comp f^0_{uv}] := g_2[f^0_{uw}, f^0_{vw} \comp f^0_{uv}];
          \]
          \[
            \psi_2[f^0_{vw} \comp f^0_{uv}, f^0_{uw}] := g_2[f^0_{vw} \comp f^0_{uv}, f^0_{uw}];
          \]
          \[
            \psi_2[f^1_{uw}, f^1_{vw} \comp f^1_{uv}] := h_2[f^0_{uw}, f^0_{vw} \comp f^0_{uv}];
          \]
          \[
            \psi_2[f^1_{vw} \comp f^1_{uv}, f^1_{uw}] := h_2[f^0_{vw} \comp f^0_{uv}, f^0_{uw}].
          \]
        As in the case $j = 3$, we do not need to describe the action of $\psi$ on the remaining $2$-cells explicitly, since they are either algebraic contraction cells, or composites involving the algebraic contraction cells and those above.
      \end{itemize}

    We see by definition of $\psi$ that it is a map of $P$-algebras, and that it makes the required diagram commute.  It is clear that, at each stage of the construction of $\psi$, if we defined the map differently it would not have satisfied these conditions; in the case of the cells on which $\psi$ is defined explicitly, any other definition would fail to make the diagram commute, and in the case of all other cells, any other definition would fail to give a map of $P$-algebras.

    Thus, $\psi$ is the unique map of $P$-algebras making the required diagram commute, so $I_2(j, 1)$ is the colimit in $P\Alg$ of the diagram
      \[
        \xymatrix{
          & I_2(1, 0)^{\amalg j} \ar[dl]_{d^{\amalg j}_0} \ar[dr]^{I_2(d_1, 1)^{\amalg j}} && I_2(1, 0)^{\amalg j} \ar[dl]_{I_2(d_0, 1)^{\amalg j}} \ar[dr]^{d^{\amalg j}_0}  \\
          I_2(j, 0) && I_2(1, 1)^{\amalg j} && I_2(j, 0),
                 }
      \]
    as required.
  \end{proof}

  The following is now an immediate corollary of Lemma~\ref{lem:i2j1}, via our characterisation of fullness and faithfulness of the Segal maps in terms of colimits in $P\Alg$.

  \begin{corol}  \label{prop:Sjfandf}
    Let $\mathcal{A}$ be a Penon weak $2$ category.  For all $j > 0$, the Segal map
      \[
        S_j \colon P\Alg(I_2(j, -), \mathcal{A}) \longrightarrow P\Alg(I_2(1, -)^{\pushj}, \mathcal{A})
      \]
    is full and faithful on $1$-cells.
  \end{corol}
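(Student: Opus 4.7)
The plan is to assemble three ingredients that are already in place: the reformulation of $S_j$ from Lemma~\ref{lem:sk}, the standard characterisation of full-and-faithful on $1$-cells as a pullback/limit condition (Definition~\ref{defn:simcontr}), and the colimit description of $I_2(j,1)$ from Lemma~\ref{lem:i2j1}. First I would unpack what full and faithful on $1$-cells means for $S_j = -\comp d^{\pushj}$: by Definition~\ref{defn:simcontr} this is the statement that the canonical map
\[
  \widetilde{(S_j)}_1 \colon P\Alg(I_2(j,1),\mathcal{A}) \longrightarrow P\Alg(I_2(j,0),\mathcal{A}) \times_L P\Alg(I_2(1,1)^{\pushj},\mathcal{A}) \times_L P\Alg(I_2(j,0),\mathcal{A})
\]
is a bijection, where $L = P\Alg(I_2(1,0)^{\pushj},\mathcal{A})$, and the source/target maps in the wide pullback are induced by the relevant face maps and by $d^{\pushj}$. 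Equivalently, $S_j$ is full and faithful on $1$-cells iff the commuting cone whose apex is $P\Alg(I_2(j,1),\mathcal{A})$ and whose remaining vertices are the four sets above forms a limit cone in $\Set$.

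Next I would note that this entire cone is the image, under the representable functor $P\Alg(-,\mathcal{A}) \colon P\Alg^{\op} \to \Set$, of the cocone in $P\Alg$ whose apex is $I_2(j,1)$ and whose shape is exactly the diagram displayed in the statement of Lemma~\ref{lem:i2j1}, namely
\[
  \xymatrix{
    & I_2(1,0)^{\pushj} \ar[dl] \ar[dr] && I_2(1,0)^{\pushj} \ar[dl] \ar[dr] \\
    I_2(j,0) \ar[drr]_{I_2(1,d_1)} && I_2(1,1)^{\pushj} \ar[d] && I_2(j,0) \ar[dll]^{I_2(1,d_0)} \\
    && I_2(j,1).
  }
\]
Since $P\Alg(-,\mathcal{A})$ is representable it preserves all limits, so the cone in $\Set$ is a limit cone if and only if the cocone above is a colimit cocone in $P\Alg$.

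Finally I would invoke Lemma~\ref{lem:i2j1}, which asserts precisely that this cocone is a colimit in $P\Alg$. Combining the three steps yields that $\widetilde{(S_j)}_1$ is a bijection, so $S_j$ is full and faithful on $1$-cells. The only real content is the identification of the pullback diagram defining fullness-and-faithfulness with the image under $P\Alg(-,\mathcal{A})$ of the pushout diagram of Lemma~\ref{lem:i2j1}; once this matching is made, the proof is genuinely immediate, so I do not anticipate any substantive obstacle. The main care required is bookkeeping: checking that the maps induced by $d^{\pushj}$, $I_2(d_0,1)$, $I_2(d_1,1)$, $I_2(1,d_0)$ and $I_2(1,d_1)$ line up correctly under the duality $P\Alg(-,\mathcal{A})$, which is guaranteed by the functoriality of $I_2$ on the commuting square in $\Theta^2$ relating $(0,1)$, $(1,1)$, $(0,0)$ and $(1,0)$.
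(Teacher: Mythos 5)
Your proposal is correct and follows exactly the route the paper takes: unpack full-and-faithfulness on $1$-cells as the assertion that the relevant cone under $P\Alg(I_2(j,1),\mathcal{A})$ is a limit in $\Set$, observe that this cone is the image under the representable (hence limit-preserving) functor $P\Alg(-,\mathcal{A})$ of the cocone in Lemma~\ref{lem:i2j1}, and conclude from that lemma. The paper carries out the same reduction in the discussion immediately preceding Lemma~\ref{lem:i2j1} and then states the corollary as immediate, so there is nothing missing here.
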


  We now apply a similar argument to the Segal maps $S_{j, k}$, and reformulate the remaining part of the Segal condition in terms of colimits of $P$-algebras, as we did for $S_j$.  By Lemma~\ref{lem:sk}, $S_{j, k}$ is given by
      \[
        \underbrace{\nerve\mathcal{A}(j, 1) \times_{\nerve\mathcal{A}(j, 0)} \dotsb \times_{\nerve\mathcal{A}(j, 0)} \nerve\mathcal{A}(j, 1)}_k \iso P\Alg(I_2(j, 1)^{\pushk}, \mathcal{A}).
      \]
  This is a bijection if $I_2(j, 1)^{\pushk} = I_2(j, k)$, and the map
      \[
        d^{\pushk} \colon I_2(j, 1)^{\pushk} \rightarrow I_2(j, k)
      \]
  is the identity.  This tells us that $I_2(j, k)$ can be obtained by gluing $k$ copies of $I_2(j, 1)$ along their boundary copies of $I_2(j, 0)$.  Thus, the Segal map $S_{j, k}$ is a bijection if the following lemma holds:

  \begin{lemma} \label{lem:i2jk}
    For all $j \geq 0$, $k > 0$, the diagram
      \[
        \xy
          % POINTS
          (-32, 0)*+{I_2(j, 0)}="-1,0";
          (-48,-16)*+{I_2(j, 1)}="-2,1";
          (-16,-16)*+{I_2(j, 1)}="-1,1";
          (32, 0)*+{I_2(j, 0)}="1,0";
          (48,-16)*+{I_2(j, 1)}="2,1";
          (16,-16)*+{I_2(j, 1)}="1,1";
          (0, -16)*+{\dotsc};
          (0, -32)*+{I_2(j, k)}="0,2";
          % ARROWS
          {\ar_{I_2(1, d_0)} "-1,0" ; "-2,1"};
          {\ar^{I_2(1, d_1)} "-1,0" ; "-1,1"};
          {\ar_{I_2(1, d_0)} "1,0" ; "1,1"};
          {\ar^{I_2(1, d_1)} "1,0" ; "2,1"};
          {\ar@/_1pc/_{I_2(1, \iota_1)} "-2,1" ; "0,2"};
          {\ar_{I_2(1, \iota_2)} "-1,1" ; "0,2"};
          {\ar^{I_2(1, \iota_{k - 1})} "1,1" ; "0,2"};
          {\ar@/^1pc/^{I_2(1, \iota_k)} "2,1" ; "0,2"};
        \endxy
      \]
    is a colimit cocone in $P\Alg$.
  \end{lemma}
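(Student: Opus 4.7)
The plan is to mimic exactly the strategy used in the proof of Lemma~\ref{lem:i2j1}: given a cocone under the diagram with vertex some Penon weak $2$-category $\mathcal{A}$, I will construct a unique map of $P$-algebras $\psi \colon I_2(j, k) \rightarrow \mathcal{A}$ by specifying its action on the generating cells and on those contraction cells that are not determined by the $P$-algebra structure, and then show that the universal property forces this choice. Write $\lambda_r \colon I_2(j, 1) \rightarrow \mathcal{A}$ for the $r$-th leg of the cocone ($1 \leq r \leq k$) and $g_z \colon I_2(j, 0) \rightarrow \mathcal{A}$ for the boundary legs; the cocone commutativity says that $\lambda_r \comp I_2(1, d_0) = g_{r-1}$ and $\lambda_r \comp I_2(1, d_1) = g_r$, where $g_z$ depends only on the level $z$.

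The cells of $I_2(j, k)$ fall into the following classes: generating $0$-cells $a_u$, generating $1$-cells $f^z_{uv}$ (for $0 \leq u < v \leq j$, $0 \leq z \leq k$), generating $2$-cells $\alpha^z_{uv}$ (for $0 \leq u < v \leq j$, $1 \leq z \leq k$), identity $1$-cells, algebraic contraction $2$-cells (which mediate between different bracketings of the same string of $1$-cells, all at a single $z$-level), the triangular contraction $2$-cells $[f^z_{uw}, f^z_{vw} \comp f^z_{uv}]$ and their inverses, and composites. The key observation is that every generating or triangular contraction cell lies entirely at a single $z$-level: the $\alpha^z_{uv}$ sits inside the image of $I_2(1, \iota_z)$, while $f^z_{uv}$ and the triangular contraction cells at level $z$ sit inside the image of $I_2(1, \iota_r)$ for any $r$ with $z \in \{r-1, r\}$. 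Using this, I define $\psi$ on these cells by transporting them to $\mathcal{A}$ via whichever $\lambda_r$ sees them; for cells visible to two adjacent $\lambda_r$'s (i.e. those at an interior level $0 < z < k$), the cocone condition $\lambda_r \comp I_2(1, d_1) = g_z = \lambda_{r+1} \comp I_2(1, d_0)$ ensures that both choices give the same image in $\mathcal{A}$. The action on all remaining $2$-cells (identities, algebraic contraction cells, and composites) is then forced by the requirement that $\psi$ be a map of $P$-algebras, using exactly the same reasoning as in Lemma~\ref{lem:i2j1}.

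To verify this defines a valid map of $P$-algebras, I would check that the algebra axioms are respected — this is automatic since the action on non-algebraic contraction cells and composites is defined via the algebra action $\theta$ of $\mathcal{A}$, and the action on generating and triangular cells coincides with that of a genuine $P$-algebra map on each $I_2(j, 1)$. Commutativity of the required diagrams with the cocone legs holds by construction: on each $\lambda_r$ the restricted map agrees with $\lambda_r$ on generators and on triangular contraction cells (the only non-algebraic data in $I_2(j, 1)$), hence on everything by $P$-algebra preservation; on each $g_z$ the restricted map agrees with $g_z$ similarly. Uniqueness is immediate, since any other map would have to agree with the cocone legs on generating and triangular contraction cells, and then agree with this $\psi$ on the remaining cells by $P$-algebra preservation.

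The main obstacle is bookkeeping rather than conceptual difficulty: I need to verify carefully that \emph{every} triangular contraction cell in $I_2(j, k)$ at an interior level $z$ is the image under \emph{both} $I_2(1, \iota_z)$ and $I_2(1, \iota_{z+1})$ of the corresponding triangular contraction cell in $I_2(j, 0)$, so that the two prescriptions for $\psi$ coincide. This amounts to checking that the gluing along $I_2(j, 0)$ at each interior level genuinely identifies the triangular cells on either side — which follows from the fact (emphasised in the proof of Lemma~\ref{lem:i2j1}) that $I_2(j, 0)$ contains all the triangular contraction $2$-cells $[f^0_{uw}, f^0_{vw} \comp f^0_{uv}]$ and their inverses, and these are transported by $I_2(1, d_0)$ and $I_2(1, d_1)$ into $I_2(j, 1)$ as the triangular cells at levels $0$ and $1$ respectively. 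Once this identification is in hand, the rest of the argument is a direct transcription of the proof of Lemma~\ref{lem:i2j1}.
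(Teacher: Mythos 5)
Your proposal is correct and follows essentially the same route as the paper's proof: list the cells of $I_2(j,k)$ by type, define $\psi$ on generating cells and triangular contraction cells by transporting along the appropriate cocone leg $g^{(z)}$, let $P$-algebra preservation determine the rest, and read off uniqueness. The only difference is that you make explicit the well-definedness check at interior levels (where a cell is seen by two adjacent legs), which the paper handles implicitly by simply choosing one leg per level.
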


  \begin{proof}
    Let $\mathcal{A}$ be a Penon weak $2$-category, and suppose we have a cocone
      \[
        \xy
          % POINTS
          (-32, 0)*+{I_2(j, 0)}="-1,0";
          (-48,-16)*+{I_2(j, 1)}="-2,1";
          (-16,-16)*+{I_2(j, 1)}="-1,1";
          (32, 0)*+{I_2(j, 0)}="1,0";
          (48,-16)*+{I_2(j, 1)}="2,1";
          (16,-16)*+{I_2(j, 1)}="1,1";
          (0, -16)*+{\dotsc};
          (0, -32)*+{\mathcal{A}}="0,3";
          % ARROWS
          {\ar_{I_2(1, d_0)} "-1,0" ; "-2,1"};
          {\ar^{I_2(1, d_1)} "-1,0" ; "-1,1"};
          {\ar_{I_2(1, d_0)} "1,0" ; "1,1"};
          {\ar^{I_2(1, d_1)} "1,0" ; "2,1"};
          {\ar@/_1pc/_{g^{(1)}} "-2,1" ; "0,3"};
          {\ar_{g^{(2)}} "-1,1" ; "0,3"};
          {\ar^{g^{(k - 1)}} "1,1" ; "0,3"};
          {\ar@/^1pc/^{g^{(k)}} "2,1" ; "0,3"};
        \endxy
      \]
    in $P\Alg$.  We define a map of $P$-algebras
      \[
        \psi \colon I_2(j, k) \longrightarrow \mathcal{A}
      \]
    such that the diagram
      \[
        \xy
          % POINTS
          (-32, 0)*+{I_2(j, 0)}="-1,0";
          (-48,-16)*+{I_2(j, 1)}="-2,1";
          (-16,-16)*+{I_2(j, 1)}="-1,1";
          (32, 0)*+{I_2(j, 0)}="1,0";
          (48,-16)*+{I_2(j, 1)}="2,1";
          (16,-16)*+{I_2(j, 1)}="1,1";
          (0, -16)*+{\dotsc};
          (0, -32)*+{I_2(j, k)}="0,2";
          (0, -48)*+{\mathcal{A}}="0,3";
          % ARROWS
          {\ar_{I_2(1, d_0)} "-1,0" ; "-2,1"};
          {\ar^{I_2(1, d_1)} "-1,0" ; "-1,1"};
          {\ar_{I_2(1, d_0)} "1,0" ; "1,1"};
          {\ar^{I_2(1, d_1)} "1,0" ; "2,1"};
          {\ar@/_1pc/ "-2,1" ; "0,2"};
          {\ar "-1,1" ; "0,2"};
          {\ar "1,1" ; "0,2"};
          {\ar@/^1pc/ "2,1" ; "0,2"};
          {\ar@/_2pc/_{g^{(1)}} "-2,1" ; "0,3"};
          {\ar@/_1pc/_(0.65){g^{(2)}} "-1,1" ; "0,3"};
          {\ar@/^1pc/^(0.65){g^{(k - 1)}} "1,1" ; "0,3"};
          {\ar@/^2pc/^{g^{(k)}} "2,1" ; "0,3"};
          {\ar@{-->}^{\psi} "0,2" ; "0,3"};
        \endxy
      \]
    commutes, and show that this is the unique such map of $P$-algebras.  We take the same approach as in the proof of Lemma~\ref{lem:i2j1}, defining the map by an elementary approach, and using the fact that it must preserve the $P$-algebra structure to avoid having to define it explicitly on every cell of $I_2(j, k)$.  To do so we now list the cells of $I_2(j, k)$; note that, as before, we do not list composites or algebraic contraction cells.
      \begin{itemize}
        \item $0$-cells: $a_u$ for all $ 0 \leq u \leq j$;
        \item $1$-cells:
          \begin{itemize}
            \item Generating cells:
              \[
                f_{uv}^{z} \text{ for all } 0 \leq u < v \leq j, \; 0 \leq z \leq k;
              \]
            \item Contraction cells:
              \[
                [a_u, a_u] = \id_{a_u} \text{ for all } 0 \leq u \leq j;
              \]
          \end{itemize}
        \item $2$-cells:
          \begin{itemize}
            \item Generating cells:
              \[
                \alpha_{uv}^{z} \text{ for all } 0 \leq u < v \leq j, \; 1 \leq z \leq k;
              \]
            \item Contraction cells:  As in Lemma~\ref{lem:i2j1}, we have algebraic contraction cells and triangular contraction cells corresponding to the cells $\iota^z_{uvw}$ from Leinster's nerve construction for bicategories (Section~\ref{sect:Leinsternerves}); since all diagrams of contraction $2$-cells commute in $I_2(j, 1)$, all other contraction cells can be expressed as composites of contraction cells of these two types.

                The algebraic contraction cells are those mediating between differently bracketed composites of the same $1$-cells, and also identities on all $1$-cells.  The triangular contraction cells are those of the form:
                  \[
                    [f_{uw}^z, f_{vw}^z \comp f_{uv}^z],
                  \]
                and
                  \[
                    [f_{vw}^z \comp f_{uv}^z, f_{uw}^z],
                  \]
                for all $0 \leq u < v < w \leq j$, $0 \leq z \leq k$.  As in Lemma~\ref{lem:i2j1}, all remaining contraction cells are composites of those above.
          \end{itemize}
      \end{itemize}

    We now define the map $\psi \colon I_2(j, k) \rightarrow \mathcal{A}$:
      \begin{itemize}
        \item On $0$-cells:
          \[
            \psi_0(a_u) := g^{(1)}_0(a_u).
          \]
        \item On $1$-cells:
        \[
          \psi_1(f^z_{uv}) :=
            \left\{
              \begin{array}{rl}
              g^{(0)}_1(f^0_{uv}) & \text{if } z = 0, \\
              g^{(z)}_1(f^1_{uv}) & \text{otherwise};
              \end{array}
            \right.
        \]
        \[
          \psi_1[a_u, a_u] = \psi(\id_{a_u}) := g^{(1)}_1(\id_{a_u}).
        \]
      As in Lemma~\ref{lem:i2j1}, we do not need to describe the action of $\psi$ on composites explicitly, since it must preserve the $P$-algebra structure.
        \item On $2$-cells:
          \[
            \psi_2(\alpha^z_{uv}) := g^{(z)}_2(\alpha^1_{uv});
          \]
          \[
            \psi_2[f^0_{uw}, f^0_{vw} \comp f^0_{uv}] := g^{(1)}_2[f^0_{uw}, f^0_{vw} \comp f^0_{uv}];
          \]
          \[
            \psi_2[f^0_{vw} \comp f^0_{uv}, f^0_{uw}] := g^{(1)}_2[f^0_{vw} \comp f^0_{uv}, f^0_{uw}];
          \]
        and for $1 \leq z \leq k$,
          \[
            \psi_2[f^z_{uw}, f^z_{vw} \comp f^z_{uv}] := g^{(z)}_2[f^1_{uw}, f^1_{vw} \comp f^1_{uv}];
          \]
          \[
            \psi_2[f^z_{vw} \comp f^z_{uv}, f^z_{uw}] := g^{(z)}_2[f^1_{vw} \comp f^1_{uv}, f^1_{uw}].
          \]
        As in Lemma~\ref{lem:i2j1}, we do not need to describe the action of $\psi$ on the remaining $2$-cells explicitly, since they are either algebraic contraction cells, or composites involving the algebraic contraction cells and those above.
      \end{itemize}

    We see by definition of $\psi$ that it is a map of $P$-algebras, and that it makes the required diagram commute.  It is clear that, at each stage of the construction of $\psi$, if we defined the map differently it would not have satisfied these conditions; in the case of the cells on which $\psi$ is defined explicitly, any other definition would fail to make the diagram commute, and in the case of all other cells, any other definition would fail to give a map of $P$-algebras.

    Thus, $\psi$ is the unique map of $P$-algebras making the required diagram commute, so $I_2(j, k)$ is the colimit in $P\Alg$ of the diagram
      \[
        \underbrace{
        \xy
          % POINTS
          (-32, 0)*+{I_2(j, 0)}="-1,0";
          (-48,-16)*+{I_2(j, 1)}="-2,1";
          (-16,-16)*+{I_2(j, 1)}="-1,1";
          (32, 0)*+{I_2(j, 0)}="1,0";
          (48,-16)*+{I_2(j, 1)}="2,1";
          (16,-16)*+{I_2(j, 1)}="1,1";
          (0, -16)*+{\dotsc};
          % ARROWS
          {\ar_{I_2(1, d_0)} "-1,0" ; "-2,1"};
          {\ar^{I_2(1, d_1)} "-1,0" ; "-1,1"};
          {\ar_{I_2(1, d_0)} "1,0" ; "1,1"};
          {\ar^{I_2(1, d_1)} "1,0" ; "2,1"};
        \endxy}_k
      \]
    as required.
  \end{proof}

  The following is now an immediate corollary of Lemma~\ref{lem:i2jk}:

  \begin{corol}  \label{prop:Sjkbij}
    Let $\mathcal{A}$ be a Penon weak $2$-category.  For each $j$, $k > 0$, the Segal map
      \[
        S_{j, k} \colon \nerve\mathcal{A}(j, k) \rightarrow \underbrace{\nerve\mathcal{A}(j, 1) \times_{\nerve\mathcal{A}(j, 0)} \dotsb \times_{\nerve\mathcal{A}(j, 0)} \nerve\mathcal{A}(j, 1)}_k
      \]
    is a bijection.
  \end{corol}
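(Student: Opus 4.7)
The plan is to combine Lemma~\ref{lem:sjk} with Lemma~\ref{lem:i2jk} in a completely formal way; the corollary is essentially a repackaging of the latter through the representable functor $P\Alg(-,\mathcal{A})$. First, by Lemma~\ref{lem:sjk}, the Segal map can be rewritten as
\[
    S_{j,k} \;=\; (-)\comp d^{\amalg k} \colon P\Alg(I_2(j,k),\mathcal{A}) \longrightarrow P\Alg(I_2(j,1)^{\amalg k},\mathcal{A}),
\]
where $d^{\amalg k} \colon I_2(j,1)^{\amalg k} \to I_2(j,k)$ is the unique map of $P$-algebras induced by the cocone consisting of the face maps $I_2(1,\iota_i)$ for $i = 1, \dotsc, k$. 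So the task reduces to showing that precomposition with $d^{\amalg k}$ is a bijection of sets.

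Next, I would invoke Lemma~\ref{lem:i2jk}, which states precisely that $I_2(j,k)$ together with the cocone of maps $I_2(1,\iota_i)$ is itself the colimit in $P\Alg$ of the diagram defining $I_2(j,1)^{\amalg k}$. Since $I_2(j,1)^{\amalg k}$ was also defined to be a colimit of the same diagram, the universal property gives a unique isomorphism $I_2(j,1)^{\amalg k} \xrightarrow{\sim} I_2(j,k)$ compatible with both cocones, and this isomorphism is exactly $d^{\amalg k}$ (by the uniqueness clause in the construction of $d^{\amalg k}$ from Lemma~\ref{lem:sjk}).

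Finally, because $d^{\amalg k}$ is an isomorphism of $P$-algebras, precomposition with it is a bijection of hom-sets, so $S_{j,k}$ is a bijection. There is no genuine obstacle here: all of the real work has already been done in establishing Lemma~\ref{lem:i2jk}, where one has to identify the algebraic contraction cells, the triangular contraction cells, and verify that the elementary definition of $\psi$ is both well-defined and forced. The corollary itself is a one-line consequence of the fact that representable functors send colimits to limits, applied to the colimit description of $I_2(j,k)$.
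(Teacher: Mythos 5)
Your proposal is correct and follows exactly the paper's route: Lemma~\ref{lem:sjk} identifies $S_{j,k}$ with precomposition by $d^{\amalg k}$, Lemma~\ref{lem:i2jk} shows the cocone with vertex $I_2(j,k)$ is a colimit of the same diagram defining $I_2(j,1)^{\amalg k}$, so $d^{\amalg k}$ is an isomorphism and precomposition with it is a bijection. If anything, your phrasing of the comparison-of-colimits step is slightly more careful than the paper's, which simply asserts that $I_2(j,1)^{\amalg k} = I_2(j,k)$ with $d^{\amalg k}$ the identity; the substance is identical.
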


  We now have all the results we need to show that the nerve of a Penon weak $2$-category is a Tamsamani--Simpson weak $2$-category.

  \begin{thm}  \label{thm:2nerveSegalcond}
    Let $\mathcal{A}$ be a Penon weak $2$-category.  Then the nerve $\nerve\mathcal{A}$ satisfies the Segal condition, and is thus a Tamsamani--Simpson weak $2$-category.
  \end{thm}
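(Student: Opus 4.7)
The plan is to prove this theorem by simply assembling the three pieces of the Segal condition (Definition~\ref{defn:2catv1}) that have already been verified separately in this section, exactly parallel to the strategy used for bicategories in Theorem~\ref{thm:bicatnerveSegalcond}. Since no new technical machinery is required, the proof should be short.

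First, I would unpack what needs to be shown. By Definition~\ref{defn:2catv1}, $\nerve\mathcal{A}$ is a Tamsamani--Simpson weak $2$-category precisely when, for each $j \geq 0$, the Segal map $S_j$ is contractible (surjective on $0$-cells, full and faithful on $1$-cells), and, for each $j, k \geq 0$, the Segal map $S_{j,k}$ is a bijection. Thus there are exactly three conditions to verify, and each has its own dedicated result earlier in the section.

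Next I would cite each condition in turn. Surjectivity of $S_j$ on $0$-cells is the content of Proposition~\ref{prop:soo2}, whose proof used the factorisation of the free $P$-algebra functor through $\Qtwo$ and an elementary construction via the transpose of an explicit map in $\Rtwo$. Fullness and faithfulness of $S_j$ on $1$-cells follows from Corollary~\ref{prop:Sjfandf}, which was itself an immediate consequence of Lemma~\ref{lem:i2j1}, using the reformulation of $S_j$ as precomposition with $d^{\pushj}$ established in Lemma~\ref{lem:sk} together with the fact that the representable functor $P\Alg(-,\mathcal{A})$ sends colimits in $P\Alg$ to limits in $\Set$. Bijectivity of $S_{j,k}$ follows from Corollary~\ref{prop:Sjkbij}, which in turn followed from Lemma~\ref{lem:i2jk} combined with Lemma~\ref{lem:sjk}. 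The edge cases $j=0$ or $k=0$ are either trivial (the wide pullback degenerates to a single copy) or handled by the same arguments.

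There is no main obstacle in this proof since all the substantive work has already been carried out, most notably in Lemmas~\ref{lem:i2j1} and \ref{lem:i2jk}, where the $P$-algebras $I_2(j,1)$ and $I_2(j,k)$ were identified as the appropriate colimits of lower-dimensional pieces by the elementary cell-by-cell construction of comparison maps. The closing sentence of the proof should simply collect the three cited results and conclude, via Definition~\ref{defn:2catv1}, that $\nerve\mathcal{A}$ satisfies the Segal condition and is therefore a Tamsamani--Simpson weak $2$-category.
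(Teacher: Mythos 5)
Your proposal matches the paper's proof exactly: it assembles Proposition~\ref{prop:soo2}, Corollary~\ref{prop:Sjfandf}, and Corollary~\ref{prop:Sjkbij}, notes that the cases $j=0$ and $k=0$ hold trivially, and concludes via Definition~\ref{defn:2catv1}. No differences worth remarking on.
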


  \begin{proof}
    Let $\mathcal{A}$ be a Penon weak $2$-category, and consider its nerve $\nerve \mathcal{A}$.  For all $j \geq 0$, the Segal map
          \[
            S_j: \nerve\mathcal{A}(j,-) \longrightarrow \underbrace{\nerve\mathcal{A}(1,-) \times_{\nerve\mathcal{A}(0,1)} \dotsb \times_{\nerve\mathcal{A}(0,1)} \nerve\mathcal{A}(1,-)}_j
          \]
    is surjective on objects by Proposition~\ref{prop:soo2} and full and faithful on $1$-cells by Corollary~\ref{prop:Sjfandf}; hence $S_j$ is contractible.  Note that the proposition and corollary are valid only for $j > 0$, but for $j = 0$ the result holds trivially.

    For all $j$, $k \geq 0$, the Segal map
          \[
            S_{j,k}: \nerve\mathcal{A}(j,k) \longrightarrow \underbrace{\nerve\mathcal{A}(j,1) \times_{\nerve\mathcal{A}(j,0)} \dotsb \times_{\nerve\mathcal{A}(j,0)} \nerve\mathcal{A}(j,1)}_k
          \]
    is a bijection by Corollary~\ref{prop:Sjkbij}.  As above, this corollary is only valid for $k > 0$, but for $k = 0$ the result holds trivially.

    Hence $\nerve \mathcal{A}$ satisfies the Segal condition, so it is a Tamsamani--Simpson weak $2$-category.
  \end{proof}

\chapter{Nerves of Penon weak $n$-categories}  \label{chap:nerveconstrn}

  In this chapter we generalise the nerve construction for Penon weak $2$-categories from Section~\ref{sect:2nerveconstr} to a nerve construction for Penon weak $n$-categories for all $n \in \mathbb{N}$.  We then discuss various questions that this nerve construction raises, and in particular we conjecture that the nerve of a Penon weak $n$-category is a Tamsamani--Simpson weak $n$-category.

\section{The nerve construction for general $n$}  \label{sect:nnerveconstr}

  We now give the construction of the nerve functor for Penon weak $n$-categories.  This construction proceeds analogously to that for $n = 2$.  Since we are potentially working with a greater number of dimensions in the general case, we have to weaken composition in each cuboidal $n$-pasting diagram at every dimension (apart from dimensions $0$ and $n$).  The greater number of dimensions entails that the notation for the cells of the $P$-algebras we construct necessarily becomes more complicated and unwieldy.

  In analogy with the case $n = 2$, when defining the nerve functor for Penon weak $n$-categories, we first define a functor $I_n \colon \Theta^n \rightarrow P\Alg$ which gives us, for each object of $\Theta^n$, the corresponding cuboidal $n$-pasting diagram expressed as a freely generated Penon weak $n$-category.  We obtain the functor $I_n$ by defining a functor $E_n \colon \Theta^n \rightarrow \mathcal{Q}$, then composing this with the Eilenberg--Moore comparison functor $K \colon \mathcal{Q} \rightarrow P\Alg$ for the adjunction $F \ladj U$ defining the monad $P$.

  As in the $2$-dimensional case, for each object $\vectj = (j_1, j_2, \dotsc, j_n)$ of $\Theta^n$, we define two $n$-globular sets, $X(\vectj)$ and $R(\vectj)$; $X(\vectj)$ is the associated $n$-globular set of the cuboidal pasting diagram $\vectj$, while $R(\vectj)$ also contains extra cells to weaken the composition structure on certain simplicial shapes of composite.  We then define an object of $\Rn$
    \[
      \xy
        (0, 0)*+{R(\vectj)}="0";
        (24, 0)*+{TX(\vectk),}="1";
        {\ar^-{\theta_{\vectj}} "0" ; "1"};
      \endxy
    \]
  and define $E_n(\vectj)$ to be the image of this under the functor $J \colon \Rn \rightarrow \Qn$; that is, the left adjoint to the forgetful functor $W \colon \Qn \rightarrow \Rn$, as described in Section~\ref{sect:ladj}.

  Before giving the construction, once again we discuss the notation we will use.  We will use a coordinate system similar to that used in the $2$-dimensional construction.  The difference is that, since higher dimensional cells require a greater number of coordinates, instead of using subscripts and superscripts, the coordinates of a cell will be written as a string in brackets.  Thus, the $m$-cell
    \[
      \alpha^m(u_0, v_0; u_1, v_1; \dotsc; u_{m - 1}, v_{m - 1}; z)
    \]
  has source $(m - 1)$-cell with coordinates $(u_0, v_0; \dotsc; u_{m - 2}, v_{m - 2}; u_{m - 1})$ and target $(k - 1)$-cell with coordinates $(u_0, v_0; \dotsc; u_{m - 2}, v_{m - 2}; v_{m - 1})$.  The $z$-coordinate indicates the position of this cell in relation to the other $m$-cells parallel to it, and the superscript $m$ indicates the dimension of the cell.  As in the $2$-dimensional construction, each $n$-cell has the same coordinates as its target $(n - 1)$-cell.

  Recall that an object of $\Theta^n$ is an equivalence class of objects of $\Delta^n$.  An object of $\Delta^n$ is in an equivalence class with more than one member if and only if it has a $0$ in the $k$th position for some $k < n$.  Thus, for the purposes of the following definition we treat the equivalence class of $(l_1, \dotsc, l_{m - 1}, 0, l_{m + 1}, \dotsc, l_n)$, with $m < n$, as the object $(l_1, \dotsc, l_{m - 1}, 0, 0, \dotsc, 0)$ of $\Delta^n$; all other equivalence classes are treated as their sole member.

  Let $\vectj \in \Theta^n$ and define $n$-globular sets $X(\vectj)$ and $R(\vectj)$ as follows: $X(\vectj)$ is defined by
    \begin{itemize}
      \item $X(\vectj)_0 = \{ a_u \gt u \in \mathbb{N}, 0 \leq u \leq j_1 \}$;
      \item for $0 < m < n$,
        \begin{align*}
          X(\vectj)_m = & \; \{ \alpha^m(u_1, u_1 + 1; u_2, u_2 + 1; \dotsc ; u_m, u_m + 1; z)  \\
          & \gt 0 \leq u_l < j_l \text{ for all } 1 \leq l \leq m, 0 \leq z \leq j_{m + 1} \};
        \end{align*}
      \item for $m = n$,
        \begin{align*}
          X(\vectj)_n = & \; \{ \alpha^n(u_1, u_1 + 1; u_2, u_2 + 1; \dotsc ; u_{n - 1}, u_{n - 1} + 1; z)  \\
          & \gt 0 \leq u_l < j_l \text{ for all } 1 \leq l \leq n - 1, 1 \leq z \leq j_{n} \};
        \end{align*}
    \end{itemize}
  and $R(\vectj)$ is defined by
    \begin{itemize}
      \item $R(\vectj)_0 = \{ a_u \gt u \in \mathbb{N}, 0 \leq u \leq j_1 \}$;
      \item for $0 < m < n$,
        \begin{align*}
          R(\vectj)_m = & \; \{ \alpha^m(u_1, v_1; u_2, v_2; \dotsc ; u_m, v_m; z)  \\
          & \gt 0 \leq u_l < v_l \leq j_l \text{ for all } 1 \leq l \leq m, 0 \leq z \leq j_{m + 1} \};
        \end{align*}
      \item for $m = n$,
        \begin{align*}
          R(\vectj)_n = & \; \{ \alpha^n(u_1, v_1; u_2, v_2; \dotsc ; u_{n - 1}, v_{n - 1}; z)  \\
          & \gt 0 \leq u_l < v_l \leq j_l \text{ for all } 1 \leq l \leq n - 1, 1 \leq z \leq j_{n} \};
        \end{align*}
    \end{itemize}
  for both $X(\vectj)$ and $R(\vectj)$, the source and target maps are defined by:
    \begin{itemize}
      \item for all $1$-cells $\alpha^1(u_1, v_1; z)$,
        \[
          s(\alpha^1(u_1, v_1; z)) = a_{u_1}, \; t(\alpha^1(u_1, v_1; z)) = a_{v_1};
        \]
      \item for all $1 < m < n$, and for all $m$-cells $\alpha^m(u_1, v_1; u_2, v_2; \dotsc ; u_m, v_m; z)$,
        \begin{align*}
          & s(\alpha^m(u_1, v_1; u_2, v_2; \dotsc ; u_m, v_m; z))  \\
          = \; & \alpha^{m - 1}(u_1, v_1; u_2, v_2; \dotsc ; u_{m - 1}, v_{m - 1}; u_m),
        \end{align*}
        and
        \begin{align*}
          & t(\alpha^m(u_1, v_1; u_2, v_2; \dotsc ; u_m, v_m; z))  \\
          = \; & \alpha^{m - 1}(u_1, v_1; u_2, v_2; \dotsc ; u_{m - 1}, v_{m - 1}; v_m),
        \end{align*}
      \item for all $n$-cells $\alpha^n(u_1, v_1; u_2, v_2; \dotsc ; u_{n - 1}$,
        \begin{align*}
          & s(\alpha^n(u_1, v_1; u_2, v_2; \dotsc ; u_{n - 1}, v_{n - 1}; z))  \\
          = \; & \alpha^{n - 1}(u_1, v_1; u_2, v_2; \dotsc ; u_{n - 1}, v_{n - 1}; z - 1),
        \end{align*}
        and
        \begin{align*}
          & t(\alpha^n(u_1, v_1; u_2, v_2; \dotsc ; u_{n - 1}, v_{n - 1}; z))  \\
          = \; & \alpha^{n - 1}(u_1, v_1; u_2, v_2; \dotsc ; u_{n - 1}, v_{n - 1}; z).
        \end{align*}
    \end{itemize}
  Once again we note that, in spite of the notation, this does not define functors $R$ and $X$ into $\nGSet$.

  We now wish to construct, for each $\vectj \in \Theta^n$, an object of $\Rn$ which will consist of a map from $R(\vectj)$ into the free strict $n$-category on $X(\vectj)$.  Before doing so, we must first establish notation for the freely generated composite cells in $TX(\vectj)$.  Following Penon's notation for composition in an $n$-magma (see Definition~\ref{defn:magma}), given $m$-cells $\alpha_1$, $\alpha_2$ and $0 \leq p < m$, where the target $p$-cell of $\alpha_1$ coincides with the source $p$-cell of $\alpha_2$, we write $\alpha_2 \comp^m_p \alpha_1$ for their composite along boundary $p$-cells.  For composites involving greater numbers of cells we extend this to summation-style notation; for $m$-cells $\alpha_i$, $1 \leq i \leq k$ for some $k$, satisfying the appropriate source and target conditions to be composable, we write
    \[
      {\sumcomp{m}{p}_{1 \leq i \leq k}} \alpha_i := \alpha_k \comp^m_p \alpha_{k - 1} \comp^m_p \dotsb \comp^m_p \alpha_2 \comp^m_p \alpha_1.
    \]
  We now define $\theta_{\vectj} \colon R(\vectj) \rightarrow TX(\vectj)$ by:
    \begin{itemize}
      \item for $a_u \in R(\vectj)_0$, $(\theta_{\vectj})_0(a_u) = a_u$;
      \item for $0 < m < n$, $(\theta_{\vectj})_m(\alpha^m(u_1, v_1; u_2, v_2; \dotsc ; u_m, v_m; z)) =$
        \[
           \sumcomp{m}{m - 1}_{u_m \leq w_m < v_m} \dotsb \sumcomp{m}{0}_{u_1 \leq w_1 < v_1} \alpha^m(w_1, w_1 + 1; w_2, w_2 + 1; \dotsc ; w_m, w_m + 1; z)
        \]
      \item for $m = n$, $(\theta_{\vectj})_n(\alpha^n(u_1, v_1; u_2, v_2; \dotsc ; u_{n - 1}, v_{n - 1}; z)) = $
    \end{itemize}
        \[
           \sumcomp{n}{n - 2}_{u_{n - 1} \leq w_{n - 1} < v_{n - 1}} \dotsb \sumcomp{n}{0}_{u_1 \leq w_1 < v_1} \alpha^m(w_1, w_1 + 1; w_2, w_2 + 1; \dotsc ; w_{n - 1}, w_{n - 1} + 1; z)
        \]
  Similar to the $2$-dimensional case, $\theta$ coincides with $\eta^T_{X(\vectj)}$ whenever $v_l = u_l + 1$ for all $0 \leq l \leq m - 1$.

  To complete the construction of the action of the functor $E_n \colon \Theta^n \rightarrow \Qn$ on objects, we apply the functor $J \colon \Rn \rightarrow \Qn$ to $\theta_{\vectj} \colon R(\vectj) \rightarrow TX(\vectj)$.  This adds to $R(\vectj)$ all the required composites and contraction cells, including those which ensure that the weakened composites (those cells in $R(\vectj)$ but not in $X(\vectj)$) are coherently equivalent to the corresponding freely generated composites at the same level in the pasting diagram. We denote the resulting object of $\Qn$ by
    \[
      \xy
        (0, 0)*+{Q(\vectj)}="0";
        (24, 0)*+{TX(\vectj).}="1";
        {\ar^-{\phi_{\vectj}} "0" ; "1"};
      \endxy
    \]

  We now define the action of the functor $E_n: \Theta^n \rightarrow \Qn$ on morphisms.  As in the $2$-dimensional case, to do so we first define a morphism in $\Rn$, then take its transpose under the adjunction
      \[
        \xy
          % POINTS
          (0, 0)*+{\Rn}="Rn";
          (16, 0)*+{\Qn}="Qn";
          % ARROWS
          {\ar@<1ex>^-{J}_-*!/u1pt/{\labelstyle \bot} "Rn" ; "Qn"};
          {\ar@<1ex>^-{W} "Qn" ; "Rn"};
        \endxy
     \]
  to obtain a morphism in $\Qn$.

  Let $\vectp: \vectj \rightarrow \vectk$ be a morphism in $\Theta^n$.  We define the strict $n$-category part of the morphism of $\Rn$ first.  Define a map of $2$-globular sets $x(\vectp) \colon X(\vectj) \rightarrow TX(\vectk)$ as follows:
    \begin{itemize}
      \item for $a_u \in X(\vectj)_0$, $x(\vectp)_0(a_u) = a_{p_1(u)}$;
      \item for $0 < m < n$, $\alpha^m(u_1, u_1 + 1; \dotsc; u_m, u_m + 1; z) \in X(\vectj)_m$, if for all $1 \leq l \leq m$ we have $p_l(u_l) < p_l(v_l)$, then
        \begin{align*}
          & x(\vectp)_m(\alpha^m(u_1, u_1 + 1; \dotsc; u_m, u_m + 1; z)) = \\
          & \sumcomp{m}{m - 1}_{\substack{p_m(u_m) \leq w_m \\ < p_m(u_m + 1)}} \dotsb \sumcomp{m}{0}_{\substack{p_1(u_1) \leq w_1 \\ < p_1(u_1 + 1)}} \alpha^m(w_1, w_1 + 1; \dotsc ; w_m, w_m + 1; p_{m + 1}(z));
        \end{align*}
        otherwise, for the smallest $l$ such that $p_l(u_l) = p_l(v_l)$ we define
          \[
            x(\vectp)_m(\alpha^m(u_1, u_1 + 1; \dotsc; u_m, u_m + 1; z))
          \]
        to be the identity $m$-cell on the $(l - 1)$-cell
          \[
            \sumcomp{l - 1}{l - 2}_{\substack{p_{l - 1}(u_{l - 1}) \leq w_{l - 1} \\ < p_{l - 1}(u_{l - 1} + 1)}} \dotsb \sumcomp{l - 1}{0}_{\substack{p_1(u_1) \leq w_1 \\ < p_1(u_1 + 1)}} \alpha^m(w_1, w_1 + 1; \dotsc ; w_{l - 1}, w_{l - 1} + 1; p_{l}(u_l));
          \]
      \item for $\alpha^n(u_1, u_1 + 1; \dotsc; u_{n - 1}, u_{n - 1} + 1; z) \in X(\vectj)_n$, if for all $1 \leq l \leq m$ we have $p_l(u_l) < p_l(v_l)$, and $p_n(z - 1) < p_n(z)$, then
        \begin{align*}
          & x(\vectp)_n(\alpha^n(u_1, u_1 + 1; \dotsc; u_{n - 1}, u_{n - 1} + 1; z)) = \\
          & \sumcomp{n}{n - 2}_{\substack{p_{n - 1}(u_{n - 1}) \leq w_{n - 1} \\ < p_{n - 1}(u_{n - 1} + 1)}} \dotsb \sumcomp{n}{0}_{\substack{p_1(u_1) \leq w_1 \\ < p_1(u_1 + 1)}} \alpha^m(w_1, w_1 + 1; \dotsc ; w_{n - 1}, w_{n - 1} + 1; p_{n}(z));
        \end{align*}
        if for all $1 \leq l \leq m$ we have $p_l(u_l) < p_l(v_l)$, and $p_n(z - 1) = p_n(z)$, then we define
          \[
            x(\vectp)_n(\alpha^n(u_1, u_1 + 1; \dotsc; u_{n - 1}, u_{n - 1} + 1; z))
          \]
        to be the identity $n$-cell on the $(n - 1)$-cell
          \[
            \sumcomp{n - 1}{n - 2}_{\substack{p_{n - 1}(u_{n - 1}) \leq w_{n - 1} \\ < p_{n - 1}(u_{n - 1} + 1)}} \dotsb \sumcomp{n}{0}_{\substack{p_1(u_1) \leq w_1 \\ < p_1(u_1 + 1)}} \alpha^m(w_1, w_1 + 1; \dotsc ; w_{n - 1}, w_{n - 1} + 1; p_{n}(z));
          \]
        otherwise, for the smallest $l$ such that $p_l(u_l) = p_l(v_l)$, we define
          \[
            x(\vectp)_n(\alpha^n(u_1, u_1 + 1; \dotsc; u_{n - 1}, u_{n - 1} + 1; z))
          \]
         to be the identity $m$-cell on the $(l - 1)$-cell
          \[
            \sumcomp{l - 1}{l - 2}_{\substack{p_{l - 1}(u_{l - 1}) \leq w_{l - 1} \\ < p_{l - 1}(u_{l - 1} + 1)}} \dotsb \sumcomp{n}{0}_{\substack{p_1(u_1) \leq w_1 \\ < p_1(u_1 + 1)}} \alpha^m(w_1, w_1 + 1; \dotsc ; w_{l - 1}, w_{l - 1} + 1; p_{l}(u_l)).
          \]
    \end{itemize}

  To obtain a map $TX(\vectj) \rightarrow TX(\vectk)$ we apply $T$ and compose this with the multiplication for $T$, giving
    \[
      \xy
        % POINTS
        (0, 0)*+{TX(\vectj)}="0";
        (28, 0)*+{T^2X(\vectk)}="1";
        (56, 0)*+{TX(\vectk)}="2";
        % ARROWS
        {\ar^-{Tx(\vectp)} "0" ; "1"};
        {\ar^-{\mu^T_{X(\vectk)}} "1" ; "2"};
      \endxy
    \]
  We now define a map
    \[
      \xy
        % POINTS
        (0, 0)*+{R(\vectj)}="0,0";
        (56, 0)*+{Q(\vectk)}="2,0";
        (0, -20)*+{TX(\vectj)}="0,1";
        (28, -20)*+{T^2X(\vectk)}="1,1";
        (56, -20)*+{TX(\vectk),}="2,1";
        % ARROWS
        {\ar^{r(\vectp)} "0,0" ; "2,0"};
        {\ar_{\theta_{\vectj}} "0,0" ; "0,1"};
        {\ar^{\phi_{\vectk}} "2,0" ; "2,1"};
        {\ar_-{Tx(\vectp)} "0,1" ; "1,1"};
        {\ar_-{\mu^T_{X(\vectk)}} "1,1" ; "2,1"};
      \endxy
    \]
  where the map $r(\vectp)$ is defined as follows:
    \begin{itemize}
      \item for $a_u \in R(\vectj)_0$, $r(\vectp)_0(a_u) = a_{p_1(u)}$;
      \item for $0 < m < n$, $\alpha^m(u_1, v_1; \dotsc; u_m, v_m; z) \in R(\vectj)_m$, if for all $1 \leq l \leq m$ we have $p_l(u_l) < p_l(v_l)$, then
        \begin{align*}
          & r(\vectp)_m(\alpha^m(u_1, v_1; \dotsc; u_m, v_m; z)) = \\ & \alpha^m(p_1(u_1), p_1(v_1); \dotsc; p_m(u_m), p_m(v_m); p_{m + 1}(z));
        \end{align*}
        otherwise, for the smallest $l$ such that $p_l(u_l) = p_l(v_l)$, we define
          \[
            r(\vectp)_m(\alpha^m(u_1, v_1; \dotsc; u_m, v_m; z))
          \]
         to be the identity $m$-cell on the $(l - 1)$-cell
          \[
            \alpha^{l - 1}(p_1(u_1), p_1(v_1); \dotsc; p_{l - 1}(u_{l - 1}), p_{l - 1}(v_{l - 1}); p_{l}(u_l));
          \]
      \item for $\alpha^n(u_1, v_1; \dotsc; u_{n - 1}, v_{n - 1}; z) \in R(\vectj)_n$, if for all $1 \leq l \leq n - 1$ we have $p_l(u_l) < p_l(v_l)$, and $p_n(z - 1) < p_n(z)$, then
        \begin{align*}
          & r(\vectp)_n(\alpha^m(u_1, v_1; \dotsc; u_{n - 1}, v_{n - 1}; z)) = \\ & \alpha^m(p_1(u_1), p_1(v_1); \dotsc; p_{n - 1}(u_{n - 1}), p_{n - 1}(v_{n - 1}); p_n(z));
        \end{align*}
        if for all $1 \leq l \leq n - 1$ we have $p_l(u_l) < p_l(v_l)$, and $p_n(z - 1) = p_n(z)$, then we define
          \[
            r(\vectp)_n(\alpha^m(u_1, v_1; \dotsc; u_{n - 1}, v_{n - 1}; z))
          \]
         to be the identity $n$-cell on the $(n - 1)$-cell
          \[
            \alpha^{n - 1}(p_1(u_1), p_1(v_1); \dotsc; p_{l - 1}(u_{l - 1}), p_{n - 1}(v_{n - 1}); p_{l}(z));
          \]
        otherwise, for the smallest $l$ such that $p_l(u_l) = p_l(v_l)$, we define
          \[
            r(\vectp)_n(\alpha^m(u_1, v_1; \dotsc; u_{n - 1}, v_{n - 1}; z))
          \]
         to be the identity $m$-cell on the $(l - 1)$-cell
          \[
            \alpha^{l - 1}(p_1(u_1), p_1(v_1); \dotsc; p_{l - 1}(u_{l - 1}), p_{l - 1}(v_{l - 1}); p_{l}(u_l)).
          \]
    \end{itemize}

  Finally, we take the transpose of this map under the adjunction
      \[
        \xy
          % POINTS
          (0, 0)*+{\Rtwo}="Rn";
          (16, 0)*+{\Qtwo.}="Qn";
          % ARROWS
          {\ar@<1ex>^-{J}_-*!/u1pt/{\labelstyle \bot} "Rn" ; "Qn"};
          {\ar@<1ex>^-{W} "Qn" ; "Rn"};
        \endxy
     \]
  We write $\epsilon \colon JW \Rightarrow 1$ for the counit of this adjunction, and $\epsilon_{\phi_{\vectk}}$ for the component corresponding to
    \[
      \xy
        (0, 0)*+{Q(\vectk)}="0";
        (24, 0)*+{TX(\vectk).}="1";
        {\ar^-{\phi_{\vectk}} "0" ; "1"};
      \endxy
    \]
  Then the transpose is given by the composite
    \[
      \epsilon_{\phi_{\vectk}} \comp J\big( r(\vectp), \mu^T_{X(\vectk)} \comp Tx(\vectp) \big).
    \]
  This allows us to define the functors $E_n \colon \Theta^n \rightarrow \Qn$ and $I_n \colon \Theta^n \rightarrow P\Alg$.

  \begin{defn}
    Define a functor $E_n \colon \Theta^n \rightarrow \Qn$ as follows:
      \begin{itemize}
        \item given an object $\vectj \in \Theta^n$, $E_n(\vectj)$ is defined to be the object
      \[
      \xy
        (0, 0)*+{Q(\vectj)}="0";
        (24, 0)*+{TX(\vectj).}="1";
        {\ar^-{\phi_{(\vectj)}} "0" ; "1"};
      \endxy
      \]
    of $\Qn$;
        \item given a morphism $\vectp \colon \vectj \rightarrow \vectk$ in $\Theta^n$, $E_n(\vectp)$ is defined to be the map
    \[
      \epsilon_{\phi_{\vectk}} \comp J\big( r(\vectp), \mu^T_{X(\vectk)} \comp Tx(\vectp) \big).
    \]
      \end{itemize}
    Write $K \colon \Qn \rightarrow P\Alg$ for the Eilenberg--Moore comparison functor for the adjunction
      \[
        \xy
          % POINTS
          (0, 0)*+{\nGSet}="Rn";
          (24, 0)*+{\Qtwo.}="Qn";
          % ARROWS
          {\ar@<1ex>^-{F}_-*!/u1pt/{\labelstyle \bot} "Rn" ; "Qn"};
          {\ar@<1ex>^-{U} "Qn" ; "Rn"};
        \endxy
     \]
    We define a functor $I_n := K \comp E_n : \Theta^n \rightarrow P\Alg$.
  \end{defn}

  We can now define the nerve functor for Penon weak $n$-categories.

  \begin{defn}
    The \emph{nerve functor} $\nerve$ for Penon weak $n$-categories is defined by
      \[
        \xymatrix@C=0pt@R=2pt{
          \nerve \colon P\Alg & \longrightarrow & [(\Theta^n)^{\op}, \Set]  \\
          \mathcal{A} \ar[dddd]_f && P\Alg(I_n(-), \mathcal{A}) \ar[dddd]^{f \comp -}  \\
          \\
          & \longmapsto & \\
          \\
          \mathcal{B} && P\Alg(I_n(-), \mathcal{B}).
                 }
      \]
    For a $P$-algebra $\mathcal{A}$, the presheaf $\nerve \mathcal{A} = P\Alg(I_n(-), \mathcal{A})$ is called the \emph{nerve of $\mathcal{A}$}.
  \end{defn}

  \section{Directions for further investigation}  \label{sect:properties}

  In this section we discuss the questions that arise from this nerve construction, and what further results need to be proved in order to make a more complete comparison between Penon weak $n$-categories and Tamsamani--Simpson weak $n$-categories.  The central question is whether the following conjecture holds:

  \begin{conjecture}  \label{conj:Segal}
    Let $\mathcal{A}$ be a Penon weak $n$-category.  Then the nerve $\nerve\mathcal{A}$ satisfies the Segal condition, and is thus a Tamsamani--Simpson weak $n$-category.
  \end{conjecture}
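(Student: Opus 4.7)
The plan is to generalise the proof strategy of Section~\ref{sect:Segal} to show that every Segal map $S_{\vectk, k}$ of $\nerve\mathcal{A}$ is contractible as a map of $(n-m)$-precategories, for each $1 \leq m \leq n$ and each $\vectk = (k_1, \dotsc, k_{m-1}) \in \Delta^{m-1}$. The first step is to establish the analogues of Lemmas~\ref{lem:sk} and~\ref{lem:sjk}: factoring the functor $\nerve\mathcal{A}(\vectk, -, \vectl) \colon \Delta^{\op} \to \Set$ through $I_n(\vectk, -, \vectl) \colon \Delta \to P\Alg$ (for each fixed $\vectl \in \Delta^{n-m}$) and using that $P\Alg(-, \mathcal{A})$ is representable and hence preserves limits, one rewrites the component $(S_{\vectk, k})_{\vectl}$ as precomposition with a canonical map
\[
d^{\pushk}_{\vectk, \vectl} \colon I_n(\vectk, 1, \vectl)^{\pushk} \longrightarrow I_n(\vectk, k, \vectl),
\]
where the wide pushout in $P\Alg$ is taken along the images under $I_n$ of the coface maps from $\Delta$. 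Contractibility of $S_{\vectk, k}$ then reduces entirely to statements describing certain target $P$-algebras as colimits in $P\Alg$ of simpler $P$-algebras in the image of $I_n$, together with a surjectivity statement at the bottom dimension.

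Next I would treat the top-dimensional Segal maps ($m = n$) by generalising Lemma~\ref{lem:i2jk}: for $\vectk \in \Delta^{n-1}$, the $P$-algebra $I_n(\vectk, k)$ should be the colimit in $P\Alg$ of $k$ copies of $I_n(\vectk, 1)$ glued along $I_n(\vectk, 0)$. Since gluing cuboidal pasting diagrams end-to-end along the last coordinate does not affect the simplicial weakening structure at any other coordinate, the case analysis from Lemma~\ref{lem:i2jk} should generalise with only notational changes: one specifies a map out of $I_n(\vectk, k)$ by listing its values on generating cells (which split across the $k$ pushout summands) and on the triangular-style contraction cells at each dimension (likewise split), and checks that all remaining contraction cells and composites are forced by the requirement that the map preserve the $P$-algebra structure. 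This yields bijectivity, and hence contractibility, of $S_{\vectk, k}$ in the case $m = n$.

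For the remaining lower-dimensional Segal maps ($m < n$), I would generalise Proposition~\ref{prop:soo2}, Corollary~\ref{prop:Sjfandf} and Lemma~\ref{lem:i2j1}. Surjectivity on $0$-cells, that is, surjectivity of the component $(S_{\vectk, k})_{\vectzero}$ with $\vectzero \in \Delta^{n-m}$, follows from the same factorisation-through-free-$P$-algebras construction used in Proposition~\ref{prop:soo2}: given a map $\phi \colon I_n(\vectk, 1, \vectzero)^{\pushk} \to \mathcal{A}$, with algebra action $\theta \colon PA \to A$, one defines a map $R(\vectk, k, \vectzero) \to PA$ using the magma structure of $PA$ to assemble the values of $\phi$, transposes along $J \dashv W$, applies the Eilenberg--Moore comparison functor, and composes with $\theta$. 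Fullness at each intermediate cell dimension and faithfulness at dimension $n-m$ reduce, via the representability argument, to showing that $I_n(\vectk, j, \vectl)$ is the colimit in $P\Alg$ of a pushout diagram generalising Lemma~\ref{lem:i2j1}: a gluing of $j$ copies of $I_n(\vectk, 1, \vectl)$ along $I_n(\vectk, 0, \vectl)$, together with two boundary copies of $I_n(\vectk, j, \vectzero)$ playing the role of the ``tetrahedral'' source and target $P$-algebras that supply the higher-dimensional coherence isomorphisms.

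The hard part will be this final colimit identification, the generalisation of Lemma~\ref{lem:i2j1}. Even in the two-dimensional case the proof required a lengthy enumeration distinguishing generating cells, algebraic contraction cells, triangular contraction cells, and the composites they generate; in dimension $n$ the algebraic contraction cells occur at every dimension up to $n$, the ``triangular'' contraction cells become higher-dimensional simplicial fillers reflecting the nesting of simplicial weakenings introduced at each stage of the interleaving construction, and at dimension $n$ one must also force all parallel pairs to coincide. To organise this, I would develop a systematic inductive description of the cells of $Q(\vectj)$ arising dimension-by-dimension from the construction of Section~\ref{sect:ladj}, classifying every cell as either a generating cell from $R(\vectj)$, a ``primitive'' contraction cell introduced at a specific interleaving stage, or a composite of these, and then check at each dimension that every such cell is either determined uniquely by the $P$-algebra structure, supplied by exactly one summand of the pushout, or a composite of cells already handled. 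The coherence results of Section~\ref{sect:globopcoh}, in particular Corollary~\ref{lem:diaginfree} stating that all diagrams of constraint $n$-cells commute in a free algebra, should be the key tool for showing these piecewise definitions are well-posed.
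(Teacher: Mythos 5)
This statement is a \emph{conjecture} in the paper, not a theorem: the author proves it only for $n=2$ (Theorem~\ref{thm:2nerveSegalcond}) and explicitly states that a proof for general $n$ is beyond the scope of the thesis. Your proposal follows exactly the route the paper itself sketches in Section~\ref{sect:properties} — rewrite the Segal maps as precomposition with maps out of wide pushouts in $P\Alg$, then reduce contractibility to identifying certain $P$-algebras $I_n(\vectj)$ as colimits of simpler ones — but it does not actually carry out the step on which everything hinges, and that step is precisely where the paper says the obstruction lies. Asserting that the generalisations of Lemmas~\ref{lem:i2j1} and~\ref{lem:i2jk} ``should generalise with only notational changes,'' or that you ``would develop a systematic inductive description of the cells of $Q(\vectj)$,'' is a plan, not a proof; the conjecture remains open after your argument exactly as it was before.

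The concrete reason the enumeration does not transfer with only notational changes is this: the proof of Lemma~\ref{lem:i2j1} leans on the fact that in $I_2(j,1)$ there are \emph{no distinct parallel $2$-cells} (the coequaliser $C_{n+1}$ identifies all parallel top-dimensional cells), which is what lets every contraction $2$-cell be written as a composite of algebraic and triangular ones and makes the piecewise definition of $\psi$ automatically well-posed. For $n>2$ the contraction cells at intermediate dimensions $1<m<n$ are \emph{not} identified when parallel, so the classification of cells into ``generating / algebraic contraction / triangular contraction / forced composite'' breaks down at every intermediate dimension: one must track which contraction $m$-cells exist, which are forced by the $P$-algebra structure, and verify coherence of the piecewise-defined map up to higher contraction cells rather than on the nose. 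This is exactly the ``deeper understanding of partially free $P$-algebras'' the paper identifies as missing — colimits of free $P$-algebras are tractable because $F_P$ preserves colimits, but the $I_n(\vectj)$ are only partially free and no such preservation result is available. Corollary~\ref{lem:diaginfree} (commutativity of diagrams of constraint $n$-cells) applies to free algebras and only at the top dimension, so invoking it as ``the key tool'' for well-posedness at intermediate dimensions is not justified without a substantial extension. Until the colimit identifications are actually established, the conjecture is not proved.
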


  We have proved this only in the case $n = 2$ (Theorem~\ref{thm:2nerveSegalcond}).  As in the $2$-dimensional case, for general $n$ we can express the Segal maps in terms of composition with wide pushouts of face maps, allowing us to rephrase some parts of the Segal condition in terms of colimits of $P$-algebras in the image of the functor $I_n \colon \Theta^n \rightarrow P\Alg$ (for the 2-dimensional version, see Lemmas~\ref{lem:sk} and \ref{lem:sjk}).  However, it is not practical to generalise the proofs from the $2$-dimensional case to the general case by hand, due to their elementary approach.  The use of computers in mathematical proofs has become more prevalent in recent years, and it may be possible to generalise these elementary proofs for low values of $n$, by using a computer to perform the calculations of the cells in the $P$-algebras $I_n(\vectj)$.  To prove Conjecture~\ref{conj:Segal} in general we would need a more abstract approach.  We believe that this would require a deeper understanding of the ``partially free'' $P$-algebras used in the nerve construction;  colimits of free $P$-algebras are easy to work with, since the free $P$-algebra functor preserves colimits, but this is not true for ``partially free'' $P$-algebras.  The coherence theorems of Section~\ref{sect:globopcoh} would likely play a key role in this, and we believe that those that apply only to free algebras can be extended to ``partially free'' algebras using the contractions in $\Qn$, though we have not yet made this precise.

  Another natural question to ask is whether the nerve functor for Penon weak $n$-categories is full and faithful.  We now prove that it is faithful, then argue that it is not full and explain why this is the case.

  \begin{prop}
    The nerve functor $\nerve \colon P\Alg \rightarrow [(\Theta^n)^{\op}, \Set]$ is faithful.
  \end{prop}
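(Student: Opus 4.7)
The plan is to exploit the fact that the objects $\vectone_p \in \Theta^n$ are small enough that no weakening occurs in the construction of $I_n$, so $I_n(\vectone_p)$ is a genuinely free $P$-algebra whose maps into $\mathcal{A}$ pick out precisely the $p$-cells of $\mathcal{A}$. Faithfulness of $\nerve$ will then follow because a map of $P$-algebras is determined by its action on $p$-cells for each $p$.

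First I would observe that $R(\vectone_p) = X(\vectone_p)$ for every $0 \leq p \leq n$: the set of cells of $R(\vectj)$ differs from that of $X(\vectj)$ only through pairs $(u_l, v_l)$ with $v_l > u_l + 1$, which cannot occur when each coordinate of $\vectj$ is $0$ or $1$. Consequently $\theta_{\vectone_p}$ coincides with $\eta^T_{X(\vectone_p)}$, so by the decomposition $F = J \comp H$ of Penon's left adjoint from Section~\ref{sect:ladj} we have $E_n(\vectone_p) = JH X(\vectone_p) = F X(\vectone_p)$. Applying the Eilenberg--Moore comparison functor $K$ gives $I_n(\vectone_p) = KF X(\vectone_p) = F_P X(\vectone_p)$, the free $P$-algebra on the $n$-globular set $X(\vectone_p)$ consisting of a single globular $p$-cell together with its iterated sources and targets.

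Next, using the adjunction $F_P \ladj U_P$, I would obtain a natural bijection
\[
  P\Alg(I_n(\vectone_p), \mathcal{A}) \iso \nGSet(X(\vectone_p), U_P \mathcal{A}) \iso (U_P \mathcal{A})_p,
\]
where the second bijection records that a map of $n$-globular sets out of $X(\vectone_p)$ is determined by the image of its unique top-dimensional cell. Under this identification the component $(\nerve f)_{\vectone_p} \colon \nerve\mathcal{A}(\vectone_p) \rightarrow \nerve\mathcal{B}(\vectone_p)$, which is post-composition with $f$, corresponds to the component $f_p \colon (U_P\mathcal{A})_p \rightarrow (U_P\mathcal{B})_p$ of the underlying map of $n$-globular sets.

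Finally, suppose $f, g \colon \mathcal{A} \rightarrow \mathcal{B}$ are maps of $P$-algebras with $\nerve f = \nerve g$. Then in particular $(\nerve f)_{\vectone_p} = (\nerve g)_{\vectone_p}$ for each $0 \leq p \leq n$, so by the previous step $f_p = g_p$ for every $p$. Thus $f$ and $g$ agree on underlying $n$-globular sets, and since maps of $P$-algebras are determined by their underlying maps of $n$-globular sets, $f = g$. No step here is a serious obstacle: the only real content is the identification $I_n(\vectone_p) = F_P X(\vectone_p)$, which is immediate from the construction, and the rest is an application of the free--forgetful adjunction for $P$-algebras.
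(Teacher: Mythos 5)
Your proposal is correct and follows essentially the same route as the paper: identify $I_n(\vectone_p)$ as the free $P$-algebra on the representable globular set $X(\vectone_p)$, use the adjunction $F_P \ladj U_P$ together with (a form of) the Yoneda lemma to identify $\nerve\mathcal{A}(\vectone_p)$ with $(U_P\mathcal{A})_p$ naturally, and conclude that $\nerve f = \nerve g$ forces $f_p = g_p$ for all $p$. The paper invokes Yoneda explicitly where you say a map out of $X(\vectone_p)$ is determined by its top cell, but this is the same argument.
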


  \begin{proof}
    The idea of the proof is as follows:  every presheaf $(\Theta^n)^{\op} \rightarrow \Set$ has an underlying $n$-globular set, and in the case of the nerve of a Penon weak $n$-category, this is isomorphic to the underlying $n$-globular set of the original $P$-algebra.  A map of $P$-algebras is a map of the underlying $n$-globular sets satisfying a certain commutativity condition, and when we apply the nerve functor to such a map the action on underlying $n$-globular sets remains unchanged.

    For all $0 \leq k \leq n$, write
      \[
        (\vectone_k, \vectzero) := (\underbrace{1, 1, \dotsc, 1}_{k \text{ times}}, 0, 0, \dotsc, 0) \in \Theta^n.
      \]
    Observe that $R(\vectone_k, \vectzero) = X(\vectone_k, \vectzero)$, so $I_n(\vectone_k, \vectzero) = F_{P}X(\vectone_k, \vectzero)$, where
      \[
        F_{P} \colon \nGSet \longrightarrow P\Alg
      \]
    is the free $P$-algebra functor.  Furthermore, for $k \in \mathbb{G}_n$,
      \[
        X(\vectone_k, \vectzero) \iso H_k = \mathbb{G}_n(-, k),
      \]
    i.e. $X(\vectone_k, \vectzero)$ is a representable functor.  Thus, by the Yoneda lemma, for any $A \in \nGSet$,
      \[
        A_k \iso \nGSet(H_k, A) \iso \nGSet(X(\vectone_k, \vectzero, A)),
      \]
    naturally in $A$ and $k$.  Let $\mathcal{A} = (\theta_A \colon P A \rightarrow A)$ be a $P$-algebra.  Then, by the adjunction $F_{P} \ladj U_{P}$,
      \[
        \nGSet(X(\vectone_k, \vectzero), A) \iso P\Alg(I_n(\vectone_k, \vectzero), \mathcal{A}),
      \]
    naturally in $\mathcal{A}$.

    Now suppose we have $P$-algebras $\mathcal{A} = (\theta_A \colon P A \rightarrow A)$, $\mathcal{B} = (\theta_B \colon P B \rightarrow B)$, and maps of $P$-algebras $u$, $v \colon \mathcal{A} \rightarrow \mathcal{B}$ such that $\nerve u = \nerve v$.  Thus, for each $0 \leq k \leq n$ we have
      \[
        u \comp - = v \comp - \colon P\Alg(I_n(\vectone_k, \vectzero), \mathcal{A}) \rightarrow P\Alg(I_n(\vectone_k, \vectzero), \mathcal{B}).
      \]
    We can write $u_k$ as the composite shown in the diagram below:
      \[
        \xy
          % POINTS
          (-50, 16)*+{A_k}="Ak";
          (0, 16)*+{B_k}="Bk";
          (-50, 0)*+{\nGSet(H_k, A)}="HkA";
          (0, 0)*+{\nGSet(H_k, B)}="HkB";
          (-50, -16)*+{\nGSet(X(\vectone_k, \vectzero), A)}="XA";
          (0, -16)*+{\nGSet(X(\vectone_k, \vectzero), B)}="XB";
          (-50, -32)*+{P\Alg(I_n(\vectone_k, \vectzero), \mathcal{A})}="InA";
          (0, -32)*+{P\Alg(I_n(\vectone_k, \vectzero), \mathcal{B})}="InB";
          % ARROWS
          {\ar^{u_k} "Ak" ; "Bk"};
          {\ar_{u \comp -} "HkA" ; "HkB"};
          {\ar_{u \comp -} "XA" ; "XB"};
          {\ar_{u \comp -} "InA" ; "InB"};
          {\ar_{\iso} "Ak" ; "HkA"};
          {\ar_{\iso} "HkA" ; "XA"};
          {\ar_{\iso} "XA" ; "InA"};
          {\ar_{\iso} "InB" ; "XB"};
          {\ar_{\iso} "XB" ; "HkB"};
          {\ar_{\iso} "HkB" ; "Bk"};
        \endxy
      \]
    and similarly, we can write $v_k$ as:
      \[
        \xy
          % POINTS
          (-50, 16)*+{A_k}="Ak";
          (0, 16)*+{B_k}="Bk";
          (-50, 0)*+{\nGSet(H_k, A)}="HkA";
          (0, 0)*+{\nGSet(H_k, B)}="HkB";
          (-50, -16)*+{\nGSet(X(\vectone_k, \vectzero), A)}="XA";
          (0, -16)*+{\nGSet(X(\vectone_k, \vectzero), B)}="XB";
          (-50, -32)*+{P\Alg(I_n(\vectone_k, \vectzero), \mathcal{A})}="InA";
          (0, -32)*+{P\Alg(I_n(\vectone_k, \vectzero), \mathcal{B}).}="InB";
          % ARROWS
          {\ar^{v_k} "Ak" ; "Bk"};
          {\ar_{v \comp -} "HkA" ; "HkB"};
          {\ar_{v \comp -} "XA" ; "XB"};
          {\ar_{v \comp -} "InA" ; "InB"};
          {\ar_{\iso} "Ak" ; "HkA"};
          {\ar_{\iso} "HkA" ; "XA"};
          {\ar_{\iso} "XA" ; "InA"};
          {\ar_{\iso} "InB" ; "XB"};
          {\ar_{\iso} "XB" ; "HkB"};
          {\ar_{\iso} "HkB" ; "Bk"};
        \endxy
      \]
    Since $u \comp - = v \comp -$, these diagrams give us that $u_k = v_k$ for all $0 \leq k \leq n$, so $u = v$.  Hence the nerve functor $\nerve \colon P\Alg \rightarrow [(\Theta^n)^{\op}, \Set]$ is faithful.
  \end{proof}

  To see that the nerve functor is not full, consider the $P$-algebra illustrated below:
    \[
      \xymatrix{
        & \bullet \ar[ddr]^g  \\
        \\
        \bullet \ar[uur]^f \ar[rr]^{h}_{\rotatebox[origin=c]{270}{ $\iso$}} \ar@/_2pc/[rr]_k && \bullet
               }
    \]
  where $g \comp f = h$.  Any endomorphism of this $P$-algebra that sends $f$ to $f$ and $g$ to $g$ must also send $h$ to $h$, since maps of $P$-algebras preserve composition, and $h = g \comp f$.  However, when we consider endomorphisms of the nerve of this $P$-algebra, we see that there are endomorphisms sending $f$ to $f$ and $g$ to $g$ that send $h$ to $k$; such endomorphisms are not in the image of the nerve functor.

  This illustrates a key difference between algebraic and non-algebraic definitions of weak $n$-category:  in the algebraic case the natural notion of map preserves the composition structure, but in the non-algebraic case there is no specified composition structure to preserve.  In the example above, once we have applied the nerve functor we no longer remember which cell was $g \comp f$, and morphisms can now map $h$ to any legitimate choice of composite.

  Note that maps of nerves are still required to preserve identities, however, since these are specified by degeneracy maps.  This means that maps of Tamsamani--Simpson weak $n$-categories behave like normalised maps, i.e.~those that preserve identities strictly, but are only required to preserve composition weakly.  This has been formalised in the $2$-dimensional case~\cite{LP08}.  There is currently no definition of normalised maps of Penon weak $n$-categories, and we believe that such a definition would be necessary to adapt our nerve construction to give a a full nerve functor for Penon weak $n$-categories.

  One final question raised by this work is whether every Tamsamani--Simpson weak $n$-category arises as the nerve of a Penon weak $n$-category.  To answer this question we would need to construct a Penon weak $n$-category from a Tamsamani--Simpson weak $n$-category.  Note that there will be no canonical way to do this, since it would involve making choices of composites.

  This nerve construction is a considerable first step towards understanding the relationships between algebraic and non-algebraic definitions of weak $n$-categories.  We have made a connection between the algebraic definition of Penon weak $n$-categories and the non-algebraic setting in which Tamsamani--Simpson weak $n$-categories are defined, allowing for the relationship between these definitions to be studied.  Our nerve construction is the first to allow for such a comparison, and we believe that it should pave the way for more connections to be made between algebraic and non-algebraic definitions of weak $n$-category.

\backmatter
\bibliography{biblio}
\bibliographystyle{halpha}

\end{document}